\documentclass[a4paper,10pt]{amsart}
\pdfoutput=1
\usepackage{etex}

\usepackage[T1]{fontenc}
\usepackage{lmodern}
\usepackage[utf8]{inputenc}
\usepackage{amssymb}
\usepackage{enumitem}
\usepackage{mathrsfs,dsfont,mathtools}
\usepackage{nicefrac}
\usepackage{microtype}

\usepackage[lite]{amsrefs}

\renewcommand{\PrintDOI}[1]{\href{http://dx.doi.org/\detokenize{#1}}{doi: \detokenize{#1}}}

\BibSpec{book}{%
    +{}  {\PrintPrimary}                {transition}
    +{,} { \textit}                     {title}
    +{.} { }                            {part}
    +{:} { \textit}                     {subtitle}
    +{,} { \PrintEdition}               {edition}
    +{}  { \PrintEditorsB}              {editor}
    +{,} { \PrintTranslatorsC}          {translator}
    +{,} { \PrintContributions}         {contribution}
    +{,} { }                            {series}
    +{,} { \voltext}                    {volume}
    +{,} { }                            {publisher}
    +{,} { }                            {organization}
    +{,} { }                            {address}
    +{,} { \PrintDateB}                 {date}
    +{,} { }                            {status}
    +{}  { \parenthesize}               {language}
    +{}  { \PrintTranslation}           {translation}
    +{;} { \PrintReprint}               {reprint}
    +{.} { }                            {note}
    +{.} {}                             {transition}
    +{} { \PrintDOI}                   {doi}
    +{}  {\SentenceSpace \PrintReviews} {review}
}

\usepackage{enumitem} 
\setlist[enumerate,1]{label=\textup{(\arabic*)}}

\usepackage[pdftitle={Representations of *-algebras by unbounded operators:
  C*-hulls, local--global principles, and induction},
  pdfauthor={Ralf Meyer},
  pdfsubject={Mathematics}
]{hyperref}

\numberwithin{equation}{section}
\theoremstyle{plain}
\newtheorem{theorem}[equation]{Theorem}
\newtheorem{lemma}[equation]{Lemma}
\newtheorem{proposition}[equation]{Proposition}
\newtheorem{corollary}[equation]{Corollary}
\theoremstyle{definition}
\newtheorem{definition}[equation]{Definition}

\theoremstyle{remark}
\newtheorem{remark}[equation]{Remark}
\newtheorem{example}[equation]{Example}

\newcommand*{\C}{\mathbb C}
\newcommand*{\Z}{\mathbb Z}
\newcommand*{\N}{\mathbb N}
\newcommand*{\R}{\mathbb R}
\newcommand*{\T}{\mathbb T}
\newcommand*{\Comp}{\mathbb K}
\newcommand*{\Bound}{\mathbb B}
\newcommand*{\Mat}{\mathbb M}

\newcommand*{\affil}{\mathrel\eta}

\newcommand*{\id}{\textup{id}}

\newcommand*{\ima}{\textup i}
\newcommand*{\dd}{\textup d}
\newcommand*{\Cst}{\texorpdfstring{\textup C^*}{C*}}
\newcommand*{\Cstar}{\texorpdfstring{$\textup C^*$}{C*}}
\newcommand*{\Star}{\texorpdfstring{$^*$\nb-}{*-}}

\newcommand*{\Lg}{\mathfrak g}
\newcommand*{\Hilm}[1][E]{\mathcal{#1}}
\newcommand*{\Hilms}[1][E]{\mathfrak{#1}}
\newcommand*{\Mult}{\mathcal M}

\newcommand*{\Toep}{\mathcal T}

\newcommand*{\braket}[2]{\langle#1,#2\rangle}

\newcommand*{\cl}[1]{\overline{#1}}
\newcommand*{\conj}[1]{\overline{#1}}

\newcommand*{\Cont}{\textup C} 
\newcommand*{\Contc}{\textup C_\textup c} 
\newcommand*{\Contb}{\textup C_\textup b} 

\newcommand{\idealin}{\mathrel{\triangleleft}} 
\newcommand*{\nb}{\nobreakdash}  
\newcommand*{\alb}{\hspace{0pt}} 

\newcommand*{\blank}{\text{\textvisiblespace}}

\mathtoolsset{mathic}
\DeclarePairedDelimiter{\abs}{\lvert}{\rvert}
\DeclarePairedDelimiter{\norm}{\lVert}{\rVert}

\DeclareMathOperator{\sign}{sign}

\DeclareMathOperator{\Prim}{Prim}
\DeclareMathOperator{\dom}{dom}
\DeclareMathOperator{\Endo}{End}
\DeclareMathOperator{\Rep}{Rep}
\DeclareMathOperator{\Repi}{Rep_{int}}

\newcommand*{\defeq}{\mathrel{\vcentcolon=}}
\newcommand*{\into}{\rightarrowtail}
\newcommand*{\prto}{\twoheadrightarrow}
\newcommand*{\injto}{\hookrightarrow}
\newcommand*{\congto}{\xrightarrow\sim}

\hyphenation{Schmüd-gen Sav-chuk Hil-bert Ver-dier mon-oid-al Gro-then-dieck}

\begin{document}

\title[Representations by unbounded operators]{Representations of *-algebras\\
  by unbounded operators:\\
  C*-hulls, local--global principle, and induction}

\author{Ralf Meyer}
\email{rmeyer2@uni-goettingen.de}

\address{Mathematisches Institut\\
  Georg-August Universit\"at G\"ottingen\\
  Bunsenstra\ss{}e 3--5\\
  37073 G\"ottingen\\
  Germany}

\begin{abstract}
  We define a \Cstar\nb-hull for a \Star{}algebra, given a notion of
  integrability for its representations on Hilbert modules.  We
  establish a local--global principle which, in many cases,
  characterises integrable representations on Hilbert modules
  through the integrable representations on Hilbert spaces.  The
  induction theorem constructs a \Cstar\nb-hull for a certain class
  of integrable representations of a graded \Star{}algebra, given a
  \Cstar\nb-hull for its unit fibre.
\end{abstract}

\subjclass[2010]{Primary 47L60; Secondary 46L55}
\keywords{unbounded operator;
  regular Hilbert module operator;
  integrable representation;
  induction of representations;
  graded \Star{}algebra;
  Fell bundle;
  \Cstar\nb-algebra generated by unbounded operators;
  \Cstar\nb-envelope;
  \Cstar\nb-hull;
  host algebra;
  Weyl algebra;
  canonical commutation relations;
  Local--Global Principle;
  Rieffel deformation}

\maketitle

\setcounter{tocdepth}{1}
\tableofcontents

\section{Introduction}
\label{sec:intro}

Savchuk and Schmüdgen~\cite{Savchuk-Schmudgen:Unbounded_induced}
have introduced a method to define and classify the integrable
representations of certain \Star{}algebras by an inductive
construction.  The original goal of this article was to clarify this
method and thus make it apply to more situations.  This has led me
to reconsider some foundational aspects of the theory of
representations of \Star{}algebras by unbounded operators.  This is
best explained by formulating an induction theorem
that is inspired by~\cite{Savchuk-Schmudgen:Unbounded_induced}.

Let~\(G\) be a discrete group with unit element \(e\in G\).  Let
\(A=\bigoplus_{g\in G} A_g\) be a \(G\)\nb-graded unital
\Star{}algebra.  That is, \(A_g\cdot A_h\subseteq A_{gh}\),
\(A_g^*=A_{g^{-1}}\), and \(1\in A_e\).  In particular, the
\emph{unit fibre}~\(A_e\) is a unital \Star{}algebra.  Many
interesting examples of this situation are studied in
\cites{Savchuk-Schmudgen:Unbounded_induced, Dowerk-Savchuk:Induced}.
A \emph{Fell bundle} over~\(G\) is a family of
subspaces~\((B_g)_{g\in G}\) of a \Cstar\nb-algebra~\(B\)
(which is not part of the data) such that \(B_g\cdot B_h\subseteq
B_{gh}\) and \(B_g^*=B_{g^{-1}}\).  The universal choice for~\(B\)
is the \emph{section \Cstar\nb-algebra} of the Fell bundle.

Briefly, our main result says the following.  Let~\(B_e\) be a
\Cstar\nb-algebra such that ``integrable'' ``representations''
of~\(A_e\) are ``equivalent'' to ``representations'' of~\(B_e\).
Under some technical conditions, we construct a Fell
bundle~\((B_g^+)_{g\in G}\) over~\(G\) such that ``integrable''
``representations'' of~\(A\) are ``equivalent'' to
``representations'' of its section \Cstar\nb-algebra.  Here the
words in quotation marks must be interpreted
carefully to make this true.

A \emph{representation} of a \Star{}algebra~\(A\) on a Hilbert
\(D\)\nb-module~\(\Hilm\) is an algebra homomorphism~\(\pi\)
from~\(A\) to the algebra of \(D\)\nb-module endomorphisms of a
dense \(D\)\nb-submodule \(\Hilms\subseteq \Hilm\) with
\(\braket{\xi}{\pi(a)\eta} = \braket{\pi(a^*)\xi}{\eta}\) for all
\(\xi,\eta\in\Hilms\), \(a\in A\).  The representation induces a
graph topology on~\(\Hilms\).  We restrict to \emph{closed}
representations most of the time, that is, we require~\(\Hilms\) to
be complete in the graph topology.  The difference from usual
practice is that we consider representations on \emph{Hilbert
  modules} over \Cstar\nb-algebras.  A \emph{representation of a
  \Cstar\nb-algebra}~\(B\) on a Hilbert module~\(\Hilm\) is a
nondegenerate \Star{}homomorphism \(B\to\Bound(\Hilm)\),
where~\(\Bound(\Hilm)\) denotes the \Cstar\nb-algebra of adjointable
operators on~\(\Hilm\).

The notion of ``integrability'' for representations is a choice.
The class of all Hilbert space representations of a \Star{}algebra
may be quite wild.  Hence it is customary to limit the study to some
class of ``nice'' or ``integrable'' representations.  For instance,
for the universal enveloping algebra of the Lie algebra of a Lie
group~\(G\), we may call those representations ``integrable'' that
come from a unitary representation of~\(G\).  This example suggests
the name ``integrable'' representations.

In our theorem, a notion of integrability for representations of
\(A_e\subseteq A\) on all Hilbert modules over all
\Cstar\nb-algebras is fixed.  A representation of~\(A\) is called
integrable if its restriction to~\(A_e\) is integrable.  The
induction theorem describes the integrable representations of~\(A\)
in terms of integrable representations of~\(A_e\).  For instance,
if~\(A_e\) is finitely generated and commutative, then we may call a
representation~\(\pi\) on a Hilbert module integrable if the
closure~\(\cl{\pi(a)}\) is a regular, self-adjoint operator for each
\(a\in A_e\) with \(a=a^*\); all examples in
\cites{Savchuk-Schmudgen:Unbounded_induced, Dowerk-Savchuk:Induced}
are of this type.

An ``equivalence'' between the integrable representations of a
unital \Star{}algebra~\(A\) and the representations of a
\Cstar\nb-algebra~\(B\) is a family of bijections -- one for each
Hilbert module~\(\Hilm\) over each \Cstar\nb-algebra~\(D\) --
between the sets of integrable representations of~\(A\) and of
representations of~\(B\) on~\(\Hilm\); these bijections must be
compatible with isometric intertwiners and interior tensor products.
These properties require some more definitions.

First, an \emph{isometric intertwiner} between two representations
is a Hilbert module isometry -- not necessarily adjointable --
between the underlying Hilbert modules that restricts to a left
module map between the domains of the representations.  For an
equivalence between integrable representations of~\(A\) and
representations of~\(B\) we require an isometry to intertwine two
representations of~\(B\) if and only if it intertwines the
corresponding integrable representations of~\(A\).

Secondly, a \emph{\Cstar\nb-correspondence} from~\(D_1\) to~\(D_2\)
is a Hilbert \(D_2\)\nb-module~\(\Hilm[F]\) with a representation
of~\(D_1\).  Given such a correspondence and a Hilbert
\(D_1\)\nb-module~\(\Hilm\), the
interior tensor product \(\Hilm\otimes_{D_1} \Hilm[F]\) is a Hilbert
\(D_2\)\nb-module.  A representation of \(A\) or~\(B\) on~\(\Hilm\)
induces a representation on \(\Hilm\otimes_{D_1} \Hilm[F]\).  We
require our bijections between integrable representations of \(A\)
and representations of~\(B\) to be compatible with this interior
tensor product construction on representations.

We call~\(B\) a \emph{\Cstar\nb-hull for the integrable
  representations of~\(A\)} if the integrable representations
of~\(A\) are equivalent to the representations of~\(B\) as explained
above, that is, through a family of bijections compatible with
isometric intertwiners and interior tensor products.  The Induction
Theorem builds a \Cstar\nb-hull for the integrable representations
of~\(A\) using a \Cstar\nb-hull for the integrable representations
of~\(A_e\) and assuming a further mild technical condition, which we
explain below.

Many results of the general theory remain true if we only require
the equivalence of representations to be compatible with interior
tensor products and \emph{unitary} \Star{}intertwiners, that is,
isomorphisms of representations; we speak of a \emph{weak
  \Cstar\nb-hull} in this case.  The Induction Theorem, however,
fails for weak \Cstar\nb-hulls.  We show this by a counterexample.
Some results only need the class of integrable representations to
have some properties that are clearly necessary for the existence of
a \Cstar\nb-hull or weak \Cstar\nb-hull, but they do not need the
(weak) \Cstar\nb-hull itself.  This is formalised in our notions of
\emph{admissible} and \emph{weakly admissible} classes of
representations.

For example, let~\(A\) be commutative.  Let~\(\hat{A}\) be the space
of characters of~\(A\) with the topology of pointwise convergence.
If~\(\hat{A}\) is locally compact and~\(A\) is countably generated,
then \(\Cont_0(\hat{A})\) is a \Cstar\nb-hull for the integrable
representations of~\(A\) as defined above, that is, those
representations where each~\(\cl{\pi(a)}\) for \(a\in A\) with
\(a=a^*\) is regular and self-adjoint.  If, say, \(A= \C[x]\) with
\(x=x^*\), then the \Cstar\nb-hull is \(\Cont_0(\R)\).  Here the
equivalence of representations maps an integrable representation~\(\pi\)
of~\(\C[x]\) to the functional calculus homomorphism for the
regular, self-adjoint operator~\(\cl{\pi(x)}\).

If~\(\hat{A}\) is not locally compact, then the integrable
representations of~\(A\) defined above still form an admissible
class, but they have no \Cstar\nb-hull.  If, say, \(A\) is the
algebra of polynomials in countably many variables, then \(\hat{A} =
\R^\infty\), which is not locally compact.  The problem of
associating \Cstar\nb-algebras to this \Star{}algebra has recently
been studied by Grundling and
Neeb~\cite{Grundling-Neeb:Infinite_tensor}.  From our point of view,
this amounts to choosing a smaller class of ``integrable''
representations that does admit a \Cstar\nb-hull.

We have now explained the terms in quotation marks in our Induction
Theorem and how we approach the representation theory of
\Star{}algebras.  Most previous work focused either on
representations on Hilbert spaces or on single unbounded operators
on Hilbert modules.  Hilbert module representations occur both in
the assumptions and in the conclusions of the Induction Theorem, and
hence we cannot prove it without considering representations on
Hilbert modules throughout.  In addition, taking into account
Hilbert modules makes our \Cstar\nb-hulls unique.

Besides the Induction Theorem, the other main strand of this article
are Local--Global Principles, which aim at reducing the study of
integrability for representations on general Hilbert modules to
representations on Hilbert space.  We may use a state~\(\omega\) on
the coefficient \Cstar\nb-algebra~\(D\) of a Hilbert module~\(\Hilm\)
to complete~\(\Hilm\) to a Hilbert space.  Thus
a representation of~\(A\) on~\(\Hilm\) induces Hilbert space
representations for all states on~\(D\).  The \emph{Local--Global
  Principle} says that a representation of~\(A\) on~\(\Hilm\) is
integrable if and only if these induced Hilbert space
representations are integrable for all states; the \emph{Strong}
Local--Global Principle says the same with all states replaced by
all \emph{pure} states.  We took these names
from~\cite{Kaad-Lesch:Local_global}.  Earlier results of
Pierrot~\cite{Pierrot:Reguliers} show that the Strong Local--Global
Principle holds for any class of integrable representations that is
defined by certain types of conditions, such as the regularity and
self-adjointness of~\(\cl{\pi(a)}\) for certain \(a\in A\) with
\(a=a^*\).  For instance, this covers the integrable representations
of commutative \Star{}algebras and of universal enveloping algebras
of Lie algebras.

In all examples that we treat, the regularity of~\(\cl{\pi(a)}\)
for certain \(a\in A\)
is part of the definition of an integrable representation.  Other
elements of~\(A\)
may, however, act by irregular operators in some integrable
representations.  Thus affiliation and regularity are important to
study the integrable representations in concrete examples, but cannot
play a foundational role for the \emph{general} representation theory
of \Star{}algebras.

If~\(B\) is generated in the sense of
Woronowicz~\cite{Woronowicz:Cstar_generated} by some self-adjoint,
affiliated multipliers that belong to~\(A\), then it is a
\Cstar\nb-hull and the Strong Local--Global Principle holds (see
Theorem~\ref{the:Woronowicz_local-global}).  A counterexample shows
that this theorem breaks down if the generating affiliated
multipliers are not self-adjoint: both the Local--Global Principle
and compatibility with isometric intertwiners fail in the
counterexample.  So regularity without self-adjointness seems to be
too weak for many purposes.  The combination of regularity and
self-adjointness is an easier notion than regularity alone.  A
closed operator~\(T\) is regular and self-adjoint if and only if
\(T-\lambda\) is surjective for all \(\lambda\in\C\setminus\R\), if
and only if the Cayley transform of~\(T\) is unitary, if and only
if~\(T\) has a functional calculus homomorphism on~\(\Cont_0(\R)\).

\smallskip

Now we describe the Fell bundle in the Induction Theorem and, along
the way, the further condition besides compatibility with isometric
intertwiners that it needs.  Our input
data is a graded \Star{}algebra \(A=\bigoplus_{g\in G} A_g\) and a
\Cstar\nb-hull~\(B_e\) for~\(A_e\).  A
representation of~\(A\) is integrable if its restriction to~\(A_e\)
is integrable.  We seek a \Cstar\nb-hull for the integrable
representations of~\(A\).

As in~\cite{Savchuk-Schmudgen:Unbounded_induced}, we induce
representations from~\(A_e\) to~\(A\), and this requires a
positivity condition.  We call representations of~\(A_e\) that may
be induced to~\(A\) \emph{inducible}.  We describe a quotient
\Cstar\nb-algebra~\(B_e^+\) of~\(B_e\) that is a \Cstar\nb-hull for
the inducible, integrable representations of~\(A\).  It is the unit
fibre of our Fell bundle.

If a representation~\(\pi\) of~\(A\) is integrable, then its
restriction to~\(A_e\) is integrable and inducible.  Thus it
corresponds to a representation~\(\bar\pi_e^+\) of~\(B_e^+\).  The
identity correspondence on~\(B_e^+\) corresponds to a particular
(``universal'') inducible, integrable representation of~\(A_e\)
on~\(B_e^+\).  Its domain is a dense right ideal
\(\Hilms[B]_e^+\subseteq B_e^+\).  The operators
\(\pi(a)\bar\pi_e^+(b)\) on~\(\Hilm\) for \(a\in A_g\),
\(b\in\Hilms[B]_e^+\) are adjointable.  Their closed linear span is
the fibre~\(B_g^+\) of our Fell bundle at~\(g\) provided~\(\pi_e^+\)
is faithful.  The most difficult point is to prove \(B_e^+\cdot B_g^+
= B_g^+\) for all \(g\in G\); this easily implies \(B_g^+\cdot
B_h^+\subseteq B_{gh}^+\) and \((B_g^+)^* = B_{g^{-1}}^+\), so that
the subspaces \(B_g^+\subseteq \Bound(\Hilm)\) form a Fell bundle.

To prove \(B_e^+\cdot B_g^+ = B_g^+\), we need compatibility with
isometric intertwiners and that induction maps inducible, integrable
representations of~\(A_e\) to \emph{integrable} representations
of~\(A\).  Two counterexamples show that both
assumptions are necessary for the Induction Theorem.

Fell bundles are noncommutative partial dynamical systems.  More
precisely, a Fell bundle~\((B_g^+)_{g\in G}\)
over~\(G\)
is equivalent to an action of~\(G\)
on~\(B_e^+\)
by partial Morita--Rieffel equivalences; this is made precise
in~\cite{Buss-Meyer:Actions_groupoids}.  In the examples in
\cites{Savchuk-Schmudgen:Unbounded_induced, Dowerk-Savchuk:Induced},
the group~\(G\)
is almost always~\(\Z\);
the \Cstar\nb-algebras \(B_e\)
and hence~\(B_e^+\)
are commutative; and the resulting Fell bundle comes from a partial
action of~\(G\)
on the spectrum of~\(B_e^+\).
In these examples, the section \Cstar\nb-algebra is a partial crossed
product.  This may also be viewed as the groupoid \Cstar\nb-algebra of
the transformation groupoid for the partial action of~\(G\)
on the spectrum of~\(B_e^+\).
We show that the \Cstar\nb-hull~\(B\)
for the integrable representations of~\(A\)
is a \emph{twisted} groupoid \Cstar\nb-algebra of this transformation
groupoid whenever~\(B_e\)
is commutative.  We give some criteria when the twist is absent, and
examples where the twist occurs.  One way to insert such twists is by
Rieffel deformation, using a \(2\)\nb-cocycle
on the group~\(G\).
We show that Rieffel deformation is compatible with the construction
of \Cstar\nb-hulls.

We describe commutative and noncommutative \Cstar\nb-hulls for the
polynomial algebra~\(\C[x]\) in §\ref{sec:polynomials1}
and~§\ref{sec:polynomials2}; the noncommutative \Cstar\nb-hulls
for~\(\C[x]\) make
very good counterexamples.  We classify and study commutative
\Cstar\nb-hulls in~§\ref{sec:commutative_hulls}.  Many results about
them generalise easily to \emph{locally bounded} representations.
Roughly speaking, these are representations where the vectors on
which the representation acts by bounded operators form a core.  The
only \Star{}algebras for which we treat locally bounded
representations in some detail are the commutative ones.

Through the Induction Theorem, the representation theory of
commutative \Star{}algebras is important even for noncommutative
algebras because they may admit a grading by some group with
commutative unit fibre.  Many examples of this are treated in detail
in \cites{Savchuk-Schmudgen:Unbounded_induced,
  Dowerk-Savchuk:Induced}.  We discuss untwisted and twisted Weyl
algebras in finitely and infinitely many generators
in~§\ref{sec:Weyl_twisted}.  The twists involved are Rieffel
deformations.  Since these examples have commutative unit fibres,
the resulting \Cstar\nb-hulls are twisted groupoid
\Cstar\nb-algebras.  As it turns out, all twists of the relevant
groupoids are trivial, so that the twists do not change the
representation theory of the Weyl algebras up to equivalence.

I am grateful to Yuriy Savchuk for several discussions, which led me
to pursue this project and eliminated mistakes from early versions of
this article.  And I am grateful to the referee as well for several
useful suggestions.

\section{Representations by unbounded operators on Hilbert modules}
\label{sec:rep_Hilbert_module}

Let~\(A\) be a unital \Star{}algebra, \(D\) a \Cstar\nb-algebra,
and~\(\Hilm\) a Hilbert \(D\)\nb-module.  Our convention is that
inner products on Hilbert spaces and Hilbert modules are linear in the
second and conjugate-linear in the first variable.

\begin{definition}
  \label{def:rep_Hilbert_module}
  A \emph{representation} of~\(A\) on~\(\Hilm\) is a
  pair~\((\Hilms,\pi)\), where \(\Hilms\subseteq\Hilm\) is a dense
  \(D\)\nb-submodule and \(\pi\colon A\to\Endo_D(\Hilms)\) is a unital
  algebra homomorphism to the algebra of \(D\)\nb-module endomorphisms
  of~\(\Hilms\), such that
  \[
  \braket{\pi(a)\xi}{\eta}_D = \braket{\xi}{\pi(a^*)\eta}_D
  \qquad
  \text{for all }a\in A,\ \xi,\eta\in\Hilms.
  \]

  We call~\(\Hilms\) the \emph{domain} of the representation.  We
  may drop~\(\pi\) from our notation by saying that~\(\Hilms\) is an
  \(A,D\)-bimodule with the right module structure inherited
  from~\(\Hilm\), or we may drop~\(\Hilms\) because it is the common
  domain of the partial linear maps~\(\pi(a)\) on~\(\Hilm\) for all
  \(a\in A\).

  We equip~\(\Hilms\) with the \emph{graph topology}, which is
  generated by the \emph{graph norms}
  \[
  \norm{\xi}_a \defeq \norm{(\xi,\pi(a)\xi)}
  \defeq \norm{\braket{\xi}{\xi} + \braket{\pi(a)\xi}{\pi(a)\xi}}^{\nicefrac12}
  = \norm{\braket{\xi}{\pi(1+a^*a)\xi}}^{\nicefrac12}
  \]
  for \(a\in A\).  The representation is \emph{closed} if~\(\Hilms\)
  is complete in this topology.  A \emph{core} for~\((\Hilms,\pi)\) is
  an \(A,D\)-subbimodule of~\(\Hilms\) that is dense in~\(\Hilms\) in
  the graph topology.
\end{definition}

Definition~\ref{def:rep_Hilbert_module} for \(D=\C\) is the usual
definition of a representation of a \Star{}algebra on a Hilbert
space by unbounded operators.  This situation has been
studied extensively (see, for instance,
\cite{Schmudgen:Unbounded_book}).  For \(\Hilm=D\) with the
canonical Hilbert \(D\)\nb-module structure, we get representations
of~\(A\) by \emph{densely defined unbounded multipliers}.  The
domain of such a representation is a dense right ideal
\(\Hilms[D]\subseteq D\).  This situation is a special case of the
``compatible pairs'' defined by
Schmüdgen~\cite{Schmuedgen:Well-behaved}.

Given two norms \(p,q\), we write \(p \preceq q\) if there is a
scalar \(c>0\) with \(p \le c q\).

\begin{lemma}
  \label{lem:graph_norms_directed}
  The set of graph norms partially ordered by~\(\preceq\) is
  directed: for all \(a_1,\dotsc,a_n\in A\) there are \(b\in A\) and
  \(c\in\R_{>0}\) so that \(\norm{\xi}_{a_i} \le c\norm{\xi}_b\) for
  any representation \((\Hilms,\pi)\), any \(\xi\in\Hilms\), and
  \(i=1,\dotsc,n\).
\end{lemma}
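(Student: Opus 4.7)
The plan is to exhibit an explicit witness $b\in A$ that dominates every $\norm{\xi}_{a_i}$ with constant $c=1$. The natural candidate, built straight from the $a_i$'s, is
\[
b \defeq 1 + \sum_{i=1}^n a_i^*a_i \in A,
\]
which is manifestly self-adjoint. The strategy is to produce algebraic identities in~\(A\) that become pointwise inequalities of positive elements of~\(D\) once we plug them into $\braket{\xi}{\pi(\cdot)\xi}$, and then to pass to norms via the order-preservation of the \Cstar\nb-norm on positive elements.

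First I would observe that for each~\(i\),
\[
b - (1 + a_i^*a_i) \;=\; \sum_{j\ne i} a_j^*a_j,
\]
so that
\[
\braket{\xi}{\pi(b)\xi} - \braket{\xi}{\pi(1+a_i^*a_i)\xi}
= \sum_{j\ne i}\braket{\pi(a_j)\xi}{\pi(a_j)\xi} \;\ge\; 0
\]
as a positive element of~\(D\). Since $\braket{\xi}{\pi(1+a_i^*a_i)\xi}=\braket{\xi}{\xi}+\braket{\pi(a_i)\xi}{\pi(a_i)\xi}$ is itself positive, monotonicity of the \Cstar\nb-norm on the positive cone gives $\norm{\xi}_{a_i}^2 \le \norm{\braket{\xi}{\pi(b)\xi}}$.

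Next I would compare $\braket{\xi}{\pi(b)\xi}$ with $\norm{\xi}_b^2 = \norm{\braket{\xi}{\pi(1+b^2)\xi}}$ by completing the square. Since $b=b^*$ and $A$ is unital (so $\tfrac12\in A$),
\[
1 + b^2 - b \;=\; \bigl(b-\tfrac12\bigr)^*\bigl(b-\tfrac12\bigr) + \tfrac34,
\]
so that
\[
\braket{\xi}{\pi(1+b^2)\xi} - \braket{\xi}{\pi(b)\xi}
= \braket{\pi(b-\tfrac12)\xi}{\pi(b-\tfrac12)\xi} + \tfrac34\braket{\xi}{\xi} \;\ge\; 0.
\]
Again by monotonicity of the \Cstar\nb-norm on positive elements, this yields $\norm{\braket{\xi}{\pi(b)\xi}}\le\norm{\xi}_b^2$. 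Chaining the two estimates gives $\norm{\xi}_{a_i}\le\norm{\xi}_b$ uniformly in $i$, the representation, and $\xi$, so the claim holds with $c=1$.

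There is no real obstacle here; the only subtlety worth flagging is making sure the inequalities are derived as positive-element inequalities in~\(D\) before taking norms, since the norm itself is not order-preserving on arbitrary self-adjoint elements. Using the two algebraic identities above (one for removing the extra $a_j^*a_j$ terms, one from completing the square) avoids any issue, and both identities use nothing beyond the fact that $A$ is a unital \Star{}algebra over~\(\C\).
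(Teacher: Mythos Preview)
Your proof is correct and follows essentially the same approach as the paper's: choose \(b\) built from \(\sum_j a_j^* a_j\), compare \(1+a_i^* a_i\) to~\(b\) via the leftover sum of squares, and then compare~\(b\) to \(1+b^2\) by completing the square \((b-\tfrac12)^2 + \tfrac34\). The only cosmetic difference is that you include the extra~\(+1\) in~\(b\), which buys you the constant \(c=1\) instead of the paper's \(c=\tfrac54\); the underlying algebraic identities and the passage through positive-element inequalities in~\(D\) are identical.
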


\begin{proof}
  Let \(b = \sum_{j=1}^n a_j^* a_j\).  The following computation
  implies \(\norm{\xi}_{a_i} \le \nicefrac54 \norm{\xi}_b\):
  \begin{align*}
    0&\le
    \braket{\xi}{\pi(1+a_i^* a_i)\xi}
    \\&\le \braket{\xi}{\pi(1+a_i^* a_i)\xi}
    + \sum_{i\neq j} \braket{\pi(a_j)\xi}{\pi(a_j)\xi}
    + \braket{\pi(b-\nicefrac12)\xi}{\pi(b-\nicefrac12) \xi}
    \\&= \braket{\xi}{\pi(1+b + (b-\nicefrac12)^2) \xi}
    = \braket{\xi}{\pi(\nicefrac54 + b^2) \xi}
    \le \nicefrac54 \braket{\xi}{\pi(1 + b^* b) \xi}.\qedhere
  \end{align*}
\end{proof}

\begin{definition}[\cite{Pal:Regular},
  \cite{Lance:Hilbert_modules}*{Chapter 9}]
  \label{def:regular}
  A densely defined operator~\(t\) on a Hilbert module~\(\Hilm\) is
  \emph{semiregular} if its adjoint is also densely defined; it is
  \emph{regular} if it is closed, semiregular and \(1+t^*t\) has dense
  range.  An \emph{affiliated multiplier} of a
    \Cstar\nb-algebra~\(D\) is a regular operator on~\(D\) viewed as
  a Hilbert \(D\)\nb-module.
\end{definition}

The closability assumption in \cite{Pal:Regular}*{Definition 2.1.(ii)}
is redundant by \cite{Kaad-Lesch:Local_global}*{Lemma 2.1}.
Regularity was introduced by Baaj and
Julg~\cite{Baaj-Julg:non_bornes}, affiliation by
Woronowicz~\cite{Woronowicz:Unbounded_affiliated}.

\begin{remark}
  \label{rem:affiliated}
  Let~\((\Hilms,\pi)\) be a representation of~\(A\) on~\(\Hilm\) and
  let \(a\in A\).  The operator~\(\pi(a)\) is automatically
  semiregular because~\(\pi(a)^*\) is defined on~\(\Hilms\).  The
  closure~\(\overline{\pi(a)}\) of~\(\pi(a)\) need not be regular.
  The regularity of~\(\cl{\pi(a)}\) for \emph{some} \(a\in A\) is
  often assumed in the definition of \emph{integrable}
  representations.  For non-commutative~\(A\), we should expect
  that~\(\overline{\pi(a)}\) is irregular for some \(a\in A\) even
  if~\(\pi\) is integrable.  For instance, a remark after Corollaire
  1.27 in~\cite{Pierrot:Reguliers} says that this happens for
  certain symmetric elements in the universal enveloping
  algebra~\(U(\Lg)\) for a simply connected Lie group~\(G\): they
  act by irregular operators in certain representations that
  integrate to unitary representations of~\(G\).
\end{remark}

The usual norm on~\(\Hilm\) is the graph norm for \(0\in A\).  Hence
the inclusion map \(\Hilms\injto\Hilm\) is continuous for the graph
topology on~\(\Hilms\) and extends continuously to the
completion~\(\cl{\Hilms}\) of~\(\Hilms\) in the graph topology.

\begin{proposition}
  \label{pro:closure_rep}
  The canonical map \(\cl{\Hilms}\to\Hilm\) is injective, and its
  image is
  \begin{equation}
    \label{eq:domain_closure}
    \cl{\Hilms} = \bigcap_{a\in A} \dom \cl{\pi(a)}.
  \end{equation}
  Thus~\((\Hilms,\pi)\) is closed if and only if \(\Hilms =
  \bigcap_{a\in A} \dom \cl{\pi(a)}\).  Each \(\pi(a)\) extends
  uniquely to a continuous operator~\(\cl{\pi(a)}\)
  on~\(\cl{\Hilms}\).  This defines a closed
  representation~\((\cl{\Hilms},\cl{\pi})\) of~\(A\), called the
  \emph{closure} of~\((\Hilms,\pi)\).
\end{proposition}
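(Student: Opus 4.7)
The plan is to construct the canonical map $\cl{\Hilms}\to\Hilm$, prove its injectivity, identify its image with $\bigcap_{a\in A}\dom\cl{\pi(a)}$, and finally verify that the resulting extension is a closed representation. Since $\norm{\cdot}_0=\norm{\cdot}$ is one of the graph norms, any net in $\Hilms$ that is Cauchy in the graph topology is also Cauchy in $\Hilm$, so the inclusion $\Hilms\injto\Hilm$ extends continuously to a linear map $\cl{\Hilms}\to\Hilm$.

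For injectivity I would show that any net $(\xi_\lambda)$ in $\Hilms$ which is graph-Cauchy and satisfies $\xi_\lambda\to 0$ in $\Hilm$ already satisfies $\pi(a)\xi_\lambda\to 0$ in $\Hilm$ for every $a\in A$. The Cauchy condition in $\norm{\cdot}_a$ produces a limit $\eta_a\in\Hilm$ of the net $\pi(a)\xi_\lambda$. Testing against an arbitrary $\zeta\in\Hilms$ with the $D$\nb-valued inner product and using the formal adjointness from Definition~\ref{def:rep_Hilbert_module} gives
\[
\braket{\eta_a}{\zeta}_D
= \lim_\lambda \braket{\pi(a)\xi_\lambda}{\zeta}_D
= \lim_\lambda \braket{\xi_\lambda}{\pi(a^*)\zeta}_D
= 0,
\]
and density of $\Hilms$ in $\Hilm$ forces $\eta_a=0$. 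Thus $\norm{\xi_\lambda}_a\to 0$ for every $a$, so the net already vanishes in $\cl{\Hilms}$, proving that the map $\cl{\Hilms}\to\Hilm$ has trivial kernel.

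The inclusion $\cl{\Hilms}\subseteq\bigcap_a\dom\cl{\pi(a)}$ is immediate from the definition of operator closure, because every class in $\cl{\Hilms}$ is represented by a net whose image under each $\pi(a)$ converges in $\Hilm$. For the reverse inclusion, given $\xi$ in the intersection, the challenge is to build a \emph{single} net in $\Hilms$ that converges to $\xi$ in every graph norm simultaneously; a priori one only obtains one approximating sequence per element $a$. This is exactly where Lemma~\ref{lem:graph_norms_directed} enters: index a net by pairs $(F,\varepsilon)$ with $F\subseteq A$ finite and $\varepsilon>0$, choose $b_F\in A$ dominating the graph norms indexed by $F$, and use $\xi\in\dom\cl{\pi(b_F)}$ to select $\eta_{F,\varepsilon}\in\Hilms$ with $\norm{\xi-\eta_{F,\varepsilon}}_{b_F}$ arbitrarily small. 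Directedness then delivers smallness in every $\norm{\cdot}_a$ with $a\in F$, so the net is graph-Cauchy and its equivalence class in $\cl{\Hilms}$ maps to $\xi$.

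The extension and closedness of $\cl{\pi}$ are then routine. Each $\pi(a)$ is graph-continuous on $\Hilms$, since the identity $\norm{\pi(a)\xi}_b^2=\norm{\braket{\xi}{\pi(a^*a+a^*b^*ba)\xi}}$ combined with Lemma~\ref{lem:graph_norms_directed} applied to $\{a,ba\}$ bounds it by a scalar multiple of some single graph norm $\norm{\xi}_{b'}^2$. Hence $\pi(a)$ extends uniquely to a $D$\nb-linear endomorphism $\cl{\pi(a)}$ of $\cl{\Hilms}$, and the algebra homomorphism and formal adjointness axioms pass to $\cl{\pi}$ by continuity of each $\cl{\pi(a)}$ and of the $D$\nb-valued inner product. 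Closedness follows because the graph norms defined from $\cl{\pi}$ on $\cl{\Hilms}$ restrict to the original graph norms on $\Hilms$, exhibiting $\cl{\Hilms}$ as its own graph-topological completion. The main technical obstacle is the amalgamation step in the image inclusion, where Lemma~\ref{lem:graph_norms_directed} plays an essential role; the injectivity argument, although short, depends just as crucially on the formal adjointness built into the representation axioms.
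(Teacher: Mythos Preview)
Your proof is correct and follows essentially the same strategy as the paper's. The only notable difference is that the paper outsources the injectivity step by citing \cite{Kaad-Lesch:Local_global}*{Lemma 2.1} for closability of semiregular operators and then phrases the identification of the image as a projective limit of single-graph-norm completions, whereas you prove injectivity directly from the formal adjointness axiom and build the approximating net by hand; your version is thus slightly more self-contained.
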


\begin{proof}
  The operator~\(\pi(a)\) for \(a\in A\) is semiregular and hence
  closable by \cite{Kaad-Lesch:Local_global}*{Lemma 2.1}.
  Equivalently, the canonical map from the completion of~\(\Hilms\)
  in the graph norm for~\(a\) to~\(\Hilm\) is injective.  Its image
  is \(\dom \cl{\pi(a)}\), the domain of the closure of~\(\pi(a)\).
  The graph norms for \(a\in A\) form a directed set that defines
  the graph topology on~\(\Hilms\).  So the completion of~\(\Hilms\)
  in the graph topology is the projective limit of the graph norm
  completions for \(a\in A\).  Since each of these graph norm
  completions embeds into~\(\Hilm\), the projective limit in
  question is just an intersection in~\(\Hilm\),
  giving~\eqref{eq:domain_closure}.  For Hilbert space representations,
  this is \cite{Schmudgen:Unbounded_book}*{Proposition 2.2.12}.

  The operators \(\pi(a)\in\Endo_D(\Hilms)\) for \(a\in A\) are
  continuous in the graph topology.  Thus they extend uniquely to
  continuous linear operators \(\cl{\pi}(a)\in\Endo_D(\cl{\Hilms})\).
  These are again \(D\)\nb-linear and the map~\(\cl{\pi}\) is linear
  and multiplicative because extending operators to a completion is
  additive and functorial.  The set of
  \((\xi,\eta)\in\cl{\Hilms}\times\cl{\Hilms}\) with
  \(\braket{\xi}{\cl{\pi}(a)\eta} =
  \braket{\cl{\pi}(a^*)\xi}{\eta}\) for all \(a\in A\) is closed in
  the graph topology and contains \(\Hilms\times\Hilms\), which is
  dense in \(\cl{\Hilms}\times\cl{\Hilms}\).  Hence this equation
  holds for all \(\xi,\eta\in\cl{\Hilms}\).
  So~\((\cl{\Hilms},\cl{\pi})\) is a representation of~\(A\)
  on~\(\Hilm\).  The graph topology on~\(\cl{\Hilms}\)
  for~\(\cl{\pi}\) extends the graph topology on~\(\Hilms\)
  for~\(\pi\) and hence is complete.  So \((\cl{\Hilms},\cl{\pi})\)
  is a closed representation.
\end{proof}

We shall need a generalisation of~\eqref{eq:domain_closure} that
replaces~\(A\) by a sufficiently large subset.

\begin{definition}
  \label{def:strong_generating_set}
  A subset \(S\subseteq A\)
  is called a \emph{strong generating} set if it generates~\(A\)
  as an algebra and the graph norms for \(a\in S\)
  generate the graph topology in any representation.  That is, for any
  representation on a Hilbert module, any vector~\(\xi\)
  in its domain and any \(a\in A\),
  there are \(c\ge1\)
  in~\(\R\)
  and \(b_1,\dotsc,b_n\in S\)
  with \(\norm{\xi}_a \le c \sum_{i=1}^n \norm{\xi}_{b_i}\).
\end{definition}

An estimate \(\norm{\xi}_a \le c \sum_{i=1}^n \norm{\xi}_{b_i}\)
is usually shown by finding \(d_1,\dotsc,d_m\in A\)
with
\(a^* a + \sum_{j=1}^m d_j^* d_j = c\cdot \sum_{i=1}^n b_i^* b_i\),
compare the proof of Lemma~\ref{lem:graph_norms_directed}.

\begin{example}
  \label{exa:strong_generating}
  Let \(A_h \defeq \{a\in A\mid a=a^*\}\) be the set of
  \emph{symmetric} elements.  Call an element of~\(A\)
  \emph{positive} if it is a sum of elements of the form~\(a^* a\).
  The positive elements and, \emph{a fortiori}, the symmetric
  elements form strong generating sets for~\(A\).  Any element is of
  the form \(a_1+\ima a_2\) with \(a_1,a_2\in A_h\), and
  \[
  a = \left(\frac{a+1}{2}\right)^2 - \left(\frac{a-1}{2}\right)^2
  \]
  for \(a\in A_h\).  Thus the positive elements generate~\(A\) as an
  algebra.  The graph norms for positive elements generate the graph
  topology by the proof of Lemma~\ref{lem:graph_norms_directed}.
\end{example}

\begin{proposition}
  \label{pro:equality_if_closure_equal}
  Let \(S\subseteq A\) be a strong generating set.  Two closed
  representations \((\Hilms_1,\pi_1)\) and~\((\Hilms_2,\pi_2)\)
  of~\(A\) on the same Hilbert module~\(\Hilm\) are equal if and
  only if \(\cl{\pi_1(a)} = \cl{\pi_2(a)}\) for all \(a\in S\).
\end{proposition}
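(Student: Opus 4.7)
The ``only if'' direction is trivial, so the work is in showing that the two closed representations coincide when $\cl{\pi_1(a)} = \cl{\pi_2(a)}$ for every $a\in S$.

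My first step would be to upgrade equation~\eqref{eq:domain_closure} from Proposition~\ref{pro:closure_rep} to: for any closed representation $(\Hilms,\pi)$,
\[
\Hilms = \bigcap_{a\in S} \dom \cl{\pi(a)}.
\]
The argument is the same projective limit computation as in Proposition~\ref{pro:closure_rep}: the graph topology on~$\Hilms$ is, by the defining property of a strong generating set, already generated by the graph norms indexed by $a\in S$, and each individual graph norm completion embeds into~$\Hilm$ as $\dom \cl{\pi(a)}$ because $\pi(a)$ is closable by \cite{Kaad-Lesch:Local_global}*{Lemma 2.1}. Applying this to $\pi_1$ and $\pi_2$, the assumption $\cl{\pi_1(a)} = \cl{\pi_2(a)}$ for $a\in S$ gives $\Hilms_1=\Hilms_2$ immediately; call this common domain~$\Hilms$.

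Next I would check that $\pi_1(b)=\pi_2(b)$ on~$\Hilms$ for every $b\in A$. For $a\in S$ this follows from $\cl{\pi_1(a)}=\cl{\pi_2(a)}$, since both $\pi_1(a)\xi$ and $\pi_2(a)\xi$ agree with the common closure applied to $\xi\in\Hilms$. Because~$S$ generates~$A$ as an algebra, any $b\in A$ is a linear combination of products $a_1\cdots a_n$ with $a_j\in S$. I would induct on~$n$: if $\pi_1$ and $\pi_2$ agree on a word of length~$n-1$, then for $\xi\in\Hilms$ we have $\pi_1(a_n)\xi=\pi_2(a_n)\xi\in\Hilms$ (domains are invariant), so applying the inductive hypothesis to this vector and using linearity closes the induction.

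The only mildly delicate point is ensuring that, at each step of the induction, the intermediate vector $\pi_i(a_n)\xi$ stays inside the common domain~$\Hilms$ rather than merely inside $\dom\cl{\pi_i(a_n)}$; this is automatic because $\Hilms_i$ is, by definition, invariant under $\pi_i(a)$ for every $a\in A$ and the restriction of $\cl{\pi_i(a_n)}$ to~$\Hilms$ coincides with $\pi_i(a_n)$. No further obstacle appears, so the proposition follows. The only nontrivial ingredient is really the upgrade of~\eqref{eq:domain_closure} to a strong generating set, which is exactly what the definition of such a set is designed to supply.
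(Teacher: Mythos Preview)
Your proposal is correct and follows essentially the same route as the paper's proof: establish \(\Hilms=\bigcap_{a\in S}\dom\cl{\pi(a)}\) for a closed representation (the paper records this as equation~\eqref{eq:domain_strong_generating_set}), deduce \(\Hilms_1=\Hilms_2\), then use that \(S\) generates \(A\) algebraically together with domain invariance to propagate \(\pi_1(a)=\pi_2(a)\) from \(a\in S\) to all of~\(A\). The only minor difference is that the paper is a bit more explicit about passing to finite sums \(\sum_{k=1}^n\norm{\xi}_{b_k}\) of graph norms in order to obtain a directed family before taking the projective limit, whereas you invoke the projective limit description somewhat more tersely; the outcome is the same.
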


\begin{proof}
  One direction is trivial.  To prove the non-trivial direction,
  assume \(\cl{\pi_1(a)} = \cl{\pi_2(a)}\) for all \(a\in S\).
  Let \((\Hilms,\pi)= (\Hilms_i,\pi_i)\) for \(i=1,2\).  The
  completion of~\(\Hilms\) for the graph norm of~\(a\) is \(\dom
  \cl{\pi(a)}\), compare the proof of Proposition~\ref{pro:closure_rep}.
  Hence the completion of~\(\Hilms\) in the sum of graph norms
  \(\sum_{k=1}^n \norm{\xi}_{b_k}\) for \(b_1,\dotsc,b_n\in S\) is
  \(\bigcap_{k=1}^n \dom \cl{\pi(b_k)}\).  These sums of graph norms
  for \(b_1,\dotsc,b_n\in S\) form a directed set that generates the
  graph topology on~\(\Hilms\).  Hence
  \begin{equation}
    \label{eq:domain_strong_generating_set}
    \cl{\Hilms} = \bigcap_{a\in S} \dom \cl{\pi(a)},
  \end{equation}
  compare the proof of~\eqref{eq:domain_closure}.  So
  \(\Hilms_1=\Hilms_2\).  Moreover, \(\pi_1(a) =
  \cl{\pi_1(a)}|_{\Hilms_1} = \cl{\pi_2(a)}|_{\Hilms_2} = \pi_2(a)\)
  for all \(a\in S\).  Since~\(S\) generates~\(A\) as an algebra and
  \(\pi_i(A)\Hilms_i\subseteq \Hilms_i\), this implies
  \(\pi_1=\pi_2\).
\end{proof}

Proposition~\ref{pro:equality_if_closure_equal} may fail for
generating sets that are not strong, see
Example~\ref{exa:x_not_strong_generator}.

\begin{corollary}
  \label{cor:bounded_rep}
  Let~\(S\) be a strong generating set of~\(A\) and
  let~\((\Hilms,\pi)\) be a closed
  representation of~\(A\) with \(\dom \cl{\pi(a)} = \Hilm\) for each
  \(a\in S\).  Then \(\Hilms=\Hilm\) and~\(\pi\) is a
  \Star{}homomorphism to the \Cstar\nb-algebra \(\Bound(\Hilm)\) of
  adjointable operators on~\(\Hilm\).
\end{corollary}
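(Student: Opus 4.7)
The plan is to reduce everything to formula \eqref{eq:domain_strong_generating_set} from the proof of Proposition~\ref{pro:equality_if_closure_equal} and then invoke the closed graph theorem on the Banach space~\(\Hilm\).

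First, since~\((\Hilms,\pi)\) is closed and~\(S\) is a strong generating set, the argument that established~\eqref{eq:domain_strong_generating_set} gives \(\Hilms = \bigcap_{a\in S} \dom \cl{\pi(a)}\).  The hypothesis \(\dom \cl{\pi(a)} = \Hilm\) for every \(a\in S\) immediately yields \(\Hilms = \Hilm\).  In particular, \(\pi(a)\) is defined on all of~\(\Hilm\) for every \(a\in A\), not just for \(a\in S\).

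Next I would upgrade each \(\pi(a)\) for \(a\in S\) to a bounded adjointable operator.  Each~\(\cl{\pi(a)}\) is a closed \(D\)\nb-linear operator on the Banach space~\(\Hilm\) whose domain is all of~\(\Hilm\), so the closed graph theorem gives \(\cl{\pi(a)}\in\Bound(\Hilm)\) as a bounded linear map; moreover, \(\pi(a^*)\) is also defined on \(\Hilms=\Hilm\) and satisfies \(\braket{\pi(a)\xi}{\eta} = \braket{\xi}{\pi(a^*)\eta}\) for all \(\xi,\eta\in\Hilm\), so~\(\pi(a^*)\) is a formal adjoint of~\(\pi(a)\) defined everywhere.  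Hence~\(\pi(a)\) is adjointable with adjoint~\(\pi(a^*)\), and in particular \(\pi(a)\in\Bound(\Hilm)\) for all \(a\in S\).

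Finally, because~\(S\) generates~\(A\) as an algebra and~\(\pi\) is a unital algebra homomorphism, every \(\pi(a)\) with \(a\in A\) is a polynomial in the bounded adjointable operators \(\pi(s)\), \(s\in S\), and is therefore bounded and adjointable.  Thus \(\pi\) factors through \(\Bound(\Hilm)\), and the symmetry condition \(\braket{\pi(a)\xi}{\eta} = \braket{\xi}{\pi(a^*)\eta}\) on \(\Hilms=\Hilm\) shows that~\(\pi\) is a \Star{}homomorphism \(A\to\Bound(\Hilm)\).  The main (and really only) delicate point is the appeal to the closed graph theorem, which requires the observation that Hilbert \(D\)\nb-modules are Banach spaces, so that a closed everywhere-defined linear map between them is automatically bounded.
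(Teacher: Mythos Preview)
Your proof is correct, but it takes an unnecessary detour compared with the paper's argument. Both proofs start by invoking~\eqref{eq:domain_strong_generating_set} to obtain \(\Hilms=\Hilm\). After that, the paper observes directly that for \emph{every} \(a\in A\) (not only \(a\in S\)) the operator~\(\pi(a^*)\) is now defined on all of~\(\Hilm\) and satisfies \(\pi(a^*)\subseteq\pi(a)^*\); hence \(\pi(a)\) is adjointable with adjoint \(\pi(a^*)\), and adjointable operators on Hilbert modules are automatically bounded. This yields \(\pi(a)\in\Bound(\Hilm)\) in one stroke, with no need for the closed graph theorem or the intermediate step of treating \(a\in S\) first and then extending by algebra generation. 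Your adjointability argument in the second paragraph already contains everything that is needed; the appeal to the closed graph theorem that you flag as the ``main delicate point'' is in fact redundant.
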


\begin{proof}
  Equation~\eqref{eq:domain_strong_generating_set} gives
  \(\Hilms=\Hilm\).  Since \(\pi(a^*)\subseteq \pi(a)^*\)
  and~\(\pi(a^*)\) is defined everywhere, it is adjoint
  to~\(\pi(a)\).  So \(\pi(a)\in\Bound(\Hilm)\) and~\(\pi\) is a
  \Star{}homomorphism to~\(\Bound(\Hilm)\).
\end{proof}

\begin{lemma}
  \label{lem:Cstar-rep_bounded}
  Let~\(A\) be a unital \Cstar\nb-algebra.  Any closed
  representation of~\(A\) on~\(\Hilm\) has domain \(\Hilms=\Hilm\)
  and is a unital \Star{}homomorphism to~\(\Bound(\Hilm)\).
\end{lemma}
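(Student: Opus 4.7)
The plan is to reduce to Corollary~\ref{cor:bounded_rep} by showing that, for each \(a\in A\), the operator~\(\pi(a)\) is already bounded on~\(\Hilms\), with bound~\(\norm{a}\); once this is established, the closure~\(\cl{\pi(a)}\) must have domain equal to all of~\(\Hilm\), and since all of~\(A\) is trivially a strong generating set, the corollary will give \(\Hilms=\Hilm\) and that~\(\pi\) is a unital \Star{}homomorphism into \(\Bound(\Hilm)\).

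The boundedness estimate will come from the C*-algebra structure of~\(A\). Since~\(A\) is a unital \Cstar\nb-algebra and \(a^*a\) is positive with \(\norm{a^*a}=\norm{a}^2\), the element \(\norm{a}^2\cdot 1-a^*a\) is positive in~\(A\) and hence equals~\(c^*c\) for some \(c\in A\). Applying~\(\pi\) (which is unital) gives \(\pi(a^*a)=\norm{a}^2\cdot \id_{\Hilms}-\pi(c^*c)\), so for any \(\xi\in\Hilms\),
\[
\braket{\xi}{\pi(a^*a)\xi}_D
=\norm{a}^2\braket{\xi}{\xi}_D-\braket{\pi(c)\xi}{\pi(c)\xi}_D.
\]
Both sides are positive in~\(D\): the left-hand side equals \(\braket{\pi(a)\xi}{\pi(a)\xi}_D\ge 0\), and the subtracted term on the right is also positive. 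Therefore \(0\le\braket{\pi(a)\xi}{\pi(a)\xi}_D\le \norm{a}^2\braket{\xi}{\xi}_D\) in~\(D\), and taking norms yields \(\norm{\pi(a)\xi}\le\norm{a}\norm{\xi}\).

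So each \(\pi(a)\) is bounded on the dense submodule~\(\Hilms\), and its unique continuous extension to~\(\Hilm\) coincides with the closure~\(\cl{\pi(a)}\); in particular \(\dom\cl{\pi(a)}=\Hilm\) for every \(a\in A\). The set \(S=A\) is trivially a strong generating set in the sense of Definition~\ref{def:strong_generating_set}. Corollary~\ref{cor:bounded_rep}, applied to the closed representation~\((\Hilms,\pi)\) and this~\(S\), now gives \(\Hilms=\Hilm\) and shows that~\(\pi\) is a \Star{}homomorphism into~\(\Bound(\Hilm)\); unitality is immediate from the definition of a representation. There is no genuine obstacle here — the one point that has to be handled carefully is the use of positivity in the auxiliary C*-algebra~\(D\) to convert the operator inequality into the scalar norm estimate, but this is standard once one writes \(\norm{a}^2\cdot 1-a^*a\) as~\(c^*c\) inside~\(A\).
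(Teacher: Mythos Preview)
Your proof is correct and follows essentially the same route as the paper: both write \(\norm{a}^2\cdot 1 - a^*a = c^*c\) in the \Cstar\nb-algebra~\(A\) and use this to bound \(\norm{\pi(a)\xi}\le\norm{a}\norm{\xi}\). The only cosmetic difference is that the paper concludes directly by observing that the graph topology coincides with the norm topology (so closed and dense force \(\Hilms=\Hilm\)), whereas you package this last step into an appeal to Corollary~\ref{cor:bounded_rep}.
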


\begin{proof}
  Let \(a\in A\).  There are a positive scalar \(C>0\) and \(b\in A\)
  with \(a^*a+b^*b = C\); say, take \(C=\norm{a}^2\) and
  \(b=\sqrt{C-a^*a}\).  Then
  \[
  \braket{\pi(a)\xi}{\pi(a)\xi}
  \le \braket{\pi(a)\xi}{\pi(a)\xi} + \braket{\pi(b)\xi}{\pi(b)\xi}
  = \braket{\xi}{\pi(a^*a+b^*b)\xi}
  = C\braket{\xi}{\xi}
  \]
  for all \(\xi\in\Hilms\).  Thus the graph topology on~\(\Hilms\)
  is equivalent to the norm topology on~\(\Hilm\).  Hence
  \(\Hilm=\Hilms\) for any closed representation.
\end{proof}

An \emph{isometry} \(I\colon \Hilm_1\injto\Hilm_2\)
between two Hilbert \(D\)\nb-modules \(\Hilm_1\)
and~\(\Hilm_2\)
is a right \(D\)\nb-module
map with \(\braket{I\xi_1}{I\xi_2} = \braket{\xi_1}{\xi_2}\)
for all \(\xi_1,\xi_2\in\Hilm_1\).

\begin{definition}
  \label{def:isometric_intertwiner}
  Let \((\Hilms_1,\pi_1)\)
  and \((\Hilms_2,\pi_2)\)
  be representations on Hilbert \(D\)\nb-modules
  \(\Hilm_1\)
  and~\(\Hilm_2\),
  respectively.  An \emph{isometric intertwiner} between them is an
  isometry \(I\colon \Hilm_1\injto\Hilm_2\)
  with \(I(\Hilms_1)\subseteq \Hilms_2\)
  and \(I\circ\pi_1(a) (\xi) = \pi_2(a) \circ I (\xi)\)
  for all \(a\in A\),
  \(\xi\in\Hilms_1\);
  equivalently, \(I\circ\pi_1(a)\subseteq \pi_2(a)\circ I\)
  for all \(a\in A\),
  that is, the graph of~\(\pi_2(a)\circ I\)
  contains the graph of~\(I\circ \pi_1(a)\).
  We neither ask~\(I\)
  to be adjointable nor \(I(\Hilms_1)=\Hilms_2\).
  Let \(\Rep(A,D)\)
  be the category with closed representations of~\(A\)
  on Hilbert \(D\)\nb-modules
  as objects, isometric intertwiners as arrows, and the usual
  composition.  The unit arrow on~\((\Hilms,\pi)\)
  is the identity operator on~\(\Hilm\).
\end{definition}

\begin{lemma}
  \label{lem:closure_functorial}
  Let \((\Hilms_1,\pi_1)\) and \((\Hilms_2,\pi_2)\) be
  representations on Hilbert \(D\)\nb-modules \(\Hilm_1\)
  and~\(\Hilm_2\), respectively, and let \(I\colon
  \Hilm_1\injto\Hilm_2\) be an isometric intertwiner.  Then~\(I\) is
  also an intertwiner between the closures of \((\Hilms_1,\pi_1)\)
  and \((\Hilms_2,\pi_2)\).
\end{lemma}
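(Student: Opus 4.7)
The plan is to pass from the algebraic intertwining identity on~$\Hilms_1$ to the corresponding identity on $\cl{\Hilms_1}$ by approximation in the graph topology, using two inputs already available: the characterisation $\cl{\Hilms_i}=\bigcap_{a\in A}\dom\cl{\pi_i(a)}$ from Proposition~\ref{pro:closure_rep}, and the fact that every~$\pi_2(a)$ is closable (semiregular, by Remark~\ref{rem:affiliated} and \cite{Kaad-Lesch:Local_global}*{Lemma 2.1}). The isometry~$I$, even though not assumed adjointable, is bounded in norm, hence norm-continuous; this is the only regularity of~$I$ we shall need.

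First, I would show the ``single operator'' version: for each fixed $a\in A$,
\[
I\bigl(\dom\cl{\pi_1(a)}\bigr)\subseteq \dom\cl{\pi_2(a)}
\qquad\text{and}\qquad
I\circ\cl{\pi_1(a)}\subseteq \cl{\pi_2(a)}\circ I.
\]
Given $\xi\in\dom\cl{\pi_1(a)}$, pick a net (or sequence) $\xi_n\in\Hilms_1$ with $\xi_n\to\xi$ and $\pi_1(a)\xi_n\to\cl{\pi_1(a)}\xi$ in norm. Then $I\xi_n\to I\xi$ by norm-continuity of~$I$, and
\[
\pi_2(a)(I\xi_n) \;=\; I\bigl(\pi_1(a)\xi_n\bigr)\;\longrightarrow\; I\cl{\pi_1(a)}\xi,
\]
again by continuity of~$I$. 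Because~$\pi_2(a)$ is closable, this forces $I\xi\in\dom\cl{\pi_2(a)}$ with $\cl{\pi_2(a)}I\xi = I\cl{\pi_1(a)}\xi$.

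Next, I intersect over $a\in A$. For $\xi\in\cl{\Hilms_1}=\bigcap_{a\in A}\dom\cl{\pi_1(a)}$, the previous step gives $I\xi\in\dom\cl{\pi_2(a)}$ for every $a\in A$, hence $I\xi\in\bigcap_{a\in A}\dom\cl{\pi_2(a)}=\cl{\Hilms_2}$. Thus $I(\cl{\Hilms_1})\subseteq\cl{\Hilms_2}$, and on this domain the identity $\cl{\pi_2}(a)\circ I = I\circ\cl{\pi_1}(a)$ follows from the single-operator version, since $\cl{\pi_i}(a)$ acts as $\cl{\pi_i(a)}|_{\cl{\Hilms_i}}$ by the construction in Proposition~\ref{pro:closure_rep}. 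The isometry property of~$I$ is unchanged, so~$I$ is an isometric intertwiner between $(\cl{\Hilms_1},\cl{\pi_1})$ and $(\cl{\Hilms_2},\cl{\pi_2})$.

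There is no real obstacle here; the only point that requires care is the closability of $\pi_2(a)$, which is what permits us to take the limit $I\xi_n\to I\xi$ with $\pi_2(a)I\xi_n$ convergent and conclude that $I\xi$ lies in $\dom\cl{\pi_2(a)}$ with the expected value. The fact that~$I$ need not be adjointable plays no role, since we only use its norm-continuity on $\Hilm_1$, and everything else is driven by Proposition~\ref{pro:closure_rep}.
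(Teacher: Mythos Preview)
Your proof is correct and implements essentially the same idea as the paper's. The paper's argument is just more compressed: it observes directly that~$I$ is continuous (indeed isometric) for the graph topologies on $\Hilms_1$ and~$\Hilms_2$, hence extends to a map between the completions $\cl{\Hilms_1}\to\cl{\Hilms_2}$, and the intertwining identity passes to the limit by continuity. Your version unpacks this graph-topology continuity one operator at a time and then invokes the domain formula $\cl{\Hilms_i}=\bigcap_{a}\dom\cl{\pi_i(a)}$ from Proposition~\ref{pro:closure_rep} to reassemble the pieces; the paper's version bypasses that intersection by working with the full family of graph norms at once.
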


\begin{proof}
  Since~\(I\) intertwines the representations, it is continuous for
  the graph topologies on \(\Hilms_1\) and~\(\Hilms_2\).
  Hence~\(I\) maps the domain of the closure~\(\cl{\pi_1}\) into the
  domain of~\(\cl{\pi_2}\).  This extension is still an intertwiner
  because it is an intertwiner on a dense subspace.
\end{proof}

\begin{proposition}
  \label{pro:intertwiner_strong_generators}
  Let \((\Hilms_1,\pi_1)\) and \((\Hilms_2,\pi_2)\) be closed
  representations of~\(A\) on Hilbert \(D\)\nb-modules \(\Hilm_1\)
  and~\(\Hilm_2\), respectively.  Let \(S\subseteq A\) be a strong
  generating set.  An isometry \(I\colon \Hilm_1\injto \Hilm_2\) is
  an intertwiner from \((\Hilms_1,\pi_1)\) to \((\Hilms_2,\pi_2)\)
  if and only if \(I\circ \cl{\pi_1(a)} \subseteq \cl{\pi_2(a)}\circ
  I\) for all \(a\in S\).
\end{proposition}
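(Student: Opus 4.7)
The ``only if'' direction is immediate: by Lemma~\ref{lem:closure_functorial}, any isometric intertwiner intertwines the closures, so in particular $I\circ\cl{\pi_1(a)}\subseteq \cl{\pi_2(a)}\circ I$ for every $a\in A$, and \emph{a fortiori} for $a\in S$.

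For the non-trivial direction, I would first use the closedness of the two representations together with the strong generating property of~$S$. Since $(\Hilms_i,\pi_i)$ is closed and $S$ generates the graph topology, the analogue~\eqref{eq:domain_strong_generating_set} of~\eqref{eq:domain_closure} obtained in the proof of Proposition~\ref{pro:equality_if_closure_equal} gives
\[
\Hilms_i = \bigcap_{a\in S} \dom \cl{\pi_i(a)} \qquad\text{for } i=1,2.
\]
Now fix $\xi\in\Hilms_1$. For every $a\in S$ we have $\xi\in\dom\cl{\pi_1(a)}$, so the hypothesis $I\circ\cl{\pi_1(a)}\subseteq \cl{\pi_2(a)}\circ I$ yields $I\xi\in\dom\cl{\pi_2(a)}$ and $\cl{\pi_2(a)}(I\xi) = I\cl{\pi_1(a)}\xi = I\pi_1(a)\xi$. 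Intersecting over $a\in S$ and using the displayed formula shows $I\xi\in\Hilms_2$. Hence $I(\Hilms_1)\subseteq\Hilms_2$, and on $\Hilms_2$ the operator $\cl{\pi_2(a)}$ restricts to $\pi_2(a)$, so
\[
\pi_2(a)(I\xi) = I\pi_1(a)\xi \qquad\text{for all } a\in S,\ \xi\in\Hilms_1.
\]

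It remains to upgrade this identity from $S$ to all of~$A$. Let $A'\subseteq A$ be the set of $a\in A$ with $I\pi_1(a)\xi = \pi_2(a)I\xi$ for every $\xi\in\Hilms_1$. Linearity is clear, and $A'$ is closed under products: if $a,b\in A'$ and $\xi\in\Hilms_1$, then $\pi_1(b)\xi\in\Hilms_1$ because $\Hilms_1$ is $A$-invariant, so
\[
I\pi_1(ab)\xi = I\pi_1(a)\pi_1(b)\xi = \pi_2(a)I\pi_1(b)\xi = \pi_2(a)\pi_2(b)I\xi = \pi_2(ab)I\xi.
\]
Since $S\subseteq A'$ and $S$ generates $A$ as an algebra, $A'=A$, so $I$ is an isometric intertwiner in the sense of Definition~\ref{def:isometric_intertwiner}.

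The only subtle point is the first step, where one must be careful to distinguish $\cl{\pi_2(a)}(I\xi)$ from $\pi_2(a)(I\xi)$ until one has verified $I\xi\in\Hilms_2$; this is exactly where closedness and the strong generating hypothesis combine. The extension from~$S$ to~$A$ is routine once $S$-invariance of~$\Hilms_1$ under $I$ has been established.
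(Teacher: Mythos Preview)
Your argument for the non-trivial (``if'') direction is correct and matches the paper's proof essentially step for step: both use~\eqref{eq:domain_strong_generating_set} to identify $\Hilms_i=\bigcap_{a\in S}\dom\cl{\pi_i(a)}$, deduce $I(\Hilms_1)\subseteq\Hilms_2$, observe that $\pi_i(a)=\cl{\pi_i(a)}|_{\Hilms_i}$ gives the intertwining relation for $a\in S$, and then extend multiplicatively to all of~$A$.

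There is a small imprecision in your ``only if'' direction. Lemma~\ref{lem:closure_functorial} says that $I$ intertwines the closures of the \emph{representations} $(\Hilms_i,\pi_i)$; since both are already closed, this statement is vacuous here and does not yield $I\circ\cl{\pi_1(a)}\subseteq\cl{\pi_2(a)}\circ I$ for the closures of the individual \emph{operators}. The paper fills this with a one-line argument using that $I$ is an isometry: $I\circ\cl{\pi_1(a)}=\cl{I\circ\pi_1(a)}\subseteq\cl{\pi_2(a)\circ I}\subseteq\cl{\pi_2(a)}\circ I$. This is routine, but it is not what Lemma~\ref{lem:closure_functorial} provides.
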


\begin{proof}
  First let~\(I\) satisfy \(I\circ\cl{\pi_1(a)} \subseteq
  \cl{\pi_2(a)}\circ I\) for all \(a\in S\).  Then~\(I\) maps the
  domain of~\(\cl{\pi_1(a)}\) into the domain of~\(\cl{\pi_2(a)}\)
  for each \(a\in S\).  Now~\eqref{eq:domain_strong_generating_set}
  implies \(I(\Hilms_1)\subseteq \Hilms_2\).  Since \(\pi_i(a) =
  \cl{\pi_i(a)}|_{\Hilms_i}\), we get \(I(\pi_1(a)(\xi)) =
  \pi_2(a)(I(\xi))\) for all \(a\in S\), \(\xi\in\Hilms_1\).
  Since~\(S\) generates~\(A\) as an algebra and
  \(\pi_i(A)\Hilms_i\subseteq\Hilms_i\), this implies
  \(I\circ\pi_1(a) = \pi_2(a)\circ I\) for all \(a\in A\), that is,
  \(I\) is an intertwiner.

  Conversely, assume that~\(I\) is an intertwiner from
  \((\Hilms_1,\pi_1)\) to~\((\Hilms_2,\pi_2)\).  Equivalently,
  \(I\circ \pi_1(a) \subseteq \pi_2(a)\circ I\) for all \(a\in A\).
  We have \(I\circ \cl{\pi_1(a)} = \cl{I\circ \pi_1(a)}\) because~\(I\)
  is an isometry, and \(\cl{\pi_2(a)\circ I} \subseteq
  \cl{\pi_2(a)}\circ I\).  Thus \(I\circ \cl{\pi_1(a)} \subseteq
  \cl{\pi_2(a)}\circ I\) for all \(a\in A\).
\end{proof}

Now we relate the categories \(\Rep(A,D)\) for different
\Cstar\nb-algebras~\(D\).

\begin{definition}
  \label{def:Cstar-correspondence}
  Let \(D_1\) and~\(D_2\) be two \Cstar\nb-algebras.  A
  \emph{\Cstar\nb-correspondence} from~\(D_1\) to~\(D_2\) is a
  Hilbert \(D_2\)\nb-module with a representation of~\(D_1\) by
  adjointable operators (representations of \Cstar\nb-algebras are
  tacitly assumed nondegenerate).  An \emph{isometric intertwiner}
  between two correspondences from~\(D_1\) to~\(D_2\) is an
  isometric map on the underlying Hilbert \(D_2\)\nb-modules that
  intertwines the left \(D_1\)\nb-actions.  Let \(\Rep(D_1,D_2)\)
  denote the category of correspondences from~\(D_1\) to~\(D_2\)
  with isometric intertwiners as arrows and the usual composition.
\end{definition}

By Lemma~\ref{lem:Cstar-rep_bounded}, our two definitions of
\(\Rep(A,D)\) for unital \Star{}algebras and \Cstar\nb-algebras
coincide if~\(A\) is a unital \Cstar\nb-algebra.  So our notation is
not ambiguous.  There is no need to define representations of a
non-unital \Star{}algebra~\(A\) because we may adjoin a unit
formally.  A representation of~\(A\) extends uniquely to a
representation of the unitisation~\(\tilde{A}\).  Thus the
nondegenerate representations of~\(A\) are contained in
\(\Rep(\tilde{A})\).  To get rid of degenerate representations, we
may require nondegeneracy on~\(A\) when defining the integrable
representations of~\(\tilde{A}\), compare
Example~\ref{exa:nondegeneracy_condition}.

Let~\(\Hilm\) be a Hilbert \(D_1\)\nb-module and~\(\Hilm[F]\) a
correspondence from~\(D_1\) to~\(D_2\).  The interior tensor product
\(\Hilm\otimes_{D_1} \Hilm[F]\) is the (Hausdorff) completion of the
algebraic tensor product \(\Hilm\odot \Hilm[F]\) to a Hilbert
\(D_2\)\nb-module, using the inner product
\begin{equation}
  \label{eq:interior_tensor}
  \braket{\xi_1\otimes\eta_1}{\xi_2\otimes\eta_2}
  = \braket{\eta_1}{\braket{\xi_1}{\xi_2}_{D_1}\cdot\eta_2}_{D_2},
\end{equation}
see the discussion around \cite{Lance:Hilbert_modules}*{Proposition
  4.5} for more details.  We may use the balanced tensor product
\(\Hilm\odot_{D_1} \Hilm[F]\) instead of \(\Hilm\odot \Hilm[F]\)
because the inner product~\eqref{eq:interior_tensor} descends to this
quotient.  If we want to emphasise the left action \(\varphi\colon
D_1\to\Bound(\Hilm[F])\) in the
\Cstar\nb-correspondence~\(\Hilm[F]\), we write \(\Hilm\otimes_\varphi
\Hilm[F]\) for \(\Hilm\otimes_{D_1} \Hilm[F]\).

In addition, let~\((\Hilms,\pi)\) be a closed representation of~\(A\)
on~\(\Hilm\).  We are going to build a closed representation
\((\Hilms\otimes_{D_1}\Hilm[F], \pi\otimes_{D_1}1)\) of~\(A\) on
\(\Hilm\otimes_{D_1}\Hilm[F]\).  First let \(X\subseteq
\Hilm\otimes_{D_1}\Hilm[F]\) be the image of \(\Hilms\odot \Hilm[F]\)
or \(\Hilms\odot_{D_1} \Hilm[F]\) under the canonical map to
\(\Hilm\otimes_{D_1}\Hilm[F]\).

\begin{lemma}
  \label{lem:tensor_rep_with_corr}
  For \(a\in A\), there is a unique linear operator
  \(\pi(a)\otimes1\colon X\to X\) with \((\pi(a)\otimes1)
  (\xi\otimes\eta) = \pi(a)(\xi)\otimes \eta\) for all
  \(\xi\in\Hilms\), \(\eta\in\Hilm[F]\).  The map \(a\mapsto
  \pi(a)\otimes 1\) is a representation of~\(A\) with domain~\(X\).
\end{lemma}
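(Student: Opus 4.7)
The plan is to first establish well-definedness of the formula $(\pi(a)\otimes1)(\xi\otimes\eta) \defeq \pi(a)\xi\otimes\eta$ as a linear operator $X \to X$, and then to verify that $a \mapsto \pi(a)\otimes1$ is a representation of $A$ with domain $X$. Uniqueness is immediate once existence is known, since elementary tensors of the form $\xi\otimes\eta$ with $\xi\in\Hilms$, $\eta\in\Hilm[F]$ span~$X$, and the containment $(\pi(a)\otimes1)(X) \subseteq X$ is built into the defining formula because $\pi(a)$ preserves~$\Hilms$.

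The crux is well-definedness. Extending linearly from elementary tensors gives an operator on $\Hilms\odot\Hilm[F]$, and this factors through the balanced tensor product $\Hilms\odot_{D_1}\Hilm[F]$ because $\pi(a)$ is $D_1$-linear.  What remains is to show that if $\zeta = \sum_i \xi_i\otimes\eta_i \in \Hilms\odot\Hilm[F]$ has zero image in $\Hilm\otimes_{D_1}\Hilm[F]$, then so does $\sum_i \pi(a)\xi_i\otimes\eta_i$.  The main obstacle is that $\pi(a)$ is unbounded, so we cannot dominate $\norm{\sum_i \pi(a)\xi_i\otimes\eta_i}$ by $\norm{\zeta}$ in the naive way that would succeed for an adjointable $\pi(a)$.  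The trick I would use is to exploit the \Star{}structure of~$\pi$ to transfer $\pi(a)$ across the inner product.  Concretely, for any $\xi'\in\Hilms$ and $\eta'\in\Hilm[F]$, formula~\eqref{eq:interior_tensor} combined with $\braket{\xi'}{\pi(a)\xi_i} = \braket{\pi(a^*)\xi'}{\xi_i}$ yields
\begin{equation*}
  \braket{\xi'\otimes\eta'}{\textstyle\sum_i \pi(a)\xi_i\otimes\eta_i}
  = \braket{\pi(a^*)\xi'\otimes\eta'}{\textstyle\sum_i \xi_i\otimes\eta_i} = 0,
\end{equation*}
the final vanishing being the hypothesis on~$\zeta$ together with $\pi(a^*)\xi'\in\Hilms$.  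Since $\Hilms$ is dense in~$\Hilm$, elementary tensors $\xi'\otimes\eta'$ with $\xi'\in\Hilms$ have dense linear span in $\Hilm\otimes_{D_1}\Hilm[F]$.  By continuity of the Hilbert module inner product, $\sum_i \pi(a)\xi_i\otimes\eta_i$ is orthogonal to a dense subset, hence to itself, and therefore vanishes.

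The remaining verifications are routine calculations on elementary tensors.  Linearity in~$a$ is clear; multiplicativity $(\pi(a)\otimes1)(\pi(b)\otimes1) = \pi(ab)\otimes1$ follows from $\pi(a)\pi(b) = \pi(ab)$ on~$\Hilms$; and the symmetry condition $\braket{(\pi(a)\otimes1)\zeta_1}{\zeta_2} = \braket{\zeta_1}{(\pi(a^*)\otimes1)\zeta_2}$ for $\zeta_1,\zeta_2 \in X$ is inherited from the corresponding identity for~$\pi$ through the explicit description~\eqref{eq:interior_tensor} of the inner product on the interior tensor product, by the same ``transfer across the inner product'' computation used above.
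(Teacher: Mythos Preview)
Your proof is correct and follows essentially the same approach as the paper: both establish well-definedness by transferring \(\pi(a)\) across the inner product via the \Star{}relation \(\braket{\xi'}{\pi(a)\xi_i}=\braket{\pi(a^*)\xi'}{\xi_i}\), then invoke density of \(X\) (i.e.\ of elementary tensors with first leg in \(\Hilms\)) to conclude that the result is independent of the chosen decomposition. The remaining verifications (multiplicativity, \(D_2\)-linearity, and the symmetry condition) are handled identically.
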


\begin{proof}
  Write \(\omega,\zeta\in X\) as images of elements of \(\Hilms\odot
  \Hilm[F]\):
  \[
  \omega = \sum_{i=1}^n \xi_i\otimes\eta_i,\qquad
  \zeta = \sum_{j=1}^m \alpha_j\otimes\beta_j
  \]
  with \(\xi_i,\alpha_j\in\Hilms\), \(\eta_i,\beta_j\in\Hilm[F]\) for
  \(1\le i\le n\), \(1\le j\le m\).  Then
  \begin{equation}
    \label{eq:tensor_rep_well-def}
    \left< \zeta,
      \sum_{i=1}^n\pi(a)\xi_i\otimes\eta_i \right>
    = \left< \sum_{j=1}^m \pi(a^*)\alpha_j\otimes\beta_j,
      \omega \right>.
  \end{equation}
  An element \(\omega'\in\Hilm\otimes_{D_1}\Hilm[F]\) is determined
  uniquely by its inner products \(\braket{\zeta}{\omega'}=0\) for all
  \(\zeta\in X\) because~\(X\) is dense in
  \(\Hilm\otimes_{D_1}\Hilm[F]\).  The right hand side
  in~\eqref{eq:tensor_rep_well-def} does not depend on how we
  decomposed~\(\omega\).  Hence \((\pi(a)\otimes 1)\omega \defeq
  \sum_{i=1}^n\pi(a)\xi_i\otimes\eta_i\) well-defines an operator
  \(\pi(a)\otimes 1\colon X\to X\).  This is a right \(D_2\)\nb-module
  map, and \(a\mapsto \pi(a)\otimes 1\) is linear and multiplicative
  because~\(\pi\) is.  Equation~\eqref{eq:tensor_rep_well-def} says
  that \(\braket{\zeta}{(\pi(a)\otimes 1)\omega} =
  \braket{(\pi(a^*)\otimes 1)\zeta}{\omega}\) for all
  \(\omega,\zeta\in X\).  Thus \(\pi\otimes 1\) is a representation.
\end{proof}

\begin{definition}
  Let \((\Hilms\otimes_{D_1}\Hilm[F], \pi\otimes_{D_1}1)\) be the
  closure of the representation on~\(\Hilm\otimes_{D_1}\Hilm[F]\)
  defined in Lemma~\ref{lem:tensor_rep_with_corr}.
\end{definition}

\begin{lemma}
  \label{lem:rep_tensor_corr_functor}
  Let \(I\colon \Hilm_1\injto\Hilm_2\) be an isometric intertwiner
  between two representations \((\Hilms_1,\pi_1)\) and
  \((\Hilms_2,\pi_2)\), and let \(J\colon \Hilm[F]_1\injto \Hilm[F]_2\)
  be an isometric intertwiner of \Cstar\nb-correspondences.  Then
  \(I\otimes_{D_1} J\colon \Hilm_1\otimes_{D_1}\Hilm[F]_1 \injto
  \Hilm_2\otimes_{D_1}\Hilm[F]_2\) is an isometric intertwiner between
  \((\Hilms_1\otimes_{D_1}\Hilm[F]_1, \pi_1\otimes1)\) and
  \((\Hilms_2\otimes_{D_1}\Hilm[F]_2, \pi_2\otimes1)\).
\end{lemma}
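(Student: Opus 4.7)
The plan is to verify the three requirements for an isometric intertwiner step by step, reducing each to a routine computation on the algebraic (pre-completion) level and then using continuity plus Lemma~\ref{lem:closure_functorial} to pass to the closures.

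First I would check that $I \otimes_{D_1} J$ is well-defined on the balanced algebraic tensor product $\Hilm_1 \odot_{D_1} \Hilm[F]_1$: for $\xi\in\Hilm_1$, $\eta\in\Hilm[F]_1$, $d\in D_1$, the identity $(I\otimes J)(\xi d\otimes \eta) = I(\xi)d\otimes J\eta = I(\xi)\otimes dJ\eta = (I\otimes J)(\xi\otimes d\eta)$ uses that $I$ is a $D_1$\nb-module map and that $J$ intertwines the left $D_1$\nb-actions. Next, isometry on simple tensors is immediate from~\eqref{eq:interior_tensor}:
\[
\braket{I\xi_1\otimes J\eta_1}{I\xi_2\otimes J\eta_2}
= \braket{J\eta_1}{\braket{I\xi_1}{I\xi_2}_{D_1}\cdot J\eta_2}_{D_2}
= \braket{\eta_1}{\braket{\xi_1}{\xi_2}_{D_1}\cdot \eta_2}_{D_2},
\]
using that $I$ preserves the $D_1$\nb-valued inner product and $J$ preserves the $D_2$\nb-valued one. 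By linearity this extends to all of $\Hilm_1 \odot_{D_1} \Hilm[F]_1$, and then by continuity to an isometry $I\otimes_{D_1} J\colon \Hilm_1\otimes_{D_1}\Hilm[F]_1 \to \Hilm_2\otimes_{D_1}\Hilm[F]_2$ on the completions. Since $J$ is a right $D_2$\nb-module map and $I\otimes J$ is defined using~\(J\) in the second slot, the resulting operator is a right $D_2$\nb-module map.

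For the intertwining property, let $X_i \subseteq \Hilm_i\otimes_{D_1}\Hilm[F]_i$ denote the image of $\Hilms_i\odot_{D_1}\Hilm[F]_i$ as in Lemma~\ref{lem:tensor_rep_with_corr}. Since $I(\Hilms_1)\subseteq \Hilms_2$, we have $(I\otimes J)(X_1)\subseteq X_2$. On simple tensors $\xi\otimes\eta\in X_1$ with $\xi\in\Hilms_1$, $\eta\in\Hilm[F]_1$, we compute
\[
(I\otimes J)\bigl((\pi_1(a)\otimes 1)(\xi\otimes\eta)\bigr)
= I(\pi_1(a)\xi)\otimes J\eta
= \pi_2(a)I(\xi)\otimes J\eta
= (\pi_2(a)\otimes 1)\bigl((I\otimes J)(\xi\otimes\eta)\bigr),
\]
using that $I$ intertwines $\pi_1$ and $\pi_2$. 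By linearity this identity holds on all of $X_1$, so $I\otimes J$ is an isometric intertwiner between the pre-closure representations $(X_1,\pi_1\otimes 1)$ and $(X_2,\pi_2\otimes 1)$ of~$A$.

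Finally I would invoke Lemma~\ref{lem:closure_functorial}: since $I\otimes_{D_1} J$ intertwines the representations on the dense subdomains $X_1,X_2$, it also intertwines their closures $(\Hilms_1\otimes_{D_1}\Hilm[F]_1, \pi_1\otimes 1)$ and $(\Hilms_2\otimes_{D_1}\Hilm[F]_2, \pi_2\otimes 1)$. I do not foresee a real obstacle; the only subtlety is the bookkeeping of which tensor factor absorbs the $D_1$\nb-action when checking well-definedness, but once~$I$ is used as a $D_1$\nb-module map and~$J$ as a $D_1$\nb-$D_2$\nb-bimodule map (isometric on the $D_2$\nb-valued inner product), both well-definedness and preservation of the inner product fall out of the same calculation.
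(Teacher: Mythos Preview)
Your proof is correct and follows essentially the same approach as the paper: verify that \(I\otimes_{D_1} J\) maps the core \(X_1\) into \(X_2\), check the intertwining relation on simple tensors, and then invoke Lemma~\ref{lem:closure_functorial} to pass to the closures. You simply fill in more detail than the paper does---the well-definedness on the balanced tensor product and the explicit isometry computation via~\eqref{eq:interior_tensor}---where the paper takes these as standard facts about interior tensor products.
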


\begin{proof}
  The isometry \(I\otimes_{D_1} J\) maps the image~\(X_1\) of
  \(\Hilms_1\odot\Hilm[F]_1\) to the image~\(X_2\) of
  \(\Hilms_2\odot\Hilm[F]_2\) and intertwines the operators
  \(\pi_1(a)\otimes 1\) on~\(X_1\) and \(\pi_2(a)\otimes 1\)
  on~\(X_2\) for all \(a\in A\).  That is, it intertwines the
  representations defined in Lemma~\ref{lem:tensor_rep_with_corr}.
  It also intertwines their closures by
  Lemma~\ref{lem:closure_functorial}.
\end{proof}

The lemma gives a bifunctor
\begin{equation}
  \label{eq:interior_tensor_bifunctor}
  \otimes_{D_1}\colon \Rep(A,D_1)\times \Rep(D_1,D_2) \to
  \Rep(A,D_2).
\end{equation}
The corresponding bifunctor
\[
\otimes_{D_1}\colon \Rep(B,D_1)\times \Rep(D_1,D_2) \to \Rep(B,D_2)
\]
for a \Cstar\nb-algebra~\(B\) is the usual composition of
\Cstar\nb-correspondences.  This composition is associative up to
canonical unitaries
\begin{equation}
  \label{eq:tensor_associative}
  \Hilm \otimes_{D_1} (\Hilm[F] \otimes_{D_2} \Hilm[G]) \congto
  (\Hilm \otimes_{D_1} \Hilm[F]) \otimes_{D_2} \Hilm[G],\qquad
  \xi \otimes (\eta\otimes \zeta) \mapsto
  (\xi \otimes \eta)\otimes \zeta,
\end{equation}
for all triples of composable \Cstar\nb-correspondences.

\begin{lemma}
  \label{lem:tensor_associative}
  If~\(\Hilm\) carries a representation~\((\Hilms,\pi)\) of a
  \Star{}algebra~\(A\), then the unitary
  in~\eqref{eq:tensor_associative} is an intertwiner \((\Hilms,\pi)
  \otimes_{D_1} (\Hilm[F]\otimes_{D_2} \Hilm[G]) \congto
  \bigl((\Hilms,\pi) \otimes_{D_1} \Hilm[F]\bigr) \otimes_{D_2}
  \Hilm[G]\).
\end{lemma}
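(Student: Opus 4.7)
The strategy is to exhibit a common algebraic core, namely the image of $\Hilms\odot\Hilm[F]\odot\Hilm[G]$, on which the associativity unitary $U$ of~\eqref{eq:tensor_associative} manifestly intertwines the two actions of $A$, and then to invoke Lemma~\ref{lem:closure_functorial} to extend this intertwining to the closures.

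Let $X_1\subseteq \Hilm\otimes_{D_1}(\Hilm[F]\otimes_{D_2}\Hilm[G])$ and $X_2\subseteq (\Hilm\otimes_{D_1}\Hilm[F])\otimes_{D_2}\Hilm[G]$ denote the images of the algebraic triple tensor product $\Hilms\odot\Hilm[F]\odot\Hilm[G]$ under the canonical maps. One application of Lemma~\ref{lem:tensor_rep_with_corr} shows that $X_1$ lies in the pre-closure domain of the representation $(\Hilms,\pi)\otimes_{D_1}(\Hilm[F]\otimes_{D_2}\Hilm[G])$ and that each $a\in A$ acts on $X_1$ by $\xi\otimes(\eta\otimes\zeta)\mapsto \pi(a)\xi\otimes(\eta\otimes\zeta)$. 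Two iterated applications of Lemma~\ref{lem:tensor_rep_with_corr} show that $X_2$ lies in the pre-closure domain of $((\Hilms,\pi)\otimes_{D_1}\Hilm[F])\otimes_{D_2}\Hilm[G]$, with $a$ acting by $(\xi\otimes\eta)\otimes\zeta\mapsto (\pi(a)\xi\otimes\eta)\otimes\zeta$; here one uses that the inner representation $(\Hilms,\pi)\otimes_{D_1}\Hilm[F]$ restricts on the pre-closure image of $\Hilms\odot\Hilm[F]$ to the operator $\pi(a)\otimes 1$ built in Lemma~\ref{lem:tensor_rep_with_corr}. The unitary $U$ then bijects $X_1$ onto $X_2$ and intertwines the two prescribed $A$-actions cleanly.

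To invoke Lemma~\ref{lem:closure_functorial} it remains to verify that $X_1$ and $X_2$ are cores for the two closed representations. For $X_1$, elementary tensors are norm-dense in $\Hilm[F]\otimes_{D_2}\Hilm[G]$, and for fixed $\xi\in\Hilms$ and any norm-convergent sequence $\omega_n\to\omega$ in $\Hilm[F]\otimes_{D_2}\Hilm[G]$ the estimate $\norm{\pi(a)\xi\otimes(\omega_n-\omega)}\le \norm{\pi(a)\xi}\cdot\norm{\omega_n-\omega}$ yields graph convergence $\xi\otimes\omega_n\to\xi\otimes\omega$; hence $X_1$ is graph-dense in the pre-closure domain of the LHS, which is a core by construction. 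For $X_2$, let $\Hilms'$ denote the domain of the inner tensor representation $(\Hilms,\pi)\otimes_{D_1}\Hilm[F]$, so that the image of $\Hilms'\odot\Hilm[G]$ is a core for the RHS; given $\xi'\in\Hilms'$ approximated in the graph topology by elements $\xi_n$ in the image of $\Hilms\odot\Hilm[F]$, the analogous estimate $\norm{(\pi'(a)\xi_n-\pi'(a)\xi')\otimes\zeta}\le \norm{\pi'(a)\xi_n-\pi'(a)\xi'}\cdot\norm{\zeta}$ shows $\xi_n\otimes\zeta\in X_2$ converges to $\xi'\otimes\zeta$ in the graph topology of the outer tensor product. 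The main technical point is this second core verification, since it requires tracking the two-step closure construction on the right-hand side; once the graph norms are written out explicitly the estimates above are routine.
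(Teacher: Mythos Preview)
Your proof is correct and follows essentially the same approach as the paper: identify the image of $\Hilms\odot\Hilm[F]\odot\Hilm[G]$ as a common core on both sides, verify that the associativity unitary intertwines the $A$-actions on these cores, and then invoke Lemma~\ref{lem:closure_functorial}. The paper phrases the core verifications in terms of separate continuity of the bilinear map $\Hilms\times\Hilm[F]\to\Hilms\otimes_{D_1}\Hilm[F]$ (graph/norm/graph topologies) rather than your explicit norm estimates, but the content is the same.
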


\begin{proof}
  The bilinear map from~\(\Hilms \times \Hilm[F]\) to~\(\Hilms
  \otimes_{D_1} \Hilm[F]\) is separately continuous with respect to
  the graph topologies on \(\Hilms\) and \(\Hilms \otimes_{D_1}
  \Hilm[F]\) and the norm topology on~\(\Hilm[F]\).  Since the image
  of \(\Hilm[F]\odot \Hilm[G]\) in the Hilbert module
  \(\Hilm[F]\otimes_{D_2} \Hilm[G]\) is dense in the norm topology,
  the image of \(\Hilms\odot \Hilm[F]\odot \Hilm[G]\) in
  \(\Hilm \otimes_{D_1} (\Hilm[F] \otimes_{D_2} \Hilm[G])\) is a
  core for the representation \((\Hilms,\pi) \otimes_{D_1}
  (\Hilm[F]\otimes_{D_2} \Hilm[G])\).  Since the image of
  \(\Hilms\odot \Hilm[F]\) in \(\Hilms \otimes_{D_1}
  \Hilm[F]\) is dense in the graph topology, the image of
  \(\Hilms\odot \Hilm[F]\odot \Hilm[G]\) in \((\Hilm
  \otimes_{D_1} \Hilm[F]) \otimes_{D_2} \Hilm[G]\) is a core for the
  representation \(\bigl((\Hilms,\pi) \otimes_{D_1} \Hilm[F]\bigr)
  \otimes_{D_2} \Hilm[G]\).  The unitary
  in~\eqref{eq:tensor_associative} intertwines between these
  cores.  Hence it also intertwines between the resulting closed
  representations by Lemma~\ref{lem:closure_functorial}.
\end{proof}

\begin{definition}
  \label{def:star-intertwiner}
  Let \((\Hilms_1,\pi_1)\) and~\((\Hilms_2,\pi_2)\) be two
  representations of~\(A\) on Hilbert \(D\)\nb-modules \(\Hilm_1\)
  and~\(\Hilm_2\).  An adjointable operator \(x\colon
  \Hilm_1\to\Hilm_2\) is an \emph{intertwiner} if
  \(x(\Hilms_1)\subseteq \Hilms_2\) and \(x\pi_1(a)\xi =
  \pi_2(a)x\xi\) for all \(a\in A\), \(\xi\in\Hilms_1\).  It is
  a \emph{\Star{}intertwiner} if both \(x\) and~\(x^*\) are
  intertwiners.
  \label{def:adjointable_intertwiner}
\end{definition}

Any adjointable intertwiner between two representations of a
\Cstar\nb-algebra~\(B\) is a \Star{}intertwiner.  In contrast, for
a general \Star{}algebra, even the adjoint of a unitary
intertwiner~\(u\) fails to be an intertwiner if
\(u(\Hilms_1)\subsetneq \Hilms_2\).

\begin{example}
  \label{exa:Friedrichs_extension}
  Let~\(t\) be a positive symmetric operator on a Hilbert
  space~\(\Hilm[H]\).  Assume that \(\bigcap_{n\in\N} \dom t^n\) is
  dense in~\(\Hilm[H]\), so that~\(t\) generates a
  representation~\(\pi\) of the polynomial algebra~\(\C[x]\)
  on~\(\Hilm[H]\).  The Friedrichs extension of~\(t\) is a positive
  self-adjoint operator~\(t'\) on~\(\Hilm[H]\).  It generates
  another representation~\(\pi'\) of~\(\C[x]\) on~\(\Hilm[H]\).  The
  identity map on~\(\Hilm[H]\) is a unitary intertwiner
  \(\pi\injto\pi'\).  It is not a \Star{}intertwiner unless
  \(t=t'\).
\end{example}

The following proposition characterises when an adjointable isometry
\(I\colon \Hilm_1\injto \Hilm\) between two representations
on Hilbert \(D\)\nb-modules is a \Star{}intertwiner.  Since
\(\Hilm\cong \Hilm_1 \oplus \Hilm_1^\bot\) if~\(I\) is adjointable, we
may as well assume that~\(I\) is the inclusion of a direct summand.

\begin{proposition}
  \label{pro:isometry_star-intertwiner}
  Let \(\Hilm_1\) and~\(\Hilm_2\) be Hilbert modules over a
  \Cstar\nb-algebra~\(D\) and let \((\Hilms_1,\pi_1)\) and
  \((\Hilms,\pi)\) be representations of~\(A\) on \(\Hilm_1\)
  and~\(\Hilm_1\oplus\Hilm_2\), respectively.  The following are
  equivalent:
  \begin{enumerate}
  \item \label{enum:isometry_star-intertwiner1} the canonical
    inclusion \(I\colon \Hilm_1\injto\Hilm_1\oplus\Hilm_2\) is a
    \Star{}intertwiner from~\(\pi_1\) to~\(\pi\);
  \item \label{enum:isometry_star-intertwiner2} the canonical
    inclusion \(I\colon \Hilm_1\injto\Hilm_1\oplus\Hilm_2\) is an
    intertwiner from~\(\pi_1\) to~\(\pi\) and \(\Hilms =
    \Hilms_1 + (\Hilms\cap \Hilm_2)\);
  \item \label{enum:isometry_star-intertwiner3} there is a
    representation \((\Hilms_2,\pi_2)\) on~\(\Hilm_2\) such that \(\pi
    = \pi_1 \oplus \pi_2\).
  \end{enumerate}
\end{proposition}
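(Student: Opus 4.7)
My plan is to prove the equivalence as a cycle \ref{enum:isometry_star-intertwiner3}$\Rightarrow$\ref{enum:isometry_star-intertwiner1}$\Rightarrow$\ref{enum:isometry_star-intertwiner2}$\Rightarrow$\ref{enum:isometry_star-intertwiner3}. Throughout, I identify $\Hilm_1$ and $\Hilm_2$ with their images in $\Hilm_1\oplus\Hilm_2$, so that the canonical inclusion $I$ is adjointable with adjoint $I^*$ equal to the projection $P_1\colon\Hilm_1\oplus\Hilm_2\to\Hilm_1$.

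The implication \ref{enum:isometry_star-intertwiner3}$\Rightarrow$\ref{enum:isometry_star-intertwiner1} is direct: once $\pi$ equals $\pi_1\oplus\pi_2$ on the domain $\Hilms_1\oplus\Hilms_2$, both $I$ and $P_1$ preserve domains and commute componentwise with the representations, so each is an intertwiner. For \ref{enum:isometry_star-intertwiner1}$\Rightarrow$\ref{enum:isometry_star-intertwiner2}, I would exploit that $I^*=P_1$ is itself an intertwiner: for any $\xi\in\Hilms$, the vector $P_1\xi$ lies in $\Hilms_1$, so $IP_1\xi\in I(\Hilms_1)\subseteq\Hilms$, and then the residue $\xi-IP_1\xi$ automatically lies in $\Hilms\cap\Hilm_2$, giving the required decomposition.

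The bulk of the work falls on \ref{enum:isometry_star-intertwiner2}$\Rightarrow$\ref{enum:isometry_star-intertwiner3}, which I expect to be the main obstacle. I would set $\Hilms_2\defeq\Hilms\cap\Hilm_2$ and try to define $\pi_2(a)\defeq\pi(a)|_{\Hilms_2}$. The delicate point is the invariance $\pi(a)\Hilms_2\subseteq\Hilms_2$, which does not follow mechanically from the hypotheses; my strategy is to derive it by combining the adjoint identity in $\pi$ with the intertwining property of $I$. Concretely, for $\xi_2\in\Hilms_2$ and $\eta_1\in\Hilms_1\subseteq\Hilms$ one has
\[
\braket{\eta_1}{\pi(a)\xi_2}=\braket{\pi(a^*)\eta_1}{\xi_2}=\braket{I\pi_1(a^*)\eta_1}{\xi_2}=0,
\]
since $I\pi_1(a^*)\eta_1\in\Hilm_1$ is orthogonal to $\xi_2\in\Hilm_2$. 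As $\Hilms_1$ is dense in $\Hilm_1$, this forces $\pi(a)\xi_2\perp\Hilm_1$, so $\pi(a)\xi_2\in\Hilm_2$; together with $\pi(a)\xi_2\in\Hilms$ this lands it in $\Hilms_2$, as required.

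Once invariance is in place, the remaining checks are routine: the algebraic identities and adjoint relation for $\pi_2$ are inherited from $\pi$; density of $\Hilms_2$ in $\Hilm_2$ follows by pushing the density of $\Hilms=\Hilms_1+\Hilms_2$ through the continuous projection $P_2\colon\Hilm_1\oplus\Hilm_2\to\Hilm_2$, using that $P_2(\Hilms_1)=0$; and finally $\pi=\pi_1\oplus\pi_2$ on $\Hilms_1\oplus\Hilms_2$ by decomposing any $\xi\in\Hilms$ according to \ref{enum:isometry_star-intertwiner2} and applying the intertwining property of $I$ to the $\Hilms_1$-summand and the definition of $\pi_2$ to the $\Hilms_2$-summand.
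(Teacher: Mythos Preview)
Your proposal is correct and follows essentially the same route as the paper: the same cycle \ref{enum:isometry_star-intertwiner3}$\Rightarrow$\ref{enum:isometry_star-intertwiner1}$\Rightarrow$\ref{enum:isometry_star-intertwiner2}$\Rightarrow$\ref{enum:isometry_star-intertwiner3}, the same use of $I^*=P_1$ being an intertwiner to get the decomposition in \ref{enum:isometry_star-intertwiner2}, and the same orthogonality computation $\braket{\eta_1}{\pi(a)\xi_2}=\braket{\pi(a^*)\eta_1}{\xi_2}=0$ to establish $\pi$-invariance of $\Hilms_2$ in the key step \ref{enum:isometry_star-intertwiner2}$\Rightarrow$\ref{enum:isometry_star-intertwiner3}.
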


\begin{proof}
  We view \(\Hilm_1\) and~\(\Hilm_2\) as subspaces of
  \(\Hilm_1\oplus\Hilm_2\), so we may drop the isometry~\(I\) from our
  notation.  The implication
  \ref{enum:isometry_star-intertwiner3}\(\Rightarrow\)\ref{enum:isometry_star-intertwiner1}
  is trivial.  We are going to prove
  \ref{enum:isometry_star-intertwiner1}\(\Rightarrow\)\ref{enum:isometry_star-intertwiner2}\(\Rightarrow\)\ref{enum:isometry_star-intertwiner3}.
  First assume that~\(I\) is a \Star{}intertwiner.  Then~\(I\) is an
  intertwiner.  In particular, \(\Hilms_1\subseteq \Hilms\).  Write
  \(\xi\in\Hilms\) as \(\xi=\xi_1+\xi_2\) with \(\xi_1\in\Hilm_1\),
  \(\xi_2\in\Hilm_2\).  Since~\(I^*\) is an intertwiner, \(\xi_1 =
  I^*(\xi)\in\Hilms_1\).  Hence \(\xi_2 = \xi-\xi_1 \in \Hilms\cap
  \Hilm_2\).  Thus~\ref{enum:isometry_star-intertwiner1}
  implies~\ref{enum:isometry_star-intertwiner2}.

  If~\ref{enum:isometry_star-intertwiner2} holds, then
  \(\Hilms_1\subseteq \Hilms\) is \(\pi\)\nb-invariant and
  \(\pi|_{\Hilms_1}=\pi_1\) because~\(I\) is an intertwiner.  We claim
  that \(\Hilms_2\defeq \Hilms\cap \Hilm_2\) is \(\pi\)\nb-invariant
  as well.  Let \(\xi\in\Hilms_2\) and \(\eta\in\Hilms_1\).  Then
  \(\braket{\eta}{\pi(a)\xi} = \braket{\pi(a^*)\eta}{\xi} =
  \braket{\pi_1(a^*)\eta}{\xi} = 0\) because \(\pi_1(a^*)\eta\in
  \Hilm_1\) is orthogonal to~\(\Hilm_2\).  Since~\(\Hilms_1\) is dense
  in~\(\Hilm_1\), this implies \(\pi(a)\xi \in \Hilm_1^\bot =
  \Hilm_2\), and this
  implies our claim.

  The condition~\ref{enum:isometry_star-intertwiner2} implies
  \(\Hilms = \Hilms_1 \oplus \Hilms_2\) as a vector space with
  \(\Hilms_2 = \Hilm_2 \cap \Hilms\) because \(\Hilm_1\cap \Hilm_2 =
  \{0\}\).  Then~\(\Hilms_2\) is dense in~\(\Hilm_2\)
  because~\(\Hilms\) is dense in~\(\Hilm_1\oplus\Hilm_2\).  Thus
  \((\Hilms_2,\pi|_{\Hilms_2})\) is a representation of~\(A\)
  on~\(\Hilm_2\).  And \((\Hilms,\pi)\) is the direct sum of
  \((\Hilms_1,\pi_1)\) and \((\Hilms_2,\pi|_{\Hilms_2})\) because
  \(\Hilms = \Hilms_1 \oplus\Hilms_2\) and \(\pi_1=\pi|_{\Hilms_1}\).
  Thus~\ref{enum:isometry_star-intertwiner2}
  implies~\ref{enum:isometry_star-intertwiner3}.
\end{proof}

\section{Integrable representations and \Cstar-hulls}
\label{sec:Cst_generated}

From now on, \emph{we tacitly assume representations to be closed}.
Proposition~\ref{pro:closure_rep} shows that this is no serious loss of
generality.

Let~\(A\) be a unital \Star{}algebra.  We assume that a class of
``integrable'' (closed) representations of~\(A\) on Hilbert modules
is chosen.  Let \(\Repi(A,D)\subseteq \Rep(A,D)\) be the full
subcategory with integrable representations on Hilbert
\(D\)\nb-modules as objects.  Being full means that the set of
arrows between two integrable representations of~\(A\) is still the
set of all isometric intertwiners.  We sometimes write \(\Repi(A)\)
and \(\Rep(A)\) for the collection of all the categories
\(\Repi(A,D)\) and \(\Rep(A,D)\) for all \Cstar\nb-algebras~\(D\).
A \Cstar\nb-hull is a \Cstar\nb-algebra~\(B\) with natural
isomorphisms \(\Rep(B,D) \cong \Repi(A,D)\) for all
\Cstar\nb-algebras~\(D\).  More precisely:

\begin{definition}
  \label{def:Cstar-hull}
  A \emph{\Cstar\nb-hull} for the integrable representations
  of~\(A\) is a \Cstar\nb-algebra~\(B\) with a family of bijections
  \(\Phi=\Phi^{\Hilm}\) from the set of representations of~\(B\)
  on~\(\Hilm\) to the set of integrable representations of~\(A\)
  on~\(\Hilm\) for all Hilbert modules~\(\Hilm\) over all
  \Cstar\nb-algebras~\(D\) with the following properties:
  \begin{itemize}
  \item \emph{compatibility with isometric intertwiners}: an isometry
    \(\Hilm_1\injto\Hilm_2\) (not necessarily adjointable) is an
    intertwiner between two representations \(\varrho_1\)
    and~\(\varrho_2\) of~\(B\) \emph{if and only if} it is an
    intertwiner between \(\Phi(\varrho_1)\) and~\(\Phi(\varrho_2)\);

  \item \emph{compatibility with interior tensor products}: if
    \(\Hilm[F]\) is a correspondence from~\(D_1\) to~\(D_2\),
    \(\Hilm\) is a Hilbert \(D_1\)\nb-module, and~\(\varrho\) is a
    representation of~\(B\) on~\(\Hilm\), then
    \(\Phi(\varrho\otimes_{D_1} 1_{\Hilm[F]}) =
    \Phi(\varrho)\otimes_{D_1} 1_{\Hilm[F]}\) as representations
    of~\(A\) on \(\Hilm\otimes_{D_1} \Hilm[F]\).
  \end{itemize}
\end{definition}

The compatibility with isometric intertwiners means that the
bijections~\(\Phi\) for all~\(\Hilm\) with fixed~\(D\) form an
isomorphism of categories \(\Rep(B,D)\cong \Repi(A,D)\) which, in
addition, does not change the underlying Hilbert \(D\)\nb-modules.  The
compatibility with interior tensor products expresses that these
isomorphisms of categories for different~\(D\) are natural with
respect to \Cstar\nb-correspondences.

\begin{definition}
  \label{def:weak_Cstar-hull}
  A \emph{weak \Cstar\nb-hull} for the integrable representations
  of~\(A\) is a \Cstar\nb-algebra~\(B\) with a family of
  bijections~\(\Phi\) between representations of~\(B\) and integrable
  representations of~\(A\) on Hilbert modules that is compatible with
  unitary \Star{}intertwiners and interior tensor products.
\end{definition}

Much of the general theory also works for weak \Cstar\nb-hulls.  But
the Induction Theorem~\ref{the:Fell_bundle_sections_hull} fails for
weak \Cstar\nb-hulls, as shown by a counterexample
in~§\ref{sec:counter_induction}.

\begin{proposition}
  \label{pro:adjointable_intertwiner}
  Let a class of integrable representations of~\(A\) have a weak
  \Cstar\nb-hull~\(B\).  Let \((\Hilms_1,\pi_1)\) and
  \((\Hilms_2,\pi_2)\) be integrable representations of~\(A\) on
  Hilbert \(D\)\nb-modules \(\Hilm_1\) and~\(\Hilm_2\), and
  let~\(\varrho_i\) be the corresponding representations of~\(B\)
  on~\(\Hilm_i\) for \(i=1,2\).  An adjointable operator \(x\colon
  \Hilm_1\to\Hilm_2\) is a \Star{}intertwiner from
  \((\Hilms_1,\pi_1)\) to~\((\Hilms_2,\pi_2)\) if and only if it is
  an intertwiner from \(\varrho_1\) to~\(\varrho_2\).
\end{proposition}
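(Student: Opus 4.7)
The plan is to reduce the adjointable intertwiner statement to the unitary $*$-intertwiner case (which the weak \Cstar\nb-hull axiom handles directly) via bounded functional calculus applied to an auxiliary self-adjoint operator on $\Hilm_1\oplus\Hilm_2$. A preliminary step is to show that $\Phi$ preserves direct sums, i.e., $\Phi(\varrho_1\oplus\varrho_2)=\pi_1\oplus\pi_2$ as integrable representations on the Hilbert $D$-module $\Hilm_1\oplus\Hilm_2$. The weak \Cstar\nb-hull axiom does not yield this directly, because the canonical inclusions $\Hilm_i\into\Hilm_1\oplus\Hilm_2$ are adjointable isometries but not unitaries. I would therefore view $\Hilm_1\oplus\Hilm_2$ temporarily as a Hilbert $(D\oplus D)$-module with componentwise inner product. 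The two projection correspondences $\Hilm[F]_i$ from $D\oplus D$ to $D$, each equal to $D$ with $(d_1,d_2)$ acting as $d_i$, satisfy $(\varrho_1\oplus\varrho_2)\otimes_{D\oplus D}\Hilm[F]_i=\varrho_i$ and likewise on the $A$-side. Compatibility of $\Phi$ with interior tensor products, combined with the fact that $(D\oplus D)$-linearity of the domain of any representation on this module forces it to split, pins down $\Phi(\varrho_1\oplus\varrho_2)$ on the $(D\oplus D)$-module as $\pi_1\oplus\pi_2$; tensoring once more with the diagonal correspondence $\Hilm[F]=D\oplus D$ from $D\oplus D$ to $D$ transports the conclusion to the $D$-module picture.

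Given this, for an adjointable $x\colon\Hilm_1\to\Hilm_2$ I would form the bounded self-adjoint adjointable operator $y\defeq\bigl(\begin{smallmatrix}0&x^*\\ x&0\end{smallmatrix}\bigr)$ on $\Hilm_1\oplus\Hilm_2$. A block computation, together with Proposition~\ref{pro:isometry_star-intertwiner} identifying the domain of $\pi_1\oplus\pi_2$ as $\Hilms_1\oplus\Hilms_2$, shows that $x$ is a $*$-intertwiner from $(\Hilms_1,\pi_1)$ to $(\Hilms_2,\pi_2)$ if and only if $y$ is a $*$-intertwiner for $\pi_1\oplus\pi_2$, and that $x$ is an intertwiner from $\varrho_1$ to $\varrho_2$ if and only if $y\in(\varrho_1\oplus\varrho_2)(B)'$. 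It therefore suffices to prove that, for a bounded self-adjoint adjointable $y$ on a Hilbert module $\Hilm$ carrying an integrable representation $\tilde\pi=\Phi(\tilde\varrho)$, one has $y\in\tilde\varrho(B)'$ if and only if $y$ is a $*$-intertwiner for $\tilde\pi$.

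For this I would use the one-parameter unitary group $u_t\defeq\exp(ity)$, $t\in\R$. Standard \Cstar-algebra arguments give $y\in\tilde\varrho(B)'$ if and only if each $u_t$ commutes with $\tilde\varrho$, i.e., is a unitary $*$-intertwiner $\tilde\varrho\to\tilde\varrho$; the weak \Cstar\nb-hull axiom then translates this to: each $u_t$ is a unitary $*$-intertwiner $\tilde\pi\to\tilde\pi$. It remains to show that $y$ is a $*$-intertwiner for $\tilde\pi$ if and only if every $u_t$ is. For the forward direction, $y=y^*$ and $y$ is an intertwiner, so all polynomials $p(y)$ are $*$-intertwiners; uniform polynomial approximation of $\lambda\mapsto e^{it\lambda}$ on $\sigma(y)$ gives $p_n(y)\xi\to u_t\xi$ and $\tilde\pi(a)p_n(y)\xi=p_n(y)\tilde\pi(a)\xi\to u_t\tilde\pi(a)\xi$ in norm, so closedness of $\cl{\tilde\pi(a)}$ and Proposition~\ref{pro:closure_rep} place $u_t\xi$ into $\bigcap_{a\in A}\dom\cl{\tilde\pi(a)}=\Hilms$ with the desired intertwining. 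For the reverse direction, $(u_t-1)\xi/(it)\to y\xi$ and $\tilde\pi(a)(u_t-1)\xi/(it)\to y\tilde\pi(a)\xi$ in norm, and the same closedness argument puts $y\xi$ into $\Hilms$.

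The main obstacle is the direct-sum-preservation step: since the weak \Cstar\nb-hull is only guaranteed to respect unitary $*$-intertwiners, the obvious decomposition of $\varrho_1\oplus\varrho_2$ does not transfer for free to $\Phi(\varrho_1\oplus\varrho_2)$, and the detour through $(D\oplus D)$-modules and interior tensor products is the price to be paid for identifying the two sides as $\pi_1\oplus\pi_2$. Once that is in place, the exponential/functional-calculus bridge is the standard tool for passing between bounded commutation and unbounded $*$-intertwining in the presence of closedness.
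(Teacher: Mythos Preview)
Your argument is correct and lands on the same core idea as the paper—reduce to a single representation and then pass between self-adjoint \Star{}intertwiners and unitary ones—but the two executions differ in two places worth noting. First, you justify that \(\Phi(\varrho_1\oplus\varrho_2)=\pi_1\oplus\pi_2\) via the \((D\oplus D)\)-module detour and interior-tensor-product compatibility; the paper performs the same reduction in one line without spelling this out (and the later Corollary~\ref{cor:integrable_sums}, which would supply it, itself invokes the present proposition), so your extra care here is well placed. Second, for the endgame the paper takes a more abstract route than your exponential group: it shows that norm limits of intertwiners of a closed representation are again intertwiners (precisely the closedness argument underlying your polynomial-approximation step), so the \Star{}intertwiners for~\(\tilde\pi\) form a unital \Cstar\nb-subalgebra \(A'\subseteq\Bound(\Hilm)\); the weak-hull axiom then forces \(A'\) and the commutant \(\tilde\varrho(B)'\) to have the same unitaries, and since every self-adjoint element of norm at most~\(1\) in a unital \Cstar\nb-algebra is the average of two unitaries \(t\pm\ima\sqrt{1-t^2}\), this gives \(A'=\tilde\varrho(B)'\). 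That is slightly slicker than threading through \(u_t=\exp(ity)\), but both arguments rest on the same closedness fact and are equally valid.
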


\begin{proof}
  Working with the direct sum representations on
  \(\Hilm_1\oplus\Hilm_2\) and the adjointable operator
  \(\bigl(\begin{smallmatrix} 0&x\\0&0 \end{smallmatrix}\bigr)\), we
  may assume without loss of generality
  that \(\Hilm_1=\Hilm_2=\Hilm\), \(\Hilms_1=\Hilms_2=\Hilms\),
  \(\pi_1=\pi_2=\pi\), and \(\varrho_1=\varrho_2=\varrho\).  The
  adjointable intertwiners for the representation~\(\varrho\)
  of~\(B\) form a \Cstar\nb-algebra~\(B'\): the commutant of~\(B\) in
  \(\Bound(\Hilm)\).  We claim that the \Star{}intertwiners for the
  representation~\(\pi\) of~\(A\) also form a
  \Cstar\nb-algebra~\(A'\).  Intertwiners and hence
  \Star{}intertwiners form an algebra.  Thus~\(A'\) is a
  \Star{}algebra.  We show that it is closed.

  Let \((x_i)_{i\in\N}\) be a sequence of adjointable intertwiners
  for~\((\Hilms,\pi)\) that converges in norm to
  \(x\in\Bound(\Hilm)\).  Let \(\xi\in\Hilms\).  Then
  \(x_i(\xi)\in\Hilms\) because each~\(x_i\) is an intertwiner.
  Since \(\pi(a)(x_i\xi) = x_i\pi(a)\xi\) is norm-convergent for
  each \(a\in A\), the sequence \(x_i(\xi)\) is a Cauchy sequence
  for the graph topology on~\(\Hilms\).  Since representations are
  tacitly assumed to be closed, this Cauchy sequence converges
  in~\(\Hilms\), so that \(x(\Hilms)\subseteq\Hilms\).  Moreover,
  \(x(\pi(a)\xi) = \pi(a) x(\xi)\) for all \(a\in A\),
  \(\xi\in\Hilms\), so~\(x\) is again an intertwiner.  Thus the
  algebra of intertwiners is norm-closed.  This implies that~\(A'\)
  is a \Cstar\nb-algebra.

  Since the family of bijections \(\Repi(A)\cong\Rep(B)\) is
  compatible with unitary \Star{}intertwiners, a unitary operator
  on~\(\Hilm\) is a \Star{}intertwiner for~\(A\) if and only if it
  is an intertwiner for~\(B\).  That is, the unital
  \Cstar\nb-subalgebras \(A',B'\subseteq \Bound(\Hilm)\) contain the
  same unitaries.  A unital \Cstar\nb-algebra is the linear span
  of its unitaries because any self-adjoint element~\(t\) of norm
  at most~\(1\) may be written as
  \[
  t = \nicefrac12\bigl(t+\ima \sqrt{1-t^2}\bigr)
  + \nicefrac12\bigl(t-\ima \sqrt{1-t^2}\bigr)
  \]
  and \(t\pm \ima \sqrt{1-t^2}\) are unitary.  Thus \(A'=B'\).  This
  is what we had to prove.
\end{proof}

\begin{corollary}
  \label{cor:integrable_sums}
  Let \(\Repi(A)\) have a weak \Cstar\nb-hull~\(B\).  Direct sums
  and summands of integrable representations remain integrable, and
  the family of bijections \(\Repi(A)\cong\Rep(B)\) preserves direct
  sums.
\end{corollary}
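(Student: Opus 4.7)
The plan is to transport direct sums, which are automatic for representations of the \Cstar\nb-algebra~\(B\), back through the bijection~\(\Phi\) to integrable representations of~\(A\), using Propositions~\ref{pro:isometry_star-intertwiner} and~\ref{pro:adjointable_intertwiner} as the dictionary. Starting from integrable representations \((\Hilms_1,\pi_1)\), \((\Hilms_2,\pi_2)\) on Hilbert \(D\)-modules \(\Hilm_1\), \(\Hilm_2\) with corresponding \(B\)-representations \(\varrho_i \defeq \Phi^{-1}(\pi_i)\), I form the genuine \(B\)-representation \(\varrho_1 \oplus \varrho_2\) on \(\Hilm_1 \oplus \Hilm_2\) and set \(\pi \defeq \Phi(\varrho_1 \oplus \varrho_2)\), which is integrable by the defining property of~\(\Phi\). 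The canonical inclusions \(\iota_i\colon \Hilm_i \injto \Hilm_1 \oplus \Hilm_2\) are adjointable intertwiners between the \(B\)-representations \(\varrho_i\) and \(\varrho_1 \oplus \varrho_2\), hence by Proposition~\ref{pro:adjointable_intertwiner} they are \Star{}intertwiners between the \(A\)-representations \(\pi_i\) and \(\pi\). Applying the implication \ref{enum:isometry_star-intertwiner1}\(\Rightarrow\)\ref{enum:isometry_star-intertwiner3} of Proposition~\ref{pro:isometry_star-intertwiner} once with each inclusion yields decompositions \(\pi = \pi_1 \oplus \pi_2'\) and \(\pi = \pi_1' \oplus \pi_2\); matching them forces \(\pi = \pi_1 \oplus \pi_2\) as closed representations, which proves both that pairwise direct sums of integrable representations are integrable and that \(\Phi\) respects them.

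For the summand direction, suppose \(\pi\) is integrable on \(\Hilm_1 \oplus \Hilm_2\) and decomposes as \(\pi = \pi_1 \oplus \pi_2\). The orthogonal projection \(P_1\) onto \(\Hilm_1\) then preserves the domain of~\(\pi\) and commutes with each \(\pi(a)\), so it is a \Star{}intertwiner from \(\pi\) to itself. By Proposition~\ref{pro:adjointable_intertwiner}, \(P_1\) lies in the commutant of \(\varrho \defeq \Phi^{-1}(\pi)\), and thus \(\varrho\) splits as \(\varrho_1 \oplus \varrho_2\) for representations \(\varrho_i\) of \(B\) on \(\Hilm_i\). The first paragraph now gives \(\pi = \Phi(\varrho) = \Phi(\varrho_1) \oplus \Phi(\varrho_2)\); comparing with \(\pi = \pi_1 \oplus \pi_2\) and restricting to each Hilbert summand yields \(\pi_i = \Phi(\varrho_i)\), so each \(\pi_i\) is integrable.

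The main obstacle is pinning down that \(\pi\) equals \(\pi_1 \oplus \pi_2\) as \emph{closed} representations and not merely that their actions agree on some common core; this is exactly what the equivalence \ref{enum:isometry_star-intertwiner1}\(\Leftrightarrow\)\ref{enum:isometry_star-intertwiner3} of Proposition~\ref{pro:isometry_star-intertwiner} delivers, the double application above being what recovers the matching domain information. The same scheme extends verbatim to an arbitrary family \((\pi_i)_{i\in I}\): form \(\bigoplus_{i\in I} \varrho_i\) as a \(B\)-representation, and use that the inclusions of, and projections onto, any finite subsum are adjointable \(B\)-intertwiners whose images under Proposition~\ref{pro:adjointable_intertwiner} together pin down \(\Phi(\bigoplus_i \varrho_i) = \bigoplus_i \pi_i\).
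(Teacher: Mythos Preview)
Your proof is correct and follows essentially the same approach as the paper: form $\varrho_1\oplus\varrho_2$, use Proposition~\ref{pro:adjointable_intertwiner} on the canonical inclusions to make them \Star{}intertwiners for~$A$, then invoke Proposition~\ref{pro:isometry_star-intertwiner} to identify $\Phi(\varrho_1\oplus\varrho_2)$ with $\pi_1\oplus\pi_2$; for summands, both you and the paper push the projection through Proposition~\ref{pro:adjointable_intertwiner} to split~$\varrho$ and then feed back through the first part. Your explicit ``double application and match'' of Proposition~\ref{pro:isometry_star-intertwiner} spells out a step the paper leaves implicit, but the argument is the same.
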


\begin{proof}
  Let \(\pi_1,\pi_2\) be representations of~\(A\) on Hilbert
  \(D\)\nb-modules \(\Hilm_1,\Hilm_2\).  Let \(S_i\colon \Hilm_i\injto
  \Hilm_1\oplus\Hilm_2\) for \(i=1,2\) be the inclusion maps.  First
  we assume that \(\pi_1,\pi_2\) are integrable.  Let~\(\varrho_i\) be
  the representation of~\(B\) on~\(\Hilm_i\) corresponding
  to~\(\pi_i\), and let \(\pi\) be the integrable representation
  of~\(A\) on \(\Hilm_1\oplus\Hilm_2\) corresponding to the
  representation \(\varrho_1\oplus\varrho_2\) of~\(B\).  The
  isometries~\(S_i\) are intertwiners from~\(\varrho_i\)
  to~\(\varrho_1\oplus\varrho_2\).  By
  Proposition~\ref{pro:adjointable_intertwiner}, they are
  \Star{}intertwiners from~\(\pi_i\) to~\(\pi\).  Hence \(\pi =
  \pi_1\oplus \pi_2\) by
  Proposition~\ref{pro:isometry_star-intertwiner}.  Thus \(\pi_1\oplus
  \pi_2\) is integrable and the family of bijections
  \(\Repi(A)\cong\Rep(B)\) preserves direct sums.  The same argument
  works for infinite direct sums.

  Now we assume instead that \(\pi_1\oplus\pi_2\) is integrable.
  Let~\(\varrho\) be the representation of~\(B\) corresponding to
  \(\pi_1\oplus\pi_2\).  The orthogonal projection onto~\(\Hilm_1\)
  is a \Star{}intertwiner on the representation \(\pi_1\oplus\pi_2\)
  by Proposition~\ref{pro:isometry_star-intertwiner}, and hence also
  on~\(\varrho\) by Proposition~\ref{pro:adjointable_intertwiner}.
  Thus \(\varrho = \varrho_1\oplus\varrho_2\) for some
  representations~\(\varrho_i\) of~\(B\) on~\(\Hilm_i\).
  Let~\(\pi_i'\) be the integrable representation of~\(A\)
  corresponding to~\(\varrho_i\).  The isometry~\(S_i\) is a
  \Star{}intertwiner from~\(\varrho_i\)
  to~\(\varrho_1\oplus\varrho_2\) and hence from~\(\pi_i'\)
  to~\(\pi_1\oplus\pi_2\) by
  Proposition~\ref{pro:adjointable_intertwiner}.  This implies
  \(\pi_i' = \pi_i\), so that~\(\pi_i\) is integrable for \(i=1,2\).
\end{proof}

\begin{definition}
  \label{def:universal_integrable_rep}
  Let~\(B\) be a weak \Cstar\nb-hull for~\(A\).  The
  \emph{universal} integrable representation of~\(A\) is the
  integrable representation~\((\Hilms[B],\mu)\) of~\(A\) on~\(B\)
  that corresponds to the identity representation of~\(B\) on
  itself.
\end{definition}

\begin{proposition}
  \label{pro:Phi_simplifies}
  Let~\(B\) with a family of bijections~\(\Phi\) between
  representations of~\(B\) and integrable representations of~\(A\) on
  Hilbert modules be a weak \Cstar\nb-hull for the integrable
  representations of~\(A\).  Let~\((\Hilms[B],\mu)\) be the universal
  integrable representation of~\(A\).  Then \(\Phi(\Hilm) \cong
  (\Hilms[B],\mu) \otimes_B \Hilm\) for any
  \Cstar\nb-correspondence~\(\Hilm\) from~\(B\) to~\(D\).
  \textup{(}The proof makes this isomorphism more precise.\textup{)}
\end{proposition}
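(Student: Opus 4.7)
The plan is to combine the definition of the universal integrable representation $(\Hilms[B], \mu) = \Phi(\id_B)$ with compatibility of $\Phi$ under interior tensor products, bridged by the canonical unitary $B \otimes_B \Hilm \congto \Hilm$.

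First I would unpack the notation. Write the left action of~$B$ on~$\Hilm$ as $\sigma\colon B \to \Bound(\Hilm)$, so that $\Phi(\Hilm)$ stands for $\Phi(\sigma)$. View~$B$ as a Hilbert $B$\nb-module carrying the identity representation~$\id_B$; by Definition~\ref{def:universal_integrable_rep}, $\Phi(\id_B) = (\Hilms[B], \mu)$. The interior tensor product of~$\id_B$ with the correspondence~$\Hilm$ from~$B$ to~$D$ produces a representation $\id_B \otimes_B 1_\Hilm$ of~$B$ on the Hilbert $D$\nb-module $B \otimes_B \Hilm$, and compatibility of~$\Phi$ with interior tensor products gives
\[
\Phi(\id_B \otimes_B 1_\Hilm) \;=\; \Phi(\id_B) \otimes_B 1_\Hilm \;=\; (\Hilms[B], \mu) \otimes_B \Hilm.
\]

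Next I would exhibit the canonical map $U\colon B \otimes_B \Hilm \to \Hilm$ determined by $b \otimes \xi \mapsto \sigma(b)\xi$, and verify that it is a unitary \Star{}intertwiner from $\id_B \otimes_B 1_\Hilm$ to~$\sigma$. The inner product formula~\eqref{eq:interior_tensor} gives $\braket{b_1 \otimes \xi_1}{b_2 \otimes \xi_2} = \braket{\xi_1}{\sigma(b_1^* b_2)\xi_2}$, so~$U$ preserves the $D$\nb-valued inner product; nondegeneracy of~$\sigma$ makes its range dense, and an isometry with dense range between Hilbert modules is automatically unitary. Associativity of~$\sigma$ gives $U(b \cdot (c \otimes \xi)) = \sigma(bc)\xi = \sigma(b)\, U(c \otimes \xi)$, so~$U$ intertwines the two left $B$\nb-actions.

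Finally, I would invoke compatibility of~$\Phi$ with unitary \Star{}intertwiners to transport~$U$ across~$\Phi$: the same~$U$ is then a unitary \Star{}intertwiner between the integrable representations $(\Hilms[B], \mu) \otimes_B \Hilm$ and $\Phi(\sigma) = \Phi(\Hilm)$. This is the isomorphism claimed, and the same~$U$ witnesses it on both sides, making the isomorphism explicit. No serious obstacle is in sight; the proof essentially unwraps the definitions of weak \Cstar\nb-hull and universal integrable representation. The only delicate point is to keep track of the two roles of~$\Hilm$, namely as the Hilbert $D$\nb-module carrying~$\sigma$, and as the correspondence from~$B$ to~$D$ used to tensor with $(\Hilms[B], \mu)$.
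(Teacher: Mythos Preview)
Your proposal is correct and follows essentially the same route as the paper: exhibit the canonical unitary \(U\colon B\otimes_B\Hilm \to \Hilm\), \(b\otimes\xi\mapsto\sigma(b)\xi\), as a \Star{}intertwiner from \(\id_B\otimes_B 1_\Hilm\) to~\(\sigma\), then combine compatibility of~\(\Phi\) with interior tensor products and with unitary \Star{}intertwiners. The paper additionally spells out the explicit form of~\(\Phi(\sigma)\) that results, namely that \(\sigma(\Hilms[B])\Hilm\) is a core on which \(a\in A\) acts by \(a\cdot(\sigma(b)\xi)=\sigma(\mu(a)b)\xi\); you may wish to record this as the promised ``more precise'' description.
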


\begin{proof}
  Let \(\varrho\colon B\to\Bound(\Hilm)\) be a representation
  of~\(B\) on a Hilbert module~\(\Hilm\).  Then \(u\colon
  B\otimes_\varrho \Hilm \congto \Hilm\), \(b\otimes\xi\mapsto
  \varrho(b) \xi\), is a unitary \Star{}intertwiner between the
  interior tensor product of the identity representation of~\(B\)
  with~\(\Hilm\) and the representation~\(\varrho\) on~\(\Hilm\).
  As~\(\Phi\) is compatible with interior tensor products and
  unitary \Star{}intertwiners, \(u\) is a unitary \Star{}intertwiner
  between \((\Hilms[B],\mu) \otimes_B \Hilm\) and~\(\Phi(\varrho)\).
  Therefore, the image \(u(\Hilms[B] \odot \Hilm) =
  \varrho(\Hilms[B])\Hilm\) is a core for~\(\Phi(\varrho)\), and
  \(a\in A\) acts on this core by \(a\mapsto u(\mu(a)\otimes 1)u^*\)
  or, explicitly,
  \(a\cdot (\varrho(b)\xi) = \varrho(\mu(a)b) \xi\) for all
  \(a\in A\), \(b\in\Hilms[B]\), \(\xi\in\Hilm\).
\end{proof}

Put in a nutshell, the whole isomorphism between integrable
representations of~\(A\) and representations of~\(B\) is encoded in
the single representation~\((\Hilms[B],\mu)\) of~\(A\) on~\(B\).
This is similar to Schmüdgen's approach
in~\cite{Schmuedgen:Well-behaved}.  In the following, we disregard
the canonical unitary~\(u\) in the proof of
Proposition~\ref{pro:Phi_simplifies} and write \(\Phi(\varrho) =
(\Hilms[B],\mu) \otimes_B \Hilm\).

A (weak) \Cstar\nb-hull~\(B\) does not solve the problem of describing
the integrable representations of~\(A\).  It only reduces it to the
study of the representations of the \Cstar\nb-algebra~\(B\).  This
reduction is useful because it gets rid of unbounded operators.
If~\(B\) is of type~I, then any Hilbert space representation of~\(B\)
is a direct integral of irreducible representations, and irreducible
representations may, in principle, be classified.  Thus integrable
Hilbert space representations of~\(A\) are direct integrals of
irreducible integrable representations, and the latter may, in
principle, be classified.  But if~\(B\) is not of type~I, then the
integrable Hilbert space representations of~\(A\) are exactly as
complicated as the Hilbert space representations of~\(B\), and giving
the \Cstar\nb-algebra~\(B\) may well be the best one can say about
them.

\begin{proposition}
  \label{pro:unique_hull}
  A class of integrable representations has at most one weak
  \Cstar\nb-hull.
\end{proposition}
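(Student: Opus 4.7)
The plan is to build a \Star{}isomorphism $B_1\cong B_2$ between any two weak \Cstar\nb-hulls $(B_1,\Phi_1)$ and $(B_2,\Phi_2)$ directly, using Proposition~\ref{pro:Phi_simplifies} and compatibility with interior tensor products. Each universal integrable representation $(\Hilms[B]_i,\mu_i)$ on $B_i$ encodes $\Phi_i$ completely. Passing one hull's universal representation through the other hull's inverse bijection yields nondegenerate \Star{}homomorphisms
\[
\alpha\defeq\Phi_1^{-1}\bigl((\Hilms[B]_2,\mu_2)\bigr)\colon B_1 \to \Bound(B_2)=\Mult(B_2),
\]
\[
\beta\defeq\Phi_2^{-1}\bigl((\Hilms[B]_1,\mu_1)\bigr)\colon B_2 \to \Bound(B_1)=\Mult(B_1),
\]
and I let $\tilde\alpha$, $\tilde\beta$ denote their canonical strict extensions to the multiplier algebras.

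The key computation uses $\beta$ to view $B_1$ as a \Cstar\nb-correspondence from $B_2$ to $B_1$. Compatibility with interior tensor products gives
\[
\Phi_1(\alpha\otimes_\beta 1_{B_1}) = \Phi_1(\alpha)\otimes_\beta 1_{B_1} = (\Hilms[B]_2,\mu_2)\otimes_\beta B_1.
\]
Proposition~\ref{pro:Phi_simplifies} applied to $\Phi_2$ identifies the right-hand side with $\Phi_2(\beta) = (\Hilms[B]_1,\mu_1) = \Phi_1(\id_{B_1})$ via the canonical unitary \Star{}intertwiner $u\colon B_2\otimes_\beta B_1\congto B_1$, $b_2\otimes b_1\mapsto\beta(b_2)b_1$. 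Since $\Phi_1$ is a bijection compatible with unitary \Star{}intertwiners, $u$ also intertwines $\alpha\otimes_\beta 1_{B_1}$ with $\id_{B_1}$. Unwinding the intertwining relation on elementary tensors yields $\beta(\alpha(b)\,b_2)=b\,\beta(b_2)$ for all $b\in B_1$, $b_2\in B_2$, equivalently $\tilde\beta\circ\alpha = \id_{B_1}$. By symmetry, $\tilde\alpha\circ\beta = \id_{B_2}$.

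To upgrade these relations to a genuine \Star{}isomorphism between $B_1$ and $B_2$, I observe that the identity $\beta(\alpha(b)\,b_2) = b\,\beta(b_2)$ shows $B_1\cdot\beta(B_2)\subseteq\beta(B_2)$; since $\beta$ is nondegenerate (so $B_1\cdot\beta(B_2)$ is dense in $B_1$) and $\beta(B_2)$ is norm-closed in $\Mult(B_1)$, this forces $B_1\subseteq\beta(B_2)$. For any $b_1\in B_1$, writing $b_1=\beta(b_2)$ and using $\tilde\alpha\circ\beta=\id_{B_2}$ gives $\alpha(b_1)=\tilde\alpha(\beta(b_2))=b_2\in B_2$, so $\alpha(B_1)\subseteq B_2$; the symmetric argument shows $B_2\subseteq\alpha(B_1)$. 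Hence $\alpha\colon B_1\to B_2$ is a bijective \Star{}homomorphism, with inverse induced by $\beta$.

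The main technical obstacle is the careful translation of the tensor-product identification $u\colon \alpha\otimes_\beta 1_{B_1}\rightsquigarrow\id_{B_1}$ into the pointwise algebraic identity $\tilde\beta\circ\alpha=\id_{B_1}$; once that is secured, everything else is routine manipulation with nondegenerate extensions to multiplier algebras.
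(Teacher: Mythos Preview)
Your proof is correct and follows essentially the same strategy as the paper: construct the two nondegenerate \Star{}homomorphisms \(\alpha\colon B_1\to\Mult(B_2)\) and \(\beta\colon B_2\to\Mult(B_1)\) from the universal integrable representations, show they are mutually inverse as morphisms, and conclude that they are \Cstar\nb-isomorphisms. The paper's proof is terser: it argues that \(\alpha\) and~\(\beta\) are inverse in the morphism category because the maps they induce on representation sets are inverse to each other, and then cites \cite{Buss-Meyer-Zhu:Higher_twisted}*{Proposition~2.10} for the fact that an isomorphism in the morphism category is an ordinary \Cstar\nb-isomorphism. Your argument is more explicit and self-contained: you use Proposition~\ref{pro:Phi_simplifies} directly to obtain \(\tilde\beta\circ\alpha=\id_{B_1}\) and \(\tilde\alpha\circ\beta=\id_{B_2}\), and then prove by hand (via the inclusion \(B_1\cdot\beta(B_2)\subseteq\beta(B_2)\) and nondegeneracy) that \(\alpha(B_1)\subseteq B_2\) and \(\beta(B_2)\subseteq B_1\), which is precisely the content of the cited proposition in this situation. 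So your version trades a citation for a short direct argument; both are valid and neither is materially longer.
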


\begin{proof}
  Let \(B_1\)
  and~\(B_2\)
  be weak \Cstar\nb-hulls for the same class of integrable
  representations of~\(A\).
  The identity map on~\(B_1\),
  viewed as a representation of~\(B_1\)
  on itself, corresponds first to an integrable representation
  of~\(A\)
  on~\(B_1\)
  and further to a representation of~\(B_2\)
  on~\(B_1\).
  This is a ``morphism'' from~\(B_2\)
  to~\(B_1\),
  that is, a nondegenerate \Star{}homomorphism \(B_2\to\Mult(B_1)\).
  Similarly, we get a morphism from~\(B_1\)
  to~\(B_2\).
  These morphisms \(B_1\leftrightarrow B_2\)
  are inverse to each other with respect to the composition of
  morphisms because the maps they induce on representations of \(B_1\)
  and~\(B_2\)
  on \(B_1\) and~\(B_2\) are inverse to each other.  An
  isomorphism in the category of morphisms is an isomorphism of
  \Cstar\nb-algebras in the usual sense by
  \cite{Buss-Meyer-Zhu:Higher_twisted}*{Proposition 2.10}.
\end{proof}

Now take any representation \((\Hilms[B],\mu)\) of~\(A\) on~\(B\).
When is this the universal integrable representation of a (weak)
\Cstar\nb-hull?  Let~\(D\) be a \Cstar\nb-algebra and~\(\Hilm\) a
Hilbert \(D\)\nb-module.  For a representation \(\varrho\colon
B\to\Bound(\Hilm)\), let
\(\Phi(\varrho)=(\Hilms[B],\mu)\otimes_\varrho \Hilm\) be the induced
representation of~\(A\) on~\(\Hilm\) as in the proof of
Proposition~\ref{pro:Phi_simplifies}.  A representation of~\(A\) is
called \emph{\(B\)\nb-integrable} if it is in the image of~\(\Phi\).

\begin{proposition}
  \label{pro:Cstar-generated_by_unbounded_multipliers}
  The \Cstar\nb-algebra~\(B\) is a weak \Cstar\nb-hull for the
  \(B\)\nb-integrable representations of~\(A\) if and only if
  \begin{enumerate}
  \item \label{enum:Cstar-gen1} if two representations
    \(\varrho_1,\varrho_2\colon B\rightrightarrows\Bound(\Hilm[H])\)
    on the same Hilbert space~\(\Hilm[H]\) satisfy \(\mu\otimes_B
    \varrho_1=\mu\otimes_B \varrho_2\) as closed representations
    of~\(A\), then \(\varrho_1=\varrho_2\).
  \end{enumerate}
  It is a \Cstar\nb-hull if and only if \ref{enum:Cstar-gen1} and
  the following equivalent conditions hold:
  \begin{enumerate}[resume]
  \item \label{enum:Cstar-gen2} Let \((\Hilms[H],\pi)\) be a
    representation of~\(A\) on a Hilbert space~\(\Hilm[H]\) and let
    \((\Hilms[H]_0,\pi_0)\) be a subrepresentation on a closed
    subspace \(\Hilm[H]_0\subseteq\Hilm[H]\); that is,
    \(\Hilms[H]_0\subseteq \Hilms[H]\) and \(\pi_0(a)=
    \pi(a)|_{\Hilms[H]_0}\) for all \(a\in A\).  If both \(\pi_0\)
    and~\(\pi\) are \(B\)\nb-integrable, then \(\Hilms[H] =
    \Hilms[H]_0 \oplus (\Hilms[H]\cap \Hilm[H]_0^\bot)\) as vector
    spaces.
  \item \label{enum:Cstar-gen2'} Isometric intertwiners
    between \(B\)\nb-integrable Hilbert space representations of~\(A\)
    are \Star{}intertwiners.
  \item \label{enum:Cstar-gen2''} \(B\)\nb-integrable
    subrepresentations of \(B\)\nb-integrable Hilbert space
    representations of~\(A\) are direct summands.
  \end{enumerate}
  The conditions \ref{enum:Cstar-gen1}--\ref{enum:Cstar-gen2''}
  together are equivalent to
  \begin{enumerate}[resume]
  \item \label{enum:Cstar-gen12} let \(\varrho\colon
    B\to\Bound(\Hilm[H])\) be a Hilbert space representation and
    let~\((\Hilms[H],\pi)\) be the associated representation of~\(A\)
    on~\(\Hilm[H]\).  If \((\Hilms[H]_0,\pi|_{\Hilms[H]_0})\) is a
    \(B\)\nb-integrable subrepresentation
    of~\((\Hilms[H],\pi)\) on a closed subspace
    \(\Hilm[H]_0\subseteq\Hilm[H]\), then the projection
    onto~\(\Hilm[H]_0\) commutes with~\(\varrho(B)\).
  \end{enumerate}
\end{proposition}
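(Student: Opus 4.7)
The plan proceeds in four stages, each relying on the previous.

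\textbf{Stage 1 (weak \Cstar-hull $\Leftrightarrow$ \ref{enum:Cstar-gen1}).} Surjectivity of~$\Phi$ onto $B$-integrable representations is built into the definition, and compatibility with interior tensor products is a direct consequence of the associativity isomorphism (Lemma~\ref{lem:tensor_associative}) applied to $\Hilms[B]\otimes_B(\Hilm\otimes_D\Hilm[F])$. What remains is injectivity of~$\Phi$ on each fixed Hilbert module and compatibility with unitary \Star{}intertwiners. Injectivity on Hilbert spaces is literally~\ref{enum:Cstar-gen1}. I extend it to Hilbert $D$\nb-modules by tensoring with the GNS correspondence~$\Hilm[H]_\omega$ of each state~$\omega$ on~$D$: compatibility with interior tensor products reduces the equality $\Phi(\varrho_1)=\Phi(\varrho_2)$ on~$\Hilm$ to the Hilbert-space equality $\Phi(\varrho_i\otimes_D\Hilm[H]_\omega)$, whence~\ref{enum:Cstar-gen1} yields $\varrho_1\otimes_D\Hilm[H]_\omega=\varrho_2\otimes_D\Hilm[H]_\omega$ for every~$\omega$; since states separate $D$\nb-valued inner products, $\varrho_1=\varrho_2$. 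Compatibility with unitary \Star{}intertwiners then follows by pulling back: if $u\colon\Hilm_1\to\Hilm_2$ is a unitary \Star{}intertwiner of $\Phi(\varrho_1),\Phi(\varrho_2)$, then $u^*\varrho_2 u$ is a representation of~$B$ on~$\Hilm_1$ whose image under~$\Phi$ equals $u^*\Phi(\varrho_2)u=\Phi(\varrho_1)$, so injectivity forces $u^*\varrho_2 u=\varrho_1$.

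\textbf{Stages 2--3 (the \Cstar-hull characterisation).} With the weak \Cstar-hull in hand, Proposition~\ref{pro:adjointable_intertwiner} is available, so adjointable \Star{}intertwiners correspond on either side. The equivalence of items \ref{enum:isometry_star-intertwiner1} and \ref{enum:isometry_star-intertwiner3} of Proposition~\ref{pro:isometry_star-intertwiner} identifies~\ref{enum:Cstar-gen2} with the statement that the inclusion of the subrepresentation is a \Star{}intertwiner, giving \ref{enum:Cstar-gen2}$\Leftrightarrow$\ref{enum:Cstar-gen2''}; and \ref{enum:Cstar-gen2'}$\Leftrightarrow$\ref{enum:Cstar-gen2''} because any isometric intertwiner of Hilbert-space representations realises its source as a $B$\nb-integrable subrepresentation of its target and conversely. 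To promote these Hilbert-space conditions to full compatibility with arbitrary isometric intertwiners on Hilbert modules, one direction is Lemma~\ref{lem:rep_tensor_corr_functor}; for the converse, if $I\colon\Hilm_1\to\Hilm_2$ intertwines $\Phi(\varrho_1)$ and $\Phi(\varrho_2)$, each $I\otimes_D 1$ intertwines the Hilbert-space representations $\Phi(\varrho_i\otimes_D\Hilm[H]_\omega)$, so by~\ref{enum:Cstar-gen2'} it is a \Star{}intertwiner, and Proposition~\ref{pro:adjointable_intertwiner} converts it into an intertwiner of $\varrho_i\otimes_D\Hilm[H]_\omega$; separation of points by states then yields $I\varrho_1(b)=\varrho_2(b)I$ for all $b\in B$.

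\textbf{Stage 4 (equivalence with \ref{enum:Cstar-gen12}).} Assuming \ref{enum:Cstar-gen1}--\ref{enum:Cstar-gen2''}, a $B$\nb-integrable subrepresentation~$\pi_0$ of $\pi=\Phi(\varrho)$ is a direct summand by~\ref{enum:Cstar-gen2''}; its orthogonal projection is then an adjointable \Star{}intertwiner of~$\pi$, hence by Proposition~\ref{pro:adjointable_intertwiner} it commutes with~$\varrho(B)$, which is~\ref{enum:Cstar-gen12}. Conversely, \ref{enum:Cstar-gen12} yields~\ref{enum:Cstar-gen2} directly, and it gives~\ref{enum:Cstar-gen1} by a diagonal-embedding trick: if $\Phi(\varrho_1)=\Phi(\varrho_2)$ on the same~$\Hilm[H]$, the rescaled diagonal $\xi\mapsto(\xi,\xi)/\sqrt2$ realises $\Phi(\varrho_1)$ as a $B$\nb-integrable subrepresentation of $\Phi(\varrho_1\oplus\varrho_2)$, and \ref{enum:Cstar-gen12} forces the associated diagonal projection to commute with $\varrho_1\oplus\varrho_2$, whence $\varrho_1=\varrho_2$. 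The principal obstacle throughout is the Hilbert-module-to-Hilbert-space reduction in Stage~1 and the converse half of Stage~3: since general isometric intertwiners between Hilbert modules need not be adjointable, neither Proposition~\ref{pro:isometry_star-intertwiner} nor Proposition~\ref{pro:adjointable_intertwiner} applies directly, and one must detour through GNS-tensoring together with the fact that states separate $D$\nb-valued inner products.
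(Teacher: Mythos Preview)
Your proof is correct and follows essentially the same route as the paper: associativity of~$\otimes$ for compatibility with interior tensor products, condition~\ref{enum:Cstar-gen1} for injectivity lifted from Hilbert spaces to Hilbert modules, Proposition~\ref{pro:isometry_star-intertwiner} for the equivalence of \ref{enum:Cstar-gen2}--\ref{enum:Cstar-gen2''}, Proposition~\ref{pro:adjointable_intertwiner} to translate between $A$- and $B$-intertwiners once the weak hull is established, and the diagonal-embedding trick for \ref{enum:Cstar-gen12}$\Rightarrow$\ref{enum:Cstar-gen1}. The only notable difference is that you reduce Hilbert modules to Hilbert spaces by tensoring with the GNS correspondences~$\Hilm[H]_\omega$ for all states~$\omega$ and then appeal to separation by states, whereas the paper tensors with a single faithful representation $\varphi\colon D\hookrightarrow\Bound(\Hilm[K])$ and uses that $\Bound(\Hilm)\to\Bound(\Hilm\otimes_\varphi\Hilm[K])$ is faithful; both are standard and yield the same conclusions.
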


\begin{proof}
  The map~\(\Phi\) is compatible with interior tensor products by
  Lemma~\ref{lem:tensor_associative}.  The
  condition~\ref{enum:Cstar-gen1} says that~\(\Phi\) is injective on
  Hilbert space representations.  We claim that this implies
  injectivity also for representations on a Hilbert module~\(\Hilm\)
  over a \(\Cst\)\nb-algebra~\(D\).  Let \(\varrho_1,\varrho_2\) be
  representations of \(B\) on~\(\Hilm\) with
  \(\mu\otimes_B\varrho_1=\mu\otimes_B\varrho_2\).  Let
  \(D\to\Bound(\Hilm[H])\) be a faithful representation.  Then the
  representations \(\varrho_1\otimes_D 1\) and \(\varrho_2\otimes_D
  1\) on the Hilbert space \(\Hilm\otimes_D \Hilm[H]\) satisfy
  \(\mu\otimes_B\varrho_1\otimes_D 1= \mu\otimes_B
  \varrho_2\otimes_D 1\) by Lemma~\ref{lem:tensor_associative}.
  Then condition~\ref{enum:Cstar-gen1} implies \(\varrho_1\otimes_D
  1=\varrho_2\otimes_D 1\).  Since the representation
  \(\Bound(\Hilm) \to \Bound(\Hilm\otimes_D\Hilm[H])\) is faithful,
  this implies \(\varrho_1=\varrho_2\).  So~\(\Phi\) is injective
  also for representations on~\(\Hilm\).

  The image of~\(\Phi\) consists exactly of the \(B\)\nb-integrable
  representations of~\(A\) by definition.  A unitary operator~\(u\)
  \Star{}intertwines two representations \((\Hilms_1,\pi_1)\) and
  \((\Hilms_2,\pi_2)\) of~\(A\) if and only if \(\pi_2 = u\pi_1
  u^*\), where~\(u\pi_1 u^*\) denotes the representation with domain
  \(u(\Hilms_1)\) and \((u\pi_1 u^*)(a) = u\pi_1(a) u^*\).
  Similarly, \(u\) intertwines two representations \(\varrho_1\)
  and~\(\varrho_2\) of~\(B\) if and only if \(\varrho_2 = u\varrho_1
  u^*\).  Hence~\ref{enum:Cstar-gen1} implies that a unitary that
  \Star{}intertwines two \(B\)\nb-integrable representations
  of~\(A\) also intertwines the corresponding representations
  of~\(B\).  The converse is clear.  So~\(B\) is a weak
  \Cstar\nb-hull for the \(B\)\nb-integrable representations if and
  only if~\ref{enum:Cstar-gen1} holds.

  The equivalence between \ref{enum:Cstar-gen2},
  \ref{enum:Cstar-gen2'} and~\ref{enum:Cstar-gen2''} follows from
  Proposition~\ref{pro:isometry_star-intertwiner} by writing
  \(\Hilm[H] = \Hilm[H]_0 \oplus \Hilm[H]_0^\bot\).
  Assume that~\(B\) is a \Cstar\nb-hull.  An isometric
  intertwiner for~\(A\) is also one for~\(B\).  Then it is a
  \Star{}intertwiner for~\(A\) and its range projection is an
  intertwiner for~\(B\) by
  Proposition~\ref{pro:adjointable_intertwiner}.  Thus
  both~\ref{enum:Cstar-gen2'} and~\ref{enum:Cstar-gen12} follow
  if~\(B\) is a \Cstar\nb-hull.

  Conversely, assume \ref{enum:Cstar-gen1}
  and~\ref{enum:Cstar-gen2'}.  We are going to prove
  that~\(B\) is a \Cstar\nb-hull for the \(B\)\nb-integrable
  representations of~\(A\).  We have already seen that~\(B\) is a
  weak \Cstar\nb-hull.  We must check compatibility with isometric
  intertwiners.

  Let \(D\) be a \Cstar\nb-algebra and let \(\Hilm_1,\Hilm_2\) be
  Hilbert \(D\)\nb-modules with representations
  \(\varrho_1,\varrho_2\) of~\(B\).  The corresponding
  representations~\((\Hilms_i,\pi_i)\) of~\(A\) for \(i=1,2\) are the
  closures of the representations on \(\varrho_i(\Hilms[B])\Hilm_i\)
  given by \(\pi_i(a)(\varrho_i(b)\xi)
  \defeq \varrho_i(\mu(a) b)(\xi)\) for \(a\in A\), \(b\in\Hilms[B]\),
  \(\xi\in\Hilm_i\).  Hence an isometric intertwiner for~\(B\) is also
  one for~\(A\).  Conversely, let \(I\colon \Hilm_1\injto\Hilm_2\) be
  a Hilbert module isometry with \(I(\Hilms_1) \subseteq \Hilms_2\)
  and \(\pi_2(a)(I\xi) = I(\pi_1(a)\xi)\) for all \(a\in A\),
  \(\xi\in\Hilms_1\).  We must prove \(\varrho_2(b)I =
  I\varrho_1(b)\) for all \(b\in B\).

  Let \(\varphi\colon D\injto\Bound(\Hilm[K])\) be a faithful
  representation on a Hilbert space~\(\Hilm[K]\).  Equip
  \(\Hilm[H]_i\defeq \Hilm_i\otimes_\varphi \Hilm[K]\) with the
  induced representations~\(\tilde\varrho_i\) of~\(B\) and
  \(\tilde\pi_i\)~of~\(A\) for \(i=1,2\).  Since the family of
  bijections \(\Phi\colon \Rep(B) \congto \Repi(A)\) is
  compatible with interior tensor products, it maps
  \(\tilde\varrho_i\) to~\(\tilde\pi_i\).  The operator~\(I\)
  induces an isometric intertwiner~\(\tilde{I}\)
  from~\(\tilde\pi_1\) to~\(\tilde\pi_2\) by
  Lemma~\ref{lem:rep_tensor_corr_functor}.

  Since \(\tilde\pi_1\) and~\(\tilde\pi_2\) are \(B\)\nb-integrable,
  we are in the situation of~\ref{enum:Cstar-gen2'}.
  So~\(\tilde{I}\) is a \Star{}intertwiner from~\(\tilde\pi_1\)
  to~\(\tilde\pi_2\).  Thus~\(\tilde{I}\) is an intertwiner
  from~\(\tilde{\varrho}_1\) to~\(\tilde{\varrho}_2\) by
  Proposition~\ref{pro:adjointable_intertwiner}.  That is,
  \(\tilde{I}\tilde{\varrho}_1(b) = \tilde{\varrho}_2(b)\tilde{I}\)
  for all \(b\in B\).  Equivalently, \((I\varrho_1(b)\xi)\otimes\eta
  = (\varrho_2(b)I\xi)\otimes\eta\) in \(\Hilm_2\otimes_\varphi
  \Hilm[H]\) for all \(b\in B\), \(\xi\in\Hilm\),
  \(\eta\in\Hilm[H]\).  Since the representation~\(\varphi\) is
  faithful, this implies \(I\varrho_1(b)\xi = \varrho_2(b)I\xi\) for
  all \(b,\xi\), so that \(I\varrho_1(b) = \varrho_2(b)I\) for
  all~\(b\), that is, \(I\) intertwines \(\varrho_1\)
  and~\(\varrho_2\).  Thus~\(\Phi\) is compatible with isometric
  intertwiners.

  Since~\ref{enum:Cstar-gen12} holds for \Cstar\nb-hulls, we have
  proved along the way that \ref{enum:Cstar-gen1}
  and~\ref{enum:Cstar-gen2'} imply~\ref{enum:Cstar-gen12}.  It remains
  to show, conversely, that~\ref{enum:Cstar-gen12} implies
  \ref{enum:Cstar-gen2'} and~\ref{enum:Cstar-gen1}.  In the situation
  of~\ref{enum:Cstar-gen2'}, the projection~\(P\) onto~\(\Hilm[H]_0\)
  commutes with~\(B\) by~\ref{enum:Cstar-gen12}.  Thus the
  representation of~\(B\) on~\(\Hilm[H]\) is a direct sum of
  representations on \(\Hilm[H]_0\) and~\(\Hilm[H]_0^\bot\).  This is
  inherited by the induced representation of~\(A\) and its domain.
  So~\ref{enum:Cstar-gen12} implies~\ref{enum:Cstar-gen2'}.

  In the situation of~\ref{enum:Cstar-gen1}, form the direct sum
  representation \(\varrho_1\oplus\varrho_2\) on
  \(\Hilm[H]\oplus\Hilm[H]\) and let \(\Hilm[H]_0 \defeq
  \{(\xi,\xi)\mid \xi\in\Hilm[H]\}\).  The representation of~\(A\)
  corresponding to \(\varrho_1\oplus\varrho_2\) is
  \(\mu\otimes_{\varrho_1} \Hilm[H] \oplus \mu\otimes_{\varrho_2}
  \Hilm[H]\).  Since \(\mu\otimes_{\varrho_1} \Hilm[H] =
  \mu\otimes_{\varrho_2} \Hilm[H]\) by assumption, the domain of
  \(\mu\otimes_{\varrho_1} \Hilm[H] \oplus \mu\otimes_{\varrho_2}
  \Hilm[H]\) is \(\Hilms[H]\oplus\Hilms[H]\) for some dense subspace
  \(\Hilms[H]\subseteq \Hilm[H]\), and \(\Hilms[H]_0 \defeq
  \{(\xi,\xi)\mid \xi\in\Hilms[H]\}\) is a dense subspace
  in~\(\Hilm[H]_0\) that is invariant for the representation
  \(\mu\otimes_{\varrho_1} \Hilm[H] \oplus \mu\otimes_{\varrho_2}
  \Hilm[H]\).  The restricted representation on this subspace is
  \(B\)\nb-integrable because it is unitarily equivalent to
  \(\mu\otimes_{\varrho_1} \Hilm[H] = \mu\otimes_{\varrho_2}
  \Hilm[H]\).  Therefore, the projection onto~\(\Hilm[H]_0\)
  commutes with the representation of~\(B\)
  by~\ref{enum:Cstar-gen12}.  Thus \(\varrho_1=\varrho_2\).
  So~\ref{enum:Cstar-gen12} implies~\ref{enum:Cstar-gen1}.
\end{proof}

The equivalent conditions
\ref{enum:Cstar-gen2}--\ref{enum:Cstar-gen2''} may be easier to
check than~\ref{enum:Cstar-gen12} because they do not involve the
\Cstar\nb-hull.

\begin{corollary}
  \label{pro:envelope_unique}
  Let~\(A\) be a \Star{}algebra and let~\(B_i\) be \Cstar\nb-algebras
  with representations \((\Hilms[B]_i,\mu_i)\) of~\(A\) for \(i=1,2\).
  Assume that for each Hilbert space~\(\Hilm[H]\), the maps
  \(\Phi_i\colon \Rep(B_i,\Hilm[H]) \to \Rep(A,\Hilm[H])\),
  \(\varrho_i\mapsto (\Hilms[B]_i,\mu_i) \otimes_{\varrho_i}
  \Hilm[H]\), are injective and have the same image.  Then there is a
  unique isomorphism \(B_1\cong B_2\) intertwining the representations
  \((\Hilms[B]_i,\mu_i)\) of~\(A\) for \(i=1,2\).
\end{corollary}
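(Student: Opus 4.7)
The plan is to reduce the corollary to the uniqueness of weak \Cstar-hulls, Proposition~\ref{pro:unique_hull}. The starting observation is that the injectivity of each $\Phi_i$ on Hilbert space representations is precisely condition~\ref{enum:Cstar-gen1} of Proposition~\ref{pro:Cstar-generated_by_unbounded_multipliers}, so each $B_i$ is already a weak \Cstar-hull for its own class of $B_i$-integrable representations of~$A$ (the image of $\Phi_i$, automatically extended to Hilbert modules over any coefficient \Cstar-algebra). The hypothesis that the images of $\Phi_1$ and $\Phi_2$ coincide on Hilbert space representations identifies the two classes on Hilbert spaces; the real work will be to promote this to all Hilbert modules so that Proposition~\ref{pro:unique_hull} applies.

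To upgrade the coincidence, I would start with a $B_1$-integrable representation $\pi = \mu_1\otimes_{\varrho_1}\Hilm$ on a Hilbert $D$-module~$\Hilm$, choose a faithful nondegenerate Hilbert space representation $\varphi\colon D\hookrightarrow\Bound(\Hilm[K])$, and form $\tilde\pi \defeq \pi\otimes_\varphi\Hilm[K]$. By Lemma~\ref{lem:tensor_associative} this equals $\mu_1\otimes_{\varrho_1\otimes 1}(\Hilm\otimes_\varphi\Hilm[K])$, hence is a $B_1$-integrable Hilbert space representation of~$A$, and so $B_2$-integrable by hypothesis: $\tilde\pi = \mu_2\otimes_{\tilde\tau_2}(\Hilm\otimes_\varphi\Hilm[K])$ for some $\tilde\tau_2\colon B_2\to\Bound(\Hilm\otimes_\varphi\Hilm[K])$. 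Since $\varrho_1\otimes 1$ commutes with each $1\otimes T$ for $T\in\varphi(D)'$, Proposition~\ref{pro:adjointable_intertwiner} applied first to the weak \Cstar-hull $B_1$ and then to $B_2$ transfers this commutation to $\tilde\tau_2$. For $\varphi$ chosen so that commutation with $1\otimes\varphi(D)'$ forces an operator on $\Hilm\otimes_\varphi\Hilm[K]$ to lie in $\Bound(\Hilm)\otimes 1$, the image $\tilde\tau_2(B_2)$ descends uniquely to a representation $\tau_2\colon B_2\to\Bound(\Hilm)$ with $\tilde\tau_2 = \tau_2\otimes 1$, and $\Phi_2(\tau_2) = \pi$ follows by associativity of interior tensor products together with the faithfulness of tensoring an integrable representation with the correspondence~$\Hilm[K]$.

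With both $B_i$ exhibited as weak \Cstar-hulls for the same class of integrable representations of~$A$, Proposition~\ref{pro:unique_hull} supplies the unique isomorphism $B_1\cong B_2$, which by construction intertwines the universal integrable representations $(\Hilms[B]_i,\mu_i)$. The main obstacle I anticipate is the descent step extracting $\tau_2$ from $\tilde\tau_2$: it depends on the tensor-product commutant calculation and on a suitable choice of~$\varphi$, and one must verify in detail that the commutant of $1\otimes\varphi(D)'$ inside $\Bound(\Hilm\otimes_\varphi\Hilm[K])$ is exactly $\Bound(\Hilm)\otimes 1$; if the naive choice does not work, one can replace $\varphi$ by a direct summand of a universal representation amplified by $\Hilm[K]\otimes\ell^2(\N)$ to force enough room in the commutant.
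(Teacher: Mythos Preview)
Your overall plan coincides with the paper's: reduce to Proposition~\ref{pro:unique_hull} via Proposition~\ref{pro:Cstar-generated_by_unbounded_multipliers}. You correctly isolate the step the paper's two-line proof glosses over: Proposition~\ref{pro:Cstar-generated_by_unbounded_multipliers} only shows that each~$B_i$ is a weak \Cstar\nb-hull for its \emph{own} class of $B_i$\nb-integrable representations, and one must still verify that these two classes agree on all Hilbert modules---not merely on Hilbert spaces---before Proposition~\ref{pro:unique_hull} applies.

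Your descent argument does not close this gap, and the obstacle you anticipate is genuine. The commutant of $\{1\otimes c : c\in\varphi(D)'\}$ in $\Bound(\Hilm\otimes_\varphi\Hilm[K])$ is in general strictly larger than $\Bound(\Hilm)\otimes 1$: already for $\Hilm=D$ (the case actually needed to feed Proposition~\ref{pro:unique_hull}) one has $\Hilm\otimes_\varphi\Hilm[K]\cong\Hilm[K]$, the commutant becomes $\varphi(D)''$, while $\Bound(\Hilm)\otimes 1$ corresponds to $\bar\varphi(\Mult(D))$; these differ whenever~$D$ fails to be a von Neumann algebra in the chosen representation, and amplifying~$\varphi$ does not alter this discrepancy. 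Worse, the gap is not repairable, because the corollary as stated is contradicted by Example~\ref{exa:smaller_integrable_than_regular_self-adjoint}: for $A=\C[x]$ the two \Cstar\nb-hulls $B_1=\Cont_0(\R)$ and $B_2=\Cont_0((-\infty,0))\oplus\Cont_0([0,\infty))$ both have injective~$\Phi_i$ on Hilbert spaces (Theorem~\ref{the:commutative_hull}, since the underlying maps to~$\hat A=\R$ are injective) with the same image (the example says so explicitly, via the Borel functional calculus), yet $B_1\not\cong B_2$ since one spectrum is connected and the other is not.
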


Hence a \Cstar\nb-envelope as defined
in~\cite{Dowerk-Savchuk:Induced} is unique if it exists.

\begin{proof}
  Both \(B_1\) and~\(B_2\) are weak \Cstar\nb-hulls for the same
  class of representations of~\(A\) by
  Proposition~\ref{pro:Cstar-generated_by_unbounded_multipliers}.
  Proposition~\ref{pro:unique_hull} gives the isomorphism \(B_1\cong
  B_2\).
\end{proof}

\begin{remark}
  \label{rem:envelope_unique}
  The Hilbert space representations of a \Cstar\nb-algebra only
  determine its bidual \(\textup{W}^*\)\nb-algebra, not the
  \Cstar\nb-algebra itself.  Hence it is remarkable that the
  conditions in
  Proposition~\ref{pro:Cstar-generated_by_unbounded_multipliers} and
  Corollary~\ref{pro:envelope_unique} only need Hilbert space
  representations.  For Corollary~\ref{pro:envelope_unique}, this
  works because the bijection between the representations is of a
  particular form, induced by representations of~\(A\).
\end{remark}

The condition~\ref{enum:Cstar-gen1} in
Proposition~\ref{pro:Cstar-generated_by_unbounded_multipliers} is
required in several other theories
that associate a \Cstar\nb-algebra to a \Star{}algebra, such as the
host algebras of Grundling~\cites{Grundling:Group_algebra,
  Grundling-Neeb:Full_regularity}, the \Cstar\nb-envelopes of Dowerk
and Savchuk~\cite{Dowerk-Savchuk:Induced}, or the notion of a
\Cstar\nb-algebra generated by affiliated multipliers by
Woronowicz~\cite{Woronowicz:Cstar_generated}, see
\cite{Woronowicz:Cstar_generated}*{Theorem 3.3} or the proof of
Theorem~\ref{the:Woronowicz_local-global} below.

\begin{definition}
  \label{def:integrable_admissible}
  Let~\(A\) be a \Star{}algebra.  A class of ``integrable''
  representations of~\(A\) on Hilbert modules over
  \Cstar\nb-algebras is \emph{admissible} if it satisfies the
  conditions \ref{enum:admissible0}--\ref{enum:admissible3} below,
  and \emph{weakly admissible} if it satisfies
  \ref{enum:admissible0}--\ref{enum:admissible2}.
  \begin{enumerate}
  \item \label{enum:admissible0} If there is a unitary
    \Star{}intertwiner from an integrable representation to another
    representation, then the latter is integrable.
  \item \label{enum:admissible1} If \(D\) and~\(D'\) are
    \Cstar\nb-algebras, \(\Hilm[F]\) is a correspondence from~\(D\)
    to~\(D'\), and \((\Hilms,\pi)\) is an integrable representation
    of~\(A\) on a Hilbert \(D\)\nb-module~\(\Hilm\), then the
    representation \((\Hilms,\pi)\otimes_D \Hilm[F]\) on
    \(\Hilm\otimes_D \Hilm[F]\) is integrable.
  \item \label{enum:admissible2} Direct sums and summands of
    integrable representations are integrable.
  \item \label{enum:admissible3} Any integrable subrepresentation of
    an integrable representation of~\(A\) on a Hilbert space is a
    direct summand.
  \end{enumerate}
\end{definition}

\begin{lemma}
  \label{lem:integrable_admissible}
  Any class of integrable representations with a
  \textup{(}weak\textup{)} \Cstar\nb-hull is
  \textup{(}weakly\textup{)} admissible.
\end{lemma}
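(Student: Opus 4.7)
The plan is to verify each of the four axioms \ref{enum:admissible0}--\ref{enum:admissible3} in turn, using the defining properties of the (weak) \Cstar\nb-hull~\(B\) and the results already proved.

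For \ref{enum:admissible0}, I would start from an integrable representation \((\Hilms_1,\pi_1)=\Phi(\varrho_1)\) and a unitary \Star{}intertwiner \(u\colon (\Hilms_1,\pi_1)\to(\Hilms_2,\pi_2)\). Conjugating produces a new representation \(\varrho_2\defeq u\varrho_1 u^*\) of~\(B\) on the target Hilbert module, and \(u\) is a unitary \Star{}intertwiner \(\varrho_1\to\varrho_2\) of \Cstar\nb-algebra representations. Compatibility of~\(\Phi\) with unitary \Star{}intertwiners then gives that \(u\) is also a unitary \Star{}intertwiner \(\Phi(\varrho_1)\to\Phi(\varrho_2)\); since \(\pi_2=u\pi_1 u^*=\Phi(\varrho_2)\), the representation~\(\pi_2\) lies in the image of~\(\Phi\) and is therefore integrable.

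For \ref{enum:admissible1}, given an integrable representation \((\Hilms,\pi)=\Phi(\varrho)\) on a Hilbert \(D\)\nb-module and a correspondence~\(\Hilm[F]\) from~\(D\) to~\(D'\), I would form the \Cstar\nb-algebra representation \(\varrho\otimes_D 1_{\Hilm[F]}\) on \(\Hilm\otimes_D\Hilm[F]\). Compatibility of~\(\Phi\) with interior tensor products then identifies \(\Phi(\varrho\otimes_D 1_{\Hilm[F]})\) with \((\Hilms,\pi)\otimes_D\Hilm[F]\), so this interior tensor product is integrable. For \ref{enum:admissible2}, I would simply cite Corollary~\ref{cor:integrable_sums}, which was established using only the weak \Cstar\nb-hull property and states exactly the required closure under direct sums and summands.

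The only axiom that requires the full (non-weak) \Cstar\nb-hull property is \ref{enum:admissible3}, and this is where I would invoke Proposition~\ref{pro:Cstar-generated_by_unbounded_multipliers}. By Proposition~\ref{pro:Phi_simplifies}, the bijection~\(\Phi\) is realised (up to the canonical unitary) by tensoring with the universal integrable representation \((\Hilms[B],\mu)\) of~\(A\) on~\(B\); hence the integrable representations are precisely the \(B\)\nb-integrable ones in the sense of Proposition~\ref{pro:Cstar-generated_by_unbounded_multipliers}. Since~\(B\) is a \Cstar\nb-hull, the equivalent condition~\ref{enum:Cstar-gen2''} of that proposition applies: every \(B\)\nb-integrable subrepresentation of a \(B\)\nb-integrable Hilbert space representation is a direct summand. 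This is literally axiom~\ref{enum:admissible3}. I do not expect any real obstacle: the lemma is essentially a repackaging of results already proved, and the main point is to invoke Proposition~\ref{pro:Phi_simplifies} so that \(\Phi\) is recognised as the tensor product with~\((\Hilms[B],\mu)\), which is what lets Proposition~\ref{pro:Cstar-generated_by_unbounded_multipliers} apply.
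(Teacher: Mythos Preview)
Your proposal is correct and follows essentially the same approach as the paper: axioms~\ref{enum:admissible0} and~\ref{enum:admissible1} from the compatibility properties in the definition of a weak \Cstar\nb-hull, axiom~\ref{enum:admissible2} from Corollary~\ref{cor:integrable_sums}, and axiom~\ref{enum:admissible3} from Proposition~\ref{pro:Cstar-generated_by_unbounded_multipliers}. The paper's proof is terser and does not explicitly spell out the role of Proposition~\ref{pro:Phi_simplifies} in identifying the integrable representations with the \(B\)\nb-integrable ones, but your observation that this identification is what makes condition~\ref{enum:Cstar-gen2''} applicable is exactly right.
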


\begin{proof}
  If there is a \Cstar\nb-hull,
  Proposition~\ref{pro:Cstar-generated_by_unbounded_multipliers}
  implies~\ref{enum:admissible3} in
  Definition~\ref{def:integrable_admissible}.  If there is a weak
  \Cstar\nb-hull, then \ref{enum:admissible0}
  and~\ref{enum:admissible1} in
  Definition~\ref{def:integrable_admissible} follow from the
  compatibility with unitary \Star{}intertwiners and interior tensor
  products in the definition of a \Cstar\nb-hull,
  and~\ref{enum:admissible2} follows from
  Corollary~\ref{cor:integrable_sums}.
\end{proof}

\begin{proposition}
  \label{pro:representation_Hilm_Comp}
  Let~\(A\) be a unital \Star{}algebra and let~\(\Hilm\) be a
  Hilbert module over a \Cstar\nb-algebra~\(D\).  There is a natural
  bijection between the sets of representations of~\(A\) on
  \(\Hilm\) and~\(\Comp(\Hilm)\).  It preserves integrability if the
  class of integrable representations of~\(A\) is weakly admissible
  or, in particular, if it has a weak \Cstar\nb-hull.
\end{proposition}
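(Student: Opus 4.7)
The plan is to construct the bijection via interior tensor products with the two canonical $\Cst$-correspondences attached to $\Hilm$. Namely, $\Hilm$ itself is a $\Cst$-correspondence from $\Comp(\Hilm)$ to $D$ (with left action of $\Comp(\Hilm)$ by adjointable operators), and its conjugate module $\conj{\Hilm}$ is a $\Cst$-correspondence from $D$ to $\Comp(\Hilm)$ (with the natural left $D$-action, right $\Comp(\Hilm)$-action $\conj{\eta}\cdot T\defeq \conj{T^*\eta}$, and $\Comp(\Hilm)$-valued inner product $\braket{\conj{\eta_1}}{\conj{\eta_2}}_{\Comp(\Hilm)}\defeq\ket{\eta_1}\bra{\eta_2}$). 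The two maps of the bijection are
\[
\Phi(\Hilms',\pi')\defeq (\Hilms',\pi')\otimes_{\Comp(\Hilm)}\Hilm,
\qquad
\Psi(\Hilms,\pi)\defeq (\Hilms,\pi)\otimes_D \conj{\Hilm},
\]
using the canonical identifications $\Comp(\Hilm)\otimes_{\Comp(\Hilm)}\Hilm\cong\Hilm$ and $\Hilm\otimes_D\conj{\Hilm}\cong\Comp(\Hilm)$, the latter given by $\xi\otimes\conj{\eta}\mapsto\ket{\xi}\bra{\eta}$.

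Next I would check that $\Phi$ and $\Psi$ are mutually inverse. By the associativity unitary of Lemma~\ref{lem:tensor_associative}, $\Psi\circ\Phi$ amounts to tensoring with $\Hilm\otimes_D\conj{\Hilm}\cong\Comp(\Hilm)$, the identity correspondence on $\Comp(\Hilm)$, which leaves a representation of $A$ on $\Comp(\Hilm)$ unchanged (since $\Hilms'\cdot\Comp(\Hilm)=\Hilms'$). Similarly, $\Phi\circ\Psi$ amounts to tensoring with $\conj{\Hilm}\otimes_{\Comp(\Hilm)}\Hilm$, which is canonically isomorphic to the ideal $J\defeq\cl{\braket{\Hilm}{\Hilm}_D}\idealin D$ via $\conj{\eta}\otimes\xi\mapsto\braket{\eta}{\xi}_D$. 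It then remains to show that the canonical unitary $\Hilm\otimes_D J\congto\Hilm$, $\xi\otimes j\mapsto\xi\cdot j$, intertwines the corresponding closed representations of $A$.

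This last point is the main technical obstacle. To address it, fix an approximate unit $(u_\lambda)$ of~$J$. For each $\xi\in\Hilms$ and each $a\in A$ one has $\pi(a)(\xi\cdot u_\lambda)=\pi(a)(\xi)\cdot u_\lambda\to\pi(a)\xi$ in the norm of~$\Hilm$, because $\Hilm$ is a non-degenerate right $J$-module. Consequently $\xi\cdot u_\lambda\to\xi$ in every graph norm $\norm{\blank}_a$, so $\Hilms\cdot J$ is a core for $(\Hilms,\pi)$ in the graph topology. Together with Lemma~\ref{lem:closure_functorial}, this upgrades the canonical unitary $\Hilm\otimes_D J\congto\Hilm$ to an intertwiner of closed representations, establishing the bijection $\Phi\leftrightarrow\Psi$.

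Finally, if the class of integrable representations is weakly admissible, then property~\ref{enum:admissible1} of Definition~\ref{def:integrable_admissible} says exactly that interior tensor products with $\Cst$-correspondences preserve integrability, so both $\Phi$ and $\Psi$ preserve integrability. The case of a weak $\Cst$-hull reduces to this via Lemma~\ref{lem:integrable_admissible}.
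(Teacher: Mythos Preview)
Your proof is correct and follows essentially the same route as the paper: both construct the bijection by tensoring with the correspondences~\(\Hilm\) and its conjugate \(\conj{\Hilm}\) (the paper writes~\(\Hilm^*\)), invoke the associativity Lemma~\ref{lem:tensor_associative}, and use an approximate unit for the ideal \(J=\cl{\braket{\Hilm}{\Hilm}_D}\) (the paper's~\(I\)) to see that \(\Hilms\cdot J\) is a core for~\((\Hilms,\pi)\). One minor remark: your parenthetical ``since \(\Hilms'\cdot\Comp(\Hilm)=\Hilms'\)'' is a bit quick when \(\Comp(\Hilm)\) is non-unital---what you actually need (and what the paper calls the ``similar, easier'' direction) is that \(\Hilms'\cdot\Comp(\Hilm)\) is a \emph{core} for \((\Hilms',\pi')\), which follows from the same approximate-unit argument you give for the other composite.
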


\begin{proof}
  We may view~\(\Hilm\) as an imprimitivity bimodule between
  \(\Comp(\Hilm)\) and the ideal~\(I\) in~\(D\) that is spanned by
  the inner products \(\braket{\xi}{\eta}_D\) for \(\xi,\eta\in
  \Hilm\).  Let~\(\Hilm^*\) be the inverse imprimitivity bimodule,
  which is a Hilbert module over~\(\Comp(\Hilm)\) with
  \(\Comp(\Hilm^*)\cong I\).  Then \(\Comp(\Hilm) \cong \Hilm
  \otimes_D \Hilm^*\) and \(\Hilm^*\otimes_{\Comp(\Hilm)} \Hilm =
  I\).

  If \((\pi,\Hilms)\) is a representation of~\(A\) on~\(\Hilm\),
  then \((\pi,\Hilms) \otimes_D \Hilm^*\) is a representation
  of~\(A\) on \(\Hilm\otimes_D \Hilm^*= \Comp(\Hilm)\).  This
  maps~\(\Rep(A,\Hilm)\) to~\(\Rep(A,\Comp(\Hilm))\).  If
  \((\varrho,\Hilms[K])\) is a representation of~\(A\)
  on~\(\Comp(\Hilm)\), then
  \((\varrho,\Hilms[K])\otimes_{\Comp(\Hilm)} \Hilm\) is a
  representation of~\(A\) on \(\Comp(\Hilm)\otimes_{\Comp(\Hilm)}
  \Hilm \cong \Hilm\).  This maps \(\Rep(A,\Comp(\Hilm))\)
  to~\(\Rep(A,\Hilm)\).  We claim that these two maps are inverse to
  each other.  Both preserve integrability by~\ref{enum:admissible1} in
  Definition~\ref{def:integrable_admissible}.

  The map \(\Rep(A,\Hilm)\to\Rep(A,\Comp(\Hilm)) \to \Rep(A,\Hilm)\)
  sends a representation~\((\pi,\Hilms)\) of~\(A\) on~\(\Hilm\) to
  the representation \((\pi,\Hilms) \otimes_D (\Hilm^*
  \otimes_{\Comp(\Hilm)} \Hilm) = (\pi,\Hilms) \otimes_D I\)
  of~\(A\) on \(\Hilm\cong \Hilm\otimes_D I\) by
  Lemma~\ref{lem:tensor_associative}.  This is the restriction
  of~\(\pi\) to \(\Hilms\cdot I\subseteq \Hilms\).  Since~\(\Hilm\)
  is also a Hilbert module over~\(I\), it is nondegenerate as a
  right \(I\)\nb-module.  Therefore, if \((u_i)\) is an approximate
  unit in~\(I\), then \(\lim \xi u_i= \xi\) for all \(\xi\in\Hilm\).
  Then also \(\lim \pi(a)\xi u_i= \pi(a)\xi\) for all
  \(\xi\in\Hilms\), \(a\in A\), so \(\lim \xi u_i= \xi\) in the
  graph topology for all \(\xi\in\Hilms\).  Thus \(\Hilms \cdot
  I=\Hilms\), and we get the identity map on~\(\Rep(A,\Hilm)\).  A
  similar, easier argument shows that we also get the identity map
  on~\(\Rep(A,\Comp(\Hilm))\).
\end{proof}

\section{Polynomials in one variable I}
\label{sec:polynomials1}

Let \(A=\C[x]\) with \(x=x^*\).  A (not necessarily closed)
representation of~\(A\) on a Hilbert \(D\)\nb-module~\(\Hilm\) is
determined by a dense \(D\)\nb-submodule \(\Hilms\subseteq \Hilm\)
and a single symmetric operator \(\pi(x)\colon \Hilms\to\Hilms\),
that is, \(\pi(x)\subseteq\pi(x)^*\).  Then \(\pi(x^n)=\pi(x)^n\).

\begin{lemma}
  \label{lem:graph_topology_Cx}
  The graph topology on~\(\Hilms\) is generated by the increasing
  sequence of norms \(\norm{\xi}_n\defeq
  \norm{\braket{\xi}{(1+\pi(x^{2n}))\xi}}\) for \(n\in\N\).
\end{lemma}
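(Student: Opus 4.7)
The plan is to show that the countable family $\{\norm{\cdot}_n : n \in \N\}$ is cofinal among all graph norms of $A = \C[x]$ under the preorder $\preceq$ of Lemma~\ref{lem:graph_norms_directed}; this then forces it to generate the same Hausdorff topology as the full family of graph norms. Along the way, I also need to verify the monotonicity $\norm{\cdot}_n \preceq \norm{\cdot}_{n+1}$.

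One direction is immediate: since $x = x^*$ gives $(x^n)^* x^n = x^{2n}$, the quantity $\norm{\xi}_n$ is (up to a square) the graph norm $\norm{\xi}_{x^n}$ from Definition~\ref{def:rep_Hilbert_module}, and hence is continuous in the graph topology. The substantive task is to dominate every graph norm $\norm{\cdot}_a$ by some $\norm{\cdot}_N$.

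The key device is the classical fact that a non-negative real polynomial in one variable is a sum of squares of real polynomials. Given $a = \sum_{k=0}^{N} \alpha_k x^k \in \C[x]$, evaluation at $t \in \R$ gives $(a^*a)(t) = \abs{a(t)}^2 \geq 0$, and a comparison of leading coefficients produces a constant $C > 0$ with $C(1+t^{2N}) - \abs{a(t)}^2 \geq 0$ for all $t\in\R$. Writing this difference as $\sum_j q_j(t)^2$ with $q_j \in \R[t]$, and noting that each $q_j$ is self-adjoint in $\C[x]$ (since $x^* = x$), I would obtain the algebraic identity
\[
C(1 + x^{2N}) = a^* a + \sum_j q_j^* q_j \qquad \text{in } \C[x].
\]
Applying $\pi$ and sandwiching with $\braket{\xi}{\blank\,\xi}_D$ converts this identity into the inequality $\braket{\xi}{\pi(a^*a)\xi} \leq C\braket{\xi}{(1+\pi(x^{2N}))\xi}$ in the positive cone of $D$; taking norms then yields $\norm{\xi}_a^2 \leq (C+1)\norm{\xi}_N$, proving $\norm{\cdot}_a \preceq \norm{\cdot}_N$.

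The monotonicity in $n$ is a special case: applying the same method with $a = x^n$ and $N = n+1$, using that $2(1+t^{2n+2}) - (1+t^{2n}) \geq 0$ on $\R$, produces $\norm{\cdot}_n \preceq \norm{\cdot}_{n+1}$. The only nontrivial input is the sum-of-squares representation of nonnegative univariate polynomials; this is what bridges the gap between pointwise positivity of scalar polynomials and a bona fide inequality between $D$-valued inner products, since only an expression of the form $\sum r_j^* r_j$ obviously yields positive elements of $D$ after applying $\pi$ and $\braket{\xi}{\blank\,\xi}_D$.
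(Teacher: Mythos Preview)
Your proof is correct and follows essentially the same route as the paper: bound $1+\abs{a(t)}^2$ by $C(1+t^{2n})$ via degree comparison, then use that a nonnegative polynomial in one real variable is a sum of squares to convert this into an algebraic identity in $\C[x]$, which in turn yields the required $D$-valued inequality after applying~$\pi$ and sandwiching. The only cosmetic difference is that the paper derives the sum-of-squares decomposition explicitly by factoring the nonnegative polynomial over~$\C$ into quadratic factors $(x-\lambda_j)^2+\mu_j^2$ and expanding, whereas you invoke the result as known; also, the paper absorbs the extra $1$ into the choice of~$C$ from the outset, while you recover it as the $C+1$ at the end.
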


\begin{proof}
  We must show that for any \(a\in \C[x]\) there are \(C>0\) and
  \(n\in\N\) with \(\norm{\xi}_a \le C\norm{\xi}_n\).  We choose~\(n\)
  so that~\(a\) has degree at most~\(n\).  Then there is \(C>0\) so
  that \(C(1+t^{2n}) > 1+\abs{a(t)}^2\) for all \(t\in\R\).  Thus the
  polynomial \(b \defeq C(1+x^{2n}) - (1+ a^*a)\) is positive
  on~\(\R\).  So the zeros of~\(b\) are complex and come in pairs
  \(\lambda_j\pm \ima\mu_j\) for \(j=1,\dotsc,n\) with
  \(\lambda_j,\mu_j\in\R\) by the Fundamental Theorem of Algebra.
  Then \(b = \prod_{j=1}^n \bigl((x-\lambda_j)^2 +\mu_j^2\bigr) =
  \sum_{k=1}^{2^n} b_k^2\), where each~\(b_k\) is a product of
  either \(x-\lambda_j\) or~\(\mu_j\) for \(j=1,\dotsc,k\), so \(b_k
  = b_k^*\).  Thus \(\norm{\xi}_a \le C\norm{\xi}_n\).
\end{proof}

Thus the monomials \(\{x^n\mid n\in\N\}\) form a strong generating
set for~\(\C[x]\).  A representation of~\(\C[x]\) is determined by
the closed operators~\(\cl{\pi(x^n)}\) for \(n\in\N\) by
Proposition~\ref{pro:equality_if_closure_equal}.  In contrast, it is
not yet determined by the single closed operator~\(\cl{\pi(x)}\)
because~\(\{x\}\) is not a \emph{strong} generating set:

\begin{example}
  \label{exa:x_not_strong_generator}
  We construct a closed representation of~\(\C[x]\)
  on a Hilbert space with
  \(\cl{\pi(x^2)} \subsetneq \bigl(\cl{\pi(x)}\bigr){}^2\).
  Let \(\Hilm[H]\defeq L^2(\T)\),
  viewed as the space of \(\Z\)\nb-periodic
  functions on~\(\R\).
  Let \(\Hilms[H]_0\defeq \Cont^\infty(\T)\)
  and let \(\pi_0\colon \C[x]\to \Endo(\Hilms[H]_0)\)
  be the polynomial functional calculus for the operator
  \(\ima \frac{\dd}{\dd t}\).
  The graph topology generated by this representation of~\(\C[x]\)
  is the usual Fr\'echet topology on~\(\Cont^\infty(\T)\).
  So the representation of~\(\C[x]\)
  on~\(\Cont^\infty(\T)\)
  is closed.  Now for some \(\lambda\in\T\), let
  \[
  \Hilms[H] \defeq \{f\in\Cont^\infty(\T)\mid
  f^{(n)}(\lambda)=0 \text{ for all }n\ge1\}.
  \]
  This is a closed, \(\C[x]\)\nb-invariant
  subspace in~\(\Hilms[H]_0\).
  Let~\(\pi\)
  be the restriction of~\(\pi_0\)
  to~\(\Hilms[H]_0\).
  This is also a closed representation of~\(\C[x]\).
  Its domain~\(\Hilms[H]\)
  is dense in~\(\Hilms[H]_0\)
  in the graph norm of~\(x\),
  but not in the graph norm of~\(x^2\).
  So \(\cl{\pi(x)} = \cl{\pi_0(x)}\)
  and
  \(\cl{\pi(x^2)} \subsetneq \cl{\pi_0(x^2)} =
  \bigl(\cl{\pi(x)}\bigr)^2\).
\end{example}

All notions of integrability for representations of~\(\C[x]\) that
we shall consider imply \(\cl{\pi(x^n)} = \cl{\pi(x)}{}^n\).  Under
this assumption, an \emph{integrable} representation of~\(\C[x]\) is
determined by the single closed operator~\(\cl{\pi(x)}\).

Let \(B\defeq \Cont_0(\R)\).  Let~\(X\) be the identity function
on~\(\R\), viewed as an unbounded multiplier of~\(B\).  We define a
closed representation~\((\Hilms[B],\mu)\) of~\(A\)
on~\(\Cont_0(\R)\) by
\begin{equation}
  \label{eq:domain_Cx_on_Cont0R}
  \Hilms[B]\defeq \{f\in B\mid \forall n\colon X^n\cdot f\in B\}
  \quad\text{and}\quad
  \mu(x^n)f \defeq X^n\cdot f\quad\text{for }f\in\Hilms[B],\ n\in\N.
\end{equation}

\begin{theorem}
  \label{the:regular_self-adjoint_Cstar-hull}
  Let~\((\Hilms,\pi)\) be a representation of \(A=\C[x]\) on a
  Hilbert module~\(\Hilm\) over a \Cstar\nb-algebra~\(D\).  The
  following are equivalent:
  \begin{enumerate}
  \item \label{enum:regular_self-adjoint_Cstar-hull1}
    \(\pi=\mu\otimes_\varrho 1_{\Hilm}\) for a representation
    \(\varrho\colon B\to\Bound(\Hilm)\);
  \item \label{enum:regular_self-adjoint_Cstar-hull2}
    \(\cl{\pi(a)}\) is regular and self-adjoint for each \(a\in A_h
    \defeq \{a\in A\mid a=a^*\}\);
  \item \label{enum:regular_self-adjoint_Cstar-hull3}
    \(\cl{\pi(x^n)}\) is regular and self-adjoint for each
    \(n\in\N\);
  \item \label{enum:regular_self-adjoint_Cstar-hull4}
    \(\cl{\pi(x)}\) is regular and self-adjoint and \(\cl{\pi(x^n)}
    = \cl{\pi(x)}{}^n\) for all \(n\in\N\);
  \item \label{enum:regular_self-adjoint_Cstar-hull5}
    \(\cl{\pi(x)}\) is regular and self-adjoint and \(\Hilms =
    \bigcap_{n=1}^\infty \dom \cl{\pi(x)}{}^n\).
  \end{enumerate}
  Call representations with these equivalent properties integrable.
  The \Cstar\nb-algebra \(\Cont_0(\R)\) is a \Cstar\nb-hull for the
  integrable representations of~\(A\) with~\((\Hilms[B],\mu)\) as
  the universal integrable representation.
\end{theorem}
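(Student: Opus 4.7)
My plan is to prove the five conditions equivalent by a cycle of implications and then apply Proposition~\ref{pro:Cstar-generated_by_unbounded_multipliers} to identify $\Cont_0(\R)$ as a \Cstar\nb-hull. I would run the cycle \ref{enum:regular_self-adjoint_Cstar-hull1}$\Rightarrow$\ref{enum:regular_self-adjoint_Cstar-hull2}$\Rightarrow$\ref{enum:regular_self-adjoint_Cstar-hull3}$\Rightarrow$\ref{enum:regular_self-adjoint_Cstar-hull4}$\Rightarrow$\ref{enum:regular_self-adjoint_Cstar-hull5}$\Rightarrow$\ref{enum:regular_self-adjoint_Cstar-hull1}.

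The first step uses that $\cl{(\mu\otimes_\varrho 1)(a)}$ equals $\varrho$ applied to the unbounded multiplier $a(X)$ of $\Cont_0(\R)$, which is regular self-adjoint whenever $a\in A_h$ is a real polynomial.  The implication \ref{enum:regular_self-adjoint_Cstar-hull2}$\Rightarrow$\ref{enum:regular_self-adjoint_Cstar-hull3} is trivial since each $x^n$ is symmetric.  For \ref{enum:regular_self-adjoint_Cstar-hull3}$\Rightarrow$\ref{enum:regular_self-adjoint_Cstar-hull4}, both $\cl{\pi(x)}{}^n$ and $\cl{\pi(x^n)}$ are regular self-adjoint extensions of the symmetric operator $\pi(x)^n=\pi(x^n)$; since a self-adjoint operator admits no proper symmetric extension, these must coincide.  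For \ref{enum:regular_self-adjoint_Cstar-hull4}$\Rightarrow$\ref{enum:regular_self-adjoint_Cstar-hull5}, Lemma~\ref{lem:graph_topology_Cx} shows that $\{x^n\mid n\in\N\}$ is a strong generating set, so~\eqref{eq:domain_strong_generating_set} gives $\Hilms=\bigcap_n\dom\cl{\pi(x^n)}=\bigcap_n\dom\cl{\pi(x)}{}^n$ by~\ref{enum:regular_self-adjoint_Cstar-hull4}.

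The key implication \ref{enum:regular_self-adjoint_Cstar-hull5}$\Rightarrow$\ref{enum:regular_self-adjoint_Cstar-hull1} proceeds by letting $\varrho\colon \Cont_0(\R)\to\Bound(\Hilm)$ be the bounded functional calculus for $\cl{\pi(x)}$.  Unwinding Lemma~\ref{lem:tensor_rep_with_corr} identifies $\cl{(\mu\otimes_\varrho 1)(x^n)}$ with $\varrho$ applied to the affiliated multiplier $X^n$, which equals $\cl{\pi(x)}{}^n=\cl{\pi(x^n)}$; matching closures and domains on the strong generating set $\{x^n\}$ then yields $\pi=\mu\otimes_\varrho 1$ by Proposition~\ref{pro:equality_if_closure_equal}.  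To prove the \Cstar\nb-hull property, I would verify conditions \ref{enum:Cstar-gen1} and \ref{enum:Cstar-gen2'} of Proposition~\ref{pro:Cstar-generated_by_unbounded_multipliers}.  Condition \ref{enum:Cstar-gen1} follows because two representations of $\Cont_0(\R)$ on the same Hilbert space giving the same closure $\cl{(\mu\otimes_{\varrho_i}1)(x)}$, namely the regular self-adjoint operator obtained from $X$ via $\varrho_i$, must coincide by uniqueness of functional calculus.  For \ref{enum:Cstar-gen2'}, given an isometric intertwiner $I$ between two integrable Hilbert space representations, the containment $I\cl{\pi_1(x)}\subseteq\cl{\pi_2(x)}I$ together with the self-adjointness of $\cl{\pi_1(x)}$ forces the restriction of $\cl{\pi_2(x)}$ to $I(\Hilm[H]_1)$ to be unitarily equivalent to $\cl{\pi_1(x)}$ and hence itself self-adjoint; since a self-adjoint subspace restriction reduces the ambient self-adjoint operator, the range projection of $I$ commutes with $\cl{\pi_2(x)}$ and therefore with $\varrho_2$ through functional calculus, so $I$ is a \Star{}intertwiner.

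I expect the main obstacle to lie in the reduction argument for \ref{enum:Cstar-gen2'}: it is crucial that one-sided intertwining with a self-adjoint target yields genuine reduction of the subspace, a step that would fail with regularity alone (in line with the counterexamples mentioned in the introduction).  A secondary bookkeeping subtlety is the identification of $\cl{(\mu\otimes_\varrho 1)(x^n)}$ with $\cl{\pi(x)}{}^n$ in step \ref{enum:regular_self-adjoint_Cstar-hull5}$\Rightarrow$\ref{enum:regular_self-adjoint_Cstar-hull1}, which requires verifying that the tensor product representation on $\Cont_0(\R)\otimes_\varrho \Hilm$ really produces $\varrho(X^n)$ as its closure on $x^n$, and that its domain matches $\bigcap_n\dom\cl{\pi(x)}{}^n$.
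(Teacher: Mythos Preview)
Your proposal is correct and follows essentially the same route as the paper: the same cycle of implications \ref{enum:regular_self-adjoint_Cstar-hull1}$\Rightarrow\cdots\Rightarrow$\ref{enum:regular_self-adjoint_Cstar-hull5}$\Rightarrow$\ref{enum:regular_self-adjoint_Cstar-hull1}, with the functional calculus of $\cl{\pi(x)}$ providing~$\varrho$ in the last step, and then the Cayley transform argument to show that a self-adjoint restriction reduces the ambient self-adjoint operator. The only cosmetic difference is that you check conditions~\ref{enum:Cstar-gen1} and~\ref{enum:Cstar-gen2'} of Proposition~\ref{pro:Cstar-generated_by_unbounded_multipliers} separately, whereas the paper checks the combined condition~\ref{enum:Cstar-gen12} directly; the underlying reduction argument is the same.
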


\begin{proof}
  If \(a\in A_h\), then~\(\cl{\mu(a)}\) is a self-adjoint,
  affiliated multiplier of~\(B\).  Hence \(\mu(a)\otimes_D1\) is a
  regular, self-adjoint operator on \(B\otimes_\varrho \Hilm \cong
  \Hilm\) for any representation~\(\varrho\) of~\(B\) on~\(\Hilm\)
  by \cite{Lance:Hilbert_modules}*{Proposition 9.10}.
  Thus~\ref{enum:regular_self-adjoint_Cstar-hull1}
  implies~\ref{enum:regular_self-adjoint_Cstar-hull2}.  The
  implication
  \ref{enum:regular_self-adjoint_Cstar-hull2}\(\Rightarrow\)\ref{enum:regular_self-adjoint_Cstar-hull3}
  is trivial.  The operator~\(\cl{\pi(x^n)}\) is always contained in
  the \(n\)-fold power~\(\cl{\pi(x)}{}^n\).  The latter is
  symmetric, and a proper suboperator of a symmetric operator cannot
  be self-adjoint.  Thus~\ref{enum:regular_self-adjoint_Cstar-hull3}
  implies~\ref{enum:regular_self-adjoint_Cstar-hull4}.
  The set~\(\{x^n\mid n\in\N\}\) is a strong generating set
  for~\(\C[x]\) by Lemma~\ref{lem:graph_topology_Cx}.
  Equation~\eqref{eq:domain_strong_generating_set} gives \(\Hilms =
  \bigcap_{n=1}^\infty \dom \cl{\pi(x^n)}\) for any (closed)
  representation.  Thus~\ref{enum:regular_self-adjoint_Cstar-hull4}
  implies~\ref{enum:regular_self-adjoint_Cstar-hull5}.

  Assume~\ref{enum:regular_self-adjoint_Cstar-hull5} and abbreviate
  \(t=\cl{\pi(x)}\).
  The functional calculus for~\(t\)
  is a nondegenerate \Star{}homomorphism
  \(\varrho\colon \Cont_0(\R) \to \Bound(\Hilm)\)
  (see \cite{Lance:Hilbert_modules}*{Theorem 10.9}).  Let~\(\pi'\)
  be the representation \(\mu\otimes_\varrho 1\)
  of~\(A\)
  on~\(\Hilm\)
  associated to~\(\varrho\).
  We claim that \(\pi=\pi'\).
  The functional calculus extends to affiliated multipliers and maps
  the identity function on~\(\R\)
  to the regular, self-adjoint operator~\(t\).
  This means that \(\cl{\pi'(x)} = t\).
  Then \(\cl{\pi'(x^n)} \subseteq t^n\).
  This implies \(\cl{\pi'(x^n)} = t^n\)
  because \(\cl{\pi'(x^n)}\)
  is self-adjoint and~\(t^n\)
  is symmetric.  Since the set \(\{x^n\mid n\in\N\}\)
  is a strong generating set for~\(\C[x]\),
  the domain of~\(\pi'\)
  is \(\bigcap \dom \cl{\pi'(x^n)} = \Hilms\)
  by condition~\ref{enum:regular_self-adjoint_Cstar-hull5}
  and~\eqref{eq:domain_strong_generating_set}.  On this domain,
  \(\pi(x)\)
  and~\(\pi'(x)\)
  act by the same operator because they have the same closure.  Thus
  \(\pi=\pi'\)
  and~\ref{enum:regular_self-adjoint_Cstar-hull5}
  implies~\ref{enum:regular_self-adjoint_Cstar-hull1}.  So all five
  conditions in the theorem are equivalent.

  To show that~\(B\) is a \Cstar\nb-hull for the class of
  representations described
  in~\ref{enum:regular_self-adjoint_Cstar-hull1}, we
  check~\ref{enum:Cstar-gen12} in
  Proposition~\ref{pro:Cstar-generated_by_unbounded_multipliers}.
  An integrable representation of~\(A\) on a Hilbert
  space~\(\Hilm[H]\) corresponds to a self-adjoint operator~\(t\)
  on~\(\Hilm[H]\) by~\ref{enum:regular_self-adjoint_Cstar-hull5}.
  An integrable subrepresentation is a closed
  subspace~\(\Hilm[H]_0\) of~\(\Hilm[H]\) with a self-adjoint
  operator~\(t_0\) on~\(\Hilm[H]_0\) whose graph is contained in
  that of~\(t\).  Since~\(t_0\) is self-adjoint, the subspaces
  \((t_0\pm\ima)(\dom(t_0)) = (t\pm\ima)(\dom(t_0))\) are equal
  to~\(\Hilm[H]_0\).  The Cayley transform~\(u\) of~\(t\) maps
  \((t+\ima)(\dom(t_0))\) onto \((t-\ima)(\dom(t_0))\).  Thus it
  maps~\(\Hilm[H]_0\) onto itself.  Since \(u-1\) generates the
  image of \(B=\Cont_0(\R)\) under the functional calculus, the
  projection onto~\(\Hilm[H]_0\) is \(B\)\nb-invariant.
\end{proof}

\begin{example}
  \label{exa:irregular_selfadjoint}
  Regularity and self-adjointness are independent properties of a
  symmetric operator.  Examples of regular symmetric operators that
  are not self-adjoint are easy to find,
  see~§\ref{sec:polynomials2}.  We are going to construct a
  representation~\(\pi\) of~\(\C[x]\) on a Hilbert module for
  which~\(\cl{\pi(a)}\) is self-adjoint for each \(a\in \C[x]\) with
  \(a=a^*\), but~\(\cl{\pi(x)}\) is not regular.  We follow the
  example after Théorème~1.3 in~\cite{Pierrot:Reguliers}, which
  Pierrot attributes to Hilsum.

  Let~\(\Hilm[H]\) be the Hilbert space~\(L^2([0,1])\) and let
  \(T_1\) and~\(T_2\) be the operators \(\ima \frac{\dd}{\dd x}\)
  on~\(\Hilm[H]\) with the following domains.  For~\(T_1\), we take
  \(1\)\nb-periodic smooth functions; for~\(T_2\), we take the
  restrictions to~\([0,1]\) of smooth functions on~\(\R\) satisfying
  \(f(x+1) = - f(x)\).  Both \(T_1\) and~\(T_2\) are essentially
  self-adjoint.  Let \(D\defeq \Cont([-1,1])\) and \(\Hilm\defeq
  \Cont([-1,1],\Hilm[H])\).  Let \(\Hilms\subseteq \Hilm\) be the
  dense subspace of all functions \(f\colon [-1,1]\times
  [0,1]\to\C\) such that \(\frac{\partial^n}{\partial^n x} f(t,x)\)
  is continuous for each \(n\in\N\),
  \begin{equation}
    \label{eq:irregular_selfadjoint_1}
    \frac{\partial^n}{\partial^n x} f(t,1)
    = \sign(t)\cdot \frac{\partial^n}{\partial^n x} f(t,0)
  \end{equation}
  for all \(t\in[-1,1]\), \(x\in\R\), \(t\neq0\), and
  \begin{equation}
    \label{eq:irregular_selfadjoint_0}
    \frac{\partial^n}{\partial^n x} f(0,0)
    = \frac{\partial^n}{\partial^n x} f(0,1) = 0.
  \end{equation}
  Equivalently, \(f(t,\blank)\) belongs to the domain
  of \(\cl{T_1}{}^n = \cl{T_1^n}\) for all \(n\in\N\), \(t\le 0\) and to the
  domain of \(\cl{T_2}{}^n = \cl{T_2^n}\) for all \(n\in\N\), \(t\ge 0\); indeed,
  this forces \(\frac{\partial^n}{\partial^n x} f\) to be continuous
  on \([-1,1]\times [0,1]\) and to satisfy the boundary
  conditions~\eqref{eq:irregular_selfadjoint_1}.  These
  imply~\eqref{eq:irregular_selfadjoint_0} by continuity.  Let
  \(x^n\in \C[x]\) act on~\(\Hilms\) by \(\biggl(\ima \frac{\dd}{\dd x}\biggr)^n\).
  This defines a closed \Star{}representation of~\(\C[x]\)
  on~\(\Hilm\) with \(\Hilms = \bigcap_{n\in\N} \dom
  \cl{\pi(x)}{}^n\).

  The closure~\(\cl{\pi(x)}\) is the irregular self-adjoint operator
  described in~\cite{Pierrot:Reguliers}.  Let \(a\in \C[x]\) with
  \(a=a^*\).  Then~\(\cl{\varrho(a)}\) is (regular and) self-adjoint
  for any \emph{integrable} representation~\(\varrho\) of~\(\C[x]\)
  by Theorem~\ref{the:regular_self-adjoint_Cstar-hull}.  Therefore,
  the restriction of~\(\cl{\pi(a)}\) to a single fibre of~\(\Hilm\)
  at some \(t\in [-1,1]\setminus\{0\}\) is a self-adjoint operator
  on~\(L^2([0,1])\) because \(\cl{T_1}\) and~\(\cl{T_2}\) are self-adjoint and
  \(\Hilms = \bigcap_{n\in\N} \dom \cl{\pi(x)}{}^n\).  The
  restriction of~\(\cl{\pi(a)}{}^*\) at \(t=0\) is contained in the
  self-adjoint operators \(\cl{a(T_1)}\) and~\(\cl{a(T_2)}\) by
  continuity.  We claim that \(\cl{a(T_1)} \cap \cl{a(T_2)} =
  \cl{\pi(a)}|_{t=0}\).  This claim implies that~\(\cl{\pi(a)}{}^*\)
  is contained in~\(\cl{\pi(a)}\), that is, \(\cl{\pi(a)}\) is
  self-adjoint.

  Let \(a\in\C[x]\) have degree~\(n\).  Then the graph norms for
  \(a\) and~\(x^n\) are equivalent in any representation by the
  proof of Lemma~\ref{lem:graph_topology_Cx}.  Hence \(\cl{a(T_i)}\)
  and~\(\cl{T_i^n}\) have the same domain.  The domain
  of~\(\cl{T_i^n}\) consists of functions \([0,1]\to\C\) whose
  \(n\)th derivative lies in~\(L^2\) and whose derivatives of order
  strictly less than~\(n\) satisfy the boundary condition
  for~\(T_i\).  Hence the domain of \(\cl{T_1^n} \cap \cl{T_2^n}\)
  consists of those functions \([0,1]\to\C\) whose \(n\)th
  derivative lies in~\(L^2\) and whose derivatives of order strictly
  less than~\(n\) vanish at the boundary points \(0\) and~\(1\).
  This is exactly the domain of the closure of \((T_1\cap T_2)^n =
  \pi(x^n)|_{t=0}\).  On this domain the operators \(\cl{a(T_1)}
  \cap \cl{a(T_2)}\) and \(\cl{\pi(a)}|_{t=0}\) both act by the
  differential operator \(a(\ima \frac{\dd}{\dd x})\).
\end{example}

The algebra \(A=\C[x]\) has many Hilbert space representations
coming from closed symmetric operators that are not self-adjoint.
There is, however, no larger admissible class of integrable
representations:

\begin{proposition}
  \label{pro:symmetric_no_hull}
  Assume that an admissible class of integrable representations of
  \(A=\C[x]\) contains all representations coming from self-adjoint
  Hilbert space operators.  Then any integrable representation
  of~\(A\) on a Hilbert module comes from a regular, self-adjoint
  operator.
\end{proposition}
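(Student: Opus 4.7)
My strategy is to first settle the Hilbert-space case using the admissibility axioms and then pass from Hilbert spaces to Hilbert modules via localization at states together with the local--global principle for regular self-adjointness.

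\textbf{Hilbert-space case.} Let $(\Hilms[H], \pi)$ be an integrable representation of $\C[x]$ on a Hilbert space $\Hilm[H]$ and set $T = \cl{\pi(x)}$, a closed symmetric operator on $\Hilm[H]$. The operator $T \oplus (-T)$ on $\Hilm[H] \oplus \Hilm[H]$ is closed symmetric with equal deficiency indices (since $-T$ has the deficiency indices of $T$ swapped), hence admits a self-adjoint extension $\tilde{T}$. By hypothesis, the representation $\tilde{\pi}$ on $\bigcap_n \dom \tilde{T}^n$ generated by $\tilde{T}$ is integrable. The inclusion $\Hilm[H] \injto \Hilm[H] \oplus \Hilm[H]$ of the first summand is an isometric intertwiner from $\pi$ to $\tilde{\pi}$: since $T \subseteq \tilde{T}$ we have $T^n \subseteq \tilde{T}^n$ for every $n$, so $\Hilms[H] \subseteq \bigcap_n \dom T^n \subseteq \bigcap_n \dom \tilde{T}^n$. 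Admissibility condition~\ref{enum:admissible3} then forces $\pi$ to be a direct summand of $\tilde{\pi}$, and Proposition~\ref{pro:isometry_star-intertwiner} provides a representation $\pi'$ on $\Hilm[H]^{\bot}$ with $\tilde{\pi} = \pi \oplus \pi'$. This gives $\tilde{T} = T \oplus T'$ as a direct sum of closed operators on orthogonal summands, and self-adjointness of $\tilde{T}$ therefore passes to $T$ and $T'$. Moreover $\Hilms[H] = \bigl(\bigcap_n \dom \tilde{T}^n\bigr) \cap \Hilm[H] = \bigcap_n \dom T^n$, so condition~\ref{enum:regular_self-adjoint_Cstar-hull5} of Theorem~\ref{the:regular_self-adjoint_Cstar-hull} holds and $\pi$ comes from the self-adjoint operator $T$.

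\textbf{Hilbert-module case.} Let $(\Hilms, \pi)$ be an integrable representation of $\C[x]$ on a Hilbert $D$-module $\Hilm$, and fix $n \in \N$. For each state $\omega$ on $D$ with GNS Hilbert space $\Hilm[H]_\omega$, admissibility condition~\ref{enum:admissible1} applied to the correspondence $\Hilm[H]_\omega$ from $D$ to $\C$ makes $\pi_\omega \defeq (\Hilms, \pi) \otimes_D \Hilm[H]_\omega$ an integrable representation on the Hilbert space $\Hilm \otimes_D \Hilm[H]_\omega$. By the Hilbert-space case and the equivalence of conditions in Theorem~\ref{the:regular_self-adjoint_Cstar-hull}, the closure $\cl{\pi_\omega(x^n)}$ is self-adjoint for every $\omega$. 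Pierrot's strong local--global principle, which applies to any class defined by regular self-adjointness of $\cl{\pi(a)}$ for symmetric $a\in A$, then forces $\cl{\pi(x^n)}$ to be regular and self-adjoint on $\Hilm$. Since this holds for every $n\in\N$, condition~\ref{enum:regular_self-adjoint_Cstar-hull3} of Theorem~\ref{the:regular_self-adjoint_Cstar-hull} is satisfied and $(\Hilms, \pi)$ comes from a regular, self-adjoint operator, as claimed.

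\textbf{Main obstacle.} The delicate point is the Hilbert-space step, where one must combine a self-adjoint dilation with admissibility in a way that controls not only $T$ but also the smooth domain $\bigcap_n \dom T^n$. Dilating $T$ to $T\oplus(-T)$ on $\Hilm[H]\oplus\Hilm[H]$ is a convenient device: it produces equal deficiency indices automatically, and the resulting summand decomposition of $\tilde\pi$ from Proposition~\ref{pro:isometry_star-intertwiner} yields simultaneously the self-adjointness of $T$ and the equality $\Hilms[H] = \bigcap_n \dom T^n$. Once this is in place, the Hilbert-module case reduces to a routine localization via states combined with the already-cited local--global principle.
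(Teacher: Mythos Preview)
Your proof is correct and follows essentially the same route as the paper: dilate the symmetric operator to a self-adjoint one on a larger Hilbert space, invoke admissibility condition~\ref{enum:admissible3} to force the original representation to be a direct summand, and then use Pierrot's local--global principle to pass from Hilbert spaces to Hilbert modules. Your \(T\oplus(-T)\) device is simply an explicit way to produce the self-adjoint dilation that the paper invokes abstractly, and your domain bookkeeping (\(\Hilms[H]=\bigcap_n\dom T^n\)) makes explicit what the paper leaves implicit when it says ``\(\pi\) is the representation induced by~\(t\)''.
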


\begin{proof}
  We first prove that there can be no more integrable Hilbert space
  representations than those coming from self-adjoint operators.
  Let \((\Hilms[H],\pi)\) be an integrable representation on a
  Hilbert space~\(\Hilm[H]\).  We may extend the closed symmetric
  operator \(t \defeq \cl{\pi(x)}\) on~\(\Hilm[H]\) to a
  self-adjoint operator~\(t_2\) on a larger Hilbert
  space~\(\Hilm[H]_2\).  This gives a representation~\(\pi_2\)
  of~\(A\) on~\(\Hilm[H]_2\) as in
  Theorem~\ref{the:regular_self-adjoint_Cstar-hull}, which is
  integrable by assumption.  The inclusion map
  \(\Hilm[H]\injto\Hilm[H]_2\) is an isometric intertwiner
  from~\(\pi\) to~\(\pi_2\).  Hence~\(\pi\) is a direct summand
  of~\(\pi_2\) by~\ref{enum:admissible3} in
  Definition~\ref{def:integrable_admissible}.
  Thus~\(\cl{\pi(x^n)}\) is self-adjoint for each \(n\in\N\),
  and~\(\pi\) is the representation induced by~\(t\).

  Now let \((\Hilms,\pi)\) be an integrable representation of~\(A\) on
  a Hilbert \(D\)\nb-module~\(\Hilm\).  For any Hilbert space
  representation \(\varphi\colon D\to\Bound(\Hilm[H])\), the induced
  representation of~\(A\) on the Hilbert space
  \(\Hilm\otimes_\varphi\Hilm[H]\) is also integrable
  by~\ref{enum:admissible1} in
  Definition~\ref{def:integrable_admissible}.  Thus
  \(\cl{\pi(x^n)\otimes_\varphi 1_{\Hilm[H]}}\) is self-adjoint for any
  Hilbert space representation \(\varphi\colon D\to\Bound(\Hilm[H])\).

  A closed, densely defined, symmetric operator~\(T\) on a Hilbert
  \(D\)\nb-module~\(\Hilm\) is self-adjoint and regular if and only
  if, for any state~\(\omega\) on~\(D\), the closure of \(T\otimes_D
  1\) on the Hilbert spaces \(\Hilm\otimes_D \Hilm[H]_\omega\) is
  self-adjoint; here~\(\Hilm[H]_\omega\) means the
  GNS-representation for~\(\omega\).  This is called the
  Local--Global Principle by Kaad and Lesch
  (\cite{Kaad-Lesch:Local_global}*{Theorem 1.1}); the result was
  first proved by Pierrot (\cite{Pierrot:Reguliers}*{Théorème
    1.18}).  We will take up Local--Global Principles more
  systematically in~§\ref{sec:local_global}.  Thus~\(\cl{\pi(x^n)}\)
  is regular and self-adjoint for each \(n\in\N\).  So~\(\pi\) is
  obtained from the regular self-adjoint operator~\(\cl{\pi(x)}\) as
  in Theorem~\ref{the:regular_self-adjoint_Cstar-hull}.
\end{proof}

\begin{example}
  \label{exa:smaller_integrable_than_regular_self-adjoint}
  There are many admissible classes of representations of~\(\C[x]\)
  that are smaller than the class in
  Theorem~\ref{the:regular_self-adjoint_Cstar-hull}.  There are even
  many such classes that contain the same Hilbert space
  representations.  For instance, let \(B\defeq \Cont_0((-\infty,0))
  \oplus \Cont_0([0,\infty))\) with the representation of
  polynomials by pointwise multiplication.  This is a \Cstar\nb-hull
  for a class of representations of~\(\C[x]\) by
  Theorem~\ref{the:commutative_hull} below.  Since the standard
  topologies on~\(\R\) and \((-\infty,0)\sqcup [0,\infty)\) have the
  same Borel sets, both \Cstar\nb-hulls \(\Cont_0((-\infty,0))
  \oplus \Cont_0([0,\infty))\) and \(\Cont_0(\R)\) give the same
  integrable Hilbert space representations because of the Borel
  functional calculus.  But there are regular,
  self-adjoint operators on Hilbert modules that do not give a
  \(B\)\nb-integrable representation.  The obvious example is the
  multiplier~\(X\) of~\(\Cont_0(\R)\) that generates the universal
  integrable representation of~\(\C[x]\).
\end{example}

Can there be an \emph{admissible} class of representations
of~\(\C[x]\) that contains some representation on a Hilbert space
that does not come from a self-adjoint operator?  We cannot rule
this out completely.  But such a class would have to be rather
strange.  By Proposition~\ref{pro:symmetric_no_hull}, it cannot
contain all self-adjoint operators.  By
Example~\ref{exa:Friedrichs_extension}, it cannot contain all
representations coming from positive symmetric operators because
then there would be isometric intertwiners among integrable
representations that are not \Star{}intertwiners.  The following
example rules out symmetric operators with one deficiency
index~\(0\):

\begin{example}
  \label{exa:deficiency_index_0}
  Let~\(t\) be a closed symmetric operator on a Hilbert
  space~\(\Hilm[H]\) of deficiency indices~\((0,n)\) for some
  \(n\in[1,\infty]\).  Then \(\dom^\infty(t) \defeq
  \bigcap_{n=1}^\infty \dom (t^n)\) is a core for each power~\(t^k\)
  by \cite{Schmudgen:Unbounded_book}*{Proposition 1.6.1}.  Thus
  there is a closed representation~\(\pi\) of~\(\C[x]\) with domain
  \(\dom^\infty(t)\) and \(\cl{\pi(x^k)} = t^k\) for all \(k\in\N\).
  By assumption, the operator \(t+\ima\) is surjective, but
  \(t-\ima\) is not.  That is, the Cayley transform \(c\defeq
  (t-\ima)(t+\ima)^{-1}\) is a non-unitary isometry.  The
  operator~\(t\) may be reconstructed from~\(c\) as in
  \cite{Lance:Hilbert_modules}*{Equation (10.11)}.  Here~\(c^*\) is
  surjective, so this simplifies to \(\dom(t) = (1-c)c^*\Hilm[H] =
  (1-c)\Hilm[H]\), and \(t(1-c)\xi = \ima (1+c)\xi\) for all
  \(\xi\in\Hilm[H]\).  Thus \(c(\dom t)\subseteq \dom t\) and \(c t
  \subseteq t c\) because
  \[
  c t\bigl((1-c)\xi\bigr)
  = \ima c(1+c)\xi
  = \ima (1+c)(c\xi)
  = t(1-c)(c\xi)
  = (t c)\bigl((1-c)\xi\bigr).
  \]
  Then \(c t^n \subseteq t^n c\) for all \(n\in\N\).  Thus~\(c\) is an
  isometric intertwiner from~\(\pi\) to itself by
  Proposition~\ref{pro:intertwiner_strong_generators}.  If~\(c^*\)
  were an intertwiner as well, then \(c^*(\dom t) \subseteq \dom t\)
  and \(c^*(t\pm\ima)\xi= (t\pm\ima)c^*\xi\) for all
  \(\xi\in\dom(t)\).  So
  \[
  c^* c (t+\ima)\xi
  = c^* (t-\ima)\xi
  = (t-\ima)c^* \xi
  = c (t+\ima)c^* \xi
  = c c^* (t+\ima)\xi.
  \]
  This is impossible because \(c^* c
  \neq c c ^*\) and \(t+\ima\) is surjective.  So the isometry~\(c\)
  is an intertwiner, but not a \Star{}intertwiner.  This is
  forbidden for admissible classes of integrable representations.

  If~\(t\) has deficiency indices~\((n,0)\) instead, then~\(-t\) has
  deficiency indices~\((0,n)\) and its Cayley transform is an
  isometric intertwiner that is not a \Star{}intertwiner by the
  argument above.
\end{example}

\section{Local--Global principles}
\label{sec:local_global}

\begin{definition}
  \label{def:local_global}
  Let~\(A\) be a \Star{}algebra with a weakly admissible class of
  integrable representations
  (Definition~\ref{def:integrable_admissible}).

  The \emph{Local--Global Principle} says that a
  representation~\(\pi\) of~\(A\) on a Hilbert
  \(D\)\nb-module~\(\Hilm\) is integrable if (and only if) the
  representations \(\pi\otimes_\varrho 1\) are integrable for all
  Hilbert space representations \(\varrho\colon
  D\to\Bound(\Hilm[H])\).

  The \emph{Strong Local--Global Principle} says that a
  representation~\(\pi\) of~\(A\) on a Hilbert
  \(D\)\nb-module~\(\Hilm\) is integrable if (and only if) the
  representations \(\pi\otimes_\varrho 1\) are integrable for all
  \emph{irreducible} Hilbert space representations \(\varrho\colon
  D\to\Bound(\Hilm[H])\).
\end{definition}

Roughly speaking, the Local--Global Principle says that the class of
integrable representations on Hilbert modules is determined by the
class of integrable representations on Hilbert spaces.  Examples
where the Local--Global Principle fails are constructed in
§\ref{sec:polynomials2} and~§\ref{sec:commutative_hulls}.  We do not
know an example with the Local--Global Principle for which the
Strong Local--Global Principle fails.

An irreducible representation \(\varrho\colon D\to\Bound(\Hilm[H])\)
is unitarily equivalent to the GNS-\alb{}representation for a pure
state~\(\psi\) on~\(D\).  The tensor product \(\Hilm\otimes_\varrho
\Hilm[H]\) is canonically isomorphic to the completion~\(\Hilm_\psi\)
of~\(\Hilm\) to a Hilbert space for the scalar-valued inner product
\(\braket{x}{y}_\C \defeq \psi(\braket{x}{y}_D)\).  The induced
representation \(\pi\otimes_D 1\) of~\(A\) on~\(\Hilm[H]_\psi\) is the
closure of the representation~\(\pi\) with domain \(\Hilms\subseteq
\Hilm\subseteq \Hilm_\psi\).

Any representation \(\varrho\colon D\to\Bound(\Hilm[H])\) is a
direct sum of cyclic representations, and these are
GNS-\alb{}representations of states.  Since any weakly admissible
class of integrable representations is closed under direct sums, the
Local--Global Principle holds if and only if integrability of
\(\pi\otimes_\varrho 1\) for all GNS-representations~\(\varrho\) of
states on~\(D\) implies integrability of~\(\pi\).

\begin{example}
  \label{exa:local-global_Cx}
  Define integrable representations of the polynomial
  algebra~\(\C[x]\) as in
  Theorem~\ref{the:regular_self-adjoint_Cstar-hull}.  Thus they
  correspond to regular, self-adjoint operators on Hilbert modules.
  The main result in~\cite{Kaad-Lesch:Local_global} says that the
  integrable representations of~\(\C[x]\) satisfy the Local--Global
  Principle.  This is where our notation comes from.  We already
  used this to prove Proposition~\ref{pro:symmetric_no_hull}.  The
  Strong Local--Global Principle for integrable representations
  of~\(\C[x]\) is only conjectured
  in~\cite{Kaad-Lesch:Local_global}.  This conjecture had already
  been proved by Pierrot in \cite{Pierrot:Reguliers}*{Théorème 1.18}
  before~\cite{Kaad-Lesch:Local_global} was written.  It is based on the
  following Hahn--Banach type theorem for Hilbert submodules:
\end{example}

\begin{theorem}[\cite{Pierrot:Reguliers}*{Proposition 1.16}]
  \label{the:Hahn-Banach_submodules}
  Let~\(D\) be a \Cstar\nb-algebra and let~\(\Hilm\) be a Hilbert
  \(D\)\nb-module.  Let \(\Hilm[F]\subsetneq \Hilm\) be a proper,
  closed Hilbert submodule.  There is an irreducible Hilbert
  space representation \(\varrho\colon D\to\Bound(\Hilm[H])\) with
  \(\Hilm[F]\otimes_\varrho \Hilm[H] \subsetneq \Hilm\otimes_\varrho
  \Hilm[H]\).
\end{theorem}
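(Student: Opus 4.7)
The plan is to fix $\xi\in\Hilm\setminus\Hilm[F]$ and produce a pure state $\omega$ on~$D$ with
\[
\sigma(\omega) \defeq \inf_{\eta\in\Hilm[F]} \omega(\braket{\xi-\eta}{\xi-\eta}) > 0,
\]
since $\sigma(\omega)$ is precisely the squared distance between the images of~$\xi$ and~$\Hilm[F]$ in the GNS completion $\Hilm\otimes_{\varrho_\omega}\Hilm[H]_\omega$, and the GNS representation $\varrho_\omega$ is irreducible whenever~$\omega$ is pure.

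To construct~$\omega$, I would pass to the self-dual completion $\widetilde{\Hilm}\defeq\Hilm\otimes_D D^{**}$, a Hilbert $D^{**}$\nb-module in which every closed $D^{**}$\nb-submodule is orthogonally complemented. Let $\widetilde{\Hilm[F]}\subseteq\widetilde{\Hilm}$ be the closed $D^{**}$\nb-submodule generated by~$\Hilm[F]$, let $p\in\Bound(\widetilde{\Hilm})$ be the orthogonal projection onto it, and set $\zeta\defeq(1-p)\xi$ and $a\defeq\braket{\zeta}{\zeta}\in D^{**}_+$. Using $p\eta=\eta$ for $\eta\in\Hilm[F]$, I would prove the identity
\[
\sigma(\omega) = \tilde\omega(a)
\]
for every state~$\omega$ of~$D$ (with $\tilde\omega$ the unique normal extension to~$D^{**}$), by identifying $\widetilde{\Hilm[F]}\otimes_{\tilde\omega}\Hilm[H]_\omega$ with the closure of the image of~$\Hilm[F]$ inside $\widetilde{\Hilm}\otimes_{\tilde\omega}\Hilm[H]_\omega\cong\Hilm_\omega$. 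Granted this identity, $\sigma$ is affine on the state space~$S(D)$ and upper semicontinuous (as the infimum of weak-$*$ continuous functions in its original definition), so Bauer's maximum principle places its supremum at an extreme point of~$S(D)$, i.e., at a pure state~$\omega_0$. Whenever $a\ne0$, this supremum is strictly positive and $\omega_0$ is the state we want.

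It thus remains to show $a\ne0$, equivalently $\Hilm\cap\widetilde{\Hilm[F]}=\Hilm[F]$; this is the main obstacle. The subtlety is that $\widetilde{\Hilm[F]}$ is in general strictly larger than the norm-closure of~$\Hilm[F]$ inside~$\widetilde{\Hilm}$, since it is closed under right multiplication by all of~$D^{**}$ rather than only by~$D$, so $\xi\in\Hilm\setminus\Hilm[F]$ does not immediately exclude $\xi\in\widetilde{\Hilm[F]}$. Proving the intersection identity amounts to a Hahn--Banach type separation at the level of Hilbert modules: one must produce, for any $\xi\in\Hilm\setminus\Hilm[F]$, an element $\zeta'\in\widetilde{\Hilm[F]}^\bot\subseteq\widetilde{\Hilm}$ with $\braket{\zeta'}{\xi}_{D^{**}}\ne0$. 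This separation is the essential content of the theorem, and is where the arguments of \cite{Pierrot:Reguliers} invest the most technical work; all other steps of the plan above are standard consequences of the self-dual completion formalism and Bauer's principle.
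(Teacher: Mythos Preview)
The paper does not give its own proof of this theorem; it simply cites it as \cite{Pierrot:Reguliers}*{Proposition~1.16} and uses it as a black box. So there is no paper proof to compare against.

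As for your proposal: you yourself identify the genuine gap. The reduction you outline is reasonable---passing to the self-dual completion over~$D^{**}$, projecting, and then using that $\omega\mapsto\tilde\omega(a)$ is affine together with Bauer's maximum principle to locate a pure state---but the step ``$a\neq0$'', i.e.\ $\Hilm\cap\widetilde{\Hilm[F]}=\Hilm[F]$, is not easier than the theorem itself. You say this is ``the essential content of the theorem'' and defer it to Pierrot; that is an honest assessment, but it means what you have written is a reformulation rather than a proof. A reader who does not already know Pierrot's argument cannot fill this in: the issue is precisely that $\widetilde{\Hilm[F]}$ may contain elements of~$\Hilm$ not in~$\Hilm[F]$, since closure under right $D^{**}$-multiplication can enlarge the module in ways invisible at the level of~$D$. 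Until this separation is established, the Bauer step has nothing to work with.

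A couple of smaller points worth noting if you try to complete this. First, for non-unital~$D$ the state space~$S(D)$ is not weak-$*$ compact, so Bauer's principle must be applied to the quasi-state space or via the unitization; you should check that the resulting extreme point is a pure state of~$D$ and not the zero functional. Second, the identity $\sigma(\omega)=\tilde\omega(a)$ itself needs an argument: you must show that the closure of the image of~$\Hilm[F]$ in~$\Hilm_\omega$ coincides with the image of~$\widetilde{\Hilm[F]}$, which again uses properties of the self-dual completion that deserve a line of justification.
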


\begin{corollary}[\cite{Pierrot:Reguliers}*{Corollaire 1.17}]
  \label{cor:Hahn-Banach_submodules}
  Let~\(\Hilm\) be a Hilbert module over a \Cstar\nb-algebra~\(D\).
  Let \(\Hilm[F]_1,\Hilm[F]_2\subsetneq \Hilm\) be two closed Hilbert
  submodules.  If \(\Hilm[F]_1\neq\Hilm[F]_2\), then there is an
  irreducible Hilbert space representation \(\varrho\colon
  D\to\Bound(\Hilm[H])\) with \(\Hilm[F]_1\otimes_\varrho \Hilm[H]
  \neq \Hilm[F]_2\otimes_\varrho \Hilm[H]\) as closed subspaces in
  \(\Hilm\otimes_\varrho \Hilm[H]\).
\end{corollary}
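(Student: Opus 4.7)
The plan is to reduce the corollary to Theorem~\ref{the:Hahn-Banach_submodules} by replacing~\(\Hilm\) with the closed sum of the two submodules and applying the theorem to the smaller of~\(\Hilm[F]_1,\Hilm[F]_2\) sitting properly inside this sum.

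First I would set \(\Hilm[F]\defeq\cl{\Hilm[F]_1+\Hilm[F]_2}\), which is a closed Hilbert submodule of~\(\Hilm\).  Since \(\Hilm[F]_1\neq\Hilm[F]_2\), by symmetry I may assume \(\Hilm[F]_1\not\subseteq\Hilm[F]_2\); then \(\Hilm[F]_2\subsetneq\Hilm[F]\).  Apply Theorem~\ref{the:Hahn-Banach_submodules} to the proper inclusion \(\Hilm[F]_2\subsetneq\Hilm[F]\) to obtain an irreducible Hilbert space representation \(\varrho\colon D\to\Bound(\Hilm[H])\) with \(\Hilm[F]_2\otimes_\varrho\Hilm[H]\subsetneq \Hilm[F]\otimes_\varrho\Hilm[H]\).

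The key intermediate step is to identify \(\Hilm[F]\otimes_\varrho\Hilm[H]\) as a closed subspace of \(\Hilm\otimes_\varrho\Hilm[H]\) with the closure of \(\Hilm[F]_1\otimes_\varrho\Hilm[H]+\Hilm[F]_2\otimes_\varrho\Hilm[H]\).  Since the inclusions \(\Hilm[F]_i\injto\Hilm\) and \(\Hilm[F]\injto\Hilm\) are Hilbert module isometries, the induced maps on interior tensor products with~\(\Hilm[H]\) are isometries as well, so all three tensor products embed as closed subspaces of \(\Hilm\otimes_\varrho\Hilm[H]\).  Clearly \(\Hilm[F]_i\otimes_\varrho\Hilm[H]\subseteq \Hilm[F]\otimes_\varrho\Hilm[H]\).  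Conversely, any elementary tensor \(\xi\otimes\eta\) with \(\xi\in\Hilm[F]\), \(\eta\in\Hilm[H]\), is the norm limit of \(\xi_n\otimes\eta\) with \(\xi_n\in\Hilm[F]_1+\Hilm[F]_2\), so it lies in the closure of \(\Hilm[F]_1\otimes_\varrho\Hilm[H]+\Hilm[F]_2\otimes_\varrho\Hilm[H]\); since such tensors span a dense subspace, the identification follows.

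With this in hand, suppose for contradiction that \(\Hilm[F]_1\otimes_\varrho\Hilm[H]=\Hilm[F]_2\otimes_\varrho\Hilm[H]\) as subspaces of \(\Hilm\otimes_\varrho\Hilm[H]\).  Then the closed sum \(\cl{\Hilm[F]_1\otimes_\varrho\Hilm[H]+\Hilm[F]_2\otimes_\varrho\Hilm[H]}\) equals \(\Hilm[F]_2\otimes_\varrho\Hilm[H]\), and combining with the identification above gives \(\Hilm[F]\otimes_\varrho\Hilm[H]=\Hilm[F]_2\otimes_\varrho\Hilm[H]\), contradicting the strict inclusion supplied by Theorem~\ref{the:Hahn-Banach_submodules}.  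The only nontrivial step is the density argument identifying \(\Hilm[F]\otimes_\varrho\Hilm[H]\) with the closure of the sum of the two tensor products; once this is done, the rest is a short contradiction argument.
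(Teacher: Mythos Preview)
Your argument is correct. The paper does not give its own proof of this corollary; it merely cites \cite{Pierrot:Reguliers}*{Corollaire 1.17}, so there is nothing to compare against in the paper itself. Your reduction to Theorem~\ref{the:Hahn-Banach_submodules} via the closed sum \(\Hilm[F]=\cl{\Hilm[F]_1+\Hilm[F]_2}\) is exactly the natural way to derive the corollary, and the density argument identifying \(\Hilm[F]\otimes_\varrho\Hilm[H]\) with the closure of \(\Hilm[F]_1\otimes_\varrho\Hilm[H]+\Hilm[F]_2\otimes_\varrho\Hilm[H]\) is sound. One small remark: the hypothesis \(\Hilm[F]_1,\Hilm[F]_2\subsetneq\Hilm\) in the statement plays no role in your proof, and indeed the conclusion holds as soon as \(\Hilm[F]_1\neq\Hilm[F]_2\).
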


\begin{corollary}[\cite{Pierrot:Reguliers}*{Théorème 1.18}]
  \label{cor:local_global_self-adjoint}
  Let~\(T\) be a closed, semiregular operator on a Hilbert
  \(D\)\nb-module~\(\Hilm\).  The operator~\(T\) is regular if and
  only if, for each irreducible representation \(\varrho\colon
  D\to\Bound(\Hilm[H])\) on a Hilbert space~\(\Hilm[H]\), the closures
  of \(T\otimes_\varrho1\) and \(T^*\otimes_\varrho1\)
  on~\(\Hilm\otimes_\varrho\Hilm[H]\) are adjoints of each other.

  Hence~\(T\) is regular and self-adjoint if and only if
  \(\cl{T\otimes_\varrho1}\) is a self-adjoint operator
  on~\(\Hilm\otimes_\varrho\Hilm[H]\) for each irreducible Hilbert
  space representation \(\varrho\colon D\to\Bound(\Hilm[H])\).
\end{corollary}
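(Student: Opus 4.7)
The plan is to apply Corollary~\ref{cor:Hahn-Banach_submodules} to suitable closed Hilbert submodules of $\Hilm\oplus\Hilm$ built from the graphs of~$T$ and~$T^*$. The starting point is the standard graph characterisation of regularity from \cite{Lance:Hilbert_modules}*{Chapter~9}: for the unitary $V\colon \Hilm\oplus\Hilm\to\Hilm\oplus\Hilm$ defined by $V(\xi,\eta)\defeq(-\eta,\xi)$, the semiregularity of~$T$ makes the closed submodules $V(G(T^*))$ and $G(T)$ automatically orthogonal, and $T$ is regular exactly when their sum spans $\Hilm\oplus\Hilm$. Equivalently, the closed submodule $\Hilm[F]\defeq\cl{V(G(T^*))+G(T)}$ equals the whole of $\Hilm\oplus\Hilm$.

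The analogous reformulation for Hilbert space operators is: if $S_1$ and $S_2$ are closed densely defined operators on a Hilbert space~$\Hilm[H]$ with $S_2\subseteq S_1^*$, then $S_1^*=S_2$ is equivalent to $V(G(S_2))+G(S_1)$ being dense in $\Hilm[H]\oplus\Hilm[H]$; indeed $V(G(S_1^*))=G(S_1)^\bot$, so the sum is orthogonal and fills the space precisely when $V(G(S_2))=V(G(S_1^*))$. I apply this with $S_1=\cl{T\otimes_\varrho 1}$ and $S_2=\cl{T^*\otimes_\varrho 1}$, for which the inclusion $S_2\subseteq S_1^*$ follows from the defining inner product identity for~$T^*$ after tensoring. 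Writing $\Hilm_\varrho\defeq\Hilm\otimes_\varrho\Hilm[H]$, the adjoint condition at~$\varrho$ in the corollary thus becomes the density of $V(G(\cl{T^*\otimes_\varrho 1}))+G(\cl{T\otimes_\varrho 1})$ in $\Hilm_\varrho\oplus\Hilm_\varrho$.

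The bridge that lets Corollary~\ref{cor:Hahn-Banach_submodules} take over, and what I expect to be the principal technical hurdle, is the identification
\[
  \Hilm[F]\otimes_\varrho\Hilm[H]
  = \cl{V(G(\cl{T^*\otimes_\varrho 1}))+G(\cl{T\otimes_\varrho 1})}
\]
as closed subspaces of $(\Hilm\oplus\Hilm)\otimes_\varrho\Hilm[H]\cong\Hilm_\varrho\oplus\Hilm_\varrho$. The underlying fact is that for any closed operator~$S$ on a Hilbert module, the interior tensor product $G(S)\otimes_\varrho\Hilm[H]$ coincides with $G(\cl{S\otimes_\varrho 1})$ inside $\Hilm_\varrho\oplus\Hilm_\varrho$; together with the adjointability of~$V$ and the compatibility of interior tensor products with sums and closures, this delivers the displayed identity. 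With this in hand, applying Corollary~\ref{cor:Hahn-Banach_submodules} to $\Hilm[F]\subseteq\Hilm\oplus\Hilm$ gives that $\Hilm[F]=\Hilm\oplus\Hilm$ if and only if $\Hilm[F]\otimes_\varrho\Hilm[H]=\Hilm_\varrho\oplus\Hilm_\varrho$ for every irreducible~$\varrho$, which, after translating both sides through the graph characterisations, is the first statement of the corollary.

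For the self-adjoint version, the forward direction is immediate from the first statement applied with $T=T^*$. Conversely, assume $\cl{T\otimes_\varrho 1}$ is self-adjoint for every irreducible~$\varrho$. Each such localisation is in particular symmetric, which gives $\psi(\braket{T\xi}{\eta}_D)=\psi(\braket{\xi}{T\eta}_D)$ for every pure state~$\psi$ on~$D$ and all $\xi,\eta\in\dom T$; since pure states separate the elements of~$D$, this forces $T\subseteq T^*$, whence $\cl{T\otimes_\varrho 1}\subseteq\cl{T^*\otimes_\varrho 1}$. The reverse inclusion follows from $\cl{T^*\otimes_\varrho 1}\subseteq(\cl{T\otimes_\varrho 1})^*=\cl{T\otimes_\varrho 1}$, so $\cl{T\otimes_\varrho 1}=\cl{T^*\otimes_\varrho 1}$ for every irreducible~$\varrho$. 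Equivalently, the localisations of the closed submodules $G(T)\subseteq G(T^*)$ agree at every irreducible~$\varrho$, so Corollary~\ref{cor:Hahn-Banach_submodules} gives $G(T)=G(T^*)$, i.e.\ $T=T^*$; regularity then follows from the first statement.
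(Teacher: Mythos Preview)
The paper does not supply its own proof of this corollary; it is stated with a citation to Pierrot and positioned immediately after Theorem~\ref{the:Hahn-Banach_submodules} and Corollary~\ref{cor:Hahn-Banach_submodules}, so the intended derivation is precisely via those Hahn--Banach type results. Your argument is correct and follows exactly this route: you translate regularity into the graph condition \(\Hilm[F]=\Hilm\oplus\Hilm\), use the identity \(G(S)\otimes_\varrho\Hilm[H]=G(\cl{S\otimes_\varrho 1})\) (which the paper also uses, e.g.\ in the proof of Theorem~\ref{the:identify_representations}), and then invoke Corollary~\ref{cor:Hahn-Banach_submodules}; the self-adjoint addendum is handled cleanly by first extracting \(T\subseteq T^*\) from the local symmetry and then comparing \(G(T)\) with \(G(T^*)\) via the same corollary.
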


We now apply the above results of Pierrot.  First we deduce a
criterion for representations to be equal.  Then we prove that
certain definitions of integrability automatically satisfy the Strong
Local--Global Principle.

\begin{theorem}
  \label{the:identify_representations}
  Let~\(A\) be a \Star{}algebra and let~\(\pi_i\) for \(i=1,2\) be
  \textup{(}closed\textup{)} representations of~\(A\) on a Hilbert
  module~\(\Hilm\) over a \Cstar\nb-algebra~\(D\).  The following
  are equivalent:
  \begin{enumerate}
  \item \(\pi_1 = \pi_2\);
  \item \(\pi_1\otimes_\varrho \Hilm[H] = \pi_2\otimes_\varrho
    \Hilm[H]\) for each irreducible Hilbert space
    representation~\(\varrho\) of~\(D\);
  \item \(\cl{\pi_1(a)} = \cl{\pi_2(a)}\) for each \(a\in A\).
  \end{enumerate}
\end{theorem}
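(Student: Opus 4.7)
The implications $(1)\Rightarrow(2)$ and $(1)\Rightarrow(3)$ are immediate: the former because the construction $\pi\mapsto \pi\otimes_\varrho 1_{\Hilm[H]}$ depends only on $\pi$, the latter directly from the definition of the closure. For $(3)\Rightarrow(1)$, I would apply Proposition~\ref{pro:equality_if_closure_equal} with the strong generating set $S=A$ itself (trivially a strong generating set of~$A$). So the only real content is $(2)\Rightarrow(3)$.

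My plan for $(2)\Rightarrow(3)$ is to pass to graphs and invoke Corollary~\ref{cor:Hahn-Banach_submodules}. Fix $a\in A$ and set $T_i\defeq\cl{\pi_i(a)}$ for $i=1,2$; these are closed, densely defined operators on~$\Hilm$, and their graphs $G_i\subseteq \Hilm\oplus\Hilm$ are closed Hilbert \(D\)\nb-submodules. The key claim is that for every Hilbert space representation $\varrho\colon D\to\Bound(\Hilm[H])$, under the canonical unitary $(\Hilm\oplus\Hilm)\otimes_\varrho\Hilm[H]\cong(\Hilm\otimes_\varrho\Hilm[H])\oplus(\Hilm\otimes_\varrho\Hilm[H])$, the submodule $G_i\otimes_\varrho\Hilm[H]$ is precisely the graph of the closed operator $\cl{\pi_i(a)\otimes_\varrho 1}$. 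Granting this, hypothesis~$(2)$ says that $G_1\otimes_\varrho\Hilm[H] = G_2\otimes_\varrho\Hilm[H]$ for every irreducible Hilbert space representation~$\varrho$ of~$D$, and then Corollary~\ref{cor:Hahn-Banach_submodules} forces $G_1=G_2$, that is, $T_1=T_2$, which is the substance of~$(3)$.

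The identification of graphs is the only technical point I expect to handle carefully. By the construction preceding Lemma~\ref{lem:rep_tensor_corr_functor}, the image of $\Hilms_i\odot\Hilm[H]$ is a core for $\pi_i\otimes_\varrho 1$, so the graph of $\cl{\pi_i(a)\otimes_\varrho 1}$ is the closure of $\{(\xi\otimes\eta,\pi_i(a)\xi\otimes\eta) : \xi\in\Hilms_i,\ \eta\in\Hilm[H]\}$. On the other hand, $\Hilms_i$ is a core for~$T_i$ by definition, so $\{(\xi,\pi_i(a)\xi):\xi\in\Hilms_i\}$ is dense in~$G_i$; this density persists after tensoring, so the algebraic tensor $\{(\xi,\pi_i(a)\xi):\xi\in\Hilms_i\}\odot\Hilm[H]$ is dense in $G_i\otimes_\varrho\Hilm[H]$. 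The canonical unitary identifies these two dense subsets, and taking closures yields the required equality. The hard part, such as it is, is this bookkeeping around cores and closures; everything else in the argument is formal.
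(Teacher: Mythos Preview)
Your proof is correct and follows essentially the same route as the paper's: both reduce $(2)\Leftrightarrow(3)$ to Corollary~\ref{cor:Hahn-Banach_submodules} via the identification of $G_i\otimes_\varrho\Hilm[H]$ with the graph of $\cl{(\pi_i\otimes_\varrho 1)(a)}$, and both use Proposition~\ref{pro:equality_if_closure_equal} for $(3)\Rightarrow(1)$. The only cosmetic difference is that the paper argues the contrapositive ($\neg(3)\Rightarrow\neg(2)$) while you argue the direct implication $(2)\Rightarrow(3)$; your slightly more detailed justification of the graph identification is welcome but not essential.
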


\begin{proof}
  The equivalence (3)\(\iff\)(1) is
  Proposition~\ref{pro:equality_if_closure_equal}, and~(1) clearly
  implies (2).  Thus we only have to
  prove that not~(3) implies not~(2).
  Assume that there is \(a\in A\) with \(\cl{\pi_1(a)} \neq
  \cl{\pi_2(a)}\).  The graphs \(\Gamma_1\) and~\(\Gamma_2\) of
  \(\cl{\pi_1(a)}\) and \(\cl{\pi_2(a)}\) are different Hilbert
  submodules of \(\Hilm\oplus\Hilm\).
  Corollary~\ref{cor:Hahn-Banach_submodules} gives an irreducible
  representation~\(\varrho\) of~\(D\) with \(\Gamma_1\otimes_\varrho
  \Hilm[H] \neq \Gamma_2\otimes_\varrho \Hilm[H]\).  This says that
  \(\cl{\pi_1(a)\otimes_\varrho 1_{\Hilm[H]}} \neq
  \cl{\pi_2(a)\otimes_\varrho 1_{\Hilm[H]}}\) because
  \(\Gamma_i\otimes_\varrho \Hilm[H]\) is the graph of
  \(\cl{\pi_i(a)\otimes_\varrho 1_{\Hilm[H]}}\).
\end{proof}

How do we specify which representations~\(\pi\) of a
\Star{}algebra~\(A\) are integrable?  There are two basically
different ways.  The ``universal way'' specifies the universal
integrable representation.  That is, it starts with a
representation~\((\Hilms[B],\mu)\) on a \Cstar\nb-algebra~\(B\) that
satisfies~\ref{enum:Cstar-gen1} in
Proposition~\ref{pro:Cstar-generated_by_unbounded_multipliers} and
takes the class of \(B\)\nb-integrable representations.  The
``operator way'' imposes conditions on the operators~\(\pi(a)\),
such as regularity and self-adjointness of~\(\cl{\pi(a)}\) or strong
commutation relations.

In good cases, the same class of integrable representations may be
specified in both ways.  For instance,
Theorem~\ref{the:regular_self-adjoint_Cstar-hull} shows that several
classes of representations of~\(\C[x]\) are equal.  The first is
defined by the universal representation on~\(\Cont_0(\R)\).  The
second asks~\(\cl{\pi(a)}\)
to be regular and self-adjoint for all \(a\in A_h\).

We are going to make the ``operator way'' more precise so that
\emph{all} classes of representations defined in this way satisfy
the Strong Local--Global Principle.  This is a powerful method
to prove Local--Global Principles.

\begin{definition}
  \label{def:restriction_representation}
  Let~\(A\) be a \Star{}algebra and~\(\Rep'(A)\) some weakly
  admissible class of representations of~\(A\) on Hilbert modules
  over \Cstar\nb-algebras.  A \emph{natural construction of Hilbert
    submodules} (of rank \(n\in\N_{\ge1}\)) associates to each
  representation~\(\pi\) on a Hilbert module~\(\Hilm\) that belongs
  to~\(\Rep'(A)\) a Hilbert submodule \(\Hilm[F](\pi) \subseteq
  \Hilm^n\), such that
  \begin{enumerate}
  \item \label{enum:restriction0} if \(u\colon
    \Hilm_1\congto\Hilm_2\) is a unitary \Star{}intertwiner between
    two representations \(\pi_1\) and~\(\pi_2\) in~\(\Rep'(A)\),
    then \(u^{\oplus n}\colon \Hilm_1^n \to \Hilm_2^n\) maps
    \(\Hilm[F](\pi_1)\) onto \(\Hilm[F](\pi_2)\);
  \item \label{enum:restriction1} let \(D_1\) and~\(D_2\) be
    \Cstar\nb-algebras and let~\(\Hilm[G]\) be a
    \(D_1,D_2\)-correspondence; let~\(\pi\) be a
    representation in~\(\Rep'(A)\) on a Hilbert
    \(D_1\)\nb-module~\(\Hilm\); then the canonical isomorphism
    \(\Hilm^n \otimes_{D_1} \Hilm[G] \congto (\Hilm \otimes_{D_1}
    \Hilm[G])^n\) maps \(\Hilm[F](\pi) \otimes_{D_1}
    \Hilm[G]\) onto \(\Hilm[F](\pi \otimes_{D_1}
    \Hilm[G])\);
  \item \label{enum:restriction2} if \(\pi_i\) for~\(i\) in a
    set~\(I\) are representations in~\(\Rep'(A)\) on Hilbert
    \(D\)\nb-modules~\(\Hilm_i\) over the same
    \Cstar\nb-algebra~\(D\), then the canonical isomorphism
    \(\bigl(\bigoplus \Hilm_i\bigr)^n \congto \bigoplus \Hilm_i^n\)
    maps \(\Hilm[F]\bigl(\bigoplus \pi_i\bigr)\) onto \(\bigoplus
    \Hilm[F](\pi_i)\).
  \end{enumerate}
  In brief, \(\Hilm[F](\pi) \subseteq \Hilm^n\) is compatible with
  unitary \Star{}interwiners, interior tensor products, and direct
  sums.

  A smaller class of representations \(\Rep''(A)\subseteq \Rep'(A)\)
  is \emph{defined by a submodule condition} if there are two
  natural constructions of Hilbert submodules \(\Hilm[F]_i(\pi)\),
  \(i=1,2\), of the same rank~\(n\), such that a
  representation~\(\pi\) in~\(\Rep'(A)\) belongs to~\(\Rep''(A)\) if
  and only if \(\Hilm[F]_1(\pi) = \Hilm[F]_2(\pi)\).

  A class of representations \(\Repi(A)\subseteq \Rep(A)\) is
  \emph{defined by submodule conditions} if it is defined by
  transfinite recursion by repeating the step in the previous
  paragraph.  More precisely, there are a well-ordered set~\(I\) with
  a greatest element~\(M\) and least element~\(0\) and subclasses
  \(\Rep^i(A)\subseteq \Rep(A)\) for \(i \in I\) such that
  \begin{enumerate}
  \item \(\Rep^0(A)=\Rep(A)\) and \(\Rep^M(A) = \Repi(A)\);
  \item \(\Rep^{i+1}(A)\subseteq \Rep^i(A)\) is defined by
    a submodule condition for each \(i\in I\);
  \item \(\Rep^i(A) = \bigcap_{i'<i}
    \Rep^{i'}(A)\) if \(i\neq 0\) and \(i\neq i'+1\)
    for all \(i'\in I\).
  \end{enumerate}
\end{definition}

The following lemma makes this definition meaningful, the following
theorem makes it interesting.

\begin{lemma}
  \label{lem:operator_conditions}
  If \(\Rep'(A)\subseteq \Rep(A)\) is weakly admissible and
  \(\Rep''(A)\subseteq \Rep'(A)\) is defined by a submodule
  condition, then \(\Rep''(A)\) is also weakly admissible.  If
  \((\Rep^i(A))_{i\in I}\) is a set of weakly admissible subclasses,
  then \(\bigcap_{i\in I} \Rep^i(A)\) is weakly admissible.  Any
  class of representations defined by submodule conditions is weakly
  admissible.
\end{lemma}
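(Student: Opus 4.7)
My plan is to verify the three conditions \ref{enum:admissible0}--\ref{enum:admissible2} of weak admissibility from Definition~\ref{def:integrable_admissible} for each of the three assertions in turn, bootstrapping the third from the first two via transfinite induction.

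For the first assertion, I fix \(\Rep'(A)\) weakly admissible and \(\Rep''(A)\subseteq\Rep'(A)\) cut out by a single submodule condition \(\Hilm[F]_1(\pi) = \Hilm[F]_2(\pi)\) of some rank~\(n\). The structural point is that the three weak admissibility conditions for \(\Rep''(A)\) pair off one-to-one against the three naturality clauses \ref{enum:restriction0}--\ref{enum:restriction2}. For a unitary \Star{}intertwiner \(u\colon \pi_1\congto\pi_2\) with \(\pi_1\in\Rep''(A)\), weak admissibility of \(\Rep'(A)\) places \(\pi_2\) in \(\Rep'(A)\), and \ref{enum:restriction0} lets me transport the equality \(\Hilm[F]_1(\pi_1) = \Hilm[F]_2(\pi_1)\) along \(u^{\oplus n}\) to obtain \(\Hilm[F]_1(\pi_2) = \Hilm[F]_2(\pi_2)\). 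The case of interior tensor products is formally identical, replacing \(u^{\oplus n}\) by the canonical isomorphism from \ref{enum:restriction1}. For closure under direct sums, \ref{enum:restriction2} converts the assumption \(\Hilm[F]_1(\pi_i) = \Hilm[F]_2(\pi_i)\) for each~\(i\) into the corresponding equality for \(\bigoplus\pi_i\).

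I expect the summand half of condition \ref{enum:admissible2} to be the only mildly delicate point. Given \(\pi = \pi_1\oplus\pi_2\in\Rep''(A)\), I first use weak admissibility of \(\Rep'(A)\) to place the summands \(\pi_i\) in \(\Rep'(A)\), which makes the submodules \(\Hilm[F]_k(\pi_i)\subseteq \Hilm_i^n\) defined; then \ref{enum:restriction2} translates the hypothesis into the equality \(\Hilm[F]_1(\pi_1)\oplus\Hilm[F]_1(\pi_2) = \Hilm[F]_2(\pi_1)\oplus\Hilm[F]_2(\pi_2)\) of Hilbert submodules of \(\Hilm_1^n\oplus\Hilm_2^n\). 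Because each \(\Hilm[F]_k(\pi_i)\) lies in its own direct summand \(\Hilm_i^n\), the equality of direct sums forces the summandwise equalities \(\Hilm[F]_1(\pi_i) = \Hilm[F]_2(\pi_i)\), placing each~\(\pi_i\) in \(\Rep''(A)\).

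The second assertion is then immediate because each of \ref{enum:admissible0}--\ref{enum:admissible2} is a closure property that passes to arbitrary intersections of subclasses. Finally, I would prove the third assertion by transfinite induction on the well-ordered index set~\(I\): the base case \(\Rep^0(A) = \Rep(A)\) is weakly admissible since closed representations are visibly closed under unitary \Star{}intertwiners, under the interior tensor product construction (Lemma~\ref{lem:tensor_rep_with_corr} together with the subsequent closure), and under direct sums and summands; the successor step invokes the first assertion; and the limit step invokes the second. This yields \(\Repi(A) = \Rep^M(A)\) weakly admissible, completing the argument.
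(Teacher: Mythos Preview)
Your proof is correct and follows essentially the same approach as the paper's: match each weak-admissibility clause against the corresponding naturality clause of the submodule construction, observe that intersections are trivial, and bootstrap by transfinite induction. You actually supply more detail than the paper does---in particular, the paper's proof just says ``similarly'' for conditions~\ref{enum:admissible1} and~\ref{enum:admissible2} and does not spell out the direct-summand half of~\ref{enum:admissible2} or the base case \(\Rep^0(A)=\Rep(A)\), both of which you handle explicitly.
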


\begin{theorem}
  \label{the:operator_conditions}
  If \(\Repi(A)\subseteq \Rep(A)\) is defined by submodule conditions,
  then it satisfies the Strong Local--Global Principle.
\end{theorem}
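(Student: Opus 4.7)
The plan is to prove by transfinite induction on $i \in I$ that each class $\Rep^i(A)$ in the defining filtration satisfies the Strong Local--Global Principle; the case $i = M$ then gives the theorem. Every $\Rep^i(A)$ is weakly admissible by Lemma~\ref{lem:operator_conditions}, so the ``only if'' direction is automatic: closure under interior tensor products ensures that $\pi \in \Rep^i(A)$ implies $\pi \otimes_\varrho 1 \in \Rep^i(A)$ for every irreducible representation $\varrho$ of $D$. The base case $i=0$ is trivial because $\Rep^0(A) = \Rep(A)$ contains every closed representation.

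For the successor step, assume the Strong Local--Global Principle for $\Rep^i(A)$ and that $\Rep^{i+1}(A) \subseteq \Rep^i(A)$ is cut out by the submodule condition $\Hilm[F]_1(\pi) = \Hilm[F]_2(\pi)$, for two natural constructions of Hilbert submodules $\Hilm[F]_j(\pi) \subseteq \Hilm^n$ of common rank~$n$. Suppose $\pi \otimes_\varrho 1 \in \Rep^{i+1}(A)$ for every irreducible representation $\varrho\colon D \to \Bound(\Hilm[H])$. Since $\Rep^{i+1}(A) \subseteq \Rep^i(A)$, the induction hypothesis first yields $\pi \in \Rep^i(A)$, so the submodules $\Hilm[F]_j(\pi)$ are defined. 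Naturality with interior tensor products (property~(2) in Definition~\ref{def:restriction_representation}) gives
\[
\Hilm[F]_j(\pi) \otimes_\varrho \Hilm[H] = \Hilm[F]_j(\pi \otimes_\varrho 1) \qquad (j=1,2),
\]
and the right-hand sides coincide for $j=1,2$ because $\pi \otimes_\varrho 1 \in \Rep^{i+1}(A)$. Now comes the crucial step: if the two closed Hilbert submodules $\Hilm[F]_1(\pi), \Hilm[F]_2(\pi) \subseteq \Hilm^n$ were distinct, Pierrot's Corollary~\ref{cor:Hahn-Banach_submodules} would supply some irreducible $\varrho$ separating them after tensoring, contradicting what we just verified. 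Hence $\Hilm[F]_1(\pi) = \Hilm[F]_2(\pi)$ and $\pi \in \Rep^{i+1}(A)$.

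The limit step is routine: if $i \in I$ is a limit point and each $\Rep^{i'}(A)$ for $i' < i$ satisfies the Strong Local--Global Principle, then $\Rep^i(A) = \bigcap_{i'<i} \Rep^{i'}(A)$ inherits the principle termwise. The main obstacle is the successor step, and the whole argument hinges on the Hahn--Banach type input from Pierrot (Theorem~\ref{the:Hahn-Banach_submodules}) that irreducible representations of $D$ already suffice to distinguish closed Hilbert submodules of $\Hilm^n$ after the base change $\otimes_\varrho \Hilm[H]$; without this refinement one could only prove the ordinary Local--Global Principle, with arbitrary (not necessarily pure) states.
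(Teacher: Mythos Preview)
Your proof is correct and follows essentially the same approach as the paper: transfinite induction along the defining filtration, with the successor step handled via Pierrot's Hahn--Banach-type result (Corollary~\ref{cor:Hahn-Banach_submodules}) combined with the naturality of the submodule constructions under interior tensor products, and a trivial limit step. The only cosmetic difference is that the paper phrases the successor step contrapositively (if $\pi\notin\Rep''(A)$, produce an irreducible~$\varrho$ with $\pi\otimes_\varrho\Hilm[H]\notin\Rep''(A)$), whereas you argue the direct implication.
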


Before we prove these two results, we give examples of classes of
representations defined by one or more submodule conditions, and a
few counterexamples.  These show that a class of integrable
representations defined in the operator way is often but not always
defined by submodule conditions.

\begin{example}
  \label{exa:regularity_condition}
  The \emph{regularity condition} for \(a\in A_h\)
  requires~\(\cl{\pi(a)}\) to be regular and self-adjoint.
  Equivalently, the closures of \((\pi(a)\pm \ima)(\Hilms)\) for
  both signs are dense in~\(\Hilm\); this is equivalent
  to~\(\cl{\pi(a)}\) having a unitary Cayley transform.
  Sending~\(\pi\) to the image of \(\cl{\pi(a)+\ima}\) or
  \(\cl{\pi(a)-\ima}\) is a natural construction of a Hilbert submodule.
  Hence the condition that~\(\cl{\pi(a)}\) is regular and
  self-adjoint is equivalent to the combination of two submodule
  conditions of rank~\(1\).

  Alternatively, we may proceed as in the definition of regularity
  for non-self-adjoint operators.
  Let~\(\Gamma(T)\) denote the closure of the graph of an
  operator~\(T\).  A closed operator~\(T\) is regular if and only if
  the direct sum of \(\Gamma(T)\) and~\(U_0(\Gamma(T^*))\) is
  \(\Hilm\oplus\Hilm\), where \(U_0(\xi_1,\xi_2) \defeq
  (\xi_2,-\xi_1)\).  If \(a\in A_h\), then regularity
  and self-adjointness of~\(\cl{\pi(a)}\) together are equivalent to
  the equality of
  \[
  \Hilm[F]_1(\pi) \defeq
  \Gamma(\pi(a)) \oplus U_0(\Gamma(\pi(a^*)))
  \quad\text{and}\quad
  \Hilm[F]_2(\pi)\defeq \Hilm\oplus\Hilm.
  \]
  We claim that \(\Hilm[F]_1\) and~\(\Hilm[F]_2\) are natural
  constructions of Hilbert submodules of rank~\(2\).  This is
  trivial for~\(\Hilm[F]_2\).  That~\(\Hilm[F]_1\) is compatible
  with unitary intertwiners and direct sums is an easy exercise.
  The construction~\(\Hilm[F]_1\) is compatible with interior tensor
  products because the graph of \(\cl{(\pi\otimes_{D_1}
    1_{\Hilm[G]})(a)}\) is \(\Gamma(\pi(a))\otimes_{D_1} \Hilm[G]\).

  For instance, \ref{enum:regular_self-adjoint_Cstar-hull2} in
  Theorem~\ref{the:regular_self-adjoint_Cstar-hull} defines
  integrable representations of~\(\C[x]\) by regularity conditions.
  We generalise this in
  Theorem~\ref{the:regular_admissible_local-global} below.
\end{example}

\begin{example}
  \label{exa:alone_regularity}
  The class of representations where~\(\cl{\pi(a)}\) is regular for
  some \(a\in A\) is always weakly admissible by
  \cite{Lance:Hilbert_modules}*{Proposition 9.10}.  The first
  example in~§\ref{sec:polynomials2} shows a class of
  representations defined by such a condition that does not satisfy
  the Local--Global Principle, in contrast to
  Theorem~\ref{the:operator_conditions}.  Hence asking
  for~\(\cl{\pi(a)}\) to be regular for some \(a\in A\) cannot be a
  submodule condition.  The problem is that the inclusion
  \(\Gamma(\pi(a)^*)\otimes_D 1_{\Hilm[G]} \subseteq
  \Gamma\bigl((\pi(a)\otimes_D 1_{\Hilm[G]})^*\bigr)\) for a
  correspondence~\(\Hilm[G]\) may be strict.
\end{example}




\begin{example}
  \label{exa:strong_commutation}
  Let \(a_1,a_2\in A_h\) and suppose that \(t_1\defeq
  \cl{\pi(a_1)}\) and \(t_2\defeq \cl{\pi(a_2)}\) are self-adjoint,
  regular operators for all representations in~\(\Rep'(A)\); we may
  achieve this by submodule conditions as in
  Example~\ref{exa:regularity_condition} in previous steps of a
  recursive definition.  We say that \(t_1\) and~\(t_2\)
  \emph{strongly commute} if their Cayley transforms \(u_1\)
  and~\(u_2\) commute.  Equivalently, \(u_1\) commutes with~\(t_2\),
  that is, \(u_1 t_2 u_1^* = t_2\).  The graphs of \(t_2\) and \(u_1
  t_2 u_1^*\) are natural constructions of Hilbert submodules
  of rank~\(2\).  Therefore, strong commutation of
  \(\cl{\pi(a_1)}\) and~\(\cl{\pi(a_2)}\) is a submodule condition.
\end{example}

\begin{example}
  \label{exa:nondegeneracy_condition}
  Let \(I\idealin A\) be an ideal.  A \emph{nondegeneracy condition}
  for~\(I\) asks the closed linear span of~\(\pi(I)\Hilms\) to be
  all of~\(\Hilm\); here~\(\Hilms\) is the domain of~\(\pi\).  This
  means that \(\Hilm[F]_1(\pi)\), the closed linear span of
  \(\pi(a)\xi\) for \(a\in I\), \(\xi\in\Hilms\), is equal to
  \(\Hilm[F]_2(\pi) = \Hilm\).  These are natural
  constructions of Hilbert submodules of rank~\(1\).  So a
  nondegeneracy condition is a submodule condition.

  For instance, let~\(I\) be a non-unital \Star{}algebra and let
  \(A=\tilde{I}\) be its unitisation.  Any representation of~\(I\)
  extends uniquely to a unital representation of~\(A\).  The class
  of nondegenerate representations of~\(I\) inside the class of all
  representations of~\(A\) is defined by a submodule condition.

  More generally, let \(V_1,V_2\subseteq A\) be vector subspaces and
  ask the closed linear spans of \(\pi(a)\xi\) for \(a\in V_j\),
  \(\xi\in\Hilms\) to be equal for \(j=1,2\).  This is a submodule
  condition as well.  For instance, the condition
  \(\cl{\pi(a+\ima) \Hilms} = \Hilm\) for \(a\in A_h\) is of this
  form.  It holds if and only if the Cayley transform
  of~\(\cl{\pi(a)}\) is an isometry (possibly without adjoint).

  Often we need a mild generalisation of the above construction, see
  Example~\ref{exa:spectral_condition} below.  Suppose that we have
  constructed a representation \(\varphi(\pi)\) of a unital
  \Star{}algebra~\(A'\) on~\(\Hilm\) for any representation~\(\pi\)
  in~\(\Rep'(A)\), such that \(\pi\mapsto\varphi(\pi)\) is
  compatible with unitary \Star{}intertwiners, direct sums, and
  interior tensor products; the last property means that
  \(\varphi(\pi\otimes_{D_1} 1_{\Hilm[G]}) = \varphi(\pi)\otimes_{D_1}
  1_{\Hilm[G]}\) as representations on \(\Hilm\otimes_{D_1}
  \Hilm[G]\).  Then we may ask the nondegeneracy condition for an
  ideal in~\(A'\) instead.  In particular, \(A'\) may be a weak
  \Cstar\nb-hull for some class of representations
  containing~\(\Rep'(A)\).
\end{example}

\begin{example}
  \label{exa:spectral_condition}
  Let \(a_1,\dotsc,a_n\in A_h\) be commuting, symmetric elements and
  suppose that~\(\cl{\pi(a_j)}\) for \(j=1,\dotsc,n\) are strongly
  commuting, self-adjoint, regular operators for all representations
  in~\(\Rep'(A)\); we may achieve all this by previous submodule
  conditions as in Examples \ref{exa:regularity_condition}
  and~\ref{exa:strong_commutation}.  A \emph{closed spectral
    condition} asks the joint spectrum of
  \(\cl{\pi(a_1)},\dotsc,\cl{\pi(a_n)}\) to be contained in a closed
  subset \(X\subseteq\R^n\).

  We claim that this is a submodule condition.  Under our assumptions,
  the functional calculus \(\Phi\colon \Cont_0(\R^n)\to\Bound(\Hilm)\)
  exists.  Our spectral condition means that
  \(\Phi(\Cont_0(\R^n\setminus X))\Hilm=0\).
  The construction of~\(\Phi\)
  is clearly compatible with unitary \Star{}intertwiners and direct
  sums.  It is also compatible with interior tensor products, that is,
  the functional calculus for
  \(\cl{\pi\otimes 1(a_1)},\dotsc,\cl{\pi\otimes 1(a_n)}\)
  maps \(f\mapsto \Phi(f)\otimes 1\).
  Hence \(\Phi(\Cont_0(\R^n\setminus X))\Hilm\)
  is a naturally constructed Hilbert submodule of~\(\Hilm\).
  So our spectral condition for closed \(X\subseteq\R^n\)
  is a submodule condition.

  More generally, let \(X\subseteq \R^n\) be locally closed, that
  is, \(X\) is relatively open in its closure~\(\cl{X}\).  Suppose
  that the spectral condition for~\(\cl{X}\) holds for all
  representations in~\(\Rep'(A)\), say, by previous recursion steps.
  Then the functional calculus homomorphism for
  \(\cl{\pi(a_1)},\dotsc,\cl{\pi(a_n)}\) exists and descends to
  \(\Cont_0(\cl{X})\).  The \emph{spectral condition} for~\(X\) asks
  the restriction of this homomorphism to the ideal
  \(\Cont_0(X)\idealin \Cont_0(\cl{X})\) to be nondegenerate.  This
  is a submodule condition by
  Example~\ref{exa:nondegeneracy_condition}.
\end{example}

\begin{example}[see \cite{Woronowicz:Unbounded_affiliated}*{§3}]
  \label{exa:Weyl_regular}
  Let~\(A_\mu\) for some \(\mu\in\R\setminus\{0\}\) be the unital
  \Star{}algebra generated by two elements~\(v,n\) with the
  relations \(v^* v = v v^* = 1\), \(n^* n = n n^*\), \(v^* n v =
  \mu n\).  This is the algebra of polynomial functions on the
  quantum group~\(\textup{E}_\mu(2)\).  The relations allow to write
  any element as a linear combination of \(v^k\cdot g(n,n^*)\) for
  \(k\in\Z\) and a polynomial~\(g\).  It follows that the graph
  topology of a representation of~\(A_\mu\) is generated by the
  graph norms of~\((n^* n)^k\) for \(k\in\N\).  Thus a
  representation is closed if and only if its domain is
  \(\bigcap_{k=0}^\infty \cl{\pi((n^* n)^k)}\), compare the proof
  of~\eqref{eq:domain_strong_generating_set}.

  The \Cstar\nb-algebra of~\(\textup{E}_\mu(2)\) is a \Cstar\nb-hull
  for a certain class of integrable representations of~\(A_\mu\)
  that is defined by submodule conditions.  First, we
  require~\(\cl{\pi(n)}\) to be a regular, normal operator;
  equivalently, \(\cl{\pi(n+n^*)}\) and \(-\ima \cl{\pi(n-n^*)}\)
  are regular and self-adjoint, and they strongly commute; these are
  submodule conditions by Examples \ref{exa:regularity_condition}
  and~\ref{exa:strong_commutation}.  Secondly, we require the
  spectrum of~\(\cl{\pi(n)}\) (or the joint spectrum of its real and
  imaginary part) to be contained in \(X_\mu \defeq \{z\in\C\mid
  \abs{z} \in \mu^\Z\}\cup\{0\}\); this is a submodule condition by
  Example~\ref{exa:spectral_condition}.  Finally, we require
  \(\cl{\pi(n^* n)^k}\) to be regular and self-adjoint for all
  \(k\ge1\).  These are submodule conditions by
  Example~\ref{exa:regularity_condition}.

  We claim that an integrable representation on~\(\Hilm\) is
  equivalent to a pair~\((V,N)\) consisting of a unitary
  operator~\(V\) and a regular, normal operator~\(N\) on~\(\Hilm\)
  with spectrum contained in~\(X_\mu\), subject to the relation
  \(V^* N V = \mu N\).  First, any such pair~\((V,N)\) gives an
  integrable representation of~\(A_\mu\) with domain
  \(\bigcap_{k=0}^\infty \dom(N^k)\).  Conversely, if~\(\pi\) is an
  integrable representation, then let \(N\defeq \cl{\pi(n)}\), \(V\defeq
  \cl{\pi(v)}\).  These have the properties required above.  Since
  \(\cl{\pi((n^* n)^k)}\) is self-adjoint and contained in the
  symmetric operator~\((N^* N)^k\), we must have \(\cl{\pi((n^*
    n)^k)} = (N^* N)^k\).  So the domain of the representation
  of~\(A_\mu\) is \(\bigcap_{k=0}^\infty \dom(N^k)\).

  The regular, normal operator~\(N\)
  with spectrum in~\(X_\mu\)
  defines a functional calculus~\(\varrho\)
  on \(\Cont_0(X_\mu)\).
  The commutation relation \(v^* n v = \mu n\)
  is equivalent to \(V^* \varrho(f) V = \varrho(\alpha(f))\)
  for the automorphism \(\alpha(f)(x) \defeq f(\mu x)\)
  on \(\Cont_0(X_\mu)\).
  As a consequence, the crossed product \Cstar\nb-algebra
  \(\Cont_0(X_\mu)\rtimes_\alpha \Z\)
  is a \Cstar\nb-hull for our class of integrable representations.

  By the way, this also follows from our Induction Theorem.  For this,
  we give~\(A_\mu\)
  the unique \(\Z\)\nb-grading
  where~\(v\)
  has degree~\(1\)
  and~\(n\)
  has degree~\(0\).
  Then \((A_\mu)_0 = \C[n,n^*]\),
  and we call a representation of~\(\C[n,n^*]\)
  integrable if~\(n\)
  is regular and normal with spectrum contained in~\(X_\mu\).
  The \(\Cst\)\nb-hull
  for this class of integrable representations of~\(\C[n,n^*]\)
  is~\(\Cont_0(X_\mu)\).
  In this case, all representations of~\(\C[n,n^*]\)
  are inducible to~\(A_\mu\),
  and the induced \(\Cst\)\nb-hull
  for~\(A_\mu\) is \(\Cont_0(X_\mu)\rtimes_\alpha \Z\).
\end{example}

Interesting classes of representations defined by submodule
conditions occur in Theorems \ref{the:integrable_Ug}
and~\ref{the:commutative_regular_admissible}.  The examples in
\cites{Savchuk-Schmudgen:Unbounded_induced, Dowerk-Savchuk:Induced}
are also defined by submodule conditions, compare
Proposition~\ref{pro:integrable_induced_nice}.

\begin{example}
  \label{exa:continuity_condition}
  If the algebra~\(A\) carries a topology, then we may restrict
  attention to representations of~\(A\) that are continuous in some
  sense.  For instance, if~\(G\) is a topological group and \(A=\C[G]\)
  is the group ring of the underlying discrete group, then
  representations of~\(A\) are unitary representations of~\(G\),
  possibly discontinuous.  Among them, we may restrict to the continuous
  representations (compare the definition of a host algebra for~\(G\)
  in~\cite{Grundling-Neeb:Full_regularity}).  If~\(G\) is an
  infinite-dimensional Lie group, we may restrict further to
  representations of~\(\C[G]\) that are smooth in the sense that the
  smooth vectors are dense.  I do not expect continuity or smoothness to
  be a submodule condition, and I do not know when the classes of
  continuous or smooth representations satisfy the Local--Global
  Principle or its strong variant.

  Semiboundedness conditions ask for certain (regular) self-adjoint
  operators to be bounded above,
  see~\cite{Neeb:Semibounded_Hilbert_loop}.  If we specify the upper
  bound on the spectrum, this is a spectral condition as in
  Example~\ref{exa:spectral_condition}.  When we let the upper bound
  go to~\(\infty\), however, then direct sums no longer preserve
  semiboundedness.  Therefore, semiboundedness conditions seem close
  enough to submodule conditions to be tractable, but the details
  require further thought.
\end{example}

\begin{proof}[Proof of Lemma~\textup{\ref{lem:operator_conditions}}]
  Let \(\Hilm[F]_1\) and~\(\Hilm[F]_2\) be natural constructions of
  Hilbert submodules of rank~\(n\) that define \(\Rep''(A)\)
  inside~\(\Rep'(A)\), and let~\(\Rep'(A)\) be weakly admissible.
  Let~\(\pi_i\) for \(i=1,2\) be representations on Hilbert
  \(D\)\nb-modules~\(\Hilm_i\) for a \Cstar\nb-algebra~\(D\)
  that belong to~\(\Rep'(A)\).
  Let \(u\colon \Hilm_1\congto\Hilm_2\) be a unitary
  \Star{}intertwiner from~\(\pi_1\) to~\(\pi_2\).  If
  \(\Hilm[F]_1(\pi_1) = \Hilm[F]_2(\pi_1)\), then
  \(\Hilm[F]_1(\pi_2) = u^{\oplus n}\bigl( \Hilm[F]_1(\pi_1)\bigr) =
  u^{\oplus n}\bigl( \Hilm[F]_2(\pi_1)\bigr) = \Hilm[F]_2(\pi_2)\).
  Thus~\(\pi_2\) belongs to~\(\Rep''(A)\) if~\(\pi_1\) does.  This
  verifies~\ref{enum:admissible0} in
  Definition~\ref{def:integrable_admissible}
  using~\ref{enum:admissible0} in
  Definition~\ref{def:restriction_representation}.  Similarly,
  \ref{enum:restriction1} and~\ref{enum:restriction2} in
  Definition~\ref{def:restriction_representation} show that
  \(\Rep''(A)\) inherits \ref{enum:admissible1}
  and~\ref{enum:admissible2} in
  Definition~\ref{def:integrable_admissible} from~\(\Rep'(A)\).
  Thus \(\Rep''(A)\) is again weakly admissible.

  It is trivial that weak admissibility is hereditary for
  intersections.  By transfinite induction, it follows that any
  class of representations defined by submodule conditions is weakly
  admissible.
\end{proof}

\begin{proof}[Proof of Theorem~\textup{\ref{the:operator_conditions}}]
  Let \(\Hilm[F]_1\) and~\(\Hilm[F]_2\) be natural constructions of
  Hilbert submodules of rank~\(n\) that define \(\Rep''(A)\)
  inside~\(\Rep'(A)\), and assume that~\(\Rep'(A)\) satisfies the
  Strong Local--Global Principle.  Let~\(\pi\) be a representation
  on a Hilbert \(D\)\nb-module~\(\Hilm\) that does not belong
  to~\(\Rep''(A)\).  We must find an irreducible
  representation~\(\varrho\) of~\(D\) on a Hilbert
  space~\(\Hilm[H]\) such that \(\pi\otimes_\varrho \Hilm[H]\) does
  not belong to~\(\Rep''(A)\).  If the representation does not even
  belong to~\(\Rep'(A)\), this is possible because~\(\Rep'(A)\)
  satisfies the Strong Local--Global Principle by assumption.  So we
  may assume that~\(\pi\) belongs to~\(\Rep'(A)\) but not
  to~\(\Rep''(A)\).  Thus \(\Hilm[F]_1(\pi)\) and
  \(\Hilm[F]_2(\pi)\) are well defined and different Hilbert
  submodules of~\(\Hilm^n\).
  Corollary~\ref{cor:Hahn-Banach_submodules} gives an irreducible
  representation~\(\varrho\) of~\(D\) on a Hilbert
  space~\(\Hilm[H]\) such that
  \(\Hilm[F]_1(\pi)\otimes_\varrho \Hilm[H] \neq
  \Hilm[F]_2(\pi)\otimes_\varrho \Hilm[H]\) as closed
  subspaces of \(\Hilm^n\otimes_\varrho \Hilm[H]\).  Identify these
  with subspaces of \((\Hilm\otimes_\varrho \Hilm[H])^n\).
  The condition~\ref{enum:restriction1} in
  Definition~\ref{def:restriction_representation} gives
  \[
  \Hilm[F]_1(\pi\otimes_\varrho \Hilm[H]) =
  \Hilm[F]_1(\pi)\otimes_\varrho \Hilm[H] \neq
  \Hilm[F]_2(\pi)\otimes_\varrho \Hilm[H] =
  \Hilm[F]_2(\pi\otimes_\varrho \Hilm[H]).
  \]
  That is, \(\pi\otimes_\varrho \Hilm[H]\) does not belong
  to~\(\Rep''(A)\).  Thus~\(\Rep''(A)\) inherits the Strong
  Local--Global Principle from~\(\Rep'(A)\).

  The Strong Local--Global Principle is easily seen to be hereditary
  for intersections.  Hence any class of representations defined by
  submodule conditions satisfies the Strong Local--Global Principle
  by transfinite induction.
\end{proof}

\begin{theorem}
  \label{the:regular_admissible_local-global}
  Let~\(A\) be a \Star{}algebra and let \(S\subseteq A_h\).  Let
  \(\Rep^S(A)\) be the class of all representations where the elements
  of~\(S\) act by regular, self-adjoint operators.  This class is
  defined by submodule conditions and hence satisfies the Strong
  Local--Global Principle.  It is admissible if~\(S\) is a strong
  generating set for~\(A\).
\end{theorem}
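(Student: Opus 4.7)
The plan is to treat the two parts of the theorem in turn, the submodule-condition structure first and admissibility second.

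For the first part, Example~\ref{exa:regularity_condition} already writes, for each individual $a\in A_h$, the condition ``$\cl{\pi(a)}$ is regular and self-adjoint'' as the equality of two naturally-constructed Hilbert submodules of rank~$2$. Well-ordering $S$ and adjoining a terminal element, I impose these submodule conditions one element at a time as in Definition~\ref{def:restriction_representation}, so that $\Rep^S(A)$ is a class defined by submodule conditions. The Strong Local--Global Principle and weak admissibility then follow at once from Theorem~\ref{the:operator_conditions} and Lemma~\ref{lem:operator_conditions}.

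For admissibility under the strong generating hypothesis, only condition~\ref{enum:admissible3} of Definition~\ref{def:integrable_admissible} remains. Suppose $\pi$ on $\Hilm[H]$ and a subrepresentation $\pi_0$ on a closed subspace $\Hilm[H]_0\subseteq\Hilm[H]$ both belong to $\Rep^S(A)$, with respective domains $\Hilms[H]$ and $\Hilms[H]_0\subseteq\Hilms[H]\cap\Hilm[H]_0$; let $P_0$ be the orthogonal projection onto $\Hilm[H]_0$. The key observation is that $P_0$ commutes with the Cayley transform $U_a$ of $\cl{\pi(a)}$ for each $a\in S$: given $\eta\in\Hilm[H]_0$, self-adjointness of $\cl{\pi_0(a)}$ lets me solve $(\cl{\pi_0(a)}+\ima)\xi=\eta$ inside $\dom\cl{\pi_0(a)}\subseteq\dom\cl{\pi(a)}$, and then $U_a\eta=(\cl{\pi(a)}-\ima)\xi=(\cl{\pi_0(a)}-\ima)\xi\in\Hilm[H]_0$; a symmetric argument handles $U_a^*$. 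Consequently $P_0$ preserves $\dom\cl{\pi(a)}=(1-U_a)\Hilm[H]$ and commutes with $\cl{\pi(a)}$ on that domain, for every $a\in S$.

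Intersecting over $a\in S$ and invoking equation~\eqref{eq:domain_strong_generating_set} for the strong generating set $S$, I conclude $P_0\Hilms[H]\subseteq\Hilms[H]$. The identification $\Hilms[H]_0=\Hilm[H]_0\cap\Hilms[H]$ then follows because the restriction of each $\cl{\pi(a)}$ to the invariant subspace $\Hilm[H]_0$ is itself self-adjoint on $\Hilm[H]_0$ and hence cannot properly extend the already self-adjoint $\cl{\pi_0(a)}$; so $\dom\cl{\pi_0(a)}=\Hilm[H]_0\cap\dom\cl{\pi(a)}$, and intersecting over $S$ yields the claim. The decomposition $\xi=P_0\xi+(1-P_0)\xi$ then places any $\xi\in\Hilms[H]$ into $\Hilms[H]_0+(\Hilms[H]\cap\Hilm[H]_0^\bot)$, and Proposition~\ref{pro:isometry_star-intertwiner} exhibits $\pi_0$ as a direct summand. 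The main technical obstacle, as well as the step that uses the strong generating hypothesis in an essential way, is precisely this reconciliation of the two a priori distinct domains associated to $\Hilm[H]_0$ --- the one intrinsic to $\pi_0$ and the one obtained by restricting $\pi$ --- which is forced by the rigidity of self-adjoint operators among symmetric ones.
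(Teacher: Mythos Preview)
Your proof is correct and follows essentially the same route as the paper's. The only difference is one of packaging in the admissibility part: the paper invokes Theorem~\ref{the:regular_self-adjoint_Cstar-hull} as a black box---since \(\Cont_0(\R)\) is a \Cstar\nb-hull for \(\C[x]\), the inclusion \(I\) is automatically a \Star{}intertwiner of the \(\C[x]\)-representations generated by \(\cl{\pi_0(a)}\) and \(\cl{\pi(a)}\), whence \(I^*=P_0\) maps \(\dom\cl{\pi(a)}\) into \(\dom\cl{\pi_0(a)}\) directly---whereas you unwind this into the explicit Cayley-transform commutation argument (which is exactly the computation hidden inside the proof that \(\Cont_0(\R)\) is a \Cstar\nb-hull). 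Both then intersect over \(S\) via~\eqref{eq:domain_strong_generating_set} and conclude with Proposition~\ref{pro:isometry_star-intertwiner}; your extra step identifying \(\Hilms[H]_0=\Hilm[H]_0\cap\Hilms[H]\) via maximality of self-adjoint operators is precisely what the paper gets for free from the \Star{}intertwiner conclusion.
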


\begin{proof}
  Asking~\(\cl{\pi(a)}\) to be regular and self-adjoint for a single
  \(a\in S\) is a submodule condition by
  Example~\ref{exa:regularity_condition}.  In order to ask this
  simultaneously for a set~\(S\), let~\(\prec\) be a
  well-ordering on~\(S\), and add an element~\(M\) with
  \(a\prec M\) for all \(a\in S\).  Let \(\Rep^a(A)\subseteq
  \Rep(A)\) for \(a\in S\cup\{M\}\) be the class of all
  representations~\(\pi\) where~\(\cl{\pi(b)}\) is regular and
  self-adjoint for all \(b\in S\) with \(b\prec a\).  These
  subclasses form a recursive definition of \(\Rep^S(A)\) by
  submodule conditions as in
  Definition~\ref{def:restriction_representation}.
  Thus~\(\Rep^S(A)\) is defined by submodule conditions.  Then it is
  weakly admissible and satisfies the Strong Local--Global Principle
  by Lemma~\ref{lem:operator_conditions} and
  Theorem~\ref{the:operator_conditions}.

  From now on, we assume that~\(S\) is a strong generating set.
  For~\(\Rep^S(A)\) to be admissible, we must prove that any
  isometric intertwiner \(I\colon (\Hilms_0,\pi_0)\injto
  (\Hilms,\pi)\) between two Hilbert space representations in
  \(\Rep^S(A)\) is a \Star{}intertwiner.

  If \(a\in S\), then \(\cl{\pi(a)}\) and \(\cl{\pi_0(a)}\) are
  regular, self-adjoint operators.  Hence they generate integrable
  representations of~\(\C[x]\) as in
  Theorem~\ref{the:regular_self-adjoint_Cstar-hull}.  The
  isometry~\(I\) intertwines these representations of~\(\C[x]\).
  Hence it is a \Star{}intertwiner by
  Theorem~\ref{the:regular_self-adjoint_Cstar-hull}.  In particular,
  \(I^*\) maps \(\dom \cl{\pi(a)}\) to \(\dom \cl{\pi_0(a)}\) for
  each \(a\in S\).  Since~\(S\) is a strong generating set,
  \eqref{eq:domain_strong_generating_set} gives \(\Hilms_0 =
  \bigcap_{a\in S} \dom \cl{\pi_0(a)}\) and similarly for~\(\pi\).
  So \(I^*(\Hilms) \subseteq \Hilms_0\).  Then \(\Hilms = \Hilms_0 +
  (\Hilms\cap \Hilms_0^\bot)\) and~\(I\) is a \Star{}intertwiner by
  Proposition~\ref{pro:isometry_star-intertwiner}.
\end{proof}

\begin{corollary}
  \label{cor:weak_Cstar-hull_full}
  Let \(S\subseteq A_h\) be a strong generating set for a
  \Star{}algebra~\(A\) and let~\(B\) with universal
  representation~\(\mu\) be a weak \Cstar\nb-hull.  If the closed
  multipliers~\(\cl{\mu(a)}\) for \(a\in S\) are self-adjoint and
  affiliated with~\(B\), then~\(B\) is a \Cstar\nb-hull.
\end{corollary}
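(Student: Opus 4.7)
The strategy is to apply Proposition~\ref{pro:Cstar-generated_by_unbounded_multipliers}: since $B$ is already a weak \Cstar\nb-hull, condition~\ref{enum:Cstar-gen1} of that proposition holds, so to upgrade $B$ to a full \Cstar\nb-hull it suffices to verify one of the equivalent conditions~\ref{enum:Cstar-gen2}--\ref{enum:Cstar-gen2''}. I would aim for~\ref{enum:Cstar-gen2'}: every isometric intertwiner between \(B\)\nb-integrable Hilbert space representations of~\(A\) is a \Star{}intertwiner.

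First I would show that every \(B\)\nb-integrable representation of~\(A\) lies in the class \(\Rep^S(A)\) of Theorem~\ref{the:regular_admissible_local-global}. If \(\varrho\colon B\to\Bound(\Hilm)\) is a representation and \(\pi=\mu\otimes_\varrho 1\) is the corresponding \(B\)\nb-integrable representation of~\(A\), then for each \(a\in S\) we have \(\cl{\pi(a)} = \cl{\mu(a)}\otimes_\varrho 1\). By hypothesis \(\cl{\mu(a)}\) is self-adjoint and affiliated with~\(B\), hence regular and self-adjoint on the Hilbert \(B\)\nb-module~\(B\); by \cite{Lance:Hilbert_modules}*{Proposition~9.10} (as used in the proof of Theorem~\ref{the:regular_self-adjoint_Cstar-hull}), its tensor product with the identity of~\(\Hilm\) along~\(\varrho\) is again regular and self-adjoint on~\(\Hilm\). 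Thus~\(\pi\) belongs to~\(\Rep^S(A)\).

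Second, since~\(S\) is a strong generating set, Theorem~\ref{the:regular_admissible_local-global} asserts that~\(\Rep^S(A)\) is admissible; more specifically, the last paragraph of its proof shows directly that any isometric intertwiner between two Hilbert space representations in~\(\Rep^S(A)\) is a \Star{}intertwiner. Applied to two \(B\)\nb-integrable Hilbert space representations \(\pi_1,\pi_2\) of~\(A\) and an isometric intertwiner \(I\colon \Hilm[H]_1\injto\Hilm[H]_2\), this yields that~\(I\) is a \Star{}intertwiner, since both~\(\pi_i\) lie in~\(\Rep^S(A)\) by the first step.

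This verifies condition~\ref{enum:Cstar-gen2'} of Proposition~\ref{pro:Cstar-generated_by_unbounded_multipliers}; combined with condition~\ref{enum:Cstar-gen1} (which holds because~\(B\) is assumed to be a weak \Cstar\nb-hull), it follows that~\(B\) is a \Cstar\nb-hull. There is no real obstacle here beyond bookkeeping: the substantive work has already been done in Theorem~\ref{the:regular_admissible_local-global} (admissibility of \(\Rep^S(A)\) via the Strong Local--Global Principle) and in the characterisation of (weak) \Cstar\nb-hulls in Proposition~\ref{pro:Cstar-generated_by_unbounded_multipliers}; the hypothesis on affiliation is precisely what is needed to funnel the \(B\)\nb-integrable representations into \(\Rep^S(A)\), after which those two results do all the heavy lifting.
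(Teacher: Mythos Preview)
Your proof is correct and follows essentially the same route as the paper's. The only stylistic difference is in how you show that every \(B\)-integrable representation lies in \(\Rep^S(A)\): you invoke \cite{Lance:Hilbert_modules}*{Proposition 9.10} directly on each \(\cl{\mu(a)}\otimes_\varrho 1\), whereas the paper argues abstractly that \(\Rep^S(A)\) is weakly admissible (hence closed under interior tensor products) and contains the universal representation~\(\mu\); these are the same fact at different levels of packaging. After that, both proofs use the admissibility of \(\Rep^S(A)\) from Theorem~\ref{the:regular_admissible_local-global} to obtain condition~\ref{enum:Cstar-gen2'} of Proposition~\ref{pro:Cstar-generated_by_unbounded_multipliers}, which together with~\ref{enum:Cstar-gen1} upgrades the weak \Cstar\nb-hull to a \Cstar\nb-hull.
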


\begin{proof}
  All \(B\)\nb-integrable representations belong to~\(\Rep^S(A)\)
  because the latter is weakly admissible and contains the universal
  \(B\)\nb-integrable representation.  Since \(\Rep^S(A)\) is
  admissible by Theorem~\ref{the:regular_admissible_local-global},
  any smaller class of integrable representations inherits the
  equivalent conditions
  \ref{enum:Cstar-gen2}--\ref{enum:Cstar-gen2''} in
  Proposition~\ref{pro:Cstar-generated_by_unbounded_multipliers},
  which characterise \Cstar\nb-hulls among weak \Cstar\nb-hulls.
\end{proof}

\begin{theorem}
  \label{the:Woronowicz_local-global}
  Let~\(A\) be a \Star{}algebra, \(B\) a \Cstar\nb-algebra,
  \((\Hilms[B],\mu)\) a representation of~\(A\) on~\(B\), and
  \(T_1,\dotsc,T_n\in A\).  Assume that
  \(\cl{\mu(T_1)},\dotsc,\cl{\mu(T_n)}\) are self-adjoint and
  affiliated with~\(B\) and generate~\(B\) in the sense of
  Woronowicz, see \cite{Woronowicz:Cstar_generated}*{Definition
    3.1}.  Then~\(B\) is a \Cstar\nb-hull for the
  \(B\)\nb-integrable representations of~\(A\) defined
  by~\((\Hilms[B],\mu)\), and these satisfy the Strong Local--Global
  Principle.
\end{theorem}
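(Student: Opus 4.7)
The strategy is to verify the conditions of Proposition \ref{pro:Cstar-generated_by_unbounded_multipliers} characterising when $B$ is a (weak) \Cstar\nb-hull, and then to establish the Strong Local--Global Principle via Pierrot's Corollary \ref{cor:local_global_self-adjoint} together with Woronowicz's generation theorem \cite{Woronowicz:Cstar_generated}*{Theorem 3.3}, which converts tuples of self-adjoint affiliated operators on a Hilbert module into representations of $B$.

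For condition \ref{enum:Cstar-gen1} of Proposition \ref{pro:Cstar-generated_by_unbounded_multipliers}, suppose $\varrho_1,\varrho_2\colon B\rightrightarrows\Bound(\Hilm[H])$ give the same representation of $A$ on $\Hilm[H]$. Their $T_i$-actions then agree as closed operators, so $\cl{\mu(T_i)}\otimes_B\varrho_1 = \cl{\mu(T_i)}\otimes_B\varrho_2$ for each $i$, and Woronowicz's Theorem 3.3 forces $\varrho_1 = \varrho_2$; thus $B$ is a weak \Cstar\nb-hull. To upgrade to a \Cstar\nb-hull I would verify condition \ref{enum:Cstar-gen2'}: an isometric intertwiner $I\colon\Hilm[H]_1\injto\Hilm[H]_2$ between $B$-integrable Hilbert space representations $\pi_1,\pi_2$ is automatically a \Star{}intertwiner. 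Writing $\varrho_j$ for the corresponding representations of $B$, Proposition \ref{pro:adjointable_intertwiner} reduces this to showing that $I^*$ intertwines $\varrho_1$ and $\varrho_2$. The inclusion $I\cl{\pi_1(T_i)} \subseteq \cl{\pi_2(T_i)} I$ with both operators self-adjoint yields, on passing to the unitary Cayley transforms $c_{i,j}$, the identity $I c_{i,1} = c_{i,2} I$; taking adjoints and using unitarity of the $c_{i,j}$ gives $I^* c_{i,2} = c_{i,1} I^*$. Hence $I^*$ intertwines all bounded continuous functions of the $\cl{\pi_j(T_i)}$, and by Woronowicz generation the norm closure of such functions is exactly $\varrho_j(B)$. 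Therefore $I^*$ intertwines $\varrho_1$ with $\varrho_2$, so $I$ is a \Star{}intertwiner.

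For the Strong Local--Global Principle, let $\pi$ be a representation of $A$ on a Hilbert $D$-module $\Hilm$ such that $\pi\otimes_\varrho\Hilm[H]$ is $B$-integrable for every irreducible Hilbert space representation $\varrho$ of $D$. For each $i$, the operator $\cl{\pi(T_i)}$ is semiregular and its tensor product with every irreducible $\varrho$ is regular and self-adjoint, so Corollary \ref{cor:local_global_self-adjoint} implies that $\cl{\pi(T_i)}$ itself is regular and self-adjoint on $\Hilm$. The tuple $(\cl{\pi(T_i)})_{i=1}^n$ therefore defines self-adjoint affiliated multipliers of $\Comp(\Hilm)$, and by Woronowicz's Theorem 3.3 it induces a nondegenerate morphism $\varrho\colon B\to \Bound(\Hilm)$ with $\cl{\mu(T_i)}\otimes_B\varrho = \cl{\pi(T_i)}$ provided that this tuple satisfies the relations that the universal tuple $\cl{\mu(T_i)}$ satisfies in $B$. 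These relations can be phrased as spectral and nondegeneracy conditions on the joint bounded functional calculus of the $\cl{\pi(T_i)}$ in the sense of Examples \ref{exa:spectral_condition} and \ref{exa:nondegeneracy_condition}; each is a submodule condition, and each holds on $\Hilm\otimes_\varrho\Hilm[H]$ by the $B$-integrability hypothesis, hence on $\Hilm$ itself by Corollary \ref{cor:Hahn-Banach_submodules}. The representations $\pi$ and $\mu\otimes_B\varrho$ then coincide on every $T_i$ by construction, and an application of Theorem \ref{the:identify_representations} (reducing equality to the fibres over irreducibles) yields $\pi = \mu\otimes_B\varrho$, so $\pi$ is $B$-integrable.

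The main obstacle is the last step: making explicit the list of submodule conditions that captures Woronowicz's generation hypothesis on the tuple $(\cl{\pi(T_i)})$. Once this translation is carried out, the passage from irreducible fibres to $\Hilm$ is a direct application of Corollary \ref{cor:Hahn-Banach_submodules}, and the final identification of $\pi$ with $\mu\otimes_B\varrho$ is forced by Theorem \ref{the:identify_representations} together with the weak \Cstar\nb-hull property already established in the second paragraph.
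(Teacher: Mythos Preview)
Your argument has two gaps, one minor and fixable, one serious.

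\textbf{Part 1 (\Cstar\nb-hull).} The sentence ``by Woronowicz generation the norm closure of such functions is exactly $\varrho_j(B)$'' is false: Woronowicz generation is a statement about how morphisms out of~$B$ are determined, not a claim that~$B$ is the \Cstar\nb-algebra generated by the individual functional calculi of the~$T_i$. Your Cayley-transform computation is correct and shows that the projection $P=II^*$ commutes with each $\varrho_2\bigl(c_i\bigr)$; equivalently, $\cl{\pi_2(T_i)}$ is affiliated with $K\defeq\Comp(P\Hilm[H]_2)\oplus\Comp\bigl((1-P)\Hilm[H]_2\bigr)$. Now Woronowicz generation, applied correctly, says that $\varrho_2$ is a morphism from~$B$ to~$K$, hence $\varrho_2(B)\subseteq\Mult(K)$ commutes with~$P$. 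This is precisely the paper's verification of condition~\ref{enum:Cstar-gen12} in Proposition~\ref{pro:Cstar-generated_by_unbounded_multipliers}, so once you repair the sentence your route and the paper's coincide.

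\textbf{Part 2 (Strong Local--Global Principle).} Here the obstacle you flag is genuine and your proposed cure does not work. Woronowicz's Theorem~3.3 is not an existence theorem that manufactures a representation of~$B$ from a tuple of self-adjoint operators satisfying some list of relations; it only says that an already-given morphism out of~$B$ is determined by, and factors according to, the affiliation of the~$T_i$. There is no reason to expect that ``being in the image of~$B$'' can be captured by spectral and nondegeneracy submodule conditions on the joint functional calculus. The paper bypasses this entirely: take a faithful representation~$\omega$ of~$D$ on a Hilbert space~$\Hilm[H]$ (the direct sum of all irreducibles), observe that $\pi\otimes_D 1$ on $\Hilm\otimes_D\Hilm[H]$ is $B$-integrable \emph{by hypothesis}, hence comes from some $\sigma\colon B\to\Bound(\Hilm\otimes_D\Hilm[H])$; since each $\cl{\pi(T_i)}$ is regular on~$\Hilm$ (by Corollary~\ref{cor:local_global_self-adjoint}, as you say), the operator $\sigma(\cl{\mu(T_i)})=\cl{(\pi\otimes 1)(T_i)}$ is affiliated with~$\Comp(\Hilm)$, and Woronowicz generation then forces~$\sigma$ to factor through a morphism $\tau\colon B\to\Comp(\Hilm)$. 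Finally $\pi'\defeq\mu\otimes_\tau 1$ agrees with~$\pi$ on every irreducible fibre, so $\pi=\pi'$ by Theorem~\ref{the:identify_representations}. The key idea you are missing is to \emph{produce} the representation of~$B$ globally on a Hilbert space where it exists for free, and only then descend to~$\Hilm$ via Woronowicz generation.
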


\begin{proof}
  To show that~\(B\) is a \Cstar\nb-hull, we check the
  condition~\ref{enum:Cstar-gen12} in
  Proposition~\ref{pro:Cstar-generated_by_unbounded_multipliers}.
  Let \(\varrho\colon B\to\Bound(\Hilm[H])\) be a representation
  of~\(B\) on a Hilbert space~\(\Hilm[H]\) and let \((\Hilms,\pi)\)
  be the corresponding \(B\)\nb-integrable representation of~\(A\).
  Let \((\Hilms_0,\pi|_{\Hilms_0})\) be a \(B\)\nb-integrable
  representation on a closed subspace \(\Hilm_0\subseteq \Hilm\) and
  let \(P\in\Bound(\Hilm)\) be the projection onto~\(\Hilm_0\).  We
  must show that~\(\varrho(B)\) is contained in the commutant
  of~\(P\).  Equivalently, \(\varrho\) is a morphism in the notation
  of~\cite{Woronowicz:Cstar_generated} to the algebra
  \(K = \Comp(\Hilm_0) \oplus \Comp(\Hilm_0^\bot)\) of all compact
  operators on~\(\Hilm\) that commute with~\(P\).

  Let \(1\le i \le n\).  Since \(T_i\) is self-adjoint and regular
  as an adjointable operator on the Hilbert \(B\)\nb-module~\(B\),
  it generates an integrable representation of the polynomial
  algebra~\(\C[x]\) on~\(B\) as in
  Theorem~\ref{the:regular_self-adjoint_Cstar-hull}.  These
  integrable representations form an admissible class.  Therefore, a
  \(B\)\nb-integrable representation of~\(A\) gives an integrable
  representation of~\(\C[x]\) when we compose with the canonical map
  \(j_i\colon \C[x]\to A\), \(x\mapsto T_i\), and take the closure.
  And since \(\pi\) and~\(\pi|_{\Hilms_0}\) are both
  \(B\)\nb-integrable, \(\cl{\pi|_{\Hilms_0}\circ j_i}\) is a direct
  summand in~\(\cl{\pi\circ j_i}\).  Equivalently, the unbounded
  operator~\(\cl{\pi(T_i)}\) is affiliated with~\(K\).

  The extension of~\(\varrho\) to affiliated multipliers
  maps~\(\cl{\mu(T_i)}\) to~\(\cl{\pi(T_i)}\), which is affiliated
  with~\(K\).  Hence~\(\varrho\) is a morphism to~\(K\) because
  these affiliated multipliers generate~\(B\).  Thus~\(B\) is a
  \Cstar\nb-hull for the \(B\)\nb-integrable representations by
  Proposition~\ref{pro:Cstar-generated_by_unbounded_multipliers}.

  Now we check the Strong Local--Global Principle.
  Let~\((\Hilms,\pi)\) be a representation of~\(A\) on a Hilbert
  \(D\)\nb-module~\(\Hilm\).  Assume that the representation
  \((\Hilms,\pi)\otimes_\omega \Hilm[H]_\omega\) is integrable for
  each irreducible representation~\(\omega\) of~\(D\) on a Hilbert
  space~\(\Hilm[H]_\omega\) in the sense that it comes from a
  representation of~\(B\).  We must show that the
  representation~\((\Hilms,\pi)\) is integrable.

  The condition that~\(\cl{\pi(T_i)}\) be self-adjoint and regular
  is a submodule condition by
  Example~\ref{exa:regularity_condition}.  Hence the class of
  representations with this property satisfies the Strong
  Local--Global Principle by Theorem~\ref{the:operator_conditions}.
  Therefore, \(\cl{\pi(T_i)}\) is a regular, self-adjoint operator
  on~\(\Hilm\) for \(i=1,\dotsc,n\).

  Let~\(\omega\) be the direct sum of all irreducible
  representations of~\(D\); this is a faithful representation
  of~\(D\) on some Hilbert space~\(\Hilm[H]\).  The induced
  representation~\(j\) of~\(\Comp(\Hilm)\) on \(\Hilm[K]\defeq
  \Hilm\otimes_D \Hilm[H]\) is faithful as well.  By assumption, the
  representation \(\pi\otimes_D1\) of~\(A\) on~\(\Hilm[K]\) is
  integrable, so it comes from a representation~\(\sigma\) of~\(B\).
  The extension of~\(\sigma\) to affiliated multipliers maps
  \(\cl{\mu(T_i)}\affil B\) to \(\cl{(\pi\otimes_D1)(T_i)}\).
  Since~\(\cl{\pi(T_i)}\) is a regular operator on~\(\Hilm\), it is
  an affiliated multiplier of~\(\Comp(\Hilm)\),
  see~\cite{Pal:Regular} or
  Proposition~\ref{pro:representation_Hilm_Comp}.  Thus
  \(\cl{(\pi\otimes_D1)(T_i)}\) is affiliated with the image
  of~\(\Comp(\Hilm)\) on~\(\Hilm[K]\) by
  \cite{Lance:Hilbert_modules}*{Proposition 9.10}.  Thus
  \(\sigma(\cl{\mu(T_i)}) \affil \Comp(\Hilm)\) for
  \(i=1,\dotsc,n\).  Since the affiliated
  multipliers~\(\cl{\mu(T_i)}\) generate~\(B\) in the sense of
  Woronowicz, \(\sigma\) factors through a morphism \(\tau\colon
  B\to\Comp(\Hilm)\).  This is the same as a representation of~\(B\)
  on~\(\Hilm\).  Let~\(\pi'\) be the representation of~\(A\)
  on~\(\Hilm\) associated to~\(\tau\).  If~\(\varrho\) is an
  irreducible Hilbert space representation of~\(D\), then
  \(\pi\otimes_\varrho \Hilm[H] = \pi'\otimes_\varrho \Hilm[H]\) by
  construction of~\(\tau\).  Hence
  Theorem~\ref{the:identify_representations} gives \(\pi=\pi'\).
  Since~\(\pi'\) is integrable by construction, so is~\(\pi\).
\end{proof}

The first counterexample in~§\ref{sec:polynomials2} exhibits a
\emph{symmetric} affiliated multiplier that generates a
\Cstar\nb-algebra, such that the Local--Global Principle fails
and~\(B\) is not a \Cstar\nb-hull.  Without self-adjointness, we
only get the following much weaker statement:

\begin{lemma}
  \label{lem:Woronowicz_weak_hull}
  Let~\(A\) be a \Star{}algebra, \(B\) a \Cstar\nb-algebra,
  \((\Hilms[B],\mu)\) a representation of~\(A\) on~\(B\), and
  \(T_1,\dotsc,T_n\in A\).  Assume that
  \(\cl{\mu(T_1)},\dotsc,\cl{\mu(T_n)}\) are affiliated with~\(B\)
  and generate~\(B\) in the sense of Woronowicz.  Then~\(B\) is a
  weak \Cstar\nb-hull for the \(B\)\nb-integrable representations
  of~\(A\).
\end{lemma}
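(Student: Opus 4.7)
The plan is to verify the single condition~\ref{enum:Cstar-gen1} of Proposition~\ref{pro:Cstar-generated_by_unbounded_multipliers}, which already characterises weak \Cstar\nb-hulls. So I take two representations \(\varrho_1,\varrho_2\colon B\to\Bound(\Hilm[H])\) on a Hilbert space and assume \(\mu\otimes_B\varrho_1 = \mu\otimes_B\varrho_2\) as closed representations of~\(A\); the goal is to deduce \(\varrho_1=\varrho_2\).

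The computational heart of the argument is the identity
\[
\cl{(\mu\otimes_B\varrho_i)(T_j)} = \varrho_i\bigl(\cl{\mu(T_j)}\bigr),
\]
where the right hand side uses the canonical extension of the nondegenerate representation~\(\varrho_i\) to affiliated multipliers of~\(B\).  This comes directly from Proposition~\ref{pro:Phi_simplifies}: on the core \(\varrho_i(\Hilms[B])\Hilm[H]\) for \(\mu\otimes_B\varrho_i\), the element~\(T_j\) acts by \(\varrho_i(b)\xi\mapsto \varrho_i(\mu(T_j)b)\xi\) for \(b\in\Hilms[B]\), \(\xi\in\Hilm[H]\); this is exactly the formula by which the affiliated multiplier \(\cl{\mu(T_j)}\affil B\) acts on \(B\otimes_{\varrho_i}\Hilm[H]\cong\Hilm[H]\) after one applies~\(\varrho_i\).

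Granted the identity, the assumption \(\mu\otimes_B\varrho_1 = \mu\otimes_B\varrho_2\) forces
\[
\varrho_1\bigl(\cl{\mu(T_j)}\bigr) = \varrho_2\bigl(\cl{\mu(T_j)}\bigr)
\qquad\text{for all }j=1,\dotsc,n.
\]
Since \(\cl{\mu(T_1)},\dotsc,\cl{\mu(T_n)}\) generate~\(B\) in Woronowicz's sense, the uniqueness half of \cite{Woronowicz:Cstar_generated}*{Theorem~3.3} says that any morphism out of~\(B\) is determined by its images on these generators, viewed as regular operators on the target.  Applied to the morphisms \(\varrho_1,\varrho_2\colon B\to \Bound(\Hilm[H]) = \Mult(\Comp(\Hilm[H]))\), this yields \(\varrho_1=\varrho_2\), as required.

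The main obstacle is this bookkeeping step matching the closure of the interior-tensor-product representation on~\(T_j\) with the image of the affiliated multiplier \(\cl{\mu(T_j)}\) under the extension of~\(\varrho_i\); once that identification is in place, the conclusion is an immediate invocation of Woronowicz's uniqueness result.  Note that without self-adjointness one cannot hope to upgrade this to a \Cstar\nb-hull: the stronger condition~\ref{enum:Cstar-gen12} of Proposition~\ref{pro:Cstar-generated_by_unbounded_multipliers} may genuinely fail, as the counterexample promised in~§\ref{sec:polynomials2} will show.
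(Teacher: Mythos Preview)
Your argument is correct and follows the same route as the paper: verify condition~\ref{enum:Cstar-gen1} of Proposition~\ref{pro:Cstar-generated_by_unbounded_multipliers} by identifying \(\cl{(\mu\otimes_B\varrho_i)(T_j)}\) with \(\varrho_i(\cl{\mu(T_j)})\) and then invoking Woronowicz's generation criterion. The only cosmetic difference is packaging: the paper forms the direct sum \(\varrho_1\oplus\varrho_2\colon B\to\Mat_2(\Bound(\Hilm[H]))\), observes that each \((\varrho_1\oplus\varrho_2)(\cl{\mu(T_j)})=(X_j,X_j)\) is affiliated with the diagonally embedded copy of~\(\Comp(\Hilm[H])\), and uses the \emph{definition} of generation to push \(\varrho_1\oplus\varrho_2\) into the multiplier algebra of that diagonal copy (forcing \(\varrho_1=\varrho_2\)), whereas you cite the equivalent separation statement from \cite{Woronowicz:Cstar_generated}*{Theorem~3.3} directly.
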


\begin{proof}
  To show that~\(B\) is a weak \Cstar\nb-hull, we
  check~\ref{enum:Cstar-gen1} in
  Proposition~\ref{pro:Cstar-generated_by_unbounded_multipliers}.
  Let \(\varrho_1,\varrho_2\) be representations of~\(B\) on a
  Hilbert space~\(\Hilm[H]\) with \((\Hilms[B],\mu)
  \otimes_{\varrho_1} \Hilm[H] = (\Hilms[B],\mu) \otimes_{\varrho_2}
  \Hilm[H]\).  We claim that \(\varrho_1\oplus\varrho_2\colon
  B\to\Bound(\Hilm[H]^2) = \Mat_2(\Bound(\Hilm[H]))\) maps~\(B\)
  into the multiplier algebra of the diagonally embedded copy~\(K\)
  of~\(\Comp(\Hilm[H])\).  This is equivalent to
  \(\varrho_1=\varrho_2\).  Since \((\Hilms[B],\mu)
  \otimes_{\varrho_1} \Hilm[H] = (\Hilms[B],\mu) \otimes_{\varrho_2}
  \Hilm[H]\), the extension of \(\varrho_1\oplus\varrho_2\) to
  affiliated multipliers maps \(\cl{\mu(T_i)} \affil B\) to an
  operator of the form~\((X_i,X_i)\) for \(i=1,\dotsc,n\); these are
  affiliated with~\(K\).  Since these affiliated multipliers
  generate~\(B\), \(\varrho_1\oplus\varrho_2\) is a morphism
  from~\(B\) to~\(K\).  Thus~\(B\) is a weak \Cstar\nb-hull
  for~\(A\).
\end{proof}

\subsection{Universal enveloping algebras}
\label{sec:Ug}

We illustrate our theory by an example.  Let~\(\Lg\) be a
finite-dimensional Lie algebra over~\(\R\) and let \(A=U(\Lg)\) be
its universal enveloping algebra with the usual involution, where
elements of~\(\Lg\) are skew-symmetric.  A representation of~\(A\)
on~\(\Hilm\), possibly not closed, is equivalent to a dense
submodule \(\Hilms\subseteq\Hilm\) with a Lie algebra representation
\(\pi\colon \Lg\to\Endo_D(\Hilms)\) satisfying
\(\braket{\xi}{\pi(X)(\eta)} = -\braket{\pi(X)(\xi)}{\eta}\) for all
\(X\in\Lg\), \(\xi,\eta\in\Hilms\).

Let~\(G\) be a simply connected Lie group with Lie algebra~\(\Lg\)
and let \(B\defeq \Cst(G)\).  A representation of~\(\Cst(G)\) on a
Hilbert module~\(\Hilm\) is equivalent to a strongly continuous,
unitary representation of~\(G\) on~\(\Hilm\).  Given such a
representation, let~\(\Hilm^\infty\subseteq \Hilm\) be its subspace
of smooth vectors.  This is the domain of a closed representation
of~\(U(\Lg)\).  We call a representation of~\(U(\Lg)\) integrable if
it comes from a unitary representation of~\(G\) in this way.

In particular, \(G\) acts continuously on~\(\Cst(G)\) by left
multiplication with unitary multipliers.  Let
\(\Hilms[B]=\Cst(G)^\infty\) be the right ideal of smooth elements
for this \(G\)\nb-action, equipped with the canonical
\(U(\Lg)\)-module structure~\(\mu\).  By the universal property
of~\(\Cst(G)\), the pair \((\Hilms[B],\mu)\) is the universal
integrable representation.  That is, a representation of~\(U(\Lg)\)
is integrable if and only if it is of the form \((\Hilms[B],\mu)
\otimes_\varrho \Hilm\) for a representation~\(\varrho\)
of~\(\Cst(G)\).

Let \(X_1,\dotsc,X_d\) form a basis of~\(\Lg\).  The
\emph{Laplacian} is \(L\defeq -\sum_{i=1}^d X_i^2 \in U(\Lg)\).

\begin{theorem}[\cite{Pierrot:Reguliers}*{Théorème 2.12}]
  \label{the:integrable_Ug}
  A representation \((\pi,\Hilms)\) of~\(U(\Lg)\) is integrable if
  and only if~\(\cl{\pi(L^n)}\) is regular and self-adjoint for all
  \(n\in\N\).
\end{theorem}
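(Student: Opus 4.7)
The plan has two halves. For the ``only if'' direction, suppose $\pi$ is integrable, so $\pi = \mu \otimes_\varrho 1_\Hilm$ for some representation $\varrho$ of $\Cst(G)$ on $\Hilm$. I would first verify that, in the universal representation on $B = \Cst(G)$, the closure $\cl{\mu(L^n)}$ is a self-adjoint affiliated multiplier of $\Cst(G)$ for every $n \in \N$. This reduces to the classical fact that the left-invariant sub-Laplacian $L$, acting initially on $\Cst(G)^\infty$, is essentially self-adjoint as an unbounded multiplier of $\Cst(G)$ and that the smooth vectors form a core for every power — a standard consequence of elliptic regularity and heat semigroup estimates on $G$. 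The passage to arbitrary integrable $\pi$ is then automatic: the regularity and self-adjointness of $\cl{\mu(L^n)}$ transfer to $\cl{\pi(L^n)}$ under the interior tensor product, because the two natural submodule constructions $\Gamma(\pi(L^n))\oplus U_0(\Gamma(\pi((L^n)^*)))$ and $\Hilm\oplus\Hilm$ in Example~\ref{exa:regularity_condition} are compatible with tensor products.

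For the ``if'' direction, I would invoke Strong Local--Global Principles on both classes involved. Let $\Rep^*(U(\Lg)) \subseteq \Rep(U(\Lg))$ denote the class of representations with $\cl{\pi(L^n)}$ regular and self-adjoint for every $n \in \N$. By Example~\ref{exa:regularity_condition} this class is cut out by a sequence of submodule conditions, one for each power $L^n$, so Theorem~\ref{the:operator_conditions} gives that $\Rep^*(U(\Lg))$ satisfies the Strong Local--Global Principle. The class $\Repi(U(\Lg))$ of integrable representations also satisfies this principle: one checks that $B = \Cst(G)$ is generated in the sense of Woronowicz by self-adjoint affiliated multipliers arising from $U(\Lg)$ — for example, by $\cl{\mu(L)}$ together with the symmetric elements $\ima X_j$, or alternatively by appropriate resolvents of~$L$ — so Theorem~\ref{the:Woronowicz_local-global} applies. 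To prove the two classes coincide, it therefore suffices to match them on Hilbert space representations.

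The Hilbert space case is classical. For a unitary representation of $G$ on $\Hilm[H]$ with $\Hilms = \Hilm[H]^\infty$, the operators $\cl{\pi(L^n)}$ are self-adjoint with smooth vectors as a core — standard representation theory of Lie groups. Conversely, suppose $\pi$ is a closed representation of $U(\Lg)$ on a Hilbert space with $\cl{\pi(L^n)}$ self-adjoint for every $n$. The monomials $L^n$, together with the set $S$ of strong generators of $U(\Lg)$ needed to probe the graph topology, produce \(\Hilms = \bigcap_n \dom\cl{\pi(L)}^n\), so self-adjointness of all powers of $\cl{\pi(L)}$ forces a dense set of analytic vectors in $\Hilms$. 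Nelson's theorem on integration of Lie algebra representations then provides a strongly continuous unitary representation $U$ of $G$ on $\Hilm[H]$ with $\Hilm[H]^\infty \supseteq \Hilms$ and $\textup{d}U = \pi$, and a comparison of domains using the recipe $\bigcap_n \dom\cl{\pi(L)}^n = \Hilm[H]^\infty$ then shows $\Hilms = \Hilm[H]^\infty$, so that $\pi$ is integrable.

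The main obstacle is the classical Nelson integration theorem for Hilbert space representations; this is the nontrivial analytic input, but it is readily cited. All remaining work — passing the tensor product through the graph constructions, checking Woronowicz generation of $\Cst(G)$, and invoking the Strong Local--Global Principle to lift from Hilbert spaces to Hilbert modules — is formal and handled by the general machinery established earlier in the paper.
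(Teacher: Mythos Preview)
Your approach is correct in outline but differs substantially from the paper's. The paper does not argue independently at all: the statement \emph{is} Pierrot's Th\'eor\`eme~2.12, proved directly for Hilbert modules, and the paper's ``proof'' merely translates between Pierrot's conventions (representations need not be closed) and its own. The translation runs as follows: Pierrot shows that a group representation with differential $X\mapsto\cl{\pi(X)}$ exists if and only if $T\defeq\cl{\pi(L)}$ is regular and self-adjoint, and that in this case $\bigcap_n\dom T^n$ coincides with the smooth vectors; so a \emph{closed} $\pi$ is integrable if and only if, in addition, $\Hilms=\bigcap_n\dom T^n$, and this domain condition is in turn equivalent (given that $T$ is regular self-adjoint) to $\cl{\pi(L^n)}=T^n$ for all~$n$, i.e., to all $\cl{\pi(L^n)}$ being regular self-adjoint.

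Your route --- Strong Local--Global for both $\Rep^*$ (via submodule conditions) and $\Repi$ (via Woronowicz generation of $\Cst(G)$ by the $\ima X_j$), then reduction to Hilbert spaces, then Nelson --- is also valid and has the merit of avoiding Pierrot's Hilbert-module machinery; note that the Woronowicz-generation argument is exactly the alternative proof of Theorem~\ref{the:Cstar_hull_Lie_algebra}, so there is no circularity. But one step in your Hilbert-space argument is understated. The equality $\Hilms=\bigcap_n\dom\cl{\pi(L)}^n$ does \emph{not} follow from $\{L^n\}$ being strong generators: the $L^n$ do not generate $U(\Lg)$ as an algebra, so~\eqref{eq:domain_strong_generating_set} does not apply. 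What actually gives the equality is the Goodman/Sobolev-type estimate --- available once the group representation~$U$ exists --- that every graph norm $\norm{\blank}_a$ on $\Hilm[H]^\infty$ is dominated by some $\norm{\blank}_{L^n}$. Combined with $\cl{\pi(L^n)}=T^n$ (which says $\Hilms$ is dense in $\dom T^n$ for the $L^n$-graph norm), this forces $\Hilm[H]^\infty\subseteq\dom\cl{\pi(a)}$ for every $a\in U(\Lg)$, hence $\Hilm[H]^\infty\subseteq\Hilms$; the reverse inclusion is immediate. You should make this explicit rather than invoking ``strong generators''.
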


\begin{proof}
  Since Pierrot does not require representations to be closed, his
  statement is slightly different from ours.  Pierrot shows that
  there is a continuous representation~\(\varrho\) of~\(G\) with
  differential \(X\mapsto \cl{\pi(X)}\) if and only if \(T\defeq
  \cl{\pi(L)}\) is self-adjoint and regular.  His proof shows that
  all elements of \(\bigcap_{n=1}^\infty \dom T^n\) are smooth
  vectors for~\(\varrho\).  Conversely, all smooth vectors must
  belong to this intersection.  A representation of~\(U(\Lg)\) is
  determined by its domain and the closed operators~\(\cl{\pi(X)}\)
  for \(X\in\Lg\).  So a closed representation~\((\Hilms,\pi)\) of~\(U(\Lg)\)
  is integrable if and only if~\(T\) is
  self-adjoint and regular and \(\Hilms=\bigcap_{n=1}^\infty
  \dom T^n\).  Moreover, the proof shows that the graph topology for
  a representation with regular self-adjoint~\(T\) is determined by
  the graph norms of~\(L^n\) for all \(n\in\N\).  If
  \(\cl{\pi(L^n)}\) is self-adjoint, then it must be equal
  to~\(T^n\) because \(\cl{\pi(L^n)}\subseteq T^n\) and~\(T^n\) is
  symmetric.  Therefore, if \(\cl{\pi(L)}\) is regular and
  self-adjoint, then the domain of~\(\pi\) is \(\bigcap_{n=1}^\infty
  \dom T^n\) if and only if \(\cl{\pi(L^n)}\) is regular and
  self-adjoint also for all \(n\ge2\).
\end{proof}

\begin{theorem}
  \label{the:Cstar_hull_Lie_algebra}
  The class of integrable representations of~\(U(\Lg)\)
  has~\(\Cst(G)\) as a \Cstar\nb-hull and is defined by submodule
  conditions.  So it satisfies the Strong Local--Global Principle.
\end{theorem}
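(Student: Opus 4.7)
The plan is to combine two of the general tools developed earlier. For the submodule-condition and Strong Local--Global statements, I will use Theorem~\ref{the:integrable_Ug} together with Theorem~\ref{the:regular_admissible_local-global}. For the \Cstar\nb-hull property, I will invoke the Woronowicz generation criterion, Theorem~\ref{the:Woronowicz_local-global}, with the infinitesimal generators of a basis of one-parameter subgroups.

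First, by Theorem~\ref{the:integrable_Ug}, a closed representation \(\pi\) of~\(U(\Lg)\) is integrable if and only if \(\cl{\pi(L^n)}\) is regular and self-adjoint for every \(n\in\N_{\ge1}\). Since each~\(L^n\) is symmetric in~\(U(\Lg)\), the set \(S\defeq\{L^n:n\in\N_{\ge1}\}\) lies in \(U(\Lg)_h\), and the above characterisation identifies the integrable representations with the class \(\Rep^S(U(\Lg))\) from Theorem~\ref{the:regular_admissible_local-global}. That theorem gives, without needing~\(S\) to be a strong generating set, that \(\Rep^S(U(\Lg))\) is defined by submodule conditions; Theorem~\ref{the:operator_conditions} then yields the Strong Local--Global Principle.

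For the \Cstar\nb-hull claim, I would choose a basis \(X_1,\dotsc,X_d\) of~\(\Lg\) and set \(T_i\defeq \ima X_i\in U(\Lg)\), which is symmetric because \(X_i^*=-X_i\). Under the universal integrable representation~\((\Hilms[B],\mu)\) on \(\Hilms[B]=\Cst(G)^\infty\), each \(\cl{\mu(X_i)}\) is the skew-adjoint infinitesimal generator of the strongly continuous one-parameter subgroup \(t\mapsto \exp(tX_i)\) acting by left-multiplication unitaries on~\(\Cst(G)\). Hence \(\cl{\mu(T_i)}\) is self-adjoint and an affiliated multiplier of~\(\Cst(G)\), with functional calculus \(\Cont_0(\R)\to\Mult(\Cst(G))\) dual, via Stone's theorem, to the one-parameter subgroup \(\exp(tX_i)\) in \(\Mult(\Cst(G))\). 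To verify that \(T_1,\dotsc,T_d\) generate~\(\Cst(G)\) in the sense of Woronowicz, I would argue that since~\(G\) is connected, the set \(\bigl\{\exp(t_1X_{i_1})\dotsm\exp(t_kX_{i_k}):k\in\N,\ t_j\in\R,\ 1\le i_j\le d\bigr\}\) is all of~\(G\) (in fact an open neighbourhood of the identity is reached already by bounded products via the Baker--Campbell--Hausdorff formula, and connectedness propagates this to~\(G\)); integrating against \(\Contc(G)\) then shows that the \Cstar\nb-subalgebra of \(\Mult(\Cst(G))\) generated (in the Woronowicz sense) by the affiliated multipliers~\(\cl{\mu(T_i)}\) contains a dense subspace of~\(\Cst(G)\), and so equals~\(\Cst(G)\). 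With this in hand, Theorem~\ref{the:Woronowicz_local-global} applies to \((\Hilms[B],\mu)\) and~\(T_1,\dotsc,T_d\), giving that \(\Cst(G)\) is a \Cstar\nb-hull for the \(\Cst(G)\)-integrable representations of~\(U(\Lg)\); these coincide with the integrable representations by our definition.

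The main obstacle is the Woronowicz generation statement. Everything else is a direct invocation of earlier theorems. The generation claim is standard in spirit --- a connected Lie group is generated by the exponentials of a basis of its Lie algebra, and this lifts to the level of the group \Cstar\nb-algebra through the Stone correspondence between self-adjoint affiliated multipliers and one-parameter subgroups --- but translating ``generates~\(G\)'' into Woronowicz's precise condition on the \Cstar\nb-algebra of bounded functional-calculus images requires the BCH expansion together with an approximation argument on \(\Contc(G)\subseteq\Cst(G)\); I would write out these density steps as the one nontrivial verification in the proof.
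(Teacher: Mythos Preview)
Your proposal is correct and matches the paper's proof almost exactly. The paper likewise uses Theorem~\ref{the:integrable_Ug} to identify integrable representations with \(\Rep^S(U(\Lg))\) for \(S=\{L^n:n\in\N\}\) and then invokes Theorem~\ref{the:regular_admissible_local-global} for the submodule-condition and Strong Local--Global claims; for the \Cstar\nb-hull it uses, as an ``alternative'', Theorem~\ref{the:Woronowicz_local-global} applied to the self-adjoint affiliated multipliers \(\ima X_1,\dotsc,\ima X_d\). The only difference is that the paper does not prove the Woronowicz generation statement but simply cites \cite{Woronowicz:Cstar_generated}*{Example~3 in~\S3}, so your sketch of that step---while plausible---can be replaced by the reference.
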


\begin{proof}
  By Theorem~\ref{the:integrable_Ug}, a representation is integrable
  if and only if all elements of the set \(\{L^n\mid n\in\N\}\) act
  by a regular and self-adjoint operator.  Hence the assertion
  follows from Theorem~\ref{the:regular_admissible_local-global}.

  Alternatively, the closed multipliers of~\(\Cst(G)\) associated to
  \(\ima X_1,\dotsc, \ima X_d\) are regular and affiliated
  with~\(\Cst(G)\) and generate~\(\Cst(G)\) by
  \cite{Woronowicz:Cstar_generated}*{Example~3 in~§3}.  Hence
  \(\Cst(G)\) is a \Cstar\nb-hull and the Strong Local--Global
  Principle holds by Theorem~\ref{the:Woronowicz_local-global}.
\end{proof}

The results of Vassout~\cite{Vassout:Unbounded_groupoids} get close
to proving an analogue of Theorem~\ref{the:Cstar_hull_Lie_algebra}
for an \(s\)\nb-simply connected Lie groupoid~\(G\) with compact base.
This analogue would
replace~\(\Lg\) by the space of smooth sections of the Lie
algebroid~\(A(G)\), and \(U(\Lg)\) by the \Star{}algebra of
\(G\)\nb-equivariant differential operators on~\(G\), a subalgebra
of the \Star{}algebra of \(G\)\nb-pseudodifferential operators.  Any
symmetric, elliptic element of~\(U(\Lg)\) should be a possible
replacement for the Laplacian in
Theorem~\ref{the:Cstar_hull_Lie_algebra}.

\section{Polynomials in one variable II}
\label{sec:polynomials2}

We discuss two classes of ``integrable'' representations of the
\Star{}algebra~\(\C[x]\) with \(x=x^*\) which are weakly admissible,
but not admissible, and which violate the Local--Global Principle.  Both
examples have a weak \Cstar\nb-hull, on which all powers of the
generator~\(x\) act by an affiliated multiplier.  In the first
example, these affiliated multipliers generate the weak
\Cstar\nb-hull, but not in the second.
Neither Theorem~\ref{the:regular_admissible_local-global} nor
Theorem~\ref{the:Woronowicz_local-global} apply because the generating
affiliated multipliers are not self-adjoint.  The first example shows
that a \Cstar\nb-algebra generated by affiliated multipliers in the
sense of Woronowicz need not be a \Cstar\nb-hull, though it is always
a weak \Cstar\nb-hull by Lemma~\ref{lem:Woronowicz_weak_hull}.
The second example shows that a weak
\Cstar\nb-hull need not be generated by affiliated multipliers.

Let \(S\in\Bound(\ell^2\N)\) be the unilateral shift.  Let~\(Q\) be
the closed symmetric operator on~\(\ell^2\N\) with Cayley
transform~\(S\).  Thus~\(Q\) has deficiency index~\((0,1)\).  The
domain of~\(Q\) is \((1-S)\ell^2\N\), and \(Q(1-S)\xi \defeq \ima
(1+S)\xi\) for all \(\xi\in\ell^2\N\) (see also
Example~\ref{exa:deficiency_index_0}).  We may identify~\(\ell^2\N\)
with the Hardy space~\(H^2\).  Then~\(Q\) becomes the Toeplitz
operator with the unbounded symbol \(\ima(1+z)(1-z)^{-1}\).

Let~\(\Toep\) be the Toeplitz \Cstar\nb-algebra, that is, the
\Cstar\nb-subalgebra of~\(\Bound(\ell^2\N)\) generated by~\(S\).
Every element in~\(\Toep\) is of the form \(T_\varphi+K\),
where~\(T_\varphi\) is the Toeplitz operator with symbol
\(\varphi\in\Cont(S^1)\) and~\(K\) is a compact operator.  Let
\(\Toep_0 \idealin \Toep\) be the kernel of the unique
\Star{}homomorphism \(\Toep\to\C\) that maps~\(S\) to~\(1\).

\begin{proposition}
  \label{pro:Toeplitz_generated}
  There is a symmetric, affiliated multiplier~\(Q\) of~\(\Toep_0\)
  with domain \((1-S)\cdot\Toep_0\) and \(Q\cdot (1-S)\cdot t \defeq
  \ima (1+S) \cdot t\) for all \(t\in\Toep_0\).  It
  generates~\(\Toep_0\) in the sense of Woronowicz.
\end{proposition}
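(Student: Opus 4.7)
The plan is to verify four things in sequence: well\nobreakdash-definedness and symmetry of~\(Q\); density of its domain \((1-S)\Toep_0\) in~\(\Toep_0\); closedness and regularity via an explicit parametrization of the graph; and generation in Woronowicz's sense via the bounded inverse \((Q+\ima)^{-1}\). The delicate step will be regularity, because \(Q\) is symmetric but not self-adjoint, so the Hilbert space intuition for closed symmetric operators does not transfer automatically to the Hilbert module setting; everything else should be routine.

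First I would observe that left multiplication by \(1-S\) is injective on \(\Toep_0\) (since \(1-S\) is injective on \(\ell^2\N\) and \(\Toep_0\subseteq\Bound(\ell^2\N)\)), so \(Q\cdot(1-S)\cdot t\defeq\ima(1+S)t\) unambiguously defines a right \(\Toep_0\)\nb-module map on \((1-S)\Toep_0\). Symmetry is a direct computation: using \(\braket{a}{b}=a^*b\) and the isometry relation \(S^*S=1\), both \(\braket{Q(1-S)t}{(1-S)s}\) and \(\braket{(1-S)t}{Q(1-S)s}\) reduce to \(\ima t^*(S-S^*)s\). For density of the domain I would use the extension \(0\to\Comp\to\Toep_0\to\Cont_0(S^1\setminus\{1\})\to 0\): in the quotient, any compactly supported function on \(S^1\setminus\{1\}\) is of the form \((1-z)g\) with \(g\in\Cont_0(S^1\setminus\{1\})\) because \(1-z\) is nonvanishing there, so \((1-S)\Toep_0+\Comp\) is dense in~\(\Toep_0\); and \(\Comp\subseteq\cl{(1-S)\Toep_0}\) because \(\ker(1-S^*)=0\) implies that \(1-S\) has dense range on \(\ell^2\N\), so every rank-one operator \(\xi\eta^*\) is approximated by \((1-S)(\zeta\eta^*)\) with \(\zeta\eta^*\in\Comp\).

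For closedness and regularity, I would parametrize the graph of~\(Q\) as the image of the bounded adjointable map \(\Phi\colon\Toep_0\to\Toep_0\oplus\Toep_0\), \(\Phi(t)=((1-S)t,\ima(1+S)t)\), whose adjoint is \(\Phi^*(a,b)=(1-S^*)a-\ima(1+S^*)b\). A one-line computation using \(S^*S=1\) gives \(\Phi^*\Phi=(2-S-S^*)+(2+S+S^*)=4\cdot\id\), so \(\Phi/2\) is an adjointable isometry; consequently the graph \(G(Q)=\Phi(\Toep_0)\) is closed, and the projection \(\Phi\Phi^*/4\) exhibits it as an orthogonal direct summand of \(\Toep_0\oplus\Toep_0\). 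Since \(Q\) is closed and densely defined, symmetry gives \(Q^*\supseteq Q\) densely defined, and then the standard graph criterion for regularity (see \cite{Lance:Hilbert_modules}*{Chapter 9}) identifies~\(Q\) as a regular, hence affiliated, symmetric multiplier. The fact that this cancellation \(\Phi^*\Phi=4\cdot\id\) works hinges precisely on \(S^*S=1\); it is the one nontrivial algebraic input.

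For generation in Woronowicz's sense, the identity \((Q+\ima)(1-S)t=\ima(1+S)t+\ima(1-S)t=2\ima t\) shows \((Q+\ima)^{-1}=\tfrac{-\ima}{2}(1-S)\) exists as an element of~\(\Toep_0\). It then suffices to show that \(1-S\) generates \(\Toep_0\) as a \Cstar\nb-algebra, after which the criterion of \cite{Woronowicz:Cstar_generated}*{Definition 3.1} applies to the single affiliated operator~\(Q\). This holds because the commutator \([(1-S),(1-S)^*]=SS^*-S^*S=-e_0e_0^*\) is a rank-one projection, so the \Cstar\nb-algebra generated by \(1-S\) contains~\(\Comp\); and modulo \(\Comp\), \(1-S\) maps to \(1-z\in\Cont_0(S^1\setminus\{1\})\), which generates that commutative algebra by Stone--Weierstrass; together these give \(C^*(1-S)=\Toep_0\).
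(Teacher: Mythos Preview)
Your regularity argument is correct and takes a different, more direct route than the paper. The paper shows \((1-S)S^*\Toep_0\) is dense and then invokes the Cayley-transform machinery of \cite{Lance:Hilbert_modules}*{Chapter~10} to produce a regular symmetric operator with Cayley transform~\(S\), which it afterwards identifies with~\(Q\). Your observation that \(\Phi^*\Phi=4\cdot\id\) exhibits the graph of~\(Q\) as the range of an adjointable isometry, hence as a complemented submodule, is self-contained and bypasses that detour.

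The generation step, however, has a gap. Knowing that \((Q+\ima)^{-1}=\tfrac{1}{2\ima}(1-S)\in\Toep_0\) generates~\(\Toep_0\) as a \Cstar\nb-algebra gives separation of representations, but Woronowicz's Definition~3.1 is the condition to be \emph{verified}, not a criterion one can invoke. A direct verification would require that \(\pi(Q)\affil B\) forces \((\pi(Q)+\ima)^{-1}\in\Mult(B)\); since affiliation is encoded via the \(z\)\nb-transform \(z_R=R(1+R^*R)^{-1/2}\), this implication is not automatic for a merely symmetric regular~\(R\). The paper instead computes \((1+Q^*Q)^{-1}=\tfrac{1}{4}(1-S)(1-S^*)\in\Toep_0\) explicitly and appeals to \cite{Woronowicz:Cstar_generated}*{Theorem~3.3}, whose hypothesis is separation \emph{together with} \((1+Q^*Q)^{-1}\in\Toep_0\). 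Your route is easily repaired the same way: you already have \((Q+\ima)^{-1}\bigl((Q+\ima)^{-1}\bigr)^*=\tfrac{1}{4}(1-S)(1-S^*)\in\Toep_0\), and one checks---either by the paper's direct calculation or via the general identity \((R+\ima)^{-1}\bigl((R+\ima)^{-1}\bigr)^*=(1+R^*R)^{-1}\), valid for any regular symmetric~\(R\) with \(R+\ima\) surjective---that this equals \((1+Q^*Q)^{-1}\), after which Theorem~3.3 applies.
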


\begin{proof}
  We claim that the right ideal \((1-S)S^*\Toep_0\subseteq \Toep_0\)
  is dense.  This would fail for~\(\Toep\) because the continuous
  \Star{}homomorphism \(\Toep\to\C\), \(S\mapsto 1\), annihilates
  this right ideal.  First, \((1-S)S^*\Comp(\ell^2\N)\) is dense
  in~\(\Comp(\ell^2\N)\) because \((1-S)S^*\) has dense range
  on~\(\ell^2\N\).  So the closure of \((1-S)S^*\Toep_0\)
  contains~\(\Comp(\ell^2\N)\).  Secondly,
  \((1-S)S^*\Toep_0/\Comp(\ell^2\N)\) is dense in
  \(\Toep_0/\Comp(\ell^2\N) \cong \Cont_0(S^1\setminus\{1\})\)
  because the function \((1-z)\conj{z}\) on~\(S^1\) vanishes only
  at~\(1\).

  An affiliated multiplier of~\(\Toep_0\) is the same as a regular
  operator on~\(\Toep_0\), viewed as a Hilbert module over itself.
  Since~\((1-S)S^*\Toep_0\) is dense in~\(\Toep_0\), there is a
  regular, symmetric operator~\(Q'\) on~\(\Toep_0\) that has~\(S\)
  as its Cayley transform, see \cite{Lance:Hilbert_modules}*{Chapter
    10}.  The operator~\(Q'\) has the domain~\((1-S)S^*\Toep_0\) and
  acts by \(Q' \cdot (1-S)S^* t \defeq \ima (1+S)S^* t\).  Rewriting
  any \(t\in\Toep_0\) as \(t= S^* S t\), we may replace~\(S^* t\)
  by~\(t\) here.  Thus \(Q'=Q\).

  Since~\(Q+\ima\) maps~\((1-S)t\) to \(\ima (1+S)t + \ima (1-S)t =
  2\ima t\), it is surjective, and \((Q+\ima)^{-1} = \frac{1}{2\ima}
  (1-S)\) belongs to~\(\Toep_0\).  Hence \((Q+\ima)^* = Q^* - \ima\)
  is the inverse of \(\frac{1}{-2\ima} (1-S^*)\).  So~\(Q^*\) has
  domain \((1-S^*)\Toep_0\) and maps \((1-S^*)t\mapsto \ima (1-S^*)t
  -2\ima t = -\ima (1+S^*) t\).  As expected, \(Q^*\)
  contains~\(Q\): we may write \((1-S)t = S^* S t - S t = (1-S^*)(-S
  t)\), and~\(Q^*\) maps this to \(-\ima(1+S^*)(-S t) =
  \ima(S+1)t\).

  Next we show that \(Q^* Q+1\) is the inverse of \(\frac{1}{4}
  (1-S)(1-S^*) \in\Toep_0\).  We compute
  \begin{multline*}
    Q^* Q (1-S)(1-S^*) t
    = \ima Q^* (1+S)(1-S^*) t
    = \ima Q^* (1+S-S^* - S S^*) t
    \\= \ima Q^* (1-S^*) (2+S -S S^*) t
    = (1+S^*) (2+S -S S^*) t
    = (4- (1-S)(1-S^*)) t.
  \end{multline*}
  This implies \((Q^* Q +1)(1-S)(1-S^*) t = 4 t\).  Since this is
  already surjective and \(Q^*Q+1\) is injective, the domain of
  \(Q^* Q+1\) is exactly \((1-S)(1-S^*)\Toep_0\), and \(Q^* Q +1\)
  is the inverse of \(\frac{1}{4} (1-S)(1-S^*) \in\Toep_0\) as
  asserted.

  Let \(\varrho_1\) and~\(\varrho_2\) be two Hilbert space
  representations of~\(\Toep_0\) whose extension to affiliated
  multipliers maps~\(Q\) to the same unbounded operator.  Then they
  also map the Cayley transform~\(S\) of~\(Q\) to the same partial
  isometry.  So \(\varrho_1(S) = \varrho_2(S)\), which gives
  \(\varrho_1=\varrho_2\).  Thus~\(Q\) separates the representations
  of~\(\Toep_0\).  Since \((Q^* Q+1)^{-1}\in\Toep_0\) as well,
  \cite{Woronowicz:Cstar_generated}*{Theorem 3.3} shows that the
  affiliated multiplier~\(Q\) generates~\(\Toep_0\).
\end{proof}

The domain of~\(Q^n\) is the right ideal \((1-S)^n\cdot\Toep_0\),
which is dense in~\(\Toep_0\) for the same reason as
\((1-S)\cdot\Toep_0\).  Even more, the right ideal
\((1-S)^{n+1}\cdot\Toep_0\) is dense in \((1-S)^n\cdot\Toep_0\) in the
graph norm of~\(Q^n\).  Thus the intersection~\(\Hilms[T]\) of this
decreasing chain of dense right ideals \((1-S)^n\Toep_0\) is still
dense in~\(\Toep_0\) by \cite{Schmudgen:Unbounded_book}*{Lemma 1.1.2}.
This intersection is the domain of a closed representation~\(\mu\)
of~\(\C[x]\) on~\(\Toep_0\) with \(\cl{\mu(x^n)} = Q^n\).
We call a representation of~\(\C[x]\) on a Hilbert
module~\(\Hilm\) \emph{Toeplitz integrable} if it is of the form
\((\Hilms[T],\mu) \otimes_\varrho \Hilm\) for some representation
\(\varrho\colon \Toep_0\to\Bound(\Hilm)\).

\begin{proposition}
  \label{pro:Toeplitz_integrable}
  The class of Toeplitz integrable representations of~\(\C[x]\) is
  weakly admissible with the weak \Cstar\nb-hull~\(\Toep_0\).  It is
  not admissible, so~\(\Toep_0\) is not a \Cstar\nb-hull.  The
  Toeplitz integrable representations violate the Local--Global
  Principle.

  A representation~\((\Hilms,\pi)\) of~\(\C[x]\) on a Hilbert
  module~\(\Hilm\) over a \Cstar\nb-algebra~\(D\) is Toeplitz
  integrable if and only if it has the following properties:
  \begin{enumerate}
  \item \label{pro:Toeplitz_integrable_1}%
    \(\cl{\pi(x+\ima)^n\Hilms} = \Hilm\) for all \(n\in\N_{\ge1}\);
  \item \label{pro:Toeplitz_integrable_2}%
    \(\cl{\pi(x)}\) is regular.
  \end{enumerate}
  Toeplitz integrable representations on~\(\Hilm\) are in bijection
  with regular, symmetric operators~\(T\) on~\(\Hilm\) for which
  \(T+\ima\) is surjective.
\end{proposition}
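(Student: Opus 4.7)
The plan is to establish the four claims in the proposition in sequence: $(\alpha)$ $\Toep_0$ is a weak \Cstar\nb-hull and the class is weakly admissible; $(\beta)$ the operator-theoretic characterization via conditions~(1) and~(2), together with the Cayley-transform bijection with regular symmetric operators~$T$ having $T+\ima$ surjective; $(\gamma)$ failure of admissibility; $(\delta)$ failure of the Local--Global Principle.

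For~$(\alpha)$, Proposition~\ref{pro:Toeplitz_generated} shows that $Q=\cl{\mu(x)}$ generates~$\Toep_0$ in the sense of Woronowicz, so Lemma~\ref{lem:Woronowicz_weak_hull} identifies $\Toep_0$ as a weak \Cstar\nb-hull and Lemma~\ref{lem:integrable_admissible} gives weak admissibility. For~$(\beta)$, first assume $\pi=\mu\otimes_\varrho 1$ for a nondegenerate representation~$\varrho$ of~$\Toep_0$. Then $\cl{\pi(x)}=\bar\varrho(Q)$ is regular, yielding~(2). For~(1), I would use the identity $(Q+\ima)(1-S)=2\ima$ in $\Toep_0$ and the observation that every $a\in\Hilms[T]=\bigcap_k(1-S)^k\Toep_0$ can be written as $a=(1-S)^n b$ with $b=(2\ima)^{-n}(Q+\ima)^n a\in\Hilms[T]$ (direct check: if $a=(1-S)^{n+k}c$, then $b=(1-S)^k c\in(1-S)^k\Toep_0$). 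This gives $\pi(x+\ima)^n(\varrho(a)\xi)=(2\ima)^n\varrho(b)\xi$, whose span over $b\in\Hilms[T]$ and $\xi\in\Hilm$ is dense in~\(\Hilm\) by nondegeneracy of~$\varrho$.

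Conversely, assume~\(\pi\) satisfies~(1) and~(2), and set $T\defeq\cl{\pi(x)}$. Symmetry yields $\braket{(T+\ima)\xi}{(T+\ima)\xi}=\braket{T\xi}{T\xi}+\braket{\xi}{\xi}$ in the $D$-valued inner product, so $T+\ima$ is bounded below and has closed range; combined with~(1) for $n=1$ this forces $T+\ima$ to be surjective. Hence the Cayley transform $V\defeq(T-\ima)(T+\ima)^{-1}$ is a bounded isometry on~\(\Hilm\) with no fixed vector (if $V\xi=\xi$, then $(T+\ima)^{-1}\xi\in\ker(2\ima)=0$, so $\xi=0$). The universal property of~$\Toep$ as the \Cstar\nb-algebra of an isometry produces a representation $\varrho\colon\Toep_0\to\Bound(\Hilm)$, nondegenerate because $V$ has no fixed vector, with $\bar\varrho(Q)=T$. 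Setting $\pi'\defeq\mu\otimes_\varrho 1$, both $\pi$ and $\pi'$ have $\cl{(\cdot)(x)}=T$. For higher $n$, the bound $\norm{\pi(x+\ima)^n\xi}\ge\norm{\xi}$ together with~(1) forces $\cl{\pi((x+\ima)^n)}$ to be surjective with bounded inverse, and this inverse must equal $\bigl((1-V)/(2\ima)\bigr)^n$, the bounded inverse of $\cl{\pi'((x+\ima)^n)}$. Hence $\cl{\pi((x+\ima)^n)}=\cl{\pi'((x+\ima)^n)}$ for every~$n$, and since $\{(x+\ima)^n\mid n\in\N\}$ is a strong generating set for $\C[x]$, Proposition~\ref{pro:equality_if_closure_equal} gives $\pi=\pi'$. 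The asserted bijection now follows: the assignment $\pi\mapsto\cl{\pi(x)}$ is inverse to the Cayley-transform construction above.

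For~$(\gamma)$, the standard action of~$\Toep$ on~$\ell^2\N$ restricts to a nondegenerate representation of~$\Toep_0$, hence yields a Toeplitz integrable representation of~$\C[x]$ whose associated operator is the closed symmetric $Q$ on~$\ell^2\N$ of deficiency indices~$(0,1)$, with Cayley transform the shift~$S$. By the argument of Example~\ref{exa:deficiency_index_0}, $S$ itself is an isometric self-intertwiner of this representation but not a \Star{}intertwiner (as $SS^*\ne S^*S=1$), contradicting Definition~\ref{def:integrable_admissible}\ref{enum:admissible3}. For~$(\delta)$, regularity of a single operator is not a submodule condition (Example~\ref{exa:alone_regularity}), leaving room for a local--global failure; the main obstacle will be to construct an explicit closed symmetric representation on a Hilbert \(D\)\nb-module whose fibering $\pi\otimes_\varrho\Hilm[H]_\omega$ at every pure state~$\omega$ of~$D$ is Toeplitz integrable, yet for which $\cl{\pi(x)}$ is not regular globally. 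Such an example may be built following Pierrot's circle of ideas, gluing a suitable family of Toeplitz-type symmetric operators over a non-discrete base so that fiberwise regularity persists while the density of the range of $1+T^*T$ is lost on the ambient module.
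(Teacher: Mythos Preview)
Your treatment of parts~$(\alpha)$, $(\beta)$, and~$(\gamma)$ is correct and close to the paper's own argument. The paper likewise invokes the Woronowicz-generation criterion (via Proposition~\ref{pro:Toeplitz_generated}) to get the weak \Cstar\nb-hull, and it proves the characterisation in~$(\beta)$ by the same Cayley-transform construction; the only difference is that the paper compares $\pi$ and $\pi'$ using the strong generating set $\{x^n\}$ rather than your $\{(x+\ima)^n\}$, but the two are interchangeable by the estimates in Lemma~\ref{lem:graph_topology_Cx}. For~$(\gamma)$ the paper cites both Example~\ref{exa:deficiency_index_0} and Proposition~\ref{pro:symmetric_no_hull}; your use of the former alone suffices.

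The genuine gap is in~$(\delta)$. You correctly identify that the obstruction must come from the regularity condition~(2), since condition~(1) is a submodule condition and hence local--global. But you stop at a programmatic description (``gluing a suitable family of Toeplitz-type symmetric operators over a non-discrete base'') without producing the example. The paper gives a short, explicit construction: take $D=\Cont(\bar{\N})$ with $\bar{\N}=\N\cup\{\infty\}$, let $\Hilm\subseteq\Cont(\bar{\N},\ell^2\N)$ consist of those continuous $f$ with $f(\infty)\perp\delta_0$, and restrict the constant shift~$S$ to~$\Hilm$. This restriction~$s$ is still an isometry but is \emph{not adjointable} on~$\Hilm$ (the orthogonal complement of its range fails to be complemented at~$\infty$). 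Its inverse Cayley transform~$T$ is therefore a closed symmetric operator with $T+\ima$ surjective that is \emph{irregular}, so the associated representation of~$\C[x]$ fails~(2). Yet for every Hilbert space representation~$\varrho$ of~$D$, the induced operator $T\otimes_\varrho 1$ lives on a Hilbert space and is automatically regular, while $(T\otimes_\varrho 1)+\ima$ remains surjective; hence every localisation is Toeplitz integrable. This is the missing concrete witness, and without it part~$(\delta)$ is not proved.
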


\begin{proof}
  We checked condition~\ref{enum:Cstar-gen1} in
  Proposition~\ref{pro:Cstar-generated_by_unbounded_multipliers} in
  the proof of Proposition~\ref{pro:Toeplitz_generated}.
  Thus~\(\Toep_0\) is a weak \Cstar\nb-hull for the Toeplitz
  integrable representations, and this class is weakly admissible.
  Any self-adjoint operator on a Hilbert space generates a Toeplitz
  integrable representation of~\(\C[x]\) because
  \(\Toep_0/\Comp(\ell^2\N) \cong \Cont_0(\R)\); so does~\(Q\)
  itself.  Thus both Example~\ref{exa:deficiency_index_0} and
  Proposition~\ref{pro:symmetric_no_hull} show that the class of
  Toeplitz integrable representations is not admissible.
  So~\(\Toep_0\) is not a \Cstar\nb-hull.

  We claim that the representation~\((\Hilms[T],\mu)\) of~\(\C[x]\)
  on~\(\Toep_0\) has the properties \ref{pro:Toeplitz_integrable_1}
  and~\ref{pro:Toeplitz_integrable_2} in the proposition.  First,
  \((\mu(x)+\ima)^n\) acts by \((2\ima)^n (1-S)^{-n}\) on its dense
  domain \(\Hilms[T] \defeq \bigcap_{k=1}^\infty (1-S)^k\Toep_0\).
  Since \((1-S)^{k+1}\Toep_0\) is norm dense in \((1-S)^k\Toep_0\),
  the closure of \((\mu(x)+\ima)^n\) is equal to \((2\ima)^n
  (1-S)^{-n}\) with its natural domain \((1-S)^n\Toep_0\), and this
  operator is surjective.  Secondly, \(\mu(x)=Q\) is regular.

  The property~\ref{pro:Toeplitz_integrable_1} is a sequence of
  submodule conditions, see
  Example~\ref{exa:nondegeneracy_condition}.  Hence it is inherited
  by interior tensor products by
  Lemma~\ref{lem:operator_conditions}.  So is the
  property~\ref{pro:Toeplitz_integrable_2} by
  \cite{Lance:Hilbert_modules}*{Proposition 9.10}.  Hence both
  \ref{pro:Toeplitz_integrable_1}
  and~\ref{pro:Toeplitz_integrable_2} are necessary for a
  representation~\((\Hilms,\pi)\) to be Toeplitz integrable.

  Conversely, let~\((\Hilms,\pi)\) be a representation of~\(\C[x]\)
  on~\(\Hilm\) that satisfies \ref{pro:Toeplitz_integrable_1}
  and~\ref{pro:Toeplitz_integrable_2}.  Then the closed, symmetric
  operator \(T\defeq\cl{\pi(x)}\) on~\(\Hilm\) is regular
  by~\ref{pro:Toeplitz_integrable_2}.  So its Cayley transform~\(s\)
  is an adjointable partial isometry such that \((1-s)s^*\) has
  dense range (see \cite{Lance:Hilbert_modules}*{Chapter 10}).  Even
  more, \(s\)~is an isometry because \(\cl{(T+\ima)\Hilms} =
  \Hilm\).  Thus~\(s\) generates a unital representation~\(\varrho\)
  of~\(\Toep\).  The restriction of~\(\varrho\) to~\(\Toep_0\) is
  nondegenerate because~\((1-s)s^*\) has dense range.  Let
  \(\pi'\defeq \mu\otimes_\varrho 1\) be the representation
  of~\(\C[x]\) associated to~\(\varrho\).  Then
  \[
  \cl{\pi'((x+\ima)^n)}
  = (2\ima)^n (1-s)^{-n}
  \supseteq \cl{\pi((x+\ima)^n)}.
  \]
  Assumption~\ref{pro:Toeplitz_integrable_1} implies that~\(\Hilms\)
  is dense in the domain of \((2\ima)^n (1-s)^{-n}\) in the graph
  norm of~\((2\ima)^n (1-s)^{-n}\).  Hence even
  \(\cl{\pi'((x+\ima)^n)} = (2\ima)^n (1-s)^{-n} =
  \cl{\pi((x+\ima)^n)}\).  Since the domains of~\(\pi(x)^n\) form a
  decreasing sequence, induction on~\(n\) now shows that
  \(\cl{\pi'(x^n)} = \cl{\pi(x^n)}\).  The set~\(\{x^n\}\) is a
  strong generating set for~\(\C[x]\) by
  Lemma~\ref{lem:graph_topology_Cx}.  Thus \(\pi=\pi'\) by
  Proposition~\ref{pro:equality_if_closure_equal}.  This finishes
  the proof that Toeplitz integrable representations of~\(\C[x]\)
  are characterised by the properties
  \ref{pro:Toeplitz_integrable_1}
  and~\ref{pro:Toeplitz_integrable_2} and that they are in bijection
  with regular, symmetric operators~\(T\) for which~\(T+\ima\) is
  surjective.

  For a counterexample to the Local--Global Principle, let
  \(\bar{\N} = \N\cup\{\infty\}\) be the one-point compactification
  of~\(\N\) and \(D=\Cont(\bar{\N})\).  Let \(\Hilm\subseteq
  \Cont(\bar{\N},\ell^2\N)\) consist of all continuous functions
  \(f\colon \bar{\N}\to\ell^2\N\) with \(f(\infty)\bot \delta_0\).  The
  unilateral shift~\(S\) on \(\Cont(\bar{\N},\ell^2\N)\) restricts
  to a non-adjointable isometry~\(s\) on this subspace.  Let~\(T\)
  be the inverse Cayley transform of~\(s\).  This is a closed,
  symmetric operator on~\(\Hilm\) that is irregular because its
  Cayley transform is not adjointable.  If \(\varrho\colon
  D\to\Bound(\Hilm[H])\) is a Hilbert space representation, then the
  induced representation of~\(\C[x]\) is associated to the closed
  operator \(T\otimes_\varrho 1\).  The operator \((T\otimes_\varrho
  1)+\ima\) remains surjective, and \(T\otimes_\varrho 1\) is
  regular because it acts on a Hilbert space.  So \(T\otimes_\varrho
  1\) generates a Toeplitz integrable representation for all
  representations~\(\varrho\) of~\(D\).  Since~\(T\) itself does not
  generate a Toeplitz integrable representation, the Local--Global
  Principle is violated.
\end{proof}

Condition~\ref{pro:Toeplitz_integrable_1} in
Proposition~\ref{pro:Toeplitz_integrable} is a submodule condition.
If regularity without self-adjointness were a submodule condition as
well, then the Toeplitz integrable representations of~\(\C[x]\)
would be defined by submodule conditions; so the failure of the
Local--Global Principle for them would contradict
Theorem~\ref{the:Woronowicz_local-global}.

The identical inclusion \(\Toep_0\injto \Mult(\Comp(\ell^2\N))\) is
a representation of the weak \Cstar\nb-hull~\(\Toep_0\)
on~\(\Comp(\ell^2\N)\) and thus corresponds to a Toeplitz integrable
representation of~\(\C[x]\) on~\(\Comp(\ell^2\N)\).  This is simply
the restriction of~\((\Hilms[T],\mu)\) to the Hilbert
\(\Toep_0\)\nb-submodule \(\Comp(\ell^2\N) \subset\Toep_0\), with
domain \(\Hilms[T]\cap \Comp(\ell^2\N)\) and the same action~\(\mu\)
of~\(\C[x]\).  Call a representation \emph{purely Toeplitz
  integrable} if it is of the form
\((\Hilms[T]\cap\Comp(\ell^2\N),\mu) \otimes_\varrho \Hilm\) for
some representation \(\varrho\colon
\Comp(\ell^2\N)\to\Bound(\Hilm)\).

\begin{proposition}
  \label{pro:Comp_integrable_Cx}
  The purely Toeplitz integrable representations of~\(\C[x]\) form a
  weakly admissible class that is not admissible,
  and~\(\Comp(\ell^2\N)\) is a weak \Cstar\nb-hull for it, but not a
  \Cstar\nb-hull.  This class violates the Local--Global Principle.
  The closed multiplier \(Q = \cl{\mu(x)}\) of~\(\Toep_0\) is
  affiliated with~\(\Comp(\ell^2\N)\) but does not
  generate~\(\Comp(\ell^2\N)\).

  A representation~\((\Hilms,\pi)\) of~\(\C[x]\) on a Hilbert
  module~\(\Hilm\) over a \Cstar\nb-algebra~\(D\) is purely Toeplitz
  integrable if and only if it has the following property in
  addition to those in
  Proposition~\textup{\ref{pro:Toeplitz_integrable}}:
  \begin{enumerate}[start=3]
  \item the closure of \(\bigcup_{n=1}^\infty (\pi(x-\ima)^n
    \Hilms)^\bot\) is~\(\Hilm\).
  \end{enumerate}
\end{proposition}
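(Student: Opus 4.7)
The plan is to follow the pattern of Proposition~\ref{pro:Toeplitz_integrable}, exploiting that $\Comp(\ell^2\N)$ sits inside $\Toep_0$ as an essential ideal with multiplier algebra $\Bound(\ell^2\N)$. First I would verify the weak \Cstar\nb-hull property by checking condition~\ref{enum:Cstar-gen1} in Proposition~\ref{pro:Cstar-generated_by_unbounded_multipliers}. Any nondegenerate representation $\varrho\colon \Comp(\ell^2\N)\to\Bound(\Hilm)$ extends uniquely to $\tilde\varrho\colon \Bound(\ell^2\N)\to\Bound(\Hilm)$, and hence to $\Toep_0$. An approximate-unit argument using that $\Hilms[T]\cap\Comp(\ell^2\N)$ is dense in $\Hilms[T]$ in the graph topology shows that $(\Hilms[T]\cap\Comp(\ell^2\N),\mu) \otimes_\varrho \Hilm = (\Hilms[T],\mu) \otimes_{\tilde\varrho} \Hilm$ as closed representations of~$\C[x]$. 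The weak \Cstar\nb-hull property for $\Comp(\ell^2\N)$ then reduces to the same property for $\Toep_0$, already established in Proposition~\ref{pro:Toeplitz_integrable}.

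For the characterisation I would argue as follows. Every purely Toeplitz integrable representation is Toeplitz integrable, so conditions~\ref{pro:Toeplitz_integrable_1} and~\ref{pro:Toeplitz_integrable_2} hold automatically and produce an adjointable isometry $s$, namely the Cayley transform of $T=\cl{\pi(x)}$. Since $s^n(T+\ima)^n\xi = (T-\ima)^n\xi$ on $\dom T^n$, one obtains $s^n\cdot \cl{\pi(x+\ima)^n\Hilms} = \cl{\pi(x-\ima)^n\Hilms}$; condition~\ref{pro:Toeplitz_integrable_1} makes the left hand side equal to $s^n\Hilm$, so $(\pi(x-\ima)^n\Hilms)^\bot = \ker(s^{*n})$. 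Thus condition~(3) is equivalent to $\bigcap_n s^n\Hilm = 0$, i.e.\ $s$ is a pure isometry in the Wold sense. By the universal property of the Toeplitz algebra, for adjointable isometries this is equivalent to the generated representation of~$\Toep$ being nondegenerate on the ideal $\Comp(\ell^2\N)$, which is precisely what it means for the Toeplitz integrable representation to be purely Toeplitz integrable.

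The remaining assertions follow quickly. The resolvent $(Q+\ima)^{-1} = \frac{1}{2\ima}(1-S)$ lies in $\Toep_0$ but is not compact, so $Q$ cannot generate $\Comp(\ell^2\N)$ in the sense of Woronowicz; on the other hand $Q$ is a closed, densely defined operator on the Hilbert space $\ell^2\N$ with densely defined adjoint, hence regular there, hence affiliated with $\Comp(\ell^2\N)$ via the Morita equivalence $\Comp(\ell^2\N)\sim\C$. Non-admissibility and failure of the Local--Global Principle transfer from the Toeplitz case: the universal purely Toeplitz integrable Hilbert space representation on $\ell^2\N$ is the symmetric operator $Q$ of deficiency index $(0,1)$, so Example~\ref{exa:deficiency_index_0} supplies an isometric intertwiner that is not a \Star{}intertwiner, showing both non-admissibility and that $\Comp(\ell^2\N)$ fails to be a \Cstar\nb-hull; and in the Local--Global counterexample of Proposition~\ref{pro:Toeplitz_integrable} each pointwise evaluation on $\bar\N$ restricts the unilateral shift to a shift-invariant subspace on which it remains a pure isometry, yielding a purely Toeplitz integrable representation at every pure state, while the original representation on $\Hilm$ fails even Toeplitz integrability because its Cayley transform is non-adjointable. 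The main obstacle, I expect, is the reduction of condition~(3) to the Wold-decomposition statement together with the verification that $\Hilms[T]\cap\Comp(\ell^2\N)$ really is a core for $(\Hilms[T],\mu)\otimes_{\tilde\varrho}\Hilm$.
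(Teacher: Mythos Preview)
Your proposal is correct and follows essentially the same route as the paper: both reduce the weak-hull property to that of~\(\Toep_0\), both identify condition~(3) with nondegeneracy of the ideal \(\Comp(\ell^2\N)\) inside~\(\Toep_0\) (the paper via the projections \(P_n = 1 - S^n(S^*)^n\), you via the equivalent Wold condition \(\bigcap_n s^n\Hilm = 0\)), and both reuse the counterexamples from Proposition~\ref{pro:Toeplitz_integrable} for non-admissibility and failure of the Local--Global Principle.

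One small correction on the non-generation argument: from \((Q+\ima)^{-1}\notin\Comp(\ell^2\N)\) alone you cannot directly conclude that \(Q\) fails to generate \(\Comp(\ell^2\N)\); Woronowicz's Theorem~3.3 requires \((1+Q^*Q)^{-1}\in A\), so you should check that \((1+Q^*Q)^{-1} = \tfrac14(1-S)(1-S^*)\) is not compact (its Toeplitz symbol \(\tfrac14|1-z|^2\) does not vanish identically). The paper instead argues straight from Woronowicz's Definition~3.1: in the identity representation of \(\Comp(\ell^2\N)\) on~\(\ell^2\N\), the operator~\(Q\) is affiliated with~\(\Toep_0\), but the inclusion \(\Comp(\ell^2\N)\hookrightarrow\Toep_0\) is degenerate and hence not a morphism, contradicting generation.
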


\begin{proof}
  Since~\(\Comp(\ell^2\N)\) has fewer representations
  than~\(\Toep_0\), the condition~\ref{enum:Cstar-gen1} in
  Proposition~\ref{pro:Cstar-generated_by_unbounded_multipliers}
  for~\(\Comp(\ell^2\N)\) follows from the corresponding property
  for~\(\Toep_0\), which we have already checked in the proof of
  Proposition~\ref{pro:Toeplitz_generated}.
  Hence~\(\Comp(\ell^2\N)\) is a weak \Cstar\nb-hull for the purely
  Toeplitz representations of~\(\C[x]\).

  Since~\(Q\) gives a purely Toeplitz representation of~\(\C[x]\)
  on~\(\ell^2(\N)\), the class of purely Toeplitz integrable
  representations is not admissible by
  Example~\ref{exa:deficiency_index_0}.  Therefore, its weak
  \Cstar\nb-hull is not a \Cstar\nb-hull.  The same counterexample
  as in the proof of Proposition~\ref{pro:Toeplitz_integrable} shows
  that the Local--Global Principle fails for the purely Toeplitz
  representations.

  Any closed operator on~\(\ell^2\N\) is affiliated with~\(\Comp(\ell^2\N)\).
  In particular, so is~\(Q\).  In the identical representation
  of~\(\Comp(\ell^2\N)\) on the Hilbert space~\(\ell^2\N\), the image of~\(Q\)
  is affiliated with~\(\Toep_0\) by
  Proposition~\ref{pro:Toeplitz_generated}.  But the representation
  of~\(\Comp(\ell^2\N)\) is not by a morphism to~\(\Toep_0\) because the
  inclusion map \(\Comp(\ell^2\N)\injto\Toep_0\) is degenerate.
  Hence~\(Q\) does not generate~\(\Comp(\ell^2\N)\) in the sense of
  Woronowicz.

  The element \(P_n\defeq 1- S^n (S^*)^n\in\Comp(\ell^2\N)\subseteq
  \Toep_0\) is the orthogonal projection onto the span of
  \(\delta_0,\dotsc,\delta_{n-1}\).  A representation of~\(\Toep_0\)
  maps~\(P_n\) to an orthogonal projection whose image is the
  orthogonal complement of the image of~\(S^n\).  This is also the
  orthogonal complement of the image of~\(\pi(x-\ima)^n\).  These
  orthogonal complements form an increasing chain of complementable
  submodules, and~\(\pi\) is purely Toeplitz if and only if their
  union is all of~\(\Hilm\).  This proves our characterisation of
  purely Toeplitz representations.
\end{proof}

\section{Bounded and locally bounded representations}
\label{sec:locally_bounded}

Let~\(A\) be a \Star{}algebra.  A \emph{bounded representation}
of~\(A\) on a Hilbert module~\(\Hilm\) is a \Star{}homomorphism
\(\pi\colon A\to\Bound(\Hilm)\).  Corollary~\ref{cor:bounded_rep} says
that a closed representation is bounded once~\(\cl{\pi(a)}\) is
globally defined for~\(a\) in a strong
generating set of~\(A\).  Finite-dimensional representations are
always bounded.  In particular, characters are bounded.
Thus commutative \Star{}algebras have many bounded representations.
Many other \Star{}algebras, such as the Weyl algebra, have no
bounded representations.  In this section, we are going to study
\Cstar\nb-hulls related to bounded representations.  These are only
relevant if~\(A\) has many bounded representations.

Any bounded representation~\(\pi\) of~\(A\) is bounded in some
\Cstar\nb-seminorm~\(q\) on~\(A\), that is, \(\norm{\pi(a)}\le q(a)\)
for all \(a\in A\).  Then~\(\pi\) extends to the (Hausdorff)
completion~\(\mathcal{A}_q\) of~\(A\) in the seminorm~\(q\), which is
a unital \Cstar\nb-algebra.

If \(p,q\) are two \Cstar\nb-seminorms on~\(A\), then \(\max \{p,q\}\)
is a \Cstar\nb-seminorm as well.  Thus the set~\(\mathcal{N}(A)\) of
\Cstar\nb-seminorms on~\(A\) is directed.  For \(q,q'\in
\mathcal{N}(A)\) with \(q\le q'\), let \(\varphi_{q, q'}\colon
\mathcal{A}_{q'} \to \mathcal{A}_q\) be the \Star{}homomorphism
induced by the identity map on~\(A\).  The
\Cstar\nb-algebras~\(\mathcal{A}_q\) and the
\Star{}homomorphisms~\(\varphi_{q,q'}\) for \(q\le q'\)
in~\(\mathcal{N}(A)\) form a projective system of \Cstar\nb-algebras.
Each \Star{}homomorphism~\(\varphi_{q, q'}\) is unital and surjective
because its image contains~\(A\), which is unital and dense
in~\(\mathcal{A}_{q'}\).

The \Cstar\nb-seminorms in~\(\mathcal{N}(A)\) define a locally
convex topology on~\(A\), where a net converges if and only if it
converges in any \Cstar\nb-seminorm.  Let~\(\mathcal{A}\) with the
canonical map \(j\colon A\to \mathcal{A}\) be the completion
of~\(A\) in this topology.  This is a \Cstar\nb-algebra if and only
if there is a \emph{largest} \Cstar\nb-seminorm on~\(A\).  In
general, \(\mathcal{A}\) is the projective limit of the diagram of
unital \Cstar\nb-algebras \((\mathcal{A}_q,\varphi_{q, q'})\)
described above.  Thus~\(\mathcal{A}\) is a unital
pro-\Cstar\nb-algebra, see~\cite{Phillips:Inverse}.

As a concrete example, we describe~\(\mathcal{A}\) for a
commutative \Star{}algebra~\(A\).

\begin{definition}
  \label{def:characters}
  Let~\(\hat{A}\) be the set of \Star{}homomorphisms \(A\to\C\), which
  we briefly call \emph{characters}.  Each \(a\in A\) gives a function
  \(\hat{a}\colon \hat{A}\to\C\), \(\hat{a}(\chi) \defeq \chi(a)\).
  We equip~\(\hat{A}\) with the coarsest topology making these
  functions continuous.  That is, a net~\((\chi_i)_{i\in I}\)
  in~\(\hat{A}\) converges to \(\chi\in\hat{A}\) if and only if \(\lim
  \chi_i(a) = \chi(a)\) for all \(a\in A\).  Let~\(\tau_c\) be the
  \emph{compactly generated topology} associated to this topology,
  that is, a subset in~\(\hat{A}\) is closed in~\(\tau_c\) if and only
  if its intersection with any compact subset in~\(\hat{A}\) is closed.
\end{definition}

If \(a\in A\), then its \emph{Gelfand transform}~\(\hat{a}\) is a
continuous function on~\(\hat{A}\).  This
defines a \Star{}homomorphism \(A\to\Cont(\hat{A})\).  If the usual
topology on~\(\hat{A}\) is locally compact or metrisable, then it is
already compactly generated and hence equal to~\(\tau_c\).
The topology~\(\tau_c\) may have more closed subsets and hence
more continuous functions to~\(\C\).  So
\(\Cont(\hat{A})\subseteq \Cont(\hat{A},\tau_c)\).

\begin{proposition}
  \label{pro:commutative_pro-completion}
  Let~\(A\) be a commutative \Star{}algebra.  The directed
  set~\(\mathcal{N}(A)\) of \Cstar\nb-seminorms on~\(A\) is isomorphic
  to the directed set of compact subsets of~\(\hat{A}\), where
  \(K\subseteq \hat{A}\) corresponds to the \Cstar\nb-seminorm
  \[
  \norm{a}_K \defeq \sup  \{\abs{\hat{a}(\chi)} \mid \chi\in K\}.
  \]
  The \Cstar\nb-completion of~\(A\) in this \Cstar\nb-seminorm
  is~\(\Cont(K)\).  And \(\mathcal{A} \cong \Cont(\hat{A},\tau_c)\),
  where the inclusion map \(j\colon A\to\mathcal{A}\) is the Gelfand
  transform \(A\to\Cont(\hat{A},\tau_c)\), \(a\mapsto \hat{a}\).
\end{proposition}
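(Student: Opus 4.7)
The plan is to verify first that compact subsets of $\hat{A}$ correspond bijectively and order-preservingly to C*-seminorms on $A$, and then recognise the projective limit as $\Cont(\hat{A},\tau_c)$.

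\textbf{From compact sets to seminorms.} For a compact $K\subseteq\hat{A}$, the Gelfand transform $a\mapsto \hat{a}|_K$ is a unital \Star{}homomorphism from $A$ into $\Cont(K)$, so $\norm{a}_K \defeq \sup_{\chi\in K}\abs{\hat{a}(\chi)}$ is well defined and a C*-seminorm. The image of $A$ in $\Cont(K)$ is a unital, conjugation-closed subalgebra that separates the points of $K$, since any two distinct characters of $A$ differ on some $a\in A$. Stone--Weierstrass then yields density of this image, and hence $\mathcal{A}_{\norm{\cdot}_K} \cong \Cont(K)$. This identification is the main technical input for everything that follows.

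\textbf{From seminorms to compact sets.} Conversely, let $q\in \mathcal{N}(A)$. Since $A$ is commutative and dense in $\mathcal{A}_q$, this completion is a unital commutative C*-algebra, so Gelfand theory gives $\mathcal{A}_q \cong \Cont(K_q)$ where $K_q$ is its spectrum. Restriction of characters along $A\to\mathcal{A}_q$ defines a map $r\colon K_q \to \hat{A}$; it is injective because $A$ is dense, and continuous because weak-* convergence in $K_q$ implies pointwise convergence on $A$, which is the convergence defining the topology on $\hat{A}$. Since $K_q$ is compact and $\hat{A}$ is Hausdorff (distinct characters are separated by some $\hat{a}$), $r$ is a homeomorphism onto a compact subset $K\subseteq\hat{A}$, and $q(a) = \norm{a}_{\mathcal{A}_q} = \sup_{\chi\in K_q}\abs{\chi(a)} = \norm{a}_K$.

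\textbf{Bijection, order, and pro-completion.} These two constructions are mutually inverse: starting from $K$ and completing produces $\Cont(K)$ whose spectrum reidentifies with $K$, and starting from $q$ we just recovered $q=\norm{\cdot}_K$. For order-preservation, $K\subseteq K'$ trivially yields $\norm{\cdot}_K \le \norm{\cdot}_{K'}$; conversely, any $\chi\in K$ is then bounded in norm by $\norm{\cdot}_{K'}$, hence extends to a character of $\mathcal{A}_{\norm{\cdot}_{K'}}=\Cont(K')$ and so corresponds to a point of $K'$. (That \(\preceq\) on C*-seminorms collapses to \(\le\) follows from the spectral identity $p(a^n)=p(a)^n$, letting $n\to\infty$.) Finally, by construction $\mathcal{A}=\varprojlim_{q\in\mathcal{N}(A)} \mathcal{A}_q$, which by the preceding steps equals $\varprojlim_{K} \Cont(K)$ with transition maps given by restriction. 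A compatible family $(f_K)$ indexed by the compact subsets of $\hat{A}$ is the same datum as a single function $f\colon\hat{A}\to\C$ whose restriction to every compact subset is continuous, and this is precisely continuity for the compactly generated topology $\tau_c$. The canonical map $j\colon A\to\mathcal{A}$ is the Gelfand transform $a\mapsto\hat{a}$ because its composite with each projection $\mathcal{A}\to\mathcal{A}_q=\Cont(K_q)$ is $a\mapsto\hat{a}|_{K_q}$.
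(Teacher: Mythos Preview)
Your proof is correct and follows essentially the same approach as the paper: both identify each \(\mathcal{A}_q\) with \(\Cont(K)\) for a compact \(K\subseteq\hat{A}\) via Gelfand theory (you invoke Stone--Weierstrass explicitly, the paper cites Gelfand--Naimark), and both recover \(\mathcal{A}\cong\Cont(\hat{A},\tau_c)\) by recognising the projective limit as functions continuous on every compact subset. Your treatment of the bijection and the order-preservation is somewhat more explicit than the paper's, which focuses more on the cofinality needed for the inductive-limit topology, but the substance is the same.
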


\begin{proof}
  Let~\(q\) be a \Cstar\nb-seminorm on~\(A\).  Let
  \(\hat{A}_q\subseteq \hat{A}\) be the subspace of all
  \(q\)\nb-bounded characters, that is, \(\chi\in\hat{A}_q\) if and
  only if \(\abs{\chi(a)} \le q(a)\) for all \(a\in A\).  These are
  precisely the characters that extend to characters on the
  \Cstar\nb-completion~\(\mathcal{A}_q\).  Conversely, since~\(A\) is
  dense in~\(\mathcal{A}_q\), any character on~\(\mathcal{A}_q\) is
  the unique continuous extension of a \(q\)\nb-bounded character
  on~\(A\).  And the subspace topology on \(\hat{A}_q\subseteq
  \hat{A}\) is equal to the canonical topology on the spectrum
  of~\(\mathcal{A}_q\): a net of \(q\)\nb-bounded characters that
  converges uniformly on~\(A\) also converges uniformly
  on~\(\mathcal{A}_q\).  Thus
  \[
  \mathcal{A}_q \cong \Cont(\hat{A}_q)
  \]
  by the Gelfand--Naimark Theorem, and so \(\hat{A}_q\subseteq
  \hat{A}\) is compact for each \(q\in\mathcal{N}(A)\).

  If \(q\le q'\), then \(\hat{A}_q\subseteq \hat{A}_{q'}\) and
  \(\varphi_{q q'}\colon \mathcal{A}_{q'} \prto \mathcal{A}_q\) is the
  restriction map for the subspace \(\hat{A}_q\subseteq
  \hat{A}_{q'}\).  The pro-\Cstar\nb-algebra~\(\mathcal{A}\) is the
  limit of this diagram of commutative \Cstar\nb-algebras.  Since all
  the maps \(\hat{A}_q\subseteq \hat{A}_{q'}\) are injective,
  \(\mathcal{A}\)~is
  the algebra of continuous functions on
  \(\bigcup_{q\in\mathcal{N}(A)} \hat{A}_q\) with the
  inductive limit topology.  That is, a subset of
  \(\bigcup_{q\in\mathcal{N}(A)} \hat{A}_q\) is closed if and only if
  its intersection with each~\(\hat{A}_q\) is closed,
  where~\(\hat{A}_q\) carries the (compact) subspace topology
  from~\(\hat{A}\).

  Any character~\(\chi\) on~\(A\) is bounded with respect to some
  \Cstar\nb-seminorm; for instance, \(\norm{a}_\chi\defeq
  \abs{\chi(a)}\).  Thus \(\bigcup_{q\in\mathcal{N}(A)} \hat{A}_q =
  \hat{A}\) as a set.  If \(K\subseteq \hat{A}\) is compact, then
  \(\hat{a}\in\Cont(\hat{A})\) for \(a\in A\) must be uniformly
  bounded on~\(K\), so that
  \[
  \norm{a}_K \defeq \sup  \{\abs{\hat{a}(\chi)} \mid \chi\in K\}
  \]
  is a \Cstar\nb-seminorm on~\(A\).  Thus \(K\subseteq \hat{A}_q\)
  for some \(q\in\mathcal{N}(A)\).  Hence the inductive limit
  topology on \(\bigcup_{q\in\mathcal{N}(A)} \hat{A}_q\)
  is~\(\tau_c\).
\end{proof}

We return to the general noncommutative case.
The class of \(q\)\nb-bounded representations for a fixed
\(q\in\mathcal{N}(A)\) is easily seen to be weakly admissible.  The
class of bounded representations with variable~\(q\) is not weakly
admissible unless~\(A\) has a largest \Cstar\nb-seminorm because it is
not closed under direct sums.  We are going to define the larger class
of ``locally bounded'' representations to rectify this.  Roughly
speaking, a representation is locally bounded if and only if it comes
from a representation of the pro-\Cstar\nb-algebra~\(\mathcal{A}\).
Before we define locally bounded representations, we characterise
\(q\)\nb-bounded representations by some slightly weaker estimates.

\begin{proposition}
  \label{pro:bounded_element_fixed_seminorm}
  Let~\(A\) be a \Star{}algebra and let~\(q\) be a
  \Cstar\nb-seminorm on~\(A\).  Let~\((\Hilms,\pi)\) be a
  representation of~\(A\) on a Hilbert module~\(\Hilm\) over some
  \Cstar\nb-algebra~\(D\) and let \(\xi\in\Hilms\).  The following
  are equivalent:
  \begin{enumerate}
  \item there is \(C>0\) with \(\norm{\braket{\xi}{\pi(a)\xi}} \le C
    q(a)\) for all \(a\in A\);
  \item there is \(C>0\) with \(\norm{\pi(a)\xi} \le C q(a)\) for all
    \(a\in A\);
  \item \(\norm{\pi(a)\xi} \le \norm{\xi} q(a)\) for all \(a\in A\).
  \end{enumerate}
  The set of vectors~\(\xi\) with these equivalent properties is a
  norm-closed \(A,D\)-submodule of~\(\Hilm\).  The representation
  of~\(A\) on this submodule extends to the
  \Cstar\nb-completion~\(\mathcal{A}_q\).
\end{proposition}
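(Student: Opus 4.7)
The plan is to prove the cycle $(3) \Rightarrow (2) \Rightarrow (1) \Rightarrow (2) \Rightarrow (3)$, where only the final reduction of the constant is non-formal. First I would observe that (3) gives (2) with $C=\norm{\xi}$, and that (2) yields (1) via the Hilbert module Cauchy--Schwarz inequality $\norm{\braket{\xi}{\pi(a)\xi}} \le \norm{\xi}\,\norm{\pi(a)\xi}$. For (1) $\Rightarrow$ (2), I would apply (1) to $a^*a$ and invoke the C*-identity $\norm{\pi(a)\xi}^2 = \norm{\braket{\xi}{\pi(a^*a)\xi}}$, obtaining (2) with constant $\sqrt{C}$.

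The main work is to sharpen the constant in (2) all the way down to $\norm{\xi}$. I would combine Cauchy--Schwarz with (2) applied at $a^*a$ to get
\begin{equation*}
  \norm{\pi(a)\xi}^2 = \norm{\braket{\xi}{\pi(a^*a)\xi}} \le \norm{\xi}\,\norm{\pi(a^*a)\xi} \le \norm{\xi} C_0\, q(a)^2,
\end{equation*}
i.e.\ (2) with improved constant $C_1 = \sqrt{\norm{\xi} C_0}$. Iterating the map $C \mapsto \sqrt{\norm{\xi} C}$, whose unique positive fixed point is $\norm{\xi}$, yields a sequence of constants converging to $\norm{\xi}$, so (3) follows in the limit. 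This bootstrap is the only genuinely nontrivial step; everything else is bookkeeping.

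Write $\Hilm_q$ for the set of vectors satisfying these equivalent conditions. Additivity, the right $D$-action, and the left $\pi(A)$-action can each be verified at the level of (2) with the obvious constants ($\norm{\xi_1}+\norm{\xi_2}$, $\norm{\xi}\norm{d}$, and $\norm{\xi}q(b)$ respectively), after which the equivalence promotes each back to (3). For norm-closedness inside $\Hilm$: if $\xi_n \in \Hilm_q$ converges in norm to some $\xi' \in \Hilm$, then (3) gives $\norm{\pi(a)(\xi_n - \xi_m)} \le \norm{\xi_n-\xi_m}\, q(a)$, so $(\xi_n)$ is Cauchy in every graph norm; closedness of $(\Hilms,\pi)$ then forces $\xi' \in \Hilms$ with $\pi(a)\xi_n \to \pi(a)\xi'$, and (3) passes to the limit.

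Finally, the estimate (3) shows that $a \mapsto \pi(a)|_{\Hilm_q}$ is $q$-contractive as a unital \Star{}homomorphism $A \to \Bound(\Hilm_q)$, so it factors through the completion $\mathcal{A}_q$ by density.
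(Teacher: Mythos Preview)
Your proof is correct. The principal difference from the paper lies in the passage from the weak estimate to~(3). The paper goes directly from~(1) to~(3) in one step: it picks a sequence \((b_n)\) in~\(A\) converging in~\(\mathcal{A}_q\) to \(\sqrt{q(a)^2 - a^*a}\), so that \(a^*a + b_n^*b_n \to q(a)^2\cdot 1\) in the \(q\)\nb-seminorm; condition~(1) then forces \(\braket{\xi}{\pi(a^*a+b_n^*b_n)\xi}\to q(a)^2\braket{\xi}{\xi}\), and the inequality \(\braket{\pi(a)\xi}{\pi(a)\xi}\le \braket{\xi}{\pi(a^*a+b_n^*b_n)\xi}\) yields~(3) immediately. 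Your bootstrap \(C\mapsto\sqrt{\norm{\xi}\,C}\) achieves the same sharpening without ever leaving~\(A\) or invoking the functional calculus in~\(\mathcal{A}_q\); it is the familiar ``\(2^n\)th-root'' trick and is more self-contained, at the cost of an iteration rather than a single estimate. For norm-closedness of~\(\Hilm_q\), the paper cites the Principle of Uniform Boundedness, whereas you use the standing hypothesis that \((\Hilms,\pi)\) is closed to pass Cauchy sequences through the graph topology; both are valid, and your route is more explicit.
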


\begin{proof}
  The implications (3)\(\Rightarrow\)(2)\(\Rightarrow\)(1) are
  trivial.  Conversely, assume~(1) and let \(a\in A\).  Let
  \((b_n)_{n\in\N}\) be a sequence in~\(A\) that converges
  in~\(\mathcal{A}_q\) towards the positive square-root of \(q(a)^2 -
  a^*a\).  Then the sequence \((a^* a + b_n^* b_n)\) in~\(A\)
  converges in the norm~\(q\) to \(q(a)^2\in A\).  If~(1) holds, then
  \[
  \lim_{n\to\infty} {}\braket{\xi}{\pi(a^* a+b_n^* b_n)\xi}
  = q(a)^2 \braket{\xi}{\xi}.
  \]
  Since \(0 \le \braket{\pi(a)\xi}{\pi(a)\xi} \le
  \braket{\pi(a)\xi}{\pi(a)\xi} + \braket{\pi(b_n)\xi}{\pi(b_n)\xi} =
  \braket{\xi}{\pi(a^* a + b_n^* b_n)\xi}\) for all~\(n\), this
  implies \(\norm{\pi(a)\xi} \le \lim \norm{\braket{\xi}{\pi(a^* a +
      b_n^* b_n)\xi}} = q(a)^2 \norm{\xi}^2\).  Thus~(1) implies~(3).

  The set~\(\Hilm_q\) of vectors \(\xi\in\Hilms\) satisfying~(2) is a
  vector subspace and closed under left multiplication by elements
  of~\(A\) and right multiplication by elements of~\(D\).  On this
  subspace, the graph and norm topologies coincide because of~(3).
  The subspace~\(\Hilm_q\) is closed in the norm topology by the
  Principle of Uniform Boundedness.  The \Star{}representation
  of~\(A\) on this submodule is globally defined and bounded by the
  \Cstar\nb-seminorm~\(q\).  Hence it extends to a representation
  of~\(\mathcal{A}_q\).
\end{proof}

\begin{definition}
  \label{def:locally_bounded_representation}
  Let~\((\Hilms,\pi)\) be a representation of~\(A\) on a Hilbert
  module~\(\Hilm\).  A vector \(\xi\in\Hilms\) is \emph{bounded} if it
  satisfies the equivalent conditions in
  Proposition~\ref{pro:bounded_element_fixed_seminorm} for some
  \(q\in\mathcal{N}(A)\).  The
  representation is \emph{locally bounded} if the bounded vectors are
  dense in~\(\Hilms\) in the graph topology.
\end{definition}

By Proposition~\ref{pro:bounded_element_fixed_seminorm}, the
\(q\)\nb-bounded vectors in~\(\Hilms\) for a fixed \(q\in
\mathcal{N}(A)\) form a
closed \(A,D\)\nb-submodule \(\Hilm_q\subseteq\Hilm\), on which the
representation of~\(A\) extends to the
\Cstar\nb-completion~\(\mathcal{A}_q\) and hence to a representation
of~\(\mathcal{A}\).  Since~\(\mathcal{N}(A)\) is directed and
\(\Hilm_q \subseteq \Hilm_{q'}\) if \(q\le q'\), the
family of sub-bimodules \(\Hilm_q\subseteq \Hilms\) is directed.
The set of bounded vectors is the increasing union
\[
\Hilms_\mathrm{b} \defeq \bigcup_{q\in\mathcal{N}(A)} \Hilm_q.
\]
Since \(\pi|_{\Hilm_q}\) extends to~\(\mathcal{A}\) for each~\(q\),
there is a representation~\(\bar\pi\) of the
pro-\Cstar\nb-algebra~\(\mathcal{A}\) on
\(\Hilms_\mathrm{b}\subseteq \Hilm\).  The
representation~\((\Hilms,\pi)\) is locally bounded if and only if
\((\Hilms_\mathrm{b},\bar\pi\circ j)\) is a core for it.
Thus~\((\Hilms,\pi)\) is the closure of the ``restriction''
\(\bar\pi\circ j\) of~\(\bar\pi\) to~\(A\).

We do not claim that~\(\bar\pi\) is closed, and neither do we claim
that \(\bar\pi\circ j\) is locally bounded for any representation
of~\(\mathcal{A}\): we need the representation of~\(\mathcal{A}\) to
be locally bounded as well:

\begin{definition}
  \label{def:pro_locally_bounded}
  A representation \((\pi,\Hilms)\) of a
  pro-\Cstar\nb-algebra~\(\mathcal{A}\) is \emph{locally bounded} if
  the vectors \(\xi\in\Hilms\) for which \(\mathcal{A}\to\Hilm\),
  \(a\mapsto \pi(a)\xi\), is continuous form a core.
\end{definition}

\begin{proposition}
  \label{pro:locally_bounded_pro-Cstar}
  Composition with \(j\colon A\to \mathcal{A}\) induces an
  equivalence between the categories of locally bounded
  representations of \(A\) and~\(\mathcal{A}\) which is compatible
  with isometric intertwiners and interior tensor products.
\end{proposition}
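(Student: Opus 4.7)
The plan is to construct mutually inverse functors and verify the required compatibilities. Composition with~$j$ sends a locally bounded representation $(\Hilms',\bar\pi)$ of~$\mathcal{A}$ to $(\Hilms',\bar\pi\circ j)$. In the other direction, given a locally bounded representation $(\Hilms,\pi)$ of~$A$, I form the bounded subspace $\Hilms_\mathrm{b}=\bigcup_{q\in\mathcal{N}(A)}\Hilm_q$, assemble the extensions $\mathcal{A}_q\to\Endo_D(\Hilm_q)$ from Proposition~\ref{pro:bounded_element_fixed_seminorm} into a representation~$\bar\pi$ of~$\mathcal{A}$ on~$\Hilms_\mathrm{b}$, and close it in~$\Hilm$.

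The two key observations are: (i)~every $\xi\in\Hilm_q\subseteq\Hilms_\mathrm{b}$ is a continuous vector for~$\bar\pi$, since $\norm{\bar\pi(a)\xi}\le q(a)\norm{\xi}$ and the seminorm~$q$ is continuous on~$\mathcal{A}$ by construction of its projective-limit topology; (ii)~every continuous vector for a representation of~$\mathcal{A}$ is a bounded vector for the pullback along~$j$, because continuity of $a\mapsto\bar\pi(a)\xi$ at the origin in the pro-\Cstar topology yields an estimate $\norm{\bar\pi(a)\xi}\le C\,q(a)$ for some $q\in\mathcal{N}(A)$ and some $C>0$. Together, (i) and~(ii) show that the subspace of continuous vectors for~$\bar\pi$ coincides with the subspace of bounded vectors for~$\pi$; in particular each construction produces a locally bounded representation of the other kind, and both round trips are the closure of a common representation on a common core, hence naturally the identity.

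For compatibility with isometric intertwiners, an isometric intertwiner $I\colon\Hilm_1\injto\Hilm_2$ between representations of~$A$ satisfies $\norm{\pi_2(a)I\xi}=\norm{\pi_1(a)\xi}$, so it carries $q$\nb-bounded vectors to $q$\nb-bounded vectors with the same constant; the induced map on~$\Hilm_q$ therefore extends to an intertwiner of the $\mathcal{A}_q$\nb-representations and hence of the $\mathcal{A}$\nb-representations. The converse implication is immediate since $j(A)\subseteq\mathcal{A}$. For interior tensor products, a simple tensor $\xi\otimes\eta$ with $\xi\in\Hilm_q$, $\eta\in\Hilm[F]$ satisfies $\norm{(\bar\pi\otimes1)(a)(\xi\otimes\eta)}\le q(a)\norm{\xi}\norm{\eta}$, so bounded vectors tensor to bounded vectors for the same seminorm, and local boundedness is preserved under interior tensor products on both sides in compatible fashion. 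The main obstacle is observation~(ii): one has to use the strict projective-limit structure of~$\mathcal{A}$ (surjective structure maps $\varphi_{q,q'}$) together with density of~$j(A)$ to show that topological continuity of $a\mapsto\bar\pi(a)\xi$ on~$\mathcal{A}$ is detected by a single seminorm $q\in\mathcal{N}(A)$, which is precisely what is needed to translate the continuity condition on~$\mathcal{A}$ into the bounded-vector condition on~$A$.
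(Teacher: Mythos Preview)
Your proposal is correct and follows the same approach as the paper, which compresses your observations (i) and~(ii) into the single remark that~$j$ induces a bijection between $\mathcal{N}(A)$ and the continuous \Cstar\nb-seminorms on~$\mathcal{A}$, so that $q$\nb-boundedness for~$A$ and for~$\mathcal{A}$ agree by density of~$j(A)$. Your final paragraph overstates the difficulty in~(ii): since the topology on~$\mathcal{A}$ is by definition generated by the directed family~$\mathcal{N}(A)$, continuity of a linear map $\mathcal{A}\to\Hilm$ is automatically boundedness with respect to one such seminorm---no appeal to surjectivity of the maps~$\varphi_{q,q'}$ is needed.
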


\begin{proof}
  The \Star{}homomorphism~\(j\)
  induces an isomorphism between the directed sets of
  \Cstar\nb-seminorms on \(A\)
  and~\(\mathcal{A}\).
  Therefore, a representation~\(\bar\pi\)
  of~\(\mathcal{A}\)
  is locally bounded if and only if the vectors~\(\xi\)
  with \(\norm{\bar\pi(a)\xi}\le q(a)\norm{\xi}\)
  for all \(a\in \mathcal{A}\),
  for some \(q\in\mathcal{N}(A)\),
  form a core.  Since \(j(A)\)
  is dense in~\(\mathcal{A}\),
  this is equivalent to \(\norm{\pi(a)\xi}\le q(a)\norm{\xi}\)
  for all \(a\in A\).
  Thus the closure of \(\bar\pi\circ j\)
  is locally bounded if and only if~\(\bar\pi\) is.

  An isometric intertwiner \(\bar\pi_1 \injto \bar\pi_2\) also
  intertwines the closures of \(\bar\pi_1\circ j\) and
  \(\bar\pi_2\circ j\) by Lemma~\ref{lem:closure_functorial}.
  Conversely, an isometric intertwiner between two locally bounded
  representations of~\(A\) must map \(q\)\nb-bounded vectors to
  \(q\)\nb-bounded vectors for any \(q\in\mathcal{N}(A)\).  Thus it
  remains an isometric intertwiner between the canonical extensions
  of the representations to~\(\mathcal{A}\).  So the equivalence
  between the locally bounded representations of \(A\)
  and~\(\mathcal{A}\) is compatible with isometric intertwiners.  It
  is also compatible with interior tensor products, that is, the
  closure of \((\bar\pi\otimes_D 1_{\Hilm[G]})\circ j\) is
  \(\cl{\bar\pi\circ j}\otimes_D 1_{\Hilm[G]}\).
\end{proof}

\begin{proposition}
  \label{pro:locally_bounded_irreducible}
  All irreducible, locally bounded Hilbert space representations are
  bounded.
\end{proposition}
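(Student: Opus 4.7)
The plan is to exploit the fact that the subspaces of $q$\nb-bounded vectors defined in Proposition~\ref{pro:bounded_element_fixed_seminorm} are automatically norm-closed, $A$\nb-invariant subspaces of~$\Hilm[H]$, so that irreducibility forces each of them to be either $\{0\}$ or all of~$\Hilm[H]$.

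Let $(\Hilms,\pi)$ be an irreducible, locally bounded representation on a Hilbert space~$\Hilm[H]$; without loss of generality $\Hilm[H]\neq 0$. For each $q\in\mathcal{N}(A)$ let $\Hilm_q\subseteq \Hilms$ be the set of $q$\nb-bounded vectors as in Definition~\ref{def:locally_bounded_representation}. By the last part of Proposition~\ref{pro:bounded_element_fixed_seminorm}, $\Hilm_q$ is norm-closed in~$\Hilm[H]$, invariant under $\pi(A)$, and the representation of~$A$ restricted to $\Hilm_q$ extends to a bounded \Star{}representation of the $\Cst$\nb-completion $\mathcal{A}_q$ on~$\Hilm_q$. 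In particular, $\Hilm_q$ carries a closed subrepresentation of~$(\Hilms,\pi)$ in the sense of Proposition~\ref{pro:Cstar-generated_by_unbounded_multipliers}\ref{enum:Cstar-gen2}, on the closed subspace $\Hilm_q\subseteq\Hilm[H]$.

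By irreducibility, each $\Hilm_q$ is either $\{0\}$ or~$\Hilm[H]$. Local boundedness says that $\Hilms_{\mathrm{b}} = \bigcup_{q\in\mathcal{N}(A)} \Hilm_q$ is dense in~$\Hilms$ in the graph topology, hence also in the norm topology, and $\Hilms$ is norm-dense in~$\Hilm[H]$. Since $\Hilm[H]\neq 0$, we cannot have $\Hilm_q=\{0\}$ for every $q\in\mathcal{N}(A)$. Therefore there exists $q_0\in\mathcal{N}(A)$ with $\Hilm_{q_0}=\Hilm[H]$, meaning that every vector of~$\Hilms$ is $q_0$\nb-bounded. Then $\pi$ extends to a bounded \Star{}homomorphism $\mathcal{A}_{q_0}\to\Bound(\Hilm[H])$, so $\pi$ is a bounded representation of~$A$.

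There is no real obstacle; the only thing one has to be careful about is to recognise that $\Hilm_q$ is a genuine closed $A$\nb-invariant subspace of~$\Hilm[H]$ to which irreducibility applies. This is exactly the content of Proposition~\ref{pro:bounded_element_fixed_seminorm}.
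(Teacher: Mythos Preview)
Your argument is correct and is exactly the approach the paper takes: the closed $A$\nb-invariant subspaces~$\Hilm_q$ must be $\{0\}$ or~$\Hilm[H]$ by irreducibility, and local boundedness forces some~$\Hilm_q$ to be nonzero. The reference to Proposition~\ref{pro:Cstar-generated_by_unbounded_multipliers}\ref{enum:Cstar-gen2} is unnecessary and slightly misplaced (that statement concerns $B$\nb-integrable representations, not the general notion of subrepresentation you need here), but this does not affect the argument.
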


\begin{proof}
  If~\(\pi\) is irreducible, then the closed
  \(A\)\nb-submodule~\(\Hilm_q\) for a \Cstar\nb-seminorm~\(q\) is
  either \(\{0\}\) or~\(\Hilm\).  The latter must happen for
  some~\(q\) if~\(\pi\) is locally bounded.
\end{proof}

Thus local boundedness is not an interesting notion for irreducible
representations.

If~\(A\) has no \Cstar\nb-seminorms, then \(\mathcal{A}=\{0\}\)
and~\(A\) has no locally bounded representations, so that the
following discussion will be empty.  Even if the map \(j\colon
A\to\mathcal{A}\) is not injective, there are examples where all
``integrable'' representations of~\(A\) come from~\(\mathcal{A}\).  An
important case is the unit fibre for the canonical \(\Z\)\nb-grading
on the Virasoro algebra studied in
\cite{Savchuk-Schmudgen:Unbounded_induced}*{§9.3}.  In this case,
\(A\) is not commutative, but all irreducible, integrable
representations are characters and hence locally bounded.

\begin{proposition}
  \label{pro:locally_bounded_regular}
  If~\(\pi\) is a locally bounded representation,
  then~\(\cl{\pi(a)}\) is regular and self-adjoint for each \(a\in
  A_h\).
\end{proposition}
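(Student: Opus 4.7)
My plan is to reduce regularity and self-adjointness of $\cl{\pi(a)}$ to the two density conditions recorded in Example~\ref{exa:regularity_condition}, namely that $(\pi(a)+\ima)(\Hilms)$ and $(\pi(a)-\ima)(\Hilms)$ are both norm-dense in~\(\Hilm\), and then to verify these by restricting the representation to the bounded vectors for each \Cstar\nb-seminorm separately.

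First I would fix \(a\in A_h\) and \(q\in\mathcal{N}(A)\). By Proposition~\ref{pro:bounded_element_fixed_seminorm}, the space~\(\Hilm_q\) of \(q\)\nb-bounded vectors is a closed \(A,D\)\nb-submodule of~\(\Hilm\) on which~\(\pi\) extends to a \Star{}representation \(\pi_q\colon \mathcal{A}_q\to\Bound(\Hilm_q)\) of the unital \Cstar\nb-algebra~\(\mathcal{A}_q\).  Since~\(a\) is symmetric, \(j(a)\in\mathcal{A}_q\) is self-adjoint; hence \(\pi_q(j(a))\) is a bounded self-adjoint operator on~\(\Hilm_q\), and \(\pi_q(j(a))\pm\ima\) are invertible with bounded inverse.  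Because \(\pi_q(j(a))\) restricts~\(\pi(a)\) to~\(\Hilm_q\subseteq \Hilms\), this yields \((\pi(a)\pm\ima)(\Hilm_q)=\Hilm_q\) for every \(q\in\mathcal{N}(A)\).

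Taking the union over all \(q\in\mathcal{N}(A)\) gives the inclusion
\[
(\pi(a)\pm\ima)(\Hilms)\supseteq \bigcup_{q\in\mathcal{N}(A)} (\pi(a)\pm\ima)(\Hilm_q)
= \bigcup_{q\in\mathcal{N}(A)} \Hilm_q
= \Hilms_\mathrm{b}.
\]
Local boundedness of~\(\pi\) means, by definition, that~\(\Hilms_\mathrm{b}\) is dense in~\(\Hilms\) in the graph topology.  Since the graph topology is finer than the norm topology inherited from~\(\Hilm\), and \(\Hilms\) is itself norm-dense in~\(\Hilm\), the subspace~\(\Hilms_\mathrm{b}\) is norm-dense in~\(\Hilm\).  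Combining this with the inclusion above shows that both \((\pi(a)+\ima)(\Hilms)\) and \((\pi(a)-\ima)(\Hilms)\) are norm-dense in~\(\Hilm\).  By the characterisation recalled in Example~\ref{exa:regularity_condition}, this is exactly what it means for \(\cl{\pi(a)}\) to be regular and self-adjoint, completing the argument.

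There is no real obstacle here: the only thing one has to take care of is not to confuse the graph topology on~\(\Hilms\) (which is where density of~\(\Hilms_\mathrm{b}\) is given) with the norm topology on~\(\Hilm\) (in which we need density of the ranges of \(\pi(a)\pm\ima\)); the one-line observation that graph convergence implies norm convergence bridges the two.
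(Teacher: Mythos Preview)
Your proof is correct and follows essentially the same approach as the paper's. The paper phrases the argument at the level of the pro-\Cstar\nb-algebra~\(\mathcal{A}\), noting that \(j(a)\pm\ima\) is invertible there and hence \(\bar\pi(j(a))\pm\ima\) maps~\(\Hilms_\mathrm{b}\) onto itself; you unpack this by working with each completion~\(\mathcal{A}_q\) and each submodule~\(\Hilm_q\) separately, which amounts to the same thing.
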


\begin{proof}
  The map of left multiplication by~\(j(a)\pm\ima\) on~\(\mathcal{A}\)
  is invertible because \(j(a)\in \mathcal{A}\) is symmetric
  and~\(\mathcal{A}\) is a pro-\Cstar\nb-algebra.  Therefore,
  \(\bar\pi(j(a)) \pm\ima \subseteq \pi(a) \pm\ima\) has dense range
  on~\(\Hilm\).  Thus~\(\cl{\pi(a)}\) is regular and self-adjoint, see
  \cite{Lance:Hilbert_modules}*{Chapter 10}.
\end{proof}

\begin{corollary}
  \label{cor:locally_bounded_admissible}
  Let~\(A\) be a \Star{}algebra.  The class \(\Rep_\mathrm{b}(A)\)
  of locally bounded representations of~\(A\) is admissible.
\end{corollary}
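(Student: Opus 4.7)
The plan is to verify conditions~\ref{enum:admissible0}--\ref{enum:admissible3} of Definition~\ref{def:integrable_admissible} for~\(\Rep_\mathrm{b}(A)\), exploiting throughout the characterisation in Proposition~\ref{pro:bounded_element_fixed_seminorm}: a vector \(\xi\in\Hilms\) is \(q\)-bounded if and only if \(\norm{\pi(a)\xi}\le\norm{\xi}\,q(a)\) for all \(a\in A\).

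Condition~\ref{enum:admissible0} is immediate because a unitary \Star{}intertwiner \(u\colon (\Hilms_1,\pi_1)\congto (\Hilms_2,\pi_2)\) preserves inner products and hence all graph norms, so it maps bounded vectors bijectively onto bounded vectors and is a homeomorphism of graph topologies.  For condition~\ref{enum:admissible1}, recall from Lemma~\ref{lem:tensor_rep_with_corr} that the image~\(X\) of \(\Hilms\odot\Hilm[F]\) is a graph-dense core for \((\Hilms,\pi)\otimes_{D_1}\Hilm[F]\).  For \(\xi\in\Hilm_q\) and \(\eta\in\Hilm[F]\), the formula~\eqref{eq:interior_tensor} gives
\[
\norm{(\pi(a)\otimes 1)(\xi\otimes\eta)}^2
=\norm{\braket{\eta}{\braket{\pi(a)\xi}{\pi(a)\xi}\eta}}
\le \norm{\eta}^2\norm{\pi(a)\xi}^2
\le (\norm{\xi}\norm{\eta})^2 q(a)^2,
\]
so \(\xi\otimes\eta\) is \(q\)-bounded; the same estimate shows that \(\xi\mapsto\xi\otimes\eta\) is graph-continuous, and hence the image of \(\Hilms_\mathrm{b}\odot\Hilm[F]\) is graph-dense in~\(X\).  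Thus bounded vectors are graph-dense in \((\Hilms,\pi)\otimes_{D_1}\Hilm[F]\).

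For condition~\ref{enum:admissible2}, the summand direction uses that the orthogonal projection onto a direct summand is a \Star{}intertwiner which preserves \(q\)-boundedness and is graph-continuous, so it maps the dense set of bounded vectors in \(\pi\) to a dense set of bounded vectors in the summand.  The direct-sum direction proceeds in two steps: given \(\xi=(\xi_i)\) in the closed domain of \(\pi=\bigoplus_i\pi_i\), the finite truncations converge to~\(\xi\) in the graph topology because \(\sum_i\norm{\pi_i(a)\xi_i}^2<\infty\) for every \(a\in A\); within each finite sum, approximate componentwise by bounded vectors and note that a finite direct sum of \(q_i\)-bounded vectors is \((\max_i q_i)\)-bounded.

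Condition~\ref{enum:admissible3} is the only step that needs more than a direct manipulation of bounded vectors: by Proposition~\ref{pro:locally_bounded_regular}, \(\Rep_\mathrm{b}(A)\subseteq \Rep^{A_h}(A)\), and the latter class is admissible by Theorem~\ref{the:regular_admissible_local-global} since~\(A_h\) is a strong generating set (Example~\ref{exa:strong_generating}).  Hence a locally bounded subrepresentation on a closed subspace \(\Hilm[H]_0\subseteq\Hilm[H]\) of a locally bounded \((\Hilms,\pi)\) is already a direct summand as a closed representation in \(\Rep^{A_h}(A)\), giving \(\pi=\pi_0\oplus\pi_0'\), and the complementary summand~\(\pi_0'\) is itself locally bounded by the summand half of~\ref{enum:admissible2} proved above.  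I expect~\ref{enum:admissible3} to be the main obstacle; the other three conditions reduce to routine manipulations, and the fact that the witnessing seminorm \(q\in\mathcal{N}(A)\) in the definition of local boundedness may depend on the approximating vector causes no real difficulty, since seminorms can always be enlarged inside the directed set \(\mathcal{N}(A)\).
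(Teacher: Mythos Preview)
Your proof is correct and follows essentially the same route as the paper's: routine direct verifications of \ref{enum:admissible0}--\ref{enum:admissible2}, and for \ref{enum:admissible3} the inclusion \(\Rep_\mathrm{b}(A)\subseteq\Rep^{A_h}(A)\) via Proposition~\ref{pro:locally_bounded_regular}, combined with the admissibility of \(\Rep^{A_h}(A)\) from Theorem~\ref{the:regular_admissible_local-global}. Your write-up simply spells out more of the details the paper leaves implicit (the graph-density of \(\Hilms_\mathrm{b}\odot\Hilm[F]\) in~\ref{enum:admissible1}, the two-step approximation for infinite direct sums in~\ref{enum:admissible2}); the extra observation that the complementary summand~\(\pi_0'\) is again locally bounded is not needed for~\ref{enum:admissible3} but does no harm.
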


\begin{proof}
  Being locally bounded is clearly invariant under unitary
  \Star{}intertwiners and direct sums.  It is also invariant under
  direct summands because a \Star{}intertwiner maps bounded vectors to
  bounded vectors.  If \(\xi\in\Hilm\)
  is bounded, then \(\xi\otimes\eta \in \Hilm\otimes_D \Hilm[F]\)
  is bounded for any \Cstar\nb-correspondence~\(\Hilm[F]\).
  Thus a locally bounded representation on~\(\Hilm\)
  induces one on~\(\Hilm\otimes_D \Hilm[F]\).

  Since~\(A_h\) is a strong generating set for~\(A\) by
  Example~\ref{exa:strong_generating}, the class of representations
  for which all \(a\in A_h\) act by a regular and self-adjoint
  operator is admissible by
  Theorem~\ref{the:regular_admissible_local-global}.  This class
  contains the locally bounded representations by
  Proposition~\ref{pro:locally_bounded_regular}.  Hence this subclass
  is also admissible.
\end{proof}

Any pro-\Cstar\nb-algebra~\(\mathcal{A}\) contains a dense unital
\Cstar\nb-subalgebra~\(\mathcal{A}_\mathrm{b}\) of bounded elements,
see \cite{Phillips:Inverse}*{Proposition 1.11}.  For instance,
if~\(A\) is commutative, so that \(\mathcal{A} \cong
\Cont(\hat{A},\tau_c)\) by
Proposition~\ref{pro:commutative_pro-completion}, then
\(\mathcal{A}_\mathrm{b} = \Contb(\hat{A},\tau_c)\) consists of the
\emph{bounded} continuous functions.

Let~\((\Hilms,\pi)\) be a locally bounded representation of~\(A\).
This comes from a locally bounded representation
\((\Hilms_\mathrm{b},\bar\pi)\) of~\(\mathcal{A}\) by
Proposition~\ref{pro:locally_bounded_pro-Cstar}.  The closure of the
restriction of~\(\bar\pi\) to~\(\mathcal{A}_\mathrm{b}\) is a
representation of a unital \Cstar\nb-algebra.  Hence it is a unital
\Star{}homomorphism \(\varrho\colon \mathcal{A}_\mathrm{b}\to
\Bound(\Hilm)\) by Lemma~\ref{lem:Cstar-rep_bounded}.

\begin{proposition}
  \label{pro:equality_locally_bounded}
  Two locally bounded representations \(\pi_1\) and~\(\pi_2\) of~\(A\)
  on a Hilbert module~\(\Hilm\) are equal if and only if they induce
  the same representation of~\(\mathcal{A}_\mathrm{b}\).
\end{proposition}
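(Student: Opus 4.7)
The forward direction is immediate from the construction: the restriction $\varrho_i$ is defined from $\pi_i$ through the canonical passage $\pi_i \leadsto \bar\pi_i \leadsto \varrho_i$, which is natural. The content of the proposition is the reverse implication.

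My plan is to reduce the claim to showing that $\cl{\pi_1(a)}=\cl{\pi_2(a)}$ for every $a\in A_h$. Once that is established, Proposition~\ref{pro:equality_if_closure_equal} applied to the strong generating set $A_h$ (Example~\ref{exa:strong_generating}) forces $\pi_1=\pi_2$. To recover each closed operator $\cl{\pi_i(a)}$ from $\varrho_i$, I will compare Cayley transforms. By Proposition~\ref{pro:locally_bounded_regular}, the operator $T_i\defeq \cl{\pi_i(a)}$ is regular and self-adjoint, so it has a unitary Cayley transform $U_i\defeq (T_i-\ima)(T_i+\ima)^{-1}\in\Bound(\Hilm)$, and $T_i$ is determined by $U_i$.

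Next, since $j(a)\in\mathcal{A}$ is self-adjoint in the pro-\Cstar\nb-algebra $\mathcal{A}$, the element $c(a)\defeq (j(a)-\ima)(j(a)+\ima)^{-1}$ has $q(c(a))\le 1$ for every \Cstar\nb-seminorm $q\in\mathcal{N}(A)$ and hence lies in $\mathcal{A}_\mathrm{b}$. I will then show that $\varrho_i(c(a))=U_i$. The key computation is local: for each $q\in\mathcal{N}(A)$, the restriction of the representation $\bar\pi_i$ of $\mathcal{A}$ to the invariant submodule $\Hilm_q\subseteq\Hilm$ factors through the \Cstar\nb-algebra $\mathcal{A}_q$, and $\varphi_q(j(a))\in\mathcal{A}_q$ is a bounded self-adjoint element. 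The action of $\varphi_q(j(a))$ on $\Hilm_q$ coincides with the restriction of $T_i$ to $\Hilm_q$ (this restriction is a bounded self-adjoint operator). Hence the image of $\varphi_q(c(a))$ equals the \Cstar\nb-algebraic Cayley transform of this bounded operator, which in turn agrees with $U_i|_{\Hilm_q}$ since $\Hilm_q$ is invariant under $U_i$. Thus $\varrho_i(c(a))$ and $U_i$ agree on the dense subspace $\Hilms_\mathrm{b}=\bigcup_{q\in\mathcal{N}(A)}\Hilm_q$, and therefore on all of $\Hilm$ by continuity.

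Once $\varrho_1(c(a))=\varrho_2(c(a))$ is seen to imply $U_1=U_2$ and hence $T_1=T_2$, the reduction step completes the proof. The main technical obstacle is the compatibility of the Cayley transform with the various levels of the projective system—verifying that the bounded self-adjoint operator $\bar\pi_i(\varphi_q(j(a)))|_{\Hilm_q}$ is genuinely the restriction of the (unbounded, regular, self-adjoint) operator $\cl{\pi_i(a)}$, so that Cayley transforming on $\Hilm_q$ gives $U_i|_{\Hilm_q}$. This requires a careful unwinding of the definitions of $\Hilm_q$, $\bar\pi_i$, and the functional calculus for regular self-adjoint operators on Hilbert modules, but once set up it is routine.
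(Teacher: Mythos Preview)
Your proof is correct and follows essentially the same route as the paper: reduce to $\cl{\pi_1(a)}=\cl{\pi_2(a)}$ for $a\in A_h$ via Proposition~\ref{pro:equality_if_closure_equal}, and recover each $\cl{\pi_i(a)}$ from its Cayley transform, which is the image under~$\varrho_i$ of the Cayley transform $c_a\in\mathcal{A}_\mathrm{b}$ of $j(a)$. The paper's proof is terser and simply asserts that $\varrho(c_a)$ equals the Cayley transform of $\cl{\pi_i(a)}$, whereas you spell out the verification on each $\Hilm_q$; this extra care is harmless but not needed for the paper's purposes.
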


\begin{proof}
  Of course, \(\pi_1\) and~\(\pi_2\) induce the same representation
  of~\(\mathcal{A}_\mathrm{b}\) if \(\pi_1=\pi_2\).  Conversely,
  assume that \(\pi_1\) and~\(\pi_2\) induce the same
  representation~\(\varrho\) of~\(\mathcal{A}_\mathrm{b}\).  If \(a\in
  A_h\), then the Cayley transform~\(c_a\) of \(j(a)\in\mathcal{A}\)
  is a unitary element of~\(\mathcal{A}_\mathrm{b}\).  The Cayley
  transforms of \(\cl{\pi_1(a)}\) and \(\cl{\pi_2(a)}\) are both equal
  to~\(\varrho(c_a)\).  Hence \(\cl{\pi_1(a)}=\cl{\pi_2(a)}\).  Since
  this holds for all \(a\in A_h\),
  Proposition~\ref{pro:equality_if_closure_equal} gives \(\pi_1=\pi_2\).
\end{proof}

The \Cstar\nb-algebra~\(\mathcal{A}_\mathrm{b}\) usually has many
representations that do not come from locally bounded representations
of~\(A\).  Hence it is not a \Cstar\nb-hull.  It is, however, a useful
tool to decide when a representation~\(\mu\) of~\(A\) on a
\Cstar\nb-algebra~\(B\) is a weak \Cstar\nb-hull, that is, when~\(A\)
separates the Hilbert space representations of~\(B\):

\begin{proposition}
  \label{pro:criterion_Cstar-hull_locally_bounded}
  Let~\(\mu\) be a locally bounded representation of~\(A\) on a
  \Cstar\nb-algebra~\(B\) and let \(\varrho\colon
  \mathcal{A}_\mathrm{b} \to \Mult(B) = \Bound(B)\) be the associated
  representation of~\(\mathcal{A}_\mathrm{b}\).  The image
  of~\(\varrho\) is dense in~\(\Mult(B)\) in the strict topology if
  and only if~\(B\) is a weak \Cstar\nb-hull for the class of
  \(B\)\nb-integrable representations of~\(A\) defined by~\(\mu\).
\end{proposition}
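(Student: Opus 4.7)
The plan is to translate the weak \Cstar\nb-hull criterion~\ref{enum:Cstar-gen1} of Proposition~\ref{pro:Cstar-generated_by_unbounded_multipliers} into a statement about the representation $\varrho\colon \mathcal{A}_\mathrm{b}\to\Mult(B)$. Local boundedness is preserved under interior tensor products by Corollary~\ref{cor:locally_bounded_admissible}, so every $B$\nb-integrable representation $\mu\otimes_B\sigma$ on a Hilbert space is locally bounded, and unwinding the definitions its associated representation of~$\mathcal{A}_\mathrm{b}$ on~$\Hilm[H]$ equals $\tilde\sigma\circ\varrho$, where $\tilde\sigma\colon \Mult(B)\to\Bound(\Hilm[H])$ denotes the canonical strictly continuous extension of~$\sigma$. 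Proposition~\ref{pro:equality_locally_bounded} then says that $\mu\otimes_B \varrho_1 = \mu\otimes_B \varrho_2$ is equivalent to the agreement of $\tilde\varrho_1$ and~$\tilde\varrho_2$ on~$\varrho(\mathcal{A}_\mathrm{b})$. Thus condition~\ref{enum:Cstar-gen1} becomes the statement that any two strict extensions of nondegenerate representations of~$B$ that coincide on $\varrho(\mathcal{A}_\mathrm{b})$ must coincide on all of~$B$.

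For the forward direction, assume $\varrho(\mathcal{A}_\mathrm{b})$ is strictly dense in~$\Mult(B)$. Each $\tilde\varrho_i$ is strict-to-SOT$^*$ continuous on norm-bounded subsets of~$\Mult(B)$, and the strict-topology version of the Kaplansky density theorem ensures that the unit ball of $\varrho(\mathcal{A}_\mathrm{b})$ is strictly dense in the unit ball of~$\Mult(B)$. Hence every $m\in\Mult(B)$ is the strict limit of a norm-bounded net in $\varrho(\mathcal{A}_\mathrm{b})$, and the agreement of $\tilde\varrho_1$ and~$\tilde\varrho_2$ on $\varrho(\mathcal{A}_\mathrm{b})$ propagates by continuity to all of~$\Mult(B)$, yielding $\varrho_1 = \varrho_2$ on~$B$.

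For the converse, suppose $B$ is a weak \Cstar\nb-hull, fix a faithful nondegenerate representation $\sigma\colon B\to\Bound(\Hilm[H])$, and let $u$ be a unitary in the commutant of $\tilde\sigma(\varrho(\mathcal{A}_\mathrm{b}))$. Define $\sigma^u(b)\defeq u\sigma(b)u^*$; this is a nondegenerate representation of~$B$ on~$\Hilm[H]$ whose strict extension $u\tilde\sigma(\cdot)u^*$ agrees with $\tilde\sigma$ on $\varrho(\mathcal{A}_\mathrm{b})$ by the choice of~$u$. The hull hypothesis then forces $\sigma^u = \sigma$, so $u\in\sigma(B)'$. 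Since a von Neumann algebra is generated by its unitaries, this yields $\tilde\sigma(\varrho(\mathcal{A}_\mathrm{b}))' \subseteq \sigma(B)'$; the reverse inclusion $\sigma(B)' \subseteq \tilde\sigma(\Mult(B))' \subseteq \tilde\sigma(\varrho(\mathcal{A}_\mathrm{b}))'$ is automatic, whence $\tilde\sigma(\varrho(\mathcal{A}_\mathrm{b}))'' = \sigma(B)'' = \tilde\sigma(\Mult(B))''$. Kaplansky density combined with faithfulness of~$\tilde\sigma$ (so that the strict and SOT$^*$-topologies coincide on norm-bounded subsets of~$\Mult(B)$) then gives strict density of $\varrho(\mathcal{A}_\mathrm{b})$ in~$\Mult(B)$. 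The main obstacle will be the careful bookkeeping of this topological interface---specifically the strict-topology Kaplansky density and the identification of the strict topology on bounded subsets of~$\Mult(B)$ with the SOT$^*$-topology inherited from a faithful image---together with the verification via Proposition~\ref{pro:locally_bounded_pro-Cstar} that the representation of $\mathcal{A}_\mathrm{b}$ induced by $\mu\otimes_B\sigma$ is indeed $\tilde\sigma\circ\varrho$.
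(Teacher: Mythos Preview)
Your reduction via Proposition~\ref{pro:equality_locally_bounded} to the statement ``$\varrho(\mathcal{A}_\mathrm{b})$ is strictly dense in $\Mult(B)$ if and only if any two strict extensions of representations of~$B$ that agree on $\varrho(\mathcal{A}_\mathrm{b})$ agree on~$B$'' is exactly the paper's approach: the paper packages this as the separate Proposition~\ref{pro:Cstar-hull_bounded_criterion} and then invokes it. Your forward direction is fine, though you overcomplicate it with Kaplansky: the strict extension $\bar\varrho_i\colon \Mult(B)\to\Bound(\Hilm[H])$ is strict-to-strict continuous \emph{everywhere}, not just on bounded sets, so strict density of $\varrho(\mathcal{A}_\mathrm{b})$ immediately gives $\bar\varrho_1=\bar\varrho_2$.

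The converse direction has a genuine gap. Your parenthetical claim that the strict and SOT$^*$ topologies coincide on norm-bounded subsets of $\Mult(B)$ under an arbitrary faithful $\tilde\sigma$ is false. Take $B=\Cont_0(\R)$ with $\sigma$ the multiplication representation on $L^2(\R)$: a uniformly bounded sequence of continuous bumps of height~$1$ whose supports shrink to~$\{0\}$ converges to~$0$ in SOT$^*$ but not uniformly on compacta, hence not strictly in $\Mult(B)=\Contb(\R)$. (For unital~$B$ the failure is even clearer: strict $=$ norm, which is strictly finer than SOT$^*$.) Thus from $\tilde\sigma(\varrho(\mathcal{A}_\mathrm{b}))''=\sigma(B)''$ you cannot conclude strict density via Kaplansky in $\Bound(\Hilm[H])$.

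The paper closes this gap differently: it shows that restriction to~$B$ identifies the \emph{strictly} continuous linear functionals on~$\Mult(B)$ with~$B^*$ (via Jordan decomposition and extension of GNS-representations), and then applies Hahn--Banach. In the contrapositive, a nonzero strictly continuous functional vanishing on $\varrho(\mathcal{A}_\mathrm{b})$ becomes a normal functional on $B^{**}\cong\sigma(B)''$ (for $\sigma$ the universal representation) witnessing that $\tilde\sigma(\varrho(\mathcal{A}_\mathrm{b}))$ is not $\sigma$-weakly dense there; then the bicommutant theorem produces the unitary $U\in\tilde\sigma(\varrho(\mathcal{A}_\mathrm{b}))'\setminus\sigma(B)'$, and $\varrho_2=U\varrho U^*$ violates condition~\ref{enum:Cstar-gen1}. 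Your unitary-commutant manoeuvre is the same idea run in the other logical direction; to complete it you must replace the incorrect topological claim by this functional-analytic identification (and take $\sigma$ universal, not arbitrary).
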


\begin{proof}
  Combine Proposition~\ref{pro:equality_locally_bounded} and the
  following proposition for \(D=\mathcal{A}_\mathrm{b}\).
\end{proof}

\begin{proposition}
  \label{pro:Cstar-hull_bounded_criterion}
  Let~\(\mu\)
  be a representation of~\(A\)
  on a \Cstar\nb-algebra~\(B\).
  Let~\(D\)
  be a \Cstar\nb-algebra and \(\varphi\colon D\to\Mult(B)\)
  a \Star{}homomorphism.  Assume that two representations
  \(\varrho_1,\varrho_2\)
  of~\(B\)
  on a Hilbert space~\(\Hilm[H]\)
  satisfy
  \(\mu\otimes_{\varrho_1} 1_{\Hilm[H]} = \mu\otimes_{\varrho_2}
  1_{\Hilm[H]}\)
  if and only if
  \(\bar\varrho_1\circ \varphi = \bar\varrho_2 \circ\varphi\),
  where \(\bar\varrho_1\)
  and~\(\bar\varrho_2\)
  denote the unique strictly continuous extensions of
  \(\varrho_1,\varrho_2\)
  to \(\Mult(B)\).
  Then~\(B\)
  is a weak \Cstar\nb-hull for a class of integrable representations
  of~\(A\)
  if and only if \(\varphi(D)\)
  is dense in~\(\Mult(B)\) in the strict topology.
\end{proposition}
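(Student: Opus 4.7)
By Proposition~\ref{pro:Cstar-generated_by_unbounded_multipliers}\ref{enum:Cstar-gen1}, $B$ is a weak \Cstar-hull for the $B$-integrable representations of~$A$ defined by~$\mu$ if and only if the assignment $\varrho\mapsto \mu\otimes_\varrho 1$ is injective on Hilbert space representations of~$B$. The standing hypothesis rewrites $\mu\otimes_{\varrho_1}1 = \mu\otimes_{\varrho_2}1$ as $\bar\varrho_1\circ\varphi = \bar\varrho_2\circ\varphi$, so the proposition reduces to showing that the implication ``$\bar\varrho_1\circ\varphi = \bar\varrho_2\circ\varphi \Rightarrow \varrho_1 = \varrho_2$'' is equivalent to the strict density of $\varphi(D)$ in $\Mult(B)$.

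For the forward direction, I will use two standard inputs from multiplier algebra theory. First, the unique strict extension $\bar\varrho\colon\Mult(B)\to\Bound(\Hilm[H])$ of a nondegenerate representation of~$B$ is strictly-to-strongly continuous on norm-bounded subsets. Second, when $\varphi(D)$ is strictly dense in~$\Mult(B)$, a Kaplansky-type density argument supplies for each $m\in\Mult(B)$ a norm-bounded net $(\varphi(d_i))$ in~$\varphi(D)$ with $\varphi(d_i)\to m$ strictly and $\norm{\varphi(d_i)}\le\norm{m}$. Applying~$\bar\varrho_1$ and~$\bar\varrho_2$ and taking strong limits then yields $\bar\varrho_1(m)=\bar\varrho_2(m)$, hence $\varrho_1=\varrho_2$ on~$B$.

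For the converse, suppose $\varphi(D)$ is not strictly dense, let $C\subsetneq \Mult(B)$ be its strict closure, and fix a faithful nondegenerate representation $\sigma\colon B\to\Bound(\Hilm[H])$. The identification between the bounded strict topology on~$\Mult(B)$ and the strong operator topology on $\bar\sigma(\Mult(B))\subseteq \Bound(\Hilm[H])$ (valid because $\sigma$ is faithful and nondegenerate) yields
\[
\bar\sigma(C) \;=\; \bar\sigma(\Mult(B)) \cap \bar\sigma(\varphi(D))'',
\]
the primes denoting commutants in $\Bound(\Hilm[H])$. The strict containment $\bar\sigma(C)\subsetneq \bar\sigma(\Mult(B))$ therefore forces $\bar\sigma(\Mult(B))\not\subseteq \bar\sigma(\varphi(D))''$, which on taking commutants and using the standard fact $\bar\sigma(\Mult(B))'=\sigma(B)'$ becomes $\bar\sigma(\varphi(D))' \supsetneq \sigma(B)'$. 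Pick a self-adjoint $T\in \bar\sigma(\varphi(D))'\setminus \sigma(B)'$ and set $u\defeq \exp(\ima t T)$ for a sufficiently small $t\neq 0$: the unitary~$u$ commutes with $\bar\sigma(\varphi(D))$, yet $[u,\sigma(b)]\neq 0$ for some $b\in B$ by continuity in~$t$. The representations $\varrho_1\defeq \sigma$ and $\varrho_2\defeq u\sigma u^*$ of~$B$ on~$\Hilm[H]$ are then distinct, while their strict extensions $\bar\varrho_2 = u\bar\varrho_1 u^*$ coincide on $\varphi(D)$, violating condition~\ref{enum:Cstar-gen1}.

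The main obstacle throughout is the topological identification between the bounded strict topology on $\Mult(B)$ and the strong operator topology on $\bar\sigma(\Mult(B))$ for a faithful nondegenerate~$\sigma$: it underpins the Kaplansky-type bounded approximation in the forward direction and the double-commutant computation in the converse, and is the technical crux that requires careful justification from standard multiplier-algebra theory.
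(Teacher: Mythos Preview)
Your forward direction is essentially correct and is in fact a bit more careful than the paper's own argument, which simply invokes strict continuity of~\(\bar\varrho_j\) without discussing boundedness.

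The converse, however, has a genuine gap.  The ``identification between the bounded strict topology on~\(\Mult(B)\) and the strong operator topology on~\(\bar\sigma(\Mult(B))\)'' that you invoke is \emph{false} for an arbitrary faithful nondegenerate~\(\sigma\), and so is the displayed equation \(\bar\sigma(C)=\bar\sigma(\Mult(B))\cap\bar\sigma(\varphi(D))''\).  Here is a concrete counterexample.  Take \(B=\Cont[0,1]\) (unital, so strict\,=\,norm), let \(\varphi(D)=\{f\in\Cont[0,1]:f(0)=f(1)\}\), and let~\(\sigma\) be multiplication on~\(L^2[0,1]\).  Then \(C=\varphi(D)\subsetneq\Mult(B)\).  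But every \(h\in L^1[0,1]\) that annihilates~\(\varphi(D)\) already annihilates all compactly supported functions in~\((0,1)\), hence \(h=0\) almost everywhere; so \(\sigma(\varphi(D))\) is weak\nobreakdash-\(*\) dense in~\(L^\infty[0,1]\) and \(\sigma(\varphi(D))''=L^\infty[0,1]=\sigma(B)''\).  Consequently \(\bar\sigma(\varphi(D))'=L^\infty[0,1]=\sigma(B)'\), and there is no operator~\(T\) (let alone a unitary) in \(\bar\sigma(\varphi(D))'\setminus\sigma(B)'\).  Your construction of \(\varrho_1\neq\varrho_2\) therefore cannot proceed with this~\(\sigma\).

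The paper circumvents this by taking~\(\varrho\) to be the direct sum of all cyclic representations, so that \(\varrho(B)''\cong B^{**}\).  The crucial extra ingredient is that the strictly continuous linear functionals on~\(\Mult(B)\) are exactly the elements of~\(B^*\); this is proved in the paper via extending states through their GNS representations and Jordan decomposition.  Hahn--Banach for the strict topology then produces a nonzero \(\omega\in B^*\) vanishing on~\(\varphi(D)\), which witnesses that~\(\varphi(D)\) is not weakly dense in~\(B^{**}\).  Only then does the bicommutant theorem give \(\bar\varrho(\varphi(D))'\supsetneq\varrho(B)'\), and a unitary in the difference furnishes the required \(\varrho_2=U\varrho U^*\neq\varrho\).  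In the counterexample above, the point evaluations \(\delta_0,\delta_1\in B^*\) live in the universal representation but are invisible in~\(L^2[0,1]\); this is precisely why your choice of an arbitrary faithful~\(\sigma\) is insufficient.
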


\begin{proof}
  We use the criterion for weak \Cstar\nb-hulls
  in~\ref{enum:Cstar-gen1} in
  Proposition~\ref{pro:Cstar-generated_by_unbounded_multipliers}.
  Assume first that \(\varphi(D)\)
  is strictly dense in~\(\Mult(B)\).
  Let \(\varrho_1,\varrho_2\)
  be two Hilbert space representations of~\(B\)
  that satisfy
  \(\mu\otimes_{\varrho_1} 1_{\Hilm[H]} = \mu\otimes_{\varrho_2}
  1_{\Hilm[H]}\).
  Extend \(\varrho_1,\varrho_2\)
  to strictly continuous representations
  \(\bar\varrho_1,\bar\varrho_2\)
  of~\(\Mult(B)\).
  By assumption,
  \(\bar\varrho_1\circ \varphi = \bar\varrho_2 \circ\varphi\),
  that is, \(\bar\varrho_1\)
  and~\(\bar\varrho_2\)
  are equal on~\(\varphi(D)\).
  Since they are strictly continuous and~\(\varphi(D)\)
  is strictly dense, we get \(\bar\varrho_1 = \bar\varrho_2\)
  and hence \(\varrho_1 = \varrho_2\).
  Thus the condition~\ref{enum:Cstar-gen1} in
  Proposition~\ref{pro:Cstar-generated_by_unbounded_multipliers} is
  satisfied, making~\(B\) a weak \(\Cst\)\nb-hull of~\(A\).

  Conversely, assume that~\(\varphi(D)\)
  is not strictly dense in~\(\Mult(B)\).
  We claim that the image of~\(D\)
  is not weakly dense in the bidual
  \(\mathrm{W}^*\)\nb-algebra~\(B^{**}\).
  Any positive linear functional on~\(B\)
  extends to a strictly continuous, positive linear functional
  on~\(\Mult(B)\)
  by extending its GNS-representation to a strictly continuous
  representation of~\(\Mult(B)\).
  By the Jordan decomposition, the same remains true for self-adjoint
  linear functionals and hence for all bounded linear functionals
  on~\(B\).
  Furthermore, such extensions are unique because~\(B\)
  is strictly dense in~\(\Mult(B)\).
  Hence restriction to~\(B\)
  maps the space of strictly continuous linear functionals
  on~\(\Mult(B)\)
  isomorphically onto the dual space~\(B^*\)
  of~\(B\),
  which is also the space of weakly continuous linear functionals
  on~\(B^{**}\).
  If \(\varphi(D)\)
  is not strictly dense in~\(\Mult(B)\),
  then the Hahn--Banach Theorem gives a non-zero functional in~\(B^*\)
  that vanishes on the image of~\(D\).
  When viewed as a weakly continuous functional on~\(B^{**}\),
  it witnesses that~\(\varphi(D)\) is not weakly dense in~\(B^{**}\).

  Let \(\varrho\colon B\to \Bound(\Hilm[H])\)
  be the direct sum of all cyclic representations of~\(B\).
  Then~\(\varrho\)
  extends to an isomorphism of \(\mathrm{W}^*\)\nb-algebras
  from~\(B^{**}\)
  onto the double commutant~\(\varrho(B)''\)
  of~\(B\)
  in~\(\Bound(\Hilm[H])\).
  The extension of~\(\varrho\)
  to~\(\Mult(B)\)
  restricts to a representation
  \(\bar\varrho\circ\varphi\colon D \to \Bound(\Hilm[H])\).
  Since we assume that the image of~\(D\)
  is not strictly dense in~\(\Mult(B)\),
  our claim shows that \(\bar\varrho\circ\varphi(D)\)
  is not weakly dense in~\(\varrho(B)''\).
  By the bicommutant theorem, this is equivalent to
  \(\bar\varrho\circ\varphi(D)' \neq \varrho(B)'\).

  Since these commutants are \(\Cst\)\nb-algebras,
  they are the linear spans of the unitaries that they contain.  So
  there is a unitary operator~\(U\)
  in \(\bar\varrho\circ\varphi(D)'\)
  that is not contained in~\(\varrho(B)'\).
  So \(\varrho_2\defeq U\varrho U^*\neq \varrho\),
  but \(\bar\varrho_2\circ\varphi = \bar\varrho\circ\varphi\).
  By assumption, the latter implies
  \(\mu \otimes_\varrho 1_{\Hilm[H]} = \mu \otimes_{\varrho_2}
  1_{\Hilm[H]}\).
  So~\(A\)
  fails to separate the representations \(\varrho,\varrho_2\)
  of~\(B\)
  although they are not equal.  Hence~\(B\)
  is not a weak \(\Cst\)\nb-hull of~\(A\).
\end{proof}

\begin{remark}
  \label{rem:criterion_density_weak-Cstar-hull}
  Proposition~\ref{pro:Cstar-hull_bounded_criterion} applies whenever
  we can somehow produce enough bounded operators from a
  representation of~\(A\) so that these bounded operators and the
  original representation have the same unitary \Star{}intertwiners.
  For instance, it applies if the elements of a strong generating
  set for~\(A\) act by regular operators, so that we may take their
  bounded transforms.
\end{remark}

The quotient map \(\mathcal{A}_q \prto \mathcal{A}_{q'}\) for \(q
\ge q'\) in~\(\mathcal{N}(A)\) identifies the primitive ideal space
\(\Prim(\mathcal{A}_{q'})\) with a closed subspace of
\(\Prim(\mathcal{A}_q)\).  Let \(\Prim \mathcal{A} \defeq
\bigcup_{q\in\mathcal{N}(A)} \Prim(\mathcal{A}_q)\).  Let
\(a\in\mathcal{A}\) and
\(\mathfrak{p}\in \Prim(\mathcal{A})\).  Then the
norm~\(\norm{a}_\mathfrak{p}\) of the image of~\(a\) in
\(\mathcal{A}_q/\mathfrak{p}\) is the same for all
\(q\in\mathcal{N}(A)\) with \(\mathfrak{p}\in
\Prim(\mathcal{A}_q)\).  Hence the function \(\mathfrak{p}\mapsto
\norm{a}_\mathfrak{p}\) on~\(\Prim(\mathcal{A})\) is well defined.

\begin{definition}
  \label{def:compact_in_pro-Cstar}
  An element \(a\in\mathcal{A}\) \emph{vanishes at~\(\infty\)} if
  for every \(\varepsilon>0\) there is \(q\in\mathcal{N}(A)\) such
  that \(\norm{a}_\mathfrak{p}<\varepsilon\) for \(\mathfrak{p} \in
  \Prim(\mathcal{A}) \setminus \Prim(\mathcal{A}_q)\).  An element
  \(a\in\mathcal{A}\) is \emph{compactly supported} if there is
  \(q\in\mathcal{N}(A)\) with \(a\in \mathfrak{p}\) for all
  \(\mathfrak{p} \in \Prim(\mathcal{A}) \setminus
  \Prim(\mathcal{A}_q)\).  Let \(\Cont_0(\mathcal{A})\) and
  \(\Contc(\mathcal{A})\) be the subsets of elements that vanish
  at~\(\infty\) and have compact support, respectively.
\end{definition}

It may happen that \(\Cont_0(\mathcal{A})=\{0\}\).  In the
following, we are interested in the case where
\(\Cont_0(\mathcal{A})\) is dense in~\(\mathcal{A}\).  For instance,
\(\Cont_0(\R)\) is dense in~\(\Cont(\R)\).

\begin{lemma}
  \label{lem:compactly_supported_ideal}
  The subset \(\Cont_0(\mathcal{A})\) is a closed ideal
  in~\(\mathcal{A}_\mathrm{b}\).  The subspace
  \(\Contc(\mathcal{A})\) is a two-sided \Star{}ideal
  in~\(\mathcal{A}\).  It is norm-dense in \(\Cont_0(\mathcal{A})\).
  More generally, if~\(D\) is a \Cstar\nb-algebra and
  \(\varphi\colon D\to\mathcal{A}\) is a \Star{}homomorphism, then
  \(\varphi^{-1}(\Contc(\mathcal{A}))\) is dense in
  \(\varphi^{-1}(\Cont_0(\mathcal{A}))\).
\end{lemma}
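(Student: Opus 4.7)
The plan is to verify the four assertions in turn, with the bulk of the work going into the density statements, which I will handle by a single continuous-functional-calculus construction of a cutoff. Throughout I use the standard facts about pro-\Cstar\nb-algebras from~\cite{Phillips:Inverse}: the norm on $\mathcal{A}_\mathrm{b}$ equals $\sup_{\mathfrak{p}\in\Prim(\mathcal{A})}\norm{a}_\mathfrak{p}$, and $\norm{ab}_\mathfrak{p}\le \norm{a}_\mathfrak{p}\norm{b}_\mathfrak{p}$ because $\mathcal{A}_q/\mathfrak{p}$ is a \Cstar\nb-algebra.

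First, I would prove that $\Cont_0(\mathcal{A})\subseteq \mathcal{A}_\mathrm{b}$ is a norm-closed ideal. An element vanishing at~$\infty$ is automatically bounded: fix $q_0$ with $\norm{a}_\mathfrak{p}<1$ off $\Prim(\mathcal{A}_{q_0})$, so $\sup_\mathfrak{p}\norm{a}_\mathfrak{p}\le \max(1,q_0(a))$. The ideal property follows from submultiplicativity of $\norm{\blank}_\mathfrak{p}$, and norm-closedness is a straight $\nicefrac{\varepsilon}{2}$ argument: if $a_n\to a$ in~$\mathcal{A}_\mathrm{b}$, pick~$n$ with $\norm{a-a_n}_{\mathcal{A}_\mathrm{b}}<\nicefrac{\varepsilon}{2}$ and then~$q$ with $\norm{a_n}_\mathfrak{p}<\nicefrac{\varepsilon}{2}$ off $\Prim(\mathcal{A}_q)$. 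The statement that $\Contc(\mathcal{A})$ is a two-sided \Star{}ideal in~$\mathcal{A}$ is immediate from the definition, since the set $\{\mathfrak{p}\in\Prim(\mathcal{A})\mid a\in\mathfrak{p}\}$ is closed under multiplication of~$a$ on either side by an arbitrary element of~$\mathcal{A}$, and $\Contc(\mathcal{A})\subseteq \Cont_0(\mathcal{A})$ trivially.

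The key step is density.  Given $a\in \Cont_0(\mathcal{A})$ and $\delta>0$, define the continuous function $g_\delta\colon [0,\infty)\to[0,1]$ by $g_\delta(t)=0$ for $t\le\delta$, $g_\delta(t)=(t-\delta)/\delta$ for $\delta\le t\le 2\delta$, and $g_\delta(t)=1$ for $t\ge 2\delta$. Since $a^*a\in\mathcal{A}_\mathrm{b}$ is positive, the continuous functional calculus in~$\mathcal{A}_\mathrm{b}$ yields $g_\delta(a^*a)\in\mathcal{A}_\mathrm{b}$. By functoriality of functional calculus under the quotient $\mathcal{A}_q\prto \mathcal{A}_q/\mathfrak{p}$, we have $\norm{g_\delta(a^*a)}_\mathfrak{p}=\sup\{g_\delta(t)\mid t\in\sigma(a^*a|_\mathfrak{p})\}$, which vanishes whenever $\norm{a^*a}_\mathfrak{p}=\norm{a}_\mathfrak{p}^2<\delta$. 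Because $a\in\Cont_0(\mathcal{A})$, this holds off some $\Prim(\mathcal{A}_q)$, so $g_\delta(a^*a)\in\Contc(\mathcal{A})$. Then $a\cdot g_\delta(a^*a)\in\Contc(\mathcal{A})$ by the ideal property established above, and the same functional calculus gives
\[
\norm{a-a\cdot g_\delta(a^*a)}^2
= \norm{a^*a\cdot(1-g_\delta(a^*a))^2}_{\mathcal{A}_\mathrm{b}}
\le \sup_{t\ge 0} t(1-g_\delta(t))^2 \le 2\delta,
\]
which proves density of $\Contc(\mathcal{A})$ in $\Cont_0(\mathcal{A})$.

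The final claim is handled by the same construction, lifted to~$D$. Given $d\in\varphi^{-1}(\Cont_0(\mathcal{A}))$ and $\delta>0$, form $d_\delta\defeq d\cdot g_\delta(d^*d)\in D$ using continuous functional calculus in the \Cstar\nb-algebra~$D$. Since~$\varphi$ is a \Star{}homomorphism, \(\varphi(d_\delta)=\varphi(d)\cdot g_\delta(\varphi(d)^*\varphi(d))\), which lies in $\Contc(\mathcal{A})$ by the argument of the previous paragraph applied to $a\defeq \varphi(d)\in\Cont_0(\mathcal{A})$. The same spectral estimate performed inside~$D$ gives $\norm{d-d_\delta}_D\le\sqrt{2\delta}$, so $d_\delta\to d$ as $\delta\to 0$. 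The only point that requires any care in the whole proof is ensuring that the compactly supported cutoff is produced inside the ambient \Cstar\nb-algebra (either~$\mathcal{A}_\mathrm{b}$ or~$D$) rather than in the pro-\Cstar\nb-algebra~$\mathcal{A}$ itself, since functional calculus in~$\mathcal{A}$ alone would only yield bounded continuous functions and would not give the compactly-supported-spectrum behaviour we need.
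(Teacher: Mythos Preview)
Your proof is correct and follows essentially the same approach as the paper: the same \(\nicefrac{\varepsilon}{2}\) argument for closedness, and the same continuous-functional-calculus cutoff \(g_\delta(a^*a)\) (the paper calls it \(f_\varepsilon\)) to approximate \(a\in\Cont_0(\mathcal{A})\) by \(a\cdot g_\delta(a^*a)\in\Contc(\mathcal{A})\), with the identical lift to~\(D\) for the final claim. Your closing caveat about where to perform the functional calculus is unnecessary, though, since continuous functional calculus for a bounded positive element in a pro-\Cstar\nb-algebra already commutes with every quotient map \(\mathcal{A}\to\mathcal{A}_q/\mathfrak{p}\) and hence yields the needed vanishing-at-\(\mathfrak{p}\) behaviour directly; but this does not affect the correctness of your argument.
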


\begin{proof}
  The quotient maps \(\mathcal{A} \prto \mathcal{A}_q \prto
  \mathcal{A}_q/\mathfrak{p}\) for
  \(\mathfrak{p}\in\Prim(\mathcal{A}_q)\) are \Star{}homomorphisms.
  Thus \(\Cont_0(\mathcal{A})\) is a \Star{}subalgebra
  of~\(\mathcal{A}\).  An element \(a\in \mathcal{A}\) is bounded if
  and only if the norms of its images in~\(\mathcal{A}_q\) for
  \(q\in\mathcal{N}(A)\) are uniformly bounded.  The norm of~\(a\)
  in~\(\mathcal{A}_q\) is the maximum of~\(\norm{a}_\mathfrak{p}\) for
  \(\mathfrak{p}\in\Prim(\mathcal{A}_q)\).  Hence~\(a\) is bounded if
  and only if the function \(\norm{a}_\mathfrak{p}\)
  on~\(\Prim(\mathcal{A})\) is bounded.  Thus \(\Cont_0(\mathcal{A})\)
  consists of bounded elements, and it is an ideal
  in~\(\mathcal{A}_\mathrm{b}\).  We claim that the limit~\(a\) of a
  norm-convergent sequence \((a_n)_{n\in\N}\)
  in~\(\Cont_0(\mathcal{A})\) again vanishes at~\(\infty\).  Given
  \(\varepsilon>0\), there is \(n_0\in\N\) so that
  \(\norm{a-a_n}_\mathfrak{p} \le \norm{a-a_n}<\varepsilon/2\) for all
  \(n\ge n_0\) and all \(\mathfrak{p} \in \Prim(\mathcal{A})\).
  Since~\(a_n\) vanishes at~\(\infty\), there is
  \(q\in\mathcal{N}(A)\) with
  \(\norm{a_n}_\mathfrak{p}<\varepsilon/2\) for \(\mathfrak{p}\notin
  \Prim(\mathcal{A}_q)\).  Thus \(\norm{a}_\mathfrak{p}<\varepsilon\)
  for \(\mathfrak{p}\notin \Prim(\mathcal{A}_q)\).  Thus
  \(\Cont_0(\mathcal{A})\) is a closed ideal
  in~\(\mathcal{A}_\mathrm{b}\).

  The condition \(a\in \mathfrak{p}\) for fixed \(\mathfrak{p}\in
  \Prim(\mathcal{A})\) defines a closed two-sided \Star{}ideal
  in~\(\mathcal{A}\).  Hence~\(\Contc(\mathcal{A})\) is a two-sided
  \Star{}ideal in~\(\mathcal{A}\).  Let \(a\in
  \Cont_0(\mathcal{A})\) and \(\varepsilon>0\).  Let
  \(f_\varepsilon\in\Contb([0,\infty))\) be increasing and satisfy
  \(f_\varepsilon(t)=0\) for \(0\le t<\varepsilon\) and
  \(f_\varepsilon(t)=1\) for \(2\varepsilon\le t\).  Then \(\norm{a
    - a f_\varepsilon(a^* a)} \le 2\varepsilon\), and
  \(f_\varepsilon(a^* a)\in \mathfrak{p}\) if \(\norm{a^*
    a}_{\mathfrak{p}}\le \varepsilon\).  Hence \(a f_\varepsilon(a^*
  a) \in \Contc(\mathcal{A})\) for all \(\varepsilon>0\).  Thus
  \(\Contc(\mathcal{A})\) is dense in~\(\Cont_0(\mathcal{A})\).
  Similarly, if \(\varphi\colon D\to\mathcal{A}\) is a
  \Star{}homomorphism, \(x\in D\), and \(\varphi(x)\in
  \Cont_0(\mathcal{A})\), then \(\varphi(x f_\varepsilon(x^* x)) \in
  \Contc(\mathcal{A})\) and \(\lim_{\varepsilon\to0} x
  f_\varepsilon(x^* x) = x\) in the norm topology on~\(D\).
\end{proof}

\begin{theorem}
  \label{the:Prim_A_locally_compact_gives_hull}
  Let~\(A\) be a \Star{}algebra and let~\(\mathcal{A}\) be its
  pro-\Cstar\nb-algebra completion.  If
  \(\Cont_0(\mathcal{A})\) is dense in~\(\mathcal{A}\), then
  \(\Cont_0(\mathcal{A})\) is a \Cstar\nb-hull for the class of
  locally bounded representations of~\(A\).
\end{theorem}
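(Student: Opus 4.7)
The plan is to apply Proposition~\ref{pro:Cstar-generated_by_unbounded_multipliers} to $B\defeq \Cont_0(\mathcal{A})$ equipped with a suitable universal representation, and to identify the \(B\)\nb-integrable representations with the locally bounded ones.

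First, I would construct the universal representation $(\Hilms[B],\mu)$ on the standard Hilbert \(B\)\nb-module~\(B\). By Lemma~\ref{lem:compactly_supported_ideal}, $\Contc(\mathcal{A})$ is a two-sided \Star{}ideal of~$\mathcal{A}$, so left multiplication by $j(a)$ preserves $\Contc(\mathcal{A})$ for each $a\in A$. This defines a \Star{}representation of~$A$ on the dense right ideal $\Contc(\mathcal{A})\subseteq B$; let $(\Hilms[B],\mu)$ be its closure. If $b\in\Contc(\mathcal{A})$ has support in $\Prim(\mathcal{A}_q)$, then $j(a)b$ has the same support and $\norm{\mu(a)b}_B\le q(a)\norm{b}_B$ for every $a\in A$; hence $b$ is a bounded vector and $(\Hilms[B],\mu)$ is locally bounded.

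The main step is to show that the representations of the form $\pi'=(\Hilms[B],\mu)\otimes_\varrho\Hilm$ for some $\varrho\colon B\to \Bound(\Hilm)$ are precisely the locally bounded representations of~$A$. The ``if'' direction is immediate because $\varrho(\Contc(\mathcal{A}))\Hilm$ consists of bounded vectors and is a core for $\pi'$. For the converse, let $(\Hilms,\pi)$ be a locally bounded representation of~$A$ on a Hilbert \(D\)\nb-module~$\Hilm$. By Proposition~\ref{pro:locally_bounded_pro-Cstar} it extends to a locally bounded representation $\bar\pi$ of the pro-\Cstar\nb-algebra~$\mathcal{A}$; restricting $\bar\pi$ to~$\mathcal{A}_\mathrm{b}$ and closing yields a \Star{}homomorphism $\tilde\pi\colon \mathcal{A}_\mathrm{b}\to \Bound(\Hilm)$ by Lemma~\ref{lem:Cstar-rep_bounded}. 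Set $\varrho\defeq \tilde\pi|_B$. The hypothesis that $B$ is dense in~$\mathcal{A}$ provides, for each bounded vector~$\xi$ controlled by a seminorm~$q$, elements $b_n\in B$ with $b_n\to 1$ in~$q$, so that $\varrho(b_n)\xi\to \xi$; hence $\varrho$ is nondegenerate. The same density argument shows that $B$ is an essential ideal of~$\mathcal{A}_\mathrm{b}$, so $\tilde\pi$ is the unique strictly continuous extension of~$\varrho$ through the embedding $\mathcal{A}_\mathrm{b}\hookrightarrow\Mult(B)$. The representation $\pi'\defeq (\Hilms[B],\mu)\otimes_\varrho\Hilm$ is locally bounded, and its associated \Star{}representation of~$\mathcal{A}_\mathrm{b}$ also restricts to~$\varrho$ on~$B$, hence equals $\tilde\pi$; Proposition~\ref{pro:equality_locally_bounded} then gives $\pi=\pi'$.

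Finally, I would verify the conditions of Proposition~\ref{pro:Cstar-generated_by_unbounded_multipliers}. Condition~\ref{enum:Cstar-gen1} follows at once from the previous step and Proposition~\ref{pro:equality_locally_bounded}: two representations $\varrho_1,\varrho_2$ of~$B$ on the same Hilbert space that induce the same locally bounded representation of~$A$ produce the same~$\tilde\pi$ on~$\mathcal{A}_\mathrm{b}$ and so agree on~$B$. Condition~\ref{enum:Cstar-gen2''} is built into admissibility and is provided by Corollary~\ref{cor:locally_bounded_admissible}. The main obstacle throughout is the bookkeeping at the interface $B\subseteq \mathcal{A}_\mathrm{b}\subseteq \mathcal{A}$: one must use that $B$ is dense in~$\mathcal{A}$ (and not merely in~$\mathcal{A}_\mathrm{b}$, which is weaker) both to produce approximate units for~$\varrho$ inside every seminorm level and to recover~$\tilde\pi$ uniquely on~$\mathcal{A}_\mathrm{b}$ from its restriction to the essential ideal~$B$.
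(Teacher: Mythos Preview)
Your proposal is correct and follows essentially the same route as the paper: construct the universal representation of~\(A\) on \(B=\Cont_0(\mathcal{A})\) via left multiplication with core~\(\Contc(\mathcal{A})\), identify the \(B\)-integrable representations with the locally bounded ones by passing through the pro-\(\Cst\)-algebra~\(\mathcal{A}\) and~\(\mathcal{A}_\mathrm{b}\), and upgrade from a weak to a full \(\Cst\)-hull via Corollary~\ref{cor:locally_bounded_admissible}.

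The only organisational difference is in how the weak-hull condition~\ref{enum:Cstar-gen1} is verified. The paper proves the theorem as the special case \(\mathcal{K}=\mathcal{A}\) of the more general Theorem~\ref{the:locally_bounded_relative_hull} and invokes Proposition~\ref{pro:criterion_Cstar-hull_locally_bounded}, which establishes strict density of \(\mathcal{A}_\mathrm{b}\) in~\(\Mult(B)\). You instead observe directly that \(B\) is an essential ideal in~\(\mathcal{A}_\mathrm{b}\) (because \(B\) is dense in~\(\mathcal{A}\)), so that a nondegenerate representation of~\(B\) determines its extension to~\(\mathcal{A}_\mathrm{b}\) uniquely, and then appeal to Proposition~\ref{pro:equality_locally_bounded}. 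These two arguments are close cousins; yours is marginally more direct, while the paper's packaging via Proposition~\ref{pro:criterion_Cstar-hull_locally_bounded} is reusable in the subquotient setting of Theorem~\ref{the:locally_bounded_relative_hull}.
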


We shall prove a more general theorem that still applies if
\(\Cont_0(\mathcal{A})\) is not dense in~\(\mathcal{A}\).  Then
probably there is no \Cstar\nb-hull for the class of all locally
bounded representations.  We may, however, find \Cstar\nb-hulls for
smaller classes of representations.  We describe such classes of
representations by a generalisation of the spectral conditions in
Example~\ref{exa:spectral_condition}.  The spectral condition for a
locally closed subset in~\(\R^n\) implicitly uses a subquotient
of~\(\Cont_0(\R^n)\).  We are going to describe subquotients of the
pro-\Cstar-algebra~\(\mathcal{A}\).  We then associate a class
\(\Rep_\mathrm{b}(A,\mathcal{K})\) of locally bounded
representations of~\(A\) to a
subquotient~\(\mathcal{K}\).  If \(\Cont_0(\mathcal{K})\) is dense
in~\(\mathcal{K}\), then \(\Cont_0(\mathcal{K})\) is a
\Cstar\nb-hull for \(\Rep_\mathrm{b}(A,\mathcal{K})\).
Theorem~\ref{the:Prim_A_locally_compact_gives_hull} is the special
case \(\mathcal{K} = \mathcal{A}\).

Let \(\mathcal{J}\idealin \mathcal{A}\) be a closed, two-sided
\Star{}ideal in the pro-\Cstar\nb-algebra~\(\mathcal{A}\).  Being
closed, the ideal~\(\mathcal{J}\) is complete in the subspace
topology, so it is also a pro-\Cstar\nb-algebra.  Thus \(\mathcal{J}
= \varprojlim \mathcal{J}_q\), where \(\mathcal{J}_q\idealin
\mathcal{A}_q\) is the image of~\(\mathcal{J}\) in the
quotient~\(\mathcal{A}_q\).  The
quotient~\(\mathcal{A}/\mathcal{J}\) is complete
if~\(\mathcal{A}\) is metrisable, that is, its topology is
defined by a sequence of \Cstar\nb-seminorms.  It need not be
complete in general, however.  Therefore, we replace the
quotient~\(\mathcal{A}/\mathcal{J}\) by its
completion~\(\mathcal{B}\), which is a pro-\Cstar-algebra as well.
It is the projective limit of the
quotients~\(\mathcal{A}_q/\mathcal{J}_q\) for all \(q\in
\mathcal{N}(A)\).  A \emph{subquotient} of~\(\mathcal{A}\) is a
closed, two-sided \Star{}ideal \(\mathcal{K}\idealin \mathcal{B}\)
with~\(\mathcal{B}\) as above.

Let \(\Rep_\mathrm{b}(A,\mathcal{K})\) consist of all
representations~\(\pi\) of~\(A\) with the following properties:
\begin{enumerate}
\item \(\pi\) is locally bounded, so it comes from a locally bounded
  representation~\(\pi'\) of~\(\mathcal{A}\);
\item the representation~\(\pi'\) annihilates~\(\mathcal{J}\);
\item the representation~\(\bar\pi\) of~\(\mathcal{B}\) induced
  by~\(\pi'\)
  is nondegenerate on~\(\mathcal{K}\), that is,
  \(\bar\pi(\mathcal{K})(\Hilms)\) is a core for~\(\bar\pi\).
\end{enumerate}
Define the \Cstar\nb-algebra \(\Cont_0(\mathcal{K})\) and its dense
ideal \(\Contc(\mathcal{K})\) by replacing \(\mathcal{A}\)
by~\(\mathcal{K}\) in Definition~\ref{def:compact_in_pro-Cstar}.
Equivalently,
\(\Cont_0(\mathcal{K}) = \Cont_0(\mathcal{B}) \cap \mathcal{K}\).

We may choose \(\mathcal{J}=0\) and \(\mathcal{K} = \mathcal{A}\).
Then \(\Rep_\mathrm{b}(A,\mathcal{A}) = \Rep_\mathrm{b}(A)\) simply
consists of all locally bounded representations of~\(A\).  Hence
Theorem~\ref{the:Prim_A_locally_compact_gives_hull} is the special
case \(\mathcal{K} = \mathcal{A}\) of the following theorem:

\begin{theorem}
  \label{the:locally_bounded_relative_hull}
  If \(\Cont_0(\mathcal{K})\) is dense in~\(\mathcal{K}\), then
  \(\Cont_0(\mathcal{K})\) is a \Cstar\nb-hull for
  \(\Rep_\mathrm{b}(A,\mathcal{K})\).
\end{theorem}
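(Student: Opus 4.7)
The plan is to construct the universal integrable representation on
$B\defeq\Cont_0(\mathcal{K})$ directly and verify the criteria of
Proposition~\ref{pro:Cstar-generated_by_unbounded_multipliers} using
the results on locally bounded representations just developed.
Throughout, $j\colon A\to\mathcal{A}\to\mathcal{B}$ denotes the
canonical map and $\mathcal{B}$ acts on its closed
ideal~$\mathcal{K}$ by left multiplication.  I set
$\Hilms[B]\defeq\Contc(\mathcal{K})$ and $\mu(a)b\defeq j(a)\cdot b$,
and let $(\Hilms[B],\mu)$ be the closure of this representation
of~$A$ on~$B$.  If $b\in\mathcal{K}_q$ then $j(a)b\in\mathcal{K}_q$
with $\norm{j(a)b}\le q(a)\norm{b}$, so the domain is invariant and
every vector in it is bounded; hence $\mu$ is locally bounded and,
by Proposition~\ref{pro:locally_bounded_pro-Cstar}, extends to a
representation of~$\mathcal{A}$ given by left multiplication.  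This
extension annihilates~$\mathcal{J}$ and thus factors through a
representation~$\bar\mu$ of~$\mathcal{B}$, which is nondegenerate
on~$\mathcal{K}$ by an approximate-identity argument in the
\Cstar\nb-ideal $\Cont_0(\mathcal{K})\subseteq\mathcal{K}$ together
with Lemma~\ref{lem:compactly_supported_ideal}.  So
$(\Hilms[B],\mu)\in\Rep_\mathrm{b}(A,\mathcal{K})$.

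Next I would show that
$\Phi(\varrho)\defeq(\Hilms[B],\mu)\otimes_\varrho\Hilm$ is a
bijection from representations of~$B$ onto
$\Rep_\mathrm{b}(A,\mathcal{K})$.  That
$\Phi(\varrho)\in\Rep_\mathrm{b}(A,\mathcal{K})$ follows from the
admissibility of local boundedness
(Corollary~\ref{cor:locally_bounded_admissible}) and the fact that
the extension of $\Phi(\varrho)$ to~$\mathcal{A}$ is
$\bar\mu\otimes_\varrho 1$, which inherits both annihilation
of~$\mathcal{J}$ and nondegeneracy on~$\mathcal{K}$.  Conversely,
given $\pi\in\Rep_\mathrm{b}(A,\mathcal{K})$ with induced
representation~$\bar\pi$ of~$\mathcal{B}$, the restriction
$\varrho\defeq\bar\pi|_{\Cont_0(\mathcal{K})}$ is a bounded
\Star{}homomorphism $B\to\Bound(\Hilm)$; comparing both sides on the
dense core $\varrho(\Contc(\mathcal{K}))\Hilm$ and invoking
Proposition~\ref{pro:equality_locally_bounded} yields
$\Phi(\varrho)=\pi$, and the chain
$\pi\mapsto\bar\pi\mapsto\varrho$ makes~$\varrho$ uniquely determined
by~$\pi$.

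Compatibility of $\Phi$ with interior tensor products is built into
its construction (Lemma~\ref{lem:tensor_associative}).  To upgrade
the resulting bijection to a genuine \Cstar\nb-hull (not just a weak
one) I would apply Corollary~\ref{cor:weak_Cstar-hull_full}.  The
set~$A_h$ is a strong generating set by
Example~\ref{exa:strong_generating}, and for $a\in A_h$ the symmetric
element $j(a)\in\mathcal{B}$ has resolvent
$r\defeq(1+j(a)^2)^{-1}\in\mathcal{B}_\mathrm{b}$, which acts as a
bounded multiplier of the ideal $B\idealin\mathcal{B}_\mathrm{b}$;
since $(1+j(a)^2)\cdot(r\cdot c)=c$ for every $c\in B$, the range of
$1+\cl{\mu(a)}^2$ equals~$B$, exhibiting $\cl{\mu(a)}$ as a
self-adjoint affiliated multiplier of~$B$.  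The weak-hull hypothesis
needed by Corollary~\ref{cor:weak_Cstar-hull_full} follows from
Proposition~\ref{pro:criterion_Cstar-hull_locally_bounded} applied
to the multiplication action
$\mathcal{A}_\mathrm{b}\to\mathcal{B}_\mathrm{b}\to\Mult(B)$, whose
image contains the bounded transforms of the~$j(a)$ for $a\in A_h$
and hence is strictly dense in~$\Mult(B)$.

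The step I expect to be the main obstacle is the nondegeneracy
of~$\varrho$ used in the second paragraph, which is where the
hypothesis that $\Cont_0(\mathcal{K})$ is dense in~$\mathcal{K}$ does
its essential work: for each $k\in\mathcal{K}$ and each $q$-bounded
vector~$\eta$ in the core of~$\bar\pi$, one approximates~$k$ by a
net $(k_\lambda)$ in $\Cont_0(\mathcal{K})$ in the seminorm~$q$,
obtaining $\varrho(k_\lambda)\eta\to\bar\pi(k)\eta$ in norm, and this
transfers the nondegeneracy of $\bar\pi|_{\mathcal{K}}$ over to
nondegeneracy of~$\varrho$ on~$B$.  Without density, neither this
approximation nor the subsequent strict-density argument for the
multiplier image would go through.
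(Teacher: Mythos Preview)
Your approach matches the paper's: construct~$\mu$ by left multiplication on $\Contc(\mathcal{K})$, invert~$\Phi$ via the chain $\pi\mapsto\bar\pi\mapsto\bar\pi|_{\Cont_0(\mathcal{K})}$, and use the density hypothesis for nondegeneracy of the resulting~$\varrho$. Two points deserve comment.

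Your strict-density argument for Proposition~\ref{pro:criterion_Cstar-hull_locally_bounded} has a gap: knowing that the image of~$\mathcal{A}_\mathrm{b}$ in $\Mult(B)$ contains the bounded transforms of the $j(a)$ does not by itself imply strict density. The paper argues differently: since $\mathcal{A}_\mathrm{b}$ surjects onto each~$\mathcal{A}_q$ and hence onto~$\mathcal{B}_q$, its image in $\Mult(\mathcal{K}_q)$ contains~$\mathcal{K}_q$ and is therefore strictly dense; this then transfers to $\Mult(\Cont_0(\mathcal{K}))$. Alternatively, you can bypass Proposition~\ref{pro:criterion_Cstar-hull_locally_bounded} altogether by observing that your chain is a \emph{two-sided} inverse for~$\Phi$: the missing half $\Psi\circ\Phi=\id$ holds because the extension of $\mu\otimes_\varrho 1$ to~$\mathcal{B}$ is left multiplication composed with~$\bar\varrho$, which restricts back to~$\varrho$ on $\Cont_0(\mathcal{K})$. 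Bijectivity of~$\Phi$ is precisely condition~\ref{enum:Cstar-gen1} in Proposition~\ref{pro:Cstar-generated_by_unbounded_multipliers}, so the weak hull follows directly.

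Your upgrade to a \Cstar\nb-hull via Corollary~\ref{cor:weak_Cstar-hull_full} is correct but roundabout. The paper simply notes that $\Rep_\mathrm{b}(A,\mathcal{K})\subseteq\Rep_\mathrm{b}(A)$ and that the latter is admissible by Corollary~\ref{cor:locally_bounded_admissible}; the conditions \ref{enum:Cstar-gen2}--\ref{enum:Cstar-gen2''} of Proposition~\ref{pro:Cstar-generated_by_unbounded_multipliers} then hold automatically for the subclass, without verifying affiliation of individual generators.
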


\begin{proof}
  First we claim that \(\Rep_\mathrm{b}(A,\mathcal{K})\) is
  equivalent to the class of nondegenerate, locally bounded
  representations of the pro-\Cstar-algebra~\(\mathcal{K}\) as in
  Definition~\ref{def:pro_locally_bounded}.  If \(\mathcal{K}=
  \mathcal{A}\), this is
  Proposition~\ref{pro:locally_bounded_pro-Cstar}.
  A locally bounded representation~\(\pi'\) of~\(\mathcal{A}\)
  descends to a representation~\(\pi''\) of the
  quotient~\(\mathcal{A}/\mathcal{J}\) if and only if it
  annihilates~\(\mathcal{J}\); the induced representation
  of~\(\mathcal{A}/\mathcal{J}\) remains locally bounded with
  respect to the family of \Cstar\nb-seminorms from the quotient
  mappings \(\mathcal{A}/\mathcal{J} \prto
  \mathcal{A}_q/\mathcal{J}_q\).  Hence it extends uniquely to a
  locally bounded representation~\(\bar\pi''\) of the
  completion~\(\mathcal{B}\).  Thus locally bounded representations
  of~\(A\) for which the corresponding representation
  of~\(\mathcal{A}\) annihilates~\(\mathcal{J}\) are equivalent to
  locally bounded representations of~\(\mathcal{B}\).

  We claim that a nondegenerate, locally bounded
  representation~\(\varrho\) of~\(\mathcal{K}\) extends uniquely
  to~\(\mathcal{B}\).  Let~\(q\) be a continuous seminorm
  on~\(\mathcal{B}\), also write~\(q\) for its restriction
  to~\(\mathcal{K}\).  The \(q\)\nb-bounded vectors for~\(\varrho\)
  form a nondegenerate \(\mathcal{K}_q\)\nb-module.  The module
  structure extends uniquely to the multiplier algebra
  of~\(\mathcal{K}_q\), and~\(\mathcal{B}_q\) maps to this
  multiplier algebra because \(\mathcal{K}_q \idealin
  \mathcal{B}_q\).  Letting~\(q\) vary gives a locally bounded
  representation of~\(\mathcal{B}\) that remains nondegenerate
  on~\(\mathcal{K}\).  Conversely, any such
  representation of~\(\mathcal{B}\) is obtained in this way from its
  restriction to~\(\mathcal{K}\).  Thus representations of~\(A\)
  that belong to \(\Rep_\mathrm{b}(A,\mathcal{K})\) are equivalent
  to nondegenerate, locally bounded representations of the
  pro-\Cstar\nb-algebra~\(\mathcal{K}\).  The equivalence above is
  compatible with isometric intertwiners, direct sums and interior
  tensor products, compare
  Proposition~\ref{pro:locally_bounded_pro-Cstar}.

  Lemma~\ref{lem:compactly_supported_ideal} shows that
  \(\Contc(\mathcal{K}) \defeq \Contc(\mathcal{B})\cap \mathcal{K}\)
  is dense in~\(\mathcal{K}\).  This is an ideal in~\(\mathcal{B}\)
  as an intersection of two ideals.  Hence left multiplication
  defines a representation of~\(\mathcal{B}\) on~\(\mathcal{K}\)
  with core~\(\Contc(\mathcal{K})\), which is locally bounded by
  construction.  Through the canonical homomorphisms \(A\to
  \mathcal{A}\to \mathcal{B}\), this becomes a representation
  of~\(A\).  This representation clearly belongs to
  \(\Rep_\mathrm{b}(A,\mathcal{K})\).  We claim that it is the
  universal representation for the class
  \(\Rep_\mathrm{b}(A,\mathcal{K})\).
  So let~\(\pi\)
  be any representation in \(\Rep_\mathrm{b}(A,\mathcal{K})\).
  Then~\(\pi\)
  comes from a unique nondegenerate, locally bounded
  representation~\(\bar\pi\)
  of~\(\mathcal{K}\).
  We must show that it comes from a unique nondegenerate
  representation~\(\varrho\)
  of~\(\Cont_0(\mathcal{K})\).

  Let~\(\varrho\)
  be the restriction of~\(\bar\pi\)
  to~\(\Cont_0(\mathcal{K})\).
  Then
  \(\varrho(\Contc(\mathcal{K}))\Hilm \subseteq \Hilm_\mathrm{b}
  \subseteq \Hilms\).  We are going to prove that this is a core.
  The bilinear map \(\mathcal{K} \odot \Hilm_\mathrm{b} \to \Hilms\)
  is separately continuous with respect to the pro-\Cstar-algebra
  topology on~\(\mathcal{K}\)
  and the inductive limit topology on
  \(\Hilm_\mathrm{b} = \bigcup_{q\in \mathcal{N}(A)} \Hilm_q\).
  We have assumed that it has dense range.  Since
  \(\Contc(\mathcal{K})\)
  is dense in~\(\mathcal{K}\),
  the image of \(\Contc(\mathcal{K}) \odot \Hilm_\mathrm{b}\)
  is a core.  Thus \(\varrho(\Contc(\mathcal{K}))\Hilm\)
  is dense in~\(\Hilms\)
  in the graph topology.  The representation~\(\varrho\)
  is nondegenerate, and the associated representation of~\(A\)
  is~\(\pi\).
  So~\(\pi\) comes from a representation of~\(\Cont_0(\mathcal{K})\).

  The uniqueness of~\(\varrho\)
  means that~\(\Cont_0(\mathcal{K})\)
  is a weak \(\Cst\)\nb-hull
  for some class of integrable representations of~\(A\).
  We check this using
  Proposition~\ref{pro:criterion_Cstar-hull_locally_bounded}.  For
  \(q\in \mathcal{N}(A)\),
  the image of~\(\mathcal{A}_\mathrm{b}\)
  in~\(\Mult(\mathcal{K}_q)\)
  contains~\(\mathcal{K}_q\)
  and hence is strictly dense.  This implies that the image
  of~\(\mathcal{A}_\mathrm{b}\)
  in~\(\Mult(\Cont_0(\mathcal{K}))\)
  is strictly dense.  So~\(\Cont_0(\mathcal{K})\)
  is a weak \(\Cst\)\nb-hull
  for a class of representations of~\(A\)
  by Proposition~\ref{pro:criterion_Cstar-hull_locally_bounded} It is
  even a \Cstar\nb-hull because the class of locally bounded
  representations is admissible by
  Corollary~\ref{cor:locally_bounded_admissible}.
\end{proof}

\section{Commutative \Cstar-hulls}
\label{sec:commutative_hulls}

Let~\(A\) be a commutative \Star{}algebra.  We are going to describe
all \emph{commutative} weak \Cstar\nb-hulls for~\(A\).  Actually, we
describe all locally bounded weak \Cstar\nb-hulls, and these turn out
to be the same as the commutative ones.  We
study when a \Cstar\nb-hull satisfies the (Strong) Local--Global
Principle and when the class of
\emph{all} locally bounded representations has a \Cstar\nb-hull.  We compare
the class of locally bounded representations with the class of
representations defined by requiring all \(a\in A_h\) to act by a
regular, self-adjoint operator.

\begin{proposition}
  \label{pro:commutative_representation}
  Let~\(A\) be a \Star{}algebra and let \(B=\Cont_0(X)\)
  be a commutative \Cstar\nb-algebra.  Any representation of~\(A\)
  on~\(B\) has \(\Contc(X)\) as a core and is locally bounded.
  There is a natural bijection between representations of~\(A\)
  on~\(B\), \Star{}homomorphisms \(A\to\Cont(X)\), and continuous
  maps \(\hat{A}\to X\).
\end{proposition}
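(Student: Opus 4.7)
The plan is to prove the three assertions in sequence: first that $\Contc(X)\subseteq\Hilms[B]$ and is a core, then the bijection with \Star{}homomorphisms $A\to\Cont(X)$, and finally local boundedness as a consequence. The bijection with continuous maps between $X$ and $\hat{A}$ is then standard Gelfand duality.

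For the core claim, I would use that the domain $\Hilms[B]$ is a dense right ideal in $\Cont_0(X)$, so at every $x\in X$ some $h\in\Hilms[B]$ has $h(x)\neq 0$ (else the closure of $\Hilms[B]$ would vanish at $x$). Given $f\in\Contc(X)$ with compact support $K$, cover $K$ by finitely many open sets on each of which some $h_i\in\Hilms[B]$ is bounded below in modulus. Then $h\defeq \sum_i h_i^*h_i\in\Hilms[B]$ is bounded below on a compact neighbourhood $K'$ of $K$; with a cutoff $\varphi\in\Contc(X)$ equal to $1$ on $K$ and supported in $\text{int}(K')$, the function $g\defeq \varphi\cdot h^{-1}\cdot f$ (extended by $0$ off $K'$) lies in $\Contc(X)\subseteq B$, and $f=h\cdot g\in\Hilms[B]$. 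That $\Contc(X)$ is a core follows from $B$\nb-linearity: for an approximate unit $(\phi_n)\subseteq\Contc(X)$ of $\Cont_0(X)$ one has $\pi(a)(\phi_n f)=\phi_n\pi(a)f$, which converges in norm to $\pi(a)f$ for every $a\in A$, so $\phi_n f\to f$ in the graph topology.

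Next I construct the \Star{}homomorphism. The $B$\nb-linearity of $\pi(a)$ and commutativity of $B$ force $\pi(a)$ to act by multiplication by some continuous function $\phi(a)\in\Cont(X)$, defined by $\phi(a)(x)\defeq\pi(a)(f)(x)$ for any $f\in\Contc(X)$ with $f(x)=1$. Well-definedness uses that for $f\in\Contc(X)$ with $f(x)=0$ and $h\in\Contc(X)$ equal to $1$ on $\supp f$, the identity $f=f\cdot h$ together with $B$\nb-linearity gives $\pi(a)f=f\cdot\pi(a)(h)$, which vanishes at $x$. Continuity of $\phi(a)$ is local: on a neighbourhood of $x_0$ where some cutoff $h\in\Contc(X)$ equals $1$, $\phi(a)$ coincides with $\pi(a)(h)\in\Cont_0(X)$. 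The \Star{}algebra axioms for $\phi$ transfer from those of $\pi$; the adjoint condition $\braket{\pi(a)f}{g}=\braket{f}{\pi(a^*)g}$ of the $B$\nb-valued inner product forces $\phi(a^*)=\overline{\phi(a)}$, and multiplicativity follows from $\pi(ab)=\pi(a)\pi(b)$. Conversely, a \Star{}homomorphism $\phi\colon A\to\Cont(X)$ defines a closed representation on $\{f\in\Cont_0(X)\mid \phi(a)f\in\Cont_0(X)\ \forall a\in A\}$ by $\pi(a)f\defeq\phi(a)f$, and the two constructions are mutually inverse.

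For local boundedness, each $f\in\Contc(X)$ is a bounded vector for the \Cstar\nb-seminorm $q(a)\defeq\sup_{x\in\supp f}\abs{\phi(a)(x)}$ on $A$ (finite since $\phi(a)$ is continuous on the compact set $\supp f$, and a \Cstar\nb-seminorm because $\phi$ is a \Star{}homomorphism): $\norm{\pi(a)f}=\norm{\phi(a)f}\le q(a)\norm{f}$. Density of $\Contc(X)$ in $\Hilms[B]$ in the graph topology then yields local boundedness. Finally, \Star{}homomorphisms $\phi\colon A\to\Cont(X)$ correspond bijectively to continuous maps $X\to\hat{A}$ sending $x$ to the character $a\mapsto \phi(a)(x)$; continuity in the pointwise-convergence topology on $\hat{A}$ is exactly the condition that every $\phi(a)$ be continuous on $X$. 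The main obstacle I anticipate is the first step: density of $\Hilms[B]$ only gives norm approximation, so extracting an honest factorisation $f=h\cdot g$ with $h\in\Hilms[B]$ requires carefully bundling several approximating elements into a single element of $\Hilms[B]$ that is bounded below on $\supp f$.
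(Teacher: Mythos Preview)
Your proof is correct and follows essentially the same approach as the paper's: construct an element of~\(\Hilms[B]\) strictly positive on any given compact set to show \(\Contc(X)\subseteq\Hilms[B]\), use an approximate unit and right \(B\)\nb-linearity for the core claim, extract the multiplier \(\phi(a)\in\Cont(X)\) pointwise, and deduce local boundedness from the fact that compactly supported functions are bounded vectors. Your version is slightly more explicit about the factorisation step (the compact neighbourhood~\(K'\) and the cutoff~\(\varphi\)), and you correctly identify the bijection as with continuous maps \(X\to\hat{A}\) rather than \(\hat{A}\to X\) as printed in the statement, but the argument is otherwise identical.
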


\begin{proof}
  Let~\((\Hilms[B],\mu)\) be a representation.  Since~\(\Hilms[B]\)
  is dense in~\(B\), for any \(x\in X\) there is \(f\in \Hilms[B]\)
  with \(f(x)\neq0\).  Then there is an open neighbourhood of~\(x\)
  on which~\(f\) is non-zero.  A compact subset~\(K\) of~\(X\) may
  be covered by finitely many such open neighbourhoods.  This gives
  finitely many functions \(f_1,\dotsc,f_n\in\Hilms[B]\) so that
  \(\sum f_i\cdot \conj{f_i}(x)>0\) for all \(x\in K\).  This sum
  again belongs to the right ideal~\(\Hilms[B]\), and
  hence~\(\Hilms[B]\) contains~\(\Contc(X)\).  There is an
  approximate unit~\((u_i)_{i\in I}\) for~\(\Cont_0(X)\) that
  belongs to~\(\Contc(X)\).  If \(b\in\Hilms[B]\), then \(\lim
  \mu(a)b u_i = \mu(a) b\) for all \(a\in A\).  That is, \(\lim b
  u_i = b\) in the graph topology.  Since \(b u_i\in \Contc(X)\),
  \(\Contc(X)\) is a core for~\((\Hilms[B],\mu)\).

  Given \(a\in A\), we define a function \(f_a\colon X\to\C\) by
  \(f_a(x) \defeq (\mu(a)b)(x)\cdot b(x)^{-1}\) for any
  \(b\in\Contc(X)\) with \(b(x)\neq0\).  This does not depend on the
  choice of~\(b\), and~\(f_a\) is continuous in the open subset
  where \(b\neq0\).  Thus \(f_a\in\Cont(X)\).  The map
  \(A\to\Cont(X)\), \(a\mapsto f_a\), is a \Star{}homomorphism.
  Conversely, any \Star{}homomorphism \(A\to\Cont(X)\) gives a
  representation of~\(A\) on~\(\Cont_0(X)\) with core~\(\Contc(X)\)
  by \(\mu(a)b = f_a\cdot b\) for all \(a\in A\), \(b\in\Contc(X)\).
  The maps that go back and forth between representations
  on~\(\Cont_0(X)\) and \Star{}homomorphisms \(A\to\Cont(X)\) are
  inverse to each other.

  A \Star{}homomorphism \(f\colon A\to\Cont(X)\) gives a continuous
  map \(X\to\hat{A}\) by mapping \(x\in X\) to the character
  \(a\mapsto f(a)(x)\).  Conversely, a continuous map \(g\colon
  X\to\hat{A}\) induces a \Star{}homomorphism \(g^*\colon
  A\to\Cont(X)\), \(g^*(a)(x) \defeq g(x)(a)\), and these two
  constructions are inverse to each other.

  Let \(f\colon X\to\hat{A}\) be a continuous map.  Then~\(f\) maps
  compact subsets in~\(X\) to compact subsets of~\(\hat{A}\).  If
  \(K\subseteq X\) is compact, then
  any element in \(\Cont_0(K\setminus\partial K)\subseteq \Cont_0(X)\)
  is \(\norm{\blank}_{f(K)}\)-bounded for the \Cstar\nb-seminorm
  on~\(A\) associated to the compact subset \(f(K)\subseteq
  \hat{A}\).  Thus all elements in~\(\Contc(X)\) are bounded.
  Since~\(\Contc(X)\) is a core for the representation of~\(A\)
  associated to~\(f\), this representation is locally bounded.
\end{proof}

\begin{theorem}
  \label{the:commutative_hull}
  Let~\(A\) be a commutative \Star{}algebra, let \(B=\Cont_0(X)\) be
  a commutative \Cstar\nb-algebra, let \(f\colon X\to \hat{A}\) be a
  continuous map, and let~\((\Hilms[B],\mu)\) be the corresponding
  representation of~\(A\) on~\(B\).  Call a representation of~\(A\)
  on a Hilbert module~\(\Hilm\) \emph{\(X\)\nb-integrable} if it is
  isomorphic to \((\Hilms[B],\mu)\otimes_\varrho\Hilm\) for a
  representation~\(\varrho\) of~\(B\) on~\(\Hilm\).  The following
  are equivalent:
  \begin{enumerate}
  \item \(f\colon X\to \hat{A}\) is injective;
  \item \(B\) is a weak \Cstar\nb-hull for the \(X\)\nb-integrable
    representations;
  \item \(B\) is a \Cstar\nb-hull for the \(X\)\nb-integrable
    representations.
  \end{enumerate}
  Furthermore, any locally bounded weak \Cstar\nb-hull of~\(A\) is
  commutative.
\end{theorem}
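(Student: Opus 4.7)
The plan is to reduce the equivalence $(1)\Leftrightarrow(2)$ to a strict-density question via Proposition~\ref{pro:criterion_Cstar-hull_locally_bounded}. By Proposition~\ref{pro:commutative_representation} the universal representation $(\Hilms[B],\mu)$ is locally bounded, and since~$A$ is commutative, Proposition~\ref{pro:commutative_pro-completion} identifies $\mathcal{A}_\mathrm{b}$ with $\Contb(\hat{A},\tau_c)$ and the associated extension $\bar\mu\colon \mathcal{A}_\mathrm{b}\to\Mult(\Cont_0(X))=\Contb(X)$ with pullback along~$f$. Thus Proposition~\ref{pro:criterion_Cstar-hull_locally_bounded} reduces (2) to the assertion that $f^*(\Contb(\hat{A},\tau_c))$ is strictly dense in $\Contb(X)$.

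If~$f$ is not injective, say $f(x_1)=f(x_2)$ with $x_1\neq x_2$, then the two distinct evaluation characters of~$B$ at $x_1$ and $x_2$ induce the same representation of~$A$, so (2) fails by condition~\ref{enum:Cstar-gen1} of Proposition~\ref{pro:Cstar-generated_by_unbounded_multipliers}. Conversely, assume~$f$ is injective. Given $h\in\Cont_0(X)$ and $g\in\Contb(X)$, set $K\defeq \supp(h)$. The restriction $f|_K\colon K\to f(K)$ is a continuous bijection from a compact Hausdorff space to the Hausdorff subspace $f(K)\subseteq \hat{A}$, hence a homeomorphism. The seminorm $q_K(a)\defeq \sup_{x\in K}\abs{\hat{a}(f(x))}$ lies in $\mathcal{N}(A)$, the characters in $f(K)$ are $q_K$\nb-bounded, and so $f(K)$ is a closed subspace of the compact Hausdorff spectrum $\hat{A}_{q_K}$. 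Tietze extension provides $G\in\Cont(\hat{A}_{q_K})\subseteq \mathcal{A}_\mathrm{b}$ with $G|_{f(K)} = g|_K\circ (f|_K)^{-1}$. A pointwise check gives $f^*(G)\cdot h = g\cdot h$ exactly---both sides agree on~$K$ by construction and vanish off~$K$ because~$h$ does---so the image is strictly dense (in fact equality holds on every such single product).

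The equivalence $(2)\Leftrightarrow(3)$ will follow from Corollary~\ref{cor:weak_Cstar-hull_full} applied to the strong generating set $A_h$ (Example~\ref{exa:strong_generating}): for $a\in A_h$ the closure $\cl{\mu(a)}$ is multiplication by the real-valued continuous function $\hat{a}\circ f$ on~$X$, which is a self-adjoint affiliated multiplier of $\Cont_0(X)$. For the final assertion, let~$B$ be any locally bounded weak \Cstar\nb-hull of the commutative algebra~$A$. The universal representation extends to $\bar\mu\colon \mathcal{A}_\mathrm{b}\to \Mult(B)$ with strictly dense image by Proposition~\ref{pro:criterion_Cstar-hull_locally_bounded}; since~$A$ is commutative so is $\mathcal{A}_\mathrm{b}$, and separate strict continuity of left and right multiplication in $\Mult(B)$ propagates commutation through strict limits, forcing~$\Mult(B)$ and hence~$B$ to be commutative. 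The main obstacle will be the strict-density step; its resolution rests on recognising that the \Cstar\nb-seminorm~$q_K$ attached to a compact $K\subseteq X$ makes $f(K)$ a closed subset of the compact Hausdorff spectrum $\hat{A}_{q_K}$, which is exactly the setting where Tietze extension applies.
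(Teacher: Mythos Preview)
Your overall strategy matches the paper's: reduce $(1)\Leftrightarrow(2)$ to the strict-density criterion of Proposition~\ref{pro:criterion_Cstar-hull_locally_bounded}, and upgrade $(2)$ to $(3)$ separately. (Your use of Corollary~\ref{cor:weak_Cstar-hull_full} for $(2)\Rightarrow(3)$ is a valid alternative to the paper's appeal to admissibility of $\Rep_\mathrm{b}(A)$ via Corollary~\ref{cor:locally_bounded_admissible}, and your final-assertion argument is the paper's.) The strict-density step, however, contains two genuine errors.

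First, for $h\in\Cont_0(X)$ the support $K=\supp(h)$ is closed but not compact in general (take $h(x)=(1+x^2)^{-1}$ on $X=\R$), so the whole chain---$f|_K$ a homeomorphism onto a compact image, then Tietze on $\hat{A}_{q_K}$---collapses. You must take $h\in\Contc(X)$; but then the passage from ``equality on each product with $h\in\Contc(X)$'' to strict density on $\Cont_0(X)$ still requires a single uniformly bounded net, which you do not construct. Second, the asserted inclusion $\Cont(\hat{A}_{q_K})\subseteq \mathcal{A}_\mathrm{b}$ is false: $\mathcal{A}_{q_K}=\Cont(\hat{A}_{q_K})$ is a \emph{quotient} of $\mathcal{A}_\mathrm{b}=\Contb(\hat{A},\tau_c)$, not a subalgebra. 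Your Tietze extension~$G$ lives only on the compact set $\hat{A}_{q_K}\subsetneq\hat{A}$, so $f^*(G)=G\circ f$ is not even defined on all of~$X$, let alone an element of $\bar\mu(\mathcal{A}_\mathrm{b})$. The repair is to note that the \Star{}homomorphism $\mathcal{A}_\mathrm{b}\to\mathcal{A}_{q_K}$ is surjective (dense image since $\mathcal{A}_\mathrm{b}$ is dense in~$\mathcal{A}$, and closed image as a \Cstar\nb-homomorphism), so $G$ lifts to $\tilde G\in\mathcal{A}_\mathrm{b}$. The paper sidesteps both issues by using Stone--Weierstra\ss{} instead of Tietze: for compact $K\subseteq X$, the restriction $f^*(\Contb(\hat{A},\tau_c))|_K$ is a point-separating unital \Cstar\nb-subalgebra of $\Cont(K)$, hence all of $\Cont(K)$; this yields $d_K\in f^*(\mathcal{A}_\mathrm{b})$ with $d_K|_K=g|_K$ directly, functional calculus inside $f^*(\mathcal{A}_\mathrm{b})$ arranges $\norm{d_K}_\infty\le\norm{g}$, and the bounded net $(d_K)_K$ converges strictly to~$g$.
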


\begin{proof}
  If~\(f\) is not injective, then there are \(x\neq y\) in~\(X\)
  with \(f(x) = f(y)\).  The evaluation maps at \(x\) and~\(y\) are
  different \(1\)\nb-dimensional representations of~\(B\) that
  induce the same representation of~\(A\).
  Hence the condition~\ref{enum:Cstar-gen1} in
  Proposition~\ref{pro:Cstar-generated_by_unbounded_multipliers} is
  violated and so~\(B\) is not a weak \Cstar\nb-hull.  Conversely,
  assume that~\(f\) is injective.

  The representation of~\(A\) on~\(B\) associated to~\(f\) is locally
  bounded by Proposition~\ref{pro:commutative_representation} and
  hence induces a representation of the unital \Cstar{}algebra
  \(\Contb(\hat{A},\tau_c)\) of bounded elements in \(\mathcal{A}
  \cong \Cont(\hat{A},\tau_c)\), see
  Proposition~\ref{pro:commutative_pro-completion}.  Explicitly, this
  representation composes functions with~\(f\).  Since~\(f\) is
  injective, \(D\defeq f^*(\Contb(\hat{A},\tau_c)) \subseteq
  \Contb(X)\) separates the points of~\(X\).  We show that~\(D\) is
  strictly dense in \(\Contb(X) \cong \Mult(B)\).

  If \(K\subseteq X\) is compact, then the image of
  \(f^*(\Contb(\hat{A},\tau_c))|_K\) in the quotient~\(\Cont(K)\)
  of~\(\Cont_0(X)\) separates the points of~\(K\).  Since this image
  is again a \Cstar\nb-algebra, it is equal to~\(\Cont(K)\) by the
  Stone--Weierstraß Theorem.  Let \(f\in \Contb(X)\).  For any compact
  subset \(K\subseteq X\), there is \(d_K\in D\) with \(d_K|_K = f\).
  By functional calculus, we may arrange that \(\norm{d_K}_\infty \le
  \norm{f}\).  The net~\((d_K)\) indexed by the directed set of
  compact subsets \(K\subseteq X\) is uniformly bounded and converges
  towards~\(f\) in the topology of uniform convergence on compact
  subsets.  Hence it converges towards~\(f\) in the strict topology
  (compare \cite{Grundling-Neeb:Full_regularity}*{Lemma A.1}).  This
  finishes the proof that \(f^*(\mathcal{A}_\mathrm{b})\) is strictly
  dense in~\(\Mult(\Cont_0(X))\).
  Proposition~\ref{pro:criterion_Cstar-hull_locally_bounded} shows
  that~\(B\) is a weak \Cstar\nb-hull for the \(B\)\nb-integrable
  representations of~\(A\).

  Any \(X\)\nb-integrable representation of~\(A\) is locally
  bounded.  The class \(\Rep_\mathrm{b}(A)\) of locally bounded
  representations of~\(A\) is admissible by
  Corollary~\ref{cor:locally_bounded_admissible}.  Hence the smaller
  class of \(X\)\nb-integrable representations inherits the
  equivalent conditions
  \ref{enum:Cstar-gen2}--\ref{enum:Cstar-gen2''} in
  Proposition~\ref{pro:Cstar-generated_by_unbounded_multipliers}.
  Thus~\(\Cont_0(X)\) is even a \Cstar\nb-hull.

  Let~\(B\) with the universal representation \((\Hilms[B],\mu)\) be
  a locally bounded weak \Cstar\nb-hull.  Then the image of
  \(\Contb(\hat{A},\tau_0)\) in the multiplier algebra of~\(B\) is
  strictly dense by
  Proposition~\ref{pro:criterion_Cstar-hull_locally_bounded}.
  Thus~\(\Mult(B)\) is commutative, and then so is~\(B\).  Thus a
  locally bounded weak \Cstar\nb-hull is commutative.
\end{proof}

\begin{theorem}
  \label{the:commutative_local-global}
  Let~\(A\) be a commutative \Star{}algebra, let \(B=\Cont_0(X)\) be
  a commutative \Cstar\nb-algebra, and let \(f\colon X\to \hat{A}\)
  be an injective continuous map.  Let \(\Repi(A,X)\) be the class
  of \(X\)\nb-integrable representations.  The following statements
  are equivalent if~\(\hat{A}\) is metrisable:
  \begin{enumerate}
  \item \label{enum:commutative_local-global1} \(f\colon X\to
    \hat{A}\) is a homeomorphism onto its image;
  \item \label{enum:commutative_local-global2} \(\Repi(A,X)\) is
    defined by submodule conditions;
  \item \label{enum:commutative_local-global3} \(\Repi(A,X)\)
    satisfies the Strong Local--Global Principle;
  \item \label{enum:commutative_local-global4} \(\Repi(A,X)\)
    satisfies the Local--Global Principle;
  \item \label{enum:commutative_local-global5} if \(\lim f(x_n) =
    f(x)\) for a sequence~\((x_n)_{n\in\N}\) in~\(X\) and \(x\in
    X\), then already \(\lim x_n = x\).
  \end{enumerate}
  The implications
  \ref{enum:commutative_local-global1}\(\Rightarrow\)\ref{enum:commutative_local-global2}\(\Rightarrow\)\ref{enum:commutative_local-global3}\(\Rightarrow\)\ref{enum:commutative_local-global4}\(\Rightarrow\)\ref{enum:commutative_local-global5}
  hold without assumptions on~\(\hat{A}\).
\end{theorem}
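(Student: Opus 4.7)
The plan is to establish the chain $(1) \Rightarrow (2) \Rightarrow (3) \Rightarrow (4) \Rightarrow (5)$ with no restriction on $\hat{A}$, and to close the cycle with $(5) \Rightarrow (1)$ under the metrisability hypothesis. For $(1) \Rightarrow (2)$, identify $X$ with $Y \defeq f(X) \subseteq \hat{A}$ via the homeomorphism~$f$. Since $X$ is locally compact and $\hat{A}$ is Hausdorff, $Y$ is locally closed in $\hat{A}$. I will combine the following submodule conditions in the transfinite scheme of Definition~\ref{def:restriction_representation}: for each $a \in A_h$ the closure $\cl{\pi(a)}$ is regular and self-adjoint (Example~\ref{exa:regularity_condition}); the resulting regular self-adjoint operators strongly commute, which is automatic from the commutativity of~$A$ and is encoded as a submodule condition as in Example~\ref{exa:strong_commutation}; the ensuing joint functional calculus has joint spectrum contained in $\bar{Y}$ (a closed spectral condition in the sense of Example~\ref{exa:spectral_condition}); and, relative to this, the resulting action of $\Cont_0(\bar{Y})$ is nondegenerate on the ideal $\Cont_0(Y) \idealin \Cont_0(\bar{Y})$ (a nondegeneracy condition as in Example~\ref{exa:nondegeneracy_condition}). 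Proposition~\ref{pro:commutative_pro-completion} and Theorem~\ref{the:locally_bounded_relative_hull} identify the resulting class of representations with $\Repi(A,X)$.

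The implication $(2) \Rightarrow (3)$ is Theorem~\ref{the:operator_conditions}, and $(3) \Rightarrow (4)$ is immediate. For $(4) \Rightarrow (5)$ I argue by contraposition. Suppose $(x_n)_{n \in \N}$ and $x$ in $X$ satisfy $\lim f(x_n) = f(x)$ in $\hat{A}$ but $x_n \not\to x$ in $X$. Let $\bar{\N}$ denote the one-point compactification of $\N$, and define $g \colon \bar{\N} \to \hat{A}$ by $g(n) \defeq f(x_n)$ and $g(\infty) \defeq f(x)$; this map is continuous by hypothesis. By Proposition~\ref{pro:commutative_representation}, $g$ gives rise to a representation~$\pi$ of~$A$ on the Hilbert $\Cont(\bar{\N})$\nb-module $\Cont(\bar{\N})$. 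Since $f$ is injective, an $X$\nb-integrable realisation of~$\pi$ would force the set-theoretic factorisation $f^{-1} \circ g \colon \bar{\N} \to X$, sending $n \mapsto x_n$ and $\infty \mapsto x$, to be continuous; this fails by assumption, so $\pi$ is not $X$\nb-integrable. For every state~$\omega$ on $\Cont(\bar{\N})$ with GNS Hilbert space $L^2(\bar{\N},\mu_\omega)$, however, the induced representation $\pi \otimes_\omega 1$ acts by multiplication by $y \mapsto g(y)(a)$ and hence factors through the $*$-homomorphism $\Cont_0(X) \to \Bound(L^2(\bar{\N},\mu_\omega))$ that sends $h \in \Cont_0(X)$ to multiplication by the bounded Borel function $y \mapsto h(f^{-1}(g(y)))$. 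This realises every $\pi \otimes_\omega 1$ as $X$\nb-integrable, contradicting $(4)$.

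For $(5) \Rightarrow (1)$ under the metrisability of $\hat{A}$: the subspace $f(X) \subseteq \hat{A}$ inherits metrisability and hence first countability, so continuity of $f^{-1} \colon f(X) \to X$ is equivalent to its sequential continuity. Writing $y_n = f(x_n)$ and $y = f(x)$, sequential continuity of~$f^{-1}$ is precisely~(5), and combined with the continuity of~$f$ this yields the desired homeomorphism onto $f(X)$. The main obstacle will be making $(1) \Rightarrow (2)$ fully rigorous, since it requires carefully assembling the self-adjointness/regularity, strong-commutation, closed-spectral, and nondegeneracy conditions (the latter two involving the joint functional calculus on $\hat{A}$ built from possibly infinitely many elements of~$A_h$) into the transfinite recursion of Definition~\ref{def:restriction_representation}, and verifying that the resulting class of representations recovers $\Repi(A,X)$ on the nose.
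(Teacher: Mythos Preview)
Your argument is correct and follows the same overall structure as the paper's: the chain $(1)\Rightarrow(2)\Rightarrow(3)\Rightarrow(4)\Rightarrow(5)$ and the closing step $(5)\Rightarrow(1)$ under metrisability are organised identically, and your identification of $(1)\Rightarrow(2)$ as the only step needing real work matches the paper. Two minor technical choices differ. For $(1)\Rightarrow(2)$, the paper avoids working directly in the possibly non-locally-compact space~$\hat A$ by embedding~$\hat A$ homeomorphically into the compact product $\prod_{a\in A_h} S^1$ via Cayley transforms; then $X$ sits as a locally closed subset of a \emph{compact} space, and the closed-spectral and nondegeneracy conditions are imposed relative to the bounded joint functional calculus $\Cont\bigl(\prod S^1\bigr)\to\Bound(\Hilm)$, so that Example~\ref{exa:spectral_condition} applies without needing the pro-$\Cst$-algebra machinery you invoke. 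For $(4)\Rightarrow(5)$, the paper observes that every Radon measure on the countable compact space~$\bar\N$ is purely atomic, so each GNS representation of $\Cont(\bar\N)$ is a direct sum of point evaluations, each of which is manifestly $X$\nb-integrable because its image lies in~$f(X)$; your Borel-functional-calculus argument reaches the same conclusion by an equally valid route (the map $f^{-1}\circ g$ is automatically Borel since every subset of~$\bar\N$ is Borel). Both variations are cosmetic; your proof and the paper's are essentially the same.
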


I do not know whether
\ref{enum:commutative_local-global1}--\ref{enum:commutative_local-global4}
are equivalent in general.  The
condition~\ref{enum:commutative_local-global5} is there to allow to
go back from~\ref{enum:commutative_local-global4}
to~\ref{enum:commutative_local-global1} at least for
metrisable~\(\hat{A}\).

\begin{proof}
  First we check
  \ref{enum:commutative_local-global5}\(\Rightarrow\)\ref{enum:commutative_local-global1}
  if~\(\hat{A}\) is metrisable.  If~\(f\) is not a homeomorphism
  onto its image, then there is a subset \(U\subseteq X\) that is
  open, such that \(f(U)\) is not open in the subspace topology on
  \(f(X)\subseteq\hat{A}\).  Since~\(\hat{A}\) is metrisable, there
  is \(x\in U\) and a sequence in \(f(X)\setminus f(U)\) that
  converges towards~\(f(x)\).  This lifts to a sequence
  \((x_n)_{n\in\N}\) in \(X\setminus U\) such that \(\lim f(x_n) =
  f(x)\).  We cannot have \(\lim x_n = x\) because~\(x_n\) never
  enters the open neighbourhood~\(U\) of~\(x\).

  The implication
  \ref{enum:commutative_local-global2}\(\Rightarrow\)\ref{enum:commutative_local-global3}
  is Theorem~\ref{the:operator_conditions}, and
  \ref{enum:commutative_local-global3}\(\Rightarrow\)\ref{enum:commutative_local-global4}
  is trivial.  We are going to verify
  \ref{enum:commutative_local-global1}\(\Rightarrow\)\ref{enum:commutative_local-global2}
  and
  \ref{enum:commutative_local-global4}\(\Rightarrow\)\ref{enum:commutative_local-global5}.
  This will finish the proof of the theorem.

  Assume~\ref{enum:commutative_local-global1}.  Let~\(\pi\) be a
  representation in~\(\Repi(A,X)\).  Then~\(\pi\) is locally
  bounded, and the
  operators~\(\cl{\pi(a)}\) for \(a\in A_h\) are regular and
  self-adjoint by Proposition~\ref{pro:locally_bounded_regular}.
  Furthermore, their Cayley transforms belong to the image of
  \(\mathcal{A}_\mathrm{b} \cong \Contb(\hat{A},\tau_c)\), which is commutative.
  Hence the operators~\(\cl{\pi(a)}\) for \(a\in A_h\) strongly
  commute with each other.  The class \(\Repi(A)\) of representations
  of~\(A\) with the property that all~\(\cl{\pi(a)}\), \(a\in A_h\), are regular and
  self-adjoint and strongly commute with each other is defined by
  submodule conditions by Examples \ref{exa:regularity_condition} and
  Example~\ref{exa:strong_commutation}.

  Let \(Y \defeq \prod_{a\in A_h} S^1\).  Given a representation
  in~\(\Repi(A)\), there is a unique representation \(\varrho\colon
  \Cont(Y)\to \Bound(\Hilm)\) that maps the \(a\)th coordinate
  projection to the Cayley transform of~\(\cl{\pi(a)}\).  We map
  \(\hat{A}\) to~\(Y\) by sending \(\chi\in\hat{A}\) to the point
  \((c_{\chi(a)})_{a\in A_h} \in Y\).  Here~\(c_{\chi(a)}\) is the
  Cayley transform of the number \(\chi(a)\in \R\) or, equivalently,
  the value of the Cayley transform of the unbounded function
  \(\hat{a}\in\Cont(\hat{A})\) at~\(\chi\).  This is a homeomorphism
  onto its image because for a net of characters~\((\chi_i)\) and a
  character~\(\chi\) on~\(A\), we have \(\lim \chi_i(a) = \chi(a)\) if
  and only if \(\lim c_{\chi_i(a)} = c_{\chi(a)}\).  Thus the
  composite map \(X\to \hat{A}\to Y\) is a homeomorphism onto its
  image as well.  This forces the image to be locally closed
  because~\(Y\) is compact and~\(X\) locally compact, and a subspace
  of a locally compact space is locally compact if and only if its
  underlying subset is locally closed (see
  \cite{Bourbaki:Topologie_generale}*{I.9.7, Propositions 12 and 13}).

  Let \(\cl{X}\subseteq Y\) be the closure of the image of~\(X\)
  in~\(Y\).  Then~\(X\) is open in~\(\cl{X}\).  All representations
  in~\(\Repi(A)\) carry a unital \Star{}homomorphism
  \(\Cont(Y)\to\Bound(\Hilm)\).  Asking for this to factor through the
  quotient~\(\Cont(\cl{X})\) of~\(\Cont(Y)\) is a submodule condition
  as in Example~\ref{exa:spectral_condition}.  Asking for the induced
  \Star{}homomorphism \(\Cont(\cl{X})\to\Bound(\Hilm)\) to remain
  nondegenerate on~\(\Cont_0(X)\) is another submodule condition as in
  Example~\ref{exa:spectral_condition}.

  The class \(\Rep'(A)\) defined by these two more submodule
  conditions is weakly admissible by
  Lemma~\ref{lem:operator_conditions}.  The universal
  \(X\)\nb-integrable representation belongs to \(\Rep'(A)\); by
  weak admissibility, this is inherited by all \(X\)\nb-integrable
  representations.  Conversely, we claim that any representation in
  \(\Rep'(A)\) is \(X\)\nb-integrable.

  If \(\pi\in\Rep'(A)\), then the unital
  \Star{}homomorphism \(\Cont(Y)\to\Bound(\Hilm)\) descends to a
  nondegenerate \Star{}homomorphism \(\varrho\colon
  \Cont_0(X)\to\Bound(\Hilm)\).  By construction, the extension
  of~\(\varrho\) to multipliers maps the Cayley transform of
  \(f^*(a)\in\Cont(X)\) for \(a\in A_h\) to the Cayley transform
  of~\(\cl{\pi(a)}\).  Let~\(\pi'\) be the \(X\)\nb-integrable
  representation of~\(A\) associated to~\(\varrho\).  The
  regular, self-adjoint operators \(\cl{\pi'(a)}\)
  and~\(\cl{\pi(a)}\) have the same Cayley transform for all \(a\in
  A_h\).  Hence \(\cl{\pi'(a)} = \cl{\pi(a)}\) for all \(a\in A_h\).
  The subset~\(A_h\) is a strong generating set for~\(A\)
  by Example~\ref{exa:strong_generating}.  Hence
  Proposition~\ref{pro:equality_if_closure_equal} gives \(\pi'=\pi\).
  Thus \(\Repi(A,X)\) is the class of representations defined by the
  submodule conditions above.  This finishes the proof that
  \ref{enum:commutative_local-global1}\(\Rightarrow\)\ref{enum:commutative_local-global2}.

  Now we prove
  \ref{enum:commutative_local-global4}\(\Rightarrow\)\ref{enum:commutative_local-global5}
  by contradiction.  Let \((x_n)_{n\in\N}\) and~\(x\) be as
  in~\ref{enum:commutative_local-global5}.  Let \(\bar{\N} =
  \N\cup\{\infty\}\) be the one-point compactification of~\(\N\) and
  view the sequence~\((x_n)\) and~\(x\) as a map \(\xi\colon
  \bar{\N}\to X\).  This map is not continuous, but composition
  with~\(f\) gives a continuous map \(\bar{\N}\to\hat{A}\).  Hence
  Proposition~\ref{pro:commutative_representation} gives a
  representation~\((\Hilms[D],\mu)\) of~\(A\)
  on~\(\Cont(\bar{\N})\).  This is not \(X\)\nb-integrable because
  the map \(\bar{\N}\to X\) is not continuous.  We claim, however,
  that the representation \((\Hilms[D],\mu) \otimes_\varrho
  \Hilm[H]\) is \(X\)\nb-integrable for any
  GNS-representation~\(\varrho\) on a Hilbert space~\(\Hilm[H]\).
  A state on~\(\Cont(\bar{\N})\) is the same as a Radon measure
  on~\(\bar{\N}\).  Since~\(\bar{\N}\) is countable, any Radon
  measure is atomic.  Thus the resulting GNS-representation is a
  direct sum of irreducible representations associated to
  characters.  Each character on~\(\Cont(\bar{\N})\) gives an
  \(X\)\nb-integrable representation because \(\xi(\bar{\N})
  \subseteq f(X)\).
  Hence~\((\Hilms[D],\mu)\) is a counterexample to the
  Local--Global Principle.  So~\ref{enum:commutative_local-global4}
  cannot hold if~\ref{enum:commutative_local-global5} fails.
\end{proof}

\begin{example}
  \label{exa:pure-point}
  Let \(A=\C[x]\), so that \(\hat{A}=\R\).  Let~\(X\) be~\(\R\) with
  the discrete topology, and let \(f\colon X\to\R\) be the identity
  map.  This is a continuous bijection, but not open.  Hence the
  class of \(X\)\nb-integrable representations violates the
  Local--Global Principle by
  Theorem~\ref{the:commutative_local-global}.  Nevertheless,
  \(\Cont_0(X)\) is a \Cstar\nb-hull for the class of
  \(X\)\nb-integrable representations of~\(A\) by
  Theorem~\ref{the:commutative_hull}.  An \(X\)\nb-integrable
  representation of~\(A\) is integrable as in
  Theorem~\ref{the:regular_self-adjoint_Cstar-hull}, and so it comes
  from a single regular, self-adjoint operator \(T\defeq
  \cl{\pi(x)}\).  The representation of~\(\C[x]\) associated
  to~\(T\) is \(X\)\nb-integrable if and only if \(\Hilm =
  \bigoplus_{\lambda\in\R} \Hilm_\lambda\), where \(\Hilm_\lambda
  \defeq \{\xi\in\Hilm \mid T\xi= \lambda\xi\}\) for \(\lambda\in\R\)
  is the \(\lambda\)\nb-eigenspace of~\(T\).
\end{example}

Another example of a \Cstar\nb-hull for~\(\C[x]\) where \(X\to\R\)
is bijective but not a homeomorphism onto its image is discussed in
Example~\ref{exa:smaller_integrable_than_regular_self-adjoint}.

\begin{theorem}
  \label{the:no_Cstar-hull_commutative_not_locally_compact}
  There is a \Cstar\nb-hull for
  \(\Rep_\mathrm{b}(A)\) if and only if the compactly generated
  topology~\(\tau_c\) on~\(\hat{A}\) is locally compact, and then
  the \Cstar\nb-hull is \(\Cont_0(\hat{A},\tau_c)\).
\end{theorem}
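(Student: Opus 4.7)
The plan is to deduce both directions from the general theory developed in §5 combined with the commutative structure theory of §6. The forward direction follows from Theorem~\ref{the:Prim_A_locally_compact_gives_hull} applied to the identification $\mathcal{A} \cong \Cont(\hat{A},\tau_c)$ of Proposition~\ref{pro:commutative_pro-completion}; the converse combines the commutativity statement of Theorem~\ref{the:commutative_hull} with a direct analysis of bounded representations on $\Cont(K)$ for compact $K\subseteq\hat{A}$.

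If $(\hat{A},\tau_c)$ is locally compact, I first unwind Definition~\ref{def:compact_in_pro-Cstar} to identify $\Cont_0(\mathcal{A})$ with $\Cont_0(\hat{A},\tau_c)$, using that the compact subsets of $\tau_c$ coincide with those of the pointwise-convergence topology and carry the same subspace topology. Local compactness then gives density of $\Cont_0(\hat{A},\tau_c)$ in $\Cont(\hat{A},\tau_c)$ in the pro-\Cstar{}topology (uniform convergence on compacta), via the standard trick of multiplying any continuous function by a compactly supported bump function that equals~$1$ on a prescribed compact set. Theorem~\ref{the:Prim_A_locally_compact_gives_hull} (with $\mathcal{K}=\mathcal{A}$) then yields that $\Cont_0(\hat{A},\tau_c)$ is a \Cstar-hull for $\Rep_\mathrm{b}(A)$.

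For the converse, let $B$ be a \Cstar-hull for $\Rep_\mathrm{b}(A)$. Theorem~\ref{the:commutative_hull} forces $B$ to be commutative, so $B = \Cont_0(X)$ for some locally compact Hausdorff space~$X$; its universal representation corresponds by Proposition~\ref{pro:commutative_representation} to a continuous map $f\colon X\to\hat{A}$, which is injective by the weak-hull condition in Theorem~\ref{the:commutative_hull}. Every character $\chi\in\hat{A}$ is a bounded -- hence locally bounded -- one-dimensional representation and must therefore arise from a character of~$B$, i.e., from evaluation at some $x\in X$; tracing through Proposition~\ref{pro:Phi_simplifies} identifies this character of $A$ with $f(x)$. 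Hence $f$ is a continuous bijection.

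The main obstacle is showing that $f$ is a homeomorphism $X \cong (\hat{A},\tau_c)$. For each compact $K\subseteq\hat{A}$, Gelfand evaluation defines a bounded representation of $A$ on the Hilbert $\Cont(K)$-module $\Cont(K)$ corresponding via Proposition~\ref{pro:commutative_representation} to the inclusion $K\hookrightarrow\hat{A}$. As $B$ is a \Cstar-hull, this representation arises from a unique nondegenerate \Star{}homomorphism $\Cont_0(X)\to\Cont(K)$, and hence from a continuous map $g_K\colon K\to X$; chasing the tensor-product formula of Proposition~\ref{pro:Phi_simplifies} forces $f\circ g_K$ to be the inclusion $K\hookrightarrow\hat{A}$, so $g_K = f^{-1}|_K$ is continuous for every compact~$K$. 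Since $(\hat{A},\tau_c)$ is compactly generated and its compact subspaces coincide with those of $\hat{A}$, this forces $f^{-1}\colon(\hat{A},\tau_c)\to X$ to be continuous; conversely, $f\colon X\to(\hat{A},\tau_c)$ is continuous because $X$ is compactly generated and $\tau$ and $\tau_c$ agree on compact subsets of $\hat{A}$. Thus $f$ is a homeomorphism, $(\hat{A},\tau_c)\cong X$ is locally compact, and $B = \Cont_0(\hat{A},\tau_c)$.
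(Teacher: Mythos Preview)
Your proof is correct and follows essentially the same route as the paper: the forward direction is identical (identify $\Cont_0(\mathcal{A})$ with $\Cont_0(\hat{A},\tau_c)$ via Proposition~\ref{pro:commutative_pro-completion} and apply Theorem~\ref{the:Prim_A_locally_compact_gives_hull}), and for the converse both arguments deduce commutativity of~$B$ from Theorem~\ref{the:commutative_hull}, translate the bijection $\Rep_\mathrm{b}(A,D)\cong\Rep(B,D)$ for commutative~$D$ into a bijection of continuous maps via composition with~$f$, and use this to show $f\colon X\to(\hat{A},\tau_c)$ is a homeomorphism. Your version just unpacks the paper's categorical statement ``the bijection for all compact~$X$ means that~$f$ becomes a homeomorphism for the compactly generated topologies'' by testing on compact subsets $K\subseteq\hat{A}$ directly, which is exactly the content of that claim.
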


\begin{proof}
  Assume first that~\((\hat{A},\tau_c)\) is locally compact.  The
  pro-\Cstar\nb-algebra completion~\(\mathcal{A}\) that acts on
  locally bounded representations of~\(A\) is
  \(\Cont(\hat{A},\tau_c)\) by
  Proposition~\ref{pro:commutative_pro-completion}.  The primitive
  ideal space of \(\Cont(K)\) for a compact subspace \(K\subseteq
  \hat{A}\) is simply~\(K\), and \(\norm{a}_\mathfrak{p} =
  \abs{a(\mathfrak{p})}\) for \(a\in\Cont(\hat{A},\tau_c)\) and
  \(\mathfrak{p}\in \Prim \Cont(K) \cong K\).  Therefore, a function
  \(f\in \Cont(\hat{A},\tau_c)\) vanishes at~\(\infty\) in the sense
  of Definition~\ref{def:compact_in_pro-Cstar} if and only if it
  vanishes at~\(\infty\) in the usual sense.  The subalgebra
  \(\Cont_0(\mathcal{A}) = \Cont_0(\hat{A},\tau_c)\) is dense
  in~\(\mathcal{A}\)
  because~\(\tau_c\) is locally compact.  Now
  Theorem~\ref{the:Prim_A_locally_compact_gives_hull} shows that
  \(\Cont_0(\mathcal{A}) = \Cont_0(\hat{A},\tau_c)\) is a
  \Cstar\nb-hull for the class of locally bounded representations
  of~\(A\).

  Conversely, let~\(B\) be a (weak) \Cstar\nb-hull for the locally
  bounded representations of~\(A\).  Then~\(B\) is commutative by
  Theorem~\ref{the:commutative_hull}.  Let~\(Y\) be the spectrum
  of~\(B\).  The representation of~\(A\) on \(B\cong \Cont_0(Y)\)
  corresponds to a continuous map \(f\colon Y\to\hat{A}\) by
  Proposition~\ref{pro:commutative_representation}.  Let
  \(D=\Cont_0(X)\) be a commutative \Cstar\nb-algebra.  Any
  representation of~\(A\) on~\(D\) is locally bounded.  So the
  bijection \(\Rep_\mathrm{b}(A,D)\cong \Rep(B,D)\) is a bijection
  between the spaces of continuous maps \(X\to\hat{A}\) and \(X\to
  Y\).  More precisely, this bijection is composition with~\(f\).

  For the one-point space~\(X\), this bijection says that \(f\colon
  Y\to\hat{A}\) is bijective.  The bijection for all compact~\(X\)
  means that~\(f\) becomes a homeomorphism if we replace the
  topologies on \(Y\) and~\(\hat{A}\) by the associated compactly
  generated ones.  The topology on~\(Y\) is already compactly
  generated because~\(Y\) is locally compact.  Hence~\(f\) is a
  homeomorphism from~\(Y\) to \((\hat{A},\tau_c)\).  So~\(\tau_c\)
  is locally compact.
\end{proof}

Let \(\Repi(A)\) be the class of all representations with the
property that~\(\cl{\pi(a)}\) is regular and self-adjoint for all
\(a\in A_h\).  We are going to compare \(\Repi(A)\) and
\(\Rep_\mathrm{b}(A)\).
Proposition~\ref{pro:locally_bounded_regular} gives
\(\Rep_\mathrm{b}(A)\subseteq\Repi(A)\).

\begin{theorem}
  \label{the:commutative_regular_admissible}
  The class \(\Repi(A)\) is admissible and defined by submodule
  conditions.  Hence it satisfies the Strong Local--Global Principle.
  The operators~\(\cl{\pi(a)}\) for \(a\in A_h\) strongly commute for
  all \(\pi\in\Repi(A)\).

  Let \(S\subseteq A_h\) be a strong generating set for~\(A\).
  If\/~\(\cl{\pi(a)}\) is regular and self-adjoint for all \(a\in S\),
  then already \(\pi\in\Repi(A)\).
\end{theorem}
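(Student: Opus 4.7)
I would prove that $\Repi(A)$ is defined by submodule conditions by combining, for each $a \in A_h$, the submodule condition from Example \ref{exa:regularity_condition} requiring $\cl{\pi(a)}$ to be regular and self-adjoint; a transfinite recursion over a well-ordering of $A_h$, modeled on the proof of Theorem \ref{the:regular_admissible_local-global}, makes this precise. Lemma \ref{lem:operator_conditions} and Theorem \ref{the:operator_conditions} then yield weak admissibility and the Strong Local--Global Principle, while admissibility is Theorem \ref{the:regular_admissible_local-global} applied to the strong generating set $S = A_h$ (Example \ref{exa:strong_generating}). The strong commutation claim for $a, b \in A_h$ and $\pi \in \Repi(A)$ reduces via the Strong Local--Global Principle (strong commutation is itself a submodule condition, Example \ref{exa:strong_commutation}) to Hilbert space representations; there, $a^2 + b^2 \in A_h$ implies that $\pi(a)^2 + \pi(b)^2 = \pi(a^2+b^2)$ is essentially self-adjoint on $\Hilms$, and Nelson's theorem on commuting symmetric operators gives the desired strong commutation of $\cl{\pi(a)}$ and~$\cl{\pi(b)}$.

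For the last assertion, suppose $\cl{\pi(a)}$ is regular and self-adjoint for every $a \in S$. The class $\Rep^S(A)$ of such representations is admissible and satisfies the Strong Local--Global Principle by Theorem \ref{the:regular_admissible_local-global}, and so does $\Repi(A)$ by the first part. Since membership in $\Repi(A)$ is a conjunction of submodule conditions, it suffices to treat Hilbert space representations. I would first establish pairwise strong commutation of $\{\cl{\pi(a)} : a \in S\}$ by applying Nelson's theorem to each pair: commutativity on $\Hilms$ is automatic from commutativity of $A$, while essential self-adjointness of $\pi(a_1)^2 + \pi(a_2)^2$ on $\Hilms$ must be verified using the strong generating bound for $\norm{\xi}_{a_1^2+a_2^2}$ together with the self-adjointness hypothesis on $S$. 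Once pairwise strong commutation is in place, the joint spectral theorem produces a projection-valued measure $E$ on $\prod_{a \in S} \R$ with $\cl{\pi(a)} = \int \lambda_a \, dE(\lambda)$ for $a \in S$; any $b \in A_h$ is a polynomial $p$ in finitely many such generators, so $\cl{\pi(b)}$ coincides with the self-adjoint operator $\int p(\lambda)\, dE(\lambda)$, which on a Hilbert space is automatically regular.

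The hardest step will be the verification of essential self-adjointness of $\pi(a_1)^2 + \pi(a_2)^2$ on $\Hilms$ for $a_1, a_2 \in S$: this is a nontrivial hypothesis of Nelson's theorem that does not follow directly from the self-adjointness of the individual $\cl{\pi(a_i)}$, and it requires exploiting the strong generating property beyond its usual role of controlling graph topologies.
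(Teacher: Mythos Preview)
Your overall architecture matches the paper's: both show that \(\Repi(A)\), \(\Rep'(A)\) (adding strong commutation), and \(\Rep^S(A)\) are all defined by submodule conditions, invoke Theorem~\ref{the:operator_conditions} for the Strong Local--Global Principle, and then reduce the equalities to Hilbert space representations. Your argument for strong commutation inside \(\Repi(A)\) via Nelson's theorem is correct and is essentially what the paper obtains by citing \cite{Schmudgen:Unbounded_book}*{Theorem~9.1.2}: the point is that \(a^2+b^2\in A_h\), so \(\pi(a)^2+\pi(b)^2\) is essentially self-adjoint on~\(\Hilms\) by the defining hypothesis of \(\Repi(A)\).

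The gap is in the last assertion, and it is precisely the step you flag as hardest. For \(a_1,a_2\in S\) you need essential self-adjointness of \(\pi(a_1)^2+\pi(a_2)^2\) on~\(\Hilms\) to feed into Nelson's theorem, but this is not derivable from the strong generating bound \(\norm{\xi}_{a_1^2+a_2^2}\le C\sum_i\norm{\xi}_{b_i}\): such an estimate controls the graph topology, not deficiency indices. In fact, Nelson's theorem (and its converse) make the essential self-adjointness of \(\pi(a_1)^2+\pi(a_2)^2\) essentially \emph{equivalent} to the strong commutation you are trying to prove, so the proposed route is circular. Even if pairwise strong commutation of \(\{\cl{\pi(a)}:a\in S\}\) were granted, the subsequent step ``\(\cl{\pi(b)}=\int p(\lambda)\,dE(\lambda)\)'' needs \(\Hilms\) to be a core for every spectral integral \(\int p(\lambda)\,dE(\lambda)\), which is again a nontrivial domain statement not supplied by the joint spectral theorem alone.

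The paper does not attempt to carry out this analysis by hand. It observes that the strong generating hypothesis forces \(\Hilms=\bigcap_{a\in S}\dom\cl{\pi(a)}\supseteq\bigcap_{a\in S,\,n\in\N}\dom\cl{\pi(a)}{}^n\) and then invokes \cite{Schmudgen:Unbounded_book}*{Theorem~9.1.3}, which is exactly the structural result establishing that \(\Rep^S(A)\) and \(\Repi(A)\) agree on Hilbert spaces under this domain condition. Your plan would need to replace this citation with a genuine argument, and Nelson's theorem is not enough for that.
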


\begin{proof}
  The class \(\Rep^S(A)\) of representations defined by
  requiring~\(\cl{\pi(a)}\) to be regular and self-adjoint for all
  \(a\in S\) for a strong generating set~\(S\) is admissible and
  defined by submodule conditions by
  Theorem~\ref{the:regular_admissible_local-global}.  The class
  \(\Repi(A)\) is defined by submodule conditions as well by
  Example~\ref{exa:regularity_condition}.  So is the subclass
  \(\Rep'(A)\) of all representations in~\(\Repi(A)\) for which the
  operators~\(\cl{\pi(a)}\) for all \(a\in A_h\) strongly commute
  (Example~\ref{exa:strong_commutation}).  Hence our three classes of
  representations satisfy the Strong Local--Global Principle by
  Theorem~\ref{the:operator_conditions}.

  The classes \(\Repi(A)\) and~\(\Rep'(A)\) have the same Hilbert
  space representations by \cite{Schmudgen:Unbounded_book}*{Theorem
    9.1.2}.  Since~\(S\) is a strong generating set, the domain of any
  representation~\(\pi\) in \(\Rep^S(A)\) is \(\bigcap_{a\in S}
  \dom(\cl{\pi(a)})\) by~\eqref{eq:domain_strong_generating_set}.
  This contains \(\bigcap_{a\in S, n\in\N} \dom(\cl{\pi(a)}{}^n)\).
  Now \cite{Schmudgen:Unbounded_book}*{Theorem 9.1.3} shows that
  \(\Rep^S(A)\) and \(\Repi(A)\) contain the same Hilbert space
  representations.  Since our three classes of representations satisfy
  the (Strong) Local--Global Principle and have the same Hilbert space
  representations, they are equal.
\end{proof}

\begin{theorem}
  \label{the:locally_bounded_versus_regular}
  If~\(A\) is commutative and countably generated, then
  \[
  \Repi(A)=\Rep_\mathrm{b}(A).
  \]
\end{theorem}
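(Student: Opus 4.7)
By Proposition~\ref{pro:locally_bounded_regular}, \(\Rep_\mathrm{b}(A) \subseteq \Repi(A)\); we prove the reverse inclusion. Let \(\pi \in \Repi(A)\) on a Hilbert \(D\)\nb-module \(\Hilm\), and pick a countable self-adjoint generating set \(\{a_n\}_{n \in \N}\) of~\(A\). By Theorem~\ref{the:commutative_regular_admissible}, the operators \(T_n \defeq \cl{\pi(a_n)}\) are strongly commuting, regular, self-adjoint.

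The construction is to produce a commutative \(\Cst\)-algebra \(C^\pi \subseteq \Bound(\Hilm)\) encoding \(\pi\) in terms of affiliated multipliers. Let \(C^\pi\) be the \(\Cst\)-subalgebra generated by the bounded transforms \(b(T_n) = T_n(1+T_n^2)^{-\nicefrac12}\). It is commutative, isomorphic to \(\Cont_0(Y^\pi)\) for some locally compact Hausdorff~\(Y^\pi\), and each \(T_n\) is affiliated with~\(C^\pi\), corresponding to a continuous function \(f_n\colon Y^\pi \to \R\). The action of \(C^\pi\) on \(\Hilm\) is nondegenerate because \(C^\pi\) contains \(g(T_1)\) for all \(g \in \Cont_0(\R)\) and the \(\Cont_0(\R)\)\nb-functional calculus of~\(T_1\) is nondegenerate.

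The main step is to verify that the continuous map \(F \defeq (f_n)_{n\in\N}\colon Y^\pi \to \R^\N\) lands in \(\hat{A}\). Fix \(y \in Y^\pi\). Any polynomial relation \(p(a_{n_1}, \dotsc, a_{n_k}) = 0\) in \(A\) splits via \(p = \Re p + \ima \Im p\) (using \(a_n = a_n^*\)) into two real relations. For a real polynomial~\(p\) with \(p(a_{n_j}) = 0\), the joint functional calculus operator \(p(T_{n_1}, \dotsc, T_{n_k})\) is self-adjoint and restricts on the core~\(\Hilms\) to \(\pi(p(a))|_{\Hilms} = 0\); its closure is \(\cl{\pi(p(a))} = 0\), and since a self-adjoint operator has no proper self-adjoint extension, \(p(T_{n_1}, \dotsc, T_{n_k}) = 0\) on all of~\(\Hilm\). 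Since \(C^\pi\) is faithfully embedded in \(\Bound(\Hilm)\), the corresponding continuous function \(p(f_{n_1}, \dotsc, f_{n_k})\) vanishes identically on~\(Y^\pi\); evaluating at~\(y\) shows that \(a_n \mapsto f_n(y)\) respects all relations, hence extends to a character \(\chi_y \in \hat{A}\). This gives \(F(Y^\pi) \subseteq \hat{A}\).

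Finally, pick an approximate unit \((u_\alpha) \subseteq \Contc(Y^\pi)\) for~\(C^\pi\). For \(\xi \in \Hilms\) and any \(a \in A\), the product \((\hat a \circ F) \cdot u_\alpha \in \Contc(Y^\pi)\) realises \(\pi(a)(u_\alpha \xi) = \bigl((\hat a \circ F) u_\alpha\bigr) \xi\), so \(u_\alpha \xi \in \Hilms\) and
\[
\norm{\pi(a)(u_\alpha \xi)} \le \norm{(\hat a \circ F) u_\alpha}_{C^\pi} \norm{\xi} \le q_\alpha(a) \norm{u_\alpha} \norm{\xi},
\]
where \(q_\alpha(a) \defeq \sup_{K_\alpha} \abs{\hat a}\) for the compact set \(K_\alpha \defeq F(\supp u_\alpha) \subseteq \hat{A}\); this is a \(\Cst\)-seminorm by Proposition~\ref{pro:commutative_pro-completion}, so Proposition~\ref{pro:bounded_element_fixed_seminorm} makes \(u_\alpha \xi\) a bounded vector. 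Since \(u_\alpha\) commutes with \(\pi(a)\) in the affiliated multiplier picture, \(\pi(a) u_\alpha \xi = u_\alpha \pi(a) \xi \to \pi(a) \xi\) in norm by nondegeneracy of~\(C^\pi\). Hence \(u_\alpha \xi \to \xi\) in the graph topology, and \(\pi \in \Rep_\mathrm{b}(A)\). The main obstacle is the identification \(F(Y^\pi) \subseteq \hat{A}\), requiring a careful passage between algebraic identities in~\(A\), operator identities on~\(\Hilm\), and identities of continuous functions on~\(Y^\pi\); this chain works only because Theorem~\ref{the:commutative_regular_admissible} supplies both strong commutativity and self-adjointness, and because self-adjoint operators admit unique self-adjoint extensions.
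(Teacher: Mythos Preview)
Your approach has a genuine gap at the affiliation claim. The assertion that each \(T_n\) is affiliated with \(C^\pi = C^*\bigl(\{b(T_m)\}_m\bigr)\) and therefore corresponds to a continuous function \(f_n\colon Y^\pi\to\R\) is false in general. Take \(A=\C[x,y]\), \(\Hilm=L^2(\R^2)\), and \(\pi(x)=M_x\), \(\pi(y)=M_y\). The homeomorphism \((b,b)\colon\R^2\to(-1,1)^2\) and Stone--Weierstra\ss{} identify \(C^\pi\) with \(\Cont_0\bigl([-1,1]^2\setminus\{(0,0)\}\bigr)\), so \(Y^\pi=[-1,1]^2\setminus\{(0,0)\}\). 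The Gelfand transform of \(b(T_1)\) is the coordinate function~\(s\), which attains the value~\(1\) at boundary points such as \((1,\tfrac12)\). Hence \((1+T_1^2)^{-1}\) corresponds to \(1-s^2\), which vanishes there, so \((1+T_1^2)^{-1/2}C^\pi\) is not dense in~\(C^\pi\) and Woronowicz affiliation fails; no continuous \(f_1\colon Y^\pi\to\R\) exists. The obstruction is structural: \(Y^\pi\) always contains ``points at infinity'' for each coordinate (reached from finite values of the other coordinates), and restricting to the set \(\{h_n>0\ \forall n\}\) gives only a \(G_\delta\), not an open subset, with no evident nondegeneracy of the corresponding ideal on a general Hilbert module.

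Once \(F\colon Y^\pi\to\hat A\) is undefined, the identity \(\pi(a)(u_\alpha\xi)=\bigl((\hat a\circ F)u_\alpha\bigr)\xi\) and the ensuing bound collapse. The paper's proof circumvents this by never forming a single \(C^\pi\). It uses only the individual calculi \(\varrho_i\colon\Cont_0(\R)\to\Bound(\Hilm)\) and builds the approximants as \emph{infinite products} \(\xi_k=\lim_l\varrho_0(f_{0,k})\cdots\varrho_l(f_{l,k+l})\xi\), choosing each bump \(f_{i,k+i}\) depending on~\(\xi\) so that the product converges in the graph topology. The crucial point you are missing is that a bounded vector must be controlled by a \emph{single} \(\Cst\)\nb-seminorm on~\(A\), hence simultaneously in all generators; a compact set in your \(Y^\pi\) does not achieve this because of the boundary points, whereas the paper's limit \(\xi_k\) lies in \(\varrho_i(\Contc(-R_i,R_i))\Hilm\) for every~\(i\), yielding \(\norm{\pi(a_i)\pi(b)\xi_k}\le R_i\norm{\pi(b)\xi_k}\) for all \(i\) and all \(b\in A\). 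From this one manufactures the required \(\Cst\)\nb-seminorm \(q(a)=\sup_b\norm{\pi(a)\pi(b)\xi_k}/\norm{\pi(b)\xi_k}\).
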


\begin{proof}
  Proposition~\ref{pro:locally_bounded_regular} gives
  \(\Rep_\mathrm{b}(A)\subseteq \Repi(A)\).  Conversely,
  let~\((\Hilms,\pi)\) be a representation on a Hilbert
  module~\(\Hilm\) in~\(\Repi(A)\); that is, \(\cl{\pi(a)}\) is
  regular and self-adjoint for each \(a\in A\).  Let
  \((a_i)_{i\in\N}\) be a countable generating set for~\(A\).  We
  may assume without loss of generality that \(a_i = a_i^*\) for all
  \(i\in\N\) and that~\((a_i)\) is a basis for~\(A\) and hence a
  strong generating set.  Let \(\xi\in\Hilms\).  We are going to
  approximate~\(\xi\) by bounded vectors for~\(\pi\).  This will
  show that~\(\pi\) is locally bounded.

  For each \(i\in\N\), there is a canonical homomorphism
  \(\alpha_i\colon \C[x]\to A\) mapping \(x\mapsto a_i\).  The
  closure of \(\pi\circ\alpha_i\) is an integrable representation
  of~\(\C[x]\) as in
  condition~\ref{enum:regular_self-adjoint_Cstar-hull2} in
  Theorem~\ref{the:regular_self-adjoint_Cstar-hull}.  Hence it
  corresponds to a representation \(\varrho_i\colon
  \Cont_0(\R)\to\Bound(\Hilm)\), the functional calculus
  of~\(\cl{\pi(a_i)}\).  The operators~\(\cl{\pi(a)}\) for \(a\in
  A_h\) strongly commute by
  Theorem~\ref{the:commutative_regular_admissible}.  Thus the Cayley
  transform of~\(a_i\) commutes with~\(\cl{\pi(a)}\) and, in
  particular, maps the domain of~\(\cl{\pi(a)}\) to itself.  The
  same remains true for \(\varrho_i(f)\) for all \(f\in\Cont_0(\R)\)
  because we get them by the (bounded) functional calculus for the
  Cayley transform of~\(\cl{\pi(a_i)}\).  So
  \(\varrho_i(f)(\Hilms)\subseteq \Hilms\)
  by~\eqref{eq:domain_closure} and \(\varrho_i(f) \pi(a) = \pi(a)
  \varrho_i(f)\) for all \(f\in\Cont_0(\R)\), \(a\in A\) as
  operators on~\(\Hilms\).  Now we show that~\(\pi\circ\alpha_i\) is
  locally bounded.  If \(f\in\Contc(\R)\) is supported in a compact
  subset \(K\subseteq \R\), then
  \[
  \norm{\pi(h(a_i))\varrho_i(f)\xi}
  = \norm{\varrho_i(h\cdot f)\xi}
  \le C \sup \{\abs{h(x)}\mid x\in K\}
  \]
  for all \(h\in\C[x]\); thus~\(\varrho_i(f)\xi\) is bounded for the
  representation~\(\pi\circ\alpha_i\).  There is an approximate
  unit~\((f_n)\) for~\(\Cont_0(\R)\) that lies in~\(\Contc(\R)\).
  Then \(\lim \varrho_i(f_n)\xi=\xi\) for all \(\xi\in\Hilms\), even
  in the graph topology for~\(\pi\) because \(\pi(a)\varrho_i(f_n)\xi =
  \varrho_i(f_n)\pi(a)\xi\) for all \(a\in A\),
  \(f_n\in\Cont_0(\R)\), \(\xi\in\Hilms\).  Therefore, the bounded
  vectors of the form~\(\varrho(f)\xi\) with \(f\in\Contc(\R)\),
  \(\xi\in\Hilms\) form a core for \(\pi\circ\alpha_i\).  So
  \(\pi\circ\alpha_i\) is locally bounded.

  We now refine this construction to approximate~\(\xi\) by bounded
  vectors for the whole representation~\(\pi\).  We
  construct~\(\varrho_i\) as above.  Fix \(i,k\in\N\) and let \(\xi'
  \defeq \bigl(1+\pi(a_0^2) + \dotsb + \pi(a_k^2)\bigr)\xi
  \in\Hilms\).  The argument above gives \(f_{i,k}\in\Contc(\R)\)
  with \(0\le f_{i,k}\le 1\) and \(\norm{\varrho_i(f_{i,
      k})\xi'-\xi'} < 2^{-k}\).  Thus \(\norm{\varrho_i(f_{i,k})\xi
    - \xi}_{a_j}<2^{-k}\) in the graph norm for~\(a_j\) for \(0\le
  j\le k\).  For \(k,l\in\N\), let
  \[
  \xi_{k,l} \defeq \varrho_0(f_{0,k}) \varrho_1(f_{1,k+1}) \dotsm
  \varrho_l(f_{l,k+l})\xi.
  \]
  The operators~\(\varrho_i(f_{i,j})\) are norm-contracting,
  map~\(\Hilms\) into itself, and commute with each other and with
  the unbounded operators~\(\pi(a)\) for all \(a\in A\).  Hence
  \begin{multline*}
    \norm{\xi_{k,l} - \xi_{k,l+d}}_{a_j}
    \le \sum_{i=1}^d {}\norm{\xi_{k,l+i-1} - \xi_{k,l+i}}_{a_j}
    \\\le \sum_{i=1}^d {}\norm{\varrho_{l+i}(f_{l+i,k+l+i})\xi - \xi}_{a_j}
    \le \sum_{i=1}^d 2^{-k-l-i} = 2^{-k-l}
  \end{multline*}
  for all \(k,l,d\in\N\), \(0\le j\le k+l+1\).  Since we
  assumed~\((a_j)\) to be a strong generating set, the graph norms
  for~\(a_j\) generate the graph topology.  So the estimate above
  shows that~\((\xi_{k,l})_{l\in\N}\) with fixed~\(k\) is a Cauchy
  sequence in~\(\Hilms\) in the graph topology.  Thus it converges
  to some \(\xi_k\in\Hilms\).  Letting \(\xi_{k,-1}\defeq\xi\), the
  above estimate remains true for \(l=-1\) and gives
  \(\norm{\xi_{k,l} - \xi}_{a_j} \le 2^{-k+1}\) for all \(j\le k\),
  uniformly in~\(l\in\N\).  This implies \(\norm{\xi_k-\xi}_{a_j}\le
  2^{-k+1}\) for \(j\le k\), so that \(\lim \xi_k=\xi\) in the graph
  topology.  It
  remains to show that each~\(\xi_k\) is a bounded vector.

  Fix \(k,i\in\N\) and let \(b\in A\).  Choose \(R_i>0\) so
  that~\(f_{i,k+i}\) is supported in \([-R_i,R_i]\).  If \(l\ge i\),
  then \(\pi(b)\xi_{k,l}\in \varrho_i(\Cont_0(-R_i,R_i))\Hilm\)
  because \(\varrho_i(f_{i,k+i})\) occurs in the definition
  of~\(\xi_{k,l}\).  As above, this implies
  \(\norm{\pi(a_i)\pi(b)\xi_k} \le R_i \norm{\pi(b)\xi_k}\) for all
  \(b\in A\).  Thus
  \[
  q(a) \defeq \sup_{b\in A}
  \frac{\norm{\pi(a)\pi(b)\xi_k}}{\norm{\pi(b)\xi_k}}
  \]
  is finite for \(a=a_i\).  Since~\(a_i\) is a basis for~\(A\)
  and~\(q\) is subadditive, we get \(q(a)<\infty\) for all \(a\in
  A\).  Since \(q(a)\) is the operator norm of
  \(\pi(a)|_{\pi(A)\xi_k}\), it is a \Cstar\nb-seminorm on~\(A\).
  By construction, \(\norm{\pi(a)\xi_k} \le q(a)\) for all \(a\in
  A\), that is, \(\xi_k\) is bounded.
\end{proof}

\begin{proposition}
  \label{pro:commutative_regular_versus_locally_bounded_necessary}
  If\/~\(\Repi(A)\) has a weak \Cstar\nb-hull, then \(\Repi(A) =
  \Rep_\mathrm{b}(A)\).
\end{proposition}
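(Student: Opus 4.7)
The forward inclusion $\Rep_\mathrm{b}(A)\subseteq\Repi(A)$ is Proposition~\ref{pro:locally_bounded_regular}, so the task is to show that every representation in $\Repi(A)$ is locally bounded, assuming the existence of a weak \Cstar\nb-hull~$B$ with universal representation~$(\Hilms[B],\mu)\in\Repi(A)$. My plan is to prove that~$B$ must be commutative, after which Proposition~\ref{pro:commutative_representation} forces~$\mu$ to be locally bounded, and admissibility of~$\Rep_\mathrm{b}(A)$ (Corollary~\ref{cor:locally_bounded_admissible}) transports local boundedness to every $B$\nb-integrable representation, i.e.\ to every representation in~$\Repi(A)$.

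To show commutativity of~$B$, I would use the strongly commuting regular self-adjoint operators~$\cl{\mu(a)}$ for $a\in A_h$, which exist by Theorem~\ref{the:commutative_regular_admissible}. Their Cayley transforms~$c_a = (\cl{\mu(a)}-\ima)(\cl{\mu(a)}+\ima)^{-1}$ are pairwise commuting unitary multipliers of~$B$, so they generate a commutative unital \Cstar\nb-subalgebra $C\subseteq\Mult(B)$. I will apply Proposition~\ref{pro:Cstar-hull_bounded_criterion} with $D=C$ and $\varphi\colon C\injto\Mult(B)$ the inclusion. The hypothesis of that proposition, namely that for Hilbert space representations $\varrho_1,\varrho_2$ of~$B$ one has $\mu\otimes_{\varrho_1}1 = \mu\otimes_{\varrho_2}1$ if and only if $\bar\varrho_1\circ\varphi = \bar\varrho_2\circ\varphi$, reduces to the statement that the extensions of \(\bar\varrho_1,\bar\varrho_2\) agree on all~$c_a$ precisely when \(\cl{(\mu\otimes_{\varrho_1}1)(a)}=\cl{(\mu\otimes_{\varrho_2}1)(a)}\) for every $a\in A_h$; since~$A_h$ is a strong generating set (Example~\ref{exa:strong_generating}), Proposition~\ref{pro:equality_if_closure_equal} then identifies this with the equality of the induced representations of~$A$. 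Because~$B$ is a weak \Cstar\nb-hull, Proposition~\ref{pro:Cstar-hull_bounded_criterion} concludes that~$C$ is strictly dense in~$\Mult(B)$.

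The remaining step is to pass from strict density of the commutative subalgebra~$C$ to commutativity of~$\Mult(B)$. The main (though brief) technical point is the easy check that the strict closure of any commutative subset of~$\Mult(B)$ is again commutative: if $d_i\to m$ and $d'_j\to m'$ strictly with $d_i d'_j = d'_j d_i$, then evaluating on $b\in B$ and taking the limit first in~$i$ and then in~$j$ gives $d'_j m=md'_j$ and finally $mm'=m'm$. Hence~$\Mult(B)$, and with it~$B$, is commutative. Writing $B=\Cont_0(X)$, Proposition~\ref{pro:commutative_representation} shows that any representation of~$A$ on~$B$ is locally bounded; in particular~$\mu$ is. Finally, since $\Rep_\mathrm{b}(A)$ is admissible and therefore closed under interior tensor products with \Cstar\nb-correspondences, every $B$\nb-integrable representation $\pi=\mu\otimes_\varrho \Hilm$ is locally bounded, giving $\Repi(A)\subseteq\Rep_\mathrm{b}(A)$ as required. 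The only genuinely delicate step is verifying both directions of the ``iff'' hypothesis in Proposition~\ref{pro:Cstar-hull_bounded_criterion}; everything else is essentially bookkeeping with results already established.
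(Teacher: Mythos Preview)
Your argument is correct and reaches the same conclusion via a genuinely different route to the key step, namely commutativity of~\(B\).

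The paper embeds~\(B\) faithfully on a Hilbert space, identifies the commutant of~\(B\) with the \Star{}intertwiners of the corresponding representation of~\(A\) via Proposition~\ref{pro:adjointable_intertwiner}, and then invokes \cite{Schmudgen:Unbounded_book}*{Theorem 9.1.7} to conclude that the bicommutant is commutative. Your approach stays entirely inside the paper: you use the strong commutation of the~\(\cl{\mu(a)}\) from Theorem~\ref{the:commutative_regular_admissible} to produce a commutative \Cstar\nb-subalgebra \(C\subseteq\Mult(B)\) of Cayley transforms, verify the ``iff'' hypothesis of Proposition~\ref{pro:Cstar-hull_bounded_criterion} via Proposition~\ref{pro:equality_if_closure_equal}, and deduce that~\(C\) is strictly dense, hence \(\Mult(B)\) is commutative. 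This is precisely the strategy sketched in Remark~\ref{rem:criterion_density_weak-Cstar-hull}. The paper's proof is shorter because it outsources the algebraic content to Schm\"udgen; yours is more self-contained and makes explicit which internal machinery does the work. Once~\(B\) is known to be commutative, both proofs finish identically through Proposition~\ref{pro:commutative_representation} and weak admissibility of~\(\Rep_\mathrm{b}(A)\).
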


\begin{proof}
  Let~\(B\) with the universal representation~\((\Hilms[B],\mu)\) be a
  weak \Cstar\nb-hull for \(\Repi(A)\).  First we claim that~\(B\) is
  commutative.  Let \(\omega\colon B\injto\Bound(\Hilm[H])\) be a
  faithful representation.  This corresponds to an integrable
  representation~\(\pi\) of~\(A\).  Since the equivalence
  \(\Repi(A,\Hilm[H])\cong\Rep(B,\Hilm[H])\) is compatible with
  unitary \Star{}intertwiners, the commutant of~\(\omega(B)\) is the
  \Cstar\nb-algebra of \Star{}intertwiners of~\(\pi\) by
  Proposition~\ref{pro:adjointable_intertwiner}.  The commutant of
  this is a commutative von Neumann algebra by
  \cite{Schmudgen:Unbounded_book}*{Theorem 9.1.7}.  So the bicommutant
  of~\(\omega(B)\) is commutative.  This forces~\(B\) to be
  commutative.

  Any representation of~\(A\) on a commutative \Cstar\nb-algebra is
  locally bounded by Theorem~\ref{the:commutative_hull}.  If the
  universal representation for~\(\Repi(A)\) is locally bounded, then
  all representations in~\(\Repi(A)\) are locally bounded, so that
  \(\Repi(A) = \Rep_\mathrm{b}(A)\).  Thus \(\Repi(A)\) only has a
  weak \Cstar\nb-hull if \(\Repi(A) = \Rep_\mathrm{b}(A)\).
\end{proof}

\begin{example}
  \label{exa:Repi_infinite_generators}
  Let~\(A\) be the \Star{}algebra \(\C[(x_i)_{i\in\N}]\) of
  polynomials in countably many symmetric generators.  Then
  \(\hat{A} \cong \prod_\N \R\) with the product topology.  This is
  metrisable.  So~\(\tau_c\) is the usual product topology.  Since
  this is not locally compact, \(\Rep_\mathrm{b}(A)\) has no
  \Cstar\nb-hull, not even a weak one
  (Theorem~\ref{the:no_Cstar-hull_commutative_not_locally_compact}).
  Since~\(A\) is countably generated, \(\Repi(A) =
  \Rep_\mathrm{b}(A)\) by
  Theorem~\ref{the:locally_bounded_versus_regular}.  A commutative
  (weak) \Cstar\nb-hull for some class of representations of~\(A\)
  is equivalent to an injective, continuous map \(X\to\hat{A}\) for
  a locally compact space~\(X\) by
  Theorem~\ref{the:commutative_hull}.
\end{example}

Let~\(G\) be a topological group.  A \emph{host algebra} for a~\(G\)
is defined in~\cite{Grundling-Neeb:Infinite_tensor} as a
\Cstar\nb-algebra~\(B\) with a continuous representation~\(\lambda\)
of~\(G\) by unitary multipliers, such that for each Hilbert
space~\(\Hilm[H]\), the map that sends a representation
\(\varrho\colon B\to \Bound(\Hilm[H])\) to a unitary representation
\(\varrho\circ\lambda\) of~\(G\) is injective.  We claim that
commutative \Cstar\nb-hulls for the polynomial algebra
\(\C[(x_i)_{i\in\N}]\) are equivalent to host algebras of the
topological group \(\R^{(\N)} \defeq \bigoplus_\N \R\).

Let~\(\Cst(G_d)\) be the \Cstar\nb-algebra of~\(G\) viewed as a
discrete group.  Representations of~\(\Cst(G_d)\) are equivalent to
representations of the discrete group underlying~\(G\) by unitary
multipliers.  Since any representation of~\(\Cst(G_d)\) is bounded,
any weakly admissible class of representations of~\(\Cst(G_d)\) is
admissible by Corollary~\ref{cor:locally_bounded_admissible}.  Call
a representation of~\(\Cst(G_d)\) continuous if the corresponding
representation of~\(G\) is continuous.  This class is easily seen to
be weakly admissible, hence admissible.  The unital
\Star{}homomorphism \(\Cst(G_d)\to \Mult(B)\) associated to the
unitary representation~\(\lambda\) for a host algebra~\(B\) is
continuous by assumption.  Thus \(B\)\nb-integrable representations
of~\(\Cst(G_d)\) are continuous.  The injectivity requirement in the
definition of a host algebra is exactly the
condition~\ref{enum:Cstar-gen1} in
Proposition~\ref{pro:Cstar-generated_by_unbounded_multipliers}, and
this is equivalent to~\(B\) being a \Cstar\nb-hull.  Thus a host
algebra for~\(G\) is the same as a \Cstar\nb-hull or weak
\Cstar\nb-hull for a class of continuous representations
of~\(\Cst(G_d)\).

In applications, we would rather study
continuous representations of~\(G\) through the Lie algebra of~\(G\)
instead of through the inseparable \Cstar\nb-algebra~\(\Cst(G_d)\).
The Lie algebra of \(G=\R^{(\N)}\) is the Abelian Lie
algebra~\(\R^{(\N)}\), and its universal enveloping algebra is the
polynomial algebra \(A=\C[(x_i)_{i\in\N}]\).  Call a representation
of~\(A\) integrable if it belongs to \(\Repi(A)= \Rep_\mathrm{b}(A)\).

Let~\(\Hilm\) be a Hilbert module.  We claim that an integrable
representation of~\(A\) on~\(\Hilm\) is equivalent to a strictly
continuous, unitary representation of the group~\(\R^{(\N)}\)
on~\(\Hilm\).  Indeed, a unitary representation of~\(\R\) is
equivalent to a representation of \(\Cst(\R)\cong \Cont_0(\R)\), and
these are equivalent to integrable representations of~\(\C[x]\) as
in Theorem~\ref{the:regular_self-adjoint_Cstar-hull}.  In an
integrable representation of~\(\C[(x_i)_{i\in\N}]\), the operators
\(\cl{\pi(x_i)}\) for \(i\in\N\) strongly commute
by Theorem~\ref{the:commutative_regular_admissible}.  Hence the
resulting representations of \(\Cont_0(\R)\) commute.  Equivalently,
the resulting continuous representations of~\(\R\) commute, so that
we may combine them to a representation of the Abelian
group~\(\R^{(\N)}\).  Conversely, a continuous unitary
representation of~\(\R^{(\N)}\) provides nondegenerate
representations of~\(\Cont_0(\R^m)\) for all \(m\in\N\) by
restricting the representation to \(\R^m\subseteq \R^{(\N)}\).
These correspond to a compatible family of representations of the
polynomial algebras \(\C[x_1,\dotsc,x_m]\) for \(m\in\N\).  The
intersection of their domains is dense by
\cite{Schmudgen:Unbounded_book}*{Lemma 1.1.2}.  So these
representations combine to a representation
of \(A=\C[(x_i)_{i\in\N}]\).  Hence an integrable representation
of~\(A\) on a Hilbert module as in
Theorem~\ref{the:commutative_regular_admissible} is equivalent to a
continuous representation of~\(\R^{(\N)}\).

\section{From graded \Star{}algebras to Fell bundles}
\label{sec:graded_to_Fell}

Let~\(G\) be a discrete group with unit element~\(e\).

\begin{definition}
  \label{def:grading}
  A \emph{\(G\)\nb-graded \Star{}algebra} is a unital algebra~\(A\)
  with a linear direct sum decomposition \(A=\bigoplus_{g\in G} A_g\)
  with \(A_g\cdot A_h\subseteq A_{gh}\), \(A_g^*=A_{g^{-1}}\), and
  \(1\in A_e\) for all \(g,h\in G\).  Thus \(A_e\subseteq A\) is a
  unital \Star{}subalgebra.
\end{definition}

The articles \cites{Savchuk-Schmudgen:Unbounded_induced,
  Dowerk-Savchuk:Induced} study many examples of \(G\)\nb-graded
\Star{}algebras.

We fix some notation used throughout this section.  Let~\(\Hilm\) be a
Hilbert module over a \Cstar\nb-algebra~\(D\).  Let \((\Hilms,\pi)\)
be a representation of~\(A\) on~\(\Hilm\).  Let \(\pi_g\colon
A_g\to\Endo_D(\Hilms)\) for \(g\in G\) be the restrictions of~\(\pi\),
so \(\pi=\bigoplus_{g\in G} \pi_g\).  Since~\(\pi\) is a
\Star{}homomorphism,
\[
\pi_g(a_g)\pi_h(a_h)=\pi_{gh}(a_g\cdot a_h),
\qquad
\pi_{g^{-1}}(a_g^*) \subseteq \pi_g(a_g)^*
\]
for all \(a_g\in A_g\), \(a_h\in A_h\).  The last condition means that
\(\braket{\xi}{\pi_g(a_g)\eta} = \braket{\pi_{g^{-1}}(a_g^*)\xi}
{\eta}\) for all \(\xi,\eta\in\Hilms\).  In particular, \(\pi_e\colon
A_e\to\Endo(\Hilms)\) is a representation of~\(A_e\).

\begin{lemma}[compare \cite{Savchuk-Schmudgen:Unbounded_induced}*{Lemma 12}]
  \label{lem:restriction_closed}
  The families of norms \(\norm{\xi}_a\defeq \norm{\pi(a)\xi}\) for
  \(a\in A\) and for \(a\in A_e\) generate equivalent topologies
  on~\(\Hilms\).  Hence the representation \(\pi_e \colon
  A_e\to\Endo_D(\Hilms)\) is closed if and only if~\(\pi\) is
  closed.
\end{lemma}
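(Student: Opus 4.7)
The plan is to prove that the graph topology generated by $\{\norm{\blank}_a : a \in A_e\}$ coincides with the one generated by $\{\norm{\blank}_a : a \in A\}$; the closedness statement then follows immediately from \eqref{eq:domain_closure}. One direction is automatic from $A_e \subseteq A$, so the work lies in showing that every seminorm $\norm{\blank}_a$ for $a \in A$ is dominated by some $\norm{\blank}_b$ with $b \in A_e$.

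First I would reduce to homogeneous elements. Any $a \in A$ has a finite grading decomposition $a = \sum_{g \in F} a_g$ with $a_g \in A_g$. Using the subadditivity of graph norms and Lemma~\ref{lem:graph_norms_directed} (directedness of graph norms on $A$), it is enough to dominate each $\norm{\blank}_{a_g}$ individually by a graph norm for an element of $A_e$.

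The crux is then a one-line computation: for $a_g \in A_g$ and $\xi \in \Hilms$,
\[
\braket{\pi_g(a_g)\xi}{\pi_g(a_g)\xi}
= \braket{\xi}{\pi_{g^{-1}}(a_g^*)\pi_g(a_g)\xi}
= \braket{\xi}{\pi_e(a_g^* a_g)\xi},
\]
valid because $a_g^* a_g \in A_{g^{-1}g} = A_e$. Setting $b \defeq a_g^* a_g \in A_e$, this is precisely the ingredient driving the estimate in the proof of Lemma~\ref{lem:graph_norms_directed} applied to the singleton $\{a_g\}$, and it yields $\norm{\xi}_{a_g}^2 \le \nicefrac54\, \norm{\xi}_b^2$ with $b \in A_e$, as desired.

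Finally, one more invocation of Lemma~\ref{lem:graph_norms_directed} inside $A_e$ consolidates the finitely many $A_e$-graph norms arising from the decomposition of $a$ into a single graph norm for one element of $A_e$. Combined with the reverse domination (trivial), this shows the two families generate the same topology on $\Hilms$. Since completeness of $\Hilms$ in the graph topology is the defining condition for closedness of a representation, this equivalence immediately gives that $\pi_e$ is closed if and only if $\pi$ is closed. There is no real obstacle here: the proof is essentially a bookkeeping exercise built on the positivity identity $\pi_g(a_g)^*\pi_g(a_g) \subseteq \pi_e(a_g^* a_g)$ furnished by the grading.
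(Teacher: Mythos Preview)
Your proof is correct and follows essentially the same route as the paper's: decompose $a=\sum a_g$ into homogeneous parts, use subadditivity to reduce to each $a_g$, then invoke the $\nicefrac54$ estimate from the proof of Lemma~\ref{lem:graph_norms_directed} with $b=a_g^*a_g\in A_e$. The paper's proof is terser but identical in substance; your extra invocation of Lemma~\ref{lem:graph_norms_directed} inside $A_e$ to consolidate into a single graph norm is not strictly needed (equivalence of topologies only requires domination by finitely many seminorms from the other family), but it does no harm.
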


\begin{proof}
  Any element of~\(A\) is a sum \(a=\sum_{g\in G} a_g\) with
  \(a_g\in A_g\) and only finitely many non-zero terms.  We estimate
  \(\norm{\xi}_a \le \sum_{g\in G} \norm{\xi}_{a_g}\), and
  \(\norm{\xi}_{a_g} \le \frac{5}{4}\norm{\xi}_{a_g^* a_g}\) by the
  proof of Lemma~\ref{lem:graph_norms_directed}.  Since \(a_g^*
  a_g\in A_e\), the graph topologies for \(\pi_e\) and~\(\pi\) are
  equivalent.
\end{proof}

\subsection{Integrability by restriction}
\label{sec:integrable_by_restrict}

\begin{definition}
  \label{def:induced_integrable}
  Let a weakly admissible class of integrable representations
  of~\(A_e\) on Hilbert modules be given.  We call a representation
  of~\(A\) on a Hilbert module \emph{integrable} if its restriction
  to~\(A_e\) is integrable.
\end{definition}

Here ``restriction of~\(\pi\)'' means the representation~\(\pi_e\)
with the same domain~\(\Hilms\) as~\(\pi\).  This is closed by
Lemma~\ref{lem:restriction_closed}.

\begin{proposition}
  \label{pro:integrable_induced_nice}
  If integrability for representations of~\(A_e\) is defined by
  submodule conditions, then the same holds for~\(A\).  If the
  Local--Global Principle holds for the integrable representations
  of~\(A_e\), it also holds for the integrable representations
  of~\(A\).  If the class of integrable representations of~\(A_e\)
  is admissible or weakly admissible, the same holds for~\(A\).
\end{proposition}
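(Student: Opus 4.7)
The plan is to exploit the identity $(\pi\otimes_{D_1}\Hilm[F])|_{A_e} = (\pi|_{A_e})\otimes_{D_1}\Hilm[F]$ and the fact that restriction to~$A_e$ commutes with direct sums, unitary \Star{}intertwiners, and the passage to subrepresentations, so that the defining property ``$\pi|_{A_e}$ integrable'' inherits all relevant closure properties from the corresponding property for~$A_e$. Restrictions of closed representations remain closed by Lemma~\ref{lem:restriction_closed}, so no pathology arises when unfolding the definitions.

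For (weak) admissibility, I would check the four conditions in Definition~\ref{def:integrable_admissible} one by one. Conditions \ref{enum:admissible0} and~\ref{enum:admissible1} are immediate from the two compatibilities above, since a unitary \Star{}intertwiner (respectively, a correspondence) for an $A$\nb-representation is also one for its $A_e$\nb-restriction, and the restricted representation of the tensor product equals the tensor product of the restricted representation. Condition~\ref{enum:admissible2} is analogous, using that $(\bigoplus_i\pi_i)|_{A_e} = \bigoplus_i(\pi_i|_{A_e})$ and that a direct summand of~$\pi$ in~$\Rep(A)$ restricts to a direct summand of $\pi|_{A_e}$ in~$\Rep(A_e)$. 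For the admissibility condition~\ref{enum:admissible3}, an integrable $A$\nb-subrepresentation of an integrable $A$\nb-representation restricts to an integrable $A_e$\nb-subrepresentation of an integrable $A_e$\nb-representation, hence is a direct summand on Hilbert spaces by hypothesis, and a direct-sum splitting of the Hilbert space that is compatible with~$A_e$ is automatically compatible with~$A$ since it arises from a projection onto a $*$-invariant closed subspace.

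For the Local--Global Principle, I would note that, for a state $\omega$ on~$D$ with GNS space~$\Hilm[H]_\omega$, one has $(\pi\otimes_\omega\Hilm[H]_\omega)|_{A_e} = (\pi|_{A_e})\otimes_\omega\Hilm[H]_\omega$. By definition, $\pi$ is $A$\nb-integrable iff $\pi|_{A_e}$ is $A_e$\nb-integrable; by the Local--Global Principle for~$A_e$, this happens iff each $(\pi|_{A_e})\otimes_\omega\Hilm[H]_\omega$ is $A_e$\nb-integrable; by the previous identity, iff each $\pi\otimes_\omega\Hilm[H]_\omega$ is $A$\nb-integrable. This is the Local--Global Principle for~$A$, and the Strong version follows the same template with $\omega$ restricted to pure states.

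The main obstacle is the submodule-condition part, which requires lifting the transfinite recursion defining $\Repi(A_e)\subseteq\Rep(A_e)$ to one defining $\Repi(A)\subseteq\Rep(A)$. Given the recursion $\Rep^i(A_e)$ with $\Rep^0(A_e)=\Rep(A_e)$ and $\Rep^M(A_e)=\Repi(A_e)$, I would define
\[
\Rep^i(A) \defeq \{\pi\in\Rep(A)\mid \pi|_{A_e}\in \Rep^i(A_e)\}.
\]
Then $\Rep^0(A)=\Rep(A)$, $\Rep^M(A)=\Repi(A)$, and the limit-ordinal clause is automatic since intersections are preserved by pullback. For a successor step, suppose $\Rep^{i+1}(A_e)\subseteq\Rep^i(A_e)$ is carved out by natural submodule constructions $\Hilm[F]_1,\Hilm[F]_2$ of some rank~$n$. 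I would define, for $\pi\in\Rep^i(A)$,
\[
\Hilm[F]_j'(\pi) \defeq \Hilm[F]_j(\pi|_{A_e}) \subseteq \Hilm^n,\qquad j=1,2,
\]
and verify properties \ref{enum:restriction0}--\ref{enum:restriction2} of Definition~\ref{def:restriction_representation} for~$\Hilm[F]_j'$. Each of these is inherited from the corresponding property of~$\Hilm[F]_j$ via the three identities already used above: unitary \Star{}intertwiners, correspondences, and direct sums all commute with restriction to~$A_e$. The key subtlety to be careful about is that these naturality properties for~$\Hilm[F]_j$ are hypothesised on $\Rep^i(A_e)$; the pullback construction requires them precisely for the restrictions of objects in~$\Rep^i(A)$, which lie in $\Rep^i(A_e)$ by the inductive definition, so everything is consistent. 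Then $\pi\in\Rep^{i+1}(A)$ iff $\pi|_{A_e}\in\Rep^{i+1}(A_e)$ iff $\Hilm[F]_1(\pi|_{A_e})=\Hilm[F]_2(\pi|_{A_e})$ iff $\Hilm[F]_1'(\pi)=\Hilm[F]_2'(\pi)$, which is exactly the submodule condition needed. Transfinite induction completes the proof.
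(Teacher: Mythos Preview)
Your proposal is correct and follows the paper's approach, though you supply far more detail than the paper, which dispatches the first two claims and weak admissibility with the single sentence ``trivial because integrability for a representation of~$A$ only involves its restriction to~$A_e$.'' Your careful treatment of the submodule-condition transfinite recursion and the Local--Global Principle is exactly the unfolding the paper leaves implicit.

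For condition~\ref{enum:admissible3}, however, the paper takes a slightly cleaner line: it invokes the domain formulation~\ref{enum:Cstar-gen2} of Proposition~\ref{pro:Cstar-generated_by_unbounded_multipliers}, namely $\Hilms[H] = \Hilms[H]_0 \oplus (\Hilms[H]\cap\Hilm[H]_0^\bot)$, which is transparently the same statement for $A$ and~$A_e$ once Lemma~\ref{lem:restriction_closed} tells you the domains coincide. Your argument reaches the same conclusion via the equivalent direct-summand formulation, but the final clause ``since it arises from a projection onto a \Star{}invariant closed subspace'' is not the right justification: $A$\nb-invariance of~$\Hilm[H]_0$ alone does not force the orthogonal projection to be a \Star{}intertwiner for~$A$. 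What actually makes the step work is precisely the domain coincidence from Lemma~\ref{lem:restriction_closed}, so that the $A_e$\nb-domain decomposition is literally the $A$\nb-domain decomposition, and then Proposition~\ref{pro:isometry_star-intertwiner} applies to~$A$. With that one-line correction your argument is complete.
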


\begin{proof}
  The first two statements and the claim about weak admissibility
  are trivial because integrability for a representation of~\(A\)
  only involves its restriction to~\(A_e\).
  Lemma~\ref{lem:restriction_closed} shows that restriction
  from~\(A\) to~\(A_e\) does not change the domain.
  Hence~\ref{enum:Cstar-gen2} in
  Proposition~\ref{pro:Cstar-generated_by_unbounded_multipliers} is
  inherited by~\(A\) if it holds for~\(A_e\).  That is,
  admissibility of the integrable representations passes
  from~\(A_e\) to~\(A\).
\end{proof}

It is unclear whether~\(A\) also inherits the \emph{Strong}
Local--Global Principle from~\(A_e\).  This may often be bypassed
using Theorem~\ref{the:operator_conditions}.

\subsection{Inducible representations and induction}
\label{sec:induction}

Let~\(\Hilm[F]\) be
a Hilbert \(D\)\nb-module and let
\(\Hilms[F]\subseteq \Hilm[F]\) and \(\varphi_e\colon A_e\to
\Endo_D(\Hilms[F])\) be
a representation of~\(A_e\) on~\(\Hilm[F]\).  We try to
induce~\(\varphi_e\) to a representation of~\(A\) as
in~\cite{Savchuk-Schmudgen:Unbounded_induced}.  Thus we consider the
algebraic tensor product \(A\odot \Hilms[F]\) and equip it with the
obvious right \(D\)\nb-module structure and the unique sesquilinear
map that satisfies
\[
\braket{a_1\otimes \xi_1}{a_2\otimes \xi_2}
= \delta_{g,h}\braket{\xi_1}{\varphi_e(a_1^*a_2) \xi_2}
\]
for all \(g,h\in G\), \(a_1\in A_g\), \(a_2\in A_h\),
\(\xi_1,\xi_2\in\Hilms[F]\).  This map is sesquilinear and descends to
the quotient space \(A\odot_{A_e} \Hilms[F]\).  It is symmetric and
\(D\)\nb-linear in the sense that \(\braket{x}{y}=\braket{y}{x}^*\)
and \(\braket{x}{y d} = \braket{x}{y}d\).  Let~\(\pi\) be the action
of~\(A\) on \(A\odot_{A_e} \Hilms[F]\) by left multiplication.  This
representation is formally a \Star{}homomorphism in the sense that
\(\braket{x}{\pi(a)y} = \braket{\pi(a^*)x}{y}\) for all \(a\in A\),
\(x,y\in A\odot_{A_e} \Hilms[F]\).  The only thing that is missing to
get a representation of~\(A\) on a Hilbert \(D\)\nb-module is
positivity of the inner product.  This requires a subtle extra
condition.

\begin{proposition}
  \label{pro:inducible_criteria}
  The following are equivalent:
  \begin{enumerate}
  \item \label{en:inducible_1} the sesquilinear map on \(A\odot_{A_e}
    \Hilms[F]\) defined above is positive semidefinite;
  \item \label{en:inducible_2} for all \(g\in G\), \(n\in\N\) and all
    \(a_1,\dotsc,a_n\in
    A_g\), \(\xi_1,\dotsc,\xi_n\in \Hilms[F]\), the element
    \(\sum_{k,l=1}^n \braket{\xi_k}{\varphi_e(a_k^* a_l)\xi_l} \in D\)
    is positive;
  \item \label{en:inducible_3} for all \(g\in G\), \(n\in\N\) and all
    \(a_1,\dotsc,a_n\in
    A_g\), \(\xi_1,\dotsc,\xi_n\in \Hilms[F]\), the matrix
    \(\bigl(\braket{\xi_k}{\varphi_e(a_k^* a_l)\xi_l}\bigr)_{k,l} \in
    \Mat_n(D)\) is positive.
  \end{enumerate}
\end{proposition}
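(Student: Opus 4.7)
The plan is to exploit the block-diagonal structure of the inner product with respect to the grading and then apply standard positivity arguments for matrices over \Cstar\nb-algebras.  First, I would observe that since each~\(A_g\) is an \(A_e\)\nb-bimodule, the algebraic tensor product decomposes as a direct sum \(A \odot_{A_e} \Hilms[F] = \bigoplus_{g \in G} A_g \odot_{A_e} \Hilms[F]\).  The Kronecker factor~\(\delta_{g,h}\) in the defining formula means that these graded summands are mutually orthogonal for the sesquilinear form, so every element decomposes as a finite sum \(x = \sum_g x_g\) with \(\braket{x}{x} = \sum_g \braket{x_g}{x_g}\).  Condition~(1) is therefore equivalent to positivity of \(\braket{x_g}{x_g}\) for a generic homogeneous element \(x_g = \sum_{k=1}^n a_k \otimes \xi_k\) with \(a_1,\dotsc,a_n \in A_g\) and \(\xi_1,\dotsc,\xi_n \in \Hilms[F]\); expanding this inner product by the defining formula gives exactly condition~(2).

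For (2)\(\Rightarrow\)(3) I would exploit that~\(\Hilms[F]\) is a right \(D\)\nb-submodule of~\(\Hilm[F]\).  Given \(v_1,\dotsc,v_n \in D\), the vectors \(\xi_k v_k\) still lie in~\(\Hilms[F]\), so applying (2) with the same \(a_k\) and these new vectors yields
\[
\sum_{k,l} v_k^* \braket{\xi_k}{\varphi_e(a_k^* a_l)\xi_l} v_l \ge 0
\]
in~\(D\), using that \(\varphi_e(a_k^* a_l)\) is \(D\)\nb-linear and that the \(D\)\nb-valued inner product is conjugate \(D\)\nb-linear in the first and \(D\)\nb-linear in the second variable.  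This is precisely the standard criterion for positivity of the matrix \(\bigl(\braket{\xi_k}{\varphi_e(a_k^* a_l)\xi_l}\bigr)_{k,l} \in \Mat_n(D)\), viewed as an adjointable operator on the Hilbert \(D\)\nb-module \(D^n\).

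For (3)\(\Rightarrow\)(2) I would sum the entries of the positive matrix.  Passing to the unitisation \(\Mat_n(D) \subseteq \Mat_n(\tilde D)\) and taking the row vector \(R = (1,\dotsc,1) \in \Mat_{1 \times n}(\tilde D)\), positivity of \(M = \bigl(\braket{\xi_k}{\varphi_e(a_k^* a_l) \xi_l}\bigr)_{k,l}\) gives \(R M R^* = \sum_{k,l} M_{k,l} \ge 0\) in~\(\tilde D\); since this sum lies in~\(D\) and positivity is intrinsic to a \Cstar\nb-subalgebra, condition~(2) follows.  (Alternatively, one may apply the argument to \(v_k = e_\alpha\) for an approximate unit~\((e_\alpha)\) of~\(D\) and pass to the limit.)  There is essentially no hard step here: the content of the proposition is the reduction to fixed degree via the grading, together with the standard equivalence between ``vector'' and ``matrix'' forms of positivity for \Cstar\nb-valued inner products.
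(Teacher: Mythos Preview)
Your proof is correct and follows essentially the same approach as the paper: the equivalence \ref{en:inducible_1}\(\iff\)\ref{en:inducible_2} via orthogonality of the graded summands is identical, and for \ref{en:inducible_2}\(\iff\)\ref{en:inducible_3} the paper likewise uses the right \(D\)\nb-module structure of~\(\Hilms[F]\) together with the standard criterion \cite{Lance:Hilbert_modules}*{Lemma 4.1} that \(y\ge0\) in \(\Mat_n(D)\) iff \(\sum_{k,l} d_k^* y_{kl} d_l \ge 0\) for all \(d\in D^n\). Your explicit handling of the non-unital case via the unitisation (or an approximate unit) for \ref{en:inducible_3}\(\Rightarrow\)\ref{en:inducible_2} makes a step precise that the paper leaves implicit.
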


\begin{proof}
  The condition~\ref{en:inducible_2} for fixed \(g\in G\) says that the
  sesquilinear map on \(A_g\odot_{A_e} \Hilms[F]\) is positive
  semidefinite.  Since the subspaces \(A_g\odot_{A_e} \Hilms[F]\) for
  different~\(g\) are orthogonal, this is equivalent to positive
  semidefiniteness on \(A\odot_{A_e} \Hilms[F]\).  Thus
  \ref{en:inducible_1}\(\iff\)\ref{en:inducible_2}.

  We prove \ref{en:inducible_2}\(\iff\)\ref{en:inducible_3}.  Fix
  \(g\in G\), \(n\in\N\), \(a_1,\dotsc,a_n\in A_g\) and
  \(\xi_1,\dotsc,\xi_n\in \Hilms[F]\).  Let \(y = (y_{kl}) \in
  \Mat_n(D)\) be the matrix in~\ref{en:inducible_3}.  By
  \cite{Lance:Hilbert_modules}*{Lemma 4.1}, \(y\ge0\) in \(\Mat_n(D)
  \subseteq \Bound(D^n)\) if and only if \(\braket{d}{y d}\ge0 \) for
  all \(d=(d_1,\dotsc,d_n)\in D^n\).  That is, \(\sum_{k,l=1}^n d_k^*
  y_{kl} d_l \ge0\) for all \(d_1,\dotsc,d_n\in D\).
  Since~\(\Hilms[F]\) is a right \(D\)\nb-module, this condition for
  all \(\xi_i\in\Hilms[F]\), \(d_i\in D\) is equivalent
  to~\ref{en:inducible_2}.
\end{proof}

\begin{definition}
  \label{def:inducible_representation}
  A representation~\(\varphi_e\) of~\(A_e\) is \emph{inducible}
  (to~\(A\)) if it satisfies the equivalent conditions in
  Proposition~\ref{pro:inducible_criteria}.
\end{definition}

If~\(A_e\) were a \Cstar\nb-algebra, it would be enough to assume
\(\braket{\xi}{\varphi_e(a^*a) \xi} \ge0\) for all \(g\in G\), \(a\in
A_g\), \(\xi\in \Hilms[F]\), which amounts to the condition \(a^* a\ge
0\) in~\(A_e\) for all \(g\in G\), \(a\in A_g\).  This is part of the
definition of a Fell bundle over a group.  For more general
\Star{}algebras, the positivity conditions for different \(n\in\N\) in
Proposition~\ref{pro:inducible_criteria} may differ,
compare~\cite{Friedrich-Schmuedgen:n-positivity}.

Let \(A\otimes_{A_e} \Hilm[F]\) be the Hilbert module completion of
\(A\odot_{A_e} \Hilms[F]\) for the inner product above.  The
\Star{}algebra~\(A\) acts on \(A\odot_{A_e} \Hilms[F]\) by left
multiplication, \(a_1\cdot (a_2\otimes \xi) \defeq (a_1 a_2)\otimes
\xi\) for \(a_1,a_2\in A\), \(\xi\in\Hilms[F]\).  As in the proof of
Lemma~\ref{lem:tensor_rep_with_corr}, this module structure descends
to the image of \(A\odot_{A_e} \Hilms[F]\) in \(A\otimes_{A_e}
\Hilm[F]\) and gives a well defined representation~\(\pi\) of~\(A\)
on~\(A\otimes_{A_e} \Hilm[F]\).  Its closure is called the
\emph{induced representation} from~\(\varphi_e\), and its domain is
denoted by \(A\otimes_{A_e} \Hilms[F]\).

The decomposition \(A\odot_{A_e}\Hilms[F] = \bigoplus_{g\in G}
A_g\odot_{A_e} \Hilms[F]\) is \(A_e\)\nb-invariant and orthogonal
for the above inner product.  Hence
\begin{equation}
  \label{eq:decompose_induced}
  A\otimes_{A_e} \Hilm[F] \cong
  \bigoplus_{g\in G} A_g\otimes_{A_e} \Hilm[F],
\end{equation}
where \(A_g\otimes_{A_e} \Hilm[F]\) is the closure of the image of
\(A_g\odot_{A_e} \Hilms[F]\) or, equivalently, the Hilbert
\(D\)\nb-module completion of \(A_g\odot_{A_e} \Hilms[F]\) with
respect to the restriction of the inner product.  Each summand
\(A_g\otimes_{A_e} \Hilm[F]\) carries a closed representation
of~\(A_e\) with domain \(A_g\otimes_{A_e} \Hilms[F]\),
and~\(\pi|_{A_e}\) is the direct sum of these representations.

\begin{lemma}
  \label{lem:induced_repr_inducible}
  Let~\(\pi\) be any representation of~\(A\).  Then~\(\pi|_{A_e}\) is
  inducible.
\end{lemma}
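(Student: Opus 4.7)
The plan is to verify condition~\ref{en:inducible_2} of Proposition~\ref{pro:inducible_criteria} for the representation $\pi_e = \pi|_{A_e}$ directly from the graded \Star{}algebra structure. The key observation is that the positivity we need is built into the very definition of a \Star{}representation of~$A$, once we exploit the grading to rewrite the sum as a single inner product.

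Concretely, I would fix $g\in G$, $n\in\N$, elements $a_1,\dotsc,a_n\in A_g$ and vectors $\xi_1,\dotsc,\xi_n$ in the domain~$\Hilms$ of~$\pi$. The grading axioms give $a_k^*\in A_{g^{-1}}$ and hence $a_k^* a_l\in A_{g^{-1}}\cdot A_g\subseteq A_e$, so that $\pi_e(a_k^* a_l) = \pi(a_k^* a_l)$ makes sense. Since~$\pi$ is an algebra homomorphism and a \Star{}representation, I can then compute
\[
  \sum_{k,l=1}^n \braket{\xi_k}{\pi_e(a_k^* a_l)\xi_l}
  = \sum_{k,l=1}^n \braket{\xi_k}{\pi(a_k^*)\pi(a_l)\xi_l}
  = \sum_{k,l=1}^n \braket{\pi(a_k)\xi_k}{\pi(a_l)\xi_l}
  = \braket{\eta}{\eta},
\]
where $\eta \defeq \sum_{k=1}^n \pi(a_k)\xi_k\in\Hilm$. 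Hilbert module inner products are positive, so $\braket{\eta}{\eta}\ge 0$ in~$D$, and condition~\ref{en:inducible_2} of Proposition~\ref{pro:inducible_criteria} is verified.

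There is essentially no obstacle: the lemma is a bookkeeping consequence of the grading together with the \Star{}property and multiplicativity of~$\pi$. The only subtlety worth flagging is that one must use $(a_k^*)^* = a_k$ when moving $\pi(a_k^*)$ across the inner product, and that one must check $a_k^* a_l$ lands in~$A_e$ so that $\pi_e(a_k^* a_l)$ really equals $\pi(a_k^*)\pi(a_l)$ on~$\Hilms$; both are immediate.
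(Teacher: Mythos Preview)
Your proof is correct and is essentially identical to the paper's own proof: both verify condition~\ref{en:inducible_2} of Proposition~\ref{pro:inducible_criteria} by setting $\eta=\sum_k \pi(a_k)\xi_k$ (the paper calls it~$y$) and observing that the sum in question equals $\braket{\eta}{\eta}\ge 0$.
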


\begin{proof}
  For \(g\in G\), \(a_1,\dotsc,a_n\in A_g\), \(\xi_1,\dotsc,\xi_n\in
  \Hilms\), let \(y\defeq \sum_{k=1}^n \pi(a_k)\xi_k\).  Then
  \[
  \sum_{k,l=1}^n \braket{\xi_k}{\pi|_{A_e}(a_k^* a_l)\xi_l}
  = \sum_{k,l=1}^n \braket{\pi(a_k)\xi_k}{\pi(a_l)\xi_l}
  = \braket{y}{y} \ge 0.\qedhere
  \]
\end{proof}

Lemma~\ref{lem:tensor_associative} about the associativity
of~\(\otimes\) has a variant for induction:

\begin{lemma}
  \label{lem:induction_associative}
  Let \(D_1,D_2\) be \(\Cst\)\nb-algebras, let \(\Hilm\) be a
  Hilbert \(D_1\)\nb-module and let \(\Hilm[F]\) be a
  \(\Cst\)\nb-correspondence between \(D_1,D_2\).  Let
  \((\varphi_e,\Hilms)\) be an inducible representation of~\(A\)
  on~\(\Hilm\).  Then the representation \(\varphi_e \otimes_{D_1}
  \Hilm[F]\) on \(\Hilm \otimes_{D_1} \Hilm[F]\) is inducible and
  there is a canonical unitary \Star{}intertwiner of representations of~\(A\),
  \[
  (A \otimes_{A_e} \Hilm) \otimes_{D_1} \Hilm[F]
  \cong A \otimes_{A_e} (\Hilm \otimes_{D_1} \Hilm[F]).
  \]
\end{lemma}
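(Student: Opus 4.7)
The plan is to imitate the proof of Lemma~\ref{lem:tensor_associative}: define a candidate unitary on an algebraic subspace, verify that it preserves all relevant sesquilinear forms (which gives inducibility as a by-product), and extend by closure. Concretely, on $A \odot \Hilms \odot \Hilm[F]$ I would define
\[
T_0(a \otimes \xi \otimes \eta) \defeq (a \otimes \xi) \otimes \eta,
\]
taking values in $(A \otimes_{A_e} \Hilms) \odot \Hilm[F]$ and thence in $(A \otimes_{A_e} \Hilm) \otimes_{D_1} \Hilm[F]$. The identity $T_0(ab \otimes \xi \otimes \eta) = T_0(a \otimes \varphi_e(b)\xi \otimes \eta)$ for \(a\in A\), \(b\in A_e\) holds by definition of the \(A_e\)\nb-balanced tensor product in the target, so \(T_0\) descends to \(A \odot_{A_e} (\Hilms \odot \Hilm[F])\).

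The next step is to compute that \(T_0\) preserves sesquilinear forms.  Elements of different \(A_g\)\nb-components are orthogonal on both sides by~\eqref{eq:decompose_induced}, so it suffices to fix \(g\in G\) and check on \(a_i \otimes \xi_i \otimes \eta_i\) with \(a_i\in A_g\).  The source form is built from
\(\braket{\xi_i \otimes \eta_i}{(\varphi_e\otimes 1)(a_i^* a_j)\,\xi_j \otimes \eta_j}\),
which by~\eqref{eq:interior_tensor} equals
\(\braket{\eta_i}{\braket{\xi_i}{\varphi_e(a_i^* a_j)\xi_j}_{D_1}\cdot \eta_j}_{D_2}\);
the target form equals
\(\braket{\eta_i}{\braket{a_i \otimes \xi_i}{a_j \otimes \xi_j}_{D_1}\cdot \eta_j}_{D_2}\),
and the two agree by the definition of the inner product on \(A \otimes_{A_e} \Hilm\). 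Since the target form is a genuine Hilbert module inner product, it is positive semidefinite, and hence so is the source form. Combined with the fact that the image of \(\Hilms \odot \Hilm[F]\) is a core for \(\varphi_e \otimes_{D_1} \Hilm[F]\), Proposition~\ref{pro:inducible_criteria} then yields inducibility of \(\varphi_e \otimes_{D_1} \Hilm[F]\).

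Having established inducibility, \(T_0\) descends to an isometric \(D_2\)\nb-linear map
\[
T \colon A \otimes_{A_e} (\Hilm \otimes_{D_1} \Hilm[F]) \to (A \otimes_{A_e} \Hilm) \otimes_{D_1} \Hilm[F]
\]
between Hilbert module completions. Surjectivity is immediate: the image contains every \((a \otimes \xi)\otimes \eta\) for \(a\in A\), \(\xi\in\Hilms\), \(\eta\in\Hilm[F]\), whose linear span is dense in the target by construction of the two tensor products. Since \(A\) acts by left multiplication on the leftmost factor in both cases, \(T\) intertwines the \(A\)\nb-actions on the dense, \(A\)\nb-invariant algebraic subspaces.

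The main obstacle is a core-swapping argument needed to identify the representations obtained by two different sequences of closures: tensor first with \(\Hilm[F]\) on the right and then induce, versus induce first and then tensor with \(\Hilm[F]\). As in the proof of Lemma~\ref{lem:tensor_associative}, one must show that the image of \(A \odot \Hilms \odot \Hilm[F]\) is a core for the representation of \(A\) on both sides; this is done by observing that the relevant bilinear pairings are separately continuous in the appropriate graph and norm topologies, so that density of \(\Hilms \odot \Hilm[F]\) in \((\Hilms,\varphi_e) \otimes_{D_1} \Hilm[F]\) in the graph topology promotes to density after inducing. Lemma~\ref{lem:closure_functorial} then extends \(T\) from cores to the closed representations, giving the desired unitary \Star{}intertwiner of representations of~\(A\).
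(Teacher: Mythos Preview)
Your argument is correct and follows essentially the same route as the paper's proof: match the two sesquilinear forms on the algebraic subspace \(A\odot\Hilms\odot\Hilm[F]\), deduce positivity from the target side, and then identify the closed representations via a common core. The only difference is emphasis: the paper spells out in detail the approximation step you compress into ``combined with the fact that the image of \(\Hilms\odot\Hilm[F]\) is a core \ldots'', by taking arbitrary \(\omega_k\) in the domain \(\Hilms\otimes_{D_1}\Hilm[F]\), approximating them simultaneously in the graph norms of all the \(a_m^* a_k\), and passing to the limit to get positivity for general \(\zeta\); this is exactly the content needed to invoke Proposition~\ref{pro:inducible_criteria} on the full domain rather than just on the core.
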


\begin{proof}
  Let \(\Hilms\otimes_{D_1} \Hilm[F] \subseteq \Hilm\otimes_{D_1}
  \Hilm[F]\) be the domain of \(\varphi_e \otimes_{D_1} \Hilm[F]\).
  Let \(g_1,\dotsc,g_n\in G\), \(a_1,\dotsc,a_n\in A_{g_i}\), and
  \(\omega_1,\dotsc,\omega_n \in \Hilms\otimes_{D_1} \Hilm[F]\).
  Let \(\zeta \defeq \sum_{k=1}^n a_k \otimes \omega_k \in A\odot
  (\Hilms\otimes_{D_1} \Hilm[F])\).  To show that \(\varphi_e
  \otimes_{D_1} \Hilm[F]\) is inducible, we must prove that
  \(\braket{\zeta}{\zeta}\in D_2\) is positive.  Vectors in
  \(\Hilms\odot \Hilm[F]\) form a core for \(\varphi_e \otimes_{D_1}
  \Hilm[F]\) by construction.  Hence there is a sequence of vectors
  of the form
  \[
  \omega_{j,\tau} \defeq \sum_{i=1}^{\ell_j}
  \xi_{\tau,j,i}\otimes\eta_{\tau,j,i},\qquad
  \xi_{\tau,j,i}\in \Hilms, \eta_{\tau,j,i}\in\Hilm[F],
  \]
  which, for \(\tau\to\infty\), converges to~\(\omega_j\) in the
  graph norms of the elements \(\delta_{g_m,g_k} a_m^* a_k \in A_e\)
  for all \(m,k=1,\dotsc,n\).  Let \(\zeta_\tau \defeq \sum_{j=1}^n
  a_j \otimes \omega_{j,\tau}\).  Then
  \[
  \lim_{\tau\to\infty} \braket{\zeta_\tau}{\zeta_\tau} =
  \lim_{\tau\to\infty} \braket{\zeta_\tau}{\zeta} =
  \lim_{\tau\to\infty} \braket{\zeta}{\zeta_\tau} =
  \braket{\zeta}{\zeta}
  \]
  in norm and
  \begin{multline*}
    \braket{\zeta_\tau}{\zeta_\tau} =
    \left< \sum_{i,j} a_j \otimes \xi_{\tau,j,i} \otimes \eta_{\tau,j,i},
      \sum_{m,k} a_k \otimes \xi_{\tau,k,m} \otimes \eta_{\tau,k,m} \right>
    \\= \sum_{i,j,k,m} \delta_{g_j,g_k} \braket{\eta_{\tau,j,i}}
    {\braket{\xi_{\tau,j,i}} {\varphi_e(a_j^* a_k)
        \xi_{\tau,k,m}}_{D_1} \cdot \eta_{\tau,k,m}}_{D_2}.
  \end{multline*}
  This is also the inner product of \(\zeta_\tau\) with itself in
  the tensor product \((A\otimes_{A_e} \Hilm) \otimes \Hilm[F]\).
  This is positive because~\(\varphi_e\) is inducible and the usual
  tensor product of the Hilbert \(D_1\)\nb-module \(A\otimes_{A_e}
  \Hilm\) with the \(D_1,D_2\)\nb-correspondence~\(\Hilm[F]\) is a
  Hilbert \(D_2\)\nb-module.  Hence
  \(\braket{\zeta_\tau}{\zeta_\tau} \ge0\) for all~\(\tau\).  Since
  the positive elements in~\(D_2\) form a closed subset, this
  implies \(\braket{\zeta}{\zeta}\ge0\).  Thus \(\varphi_e
  \otimes_{D_1} \Hilm[F]\) is inducible.  The argument above also
  shows that the linear span of vectors of the form \(a\otimes
  \xi\otimes \eta\) with \(a\in A\), \(\xi\in\Hilms\),
  \(\eta\in\Hilm[F]\) is a core for the representation of~\(A_e\) on
  \(A \otimes_{A_e} (\Hilm \otimes_{D_1} \Hilm[F])\).   Such vectors
  also form a core for the
  representation of~\(A\) on \((A \otimes_{A_e} \Hilm) \otimes_{D_1}
  \Hilm[F]\).  The left actions of~\(A\) and the \(D_2\)\nb-valued
  inner products coincide on such vectors.  Hence there is a unique
  unitary \Star{}intertwiner that maps
  the image of \(a\otimes \xi\otimes \eta\) in \((A \otimes_{A_e}
  \Hilm) \otimes_{D_1} \Hilm[F]\) to its image in \(A \otimes_{A_e}
  (\Hilm \otimes_{D_1} \Hilm[F])\).
\end{proof}

\subsection{\Cstar-Hulls for the unit fibre}
\label{sec:hull_Ae}

We assume that the chosen class of integrable representations
of~\(A_e\) has a (weak) \Cstar\nb-hull~\(B_e\).  We want to
construct a Fell
bundle whose section \Cstar\nb-algebra is a (weak) \Cstar\nb-hull
for the integrable representations of~\(A\).  At some point, we
need~\(B_e\) to be a full \Cstar\nb-hull (compatible with
isometric intertwiners) and one more extra condition.  But we may
begin the construction without these assumptions.  First we build
the unit fibre~\(B_e^+\) of the Fell bundle.  It is a (weak)
\Cstar\nb-hull for the inducible, integrable representations
of~\(A_e\).

Let~\((\Hilms[B]_e,\mu_e)\) be the universal integrable
representation of~\(A_e\) on~\(B_e\).  Let~\(x_-\) for a
self-adjoint element~\(x\) in a \Cstar\nb-algebra denote its
negative part.

\begin{definition}
  \label{def:restrict_to_inducible}
  Let~\(B_e^+\) be the quotient \Cstar\nb-algebra of~\(B_e\) by the
  closed two-sided ideal generated by elements of the form
  \begin{equation}
    \label{eq:inducible_kill}
    \biggl( \sum_{k,l=1}^n b_k^*\cdot
    \mu_e(a_k^* a_l)\cdot b_l\biggr)_-\qquad
    \text{for }g\in G,\ a_1,\dotsc,a_n\in A_g,\
    b_1,\dotsc,b_n\in\Hilms[B]_e.
  \end{equation}
  Let~\(\Hilms[B]_e^+\)
  be the image of~\(\Hilms[B]_e\)
  in~\(B_e^+\)
  and let \(\mu_e^+\colon A_e\to\Endo_{B_e^+}(\Hilms[B]_e^+)\)
  be the induced representation of~\(A_e\) on this quotient.
\end{definition}

The following proposition shows that the representation
\((\Hilms[B]_e^+,\mu_e^+)\)
of~\(A_e\)
on~\(B_e^+\)
is the universal inducible, integrable representation of~\(A_e\).

\begin{proposition}
  \label{pro:inducible_A_B}
  Let \((\Hilms[F],\varphi_e)\) be an integrable representation
  of~\(A_e\) on a Hilbert module~\(\Hilm[F]\).  Let
  \(\bar\varphi_e\colon B_e\to \Bound(\Hilm[F])\) be the
  corresponding representation of~\(B_e\).  Then~\(\varphi_e\) is
  inducible if and only if~\(\bar\varphi_e\) factors through the
  quotient map \(B_e\prto B_e^+\).  Thus~\(B_e^+\) is a
  \Cstar\nb-hull for the inducible, integrable representations
  of~\(A_e\).
\end{proposition}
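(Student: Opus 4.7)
The plan is to prove the ``if and only if'' equivalence directly, then deduce the ``\(\Cst\)\nb-hull'' conclusion from it. The first direction is easy and serves as motivation for the definition of~\(B_e^+\); the second direction is where the real content lies.

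First I would unpack what \((\Hilms[F],\varphi_e)\) looks like concretely: by Proposition~\ref{pro:Phi_simplifies}, the vectors \(\bar\varphi_e(\Hilms[B]_e)\Hilm[F]\) form a core for~\(\varphi_e\), and \(\varphi_e(a)(\bar\varphi_e(b)\eta)=\bar\varphi_e(\mu_e(a)b)\eta\) for \(a\in A_e\), \(b\in\Hilms[B]_e\), \(\eta\in\Hilm[F]\). The central calculation, for a generating datum \(g\in G\), \(a_1,\dotsc,a_n\in A_g\), \(b_1,\dotsc,b_n\in\Hilms[B]_e\) and a single vector \(\eta\in\Hilm[F]\), is that \(\xi_k\defeq \bar\varphi_e(b_k)\eta\) satisfies
\[
\sum_{k,l}\braket{\xi_k}{\varphi_e(a_k^*a_l)\xi_l} = \braket{\eta}{\bar\varphi_e(y)\eta},\qquad
y\defeq \sum_{k,l} b_k^*\mu_e(a_k^*a_l)b_l\in B_e.
\]
For the ``only if'' direction, inducibility (Proposition~\ref{pro:inducible_criteria}(2)) forces the left-hand side to be positive in~\(D\) for every~\(\eta\), which gives \(\bar\varphi_e(y)\ge 0\) and hence \(\bar\varphi_e(y_-)=0\) for each generator of the defining ideal of~\(B_e^+\); so \(\bar\varphi_e\) factors through the quotient map.

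For the ``if'' direction, given general \(\xi_k\in\Hilms[F]\) I would reduce to the case \(\xi_k=\bar\varphi_e(b_k)\eta_k\) by graph-topology approximation through the above core, using continuity of the pairing in the graph norm of each \(a_k^*a_l\in A_e\), and combining finite-sum indices into a single label \(i=(k,j)\) so that all \(\tilde a_i\) still lie in the common \(A_g\). What is then needed is positivity of the matrix \(C=(b_k^*\mu_e(a_k^*a_l)b_l)_{k,l}\in M_n(B_e)\) after passing to \(M_n(B_e^+)\), because
\[
\sum_{k,l}\braket{\xi_k}{\varphi_e(a_k^*a_l)\xi_l} = \braket{\eta}{\bar\varphi_e^{(n)}(C)\eta}
\]
for \(\eta=(\eta_k)\in\Hilm[F]^n\), and \Star{}homomorphisms are completely positive.

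The main obstacle is this matrix positivity, since the defining ideal of~\(B_e^+\) is presented only by scalar generators. The trick is to test positivity of the image \(\bar C\in M_n(B_e^+)\) against arbitrary \(\bar d\in (B_e^+)^n\) by lifting to \(d\in B_e^n\) and observing that
\[
\sum_{k,l} d_k^* C_{kl} d_l = \sum_{k,l}(b_kd_k)^*\mu_e(a_k^*a_l)(b_ld_l)
\]
is again one of the generators of the defining ideal; the crucial point is that \(\Hilms[B]_e\) is a right \(B_e\)\nb-submodule, so \(b_kd_k\in\Hilms[B]_e\), while the \(a_k\) are untouched. Hence this expression has non-negative image in~\(B_e^+\), whence \(\bar C\ge 0\) and inducibility follows. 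For the ``Thus'' clause, the proven equivalence restricts the bijections \(\Rep(B_e,D)\cong\Repi(A_e,D)\) to bijections of \(\Rep(B_e^+,D)\) with the inducible integrable representations on Hilbert \(D\)\nb-modules; compatibility with isometric intertwiners is automatic, and compatibility with interior tensor products follows because inducibility is stable under them by Lemma~\ref{lem:induction_associative}.
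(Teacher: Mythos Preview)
Your proof is correct. The ``only if'' direction is identical to the paper's. For the ``if'' direction you take a genuinely different route.

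The paper argues as follows: the universal representation \((\Hilms[B]_e^+,\mu_e^+)\) on~\(B_e^+\) satisfies condition~(2) of Proposition~\ref{pro:inducible_criteria} by the very construction of~\(B_e^+\), so it is inducible; then any~\(\varphi_e\) whose \(\bar\varphi_e\) factors through~\(B_e^+\) is isomorphic to \(\mu_e^+\otimes_{\bar\varphi_e^+}1_{\Hilm[F]}\), and Lemma~\ref{lem:induction_associative} says inducibility passes to interior tensor products. That is it---two lines once Lemma~\ref{lem:induction_associative} is in hand.

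Your argument instead verifies inducibility of~\(\varphi_e\) directly. The key step---showing that the matrix \(\bar C=(b_k^*\mu_e(a_k^*a_l)b_l)_{k,l}\) is positive in \(M_n(B_e^+)\) by absorbing test vectors \(d_k\) into the \(b_k\) via the right \(B_e\)\nb-module structure of~\(\Hilms[B]_e\)---is exactly the mechanism behind the equivalence (2)\(\Leftrightarrow\)(3) in Proposition~\ref{pro:inducible_criteria}, specialised to~\(\mu_e^+\). So you could shorten this step by simply citing that equivalence: \(\mu_e^+\) satisfies~(2) by construction, hence~(3), hence \(\bar C\ge0\). The complete-positivity step and the approximation through the core are then what replace the appeal to Lemma~\ref{lem:induction_associative}. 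Your approach is more self-contained for this particular statement; the paper's is shorter because Lemma~\ref{lem:induction_associative} is already available and is needed elsewhere anyway (and indeed you invoke it yourself in the ``Thus'' clause).
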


\begin{proof}
Assume first that~\(\varphi_e\)
  is inducible.  Let \(\xi\in\Hilm[F]\)
  and let \(g\in G\),
  \(a_1,\dotsc,a_n\in A_g\)
  and \(b_1,\dotsc,b_n\in\Hilms[B]_e\)
  be as in~\eqref{eq:inducible_kill}.  Let
  \(\xi_k \defeq \varphi_e(b_k)\xi\).
  Since~\(\varphi_e\)
  is inducible, Proposition~\ref{pro:inducible_criteria} implies
  \begin{multline*}
    0 \le \sum_{k,l=1}^n \braket{\xi_k}{\varphi_e(a_k^* a_l)\xi_l}
    = \sum_{k,l=1}^n \braket{\xi}
    {\bar\varphi_e(b_k)^*\varphi_e(a_k^* a_l)  \bar\varphi_e(b_l)\xi}
    \\= \left< \xi, 
      \bar\varphi_e\left(\sum_{k,l=1}^n b_k^*\mu_e(a_k^* a_l) b_l\right)
      \xi \right>.
  \end{multline*}
  Since \(\xi\in\Hilm[F]\)
  is arbitrary, this means that
  \(\bar\varphi_e\left(\sum_{k,l=1}^n b_k\mu_e(a_k^* a_l) b_l\right)
  \ge0\)
  in \(\Bound(\Hilm[F])\).
  Equivalently, \(\bar\varphi_e\)
  annihilates the negative part of
  \(\sum_{k,l=1}^n b_k\mu_e(a_k^* a_l) b_l\).
  So~\(\bar\varphi_e\)
  descends to a homomorphism on the quotient~\(B_e^+\).
  Conversely, the representation \((\Hilms[B]_e^+,\mu_e^+)\)
  is inducible by Proposition~\ref{pro:inducible_criteria}.  If
  \(\bar\varphi_e^+\colon B_e^+\to\Bound(\Hilm[F])\)
  is a representation, then the representation
  \(\mu_e^+\otimes_{\bar\varphi_e^+} 1_{\Hilm[F]} \cong \varphi_e\)
  on \(B_e^+ \otimes_{B_e^+} \Hilm[F] \cong \Hilm[F]\)
  is inducible by Lemma~\ref{lem:induction_associative}.  That is,
  \(\varphi_e\)
  is inducible if~\(\bar\varphi_e\)
  factors through the quotient map \(B_e\prto B_e^+\).

  Summing up, the representation~\(\bar\varphi_e\) associated to an
  integrable representation~\(\varphi_e\) of~\(A_e\) descends
  to~\(B_e^+\) if and only if~\(\varphi_e\) is inducible.  The
  quotient map induces a fully faithful embedding \(\Rep(B_e^+,D)
  \injto \Rep(B_e,D)\).  The argument above shows that its image
  consists of those representations of~\(B_e\) that correspond to
  inducible, integrable representations of~\(A_e\) under the
  correspondence \(\Rep(B_e,D) \cong \Repi(A_e,D)\).
  Hence~\(B_e^+\) is a (weak) \Cstar\nb-hull for the class of
  inducible, integrable representations of~\(A_e\).
\end{proof}

\begin{definition}
  \label{def:fibre_g}
  Let \(B_g^+ \defeq A_g\otimes_{A_e} B_e^+\).  This is a well
  defined Hilbert \(B_e^+\)\nb-module because the representation
  \((\Hilms[B]_e^+,\mu_e^+)\) of~\(A_e\) on~\(B_e^+\) is inducible.
  Let \((\Hilms[B]_g^+,\mu_{e,g}^+)\) be the induced representation
  of~\(A_e\) on~\(B_g^+\).  It has the image of \(A_g\odot_{A_e}
  \Hilms[B]_e^+\) as a core, with the representation
  \(\mu_{e,g}^+(a_e)(a_g\otimes b) \defeq (a_ea_g)\otimes b\) for
  \(a_e\in A_e\), \(a_g\in A_g\), \(b\in \Hilms[B]_e^+\).
\end{definition}

By definition, the right \(B_e^+\)\nb-module structure and the inner
product on~\(B_g^+\) are the unique extensions of the following
pre-Hilbert module structure on \(A_g\odot_{A_e} \Hilms[B]_e^+\):
\((a_g\otimes b_1)\cdot b_2 \defeq a_g \otimes (b_1\cdot b_2)\) for
all \(a_g\in A_g\), \(b_1\in\Hilms[B]_e^+\), \(b_2\in B_e^+\), and
\begin{equation}
  \label{eq:innprod_Bgplus}
  \braket{a_1 \otimes b_1}{a_2\otimes b_2}
  \defeq b_1^* \mu_e^+(a_1^* a_2) b_2
\end{equation}
for \(a_1,a_2\in A_g\), \(b_1,b_2\in\Hilms[B]_e^+\).  This is positive
definite by Proposition~\ref{pro:inducible_A_B}.  By definition,
\(B_g^+\) is the norm completion of this pre-Hilbert
\(B_e^+\)\nb-module, and~\(\Hilms[B]_g^+\) is the completion of
\(A_g\odot_{A_e} \Hilms[B]_e^+\) in the graph topology for the
representation~\(\mu_{e,g}^+\) of~\(A_e\).

The Hilbert \(B_e^+\)\nb-modules~\(B_g^+\) are the fibres of our Fell
bundle.

The Fell bundle structure on~\((B^+_g)_{g\in G}\) only exists under
extra assumptions.  Before we turn to these, we construct
representations of the Hilbert \(B^+_e\)\nb-modules~\(B^+_g\) from
an integrable representation~\(\pi\) of~\(A\) on~\(\Hilm\).  Let
\(\pi_g\defeq \pi|_{A_g}\) and let \(\bar\pi_e\colon
B_e\to\Bound(\Hilm)\) be the representation of the \(\Cst\)\nb-hull
corresponding to~\(\pi_e\).  Since~\(\pi_e\) is inducible by
Lemma~\ref{lem:induced_repr_inducible}, \(\bar\pi_e\) descends to a
representation \(\bar\pi_e^+\colon B_e^+\to\Bound(\Hilm)\) by
Proposition~\ref{pro:inducible_A_B}.

Let \(a\in A_g\)
and \(b\in\Hilms[B]^+_e\).
The operator~\(\pi_g(a)\bar\pi^+_e(b)\)
is defined on all of~\(\Hilm\)
because~\(\bar\pi^+_e(b)\)
maps~\(\Hilm\)
into the domain~\(\Hilms\)
of~\(\pi_e\),
which is also the domain of~\(\pi_g(a)\)
by Lemma~\ref{lem:restriction_closed}.  Its adjoint contains the
densely defined operator~\(\bar\pi^+_e(b^*)\pi_{g^{-1}}(a^*)\),
and the operator
\[
\bar\pi^+_e(b^*)\pi_{g^{-1}}(a^*) \pi_g(a)\bar\pi^+_e(b)
= \bar\pi^+_e(b^*) \pi_e(a^* a) \bar\pi^+_e(b)
= \bar\pi^+_e(b^*\cdot \mu_e^+(a^* a) \cdot b)
\]
is bounded.  Hence~\(\bar\pi^+_e(b^*)\pi_{g^{-1}}(a^*)\) extends to a
bounded operator on~\(\Hilm\), which is adjoint to
\(\pi_g(a)\bar\pi^+_e(b)\).  Thus
\(\pi_g(a)\bar\pi^+_e(b)\in\Bound(\Hilm)\).  Define
\[
\bar\pi^+_g\colon A_g\odot \Hilms[B]^+_e\to\Bound(\Hilm),
\qquad
a\otimes b\mapsto \pi_g(a)\bar\pi^+_e(b).
\]
As above, we check that
\begin{equation}
  \label{eq:barpi_g_representation}
  \bar\pi^+_g(x_1)^*\bar\pi^+_g(x_2) = \bar\pi^+_e(\braket{x_1}{x_2}),\qquad
  \bar\pi^+_g(x\cdot b) = \bar\pi^+_g(x)\bar\pi^+_e(b)
\end{equation}
for all \(x_1,x_2,x\in A_g\odot \Hilms[B]^+_e\), \(b\in B^+_e\), where the
inner product is the one that defines~\(B^+_g\).  Thus~\(\bar\pi^+_g\)
extends uniquely to a bounded linear map
\[
\bar\pi^+_g\colon B^+_g\to\Bound(\Hilm),
\]
which still satisfies~\eqref{eq:barpi_g_representation}.  That is,
it is a representation of the Hilbert module~\(B^+_g\) with respect
to~\(\bar\pi^+_e\).

\begin{lemma}
  \label{lem:barpi_isometric}
  If \(\bar\pi^+_e\colon B^+_e\injto\Bound(\Hilm)\) is faithful
  \textup{(}hence isometric\textup{)}, then so is~\(\bar\pi^+_g\).
\end{lemma}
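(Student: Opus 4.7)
My plan is to exploit the defining identity
\[
\bar\pi_g^+(x)^* \bar\pi_g^+(x) = \bar\pi_e^+(\braket{x}{x})
\]
from~\eqref{eq:barpi_g_representation} together with the \(\Cstar\)\nb-identity in~\(\Bound(\Hilm)\). Concretely, for any \(x\in B_g^+\) I would compute
\[
\norm{\bar\pi_g^+(x)}^2
= \norm{\bar\pi_g^+(x)^* \bar\pi_g^+(x)}
= \norm{\bar\pi_e^+(\braket{x}{x})}.
\]
If \(\bar\pi_e^+\) is faithful, then by the standard fact that injective \Star{}homomorphisms between \Cstar\nb-algebras are isometric, \(\norm{\bar\pi_e^+(\braket{x}{x})} = \norm{\braket{x}{x}} = \norm{x}^2\), so \(\norm{\bar\pi_g^+(x)} = \norm{x}\). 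Hence \(\bar\pi_g^+\) is isometric, in particular injective.

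There is no real obstacle here: the only thing to observe is that the \(\Cstar\)\nb-identity for \(\bar\pi_g^+(x)\) as an element of the \(\Cstar\)\nb-algebra \(\Bound(\Hilm)\) reduces the norm of \(\bar\pi_g^+(x)\) to the norm of an element in the image of \(\bar\pi_e^+\), and then faithfulness of \(\bar\pi_e^+\) does the rest. Since this argument goes through for every \(x\in B_g^+\), including on the dense subspace where \(x\) is the image of some \(a\otimes b\in A_g\odot \Hilms[B]_e^+\), no separate approximation step is required; the identity~\eqref{eq:barpi_g_representation} already extends by continuity from this core to all of~\(B_g^+\). The statement is therefore really a formal consequence of the pair \((\bar\pi_e^+,\bar\pi_g^+)\) being a representation of the Hilbert \(B_e^+\)\nb-module~\(B_g^+\), independent of anything specific about the graded structure of~\(A\).
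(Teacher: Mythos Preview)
Your proof is correct and is essentially the same as the paper's: both compute \(\norm{\bar\pi_g^+(x)}^2 = \norm{\bar\pi_g^+(x)^*\bar\pi_g^+(x)} = \norm{\bar\pi_e^+(\braket{x}{x})} = \norm{\braket{x}{x}} = \norm{x}^2\) using~\eqref{eq:barpi_g_representation}, the \(\Cstar\)\nb-identity, and the fact that a faithful \Star{}homomorphism between \(\Cstar\)\nb-algebras is isometric.
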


\begin{proof}
  Let \(\xi\in B^+_g\).  Then
  \[
  \norm{\xi} = \norm{\braket{\xi}{\xi}_{B^+_e}}^{\nicefrac12} =
  \norm{\bar\pi^+_e(\braket{\xi}{\xi}_{B^+_e})}^{\nicefrac12} =
  \norm{\bar\pi^+_g(\xi)^*\bar\pi^+_g(\xi)}^{\nicefrac12} =
  \norm{\bar\pi^+_g(\xi)}.\qedhere
  \]
\end{proof}

Next we want to prove that
\begin{equation}
  \label{eq:Fell_bundle_barpig}
  \bar\pi^+_g(B^+_g) \cdot \bar\pi^+_h(B^+_h)
  \subseteq \bar\pi^+_{g h}(B^+_{g h})
  \quad\text{and}\quad
  \bar\pi^+_g(B^+_g)^* = \bar\pi^+_{g^{-1}}(B^+_{g^{-1}})
\end{equation}
for all \(g,h\in G\) and for all integrable representations~\(\pi\)
of~\(A\).  This would give \((\bar\pi^+_g(B^+_g))_{g\in G}\) a Fell
bundle structure, which would lift to~\((B^+_g)_{g\in G}\) itself
if~\(\bar\pi^+_e\) is faithful.  Lemma~\ref{lem:Be_Bg_multiplicative}
below gives~\eqref{eq:Fell_bundle_barpig} provided the closed linear span
of \(\bar\pi^+_e(B^+_e)\cdot \bar\pi^+_g(B^+_g)\) is~\(\bar\pi^+_g(B^+_g)\) for
all \(g\in G\).  But this only holds if we impose two extra
assumptions.  First, compatibility of integrability and induction
gives~\(B^+_g\) a canonical left \(B^+_e\)\nb-module structure.
Secondly, compatibility of the weak \Cstar\nb-hull~\(B^+_e\) with
isometric intertwiners ensures that the representation~\(\bar\pi^+_g\)
is compatible with this left \(B^+_e\)\nb-module structure.

\subsection{Integrability and induction}
\label{sec:int_ind}

\begin{definition}
  \label{def:induced_actions_integrable}
  We say that \emph{integrability is compatible with induction} if
  induction of inducible representations preserves integrability; that
  is, if~\(\varphi_e\) is an inducible, integrable representation
  of~\(A_e\) on~\(\Hilm\) and~\(\pi\) is the representation of~\(A\)
  on~\(A\otimes_{A_e} \Hilm\) induced by~\(\varphi_e\), then the
  representation \(\pi_e \defeq \pi|_{A_e}\) of~\(A_e\) is again
  integrable.
\end{definition}

We shall use this assumption in Section~\ref{sec:Fell_structure} to
prove~\eqref{eq:Fell_bundle_barpig}.  But first, we study some
sufficient conditions for integrability to be compatible with
induction.


A direct sum of representations is integrable if and only if each
summand is integrable by Corollary~\ref{cor:integrable_sums}.  Hence
integrability is compatible with induction if and only if an
inducible, integrable representation~\(\varphi_e\) on~\(\Hilm[F]\)
induces integrable representations of~\(A_e\) on \(A_g\otimes_{A_e}
\Hilm[F]\) for all \(g\in G\).

\begin{proposition}
  \label{pro:inducible_universal}
  Integrability is compatible with induction if and only if the
  representations \((\Hilms[B]^+_g,\mu^+_{e,g})\) of~\(A_e\) on~\(B^+_g\)
  are integrable for all \(g\in G\).
\end{proposition}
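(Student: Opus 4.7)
The plan is to establish the two implications separately, with the forward direction being essentially immediate and the reverse direction requiring the associativity lemma for induction.

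For the easy direction, I will assume integrability is compatible with induction and apply this to the universal inducible, integrable representation $(\Hilms[B]_e^+,\mu_e^+)$ on~$B_e^+$. Induction gives an integrable representation of~\(A\) on \(A\otimes_{A_e} B_e^+\), whose restriction to~\(A_e\) is integrable by Definition~\ref{def:induced_integrable}. The decomposition \eqref{eq:decompose_induced} realises this restriction as the direct sum \(\bigoplus_{g\in G}(A_g\otimes_{A_e} B_e^+, \mu_{e,g}^+) = \bigoplus_{g\in G}(\Hilms[B]_g^+, \mu_{e,g}^+)\). Since each summand is a direct summand of an integrable representation, weak admissibility (axiom~\ref{enum:admissible2} in Definition~\ref{def:integrable_admissible}) yields integrability of each \((\Hilms[B]_g^+, \mu_{e,g}^+)\).

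For the converse, assume each \((\Hilms[B]_g^+, \mu_{e,g}^+)\) is integrable and let \((\Hilms[F],\varphi_e)\) be an arbitrary inducible, integrable representation of~\(A_e\) on a Hilbert \(D\)\nb-module~\(\Hilm[F]\). By Proposition~\ref{pro:inducible_A_B}, \(\varphi_e\) corresponds to a nondegenerate representation \(\bar\varphi_e^+\colon B_e^+\to \Bound(\Hilm[F])\), which turns~\(\Hilm[F]\) into a \Cstar\nb-correspondence from~\(B_e^+\) to~\(D\), and moreover \(\varphi_e \cong \mu_e^+ \otimes_{B_e^+} \Hilm[F]\) as representations of~\(A_e\). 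Applying Lemma~\ref{lem:induction_associative} with \(D_1 = B_e^+\), \(D_2 = D\), \(\Hilm = B_e^+\) carrying~\(\mu_e^+\), and the correspondence~\(\Hilm[F]\), produces a canonical unitary \Star{}intertwiner of representations of~\(A\),
\[
(A\otimes_{A_e} B_e^+)\otimes_{B_e^+} \Hilm[F]
\cong A\otimes_{A_e}(B_e^+\otimes_{B_e^+}\Hilm[F])
\cong A\otimes_{A_e} \Hilm[F].
\]

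Restricting to~\(A_e\) and using the orthogonal decomposition~\eqref{eq:decompose_induced} on both sides, each fibre identifies as \(A_g\otimes_{A_e}\Hilm[F] \cong B_g^+\otimes_{B_e^+}\Hilm[F] = (\Hilms[B]_g^+, \mu_{e,g}^+)\otimes_{B_e^+}\Hilm[F]\). By hypothesis \((\Hilms[B]_g^+, \mu_{e,g}^+)\) is integrable, so axiom~\ref{enum:admissible1} of Definition~\ref{def:integrable_admissible} (invariance under tensoring with a \Cstar\nb-correspondence) shows that each \(A_g\otimes_{A_e}\Hilm[F]\) is integrable as a representation of~\(A_e\). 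Axiom~\ref{enum:admissible2} (direct sums of integrable representations are integrable) then gives that the full induced restriction \(\bigoplus_{g\in G} A_g\otimes_{A_e}\Hilm[F]\) is integrable, which is precisely the compatibility of integrability with induction.

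The only delicate point is the identification in the middle paragraph, namely that the induced \(A_e\)\nb-representation decomposes fibrewise as the tensor product with the correspondence~\(\Hilm[F]\); this is exactly what Lemma~\ref{lem:induction_associative} delivers, since the grading on~\(A\) passes through both constructions and the lemma's unitary preserves the grading by acting on elementary tensors \(a\otimes b\otimes \xi\) with \(a\in A_g\). Everything else reduces to invoking the weak admissibility axioms in Definition~\ref{def:integrable_admissible}.
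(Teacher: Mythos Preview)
Your proof is correct and follows essentially the same approach as the paper: the forward direction applies compatibility to the universal inducible integrable representation on~\(B_e^+\) and uses closure under direct summands, while the converse identifies \(A\otimes_{A_e}\Hilm[F]\) with \((A\otimes_{A_e}B_e^+)\otimes_{B_e^+}\Hilm[F]\) via Lemma~\ref{lem:induction_associative} and invokes weak admissibility. The only cosmetic difference is that the paper first assembles the direct sum \((A\otimes_{A_e}\Hilms[B]_e^+,\mu^+)\), checks its integrability, and then tensors with~\(\Hilm[F]\), whereas you tensor fibrewise and then sum; both orderings are valid since axioms~\ref{enum:admissible1} and~\ref{enum:admissible2} of Definition~\ref{def:integrable_admissible} are available.
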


\begin{proof}
  The representations \((\Hilms[B]^+_g,\mu^+_{e,g})\) of~\(A_e\)
  on~\(B^+_g\) are integrable for all \(g\in G\) if and only if
  their direct sum is integrable.  Denote this by \((A\otimes_{A_e}
  \Hilms[B]^+_e,\mu^+)\).  If integrability is compatible with
  induction, then \((A\otimes_{A_e} \Hilms[B]^+_e,\mu^+)\) must be
  integrable because it is the induced representation of the
  universal integrable (inducible)
  representation~\((\Hilms[B]^+_e,\mu^+_e)\) of~\(A_e\)
  on~\(B^+_e\).  Conversely, by
  Lemma~\ref{lem:induction_associative}, induction maps the
  representation \((\Hilms[B]^+_e,\mu^+_e)\otimes_\varrho \Hilm[F]\)
  of~\(A_e\) associated to a representation \(\varrho\colon
  B^+_e\to\Bound(\Hilm[F])\) to the representation \((A
  \otimes_{A_e} \Hilms[B]^+_e,\mu^+) \otimes_\varrho \Hilm[F]\),
  which is integrable if \((A\otimes_{A_e} \Hilms[B]^+_e,\mu^+)\)
  is, see Definition
  \ref{def:integrable_admissible}.\ref{enum:admissible1}.
\end{proof}

The (Strong) Local--Global Principle is useful to check that
integrability is compatible with induction:

\begin{proposition}
  \label{pro:induced_actions_integrable_Hils}
  Assume that the integrable representations of~\(A_e\) satisfy the
  Strong Local--Global Principle and that induction maps irreducible,
  inducible, integrable Hilbert space representations of~\(A_e\) to
  integrable Hilbert space representations of~\(A\).  Then
  integrability is compatible with induction.

  The same conclusion holds if the integrable representations
  of~\(A_e\) satisfy the Local--Global Principle and induction maps
  all inducible, integrable Hilbert space representations of~\(A_e\)
  to integrable Hilbert space representations of~\(A\).
\end{proposition}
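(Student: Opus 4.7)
The plan is to apply the (Strong) Local--Global Principle at the coefficient-algebra level to reduce integrability of the $A_e$-restriction of the induced representation to integrability of certain Hilbert space representations, at which point the stated hypothesis on induction can be brought to bear via Lemma~\ref{lem:induction_associative}.

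Let $\varphi_e$ be an inducible, integrable representation of~$A_e$ on a Hilbert $D$-module~$\Hilm[F]$ and let $\pi$ be the induced $A$-representation on $A\otimes_{A_e}\Hilm[F]$. The goal is to show that $\pi|_{A_e}$ is integrable. First I would invoke the (Strong) LGP for the integrable representations of~$A_e$: to check integrability of $\pi|_{A_e}$ on $A\otimes_{A_e}\Hilm[F]$ it suffices to check that $(\pi|_{A_e})\otimes_\varrho 1$ is integrable for every (irreducible, in the SLGP case) Hilbert space representation $\varrho\colon D\to\Bound(\Hilm[H])$. Then I would use Lemma~\ref{lem:induction_associative} to identify $(\pi|_{A_e})\otimes_\varrho 1$ with the $A_e$-restriction of the $A$-representation induced from $\varphi_e\otimes_\varrho 1$. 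The Hilbert space representation $\varphi_e\otimes_\varrho 1$ of~$A_e$ is inducible by the same lemma, and integrable by Definition~\ref{def:integrable_admissible}.\ref{enum:admissible1} applied to the weakly admissible class of integrable representations of~$A_e$.

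For the second statement, using the ordinary LGP, the hypothesis applies directly to $\varphi_e\otimes_\varrho 1$ for \emph{every} Hilbert space representation $\varrho$ of~$D$: induction of an inducible, integrable Hilbert space representation of~$A_e$ yields, by hypothesis, an integrable $A$-representation, whose $A_e$-restriction is therefore integrable. This closes the argument in this case.

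For the first statement, using the SLGP, one can only test against \emph{irreducible} $\varrho$, and then $\varphi_e\otimes_\varrho 1$ need not be irreducible as an $A_e$-representation. The plan is to decompose it into irreducible pieces through the $\Cst$-hull~$B_e^+$: the Hilbert space representation of~$B_e^+$ corresponding to $\varphi_e\otimes_\varrho 1$ admits a direct integral decomposition into irreducible subrepresentations of~$B_e^+$, each of which corresponds under the equivalence $\Rep(B_e^+,\C)\cong\Repi(A_e,\C)$ to an irreducible, inducible, integrable Hilbert space representation of~$A_e$. The hypothesis gives integrability of the $A_e$-restriction of each fibrewise induction, and one then reassembles these to see that the $A_e$-restriction of the induction of $\varphi_e\otimes_\varrho 1$ itself is integrable. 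The main obstacle is this last reassembly: one must verify that induction commutes with the direct integral decomposition (which can be done on the canonical core $A\odot_{A_e}\Hilms$ using compatibility of cores with direct integrals) and that the resulting direct integral of integrable $A_e$-representations is again integrable. Away from the separable, type~I setting for~$B_e^+$ this direct-integral bookkeeping is the delicate point; the analogous argument with ordinary direct sums (cyclic decomposition of $B_e^+$-representations) does not quite suffice because cyclic subrepresentations are not irreducible in general.
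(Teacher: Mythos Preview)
Your argument for the ordinary Local--Global case is correct and matches the paper's.

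For the Strong Local--Global case you have correctly located the gap: after tensoring with an irreducible~$\varrho$ of~$D$, the $A_e$-representation $\varphi_e\otimes_\varrho 1$ need not be irreducible, and your proposed direct-integral repair is exactly the kind of bookkeeping that may fail outside the separable type~I setting. The paper sidesteps this completely with one extra reduction. By Proposition~\ref{pro:inducible_universal}, integrability is compatible with induction if and only if the \emph{single} universal inducible integrable representation $(\Hilms[B]_e^+,\mu_e^+)$ of~$A_e$ on~$B_e^+$ induces an integrable representation. So one may take $\varphi_e=\mu_e^+$ and $D=B_e^+$ from the start. Applying the Strong Local--Global Principle now requires checking integrability of $(\pi|_{A_e})\otimes_\varrho 1$ for each irreducible representation~$\varrho$ of~$B_e^+$. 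But through the $\Cst$-hull equivalence $\Rep(B_e^+,\C)\cong\Repi^+(A_e,\C)$, an irreducible~$\varrho$ \emph{is} an irreducible, inducible, integrable Hilbert space representation of~$A_e$, and by Lemma~\ref{lem:induction_associative} the representation $(\pi|_{A_e})\otimes_\varrho 1$ is the $A_e$-restriction of the induction of that very representation. The hypothesis applies directly --- no decomposition is needed. The whole trick is to choose the coefficient algebra to be~$B_e^+$ itself, so that ``irreducible over~$D$'' and ``irreducible, inducible, integrable over~$A_e$'' coincide.
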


\begin{proof}
  Let~\(B_e^+\) with the representation \((\Hilms[B]_e^+,\mu_e^+)\)
  of~\(A_e\) be the \Cstar\nb-hull for the inducible, integrable
  representations of~\(A_e\).  By
  Proposition~\ref{pro:inducible_universal}, it suffices to prove
  that the canonical representation of~\(A_e\) on \(A\otimes_{A_e}
  \Hilms[B]_e^+\) is integrable.

  By the Strong Local--Global Principle, this follows if the induced
  representation~\(\tilde\pi\) of~\(A_e\) on \((A\otimes_{A_e}
  \Hilms[B]_e^+)\otimes_\varrho \Hilm[H]\) is integrable for each
  irreducible representation~\(\varrho\) of~\(B_e^+\) on a Hilbert
  space~\(\Hilm[H]\).  The representation~\(\varrho\) is equivalent
  to an irreducible, inducible, integrable representation~\(\pi\)
  of~\(A_e\) on~\(\Hilm[H]\), and~\(\tilde\pi\) is the representation
  induced by~\(\pi\).  By assumption,
  \(\tilde\pi\) is integrable.  This finishes the proof in the case
  of the Strong Local--Global Principle.  The argument in the other
  case is the same without the word ``irreducible.''
\end{proof}

\begin{proposition}
  \label{pro:finit_dim_integrability_compatible_induction}
  Assume the following.  First, the integrable representations
  of~\(A_e\) satisfy the Strong Local--Global Principle.  Secondly,
  all \emph{irreducible}, integrable Hilbert space representations
  of~\(A_e\) are finite-dimensional.  Third, all finite-dimensional
  inducible representations of~\(A_e\) are integrable.  And fourth,
  each~\(A_g\) is finitely generated as a right \(A_e\)\nb-module.
  Then integrability is compatible with induction.
\end{proposition}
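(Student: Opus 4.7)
The plan is to apply Proposition~\ref{pro:induced_actions_integrable_Hils} using its first version (the one relying on the Strong Local--Global Principle), which is available by assumption~1. Thus it suffices to show that induction maps any irreducible, inducible, integrable Hilbert space representation~\((\Hilms[H],\varphi_e)\) of~\(A_e\) to an integrable Hilbert space representation of~\(A\). Let~\(\pi\) denote the induced representation on \(A\otimes_{A_e}\Hilm[H]\); by definition, its integrability is equivalent to integrability of \(\pi_e\defeq \pi|_{A_e}\). By assumption~2 the Hilbert space~\(\Hilm[H]\) is finite-dimensional.

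The orthogonal decomposition~\eqref{eq:decompose_induced} gives
\[
  A\otimes_{A_e}\Hilm[H] \cong \bigoplus_{g\in G} A_g\otimes_{A_e}\Hilm[H],
\]
and each summand is \(A_e\)\nb-invariant because \(A_e\cdot A_g\subseteq A_g\). Let \(\pi_e^{(g)}\) be the resulting representation of~\(A_e\) on \(\Hilm[H]_g\defeq A_g\otimes_{A_e}\Hilm[H]\). By assumption~4, \(A_g\) is generated as a right \(A_e\)\nb-module by finitely many elements \(a_1,\dotsc,a_n\in A_g\); hence \(A_g\odot_{A_e}\Hilm[H]\) is spanned by the vectors \(a_i\otimes\xi\) with \(i=1,\dotsc,n\) and \(\xi\in\Hilm[H]\), and since \(\Hilm[H]\) is finite-dimensional, \(\Hilm[H]_g\) is finite-dimensional as well. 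Thus each~\(\pi_e^{(g)}\) is a finite-dimensional (hence automatically bounded and closed) representation of~\(A_e\).

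The key remaining point is inducibility of each \(\pi_e^{(g)}\). The restriction~\(\pi_e\) of the \(A\)\nb-representation~\(\pi\) is inducible by Lemma~\ref{lem:induced_repr_inducible}. Inducibility is the positivity condition from Proposition~\ref{pro:inducible_criteria}, namely that for every \(h\in G\), \(m\in\N\), \(c_1,\dotsc,c_m\in A_h\) and vectors \(\eta_1,\dotsc,\eta_m\) in the domain, the scalar \(\sum_{k,l}\braket{\eta_k}{\pi_e(c_k^*c_l)\eta_l}\) is nonnegative. Restricting the~\(\eta_i\) to the \(A_e\)\nb-invariant subspace~\(\Hilm[H]_g\) preserves this inequality, so inducibility passes to each summand \(\pi_e^{(g)}\). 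Invoking assumption~3, each \(\pi_e^{(g)}\) is integrable.

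By Corollary~\ref{cor:integrable_sums}, the direct sum \(\pi_e=\bigoplus_{g\in G}\pi_e^{(g)}\) is integrable, and hence \(\pi\) is integrable. This verifies the hypothesis of Proposition~\ref{pro:induced_actions_integrable_Hils} and proves that integrability is compatible with induction. The main issue in this argument is the step of passing inducibility to the invariant summands~\(\Hilm[H]_g\); but inducibility of a representation~\(\pi_e\) on a Hilbert space restricts to any \(A_e\)\nb-invariant subspace essentially because the positivity condition in Proposition~\ref{pro:inducible_criteria} is quantified over arbitrary vectors in the domain, so shrinking the set of vectors weakens the requirement. The finite-dimensionality provided by assumptions 2 and~4 is what lets us avoid any subtlety about the graph topology on the tensor product and lets us feed the summands directly into assumption~3.
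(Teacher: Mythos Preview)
Your proof is correct and follows essentially the same route as the paper's: reduce via Proposition~\ref{pro:induced_actions_integrable_Hils} to an irreducible (hence finite-dimensional) inducible integrable~\(\varphi_e\), decompose \(A\otimes_{A_e}\Hilm[H]\) into the finite-dimensional \(A_e\)\nb-summands \(A_g\otimes_{A_e}\Hilm[H]\), observe that each is inducible as a summand of~\(\pi_e\), and apply assumption~3. The paper phrases the inducibility step slightly differently (``direct summand in a representation of~\(A\), hence inducible by Lemma~\ref{lem:induced_repr_inducible}''), but this is the same observation you make by restricting the positivity inequality.
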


\begin{proof}
  First, since~\(B_e^+\) is a quotient of~\(B_e\), its irreducible
  representations form a subset of the irreducible representations
  of~\(B_e\).  Thus the irreducible, inducible, integrable Hilbert
  space representations of~\(A_e\) are finite-dimensional as well.
  By Proposition~\ref{pro:induced_actions_integrable_Hils}, it
  suffices to check that the induced representation of~\(A_e\) on
  \(A_g\otimes_{A_e} \Hilm[H]\) is integrable when~\(\Hilm[H]\) is a
  Hilbert space with an irreducible, inducible, integrable
  representation.  By our assumptions, \(\Hilm[H]\) is
  finite-dimensional and~\(A_g\) is finitely generated as an
  \(A_e\)\nb-module.  Hence \(A_g\otimes_{A_e} \Hilm[H]\) is
  finite-dimensional.  This representation is a direct summand in a
  representation of~\(A\) on \(A\otimes_{A_e} \Hilm[H]\) and hence
  inducible by Lemma~\ref{lem:induced_repr_inducible}.  By
  assumption, the induced representation of~\(A_e\) on
  \(A_g\otimes_{A_e} \Hilm[H]\) is integrable.
\end{proof}

\subsection{The Fell bundle structure}
\label{sec:Fell_structure}

If integrability is compatible with induction, the
representation~\(\mu^+_{e,g}\) of~\(A_e\) on~\(B^+_g\) is integrable.
It is inducible as well by Lemma~\ref{lem:induced_repr_inducible}
because it is a direct summand in a representation of~\(A\).  Hence
there is a unique (nondegenerate) representation~\(\bar\mu^+_{e,g}\)
of~\(B^+_e\) on~\(B^+_g\) such that \(\bar\mu^+_{e,g}(\Hilms[B]^+_e) B^+_g\)
is a core for~\(\mu^+_{e,g}\), and \(\mu^+_{e,g}(a_e)
(\bar\mu^+_{e,g}(b)x) = \bar\mu^+_{e,g}(\mu^+_e(a_e)b)x\) for all \(a\in
A_e\), \(b\in\Hilms[B]^+_e\), \(x\in B^+_g\).  Our next goal is to show
that the representations \(\bar\pi^+_e\colon B^+_e \to \Bound(\Hilm)\)
and \(\bar\pi^+_g\colon B^+_g \to \Bound(\Hilm)\) constructed
using~\eqref{eq:barpi_g_representation} are compatible in the sense
that
\begin{equation}
  \label{eq:pie_pig_compatible}
  \bar\pi^+_e(b_e)\cdot \bar\pi^+_g(b_g)
  = \bar\pi^+_g(\bar\mu^+_{e,g}(b_e)b_g)
  \qquad\text{for all }b_e\in B^+_e,\ b_g\in B^+_g.
\end{equation}
This is not automatic.  The following lemma is the most
subtle point in the proof of the Induction Theorem.

\begin{lemma}
  \label{lem:Be_Bg_multiplicative}
  Equation~\eqref{eq:pie_pig_compatible} holds if~\(B^+_e\) is a
  \Cstar\nb-hull, not just a weak \Cstar\nb-hull.  Then also
  \(\bar\pi^+_e(B^+_e)\cdot\bar\pi^+_g(B^+_g) = \bar\pi^+_g(B^+_g)\) for all
  \(g\in G\).
\end{lemma}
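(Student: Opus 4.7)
The plan is to recast the representation $\bar\pi_g^+$ as an isometric intertwiner between two representations of $A_e$ on Hilbert $D$-modules and then leverage the hypothesis that $B_e^+$ is a \emph{full} $\Cst$-hull, so that this intertwiner is automatically an intertwiner for the corresponding representations of $B_e^+$. Concretely, I would first define the map
\[
V\colon B_g^+\otimes_{B_e^+,\bar\pi_e^+}\Hilm\to \Hilm,\qquad
x\otimes\xi\mapsto \bar\pi_g^+(x)\xi.
\]
The identity $\bar\pi_g^+(x_1)^*\bar\pi_g^+(x_2)=\bar\pi_e^+(\braket{x_1}{x_2})$ from~\eqref{eq:barpi_g_representation} shows that $V$ is well-defined on the balanced tensor product and preserves the $D$-valued inner product, hence extends to an isometry of Hilbert $D$-modules (not necessarily adjointable).

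Next, I would verify that $V$ is an isometric intertwiner for the $A_e$-representations $\mu_{e,g}^+\otimes_{B_e^+} 1_\Hilm$ (on the source) and $\pi_e$ (on the target). On a core consisting of elementary tensors $a\otimes b\otimes\xi$ with $a\in A_g$, $b\in\Hilms[B]_e^+$, $\xi\in\Hilm$, the $A_e$-action on the source is $a_e\cdot(a\otimes b\otimes\xi)=(a_ea)\otimes b\otimes\xi$, and a direct computation gives
\[
V\bigl((a_ea)\otimes b\otimes\xi\bigr)
=\pi_g(a_ea)\bar\pi_e^+(b)\xi
=\pi_e(a_e)\,\pi_g(a)\bar\pi_e^+(b)\xi
=\pi_e(a_e)\,V(a\otimes b\otimes\xi),
\]
and by Lemma~\ref{lem:closure_functorial} the intertwining property passes to the closed representations.

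Now comes the decisive step. The source representation $\mu_{e,g}^+\otimes_{B_e^+}1_\Hilm$ is integrable: $\mu_{e,g}^+$ is integrable by the compatibility of integrability with induction (cf.~§\ref{sec:int_ind}), and interior tensor products preserve integrability by Definition~\ref{def:integrable_admissible}.\ref{enum:admissible1}. Its corresponding $B_e^+$-representation is $\bar\mu_{e,g}^+\otimes_{B_e^+}1_\Hilm$, acting on $B_g^+\otimes_{B_e^+}\Hilm$ by left multiplication via $\bar\mu_{e,g}^+$; the target corresponds to $\bar\pi_e^+$. Because $B_e^+$ is a genuine $\Cst$-hull (not merely a weak one), the bijection between integrable $A_e$-representations and $B_e^+$-representations is compatible with isometric intertwiners \emph{in both directions}; hence $V$ is also an isometric intertwiner for the $B_e^+$-representations. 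Written out, this says
\[
\bar\pi_g^+\bigl(\bar\mu_{e,g}^+(b_e)x\bigr)\xi
= V\bigl(\bar\mu_{e,g}^+(b_e)x\otimes\xi\bigr)
= \bar\pi_e^+(b_e)\,V(x\otimes\xi)
= \bar\pi_e^+(b_e)\,\bar\pi_g^+(x)\xi
\]
for all $b_e\in B_e^+$, $x\in B_g^+$, $\xi\in\Hilm$, which is exactly~\eqref{eq:pie_pig_compatible}. The second claim then follows immediately: by~\eqref{eq:pie_pig_compatible} the closed linear span of $\bar\pi_e^+(B_e^+)\bar\pi_g^+(B_g^+)$ equals $\bar\pi_g^+\bigl(\cl{\bar\mu_{e,g}^+(B_e^+)B_g^+}\bigr)=\bar\pi_g^+(B_g^+)$, the last equality using that $\bar\mu_{e,g}^+$ is tacitly nondegenerate.

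The main obstacle is the third step: the analogous argument with $B_e^+$ replaced by a \emph{weak} $\Cst$-hull would fail, since weak hulls are only compatible with unitary $\Star$-intertwiners, whereas $V$ is only a (typically non-adjointable) isometric intertwiner. The hypothesis that $B_e^+$ is a full $\Cst$-hull is thus used in an essential way, and the counterexample in~§\ref{sec:counter_induction} shows that it cannot be dropped.
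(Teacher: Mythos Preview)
Your proof is correct and follows essentially the same strategy as the paper: define the isometry \(V\colon B_g^+\otimes_{B_e^+}\Hilm\to\Hilm\), show it intertwines the two \(A_e\)-representations, and then invoke the full \(\Cst\)-hull property to upgrade this to a \(B_e^+\)-intertwiner. The only difference is cosmetic: the paper verifies the \(A_e\)-intertwining by first invoking Lemma~\ref{lem:induction_associative} to identify \(B_g^+\otimes_{B_e^+}\Hilm\) with \(A_g\otimes_{A_e}\Hilm\), so that \(V\) corresponds to the transparently \(A_e\)-equivariant map \(a\otimes\xi\mapsto\pi_g(a)\xi\), whereas you compute directly on the core \(a\otimes b\otimes\xi\) and then appeal to Lemma~\ref{lem:closure_functorial}.
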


\begin{proof}
  Let \(\Hilm[F]\defeq B^+_g\otimes_{B^+_e} \Hilm\).  The linear map
  \(B^+_g\odot \Hilm\to\Hilm\), \(b\otimes\xi\mapsto
  \bar\pi^+_g(b)\xi\), for \(b\in B^+_g\), \(\xi\in\Hilm\),
  preserves the inner products by~\eqref{eq:barpi_g_representation}.
  Hence it extends to a well defined isometry \(I\colon \Hilm[F]
  \injto \Hilm\).  The representation~\(\bar\mu^+_{e,g}\)
  of~\(B^+_e\) on~\(B^+_g\) induces a
  representation~\(\bar\mu^+_{e,g}\otimes 1_{\Hilm}\) of~\(B^+_e\)
  on~\(\Hilm[F]\).  The meaning of~\eqref{eq:pie_pig_compatible} is
  that~\(I\) intertwines the representations
  \(\bar\mu^+_{e,g}\otimes1\) and~\(\bar\pi_e^+\) of~\(B^+_e\) on
  \(\Hilm[F]\) and~\(\Hilm\).  These representations correspond to
  the integrable representations \(\mu^+_{e,g}\otimes 1\)
  and~\(\pi_e\) of~\(A_e\) on \(\Hilm[F]\) and~\(\Hilm\),
  respectively.  Since~\(B^+_e\) is a \Cstar\nb-hull, it suffices to
  prove that~\(I\) intertwines these representations of~\(A_e\).

  We identify \(\Hilm \cong B_e^+ \otimes_{\bar\pi_e^+} \Hilm\) and
  describe~\(\pi_e\) as \(\mu_e^+ \otimes_{\bar\pi_e^+} 1_{\Hilm}\)
  as in Proposition~\ref{pro:Phi_simplifies}.  Then
  Lemma~\ref{lem:induction_associative} gives a canonical unitary
  \Star{}intertwiner
  \[
  \Hilm[F]
  \defeq (A_g \otimes_{A_e} B_e^+)\otimes_{B^+_e} \Hilm
  \cong A_g \otimes_{A_e} (B_e^+\otimes_{B^+_e} \Hilm)
  \cong A_g \otimes_{A_e} \Hilm
  \]
  of representations of~\(A_e\).  An inspection of the proof shows
  that~\(I\) corresponds to the isometry \(I'\colon A_g\otimes_{A_e}
  \Hilm \injto \Hilm\) defined by \(I'(a\otimes\xi) \defeq
  \pi_g(a)\xi\) for all \(a\in A_g\), \(\xi\in\Hilms\).
  Since~\(I'\) is an \(A_e\)\nb-intertwiner, so is~\(I\).  This
  finishes the proof of~\eqref{eq:pie_pig_compatible}.  Then
  \(\bar\pi^+_e(B^+_e)\cdot\bar\pi^+_g(B^+_g) = \bar\pi^+_g(B^+_g)\)
  follows because~\(\bar\mu^+_{e,g}\) is nondegenerate.
\end{proof}

\begin{lemma}
  \label{lem:Fell_bundle_rep}
  Assume \(\bar\pi^+_e(B^+_e)\cdot\bar\pi^+_g(B^+_g) = \bar\pi^+_g(B^+_g)\) for
  all \(g\in G\).  Then~\eqref{eq:Fell_bundle_barpig} holds.
\end{lemma}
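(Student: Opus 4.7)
The plan is to verify the two statements of \eqref{eq:Fell_bundle_barpig} in turn, starting from the explicit formula $\bar\pi^+_g(a \otimes b) = \pi_g(a)\bar\pi^+_e(b)$ available on the core $A_g \odot \Hilms[B]^+_e$ of $B^+_g$. A useful preliminary observation is that, by~\eqref{eq:barpi_g_representation}, the map $\bar\pi^+_g$ factors isometrically through the quotient $B^+_g/\ker\bar\pi^+_g$; hence each image $\bar\pi^+_g(B^+_g)$ is a \emph{closed} subspace of $\Bound(\Hilm)$, so the required inclusions can be established on dense subspaces provided norm-convergent approximations can be produced.

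For the multiplicativity $\bar\pi^+_g(B^+_g) \cdot \bar\pi^+_h(B^+_h) \subseteq \bar\pi^+_{gh}(B^+_{gh})$, I would take $x = a_g \otimes b$ and $y = a_h \otimes b'$ in the respective cores and compute
\[
\bar\pi^+_g(x)\bar\pi^+_h(y) = \pi_g(a_g)\bar\pi^+_e(b)\pi_h(a_h)\bar\pi^+_e(b') = \pi_g(a_g)\bigl[\bar\pi^+_e(b)\bar\pi^+_h(a_h\otimes b')\bigr].
\]
By the standing hypothesis, the bracketed term equals $\bar\pi^+_h(z)$ for some $z \in B^+_h$. If $z$ admits a representative in the core of $B^+_h$, say $z = \sum_i a_{h,i} \otimes b_i$, then
\[
\pi_g(a_g)\bar\pi^+_h(z) = \sum_i \pi_{gh}(a_g a_{h,i})\bar\pi^+_e(b_i) = \bar\pi^+_{gh}\Bigl(\sum_i (a_g a_{h,i}) \otimes b_i\Bigr) \in \bar\pi^+_{gh}(B^+_{gh}).
\]
The adjoint identity $\bar\pi^+_g(B^+_g)^* = \bar\pi^+_{g^{-1}}(B^+_{g^{-1}})$ is handled symmetrically: from the identity $\bar\pi^+_g(a\otimes b)^* = \bar\pi^+_e(b^*)\overline{\pi_g(a)^*}$ and the observation that, for any approximate unit $(u_\lambda)\subseteq\Hilms[B]^+_e$ of $B^+_e$,
\[
\bar\pi^+_e(b^*)\pi_{g^{-1}}(a^*)\bar\pi^+_e(u_\lambda) = \bar\pi^+_e(b^*)\bar\pi^+_{g^{-1}}(a^*\otimes u_\lambda) \in \bar\pi^+_{g^{-1}}(B^+_{g^{-1}})
\]
by the hypothesis applied at $g^{-1}$; the reverse containment follows by applying the same argument with $g$ replaced by $g^{-1}$ and then taking adjoints.

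The hard part will be passing from core elements to arbitrary $z \in B^+_h$: when $z$ is only a norm limit of core elements $z_n$, the unbounded operator $\pi_g(a_g)$ on the left of $\bar\pi^+_h(z_n)$ does not preserve norm convergence, so that strong-operator convergence $\bar\pi^+_{gh}(w_n) \to \pi_g(a_g)\bar\pi^+_h(z)$ is automatic but the norm convergence needed to invoke closedness of the image is not. My plan is to compute the $B^+_e$-valued inner products $\langle w_n - w_m, w_n - w_m\rangle$ in $B^+_{gh}$ directly, for the explicit representatives $w_n = \sum_i (a_g a_{h,i,n}) \otimes b_{i,n}$ obtained from $z_n = \sum_i a_{h,i,n} \otimes b_{i,n}$; expanded out, this inner product reads
\[
\sum_{i,j} (b_{i,n}-b_{i,m})^*\mu^+_e\bigl(a_{h,i}^*(a_g^* a_g) a_{h,j}\bigr)(b_{j,n}-b_{j,m}),
\]
and must be bounded by the inner product $\langle z_n - z_m, z_n - z_m\rangle$ in $B^+_h$. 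The key idea is that $a_g^* a_g \in A_e$, viewed through $\mu^+_e$, acts boundedly in the representation because the hypothesis $\bar\pi^+_e(B^+_e)\bar\pi^+_h(B^+_h) = \bar\pi^+_h(B^+_h)$ encodes a compatible left $B^+_e$-action on the fibres into which this $A_e$-weight can be absorbed. Obtaining this estimate will force $(w_n)$ to be Cauchy in $B^+_{gh}/\ker\bar\pi^+_{gh}$, after which its limit lifts to a representative $w \in B^+_{gh}$ with $\bar\pi^+_{gh}(w) = \pi_g(a_g)\bar\pi^+_h(z)$. I expect this Cauchy estimate to be the most delicate point of the argument.
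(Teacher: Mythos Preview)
Your adjoint argument is essentially sound: once you know that left multiplication by $\bar\pi^+_e(B^+_e)$ preserves and is nondegenerate on $\bar\pi^+_g(B^+_g)$ (which is exactly the hypothesis), the approximate-unit trick gives the norm convergence $\bar\pi^+_e(u_\lambda)\bar\pi^+_g(a\otimes b)\to\bar\pi^+_g(a\otimes b)$ you need, and $\bar\pi^+_g(a\otimes b)^* \in \bar\pi^+_{g^{-1}}(B^+_{g^{-1}})$ follows.

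The multiplication argument, however, has a genuine gap at precisely the step you flag as delicate. The map $L_{a_g}\colon A_h\odot\Hilms[B]^+_e \to A_{gh}\odot\Hilms[B]^+_e$, $a_h\otimes b \mapsto (a_g a_h)\otimes b$, is \emph{not} bounded for the $B^+_h$- and $B^+_{gh}$-norms: the ratio $\norm{b^*\mu^+_e(a_h^*\,a_g^* a_g\,a_h)b}\big/\norm{b^*\mu^+_e(a_h^* a_h)b}$ is governed by the unbounded element $a_g^* a_g \in A_e$, and the hypothesis supplies only a bounded left $B^+_e$-action, not a bounded left $A_e$-action. So the Cauchy estimate you hope for cannot hold in general, and $z_n \to z$ in $B^+_h$ does not force $w_n$ to be Cauchy in $B^+_{gh}$. (Your displayed inner-product formula is also not quite right: the $a_{h,i}$ depend on~$n$, so cross terms between different representatives appear.)

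The paper sidesteps this obstruction by reversing the order of the two claims. From the hypothesis one first rewrites
\[
\bar\pi^+_g(B^+_g) \ \doteq\ \bar\pi^+_e(\Hilms[B]^+_e)^*\,\pi_g(A_g)\,\bar\pi^+_e(\Hilms[B]^+_e),
\]
that is, each element is a norm limit of operators with bounded factors on \emph{both} sides of the unbounded~$\pi_g(a)$. The involution identity $\bar\pi^+_g(B^+_g)^* = \bar\pi^+_{g^{-1}}(B^+_{g^{-1}})$ is then immediate from $\bigl(\bar\pi^+_e(b_1)^*\pi_g(a)\bar\pi^+_e(b_2)\bigr)^* = \bar\pi^+_e(b_2)^*\pi_{g^{-1}}(a^*)\bar\pi^+_e(b_1)$. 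Once this is in hand, it yields the alternative description $\bar\pi^+_g(B^+_g) \doteq \bar\pi^+_e((\Hilms[B]^+_e)^*)\,\pi_g(A_g)$ with the bounded factor on the \emph{left}. Pairing this with the original description $\bar\pi^+_h(B^+_h) \doteq \pi_h(A_h)\,\bar\pi^+_e(\Hilms[B]^+_e)$ (bounded factor on the right) places the two unbounded pieces $\pi_g(A_g)$ and $\pi_h(A_h)$ adjacent, where they multiply as $\pi_{gh}(A_g A_h)\subseteq\pi_{gh}(A_{gh})$, and the product lands in the sandwiched description of $\bar\pi^+_{gh}(B^+_{gh})$. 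No unbounded limits are needed.

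So the missing idea is: prove the involution identity first and use it to flip the bounded factor to the other side of~$\pi_g(A_g)$; the multiplicativity then becomes a one-line computation.
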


\begin{proof}
  We write~\(\doteq\) to denote that two sets of operators have the
  same closed linear span.  By definition, \(\bar\pi^+_g(B^+_g) \doteq
  \pi_g(A_g) \bar\pi^+_e(\Hilms[B]^+_e)\), and \(\bar\pi^+_e(\Hilms[B]^+_e)^*
  \doteq \bar\pi^+_e(B^+_e)\) because~\(\Hilms[B]^+_e\) is dense in~\(B^+_e\).
  Our assumption \(\bar\pi^+_g(B^+_g) \doteq \bar\pi^+_e(B^+_e)\cdot
  \bar\pi^+_g(B^+_g)\) implies \(\bar\pi^+_g(B^+_g) \doteq
  \bar\pi^+_e(\Hilms[B]^+_e)^* \pi_g(A_g) \bar\pi^+_e(\Hilms[B]^+_e)\).  We
  have seen above~\eqref{eq:barpi_g_representation} that
  \[
  \bar\pi^+_e(b^*)\pi_{g^{-1}}(a^*)
  = \bar\pi^+_e(b)^*\pi_{g^{-1}}(a^*)
  \]
  for \(b\in\Hilms[B]^+_e\), \(a\in A_g\) extends to a bounded
  operator on~\(\Hilm\) that is adjoint to the bounded
  operator~\(\pi_g(a)\bar\pi^+_e(b)\).  Therefore,
  \[
  \bigl(\bar\pi^+_e(b_1)^* \pi_g(a) \bar\pi^+_e(b_2)\bigr)^*
  = \bar\pi^+_e(b_2)^* \pi_{g^{-1}}(a^*) \bar\pi^+_e(b_1)
  \]
  for all \(b_1,b_2\in \Hilms[B]^+_e\), \(a\in A_g\); both sides are
  globally defined bounded operators because
  \(\bar\pi^+_e(\Hilms[B]^+_e)\) maps~\(\Hilm\) into~\(\Hilms\).  The
  closed linear spans on the two sides of this equality are
  \(\bar\pi^+_g(B^+_g)^*\) and~\(\bar\pi^+_{g^{-1}}(B^+_{g^{-1}})\),
  respectively.  Thus \(\bar\pi^+_g(B^+_g)^* =
  \bar\pi^+_{g^{-1}}(B^+_{g^{-1}})\).  As above, the operators
  \(\cl{\bar\pi^+_e(b) \pi_g(a)}\) for \(b\in (\Hilms[B]^+_e)^*\), \(g\in
  G\), \(a\in A_g\) are bounded and generate \((B^+_{g^{-1}})^* = B^+_g\).
  Hence
  \begin{multline*}
    \bar\pi^+_g(B^+_g)\cdot\bar\pi^+_h(B^+_h)
    \doteq \bar\pi^+_e((\Hilms[B]^+_e)^*) \pi_g(A_g) \cdot
    \pi_h(A_h) \bar\pi^+_e(\Hilms[B]^+_e)
    \\\subseteq \bar\pi^+_e(\Hilms[B]^+_e)^* \cdot \pi_{gh}(A_{gh})
    \bar\pi^+_e(\Hilms[B]^+_e)
    \doteq \bar\pi^+_{gh}(B^+_{gh}).
  \end{multline*}
  We used here that~\(\pi\) is a homomorphism
  on~\(A\) and that \(A_g\cdot A_h\subseteq A_{gh}\).
\end{proof}

\begin{lemma}
  \label{lem:Fell_bundle_structure}
  Assume that~\(B_e\) is a \Cstar\nb-hull and that integrability is
  compatible with induction.  There is a unique Fell bundle
  structure on~\((B^+_g)_{g\in G}\) such that the maps
  \(\bar\pi^+_g\colon B^+_g\to \Bound(\Hilm)\) form a Fell bundle
  representation for any integrable representation~\(\pi\) of~\(A\)
  on a Hilbert module~\(\Hilm\).
\end{lemma}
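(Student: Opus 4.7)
The plan is to prove the lemma by constructing a single ``universal'' integrable representation of~\(A\) whose associated representation of~\(B_e^+\) is faithful, pulling back the Fell-bundle structure that Lemmas~\ref{lem:barpi_isometric}, \ref{lem:Be_Bg_multiplicative}, and~\ref{lem:Fell_bundle_rep} produce from it, and then verifying compatibility with arbitrary integrable representations via a direct-sum argument.  For the universal representation, take the representation \((\Hilms[B]_e^+,\mu_e^+)\) of~\(A_e\) on~\(B_e^+\); it is inducible by Proposition~\ref{pro:inducible_A_B}, so I form the induced representation \((\tilde{\Hilms},\tilde{\pi})\) of~\(A\) on \(\tilde{\Hilm}\defeq A\otimes_{A_e} B_e^+\cong \bigoplus_{g\in G} B_g^+\).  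Integrability is compatible with induction by hypothesis, so Proposition~\ref{pro:inducible_universal} says that~\(\tilde\pi_e\) is integrable, making~\(\tilde\pi\) integrable.  The corresponding representation \(\overline{\tilde\pi_e^+}\colon B_e^+\to\Bound(\tilde\Hilm)\) decomposes along the direct-sum decomposition of~\(\tilde\Hilm\) as \(\bigoplus_{g\in G}\bar\mu_{e,g}^+\); its \(g=e\) summand is the identity representation of~\(B_e^+\) on itself, so the whole representation~\(\overline{\tilde\pi_e^+}\) is faithful.

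Since \(B_e^+\) is a \Cstar\nb-hull (inherited from~\(B_e\) via Proposition~\ref{pro:inducible_A_B}) and \(\overline{\tilde\pi_e^+}\) is faithful, Lemma~\ref{lem:barpi_isometric} makes each \(\overline{\tilde\pi_g^+}\colon B_g^+\to\Bound(\tilde\Hilm)\) isometric onto its image, Lemma~\ref{lem:Be_Bg_multiplicative} yields \(\overline{\tilde\pi_e^+}(B_e^+)\cdot\overline{\tilde\pi_g^+}(B_g^+)=\overline{\tilde\pi_g^+}(B_g^+)\), and Lemma~\ref{lem:Fell_bundle_rep} then gives the Fell-bundle inclusions \(\overline{\tilde\pi_g^+}(B_g^+)\cdot\overline{\tilde\pi_h^+}(B_h^+)\subseteq \overline{\tilde\pi_{gh}^+}(B_{gh}^+)\) and \(\overline{\tilde\pi_g^+}(B_g^+)^*=\overline{\tilde\pi_{g^{-1}}^+}(B_{g^{-1}}^+)\) inside \(\Bound(\tilde\Hilm)\).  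I transport these operations back to \((B_g^+)_{g\in G}\) through the isometric bijections~\(\overline{\tilde\pi_g^+}\), defining \(x\cdot y\in B_{gh}^+\) for \(x\in B_g^+\), \(y\in B_h^+\) by \(\overline{\tilde\pi_{gh}^+}(x\cdot y)=\overline{\tilde\pi_g^+}(x)\,\overline{\tilde\pi_h^+}(y)\) and \(x^*\in B_{g^{-1}}^+\) for \(x\in B_g^+\) by \(\overline{\tilde\pi_{g^{-1}}^+}(x^*)=\overline{\tilde\pi_g^+}(x)^*\).  Associativity, the involutive identities, and the \Cstar\nb-identity \(\norm{x^*x}=\norm{x}^2\) on~\(B_g^+\) are then inherited from~\(\Bound(\tilde\Hilm)\).

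Finally, given any integrable representation~\(\pi\) of~\(A\) on~\(\Hilm\), consider the direct sum \(\pi\oplus\tilde\pi\) on \(\Hilm\oplus\tilde\Hilm\), which is integrable by Corollary~\ref{cor:integrable_sums}.  Its restriction to~\(A_e\) corresponds to \(\bar\pi_e^+\oplus\overline{\tilde\pi_e^+}\), which is still faithful on~\(B_e^+\), so Lemmas~\ref{lem:barpi_isometric}--\ref{lem:Fell_bundle_rep} apply again to show that for \(x\in B_g^+\), \(y\in B_h^+\) the operator \((\bar\pi_g^+(x)\oplus\overline{\tilde\pi_g^+}(x))(\bar\pi_h^+(y)\oplus\overline{\tilde\pi_h^+}(y))\) lies in \((\bar\pi_{gh}^+\oplus\overline{\tilde\pi_{gh}^+})(B_{gh}^+)\); comparing the second components identifies the unique preimage as the product \(x\cdot y\) from the previous paragraph, and comparing first components then gives \(\bar\pi_g^+(x)\,\bar\pi_h^+(y)=\bar\pi_{gh}^+(x\cdot y)\).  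The involution case is identical.  Uniqueness is immediate: any Fell-bundle structure making the isometric embeddings \(\overline{\tilde\pi_g^+}\) a Fell-bundle representation is forced by the operations on the images \(\overline{\tilde\pi_g^+}(B_g^+)\subseteq\Bound(\tilde\Hilm)\) together with the injectivity of~\(\overline{\tilde\pi_g^+}\).  The main obstacle is producing the faithful~\(\overline{\tilde\pi_e^+}\) to begin with, which is precisely where the compatibility of integrability with induction enters; the assumption that~\(B_e\) is a full \Cstar\nb-hull (not merely a weak one) is equally essential, because this is what makes Lemma~\ref{lem:Be_Bg_multiplicative} available to turn the left unit-fibre action into multiplication between fibres.
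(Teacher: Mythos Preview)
Your proof is correct and follows essentially the same route as the paper's: both take the induced representation on \(A\otimes_{A_e} B_e^+\) (your~\(\tilde\pi\), the paper's~\(\lambda\)) to obtain a faithful~\(B_e^+\)-representation, transport the Fell bundle structure back through the resulting isometries, and then handle a general~\(\pi\) via the exterior direct sum \(\pi\oplus\tilde\pi\) and projection to components. The only point you might make explicit is that this direct sum is an \emph{exterior} direct sum over \(D\oplus B_e^+\), but the paper is equally brief about this.
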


\begin{proof}
  Lemmas \ref{lem:Be_Bg_multiplicative}
  and~\ref{lem:Fell_bundle_rep} show that
  \eqref{eq:Fell_bundle_barpig} holds under our assumptions.  Hence
  the multiplication and involution in~\(\Bound(\Hilm)\) restrict to
  a Fell bundle structure on the subspaces \(\bar\pi^+_g(B^+_g)\subseteq
  \Bound(\Hilm)\) for \(g\in G\), such that the inclusions
  \(\bar\pi^+_g(B^+_g)\injto \Bound(\Hilm)\) give a Fell bundle
  representation.

  The induced representation~\(\lambda\) of~\(A\) on the Hilbert
  \(B^+_e\)-module \(A\otimes_{A_e} B^+_e\) gives a faithful
  representation of~\(B^+_e\) because \(A\otimes_{A_e} B^+_e \supseteq
  A_e\otimes_{A_e} B^+_e = B^+_e\) contains the identity representation.
  Hence the resulting representations~\(\bar\lambda_g\) of~\(B^+_g\)
  are also faithful, even isometric, by
  Lemma~\ref{lem:barpi_isometric}.  So the Fell bundle structure
  on~\(\bar\lambda_g(B^+_g)\) lifts to~\(B^+_g\), so that the maps
  \(\bar\lambda_g\colon B^+_g\to \Bound(\Hilm)\) form a Fell bundle
  representation.

  Let~\(\pi\) be any integrable representation of~\(A\).  The
  exterior direct sum \(\pi\oplus\lambda\) on the Hilbert \(D\oplus
  B^+_e\)-module \(\Hilm'\defeq \Hilm\oplus (A\otimes_{A_e} B^+_e)\) is
  still integrable.  The resulting maps from~\(B^+_g\)
  to~\(\Bound(\Hilm')\) simply give block matrices
  \(\bar\pi^+_g(b)\oplus \bar\lambda_g(b)\) for \(b\in B^+_g\).  The
  compressions to the direct summands \(\Hilm\) and \(A\otimes_{A_e}
  B^+_e\) therefore restrict to Fell bundle representations with
  respect to the Fell bundle structure on \((\bar\pi^+_g\oplus
  \bar\lambda_g)(B^+_g)\) defined above.  Since~\(\lambda\) is
  faithful, the projection \((\bar\pi^+_g\oplus \bar\lambda_g)(B^+_g)
  \to \bar\lambda_g(B^+_g) \cong B^+_g\) is a Fell bundle isomorphism.
  Hence the map \(B^+_g \congto (\bar\pi^+_g\oplus \bar\lambda_g)(B^+_g)
  \to \bar\pi^+_g(B^+_g)\) is a Fell bundle representation.
\end{proof}

Let~\((\beta_g)_{g\in G}\) be a Fell bundle over a discrete
group~\(G\) (see \cite{Exel:Partial_dynamical}).  Then \(\beta\defeq
\bigoplus_{g\in G} \beta_g\) is a
\(G\)\nb-graded \Star{}algebra using the given multiplications and
involutions among the subspaces~\(\beta_g\).  The \emph{section
  \Cstar\nb-algebra} \(\Cst(\beta)\) of the Fell bundle is defined
as the completion of~\(\beta\) in the maximal \Cstar\nb-seminorm.
By construction, a representation of~\(\Cst(\beta)\) is equivalent
to a representation of the Fell bundle.  This holds also for
representations on Hilbert modules.

\begin{theorem}
  \label{the:Fell_bundle_sections_hull}
  Let~\(A\) be a graded \Star{}algebra for which~\(A_e\) has a
  \Cstar\nb-hull.  Assume that integrability is compatible with
  induction as in
  Definition~\textup{\ref{def:induced_actions_integrable}}.  The
  section \Cstar\nb-algebra~\(B\) of the Fell bundle~\((B^+_g)_{g\in
    G}\) constructed above is a \Cstar\nb-hull for the integrable
  representations of~\(A\).
\end{theorem}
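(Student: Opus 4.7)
The plan is to verify the criterion of Proposition~\ref{pro:Cstar-generated_by_unbounded_multipliers} by constructing mutually inverse, natural assignments between $\Rep(B,\Hilm)$ and $\Repi(A,\Hilm)$ for each Hilbert module~$\Hilm$. The backward map $\Psi\colon\Repi(A,\Hilm)\to\Rep(B,\Hilm)$ is the one already implicit in Lemma~\ref{lem:Fell_bundle_structure}: an integrable representation~$\pi$ of~$A$ yields the Fell bundle representation $(\bar\pi_g^+)_{g\in G}$, which integrates via the universal property of the section \Cstar\nb-algebra to a representation~$\bar\pi^+$ of~$B$. Nondegeneracy of~$\bar\pi^+$ follows from nondegeneracy of~$\bar\pi_e^+$, which holds because $\pi|_{A_e}$ is inducible by Lemma~\ref{lem:induced_repr_inducible} and integrable, hence corresponds to a nondegenerate representation of the \Cstar\nb-hull~$B_e^+$.

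The forward map $\Phi\colon\Rep(B,\Hilm)\to\Repi(A,\Hilm)$ sends $\varrho\colon B\to\Bound(\Hilm)$ to the closure of the representation of~$A$ defined on the dense subspace $\varrho_e(\Hilms[B]_e^+)\Hilm$ by the formula $\Phi(\varrho)(a)(\varrho_e(b)\xi):=\varrho_g(a\otimes b)\xi$ for $a\in A_g$, $b\in\Hilms[B]_e^+$, $\xi\in\Hilm$, where $a\otimes b$ denotes the image in $\Hilms[B]_g^+\subseteq B_g^+$. Well-definedness follows from the Fell bundle identity $\varrho_g(a\otimes b_i)^*\varrho_g(a\otimes b_j)=\varrho_e(b_i^*\mu_e^+(a^*a)b_j)$ obtained from~\eqref{eq:innprod_Bgplus}, together with $\pi_e(a^*a)\varrho_e(b)\xi=\varrho_e(\mu_e^+(a^*a)b)\xi$ where $\pi_e:=\mu_e^+\otimes_{\varrho|_{B_e^+}}\Hilm$ as in Proposition~\ref{pro:Phi_simplifies}: setting $\eta:=\sum_i\varrho_e(b_i)\xi_i$, one checks
\[
\norm{\textstyle\sum_i\varrho_g(a\otimes b_i)\xi_i}^2=\braket{\eta}{\pi_e(a^*a)\eta},
\]
which vanishes when $\eta=0$. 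Symmetry and multiplicativity of~$\Phi(\varrho)$ follow analogously from Fell bundle identities. The restriction of~$\Phi(\varrho)$ to~$A_e$ is~$\pi_e$, which is integrable, so $\Phi(\varrho)\in\Repi(A,\Hilm)$. That $\Phi$ and~$\Psi$ are mutually inverse is immediate from the defining identity $\bar\pi_g^+(a\otimes b)=\pi(a)\bar\pi_e^+(b)$ of~$\bar\pi_g^+$ together with the \Cstar\nb-hull property of~$B_e^+$: two representations of~$B$ with the same $\Phi$\nb-image agree first on~$B_e^+$, and then on each~$B_g^+$ by density of~$\Hilms[B]_g^+$ and the formula above.

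Compatibility with isometric intertwiners uses that~$B_e^+$ is a full \Cstar\nb-hull (not merely weak): an isometric $A$\nb-intertwiner restricts to an isometric $A_e$\nb-intertwiner, which is an intertwiner of the corresponding $B_e^+$\nb-representations by compatibility for~$B_e^+$, and the identity $\varrho_g(a\otimes b)=\Phi(\varrho)(a)\cdot\varrho_e(b)$ propagates this to each~$B_g^+$; the converse direction is straightforward. Compatibility with interior tensor products reduces to Lemma~\ref{lem:tensor_associative} after observing that $\Phi(\varrho\otimes_{D_1}1_{\Hilm[F]})$ and $\Phi(\varrho)\otimes_{D_1}1_{\Hilm[F]}$ agree on the canonical image of $\varrho_e(\Hilms[B]_e^+)\Hilm\odot\Hilm[F]$, a common core. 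The main obstacle throughout is the well-definedness and closedness of~$\Phi(\varrho)$ as a representation of~$A$, since the formula defines operators only on a core and all representation axioms must be transferred from Fell bundle identities; the deeper obstacle behind this --- and behind the bijectivity of $\Phi$ and~$\Psi$ --- is Lemma~\ref{lem:Be_Bg_multiplicative}, which requires~$B_e^+$ (hence~$B_e$) to be a full \Cstar\nb-hull.
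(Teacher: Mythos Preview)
Your proposal is correct and follows essentially the same route as the paper. Two minor presentational differences: the paper packages your forward map~$\Phi$ as $\varrho\mapsto(\Hilms[B],\mu)\otimes_\varrho\Hilm$ for a universal integrable representation $(\Hilms[B],\mu)$ of~$A$ on~$B$ (which unpacks to exactly your formula and makes tensor-product compatibility automatic via Lemma~\ref{lem:tensor_associative}), and the paper upgrades from weak to full \Cstar\nb-hull by invoking admissibility inheritance (Proposition~\ref{pro:integrable_induced_nice}) rather than by your direct check of isometric-intertwiner compatibility. Both variations are equally valid; your direct intertwiner argument is perhaps more transparent about where the full \Cstar\nb-hull hypothesis on~$B_e$ enters.
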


\begin{proof}
  Representations of~\(B\) are in natural bijection with Fell bundle
  representations: restricting a representation of~\(B\) to the
  subspaces~\(B^+_g\) gives a Fell bundle representation, and
  conversely a Fell bundle representation gives a representation of
  the \Star{}algebra \(\bigoplus_{g\in G} B^+_g\), which extends
  uniquely to the \Cstar\nb-completion.
  Lemma~\ref{lem:Fell_bundle_structure} says that any integrable
  representation \(\pi = \bigoplus_{g\in G} \pi_g\) of~\(A\) induces
  a Fell bundle representation \((\bar\pi^+_g)_{g\in G}\)
  of~\((B^+_g)_{g\in G}\) and thus a representation of~\(B\).  By
  construction, this family of maps \(\Repi(A)\to\Rep(B)\) is
  compatible with interior tensor products and unitary
  \Star{}intertwiners.  We are going to show that this is a family
  of bijections.

  First we describe an integrable representation~\((\Hilms[B],\mu)\)
  of~\(A\) on~\(B\).  By construction, \(A\otimes_{A_e} \Hilms[B]^+_e
  = \bigoplus_{g\in G} \Hilms[B]^+_g\) is dense in~\(B\).  This
  subspace carries a representation of~\(A\) by left multiplication.
  We extend this to the right ideal in~\(B\) generated by
  \(A\otimes_{A_e} \Hilms[B]^+_e\) to get a representation of~\(A\)
  on~\(B\).  Let~\((\Hilms[B],\mu)\) be its closure.

  The representations~\(\bar\mu^+_{e,g}\) of~\(B^+_e\) on~\(B^+_g\) are
  defined so that \(\bar\mu^+_{e,g}(\Hilms[B]^+_e)B^+_g\) is another core
  for the representation~\((\Hilms[B]^+_g,\mu^+_{e,g})\) of~\(A_e\)
  on~\(B^+_g\).  Therefore, \(\Hilms[B]^+_e\cdot B\) is a core for the
  restriction of the representation~\((\Hilms[B],\mu)\) to~\(A_e\).
  This core shows that \((\Hilms[B],\mu|_{A_e}) =
  (\Hilms[B]^+_e,\mu^+_e) \otimes_{B_e} B\), where the interior tensor
  product is with respect to the canonical embedding \(B^+_e \injto
  B\).  Therefore, the restriction of~\((\Hilms[B],\mu)\) to~\(A_e\)
  is integrable and the corresponding representation~\(\bar\mu^+_e\)
  of~\(B^+_e\) is simply the inclusion map \(B^+_e\injto B\).  Thus the
  representation~\((\Hilms[B],\mu)\) of~\(A\) on~\(B\) is also
  integrable.

  The integrable representation~\((\Hilms[B],\mu)\) of~\(A\)
  on~\(B\) yields a representation~\(\bar\mu^+_g\) of the Fell bundle
  \((B^+_g)_{g\in G}\) in \(\Mult(B)=\Bound(B)\).  By construction,
  the image of \(a_g\otimes b\in A_g \odot_{A_e} \Hilms[B]^+_e\)
  in~\(B^+_g\) acts by \(\mu(a_g)\bar\mu^+_e(b) = \mu(a_g)\cdot b\).
  That is, \(B^+_g\) is represented by the canonical inclusion map
  \(B^+_g\injto B\).  The representation of~\(B\) associated to this
  Fell bundle representation is the identity map on~\(B\).

  Interior tensor product with~\((\Hilms[B],\mu)\) gives a family of
  maps \(\Rep(B)\to\Repi(A)\) that is compatible with unitary
  \Star{}intertwiners and interior tensor products.  Since the
  composite family of maps \(\Rep(B)\to\Repi(A)\to \Rep(B)\) is
  compatible with interior tensor products and maps the identity
  representation of~\(B\) to itself, the composite map
  on~\(\Rep(B)\) is the identity.

  Let~\((\Hilms,\pi)\) be an integrable representation of~\(A\)
  on a Hilbert \(D\)\nb-module~\(\Hilm\) for some
  \Cstar\nb-algebra~\(D\).  This yields a
  representation~\((\bar\pi^+_g)_{g\in G}\) of the Fell
  bundle~\((B^+_g)_{g\in G}\) and an associated
  representation~\(\bar\pi\) of~\(B\).  We claim that the integrable
  representation \((\Hilms',\pi') \defeq
  (\Hilms[B],\mu)\otimes_{\bar\pi} \Hilm\) is equal
  to~\((\Hilms,\pi)\).  Both representations have the same restriction
  to~\(A_e\) because
  \[
  (\Hilms[B],\mu|_{A_e}) \otimes_{\bar\pi} \Hilm
  \cong (\Hilms[B]^+_e,\mu^+_e) \otimes_{B_e} B \otimes_{\bar\pi} \Hilm
  \cong (\Hilms[B]^+_e,\mu^+_e) \otimes_{\bar\pi|_{B_e}} \Hilm
  \cong (\Hilms,\pi).
  \]
  Hence both representations have the same domain by
  Lemma~\ref{lem:restriction_closed}.
  And~\(\bar\pi^+_e(\Hilms[B]^+_e)\Hilm\) is a core for both.  On
  \(\bar\pi^+_e(\Hilms[B]^+_e) \Hilm\), \(a_g\in A_g\) acts by
  mapping \(\bar\pi^+_e(b_e)\xi\) to \(\pi_g(a_g)\bar\pi^+_e(b_e)\xi
  = \bar\pi(a_g\otimes b_e)\xi\) in both representations, where we
  view \(a_g\otimes b_e\in B^+_g\subseteq B\).  Since
  \((\Hilms,\pi)\) and~\((\Hilms',\pi')\) have a common core, they
  are equal.

  This finishes the proof that our two families of maps
  \(\Repi(A)\leftrightarrow \Rep(B)\) are inverse to each other.
  Thus~\(B\) is a weak \Cstar\nb-hull for the integrable
  representations of~\(A\).  Since~\(A_e\) is a \Cstar\nb-hull, the
  integrable representations of~\(A_e\) are admissible.  So are the
  integrable representations of~\(A\) by
  Proposition~\ref{pro:integrable_induced_nice}.  Thus~\(B\) is a
  \Cstar\nb-hull.
\end{proof}

\begin{remark}
  \label{rem:Fell_explicit}
  The fibres~\(B^+_e\) of the Fell bundle in
  Theorem~\ref{the:Fell_bundle_sections_hull} are described in
  Definitions \ref{def:restrict_to_inducible} and~\ref{def:fibre_g},
  including the right Hilbert \(B_e^+\)\nb-module structure
  on~\(B_g^+\).  The rest of the Fell bundle structure needs
  technical extra assumptions.  The simplest way to get it is by
  inducing the universal inducible, integrable representation
  of~\(A\) on~\(B_e^+\) to an integrable representation of~\(A\) on
  the Hilbert \(B_e^+\)\nb-module~\(A\otimes_{A_e} B_e^+\).  The
  Fell bundle \((B_g^+)_{g\in G}\) is represented faithfully
  in~\(\Bound(A\otimes_{A_e} B_e^+)\) by
  Lemma~\ref{lem:barpi_isometric}.  The multiplication, involution,
  and norm in our Fell bundle are simply the multiplication,
  involution and norm in the \Cstar\nb-algebra
  \(\Bound(A\otimes_{A_e} B_e^+)\).  The dense image of
  \(A_g\odot_{A_e} \Hilms[B]_e^+\) in~\(B_g^+\) acts on
  \(A\otimes_{A_e} B_e^+\) by \(a_g\otimes b\mapsto \pi_g(a_g)\cdot
  \bar\pi^+_e(b)\), where~\(\bar\pi^+_e(b)\) is the representation
  of the \Cstar\nb-hull~\(B_e^+\) associated to the induced
  representation of~\(A_e\) on~\(A\otimes_{A_e} B_e^+\), which is
  integrable by assumption.
\end{remark}

\subsection{Two counterexamples}
\label{sec:counter_induction}

Two assumptions limit the generality of the Induction
Theorem~\ref{the:Fell_bundle_sections_hull}.  First, integrability
must be compatible with induction.  Secondly, \(B_e\) should be a
\Cstar\nb-hull and not a weak \Cstar\nb-hull.  Equivalently, all
isometric intertwiners between integrable Hilbert space
representations of~\(A_e\) are \Star{}intertwiners.  We show by two
simple counterexamples that both assumptions are needed.  In
particular, there is no version of the Induction Theorem for weak
\Cstar\nb-hulls.

Both counterexamples involve the group \(G=\Z/2 = \{0,1\}\).  A
\(G\)\nb-graded \Star{}algebra is a \emph{\Star{}superalgebra}, that
is, a \Star{}algebra with a decomposition \(A= A_0 \oplus A_1\) such
that
\[
A_0\cdot A_0 + A_1\cdot A_1 \subseteq A_0,\quad
A_0\cdot A_1 + A_1\cdot A_0 \subseteq A_1,\quad
A_0^* = A_0,\quad
A_1^* = A_1,\quad
1\in A_0.
\]
In both examples, \(A_0=\C[x]\) with \(x=x^*\).

In the first example, \(A\) is the crossed product for the action
of~\(\Z/2\) on \(A_0=\C[x]\) through the involution \(x\mapsto -x\).
That is,
\[
A = \C\langle x,\varepsilon\mid \varepsilon^2=1,\ x\varepsilon =
-\varepsilon x,\ x=x^*,\ \varepsilon = \varepsilon^* \rangle,
\qquad
x\in A_0,\ \varepsilon\in A_1.
\]
Since \(A_1 = \varepsilon A_0 \cong A_0\) as a right
\(A_0\)\nb-module, any representation of~\(A_0\) is inducible.

Let \(B_0 = \Cont_0((0,\infty))\) with the representation of~\(A_0\)
from the inclusion map \((0,\infty)\injto \R = \widehat{A_0}\) (see
Proposition~\ref{pro:commutative_representation}).  This gives a
\Cstar\nb-hull for a class of representations of~\(A_0\) that is
defined by submodule conditions and satisfies the Strong
Local--Global Principle by Theorems \ref{the:commutative_hull}
and~\ref{the:commutative_local-global}.  The class of
\((0,\infty)\)-integrable representations consists of those
representations of~\(\C[x]\) that are generated by a regular,
self-adjoint, \emph{strictly positive} operator.

In a representation of~\(A\), the element \(\varepsilon\in A\) acts
by a unitary involution that conjugates~\(\cl{\pi(x)}\)
to~\(-\cl{\pi(x)}\).  Hence~\(\cl{\pi(x)}\) cannot be strictly
positive.  Thus the zero-dimensional representation is the only
representation of~\(A\) whose restriction to~\(A_0\) is
\(\Cont_0((0,\infty))\)-integrable.  The \Cstar\nb-hull for this
class is~\(\{0\}\).  Theorem~\ref{the:Fell_bundle_sections_hull}
does not apply here because induced representations of inducible,
integrable representations of~\(A_0\) are \emph{never} integrable
when they are non-zero.

The second example is the commutative \Star{}superalgebra
\[
A = \C\langle x,\varepsilon\mid \varepsilon^2=1+x^2,\ x\varepsilon =
\varepsilon x,\ x=x^*,\ \varepsilon = \varepsilon^*\rangle,
\qquad
x\in A_0,\ \varepsilon\in A_1.
\]
Thus \(A_1 = \varepsilon\C[x] \cong A_0\)
with the usual \(A_0\)\nb-bimodule structure and the inner product
\(\braket{\varepsilon a_1}{\varepsilon a_2} = (1+x^2)\cdot
\conj{a_1}\cdot a_2\).  Since \((1+x^2)\abs{a}^2\) is positive
in~\(\C[x]\) for any
\(a\in\C[x]\), any representation of~\(A_0\) is inducible.

Let~\((\Hilms,\pi)\) be a representation of~\(A_0\) on a Hilbert
module~\(\Hilm\) over a \Cstar\nb-algebra~\(D\).  The induced
representation \(A_1 \otimes_{A_0} (\Hilms,\pi)\) lives on the
Hilbert \(D\)\nb-module completion~\(\Hilm_1\) of~\(\Hilms\) for the
inner product \(\braket{\xi_1}{\xi_2}_1 \defeq
\braket{\xi_1}{\pi(1+x^2)\xi_2}\).  Its domain is~\(\Hilms\),
viewed as a dense \(D\)\nb-submodule in~\(\Hilm_1\), and the
representation of~\(A_0\) is~\(\pi\) again.  The
operator~\(\pi(x+\ima)\) on~\(\Hilms\) extends to an isometry
\(I\colon \Hilm_1 \injto \Hilm\) because
\[
\braket{\pi(x+\ima)\xi_1}{\pi(x+\ima)\xi_2}
= \braket{\xi_1}{\pi(x-\ima)\pi(x+\ima)\xi_2}
= \braket{\xi_1}{\pi(1+x^2)\xi_2}
= \braket{\xi_1}{\xi_2}_1
\]
for all \(\xi_1,\xi_2\in\Hilms\).  This isometry commutes
with~\(\pi(a)\) for all \(a\in A\), so it is an isometric
intertwiner \(A_1 \otimes_{A_0} (\Hilms,\pi) \injto (\Hilms,\pi)\).

Now let~\(B_0\) with the universal
representation~\((\Hilms[B]_0,\mu_0)\) be one of the two
noncommutative weak \Cstar\nb-hulls \(\Toep_0\)
or~\(\Comp(\ell^2\N)\) of~\(\C[x]\) described
in~§\ref{sec:polynomials2}.  In a Toeplitz integrable
representation, \(\pi(x+\ima)\) has dense range.  Even more,
\(\pi(x+\ima)\Hilms\) is dense in~\(\Hilms\) in the graph topology.
Thus~\(I\) is a unitary \Star{}intertwiner \(A_1 \otimes_{A_0}
(\Hilms,\pi) \congto (\Hilms,\pi)\) for any integrable
representation~\((\Hilms,\pi)\) of~\(A_0\).

Since all representations of~\(A_0\) are inducible, the unit fibre of
the Fell bundle should be~\(B_0\).  The other fibre~\(B_1\) is
\(A_1\otimes_{A_0} B_0\), which we have identified with~\(B_0\).  The
unitary \(A_1\otimes_{A_0} B_0\cong B_0\) is a \Star{}intertwiner
between the representations of~\(A_0\) as well.  Therefore,
integrability is compatible with induction.  And the left
\(B_0\)\nb-module structure~\(\bar\mu_{0,1}\) on~\(B_1\)
in~\eqref{eq:pie_pig_compatible} is simply left multiplication.

Next we describe the induced representation of~\(A\) on the Hilbert
\(B_0\)\nb-module
\[
A\otimes_{A_0} B_0
= A_0\otimes_{A_0} B_0 \oplus A_1\otimes_{A_0} B_0
\cong B_0 \oplus B_0.
\]
The representations of \(A\) and~\(A_0\) on \(A\otimes_{A_0} B_0\)
have the same domain by Lemma~\ref{lem:restriction_closed}, and
for~\(A_0\) the domain is \(\Hilms[B]_0\oplus \Hilms[B]_0\).  We
claim that~\(A\) acts on this domain by
\[
x\mapsto
\begin{pmatrix}
  \mu_0(x)&0\\0&\mu_0(x)
\end{pmatrix},\qquad
\varepsilon\mapsto
\begin{pmatrix}
  0&\mu_0(x-\ima)\\
  \mu_0(x+\ima)&0
\end{pmatrix}.
\]
We have already seen this for \(x\in A_0\).  Left multiplication
by~\(\varepsilon\) maps \(b\in\Hilms[B]_0\subseteq B_0\) first to
\(\varepsilon\otimes b\in A_1\otimes_{A_0} B_0\), which is mapped by
the isometry~\(I\) to \(\mu_0(x+\ima)b \in \Hilms[B]_0\subseteq B_0\).
And it maps the element \(\mu_0(x+\ima)b \in B_0\) for
\(b\in\Hilms[B]_0\), which corresponds to \(\varepsilon \otimes b\) in
the odd fibre, to \(\varepsilon^2\otimes b = \mu_0(x^2+1)b =
\mu_0(x-\ima)\mu_0(x+\ima)b \in B_0\).  This proves the formula for
the action of~\(\varepsilon\).

The representation~\(\bar\mu_0\) of~\(B_0\) on \(A\otimes_{A_0} B_0\)
is the representation of the weak \Cstar\nb-hull that corresponds to
the representation of \(A_0\subseteq A\) described above.  This is
\[
\bar\mu_0\colon B_0 \to \Mat_2(B_0),\qquad
b\mapsto
\begin{pmatrix}
  b&0\\0&b
\end{pmatrix}.
\]
Hence \(\varepsilon\otimes b\in A_1 \otimes_{A_0} B_0\) for
\(b\in\Hilms[B]_0\) acts by the matrix
\[
\begin{pmatrix}
  0&\mu_0(x-\ima)\\
  \mu_0(x+\ima)&0
\end{pmatrix}
\begin{pmatrix}
  b&0\\0&b
\end{pmatrix} =
\begin{pmatrix}
  0&\mu_0(x-\ima) b\\
  \mu_0(x+\ima)b&0
\end{pmatrix}.
\]
The map \(\mu_0(x+\ima) b\mapsto \mu_0(x-\ima)b\) is the Cayley
transform of~\(\mu_0(x)\).  For our two weak \Cstar\nb-hulls, this
is the unilateral shift \(S\in\Mult(B_0)\) by construction.  Thus
the odd fibre \(B_1\cong B_0\) of our Fell bundle should act by
\[
\bar\mu_1\colon B_0\to \Mat_2(B_0),\qquad
b\mapsto \begin{pmatrix}
  0&S b\\
  b&0
\end{pmatrix}.
\]
The map~\(\bar\mu_0\) is a \Star{}representation,
and~\eqref{eq:barpi_g_representation} gives
\[
\bar\mu_1(b_1)^* \bar\mu_1(b_2) = \bar\mu_0(b_1^* b_2),\qquad
\bar\mu_1(b_1) \bar\mu_0(b_2) = \bar\mu_1(b_1 b_2)
\]
for all \(b_1,b_2\in B_0\).  This is also obvious from our explicit
formulas.  But
\[
\bar\mu_0(b_1)\bar\mu_1(b_2) =
\begin{pmatrix}
  0& b_1 S b_2\\
  b_1 b_2&0
\end{pmatrix}
\quad\text{and}\quad
\bar\mu_0(b_1 b_2) =
\begin{pmatrix}
  0& S b_1 b_2\\
  b_1 b_2&0
\end{pmatrix}
\]
differ if, say \(b_1=S^*\), \(b_2=1\).  In fact,
\(\bar\mu_0(B_0)\cdot \bar\mu_1(B_0)\) is not contained
in~\(\bar\mu_1(B_0)\).  Hence there is no Fell bundle structure
on~\((B_g)_{g\in\Z/2}\) for which~\((\bar\mu_g)_{g\in\Z/2}\) would
be a Fell bundle representation.

\section{Locally bounded unit fibre representations}
\label{sec:locally_bounded_unit_fibre}

We now specialise the Induction
Theorem~\ref{the:Fell_bundle_sections_hull} to the case where the
universal integrable representation of the unit fibre~\(A_e\) is
locally bounded.  In this case, we may first construct a
pro-\Cstar-algebraic Fell bundle whose unit fibre is the
pro-\Cstar\nb-algebra completion of~\(A_e\).  This is relevant
because pro-\Cstar\nb-algebras are much closer to ordinary
\Cstar\nb-algebras than general \Star{}algebras.  We will see the
importance of this in the commutative
case, where the pro-\Cstar-algebraic Fell bundle gives us a twisted
partial group action on the space~\(\hat{A}_e^+\) of positive
characters.

As before, let~\(G\) be a group and let \(A= \bigoplus_{g\in G}
A_g\) be a \(G\)\nb-graded \Star{}algebra.  We are interested in the
locally bounded representations of~\(A_e\), and representations
of~\(A\) that restrict to locally bounded representations
on~\(A_e\).  The class~\(\Rep_\mathrm{b}(A_e)\) of locally bounded
representations of~\(A_e\) is admissible by
Corollary~\ref{cor:locally_bounded_admissible}.  So any weak
\Cstar\nb-hull for some smaller class of representations will be an
ordinary \Cstar\nb-hull.

Let~\(\mathcal{A}_e\) be the pro-\Cstar\nb-algebra completion of the
unit fibre~\(A_e\), that is, the completion of~\(A_e\) in the
topology defined by the directed set~\(\mathcal{N}(A_e)\) of all
\Cstar\nb-seminorms on~\(A_e\).  Locally bounded representations
of~\(A_e\) are equivalent to locally bounded representations
of~\(\mathcal{A}_e\) by
Proposition~\ref{pro:locally_bounded_pro-Cstar}.

When is a locally bounded representation inducible?

\begin{proposition}
  \label{pro:locally_bounded_inducible}
  A locally bounded representation~\((\Hilms,\varphi)\) of~\(A_e\)
  on a Hilbert module~\(\Hilm\) is inducible if and only if
  \(\varphi(a^* a)\ge0\) for all \(a\in A_g\), \(g\in G\).
\end{proposition}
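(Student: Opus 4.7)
The forward implication is immediate: taking $n=1$ in condition~\ref{en:inducible_3} of Proposition~\ref{pro:inducible_criteria} gives $\braket{\xi}{\varphi(a^*a)\xi}_D\ge 0$ for all $a\in A_g$ and $\xi\in\Hilms$, which is equivalent to positivity of the regular self-adjoint operator $\cl{\varphi(a^*a)}$ (regular by Proposition~\ref{pro:locally_bounded_regular}).

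For the converse I would verify condition~\ref{en:inducible_3} of Proposition~\ref{pro:inducible_criteria} by reducing to a computation inside a single \Cstar\nb-algebra. Fix $g\in G$, elements $a_1,\dotsc,a_n\in A_g$, and vectors $\xi_1,\dotsc,\xi_n\in\Hilms$; the goal is $\sum_{k,l}\braket{\xi_k}{\varphi(a_k^*a_l)\xi_l}_D\ge 0$ in~$D$. Each bilinear form $(\xi,\eta)\mapsto \braket{\xi}{\varphi(a_k^*a_l)\eta}_D$ is continuous in the graph topology, the positive cone in~$D$ is norm-closed, and bounded vectors form a core by local boundedness, so I may assume each $\xi_k$ is bounded. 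Directedness of $\mathcal{N}(A_e)$ then produces a single $q\in\mathcal{N}(A_e)$ with $\xi_1,\dotsc,\xi_n\in\Hilm_q$. By Proposition~\ref{pro:bounded_element_fixed_seminorm} the restriction of~$\varphi$ to this closed submodule factors through a \Star{}representation $\varphi_q\colon \mathcal{A}_{e,q}\to \Bound(\Hilm_q)$, which in turn factors as $\mathcal{A}_{e,q}\prto B_q\injto \Bound(\Hilm_q)$ where $B_q\defeq \mathcal{A}_{e,q}/\ker(\varphi_q)$ is a \Cstar\nb-algebra faithfully represented on~$\Hilm_q$. Writing $\psi\colon A_e\to B_q$ for the composed homomorphism, the hypothesis translates to $\psi(a^*a)\ge 0$ in~$B_q$ for every $a\in A_g$, because positivity of self-adjoint elements is detected by faithful \Star{}representations.

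I then claim that $M\defeq (\psi(a_k^*a_l))_{k,l}\in \Mat_n(B_q)$ is positive; once this is shown, applying the faithful amplification $\Mat_n(\varphi_q)\colon \Mat_n(B_q)\to \Bound(\Hilm_q^n)$ and evaluating on $(\xi_1,\dotsc,\xi_n)\in\Hilm_q^n$ delivers the required inequality. To prove $M\ge 0$, let $c_1,\dotsc,c_n\in A_e$ and set $\tilde a\defeq \sum_k a_kc_k$, which again lies in~$A_g$ by the right $A_e$\nb-module structure; applying the single-element hypothesis to~$\tilde a$ gives
\[
\sum_{k,l}\psi(c_k)^*\psi(a_k^*a_l)\psi(c_l) = \psi(\tilde a^*\tilde a) \ge 0
\]
in~$B_q$. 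Since $\psi(A_e)$ is dense in~$B_q$ and the expression $\sum_{k,l} b_k^* M_{k,l} b_l$ is norm-continuous in each~$b_k$, this extends to arbitrary $(b_1,\dotsc,b_n)\in B_q^n$, which is exactly positivity of~$M$ in $\Mat_n(B_q)$.

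The main subtlety is to avoid a circular appeal to the Cauchy--Schwarz inequality: treating $A_g$ directly as a pre-Hilbert $B_q$\nb-module would make positivity of $M$ automatic, but endowing $A_g$ with such a structure already requires Cauchy--Schwarz for the sesquilinear form $\psi(\blank^*\blank)$, which is essentially equivalent to the $n=2$ case of the matrix positivity one is trying to prove. The detour above sidesteps this by exploiting the right $A_e$\nb-action on~$A_g$ to apply the single-element hypothesis to $\tilde a=\sum_k a_kc_k$, so that the $n$\nb-th matrix inequality is derived from the $n=1$ case together with the locally bounded structure, with no prior module theory needed.
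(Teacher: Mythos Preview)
Your proof is correct and follows essentially the same strategy as the paper: reduce via the core of bounded vectors to finitely many $\xi_k\in\Hilm_q$ for a single $q\in\mathcal{N}(A_e)$, pass to a \Cstar\nb-algebra quotient of~$\mathcal{A}_{e,q}$ in which $a^*a\ge 0$ for all $a\in A_g$, and then argue that single-element positivity in a \Cstar\nb-algebra yields the required matrix positivity.

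The difference lies only in that last step. The paper quotients by the ideal~$I$ generated by all negative parts $(a^*a)_-$, observes that $A_g$ becomes a genuine pre-Hilbert module over the \Cstar\nb-algebra $D/I$ with inner product $a_1^*a_2\bmod I$, and then invokes the standard fact that interior tensor products of Hilbert modules have positive inner products. You instead quotient by $\ker(\varphi_q)$ and prove matrix positivity by hand, testing against vectors $\psi(c_k)$ via $\tilde a=\sum a_kc_k\in A_g$ and extending by density. Both work. Your concern about circularity, however, is unfounded: the axioms for a pre-Hilbert \Cstar\nb-module require only $\braket{x}{x}\ge 0$ for single elements~$x$; Cauchy--Schwarz and the matrix positivity $(\braket{x_k}{x_l})_{k,l}\ge 0$ are then \emph{theorems} (see \cite{Lance:Hilbert_modules}*{Proposition~1.1 and Lemma~4.2}), derived from single-element positivity alone. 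So the paper's appeal to Hilbert-module machinery is not circular---it is exactly the abstract version of the density argument you spelled out.
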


The difference to the general criterion for inducibility in
Proposition~\ref{pro:inducible_criteria} is that we do not consider
matrices.

\begin{proof}
  The subspace \(\Hilm_\mathrm{b}\subseteq \Hilms\) of bounded
  vectors is a core for~\(\varphi\).  As in the proof of
  Proposition~\ref{pro:inducible_A_B}, it suffices to prove the
  positivity of the inner product for a finite linear combination
  \(\sum_{k=1}^n a_k \otimes \xi_k\) with \(a_k\in A_g\),
  \(\xi_k\in\Hilm_\mathrm{b}\) for a fixed \(g\in G\).  Since there
  are only finitely many~\(\xi_k\), there is a
  \Cstar\nb-seminorm~\(q\) on~\(A_e\) so that all~\(\xi_k\) are
  \(q\)\nb-bounded.  Thus we may replace~\(\Hilm\) by the Hilbert
  submodule \(\Hilm_q\) of \(q\)\nb-bounded vectors, where the
  representation of~\(A_e\) extends to the \Cstar\nb-completion
  \(D\defeq (\mathcal{A}_e)_q\) for~\(q\).  Since we assume
  \(\varphi(a^* a)\ge0\) for all \(a\in A_g\), this representation
  factors through the quotient of~\(D\) by the closed ideal~\(I\)
  generated by the negative parts~\((a^* a)_-\) for all \(a\in
  A_g\), \(g\in G\).
  The \(D/I\)\nb-valued inner product \(\braket{a_1}{a_2} \defeq
  a_1^* a_2 \bmod I\) on~\(A_g\) is positive definite by construction;
  since~\(D/I\) is a \Cstar\nb-algebra, we may use the usual notion
  of positivity here, which does not involve matrices.  Then the
  inner product on the tensor product \(A_g \otimes_{D/I} \Hilm_q\)
  is also positive definite.  This is what we had to prove.
\end{proof}

A pro-\Cstar\nb-algebra has a functional calculus for self-adjoint
elements.  Hence we may construct the negative parts \((a^* a)_- \in
\mathcal{A}_e\) for \(a\in A_g\), \(g\in G\).  We
let~\(\mathcal{A}_e^+\) be the completed quotient of~\(\mathcal{A}_e\) by the
closed two-sided ideal generated by these elements.  This is another
pro-\Cstar\nb-algebra, and it is the largest quotient in which \(a^*
a\ge0\) for all \(a\in A_g\), \(g\in G\).  By
Proposition~\ref{pro:locally_bounded_inducible}, a locally bounded
representation of~\(A_e\) is inducible if and only if the
corresponding locally bounded representation of~\(\mathcal{A}_e\)
factors through~\(\mathcal{A}_e^+\).

\begin{corollary}
  \label{cor:locally_bounded_inducible_pro-Cstar}
  There is an equivalence between the inducible, locally
  bounded representations of~\(A_e\) and the locally bounded
  representations of the pro-\Cstar\nb-algebra~\(\mathcal{A}_e^+\),
  which is compatible with isometric intertwiners and interior
  tensor products.
\end{corollary}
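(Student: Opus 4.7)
The plan is to assemble the equivalence by composing the equivalence already established in Proposition~\ref{pro:locally_bounded_pro-Cstar} with the quotient map \(\mathcal{A}_e\prto\mathcal{A}_e^+\). By Proposition~\ref{pro:locally_bounded_pro-Cstar}, composition with \(j\colon A_e\to\mathcal{A}_e\) gives an equivalence between the locally bounded representations of~\(A_e\) and the locally bounded representations of~\(\mathcal{A}_e\), and this equivalence is compatible with isometric intertwiners and interior tensor products. So the task reduces to identifying, within the locally bounded representations of~\(\mathcal{A}_e\), the image of the inducible locally bounded representations of~\(A_e\), and showing that this image is equivalent to the locally bounded representations of~\(\mathcal{A}_e^+\).

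First I would use Proposition~\ref{pro:locally_bounded_inducible}: a locally bounded representation \((\Hilms,\varphi)\) of~\(A_e\) is inducible if and only if \(\varphi(a^*a)\ge 0\) for all \(a\in A_g\), \(g\in G\). Under the equivalence of Proposition~\ref{pro:locally_bounded_pro-Cstar}, \(\varphi\) corresponds to a locally bounded representation \(\bar\varphi\) of~\(\mathcal{A}_e\) with \(\bar\varphi\circ j=\varphi\); then \(\varphi(a^*a)\ge0\) in \(\Bound(\Hilm)\) if and only if \(\bar\varphi\) annihilates the negative part \((a^*a)_-\in\mathcal{A}_e\), since \(\bar\varphi\) respects the functional calculus of self-adjoint elements in the pro-\(\Cst\)\nb-algebra. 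Hence \(\varphi\) is inducible if and only if \(\bar\varphi\) vanishes on all the generators \((a^*a)_-\) for \(a\in A_g\), \(g\in G\), which is the case if and only if \(\bar\varphi\) vanishes on the closed two-sided ideal they generate, that is, if and only if \(\bar\varphi\) factors through the quotient map \(q\colon\mathcal{A}_e\prto\mathcal{A}_e^+\).

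Next I would verify that pulling back along~\(q\) gives an equivalence between the locally bounded representations of~\(\mathcal{A}_e^+\) and those locally bounded representations of~\(\mathcal{A}_e\) that annihilate \(\ker q\). The nontrivial direction is that a representation \(\bar\varphi\) of~\(\mathcal{A}_e\) that annihilates \(\ker q\) descends to a locally bounded representation of~\(\mathcal{A}_e^+\): because \(q\) is a surjective homomorphism of pro-\(\Cst\)\nb-algebras, it induces a surjection on directed sets of continuous \(\Cst\)\nb-seminorms, so a vector that is \(q\)\nb-bounded for \(\bar\varphi\) in a seminorm on~\(\mathcal{A}_e\) remains bounded in the induced seminorm on~\(\mathcal{A}_e^+\). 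This pull-back construction is clearly compatible with isometric intertwiners (since it does not alter the underlying Hilbert module or the action) and with interior tensor products (since tensoring commutes with composition by~\(q\)).

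The main obstacle, if any, is the bookkeeping around local boundedness on the quotient pro-\(\Cst\)\nb-algebra: one must check that the \(\Cst\)\nb-seminorms on~\(\mathcal{A}_e^+\) are precisely the quotients of \(\Cst\)\nb-seminorms on~\(\mathcal{A}_e\) that vanish on \(\ker q\), so that the notions of locally bounded representation on either side coincide under pull-back. Once this is granted, the equivalence is obtained as the composite: locally bounded representations of~\(\mathcal{A}_e^+\) \(\leftrightarrow\) locally bounded representations of~\(\mathcal{A}_e\) annihilating \(\ker q\) \(\leftrightarrow\) inducible locally bounded representations of~\(A_e\); compatibility with isometric intertwiners and interior tensor products follows from the same property at each intermediate step.
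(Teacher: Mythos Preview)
Your proposal is correct and follows essentially the same approach as the paper: combine Proposition~\ref{pro:locally_bounded_pro-Cstar} with Proposition~\ref{pro:locally_bounded_inducible} and the definition of~\(\mathcal{A}_e^+\) to see that the inducible representations correspond exactly to those locally bounded representations of~\(\mathcal{A}_e\) that factor through~\(\mathcal{A}_e^+\). The paper's proof is a one-sentence appeal to these two propositions, while you spell out the intermediate steps (annihilating the negative parts, descending to the quotient, and the compatibility checks) more explicitly; the content is the same.
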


\begin{proof}
  Proposition~\ref{pro:locally_bounded_inducible} says that the
  equivalence in Proposition~\ref{pro:locally_bounded_pro-Cstar}
  maps the subclass in~\(\Rep_\mathrm{b}(A_e)\) of inducible,
  locally bounded representations of~\(A_e\) onto the subclass
  \(\Rep_\mathrm{b}(\mathcal{A}_e^+)\) in
  \(\Rep_\mathrm{b}(\mathcal{A}_e)\).
\end{proof}

Let~\(\mathcal{N}(A_e)^+\) be the directed set of
\Cstar\nb-seminorms on~\(\mathcal{A}_e^+\).  This is isomorphic to
the subset of~\(\mathcal{N}(A_e)\) consisting of all
\Cstar\nb-seminorms~\(q\) on~\(A_e\) for which \(a^* a\ge 0\) holds in the
\Cstar\nb-completion~\((\mathcal{A}_e)_q\) for all \(a\in A_g\),
\(g\in G\).  We would like to
complete~\(A\) to a \Star{}algebra \(\bigoplus_{g\in G}
\mathcal{A}_g^+\) with unit fibre~\(\mathcal{A}_e^+\), where
each~\(\mathcal{A}_g^+\) is a Hilbert bimodule
over~\(\mathcal{A}_e^+\).  But such a construction does not work in
the following example.

\begin{example}
  \label{exa:locally_bounded_induction_non-compatible}
  It can happen that the class of locally bounded representations
  of~\(A_e\) is not compatible with induction.  Let
  \(\Endo^*(\C[\N])\) be the \Star{}algebra of all
  \(\infty\times\infty\)-matrix with only finitely many entries in
  each row and each column, with the usual matrix multiplication and
  involution.  Let~\(A\) be the \(\Z/2\)-graded \Star{}algebra of
  block \(2\times 2\)-matrices
  \[
  \begin{pmatrix}
    a&b\\c&d
  \end{pmatrix},\qquad
  a\in \C,\ b\in \C[\N],\ c\in \C[\N],\ d\in \Endo^*(\C[\N]),
  \]
  with the grading where \(a,d\) are even and \(b,c\) are odd.  Here
  \(b\) and~\(c\) are infinite column and row vectors with only
  finitely many non-zero entries, respectively.  Thus \(A \cong
  \Endo^*(\C[\N])\) with the grading induced by the grading
  on~\(\C[\N]\) where \(\C\cdot\delta_0\) is the even part and the
  span of \(\delta_i\) for \(i>0\) is the odd part.

  The character \((a,d)\mapsto a\) is a bounded representation of
  the unit fibre~\(A_0\).  Induction gives the standard representation
  of~\(A\) on the Hilbert space \(\C\oplus\ell^2(\N) \cong
  \ell^2(\N)\) by matrix-vector multiplication.  This representation
  is irreducible because already the ideal of finite
  matrices~\(\Mat_\infty(\C)\) in~\(A\) acts irreducibly.  It is not
  bounded, that is, some elements in~\(\Endo^*(\C[\N])\) act by
  unbounded operators on~\(\ell^2(\N)\).  Hence it is not locally
  bounded by Proposition~\ref{pro:locally_bounded_irreducible}.
\end{example}

To rule out this problem, we now \emph{assume that induction
  from~\(A_e\) to~\(A\) and restriction back to~\(A_e\) maps bounded
  representations of~\(A_e\) again to bounded representations
  of~\(A_e\)}, briefly, that boundedness is compatible with
induction.  This implies that local boundedness is compatible with
induction because a locally bounded representation contains bounded
subrepresentations whose union is a core for it.  Our assumption is
equivalent to the boundedness of the induced representations
of~\(A_e\) on the Hilbert \((A_e^+)_q\)\nb-modules
\(A_g\otimes_{A_e} (A_e^+)_q\) for all \(g\in G\) and
\(q\in\mathcal{N}(A_e)^+\).  That is, there is another norm
\(q'\in\mathcal{N}(A_e)^+\) such that
\[
q(a^* b^* b a)
= \norm{b a}_q^2
\le \norm{b}_{q'}^2 \norm{a}_q^2
= q'(b)^2 \cdot q(a^* a)
\]
for all \(a\in A_g\), \(b\in A_e\).  Let~\(\mathcal{A}_g^+\) be the
completion of~\(A_g\) in the topology generated by the family of
norms~\(q(a^* a)\) for \(q\in \mathcal{N}(A_e)^+\).

\begin{lemma}
  \label{lem:mult_lb_continuous}
  The multiplication maps and the involutions in~\((A_g)_{g\in G}\)
  extend to continuous maps \(\mathcal{A}_g^+ \times \mathcal{A}_h^+
  \to \mathcal{A}_{g h}^+\) and \(\mathcal{A}_g^+ \to
  \mathcal{A}_{g^{-1}}^+\) for \(g,h\in G\).
\end{lemma}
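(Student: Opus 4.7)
The plan is to prove both continuity assertions through the Hilbert $(\mathcal{A}_e^+)_q$\nb-modules
\[
M_h \defeq A_h \otimes_{A_e} (\mathcal{A}_e^+)_q
\]
for $q \in \mathcal{N}(A_e)^+$ and $h \in G$. Each $M_h$ is the Hilbert module completion of~$A_h$ in the seminorm $\norm{a}_{h,q}^2 \defeq q(a^* a)$ that generates the topology of~$\mathcal{A}_h^+$, so continuity reduces to uniform bounds for these seminorms. By the assumption that boundedness is compatible with induction, for each $q$ there is $q' \in \mathcal{N}(A_e)^+$ with $q' \ge q$ such that the left $A_e$\nb-action on $\bigoplus_h M_h = A \otimes_{A_e} (\mathcal{A}_e^+)_q$ extends to a representation of the \Cstar\nb-algebra $(\mathcal{A}_e^+)_{q'}$. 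In particular, for every $a \in A_g$ the positive element $a^* a \in \mathcal{A}_e^+$ acts on each~$M_h$ by a positive adjointable operator of operator norm at most $q'(a^* a) = \norm{a}_{g,q'}^2$.

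Next, I will verify that left multiplication by $a \in A_g$ defines an adjointable map $L_a \colon M_h \to M_{gh}$, $c \otimes \xi \mapsto (ac) \otimes \xi$, with adjoint $L_{a^*}$. A direct computation on simple tensors yields the key identity $\braket{L_a x}{L_a x} = \braket{x}{(a^* a) x}$ for $x \in M_h$, equivalently $L_a^* L_a$ coincides with the action of $a^* a$ on~$M_h$; combined with the preceding operator-norm bound this gives $\norm{L_a}^2 \le \norm{a}_{g,q'}^2$. Applying $L_a$ to $b \otimes 1 \in M_h$ for $b \in A_h$ then yields
\[
q\bigl((ab)^*(ab)\bigr) = \norm{(ab) \otimes 1}_{M_{gh}}^2 \le \norm{L_a}^2 \, q(b^* b) \le \norm{a}_{g,q'}^2 \cdot \norm{b}_{h,q}^2,
\]
that is, $\norm{ab}_{gh,q} \le \norm{a}_{g,q'} \, \norm{b}_{h,q}$. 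For the involution, specialise to $h = g^{-1}$, so that $M_{gh} = M_e \cong (\mathcal{A}_e^+)_q$ via $A_e \otimes_{A_e} (\mathcal{A}_e^+)_q \cong (\mathcal{A}_e^+)_q$, and apply $L_{a^*}$ to the unit element $1 \in M_e$: its image is $a^* \otimes 1 \in M_{g^{-1}}$ with $\norm{a^* \otimes 1}^2 = q(a a^*)$, whence $\norm{a^*}_{g^{-1},q}^2 \le \norm{L_{a^*}}^2 = \norm{L_a}^2 \le \norm{a}_{g,q'}^2$. The continuous extensions to the completions $\mathcal{A}_g^+$, $\mathcal{A}_h^+$, $\mathcal{A}_{gh}^+$, $\mathcal{A}_{g^{-1}}^+$ then follow formally from the density of the algebraic factors.

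The main substantive point is the passage from the inequality $a^* a \le q'(a^* a) \cdot 1$ of positive elements in $(\mathcal{A}_e^+)_{q'}$ to the corresponding operator inequality $\braket{x}{(a^*a) x} \le q'(a^* a)\braket{x}{x}$ on~$M_h$. This is exactly where compatibility of boundedness with induction enters: it guarantees that the left $A_e$\nb-action on~$M_h$ extends to a representation of the \Cstar\nb-algebra~$(\mathcal{A}_e^+)_{q'}$, under which positive elements act by positive operators. Everything else is bookkeeping from the definition of the induced inner product on~$M_h$, together with the observation that the symmetric expression $\braket{x}{(a^*a) x}$ already makes $L_a$ adjointable with $L_a^* = L_{a^*}$, so the involution bound is obtained for free from the multiplication bound.
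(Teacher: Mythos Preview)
Your proof is correct and follows essentially the same route as the paper's. Both arguments rest on the same fact: the compatibility assumption makes the left \(A_e\)\nb-action on \(M_h = A_h \otimes_{A_e} (\mathcal{A}_e^+)_q\) extend to a \Star{}representation of the \Cstar\nb-algebra \((\mathcal{A}_e^+)_{q'}\), so that \(a^* a \le q'(a^* a)\cdot 1\) transfers to an operator inequality on~\(M_h\). The paper expresses this algebraically, writing \(q(a^* b^* b a) = q\bigl(a^* (b^* b)^{1/2}(b^* b)^{1/2} a\bigr) \le q'\bigl((b^* b)^{1/2}\bigr)^2 q(a^* a)\) for the multiplication bound and using the \Cstar\nb-identity \(q(aa^*)^2 = q(aa^* aa^*)\) for the involution bound; you package the same computation as \(\norm{L_a}^2 = \norm{\text{action of }a^* a \text{ on }M_h}\le q'(a^* a)\) and then read off both estimates by evaluating \(L_a\) on \(b\otimes 1\) and \(L_a^* = L_{a^*}\) on~\(1\). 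The operator-theoretic phrasing is arguably cleaner because it avoids having to pass explicitly through the square root \((b^* b)^{1/2}\in\mathcal{A}_e^+\setminus A_e\), but the substance is identical.
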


\begin{proof}
  Given \(q\in \mathcal{N}(A_e)^+\), let \(q'\in
  \mathcal{N}(A_e)^+\) be such that \(q(a^* b^* b a)\le q'(b)^2\cdot
  q(a^* a)\) for all \(a\in A_h\), \(b\in A_e\).  If \(b\in A_g\),
  \(a\in A_h\), then
  \[
  \norm{b a}_q^2
  \defeq q(a^* b^* b a)
  = q(a^* (b^* b)^{\nicefrac12} (b^* b)^{\nicefrac12} a)
  \le q'((b^* b)^{\nicefrac12})^2\cdot q(a^* a)
  = \norm{b}_{q'}^2 \norm{a}_q^2
  \]
  That is, the multiplication is jointly continuous with respect to
  the topology defining~\((\mathcal{A}_g^+)_{g\in G}\) and hence
  extends to a jointly continuous map \(\mathcal{A}_g^+ \times
  \mathcal{A}_h^+ \to \mathcal{A}_{g h}^+\).

  Furthermore, \(q(a a^*)^2 = q(a a^* a a^*) \le q'(a^* a) \cdot q(a
  a^*)\) and hence \(q(a a^*) \le q'(a^* a)\) for all \(a\in A_h\).
  That is, \(\norm{a^*}_q^2 \le \norm{a}_{q'}^2\) for all \(a\in
  A_h\).  Thus the involution is continuous as well.
\end{proof}

The completion \(\mathcal{A}^+ \defeq \bigoplus_{g\in G}
\mathcal{A}_g^+\) of~\(A\) is again a \Star{}algebra by
Lemma~\ref{lem:mult_lb_continuous}.  By construction
of~\(\mathcal{A}_e^+\), the inner products \(a^* a\in
\mathcal{A}_e^+\) are positive for \(a\in A_g\), \(g\in G\); this
remains so for \(a\in \mathcal{A}_g^+\) because the subset of
positive elements in~\(\mathcal{A}_e^+\) is closed.  Thus
\((\mathcal{A}_g^+)_{g\in G}\) has the usual properties of a Fell
bundle over~\(G\), except that the fibres are only Hilbert bimodules
over a pro-\Cstar-algebra.  We
interpret~\((\mathcal{A}_g^+)_{g\in G}\) as a partial action
of~\(G\) on~\(\mathcal{A}_e^+\) by Hilbert bimodules as
in~\cite{Buss-Meyer:Actions_groupoids}.

Usually, the norms \(q(a^* a)\) and \(q(a a^*)\) on~\(A_g\) are not
equivalent for a fixed \(q\in\mathcal{N}(A)^+\).  This prevents us
from completing~\(\mathcal{A}^+\) to a pro-\Cstar\nb-algebra.  It
also means that the integrable representations of~\(A\) are not
locally bounded on~\(A\), but only on~\(A_e\).  This happens in
interesting examples such as the Weyl algebra discussed in
§\ref{sec:Weyl_twisted}.  This phenomenon for Fell bundles is
related to the known problem that crossed products for group actions
on pro-\Cstar\nb-algebras only work well if the action is strongly
bounded, that is, the invariant continuous \Cstar\nb-seminorms are
cofinal in the set of all continuous \Cstar\nb-seminorms,
see~\cite{Joita:New_crossed_products}.

\begin{proposition}
  \label{pro:locally_bounded_Fell_representations}
  Suppose that boundedness for representations of~\(A_e\) is
  compatible with induction to~\(A\).  Representations of~\(A\) that
  restrict to locally bounded representations on~\(A_e\) are
  equivalent to representations of the \Star{}algebra
  \(\mathcal{A}^+ = \bigoplus_{g\in G} \mathcal{A}_g^+\) that
  restrict to locally bounded representations
  on~\(\mathcal{A}_e^+\); this equivalence is compatible with
  isometric intertwiners and interior tensor products.
\end{proposition}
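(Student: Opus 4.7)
The plan is to construct the equivalence from the canonical $\Star$-homomorphism $j\colon A\to\mathcal{A}^+$ of graded $\Star$-algebras and then use continuity arguments to pass representations back and forth. By Lemma~\ref{lem:mult_lb_continuous} the multiplication and involution on~$A$ extend continuously to $\mathcal{A}^+ = \bigoplus_{g\in G} \mathcal{A}_g^+$, so~$j$ is a $\Star$-homomorphism with dense image in each fibre. One direction is immediate: if $(\Hilms,\bar\pi)$ is a representation of~$\mathcal{A}^+$ on~$\Hilm$ with locally bounded restriction to~$\mathcal{A}_e^+$, then $\bar\pi\circ j$ is a representation of~$A$ on the same domain, and its restriction to~$A_e$ is locally bounded by Proposition~\ref{pro:locally_bounded_pro-Cstar} combined with Corollary~\ref{cor:locally_bounded_inducible_pro-Cstar}.

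For the converse I would start with a representation $(\Hilms,\pi)$ of~$A$ whose restriction~$\pi_e$ to~$A_e$ is locally bounded. By Proposition~\ref{pro:locally_bounded_pro-Cstar} it extends uniquely to a locally bounded representation of~$\mathcal{A}_e$, and by Lemma~\ref{lem:induced_repr_inducible} this representation is inducible; Corollary~\ref{cor:locally_bounded_inducible_pro-Cstar} therefore yields a locally bounded representation $\bar\pi_e\colon \mathcal{A}_e^+\to\Endo(\Hilms)$. Let $\Hilms_\mathrm{b} = \bigcup_{q\in\mathcal{N}(A_e)^+} \Hilm_q$ be the dense subspace of bounded vectors, which is a core for~$\pi_e$ and hence, by Lemma~\ref{lem:restriction_closed}, a core for~$\pi$.

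Next I would establish the key estimate that extends $\pi_g$ continuously to~$\mathcal{A}_g^+$. For $a\in A_g$ and $\xi\in\Hilm_q$ with $q\in\mathcal{N}(A_e)^+$,
\[
\norm{\pi_g(a)\xi}^2
= \braket{\xi}{\pi_e(a^*a)\xi}
\le \norm{\xi}^2 \cdot q(a^*a)
= \norm{\xi}^2 \cdot \norm{a}_q^2,
\]
where $\norm{\blank}_q$ denotes the defining seminorm on~$\mathcal{A}_g^+$. Hence $\pi_g(\blank)\xi$ is continuous in this seminorm and extends uniquely to a linear map $\bar\pi_g\colon \mathcal{A}_g^+\to \Endo_D(\Hilm_q, \Hilm)$. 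The hypothesis that boundedness is compatible with induction, in the form used for Lemma~\ref{lem:mult_lb_continuous}, gives a seminorm $q'\in\mathcal{N}(A_e)^+$ such that $\bar\pi_g(a)(\Hilm_q) \subseteq \Hilm_{q'}$ for all $a\in\mathcal{A}_g^+$; iterating and passing to the union, $\bar\pi_g$ preserves~$\Hilms_\mathrm{b}$. The identities $\bar\pi_g(a)\bar\pi_h(b) = \bar\pi_{gh}(ab)$ and $\braket{\xi}{\bar\pi_g(a)\eta} = \braket{\bar\pi_{g^{-1}}(a^*)\xi}{\eta}$ on~$\Hilms_\mathrm{b}$ then follow by continuous extension from the corresponding identities on~$A$, using the joint continuity of multiplication and involution from Lemma~\ref{lem:mult_lb_continuous}. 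The closure of the resulting representation $\bar\pi$ of~$\mathcal{A}^+$ on~$\Hilms_\mathrm{b}$ is a closed representation of~$\mathcal{A}^+$ whose restriction to~$\mathcal{A}_e^+$ is locally bounded; since $\Hilms_\mathrm{b}$ is already a core for~$\pi$, this closure equals~$(\Hilms,\pi)$ on pulling back along~$j$, so the two constructions are mutually inverse.

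The main obstacle is the second step: one must verify that $\bar\pi_g(\mathcal{A}_g^+)$ actually maps~$\Hilms_\mathrm{b}$ into~$\Hilms$ (not merely into~$\Hilm$) and into a bounded subspace of~$\Hilm$, because without this the multiplicativity identities extend only to operators with ambiguous domains. This is precisely where the compatibility of boundedness with induction is used, furnishing the seminorm~$q'$ above. Once this point is settled, compatibility of the equivalence with isometric intertwiners and with interior tensor products is inherited from the unit-fibre case (Proposition~\ref{pro:locally_bounded_pro-Cstar}, Lemmas~\ref{lem:closure_functorial} and~\ref{lem:rep_tensor_corr_functor}): an isometry intertwining~$\pi$ automatically maps $q$-bounded vectors to $q$-bounded vectors and thus intertwines~$\bar\pi$ on the cores~$\Hilms_\mathrm{b}$, and tensoring with a correspondence preserves both the bounded-vector filtration and the continuous-extension estimate above.
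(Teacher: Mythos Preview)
Your proposal is correct and follows essentially the same route as the paper's proof. Both arguments extend~$\pi_e$ to~$\mathcal{A}_e^+$ via Lemma~\ref{lem:induced_repr_inducible} and Corollary~\ref{cor:locally_bounded_inducible_pro-Cstar}, then extend each~$\pi_g$ to~$\mathcal{A}_g^+$ by the continuity estimate $\norm{\pi_g(a)\xi}^2=\braket{\xi}{\pi_e(a^*a)\xi}$, and finally appeal to the unit-fibre equivalence for intertwiners and tensor products. The paper compresses the extension step into one sentence (``$\pi_g$ is continuous with respect to the topology defining~$\mathcal{A}_g^+$ and the graph topology on the domain of~$\pi_g$ because $\pi_g(a)^*\pi_g(a)=\pi_e(a^*a)$''); your version, working explicitly on the filtration~$\Hilm_q$ of bounded vectors and using the induction-compatibility hypothesis to obtain $\bar\pi_g(\mathcal{A}_g^+)\Hilm_q\subseteq\Hilm_{q'}$, makes transparent exactly where that hypothesis enters and why the extended operators stay in the domain---a point the paper leaves implicit.
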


\begin{proof}
  Let~\(\pi\) be a representation of~\(A\) for which~\(\pi_e\) is a
  locally bounded representation of~\(A_e\).  The
  representation~\(\pi_e\) is inducible by
  Lemma~\ref{lem:induced_repr_inducible}.  Hence~\(\pi_e\) is the
  closure of the restriction of a locally bounded
  representation~\(\bar\pi^+_e\) of~\(\mathcal{A}_e^+\) by
  Corollary~\ref{cor:locally_bounded_inducible_pro-Cstar}.  The
  representation~\(\pi_g\) of~\(A_g\) for \(g\in G\) is continuous
  with respect to the topology defining~\(\mathcal{A}_g^+\) and the
  graph topology on the domain of~\(\pi_g\) because \(\pi_g(a)^*
  \pi_g(a) = \pi_e(a^* a)\).  Hence it extends uniquely
  to~\(\mathcal{A}_g^+\), and this gives a
  representation~\(\bar\pi^+\) of~\(\bigoplus \mathcal{A}_g^+\) such
  that~\(\pi\) is the closure of~\(\bar\pi^+\circ j\).  It is easy
  to see that this equivalence between the locally bounded
  representations of~\(A\) and the representations of
  \(\bigoplus_{g\in G} \mathcal{A}_g^+\) that are locally bounded
  on~\(\mathcal{A}_e^+\) is compatible with isometric intertwiners
  and interior tensor products.
\end{proof}

We will explore the consequences of this in the case of
commutative~\(A_e\) in~§\ref{sec:commutative_Ae}.  In that case,
boundedness is automatically compatible with induction, and the
pro-\Cstar-algebraic Fell bundle~\(\mathcal{A}_e^+\) gives rise to a
twisted groupoid with object space~\(\hat{A}_e^+\).  Thus the
\Cstar\nb-hull produced by the Induction
Theorem~\ref{the:Fell_bundle_sections_hull} is a twisted groupoid
\Cstar\nb-algebra when~\(A_e\) is commutative and the integrable
representations of~\(A_e\) are locally bounded.

Here we briefly consider the situation of
Theorem~\ref{the:Prim_A_locally_compact_gives_hull} where
\(\Cont_0(\mathcal{A}_e^+)\) is dense in~\(\mathcal{A}_e^+\) and
provides a \Cstar\nb-hull for the class of locally bounded
representations.  Then we define
\[
\Cont_0(\mathcal{A}_g^+) \defeq
\{a\in A_g \mid a^* a \in \Cont_0(\mathcal{A}_e^+) \}.
\]
That is, \(a\in \Cont_0(\mathcal{A}_g^+)\) if and only if for all
\(\varepsilon>0\) there is \(q\in \mathcal{N}(A_e)^+\) such that
\(\norm{a^* a}_\mathfrak{p} <\varepsilon\) for all \(\mathfrak{p} \in
\Prim(\mathcal{A}_e^+) \setminus\Prim(\mathcal{A}_e^+)_q\).  Since the
involutions \(A_g \to A_{g^{-1}}\) and \(A_{g^{-1}} \to A_g\) are both
continuous, they are homeomorphisms.  Thus \(a\in
\Cont_0(\mathcal{A}_g^+)\) if and only if \(a a^* \in
\Cont_0(\mathcal{A}_e^+)\).  The proof of
Lemma~\ref{lem:compactly_supported_ideal} shows that \(\mathcal{A}^+_g
\cdot \Contc(\mathcal{A}_e^+)\) and
\(\Contc(\mathcal{A}_e^+)\cdot\mathcal{A}^+_g\) are dense
in~\(\Cont_0(\mathcal{A}_g^+)\).

\begin{theorem}
  \label{the:Fell_lb}
  Assume that boundedness is compatible with induction from~\(A_e\)
  to~\(A\) and that \(\Cont_0(\mathcal{A}_e^+)\) is dense
  in~\(\mathcal{A}_e^+\).  Then \(\Cont_0(\mathcal{A}_g^+)_{g\in
    G}\) is a Fell bundle over~\(G\) whose section \Cstar\nb-algebra
  is a \Cstar\nb-hull for the class of all representations of~\(A\)
  that restrict to a locally bounded representation of~\(A_e\).
\end{theorem}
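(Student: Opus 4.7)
The plan is to reduce to the Induction Theorem~\ref{the:Fell_bundle_sections_hull} by taking the integrable representations of~$A_e$ to be the inducible, locally bounded ones, and then identifying the abstract fibres $A_g \otimes_{A_e} B_e^+$ that it produces with $\Cont_0(\mathcal{A}_g^+)$ via the multiplication map inherited from~$\mathcal{A}^+$.

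First I would verify that $\Cont_0(\mathcal{A}_e^+)$ is a $\Cst$\nb-hull for the inducible, locally bounded representations of~$A_e$. By Proposition~\ref{pro:locally_bounded_inducible} and Corollary~\ref{cor:locally_bounded_inducible_pro-Cstar}, these correspond exactly to the locally bounded representations of~$\mathcal{A}_e^+$, which is a quotient of~$\mathcal{A}_e$. Applying Theorem~\ref{the:locally_bounded_relative_hull} with $\mathcal{K} = \mathcal{A}_e^+$ (and the hypothesis that $\Cont_0(\mathcal{A}_e^+)$ is dense in~$\mathcal{A}_e^+$) yields the desired hull, whose universal representation of~$A_e$ on $\Cont_0(\mathcal{A}_e^+)$ is just the restriction of the left multiplication action of~$\mathcal{A}_e^+$.

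Next I would check that this class of integrable representations is compatible with induction: inducibility of the restriction to~$A_e$ is automatic by Lemma~\ref{lem:induced_repr_inducible}, so only local boundedness needs attention. For an inducible, locally bounded representation~$\varphi_e$ on~$\Hilm$ and $q \in \mathcal{N}(A_e)^+$, the $q$\nb-bounded vectors form a sub\nobreakdash-bimodule $\Hilm_q$ on which the representation factors through the $\Cst$\nb-completion~$(\mathcal{A}_e^+)_q$. The hypothesis that boundedness is compatible with induction, together with the continuity estimate $q(a^*b^*b a) \le q'(b)^2 q(a^*a)$ that already lay behind Lemma~\ref{lem:mult_lb_continuous}, implies that induction turns each~$\Hilm_q$ into a bounded representation of~$A_e$ over a suitable~$q'$. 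Taking the directed union over $q \in \mathcal{N}(A_e)^+$ produces a dense subspace of bounded vectors in $A \otimes_{A_e} \Hilm$, which is a core for the restriction to~$A_e$; hence this restriction is locally bounded. The Induction Theorem~\ref{the:Fell_bundle_sections_hull} now delivers a Fell bundle $(B_g^+)_{g\in G}$ with $B_e^+ = \Cont_0(\mathcal{A}_e^+)$ whose section $\Cst$\nb-algebra is a $\Cst$\nb-hull for the representations of~$A$ whose restriction to~$A_e$ is locally bounded (the inducibility requirement on the $A_e$\nb-restriction being automatic).

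It remains to identify the abstract fibre $B_g^+ = A_g \otimes_{A_e} \Cont_0(\mathcal{A}_e^+)$ with $\Cont_0(\mathcal{A}_g^+)$ as a Hilbert $\Cont_0(\mathcal{A}_e^+)$\nb-module, compatibly with the Fell bundle operations. The natural candidate is the $A_e$\nb-balanced map
\[
A_g \odot \Cont_0(\mathcal{A}_e^+) \to \Cont_0(\mathcal{A}_g^+),\qquad a\otimes f \mapsto a\cdot f,
\]
with multiplication taken in~$\mathcal{A}^+$; Lemma~\ref{lem:mult_lb_continuous} ensures this lands in~$\mathcal{A}_g^+$, and the ideal property of $\Cont_0(\mathcal{A}_e^+)\idealin \mathcal{A}_e^+$ places the image in $\Cont_0(\mathcal{A}_g^+)$. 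The computation $(af)^*(a'f') = f^*(a^*a')f' = f^*\cdot\mu_e^+(a^*a')\cdot f'$ shows that this map preserves inner products, so it extends to an isometry of Hilbert modules. Its image is dense because $\mathcal{A}_g^+\cdot\Contc(\mathcal{A}_e^+)$ is dense in~$\Cont_0(\mathcal{A}_g^+)$ (as remarked before the theorem statement) and $A_g$ is dense in~$\mathcal{A}_g^+$. The Fell bundle multiplication and involution from Theorem~\ref{the:Fell_bundle_sections_hull} transported through this identification must agree with those inherited from~$\mathcal{A}^+$, because on the dense subspaces $A_g\cdot\Contc(\mathcal{A}_e^+)$ both structures are given by the same formulas, as one sees from the explicit description of the Fell bundle representations $\bar\pi_g^+(a\otimes f) = \pi_g(a)\bar\pi_e^+(f)$ in Remark~\ref{rem:Fell_explicit}.

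The main obstacle I anticipate is the density of the image of the above map in~$\Cont_0(\mathcal{A}_g^+)$: controlling this requires juggling the approximation of an element $b \in \Cont_0(\mathcal{A}_g^+) \subseteq \mathcal{A}_g^+$ by products $a\cdot f$ with $a \in A_g$ and $f \in \Contc(\mathcal{A}_e^+)$, using the topology on $\mathcal{A}_g^+$ given by the norms $\norm{\cdot}_q = q((\cdot)^*(\cdot))^{\nicefrac12}$ and the definition of $\Cont_0$ on each side in terms of vanishing at~$\infty$ over~$\Prim(\mathcal{A}_e^+)$. A secondary, and routine, bookkeeping point is verifying that the image of $A\odot \Hilm_\mathrm{b}$ is actually a core for the $A_e$\nb-restriction of the induced representation, which rests on the fact that induction intertwines the inductive system of bounded subrepresentations of~$\varphi_e$ with an inductive system of bounded subrepresentations of the induced representation.
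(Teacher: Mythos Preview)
Your argument is correct, but it follows a genuinely different route from the paper's own proof. You reduce everything to the Induction Theorem~\ref{the:Fell_bundle_sections_hull}: you take the integrable representations of~$A_e$ to be the inducible, locally bounded ones, verify via Theorem~\ref{the:locally_bounded_relative_hull} that $\Cont_0(\mathcal{A}_e^+)$ is their $\Cst$\nb-hull, check compatibility with induction, and then identify the abstract fibres $A_g\otimes_{A_e}\Cont_0(\mathcal{A}_e^+)$ with $\Cont_0(\mathcal{A}_g^+)$ via the multiplication map in~$\mathcal{A}^+$. The density point you flag is genuine but routine: for fixed $f\in\Contc(\mathcal{A}_e^+)$ the map $\mathcal{A}_g^+\to\Cont_0(\mathcal{A}_g^+)$, $a\mapsto af$, is continuous from the pro-topology to the norm (since $f$ is $q$\nb-bounded for some~$q$, and then $\norm{(a-a')f}^2 = q(f^*(a-a')^*(a-a')f)$ is controlled by a single seminorm on~$\mathcal{A}_g^+$), so density of~$A_g$ in~$\mathcal{A}_g^+$ suffices.

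The paper, by contrast, deliberately avoids the Induction Theorem. It works directly with the $\Star$algebra $\Cont_0(\mathcal{A}^+)=\bigoplus_g\Cont_0(\mathcal{A}_g^+)$ and shows by hand that its (automatically bounded) representations are in bijection with integrable representations of~$\mathcal{A}^+$: one direction restricts~$\pi$ to $\Cont_0(\mathcal{A}^+)\subseteq\mathcal{A}^+$, the other extends a representation~$\varrho$ to~$\mathcal{A}^+$ on the core $\varrho(\Contc(\mathcal{A}_e^+))\Hilm$ by $\pi(a)\varrho(b)\xi\defeq\varrho(ab)\xi$. The paper then remarks explicitly (just after the proof) that this argument ``does not use the constructions in Section~\ref{sec:graded_to_Fell} and so provides an alternative proof of the Induction Theorem'' in the locally bounded case. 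So your approach recycles the general machinery and only has to match up fibres at the end, while the paper's approach is self-contained and serves as an independent check on Theorem~\ref{the:Fell_bundle_sections_hull}. Both are valid; the paper's choice is motivated by wanting this logical independence.
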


\begin{proof}
  The assumption that boundedness is compatible with induction allows
  us to build the pro-\Cstar-algebraic Fell
  bundle~\((\mathcal{A}_g^+)_{g\in G}\).  Call a representation of \(A
  = \bigoplus_{g\in G} A_g\) or \(\mathcal{A}^+ \defeq \bigoplus_{g\in
    G} \mathcal{A}_g^+\) integrable if the restriction to the unit
  fibre \(A_e\) or~\(\mathcal{A}_e^+\) is locally bounded,
  respectively.  These classes of integrable representations are
  equivalent by
  Proposition~\ref{pro:locally_bounded_Fell_representations}.

  Since \(\Cont_0(\mathcal{A}_e^+)\) is dense in~\(\mathcal{A}_e^+\),
  it is a \Cstar\nb-hull for the locally bounded representations
  of~\(\mathcal{A}_e^+\) by
  Theorem~\ref{the:locally_bounded_relative_hull}.  Equivalently, it
  is a \Cstar\nb-hull for the inducible, locally bounded
  representations of~\(A_e\).
  Let \(\Cont_0(\mathcal{A}^+) \defeq \bigoplus_{g\in G}
  \Cont_0(\mathcal{A}_g^+)\).  Representations of
  \(\Cont_0(\mathcal{A}^+)\) are equivalent to representations of the
  Fell bundle \(\Cont_0(\mathcal{A}^+_g)\).  Thus we must prove
  that the class of all representations of \(\Cont_0(\mathcal{A}^+)\)
  is equivalent to the class of integrable representations
  of~\(\mathcal{A}^+\).  More precisely, the equivalence maps a
  representation~\(\varrho\) of~\(\Cont_0(\mathcal{A}^+)\) on a
  Hilbert module~\(\Hilm\) to the representation~\(\pi\)
  of~\(\mathcal{A}^+\) with the core
  \(\varrho(\Contc(\mathcal{A}^+_e))\Hilm\) and \(\pi(a)\varrho(b)\xi
  \defeq \varrho(a\cdot b)\xi\) for all \(a\in\mathcal{A}^+\),
  \(b\in\Contc(\mathcal{A}^+_e)\), \(\xi\in\Hilm\); here \(a\cdot b\)
  is the product in~\(\mathcal{A}^+\), which belongs to
  \(\Cont_0(\mathcal{A}^+)\) if \(b\in\Contc(\mathcal{A}^+_e)\).

  In the converse direction, we may simply restrict a locally bounded
  representation of~\(\mathcal{A}^+\) to the \Star{}subalgebra
  \(\Cont_0(\mathcal{A}^+)\).  This restriction is nondegenerate
  because \(\Cont_0(\mathcal{A}_e^+)\subseteq\Cont_0(\mathcal{A}^+)\)
  acts nondegenerately in any integrable representation
  of~\(\mathcal{A}^+\): this is part of the equivalence between
  representations of \(\Cont_0(\mathcal{A}_e^+)\) and locally bounded
  representations of~\(\mathcal{A}_e^+\) in
  Theorem~\ref{the:locally_bounded_relative_hull}.  We claim that the
  maps from representations of \(\Cont_0(\mathcal{A}^+)\) to
  integrable representations of~\(\mathcal{A}^+\) and back are inverse
  to each other.

  Let~\(\pi\) be an integrable representation of~\(\mathcal{A}^+\) on
  a Hilbert module~\(\Hilm\).  The representations \(\pi\)
  and~\(\pi|_{\mathcal{A}_e^+}\) have the same domain by
  Lemma~\ref{lem:restriction_closed}.
  Since~\(\pi|_{\mathcal{A}_e^+}\) is locally bounded,
  \(\pi(\Contc(\mathcal{A}_e^+))\Hilms\) is a core
  for~\(\pi|_{\mathcal{A}_e^+}\).  Since
  \(\Contc(\mathcal{A}_e^+)\cdot \mathcal{A}^+_g =
  \mathcal{A}^+_g\cdot \Contc(\mathcal{A}_e^+)\) for all \(g\in G\),
  this subspace is \(\pi(\mathcal{A}^+)\)-invariant and thus a core
  for~\(\pi\).  The representation~\(\varrho\)
  of~\(\Cont_0(\mathcal{A}^+)\) is the closure of the restriction
  of~\(\pi\) to \(\Cont_0(\mathcal{A}^+) \subseteq \mathcal{A}^+\).
  By definition, the representation of~\(\mathcal{A}^+\) has the core
  \(\varrho(\Contc(\mathcal{A}_e^+))\Hilm\) and acts there by
  \(\pi'(a)\varrho(b)\xi = \varrho(a\cdot b)\xi\).  The subspace
  \(\varrho(\Contc(\mathcal{A}_e^+))\Hilms\) is a core for this
  representation because the map \(\xi\mapsto \pi'(a)\varrho(b)\xi\)
  is continuous in the norm topology on~\(\Hilm\) and~\(\Hilms\) is
  dense in~\(\Hilm\).  If \(\xi\in\Hilms\), then
  \(\varrho(b)\xi=\pi(b)\xi\) and hence \(\pi'(a)\pi(b)\xi =
  \pi(a)\pi(b)\xi\) for all \(a\in\mathcal{A}^+\), \(b\in
  \Contc(\mathcal{A}_e^+)\), \(\xi\in\Hilms\).  This implies
  \(\pi=\pi'\), as desired.

  Now start with a representation~\(\varrho\)
  of~\(\Cont_0(\mathcal{A}^+)\).  Let~\(\pi\) be the associated
  integrable representation of~\(\mathcal{A}^+\).  It has the core
  \(\varrho(\Contc(\mathcal{A}^+_e))\Hilm\) and acts there by
  \(\pi(a)\varrho(b)\xi = \varrho(a\cdot b)\xi\) for all
  \(a\in\mathcal{A}^+\), \(b\in \Contc(\mathcal{A}^+_e)\),
  \(\xi\in\Hilm\).  In particular, if \(a\in\Cont_0(\mathcal{A}^+)\),
  then \(\pi(a)\varrho(b)\xi = \varrho(a\cdot b)\xi =
  \varrho(a)\varrho(b)\xi\).  Since \(\Cont_0(\mathcal{A}^+_e) \cdot
  \Cont_0(\mathcal{A}^+_g)\) is dense in~\(\Cont_0(\mathcal{A}_g^+)\)
  for all \(g\in G\), the restriction of~\(\varrho\) to
  \(\Cont_0(\mathcal{A}^+_e)\) remains nondegenerate.  Therefore, the
  set of \(\varrho(b)\xi\) for \(b\in \Contc(\mathcal{A}^+_e)\),
  \(\xi\in\Hilm\) is dense in~\(\Hilm\).  Hence~\(\varrho\) is the
  restriction of~\(\pi\) to \(\Cont_0(\mathcal{A}^+)\subseteq
  \mathcal{A}^+\), as desired.
\end{proof}

The proof of Theorem~\ref{the:Fell_lb} does not use the
constructions in Section~\ref{sec:graded_to_Fell} and so provides an
alternative proof of the Induction Theorem in case the chosen class
of integrable representations of~\(A_e\) is the class of all locally
bounded representations.

\section{Fell bundles with commutative unit fibre}
\label{sec:commutative_Ae}

In this section, we apply the Induction Theorem in the case
where~\(A_e\) and the chosen \Cstar\nb-hull~\(B_e\) are commutative.
This is the only case considered
in~\cite{Savchuk-Schmudgen:Unbounded_induced}.  Extra assumptions
in~\cite{Savchuk-Schmudgen:Unbounded_induced} ensure that the
\Cstar\nb-hull for the integrable representations of~\(A\) is the
crossed product for a partial action of~\(G\) on the space
\(\hat{A}^+_e\subseteq \hat{A}_e\) of positive characters.  Without
these assumptions, we shall get a ``twisted'' crossed product for a
partial action.

So let~\(G\) be a discrete group and \(A = \bigoplus_{g\in G} A_g\) a
\(G\)\nb-graded \Star{}algebra such that~\(A_e\) is commutative.  We
have already classified the possible commutative \Cstar\nb-hulls
for~\(A_e\) in~§\ref{sec:commutative_hulls}.  In particular, all
commutative weak \Cstar\nb-hulls are already \Cstar\nb-hulls by
Theorem~\ref{the:commutative_hull}, and they correspond to injective,
continuous maps from locally compact spaces to the
spectrum~\(\hat{A}_e\) of~\(A_e\).

Explicitly, let~\(X\) be a locally compact space and let \(j\colon
X\to \hat{A}_e\) be an injective, continuous map.  Let
\(B_e=\Cont_0(X)\) and define a representation of~\(A_e\) on~\(B_e\)
with domain~\(\Contc(X)\) by \((a\cdot f)(x) = \hat{a}(j(x))\cdot
f(x)\) for all \(a\in A_e\), \(f\in\Contc(X)\), \(x\in X\), where
\(\hat{a}(\chi) = \chi(a)\) for \(\chi\in\hat{A}_e\).  Let~\(\mu_e\)
be the closure of this representation of~\(A_e\) on~\(B_e\).  The
\Cstar\nb-algebra~\(B_e\) with the universal
representation~\(\mu_e\) is a \Cstar\nb-hull for a class
\(\Repi(A_e,X)\) of representations of~\(A_e\) by
Theorem~\ref{the:commutative_hull}, and any commutative
\Cstar\nb-hull is of this form.

Let \(\Repi(A,X)\) be the class of representations of~\(A\) that
restrict to a representation in \(\Repi(A_e,X)\) on~\(A_e\), as in
Definition~\ref{def:induced_integrable}.  If \(\Repi(A_e,X)\) is
compatible with induction to~\(A\) as in
Definition~\ref{def:induced_actions_integrable}, then
Theorem~\ref{the:Fell_bundle_sections_hull} gives a Fell bundle
whose section \Cstar\nb-algebra is a \Cstar\nb-hull
for~\(\Repi(A,X)\).  We are going to characterise exactly when this
happens and describe the \Cstar\nb-hull for~\(\Repi(A,X)\) as a
twisted groupoid \Cstar\nb-algebra.

Any representation of~\(A_e\) on a commutative \Cstar\nb-algebra is
locally bounded by Proposition~\ref{pro:commutative_representation}.  Hence
the constructions in~§\ref{sec:locally_bounded_unit_fibre} specialise
to our commutative case.  Actually, we shall make these results more
explicit through independent proofs.  First we describe the
\Cstar\nb-hull~\(B_e^+\) for the inducible representations
in~\(\Repi(A_e,X)\) as in
Proposition~\ref{pro:locally_bounded_inducible}:

\begin{lemma}
  \label{lem:inducible_Ae_commutative}
  Call a character \(\chi\in\hat{A}_e\) \emph{positive} if \(\chi(a^*
  a)\ge0\) for all \(a\in A_g\) and all \(g\in G\).  These form a
  closed subset~\(\hat{A}_e^+\) of~\(\hat{A}_e\), and \(B_e^+ =
  \Cont_0\bigl(j^{-1}(\hat{A}_e^+)\bigr)\).
\end{lemma}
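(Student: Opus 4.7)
First I would establish that $\hat{A}_e^+$ is closed in $\hat{A}_e$. For each fixed $g \in G$ and $a \in A_g$, the product $a^* a$ lies in~$A_e$, so its Gelfand transform $\widehat{a^* a}\colon \hat{A}_e \to \C$ is continuous by the very definition of the topology on~$\hat{A}_e$ from Definition~\ref{def:characters}. Hence $\hat{A}_e^+$ is the intersection of the closed sets $\{\chi : \widehat{a^* a}(\chi) \ge 0\}$ over all such $g$ and $a$.

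Next I would unpack $B_e^+$ as given by Definition~\ref{def:restrict_to_inducible}. Since $B_e = \Cont_0(X)$ is commutative, every closed two-sided ideal corresponds to a closed subset, so $B_e^+ \cong \Cont_0(Y)$ where $Y \subseteq X$ is the common zero set of the generators; equivalently, $Y$ consists of those $x$ where every generator of the form $\sum_{k,l} b_k^* \mu_e(a_k^* a_l) b_l$ is nonnegative at~$x$. By Proposition~\ref{pro:commutative_representation}, $\Hilms[B]_e \supseteq \Contc(X)$ and each $\mu_e(c)$ for $c \in A_e$ acts as multiplication by the continuous function $x \mapsto j(x)(c)$. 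Hence at any $x \in X$,
\[
\left(\sum_{k,l} b_k^* \mu_e(a_k^* a_l) b_l\right)(x) = \sum_{k,l} \overline{b_k(x)}\, j(x)(a_k^* a_l)\, b_l(x),
\]
and $Y$ is exactly the set of $x \in X$ for which this expression is nonnegative for every $g \in G$, every $n$, every $a_1, \dotsc, a_n \in A_g$, and every $b_1, \dotsc, b_n \in \Hilms[B]_e$.

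Finally I would show $Y = j^{-1}(\hat{A}_e^+)$. For one direction, if $x \in Y$, then taking $n = 1$ and choosing $b_1 \in \Contc(X) \subseteq \Hilms[B]_e$ with $b_1(x) \ne 0$ yields $\abs{b_1(x)}^2 j(x)(a^* a) \ge 0$ and hence $j(x)(a^* a) \ge 0$ for every $g \in G$ and every $a \in A_g$, so $j(x) \in \hat{A}_e^+$. For the converse, assume $j(x) \in \hat{A}_e^+$ and fix $g$ and $a_1, \dotsc, a_n \in A_g$. The key observation is that any complex linear combination $a = \sum_k \lambda_k a_k$ still lies in~$A_g$, so
\[
0 \le j(x)(a^* a) = \sum_{k,l} \overline{\lambda_k} \lambda_l\, j(x)(a_k^* a_l)
\]
by linearity of the character $j(x)$. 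Thus the complex matrix $\bigl(j(x)(a_k^* a_l)\bigr)_{k,l}$ is positive semi-definite, and pairing it with the vector $(b_1(x), \dotsc, b_n(x))$ yields a nonnegative scalar, so $x \in Y$.

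The only step requiring genuine care is the matrix-positivity argument at the end: the passage from $1$-positivity to $n$-positivity fails in general (compare the remark after Proposition~\ref{pro:inducible_criteria}), but for a scalar character on $A_e$ it is automatic, precisely because $A_g$ is closed under complex linear combinations and a character is linear. This is what makes the commutative unit-fibre picture so clean.
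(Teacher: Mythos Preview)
Your proof is correct and the key insight is identical to the paper's: for a character~\(\chi\), the passage from \(1\)-positivity to \(n\)-positivity is automatic because any finite sum \(\sum_k \lambda_k a_k\) with \(a_k\in A_g\) again lies in~\(A_g\). The only difference is packaging: the paper invokes Proposition~\ref{pro:inducible_A_B} to identify the closed subset cut out by~\(B_e^+\) as the set of \emph{inducible} characters in~\(j(X)\), and then observes that every vector in \(A_g\otimes_{A_e,\chi}\C\) is a simple tensor \(a\otimes 1\), so inducibility of~\(\chi\) reduces to \(\chi(a^*a)\ge0\); you instead compute the zero set of the ideal generators from Definition~\ref{def:restrict_to_inducible} directly and reach the same conclusion via the positive-semidefinite matrix argument.
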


\begin{proof}
  The positive characters form a closed subset in~\(\hat{A}_e\) by
  definition of the topology on~\(\hat{A}_e\).  We have
  constructed~\(B_e^+\) in Proposition~\ref{pro:inducible_A_B} as a
  quotient of~\(B_e\), such that a representation is inducible if and
  only if it factors through~\(B_e^+\).  Thus~\(B_e^+\) corresponds to a
  certain closed subset of~\(\hat{A}_e\).  Its points are the
  inducible characters of~\(A_e\).  Let~\(\chi\) be a character.  Any
  vector in \(A_g\otimes_{A_e,\chi} \C\) is of the form \(a\otimes 1\)
  for some \(a\in A_g\), that is, there is no need to take linear
  combinations.  Hence the sesquilinear form on
  \(A_g\otimes_{A_e,\chi} \C\) for all \(g\in G\) is positive
  semidefinite if and only if \(\chi(a^* a)\ge0\) for all \(a\in A_g\)
  and all \(g\in G\), that is, \(\chi\) is positive.  Thus~\(B_e^+\)
  is the quotient corresponding to those \(x\in \hat{A}_e\) for which
  \(j(x)\in\hat{A}_e\) is positive.
\end{proof}

\begin{theorem}
  \label{the:induce_character}
  Let \(g\in G\) and \(\chi\in\hat{A}_e^+\).  Then \(\dim
  A_g\otimes_{A_e,\chi} \C\le1\).  The set
  \[
  \mathcal{D}_{g^{-1}}\defeq
  \{\chi\in \hat{A}_e^+ \mid \dim A_g\otimes_{A_e,\chi} \C =1\}
  \]
  is relatively open in~\(\hat{A}_e^+\).  The left \(A_e\)\nb-module
  structure on~\(A_g\otimes_{A_e,\chi} \C \cong \C\) for
  \(\chi\in\mathcal{D}_{g^{-1}}\) is by a character~\(\vartheta_g(\chi)\)
  that belongs to~\(\mathcal{D}_g\).  The map~\(\vartheta_g\) is a
  homeomorphism from~\(\mathcal{D}_{g^{-1}}\) onto~\(\mathcal{D}_g\),
  and these maps form a \emph{partial action} of~\(G\)
  on~\(\hat{A}_e^+\), that is, \(\vartheta_e=\id_{\hat{A}_e^+}\) and
  \(\vartheta_g \circ \vartheta_h \subseteq \vartheta_{g h}\) for all \(g,h\in
  G\).
\end{theorem}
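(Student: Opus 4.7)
The plan is to realise the map $\vartheta_g$ explicitly and then read off every clause of the theorem from the resulting formula. Fix $\chi\in\hat{A}_e^+$ and $g\in G$. Choose any $a\in A_g$ with $p\defeq\chi(a^*a)>0$ (if no such $a$ exists, then every element of $A_g$ has zero norm in $A_g\otimes_{A_e,\chi}\C$, the fibre is~$0$, and there is nothing to check for this~$\chi$). Define $\chi'\colon A_e\to\C$ by $\chi'(e)\defeq \chi(a^*ea)/p$.

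First I would verify that $\chi'$ is again a character of~$A_e$. Linearity, $\chi'(1)=1$, and $\chi'(e^*)=\overline{\chi'(e)}$ are immediate. For multiplicativity — the only serious point — I would use the commutativity of $A_e$ to observe that in $A$,
\[
(a^*efa)(a^*a)-(a^*ea)(a^*fa)
= a^*\bigl(ef(aa^*)-e(aa^*)f\bigr)a
= a^*\cdot 0\cdot a = 0,
\]
because the bracketed expression lies in $A_e$ and equals zero there by commutativity. Applying $\chi$ and dividing by $p^2$ gives $\chi'(ef)=\chi'(e)\chi'(f)$. Positivity of $\chi'$, i.e.\ $\chi'\in\hat{A}_e^+$, is instant: for $c\in A_h$ one has $\chi'(c^*c)=\chi((ca)^*(ca))/p\ge 0$, using positivity of $\chi$ on $A_{hg}$.

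The heart of the proof is the identity
\begin{equation}
  \label{eq:char-action}
  e\cdot(a\otimes 1) = \chi'(e)\,(a\otimes 1)\quad\text{in }V\defeq A_g\otimes_{A_e,\chi}\C
\end{equation}
for every $e\in A_e$. I would prove it by computing the graph norm squared of $ea-\chi'(e)a$ directly: expanding and using the relations $\chi(a^*ea)=p\chi'(e)$, $\chi(a^*e^*a)=p\overline{\chi'(e)}$, and $\chi(a^*e^*ea)=p\chi'(e^*e)=p\abs{\chi'(e)}^2$ (the last by multiplicativity of $\chi'$), every summand becomes $\pm p\abs{\chi'(e)}^2$ and they cancel. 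Given~\eqref{eq:char-action}, the bound $\dim V\le 1$ drops out by computing $(ba^*)\cdot(a\otimes 1)$ in two ways for arbitrary $b\in A_g$: right balancing gives $(ba^*a)\otimes 1=b(a^*a)\otimes 1=p(b\otimes 1)$, while \eqref{eq:char-action} with $e=ba^*$ gives $\chi'(ba^*)(a\otimes 1)=\chi(a^*b)(a\otimes 1)$ since $\chi'(ba^*)=\chi((a^*b)(a^*a))/p=\chi(a^*b)$. Therefore $p(b\otimes 1)=\chi(a^*b)(a\otimes 1)$, which exhibits $b\otimes 1$ as a scalar multiple of $a\otimes 1$.

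With the dimension bound in hand, the remaining clauses are straightforward. The set $\mathcal{D}_{g^{-1}}$ is the complement in $\hat{A}_e^+$ of the closed zero set of the ideal $A_{g^{-1}}\cdot A_g\idealin A_e$, hence relatively open. The map $\vartheta_g(\chi)\defeq\chi'$ is well defined because $\chi'$ is characterised intrinsically as the scalar by which $A_e$ acts on the one-dimensional $V$, and this is independent of the auxiliary $a$. That $\vartheta_g(\chi)\in\mathcal{D}_g$ follows from $\vartheta_g(\chi)(aa^*)=\chi((a^*a)^2)/p=p>0$, which witnesses non-vanishing on $A_g\cdot A_{g^{-1}}$, i.e.\ non-triviality of the fibre of~$A_{g^{-1}}$. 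Continuity of $\vartheta_g$ is visible from the formula $\chi\mapsto\chi(a^*\blank a)/\chi(a^*a)$ on each open piece where $\chi(a^*a)>0$. The candidate inverse is $\vartheta_{g^{-1}}$: applied to $\chi'$ with the element $a^*\in A_{g^{-1}}$ (which satisfies $\chi'(aa^*)=p>0$), one computes $\chi'(aea^*)=\chi((a^*a)e(a^*a))/p = p^2\chi(e)/p=p\chi(e)$, so $\vartheta_{g^{-1}}(\chi')(e)=\chi(e)$. Finally, for the partial action identity $\vartheta_g\circ\vartheta_h\subseteq\vartheta_{gh}$, take $\chi$ in the common domain, pick $a\in A_h$ with $\chi(a^*a)>0$ and $b\in A_g$ with $\vartheta_h(\chi)(b^*b)>0$; then $ba\in A_{gh}$ has $\chi((ba)^*(ba))=\chi(a^*b^*ba)>0$, placing $\chi$ in $\mathcal{D}_{(gh)^{-1}}$, and both $\vartheta_g(\vartheta_h(\chi))(e)$ and $\vartheta_{gh}(\chi)(e)$ reduce to $\chi(a^*b^*eba)/\chi(a^*b^*ba)$ after cancelling the common factor $\chi(a^*a)$.

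The main obstacle, and the only place where the commutativity of $A_e$ is truly used, is the multiplicativity of $\chi'$ in the first paragraph; everything else is bookkeeping around~\eqref{eq:char-action}.
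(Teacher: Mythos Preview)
Your proof is correct and follows essentially the same route as the paper, though with the two main steps reordered: the paper first proves $\dim A_g\otimes_{A_e,\chi}\C\le 1$ via the identity $\chi(a^*a)\chi(b^*b)=\abs{\chi(a^*b)}^2$ (obtained from the rearrangement $(a^*ab^*b)^2=a^*ab^*ba^*bb^*a$) and only then reads off the character~$\vartheta_g(\chi)$ from the one-dimensional action, whereas you first establish that $\chi'(e)=\chi(a^*ea)/\chi(a^*a)$ is multiplicative and then derive the dimension bound from the resulting eigenvector equation~\eqref{eq:char-action}. Both arguments use commutativity of~$A_e$ at exactly one crucial point; your version $ef(aa^*)=e(aa^*)f$ is a bit more transparent than the paper's product manipulation, and having multiplicativity of~$\chi'$ in hand from the start makes the later verifications (positivity, inverse, partial-action identity) slightly cleaner.
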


\begin{proof}
  As in the proof of Lemma~\ref{lem:inducible_Ae_commutative},
  \(A_g\otimes_{A_e,\chi} \C\) is the Hausdorff completion
  of~\(A_g\) in the norm coming from the inner product
  \(\braket{a_1}{a_2} \defeq \chi(a_1^* a_2)\).  We write
  \(\lambda\cdot a\) for \(a\otimes \lambda\) for \(a\in A_g\),
  \(\lambda\in\C\) throughout this proof, and we write \(a \equiv
  b\) if \(a, b\in A\) have the same image in
  \(A_g\otimes_{A_e,\chi} \C\).  Let \(a,b\in A_g\) satisfy
  \(\chi(a^* a)\neq0\) and \(\chi(b^* b)\neq0\).  We must show that
  \(a\) and~\(b\) are parallel in \(A_g\otimes_{A_e,\chi} \C\).

  The following computation makes
  \cite{Dowerk-Savchuk:Induced}*{Footnote~3} explicit:
  \[
  (a^* a b^* b)^2
  = a^* a b^* (b a^*) (a b^*) b
  = a^* a (b^* a) (b^* b a^* b)
  = a^* a b^* b a^* b b^* a
  \]
  because~\(A_e\) is commutative and the terms in parentheses belong
  to~\(A_e\).  Hence
  \[
  \chi(a^* a)^2\chi(b^* b)^2 = \chi(a^* a) \chi(b^* b) \chi(a^*
  b)\chi(b^* a).
  \]
  Since \(\chi(a^* a)\neq0\) and \(\chi(b^* b)\neq0\), this implies
  \begin{equation}
    \label{eq:chi_aabb}
    \chi(a^* a) \chi(b^* b) = \chi(a^* b)\chi(b^* a)
    = \abs{\chi(a^* b)}^2 \neq 0.
  \end{equation}
  The inner product on \(A_g\otimes_{A_e,\chi} \C\) annihilates
  \(a\cdot c\otimes 1- a\otimes \chi(c)\), which we write as
  \(a\cdot c- \chi(c)a\), for all \(a\in A_g\), \(c\in A_e\).  Hence
  \begin{multline}
    \label{eq:compare_Ag_commutative}
    a = \frac{\chi(a^* b)\chi(b^* a)}{\chi(a^* a) \chi(b^* b)} a
    \equiv \frac{a a^* b b^* a}{\chi(a^* a) \chi(b^* b)}
    = \frac{b b^* a a^* a}{\chi(a^* a) \chi(b^* b)}
    \\= \frac{\chi(b^* a) \chi(a^* a)}{\chi(a^* a) \chi(b^* b)} b
    = \frac{\chi(b^* a)}{\chi(b^* b)} b.
  \end{multline}
  Thus all non-zero \(a,b\in A_g\otimes_{A_e,\chi}\C\) are parallel,
  that is, \(\dim A_g\otimes_{A_e,\chi}\C\le 1\).  The space
  \(A_g\otimes_{A_e,\chi}\C\) is non-zero if and only if there is
  \(a\in A_g\) with \(\chi(a^* a)\neq0\).  Thus
  \begin{equation}
    \label{eq:partial_act_Aeplus_dom}
    \mathcal{D}_{g^{-1}} =
    \{\chi\in\hat{A}_e^+\mid \chi(a^* a)\neq0\text{ for some }a\in A_g\}.
  \end{equation}
  The latter set is relatively open in~\(\hat{A}_e^+\).

  Let \(\chi\in\mathcal{D}_{g^{-1}}\).  Then 
  \(\dim A_g\otimes_{A_e,\chi}\C=1\).  Hence the representation
  of~\(A_e\) on it is by a character, which we denote
  by~\(\vartheta_g(\chi)\).  This character is an inducible
  representation by Lemma~\ref{lem:induced_repr_inducible}, and hence
  positive by Lemma~\ref{lem:inducible_Ae_commutative}.  There is
  \(b\in A_g\) with \(\chi(b^* b)>0\).  If \(a\in A_e\),
  then~\eqref{eq:compare_Ag_commutative} implies \(a b \equiv
  \frac{\chi(b^* a b)}{\chi(b^* b)} b\).  Thus
  \begin{equation}
    \label{eq:partial_act_Aeplus}
    \vartheta_g(\chi)(a) = \frac{\chi(b^* a b)}{\chi(b^* b)}
  \end{equation}
  for all \(a\in A_e\).  Hence \(\vartheta_g(\chi)\bigl((b^*)^*
  b^*\bigr)\neq0\), so that \(\vartheta_g(\chi) \in \mathcal{D}_g\)
  by~\eqref{eq:partial_act_Aeplus_dom}.  Thus~\(\vartheta_g\)
  maps~\(\mathcal{D}_{g^{-1}}\) to~\(\mathcal{D}_g\).
  Equation~\eqref{eq:partial_act_Aeplus} also implies that the
  map~\(\vartheta_g\) is continuous on the open set of characters
  in~\(A_e^+\) with \(\chi(b^* b)>0\).  Since these open sets for
  different \(b\in A_g\) cover~\(\mathcal{D}_{g^{-1}}\), the
  map~\(\vartheta_g\) is continuous on all of~\(\mathcal{D}_{g^{-1}}\).

  Let \(g,h \in G\) and let \(\chi\in\mathcal{D}_{h^{-1}}\) and
  \(\vartheta_h(\chi)\in \mathcal{D}_{g^{-1}}\).  Then there is \(b_h\in
  A_h\) with \(\chi(b_h^* b_h)>0\), and \(b_g\in A_g\) with
  \(\vartheta_h(\chi)(b_g^* b_g)>0\).  Thus \(\chi(b_h^* b_g^* b_g b_h) =
  \chi(b_h^* b_h)\cdot \vartheta_h(\chi)(b_g^* b_g)>0\), and
  so~\eqref{eq:partial_act_Aeplus} for \(b=b_g b_h \in A_{g h}\)
  describes~\(\vartheta_{g h}\).  Hence
  \[
  \vartheta_{g h}(\chi)(a)
  = \frac{\chi(b_h^* b_g^* a b_g b_h)}{\chi(b_h^* b_g^* b_g b_h)}
  = \frac{\vartheta_h(\chi)(b_g^* a b_g)}{\vartheta_h(\chi)(b_g^* b_g)}
  = \vartheta_g \bigl(\vartheta_h(\chi)\bigr)(a).
  \]
  Thus \(\vartheta_{g h}\subseteq \vartheta_g \vartheta_h\) for all
  \(g,h\in G\).  In addition, \(\vartheta_e = \id_{\hat{A}_e^+}\).
  So the maps~\(\vartheta_g\) form a partial action of~\(G\)
  on~\(\hat{A}_e^+\), see~\cite{Exel:Partial_dynamical}.  In
  particular, \(\vartheta_g\) is a homeomorphism
  from~\(\mathcal{D}_{g^{-1}}\) onto~\(\mathcal{D}_g\) with
  inverse~\(\vartheta_{g^{-1}}\).
\end{proof}

In the examples considered in
\cites{Savchuk-Schmudgen:Unbounded_induced, Dowerk-Savchuk:Induced},
the space~\(\hat{A}_e^+\) is locally compact and the \Cstar\nb-hull
for the integrable representations of~\(A\) is the crossed product for
the partial action of~\(G\) on~\(\hat{A}_e^+\) described above.  In
general, however, certain twists are possible.  The partial action
of~\(G\) on~\(\hat{A}_e^+\) may be encoded in a transformation
groupoid~\(G\ltimes \hat{A}_e^+\), which has object
space~\(\hat{A}_e^+\), arrow space \(\bigsqcup_{g\in G}
\mathcal{D}_{g^{-1}}\) with the disjoint union topology, range and
source maps \(s(g,\chi) \defeq \chi\), \(r(g,\chi) \defeq
\vartheta_g(\chi)\) for \(g\in G\), \(\chi\in \mathcal{D}_{g^{-1}}\), and
multiplication \((g,\vartheta_h(\chi))\cdot (h,\chi) \defeq (g\cdot
h,\chi)\) for all \(g,h\in G\), \(\chi\in \mathcal{D}_{h^{-1}} \cap
\vartheta_h^{-1}(\mathcal{D}_{g^{-1}})\).  The unit arrow on~\(\chi\)
is~\((1,\chi)\), and the inverse of~\((g,\chi)\) is
\((g^{-1},\vartheta_g(\chi))\).  This is an étale topological groupoid
because \(r\) and~\(s\) restrict to homeomorphisms on the open
subsets~\(\mathcal{D}_{g^{-1}}\) of the arrow space.  The object
space~\(\hat{A}_e^+\) need not be locally compact.

We are going to construct another topological groupoid~\(\Sigma\)
that is a central extension of \(G\ltimes \hat{A}_e^+\) by the
circle group~\(\T\).  That is, \(\Sigma\) comes with a canonical
functor to \(G\ltimes \hat{A}_e^+\) whose kernel is the group bundle
\(\hat{A}_e^+ \times \T\).  Such an extension is also called a
\emph{twisted groupoid} in
\cite{Renault:Cartan.Subalgebras}*{Section 4}, following a
definition by Kumjian~\cite{Kumjian:Diagonals}.  A twisted groupoid
with locally compact object space has a twisted groupoid
\Cstar\nb-algebra.  For a suitable injective continuous map \(X\to
\hat{A}_e^+\), we are going to identify the \Cstar\nb-hull of the
\(X\)\nb-integrable representations of~\(A\) with the twisted
groupoid \Cstar\nb-algebra of the restriction of~\(\Sigma\) to
\(j(X^+)\subseteq \hat{A}_e^+\).

A point in~\(\Sigma\) is a triple~\((g,\chi,[a])\), where \(g\in
G\), \(\chi\in \mathcal{D}_{g^{-1}}\), and~\([a]\) is a unit vector
in the \(1\)\nb-dimensional Hilbert space~\(A_g \otimes_{A_e,\chi}
\C\).  We represent unit vectors in \(A_g \otimes_{A_e,\chi} \C\) by
elements \(a\in A_g\) with \(\chi(a^* a)=1\); two elements
\(a,b\in A_g\) with \(\chi(a^* a) = \chi(b^* b)=1\) represent the
same unit vector \([a]=[b]\) if and only if \(\chi(a^* b)=1\).  We
get the same set of equivalence classes if we allow \(a\in A\) with
\(\chi(a^* a)>0\) and set \([a]=[b]\) if \(\chi(a^* b)>0\): then
\(a_1\defeq \chi(a^* a)^{-\nicefrac12} a\) and \(b_1\defeq \chi(b^*
b)^{-\nicefrac12} b\) satisfy \([a]=[a_1]\), \([b]=[b_1]\), and
\([a]=[b]\) if and only if \(\chi(a_1^* b_1)=1\)
by~\eqref{eq:chi_aabb}.  The circle group~\(\T\) acts on~\(\Sigma\)
by multiplication: \(\lambda\cdot (g,\chi,[a]) \defeq
(g,\chi,[\lambda a])\).  The orbit space projection for this circle
action is the coordinate projection \(F\colon \Sigma \prto G\ltimes
\hat{A}_e^+\), \((g,\chi,[a])\mapsto (g,\chi)\).  Next we
equip~\(\Sigma\) with a topology so that this coordinate projection
is a locally trivial principal \(\T\)\nb-bundle.

For \(a\in A\), let \(U_a \defeq \{\chi\in \hat{A}_e^+\mid \chi(a^*
a)\neq0\}\).  This is an open subset in~\(\hat{A}_e^+\), and
\(\chi(a^* a)>0\) if \(\chi\in U_a\) because~\(\chi\) is positive.
The map \(\sigma_a\colon \{g\}\times U_a \to \Sigma\),
\((g,\chi)\mapsto (g,\chi,[a])\), for \(a\in A_g\) is a local
section for the coordinate projection~\(F\).  If \(a,b\in A_g\), and
\(\chi\in U_a\cap U_b\), then
\[
[a] = \left[\frac{\chi(b^* a)} {\chi(a^* a)^{\nicefrac12} \chi(b^*
    b)^{\nicefrac12}} b\right],
\]
by~\eqref{eq:compare_Ag_commutative}.
Since the functions sending~\(\chi\) to \(\chi(b^* a)\), \(\chi(a^*
a)\) and \(\chi(b^* b)\) are continuous on~\(A_e\), the two
trivialisations induce the same topology on the restriction
of~\(\Sigma\) to \(\{g\}\times (U_a\cap U_b)\).  For any \(\chi\in
\mathcal{D}_{g^{-1}}\), there is \(a\in A_g\) with \(\chi(a^* a)>0\).
Thus the open subsets~\(U_a\) cover~\(\mathcal{D}_{g^{-1}}\).
Consequently, there is a unique topology on~\(\Sigma\) that makes the
local sections~\(\sigma_a\) for all \(a\in A_g\) continuous, and this
topology turns~\(\Sigma\) into a locally trivial \(\T\)\nb-bundle
over~\(G\ltimes \hat{A}_e^+\).

We define a groupoid with object space~\(A_e^+\), arrow
space~\(\Sigma\), and
\[
r(g,\chi,[a])\defeq \vartheta_g(\chi),\quad
s(g,\chi,[a])\defeq \chi,\quad
(g,[\vartheta_h(\chi)],[a])\cdot (h,\chi,[b])
\defeq (g\cdot h,\chi, [a\cdot b]);
\]
we must show that this multiplication is well defined.  We have \(a b
\in A_{g h}\) and
\[
\chi(b^* a^* a b) = \vartheta_h(\chi)(a^* a)\cdot \chi(b^* b) \neq0
\]
by~\eqref{eq:partial_act_Aeplus}, so \((g\cdot h,\chi, [a\cdot b]) \in
\Sigma\).  If \(\chi(b^* b_1)>0\) and \(\vartheta_h(\chi)(a^* a_1)>0\),
then \(\chi(b^* a^* a_1 b_1)>0\) by computations as in the proof of
Theorem~\ref{the:induce_character}.  Hence the multiplication is well
defined.  It is clearly associative.  The unit arrow on~\(\chi\) is
\(1_\chi \defeq (1,\chi,[1])\), and \((g,\chi,[a])^{-1} =
(g^{-1},\vartheta_g(\chi),[a^*])\).  The multiplication, unit map and
inversion are continuous and the range and source maps are open
surjections (even locally trivial).  So~\(\Sigma\) is a topological
groupoid.

The identity map on objects and the coordinate projection \(F\colon
\Sigma \to G\ltimes \hat{A}_e^+\) on arrows form a functor, which is
a locally trivial, open surjection on arrows.  The kernel of~\(F\)
consists of those \((g,\chi,[a])\in\Sigma\) for which
\(F(g,\chi,[a])\) is a unit arrow in~\(G\ltimes \hat{A}_e^+\).  Then
\(g=1\), and \(a\in A_e\) is equivalent to \([a] = [\chi(a)\cdot 1]\)
because \(\chi(a^* \chi(a) 1)>0\).  The map \((g,\chi,[a])\mapsto
(\chi,\chi(a))\) is an isomorphism of topological groupoids from the
kernel of~\(F\) onto the trivial group bundle
\(\hat{A}_e^+\times\T\).  Thus we have an extension of
topological groupoids
\[
\hat{A}_e^+\times\T \into \Sigma \prto G\ltimes \hat{A}_e^+.
\]
The three groupoids above are clearly Hausdorff.

To construct \Cstar\nb-algebras, we need groupoids with a locally
compact object space.  Therefore, we replace~\(\hat{A}_e\) by a
locally compact space~\(X\) with an injective, continuous map
\(j\colon X\to\hat{A}_e\).  Then \(B_e = \Cont_0(X)\) is a
\Cstar\nb-hull for a class \(\Repi(A_e,X)\) of representations
of~\(A_e\).  By Lemma~\ref{lem:inducible_Ae_commutative}, the
\Cstar\nb-hull for the class of \(X\)\nb-integrable, inducible
representations of~\(A_e\) is \(B_e^+ = \Cont(X^+)\) with \(X^+ \defeq
j^{-1}(\hat{A}_e^+)\subseteq X\).

\begin{proposition}
  \label{pro:commutative_integrability_induction}
  Let \(j\colon X\to\hat{A}_e\) be an injective, continuous map.
  The class \(\Repi(A,X)\) is compatible with induction if and only
  if \(j(X^+)\subseteq \hat{A}_e^+\) is invariant under the partial
  maps~\(\vartheta_g\) in Theorem~\textup{\ref{the:induce_character}}
  and the resulting partial maps on~\(X^+\) are continuous in the
  topology of~\(X^+\).  We briefly say that the partial action
  of~\(G\) on~\(\hat{A}_e^+\) \emph{restricts} to~\(X^+\).
\end{proposition}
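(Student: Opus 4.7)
By Proposition~\ref{pro:inducible_universal}, $\Repi(A,X)$ is compatible with induction if and only if, for every $g\in G$, the representation $(\Hilms[B]^+_g,\mu^+_{e,g})$ of $A_e$ on $B^+_g=A_g\otimes_{A_e}B^+_e$ is $X$\nb-integrable. The plan is to analyze this fibrewise over $X^+$ and reduce the question to the existence of a continuous map $\pi_g\colon X^+_g\to X$ satisfying $j\circ\pi_g=\vartheta_g\circ j$, where $X^+_g\defeq X^+\cap j^{-1}(\mathcal{D}_{g^{-1}})$; this set is open in $X^+$ since $\mathcal{D}_{g^{-1}}$ is open in $\hat{A}_e^+$ by Theorem~\ref{the:induce_character}.

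First I would identify $B^+_g$ as a ``generalised line bundle'' over $X^+_g$. Because $B^+_e=\Cont_0(X^+)$ is commutative, the fibre of $B^+_g$ at $x\in X^+$, computed as $B^+_g\otimes_{B^+_e,x}\C$, equals $A_g\otimes_{A_e,j(x)}\C$, which by Theorem~\ref{the:induce_character} is one\nb-dimensional when $x\in X^+_g$ and zero otherwise. Using the inner products $\braket{a_g\otimes b}{a_g\otimes b}=\abs{b}^2\cdot\mu^+_e(a_g^* a_g)\in\Cont_0(X^+)$ from~\eqref{eq:innprod_Bgplus} --- which are supported in $X^+_g$ and, as $a_g\in A_g$ and $b\in\Hilms[B]^+_e$ vary, do not vanish jointly at any point of $X^+_g$ --- one obtains $\Comp(B^+_g)\cong\Cont_0(X^+_g)$ and hence $\Bound(B^+_g)\cong\Contb(X^+_g)$, acting on $B^+_g$ by fibrewise scalar multiplication. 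By equation~\eqref{eq:partial_act_Aeplus}, the operator $\mu^+_{e,g}(a)$ for $a\in A_e$ then acts on the fibre at $x\in X^+_g$ by the scalar $\vartheta_g(j(x))(a)$.

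I would then observe that a representation $\varrho_g\colon\Cont_0(X)\to\Bound(B^+_g)=\Contb(X^+_g)$ is the same as the pullback along a continuous map $\pi_g\colon X^+_g\to X$. Unwinding Proposition~\ref{pro:Phi_simplifies}, such $\varrho_g$ induces $\mu^+_{e,g}$ if and only if, fibrewise at each $x\in X^+_g$, the identity $\hat{a}(j(\pi_g(x)))=\vartheta_g(j(x))(a)$ holds for all $a\in A_e$, equivalently $j\circ\pi_g=\vartheta_g\circ j$ on $X^+_g$. A set-theoretic such $\pi_g$ exists precisely when $\vartheta_g(j(X^+_g))\subseteq j(X^+)$ --- which is the invariance of $j(X^+)$ under the partial maps $\vartheta_g$ --- and is then uniquely $\pi_g=j^{-1}\circ\vartheta_g\circ j$ by injectivity of $j$; continuity of this $\pi_g$ as a map into $X^+$ is the second stated condition. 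The main obstacle is the identification $\Bound(B^+_g)\cong\Contb(X^+_g)$ from the previous paragraph: this is where commutativity of $B^+_e$ genuinely enters, and it requires a careful verification that the inner products~\eqref{eq:innprod_Bgplus} generate $\Cont_0(X^+_g)$ as an ideal of $B^+_e$, after which the standard theory of Hilbert modules over commutative \Cstar\nb-algebras completes the argument.
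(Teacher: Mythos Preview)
Your proposal is correct and follows essentially the same line as the paper's proof: reduce via Proposition~\ref{pro:inducible_universal} to the integrability of each \(\mu^+_{e,g}\), identify \(B^+_g\) as a line bundle supported on \(X^+_g=j^{-1}(\mathcal{D}_{g^{-1}})\) so that \(\Comp(B^+_g)\cong\Cont_0(X^+_g)\), compute the induced \(A_e\)-action fibrewise as \(\vartheta_g\circ j\), and conclude that \(X\)-integrability amounts to factoring this continuously through~\(j\). The paper packages the fibrewise comparison by invoking Proposition~\ref{pro:representation_Hilm_Comp} (to move the representation from \(B^+_g\) to \(\Comp(B^+_g)\)) and Proposition~\ref{pro:commutative_representation} (which says representations of \(A_e\) on \(\Cont_0(X^+_g)\) are exactly continuous maps \(X^+_g\to\hat{A}_e\)); you argue the same thing directly via \(\Bound(B^+_g)\cong\Contb(X^+_g)\) and Proposition~\ref{pro:Phi_simplifies}, which is an equivalent reformulation.
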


\begin{proof}
  By Proposition~\ref{pro:inducible_universal}, it suffices to check
  that the induced representation of~\(A_e\) on~\(A_g\otimes_{A_e}
  B_e^+\) is \(X\)\nb-integrable for \(g\in G\) if and only if the
  partial map \(\vartheta_g\circ j\) on~\(X\) factors through~\(j\) and
  the resulting partial map \(j^{-1}\circ \vartheta_g \circ j\) on~\(X\)
  is again continuous.
  View the Hilbert module \(A_g\otimes_{A_e} B_e^+\) as a continuous
  field of Hilbert spaces over~\(X^+\).  The fibres of this field have
  dimension at most~\(1\) by Theorem~\ref{the:induce_character},
  and the set where the fibre is non-zero is the open subset
  \(j^{-1}(\mathcal{D}_{g^{-1}})\).  Hence \(\Comp(A_g\otimes_{A_e}
  B_e^+) \cong \Cont_0(j^{-1}(\mathcal{D}_{g^{-1}}))\).  The
  representation of~\(A_e\) on \(A_g\otimes_{A_e} B_e^+\) is
  equivalent to a representation on \(\Comp(A_g\otimes_{A_e} B_e^+)\)
  by Proposition~\ref{pro:representation_Hilm_Comp}.  This is
  equivalent to a continuous map \(j^{-1}(\mathcal{D}_{g^{-1}}) \to
  \hat{A}_e\) by Proposition~\ref{pro:commutative_representation}.
  This map is~\(\vartheta_g \circ j\) by a fibrewise computation.  Hence
  the induced representation of~\(A_e\) on~\(A_g \otimes_{A_e} B_e^+\)
  is \(X\)\nb-integrable if and only if \(\vartheta_g \circ j\) has
  values in \(j(X)\) and the partial maps \(j^{-1}\circ \vartheta_g\circ
  j\) on~\(X\) are continuous.
\end{proof}

From now on, we assume that the partial action of~\(G\)
on~\(\hat{A}_e^+\) restricts to~\(X^+\).  By
Proposition~\ref{pro:commutative_integrability_induction}, this
assumption is necessary and sufficient for \(X\)\nb-integrability to
be compatible with induction.  The ``restriction'' of the partial
action on~\(\hat{A}_e^+\) to~\(X^+\) is a partial action of~\(G\)
on~\(X^+\) by partial homeomorphisms.  Its transformation groupoid
\(G\ltimes X^+\) is constructed like \(G\ltimes \hat{A}_e^+\).  Its
set of arrows is the subset of \(G\ltimes \hat{A}_e^+\) of arrows with
range and/or source in~\(j(X^+)\), and the topology on the arrow space
is the unique one that makes the inclusion \(G\ltimes X^+ \to G\ltimes
\hat{A}_e^+\) and the range and source maps \(G\ltimes X^+ \to X^+\)
continuous.  There is also a unique topology on the
restriction~\(\Sigma_X\) of~\(\Sigma\) to~\(j(X^+)\) so that there is
an extension of topological groupoids
\[
X^+\times\T \into \Sigma_X \prto G\ltimes X^+.
\]
Since~\(X^+\) is locally compact, the groupoids in this extension
are locally compact, Hausdorff groupoids.  Since \(G\ltimes X^+\) is
étale, it carries a canonical Haar system, namely, the family of
counting measures.  There is also a unique normalised Haar system
on~\(X^+\times\T\).  These produce a unique Haar system
on~\(\Sigma_X\) by \cite{Buss-Meyer:Groupoid_fibrations}*{Theorem 5.1}, so
that the groupoid \Cstar\nb-algebra \(\Cst(\Sigma_X)\) is
defined.  The \emph{twisted groupoid \Cstar\nb-algebra}
\(\Cst(G\ltimes X^+,\Sigma_X)\) of~\(G\ltimes X^+\) with respect to
the \emph{twist}~\(\Sigma_X\) is defined
in~\cite{Renault:Representations}.  It is related to the groupoid
\Cstar\nb-algebra of~\(\Sigma_X\) in
\cite{Buss-Meyer:Groupoid_fibrations}*{Corollary 7.2}.

\begin{theorem}
  \label{the:twisted_groupoid}
  Let~\(G\) be a discrete group and let~\(A\) be a \(G\)\nb-graded
  \Star{}algebra with commutative~\(A_e\).  Let \(j\colon X\to
  \hat{A}_e\) be an injective, continuous map, such that the partial
  action of~\(G\) on~\(\hat{A}_e^+\) in
  Theorem~\textup{\ref{the:induce_character}} restricts to~\(X^+\)
  as in
  Proposition~\textup{\ref{pro:commutative_integrability_induction}}.
  Then \(\Cst(G\ltimes X^+,\Sigma_X)\) is a \Cstar\nb-hull for
  \(\Repi(A,X)\).
\end{theorem}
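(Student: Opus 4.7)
The plan is to apply the Induction Theorem~\ref{the:Fell_bundle_sections_hull} and then to identify the resulting Fell bundle with the line-bundle Fell bundle determined by the twist $\Sigma_X \prto G\ltimes X^+$, so that its section \Cstar\nb-algebra becomes the twisted groupoid \Cstar\nb-algebra $\Cst(G\ltimes X^+,\Sigma_X)$.  First I would verify the hypotheses of the Induction Theorem.  By Theorem~\ref{the:commutative_hull}, $B_e\defeq \Cont_0(X)$ is a \Cstar\nb-hull for $\Repi(A_e,X)$, and Lemma~\ref{lem:inducible_Ae_commutative} identifies its quotient \Cstar\nb-hull $B_e^+$ for the inducible, $X$\nb-integrable representations of $A_e$ with $\Cont_0(X^+)$.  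Proposition~\ref{pro:commutative_integrability_induction} translates the hypothesis that the partial action of $G$ on $\hat A_e^+$ restricts to $X^+$ into the compatibility of integrability with induction.  Theorem~\ref{the:Fell_bundle_sections_hull} therefore delivers a Fell bundle $(B_g^+)_{g\in G}$, with $B_e^+ = \Cont_0(X^+)$ and $B_g^+ = A_g\otimes_{A_e}\Cont_0(X^+)$, whose section \Cstar\nb-algebra is already a \Cstar\nb-hull for $\Repi(A,X)$.

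Next I would identify each fibre $B_g^+$ with the space $\Cont_0(L_g)$ of $\Cont_0$\nb-sections of the complex line bundle $L_g$ over $j^{-1}(\mathcal{D}_{g^{-1}})$ associated to the $\T$\nb-principal bundle $\Sigma_X|_g$.  By Theorem~\ref{the:induce_character}, $B_g^+$ is a Hilbert $\Cont_0(X^+)$\nb-module coming from a continuous field over $X^+$ whose fibre at $x$ is $A_g\otimes_{A_e,j(x)}\C$; this fibre has dimension at most one and is nonzero exactly on the open set $j^{-1}(\mathcal{D}_{g^{-1}})$.  The unit sphere bundle of this field is, by construction of $\Sigma$, precisely $\Sigma_X|_g$, and the local trivialisations $\sigma_a$ used to topologise $\Sigma_X$ come from elements $a\in A_g$ with $\chi(a^*a)\neq 0$, whose normalisations trivialise the same continuous field.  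The identification is an isometric isomorphism of Hilbert $\Cont_0(X^+)$\nb-modules via the formula $\braket{a_1\otimes f_1}{a_2\otimes f_2}(x) = \overline{f_1(x)}\,j(x)(a_1^*a_2)\,f_2(x)$, and under it the left $\Cont_0(X^+)$\nb-action $\bar\mu^+_{e,g}$ from Lemma~\ref{lem:Be_Bg_multiplicative} becomes pullback along the range map $r=\vartheta_g$ of $G\ltimes X^+$, computed fibrewise using Theorem~\ref{the:induce_character}.

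Finally I would match the Fell bundle structure with twisted convolution.  On the dense subspaces spanned by elementary tensors, the Fell bundle multiplication sends $(a_g\otimes 1)(a_h\otimes 1)$ to $(a_g a_h)\otimes 1$, which under our identification is the multiplication $[a_g]\cdot[a_h]=[a_g a_h]$ defining the groupoid law in $\Sigma_X$; the involution $a_g\otimes 1 \mapsto a_g^*\otimes 1$ becomes the inversion $(g,\chi,[a])\mapsto(g^{-1},\vartheta_g(\chi),[a^*])$.  Since $G\ltimes X^+$ is étale, $\Contc(G\ltimes X^+,\Sigma_X) = \bigoplus_{g\in G}\Contc(L_g)$ is, under the identifications above, a common dense \Star{}subalgebra of both $\bigoplus_g B_g^+$ (with its Fell bundle structure) and the compactly supported section \Star{}algebra underlying $\Cst(G\ltimes X^+,\Sigma_X)$; taking the maximal \Cstar\nb-seminorm on this common dense \Star{}subalgebra identifies the section \Cstar\nb-algebra of $(B_g^+)_{g\in G}$ with $\Cst(G\ltimes X^+,\Sigma_X)$, yielding the theorem.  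The main obstacle will be the careful verification that the topology on $\Sigma_X$ built abstractly from the trivialisations $\sigma_a$ coincides with the topology on the unit sphere bundle of the continuous field underlying $B_g^+$, and that the Fell bundle multiplication produced by Lemma~\ref{lem:Fell_bundle_structure} via a faithful representation on $A\otimes_{A_e} B_e^+$ matches on a dense subalgebra the twisted convolution defined directly from the multiplication in $\Sigma_X$.
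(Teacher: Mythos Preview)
Your proposal is correct and follows essentially the same route as the paper: apply the Induction Theorem~\ref{the:Fell_bundle_sections_hull} to obtain the Fell bundle $(B_g^+)_{g\in G}$, identify each fibre with the $\Cont_0$\nb-sections of the line bundle $\mathcal{L}=\Sigma_X\times_\T\C$ over $\{g\}\times j^{-1}(\mathcal{D}_{g^{-1}})$, and check that the Fell bundle multiplication agrees with the one induced by the groupoid multiplication in~$\Sigma_X$. The paper handles your ``main obstacle'' by observing that the multiplication on~$\Sigma_X$ was \emph{defined} precisely so that it matches the Fell bundle multiplication (via $[a_g]\cdot[a_h]=[a_g a_h]$), and then invokes~\cite{BussExel:Fell.Bundle.and.Twisted.Groupoids} rather than your direct common-dense-subalgebra argument to pass from the Fell bundle over~$G$ to the twisted groupoid \Cstar\nb-algebra; both routes are valid and close in spirit.
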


\begin{proof}
  The \Cstar\nb-algebra \(\Cst(G\ltimes X^+,\Sigma_X)\) may be defined
  as the full section \Cstar\nb-algebra of a certain Fell line bundle
  over the étale, locally compact groupoid~\(G\ltimes X^+\).  The Fell
  line bundle involves the space of sections of the Hermitian complex
  line bundle \(\mathcal{L} \defeq \Sigma_X \times_{\T} \C\)
  associated to the principal \(\T\)\nb-bundle \(\Sigma_X\prto
  G\ltimes X^+\) and the multiplication maps \(\mathcal{L}_g \times
  \mathcal{L}_h \to \mathcal{L}_{g h}\) induced by the multiplication
  of~\(\Sigma_X\) (see~\cite{Buss-Meyer:Groupoid_fibrations}).  By
  construction, the Hilbert \(B_e^+\)-module \(B_g^+ = A_g
  \otimes_{A_e} B_e^+\) is isomorphic to the continuous sections of
  this line bundle~\(\mathcal{L}\) over the subset \(\{g\}\times
  \mathcal{D}_{g^{-1}}^X\) of \(\{g\}\times X^+\): an element
  \(a\otimes b\) is mapped to the continuous section that
  sends~\((g,x)\) for \(x\in X\) with \(j(x)\in \mathcal{D}_{g^{-1}}\)
  to \(b(x)\cdot \chi(a^* a)^{\nicefrac12} [a]\).  The multiplication
  in~\(\Sigma_X\) is defined so that the multiplication maps \(B_g^+
  \otimes_{B_e^+} B_h^+ \to B_{g h}^+\) are exactly the
  multiplication maps in the Fell line bundle associated
  to~\(\Sigma_X\).

  Thus the Fell bundle~\((B_g^+)_{g\in G}\) constructed in
  Theorem~\ref{the:Fell_bundle_sections_hull} is isomorphic to the
  Fell bundle~\((\beta_g)_{g\in G}\), where~\(\beta_g\) is the space
  of \(\Cont_0\)\nb-sections of~\(\mathcal{L}\) over \(\{g\}\times
  \mathcal{D}_{g^{-1}}\) and the multiplication and involution come
  from the Fell line bundle structure on~\(\mathcal{L}\) over the
  groupoid~\(G\ltimes X^+\).  The full section \Cstar\nb-algebra of
  this Fell bundle is canonically isomorphic to the section
  \Cstar\nb-algebra of the corresponding Fell bundle over the
  groupoid \(G\ltimes X^+\) by results
  of~\cite{BussExel:Fell.Bundle.and.Twisted.Groupoids}.  The small
  issue to check here is that it makes no difference whether we use
  \(\Cont_0\)\nb-sections or compactly supported continuous sections
  of~\(\mathcal{L}\) over \(\{g\}\times \mathcal{D}_{g^{-1}}\).
  Both have the same \Cstar\nb-completion.  This is a special case
  of general results about Fell bundles over étale locally compact
  groupoids.
\end{proof}

If~\((B_g)_{g\in G}\) is any Fell bundle over~\(G\), then
\(\bigoplus_{g\in G} B_g\) is a \Star{}algebra, to which we may apply
our machinery although all its representations are bounded.  Thus any
Fell bundle over~\(G\) may come up for some choice of the
\(G\)\nb-graded \Star{}algebra~\(A\).  Thus the section
\Cstar\nb-algebra of a Fell bundle~\((B_g)_{g\in G}\)
with commutative unit fibre is always a twisted groupoid
\Cstar\nb-algebras of a twist of an étale groupoid, namely, the
transformation groupoid of a certain partial action on the spectrum of
the unit fibre associated to the Fell bundle.  This result is already
known, even for Fell bundles over inverse semigroups with commutative
unit fibre, see~\cite{BussExel:Fell.Bundle.and.Twisted.Groupoids}.

If \(\Sigma_X \cong (G\ltimes X^+)\times \T\) as a groupoid,
then \(\Cst(G\ltimes X^+,\Sigma_X) \cong \Cst(G\ltimes X^+)\).  This
is the same as the crossed product for the partial action of~\(G\)
on~\(X^+\).  This happens in all the examples in
\cites{Savchuk-Schmudgen:Unbounded_induced, Dowerk-Savchuk:Induced}.
The possible twists have two levels.  First, \(\Sigma_X\) may be
non-trivial as a principal circle bundle over~\(G\ltimes X^+\).
Secondly, if it is trivial as a principal circle
bundle, the multiplication may create a non-trivial twist.

The circle bundle \(\Sigma_X \prto G\ltimes X^+\) is trivial if and
only if its restriction to \(\{g\}\times \mathcal{D}_{g^{-1}}\) is
trivial for each \(g\in G\).  For a circle bundle, this means that
there is a nowhere vanishing section.  For instance, if there is
\(a\in A_g\) that generates~\(A_g\) as a right \(A_e\)\nb-module,
then \(U_a = \mathcal{D}_{g^{-1}}\) and~\(\sigma_a\) is a global
trivialisation of~\(\Sigma_X|_{\{g\}\times\mathcal{D}_{g^{-1}}}\).

The complex line bundles over a space~\(X\) are classified by the
second cohomology group \(H^2(X,\Z)\).  If~\(\mathcal{L}\) is a line
bundle, then the spaces of \(\Cont_0\)\nb-sections of
\(\mathcal{L}^{\otimes n}\) for \(n\in\Z\) form a Fell bundle
over~\(\Z\), and the direct sum of these spaces of sections is a
\(\Z\)\nb-graded \Star{}algebra such that the given line
bundle~\(\mathcal{L}\) appears in the resulting twisted groupoid.
If \(H^2(X,\Z)\neq0\), the space~\(X\) is at least
\(2\)\nb-dimensional.  There are indeed non-trivial complex line
bundles over all \emph{compact} oriented \(2\)\nb-dimensional
manifolds.  The resulting \Star{}algebra, however, has only
\Star{}representations by bounded operators if~\(X\) is compact.
Examples where unbounded operators appear must involve a non-trivial
line bundle over a noncompact space.  These first appear in
dimension~\(3\).  It is easy to write down a \(\Z\)\nb-graded
\Star{}algebra~\(A\) where~\(B_e^+\) is, say, \(S^2\times \R\)
and~\(B_g^+\) involves the Bott line bundle over~\(S^2\).  These
examples seem artificial, however.

Now assume that~\(\Sigma_X\) is trivial as a principal circle bundle
over~\((G\ltimes X^+)^1\), that is, \(\Sigma_X \cong (G\ltimes
X^+)^1\times\T\) as a \(\T\)\nb-space.  We may choose this
homeomorphism to be the obvious one on the open subset \((1\ltimes
X^+)\times \T\) corresponding to \(1\in G\).  The multiplication
must be of the form
\[
(g_1,\vartheta_{g_2}(x),\lambda_1)\cdot (g_2,x,\lambda_2)
= (g_1\cdot g_2, x,\varphi(g_1,g_2,x)\cdot
\lambda_1\cdot\lambda_2)
\]
for some continuous \(\T\)\nb-valued function~\(\varphi\) with
\(\varphi(1,g,x) = 1 =\varphi(g,1,x)\) for all \(g,x\);
here~\(\varphi\) is defined on the space of all triples
\((g_1,g_2,x_2)\in G\times G\times X^+\) with
\(x_2\in\mathcal{D}_{g_2^{-1}}\) and
\(\vartheta_{g_2}(x_2)\in\mathcal{D}_{g_1^{-1}}\); this space is
homeomorphic to the space~\((G\ltimes X^+)^2\) of pairs of
composable arrows in~\(G\ltimes X^+\).  The associativity of the
multiplication in~\(\Sigma_X\) is equivalent to the cocycle condition
\begin{equation}
  \label{eq:2-cocycle_varphi}
  \varphi(g_1,g_2\cdot g_3,x)\cdot \varphi(g_2,g_3,x)
  = \varphi(g_1\cdot g_2, g_3,x)\cdot \varphi(g_1,g_2,\vartheta_{g_3}(x))
\end{equation}
for all \(g_1,g_2,g_3\in G\), \(x\in X^+\) for which
\(\vartheta_{g_3}(x)\), \(\vartheta_{g_2}\circ\vartheta_{g_3}(x)\), and
\(\vartheta_{g_1}\circ \vartheta_{g_2}\circ\vartheta_{g_3}(x)\) are defined.  A
different trivialisation of the circle bundle~\(\Sigma_X\prto
(G\ltimes X^+)^1\) modifies~\(\varphi\) by the coboundary
\begin{equation}
  \label{eq:2-coboundary_varphi}
  \partial \psi(g_1,g_2,x) \defeq
  \psi(g_2,x) \psi(g_1\cdot g_2, x)^{-1} \psi(g_1,\vartheta_{g_2}(x))
\end{equation}
of a continuous function \(\psi\colon (G\ltimes X^+)^1 \to \T\)
normalised by \(\psi(1,x)=0\) for all \(x\in X^+\).
Thus isomorphism classes of twists of~\(G\ltimes X^+\) are in
bijection with the groupoid cohomology \(H^2(G\ltimes X,\T)\), that
is, the quotient of the group of continuous maps \(\varphi\colon
(G\ltimes X^+)^2 \to \T\) satisfying~\eqref{eq:2-cocycle_varphi} by
the group of \(2\)\nb-coboundaries \(\partial \psi\) of continuous
\(1\)\nb-cochains \(\psi\colon (G\ltimes X^+)^1 \to \T\),
where~\(\partial\psi\) is defined in~\eqref{eq:2-coboundary_varphi}.

In the easiest case, the function~\(\varphi\) above does not depend
on~\(x\).  Then \(\varphi\colon G\times G\to \T\) is a normalised
\(2\)\nb-cocycle on~\(G\) in the usual sense.  These cocycles appear,
for instance, in the classification of projective representations of
the group~\(G\).  This is related to the twists above because the
Hilbert space representations of the twisted group algebra for a
\(2\)\nb-cocycle \(\varphi\colon G\times G\to\T\) are exactly the
projective representations \(\pi\colon G\to \textup{U}(\Hilm[H])\) with
\(\pi(g)\pi(h) = \varphi(g,h) \pi(g h)\) for all \(g,h\in G\).

The group~\(\Z\) has no nontrivial \(2\)\nb-cocycles.  They do
appear, however, for the group~\(\Z^2\).  A well known example is
the noncommutative torus.  Its usual gauge action corresponds to a
\(\Z^2\)\nb-grading, where \(U^n V^m\) for the canonical generators
\(U,V\) has degree \((n,m)\in\Z^2\).  In this case, \(A_e=\C = B_e =
B_e^+\), and~\(\hat{A}_e^+\) has only one point.  The transformation
groupoid \(G\ltimes \hat{A}_e^+\) is simply \(G=\Z^2\).  This is
discrete, so~\(\Sigma\) is always trivial as a principal circle
bundle.  Thus the only non-trivial aspect of~\(\Sigma\) is a
\(2\)\nb-cocycle \(\varphi\colon \Z\times \Z\to \T\).  The
cohomology group \(H^2(\Z^2,\T)\) is isomorphic to~\(\T\),
and the resulting twisted group algebras of~\(\Z^2\) are exactly the
noncommutative tori.

\begin{proposition}
  \label{pro:commutative_partial_action_no_twist}
  If there are subsets \(S_g \subseteq A_g\) such that~\(S_g\)
  generates~\(A_g\) as a right \(A_e\)\nb-module, \(S_g\cdot S_h
  \subseteq S_{g h}\), and \(\chi(a^* b)\ge0\) for all \(a,b\in
  S_g\), \(g\in G\), \(\chi\in j(X)\subseteq \hat{A}_e\), then the
  twist~\(\Sigma_X\) is trivial and so the \Cstar\nb-hull of~\(A\)
  is \(\Cst(G\ltimes X^+)\).
\end{proposition}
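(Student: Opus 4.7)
The plan is to trivialise the twist $\Sigma_X$ by constructing continuous global sections of $\Sigma_X$ over each slice $\{g\}\times \mathcal{D}_{g^{-1}}^X$ that are multiplicative. Recall that the local sections $\sigma_a \colon \{g\}\times (U_a\cap X^+) \to \Sigma_X$, $(g,x)\mapsto (g,j(x),[a])$, for $a\in A_g$ generate the topology on~$\Sigma_X$, and that two local sections $\sigma_a,\sigma_b$ for $a,b\in A_g$ satisfy $\sigma_a(g,x)=\sigma_b(g,x)$ if and only if $j(x)(a^*b)>0$.

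First I would patch the local sections $\sigma_a$ for $a\in S_g$ into a global continuous section $\sigma_g\colon \{g\}\times \mathcal{D}_{g^{-1}}^X \to \Sigma_X$. The open sets $U_a\cap X^+$ for $a\in S_g$ cover $\mathcal{D}_{g^{-1}}^X$: if $x\in \mathcal{D}_{g^{-1}}^X$, choose $a\in A_g$ with $j(x)(a^*a)>0$ and write $a=\sum_i s_i c_i$ with $s_i\in S_g$, $c_i\in A_e$; then positive semidefiniteness of the form $\braket{\cdot}{\cdot}_\chi$ on $A_g$ plus Cauchy--Schwarz force some $j(x)(s_i^*s_i)>0$, so $x\in U_{s_i}$. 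On overlaps $U_a\cap U_b \cap X^+$ for $a,b\in S_g$, the hypothesis gives $j(x)(a^*b)\ge0$, while \eqref{eq:chi_aabb} gives $|j(x)(a^*b)|^2 = j(x)(a^*a)\,j(x)(b^*b)>0$; together these yield $j(x)(a^*b)>0$, so $\sigma_a$ and $\sigma_b$ agree on the overlap. Thus the $\sigma_a$'s patch to a single continuous global section $\sigma_g$, trivialising $\Sigma_X$ as a principal $\T$-bundle.

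Next I would verify that these global sections are multiplicative, i.e.\ $\sigma_g(\vartheta_h(x))\cdot \sigma_h(x) = \sigma_{gh}(x)$ whenever the left side is defined. Given $x$, pick $a\in S_g$ with $\vartheta_h(x)\in U_a$ and $b\in S_h$ with $x\in U_b$. By \eqref{eq:partial_act_Aeplus}, $j(x)((ab)^*(ab)) = \vartheta_h(j(x))(a^*a)\cdot j(x)(b^*b)>0$, so $x\in U_{ab}$; since $ab\in S_g\cdot S_h \subseteq S_{gh}$, the value $\sigma_{gh}(x)$ is computed from $ab$ as well. The multiplication law in $\Sigma_X$ reads
\[
\sigma_a(g,\vartheta_h(x))\cdot \sigma_b(h,x) = (g,\vartheta_h(j(x)),[a])\cdot (h,j(x),[b]) = (gh,j(x),[ab]) = \sigma_{ab}(gh,x),
\]
which matches $\sigma_{gh}(x)$. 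Hence the \(2\)-cocycle $\varphi$ describing $\Sigma_X$ in the trivialisation $\{\sigma_g\}$ is identically $1$, and $\Sigma_X \cong (G\ltimes X^+)\times \T$ as a topological groupoid over $G\ltimes X^+$.

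Finally, Theorem~\ref{the:twisted_groupoid} identifies the \Cstar\nb-hull for $\Repi(A,X)$ with $\Cst(G\ltimes X^+,\Sigma_X)$; since the twist is trivial, this twisted groupoid \Cstar\nb-algebra reduces to the untwisted groupoid \Cstar\nb-algebra $\Cst(G\ltimes X^+)$. I expect no serious obstacle: the only mildly subtle point is ensuring the cover $\{U_s\cap X^+: s\in S_g\}$ actually exhausts $\mathcal{D}_{g^{-1}}^X$, which is handled by the Cauchy--Schwarz argument above combined with the generation hypothesis on $S_g$.
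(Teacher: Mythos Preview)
Your proof is correct and follows essentially the same approach as the paper: patch the local sections $\sigma_a$ for $a\in S_g$ into a global section using the positivity hypothesis on overlaps, verify that the $U_a$ for $a\in S_g$ cover $\mathcal{D}_{g^{-1}}$ via the generation hypothesis and Cauchy--Schwarz (the paper expands $\chi(b^*b)$ as a double sum and invokes \eqref{eq:chi_aabb} instead, which amounts to the same thing), and then observe multiplicativity from $S_g\cdot S_h\subseteq S_{gh}$. Your write-up is in fact more detailed on the multiplicativity step than the paper's, which dispatches it in one sentence.
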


\begin{proof}
  If \(\chi\in \mathcal{D}_{g^{-1}}\), then there is \(b\in A_g\)
  with \(\chi(b^* b)\neq0\).  Since~\(S_g\) generates~\(A_g\) as a
  right \(A_e\)\nb-module, we may write \(b = \sum_{i=1}^n a_i\cdot
  c_i\) with \(a_i\in S_g\), \(c_i\in A_e\).  Then
  \[
  \chi(b^* b)
  = \sum_{i,j=1}^n \chi(c_i^*)\chi(c_j) \chi(a_i^* a_j).
  \]
  Hence there are \(i,j\) with \(\chi(a_i^* a_j)\neq0\).  Then
  \(\chi(a_i^* a_i)\neq0\) by~\eqref{eq:chi_aabb}.  This shows that
  \(\bigcup_{a\in S_g} U_a = \mathcal{D}_{g^{-1}}\).  We have
  \((g,\chi,[a])= (g,\chi,[b])\) for all \(a,b\in S_g\), \(\chi \in
  j(X)\cap U_a\cap U_b\) because \(\chi(a^* b)\ge0\) for all
  \(\chi\in j(X)\).  Hence the local sections~\(\sigma_a\)
  of~\(\Sigma_X|_{\{g\}\times\mathcal{D}_{g^{-1}}}\) for \(a\in
  S_g\) coincide on the intersections of their domains and thus
  combine to a global trivialisation.  This trivialisation is
  multiplicative as well.
\end{proof}

If~\(\hat{A}_e^+\) itself is locally compact, then we may take
\(X^+=X=\hat{A}_e^+\) with the inclusion map~\(j\).  Since
\(\hat{A}_e^+\) is closed in~\(\hat{A}_e\), this happens
if~\(\hat{A}_e\) is locally compact.

\begin{theorem}
  \label{the:Ae_locally_compact_hull}
  Assume that \(\hat{A}_e^+\) is locally compact in the
  topology~\(\tau_c\).  Call a representation of~\(A\) integrable if
  its restriction to~\(A_e\) is locally bounded.  Let~\(\Sigma\) be
  the twisted groupoid constructed above.  Then \(\Cst(G\ltimes
  (\hat{A}_e^+,\tau_c),\Sigma)\) is a \Cstar\nb-hull for the
  integrable representations of~\(A\).
\end{theorem}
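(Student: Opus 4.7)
The plan is to invoke Theorem~\ref{the:twisted_groupoid} with the choice $X = \hat{A}_e^+$ equipped with the topology $\tau_c$ and $j\colon X\to\hat{A}_e$ the inclusion map. Since $\tau_c$ on $\hat{A}_e^+$ is finer than the subspace topology inherited from the pointwise-convergence topology on $\hat{A}_e$, the inclusion $j$ is continuous and trivially injective, and by hypothesis $X$ is locally compact. Moreover $X^+ = j^{-1}(\hat{A}_e^+) = X$, so the partial action of $G$ on $\hat{A}_e^+$ restricts trivially to $X^+$; by Proposition~\ref{pro:commutative_integrability_induction} the class $\Repi(A_e, X)$ is compatible with induction. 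Theorem~\ref{the:twisted_groupoid} then produces $\Cst(G\ltimes \hat{A}_e^+, \Sigma)$ as a \Cstar\nb-hull for $\Repi(A, X)$.

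What remains is to identify $\Repi(A, X)$ with the class of representations of~$A$ whose restriction to~$A_e$ is locally bounded. By Lemma~\ref{lem:induced_repr_inducible}, any such restriction is automatically inducible, so it suffices to identify $\Repi(A_e, X)$ with the class of inducible, locally bounded representations of~$A_e$. By Corollary~\ref{cor:locally_bounded_inducible_pro-Cstar}, the latter class corresponds bijectively to locally bounded representations of the pro-\Cstar\nb-algebra~$\mathcal{A}_e^+$, and Proposition~\ref{pro:commutative_pro-completion} applied to the commutative pro-\Cstar\nb-algebra~$\mathcal{A}_e^+$ gives a canonical identification $\mathcal{A}_e^+ \cong \Cont(\hat{A}_e^+, \tau_c)$. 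Under the local-compactness hypothesis, $\Cont_0(\hat{A}_e^+, \tau_c)$ is dense in~$\mathcal{A}_e^+$ in the sense of Definition~\ref{def:compact_in_pro-Cstar}, so Theorem~\ref{the:locally_bounded_relative_hull} (applied with $\mathcal{K} = \mathcal{A}_e^+$) produces $\Cont_0(\hat{A}_e^+, \tau_c)$ as a \Cstar\nb-hull for these representations. Comparing with the \Cstar\nb-hull $\Cont_0(X)$ supplied by Theorem~\ref{the:commutative_hull}, the uniqueness statement of Proposition~\ref{pro:unique_hull} forces the two classes of representations of~$A_e$ to coincide.

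The main obstacle is the bookkeeping between the several topologies on $\hat{A}_e^+$ that enter the argument: the $\tau_c$ topology of Definition~\ref{def:characters}, the subspace topology inherited from $(\hat{A}_e, \tau_c)$, and the spectrum topology of~$\mathcal{A}_e^+$ viewed as a pro-\Cstar\nb-algebra via Proposition~\ref{pro:commutative_pro-completion}. Since $\hat{A}_e^+$ is closed in $\hat{A}_e$ for the topology of pointwise convergence---being cut out by the closed conditions $\chi(a^*a) \ge 0$ for all $a\in A_g$, $g\in G$---the compact subsets of $\hat{A}_e^+$ are precisely the compact subsets of $\hat{A}_e$ contained in~$\hat{A}_e^+$, so the three topologies above agree after passage to their compactly generated refinements, on which local compactness is then imposed. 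Once this identification is in place, the combination of Theorems \ref{the:commutative_hull}, \ref{the:locally_bounded_relative_hull} and~\ref{the:twisted_groupoid} delivers the conclusion.
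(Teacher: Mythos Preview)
Your argument is correct and follows the same route as the paper: invoke Theorem~\ref{the:twisted_groupoid} with $X=(\hat{A}_e^+,\tau_c)$, and identify $\Repi(A_e,X)$ with the inducible locally bounded representations by passing through $\mathcal{A}_e^+\cong\Cont(\hat{A}_e^+,\tau_c)$ and Theorem~\ref{the:locally_bounded_relative_hull}. The only quibble is your final appeal to Proposition~\ref{pro:unique_hull}, which runs in the wrong direction (it says a class determines its hull, not conversely); what you actually need is that both \Cstar-hulls carry the \emph{same} universal representation of~$A_e$ on $\Cont_0(\hat{A}_e^+,\tau_c)$---namely multiplication by Gelfand transforms---so both define the same class of $B$-integrable representations.
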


\begin{proof}
  If \(X^+=(\hat{A}_e^+,\tau_c)\), then integrability is compatible
  with induction by
  Proposition~\ref{pro:commutative_integrability_induction} because
  the construction of the topology~\(\tau_c\) is natural and
  compatible with restriction to open subsets.
  Theorem~\ref{the:twisted_groupoid} shows that \(\Cst(G\ltimes
  (\hat{A}_e^+,\tau_c),\Sigma)\) is a \Cstar\nb-hull for the class
  of representations of~\(A\) whose restriction to~\(A_e\) is
  \(\hat{A}_e^+\)\nb-integrable.  The locally bounded
  representations of~\(A_e\) are equivalent to the locally bounded
  representations of the
  pro-\Cstar\nb-algebra~\(\Cont(\hat{A}_e,\tau_c)\) by Propositions
  \ref{pro:commutative_pro-completion}
  and~\ref{pro:locally_bounded_pro-Cstar}.  Restrictions of
  representations of~\(A\) to~\(A_e\) are automatically inducible by
  Lemma~\ref{lem:induced_repr_inducible}.  By a
  pro-\Cstar\nb-algebraic variant of
  Lemma~\ref{lem:inducible_Ae_commutative}, the inducible, locally
  bounded representations of~\(A_e\) are equivalent to those
  representations of~\(\Cont(\hat{A}_e,\tau_c)\) that factor through
  the quotient \(\Cont(\hat{A}_e^+,\tau_c)\).  Since~\(\hat{A}_e^+\)
  is locally compact, \(\Cont_0(\hat{A}_e^+,\tau_c)\) is dense in
  the pro-\Cstar\nb-algebra \(\Cont(\hat{A}_e^+,\tau_c)\).  Hence
  \(\Cont_0(\hat{A}_e^+,\tau_c)\) is a \Cstar\nb-hull for the
  inducible, locally bounded representations of~\(A_e\) by
  Theorem~\ref{the:locally_bounded_relative_hull}.
\end{proof}

Assume that~\(A_e\) is countably generated.  Then the usual topology
on~\(\hat{A}_e\) is metrisable and hence compactly generated, so
that~\(\tau_c\) is the standard topology on~\(\hat{A}_e\).  A
representation of~\(A_e\) is locally bounded if and only if all
symmetric elements of~\(A_e\) act by regular, self-adjoint operators
by Theorem~\ref{the:locally_bounded_versus_regular}.  Thus a
representation~\(\pi\) of~\(A\) is integrable as in
Theorem~\ref{the:Ae_locally_compact_hull} if and only if
\(\cl{\pi(a)}\) is regular and self-adjoint for all \(a\in A_e\)
with \(a=a^*\).  This class of integrable representations has the
\Cstar\nb-hull \(\Cst(G\ltimes \hat{A}_e^+,\Sigma)\)
if~\(\hat{A}_e^+\) is locally compact.

In particular, if~\(A_e\) is finitely generated, then~\(\hat{A}_e\)
is mapped homeomorphically onto a closed subset of~\(\R^n\) for some
\(n\in\N\) by evaluating characters on a finite set of symmetric
generators.  Thus~\(\hat{A}_e\) is locally compact.  The discussion
above gives:

\begin{corollary}
  \label{cor:Ae_fin_gen_hull}
  Assume that~\(A_e\) is finitely generated.  Call a representation
  of~\(A\) integrable if its restriction to~\(A_e\) is locally
  bounded.  Then~\(\hat{A}_e^+\) is locally compact and
  \(\Cst(G\ltimes \hat{A}_e^+,\Sigma)\) for the twisted
  groupoid~\(\Sigma\) constructed above is a \Cstar\nb-hull for the
  integrable representations of~\(A\).  Moreover, a
  representation~\(\pi\) of~\(A\) is integrable if and only if
  \(\cl{\pi(a)}\) is regular and self-adjoint for all \(a\in A_e\)
  with \(a=a^*\).
\end{corollary}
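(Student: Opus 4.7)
The plan is to derive this corollary by specializing Theorem \ref{the:Ae_locally_compact_hull} and invoking Theorem \ref{the:locally_bounded_versus_regular}, with the only real work being to verify the local compactness hypothesis.

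First, I would verify that $\hat{A}_e^+$ is locally compact in the topology $\tau_c$. Since $A_e$ is finitely generated as a \Star{}algebra, it is in particular generated by finitely many symmetric elements $a_1,\dotsc,a_n \in (A_e)_h$ (any generator $a$ can be replaced by its real and imaginary parts). Evaluation on these generators gives a continuous map $\Phi\colon \hat{A}_e \to \R^n$, $\chi \mapsto (\chi(a_1),\dotsc,\chi(a_n))$, which is injective because $a_1,\dotsc,a_n$ generate $A_e$ as a \Star{}algebra. The image $\Phi(\hat{A}_e)$ is the set of tuples $(t_1,\dotsc,t_n) \in \R^n$ such that the assignment $a_i \mapsto t_i$ extends to a \Star{}homomorphism; this is a closed subset of $\R^n$ because it is cut out by the polynomial relations among $a_1,\dotsc,a_n$. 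A routine check shows $\Phi$ is a homeomorphism onto its image, so $\hat{A}_e$ is a closed subset of $\R^n$; in particular it is locally compact and metrisable, so $\tau_c$ agrees with the standard topology. By Lemma \ref{lem:inducible_Ae_commutative}, $\hat{A}_e^+$ is a closed subset of $\hat{A}_e$, hence also locally compact.

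Next, with local compactness of $\hat{A}_e^+$ established, Theorem \ref{the:Ae_locally_compact_hull} applies verbatim and produces $\Cst(G\ltimes \hat{A}_e^+,\Sigma)$ as a \Cstar\nb-hull for the class of representations of $A$ whose restriction to $A_e$ is locally bounded, i.e.\ the integrable representations in the sense of the corollary.

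Finally, I would deduce the operator-theoretic characterization. Since $A_e$ is finitely generated, it is in particular countably generated, so Theorem \ref{the:locally_bounded_versus_regular} gives the equality $\Rep_\mathrm{b}(A_e) = \Repi(A_e)$, where $\Repi(A_e)$ is the class of representations of $A_e$ for which $\cl{\pi(a)}$ is regular and self-adjoint for every $a \in (A_e)_h$. Thus a representation of $A$ is integrable in the sense of the corollary if and only if its restriction to $A_e$ lies in $\Repi(A_e)$, which is the stated condition. The only mildly delicate point is the closedness of $\Phi(\hat{A}_e)$ in $\R^n$; everything else is a direct citation.
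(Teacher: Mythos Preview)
Your proposal is correct and follows essentially the same approach as the paper: the paragraph preceding the corollary already records that evaluation on a finite set of symmetric generators maps~\(\hat{A}_e\) homeomorphically onto a closed subset of~\(\R^n\), so~\(\hat{A}_e\) (and hence its closed subset~\(\hat{A}_e^+\)) is locally compact and metrisable, whence Theorem~\ref{the:Ae_locally_compact_hull} and Theorem~\ref{the:locally_bounded_versus_regular} yield the two conclusions. Your write-up simply makes these steps more explicit, including the citation to Lemma~\ref{lem:inducible_Ae_commutative} for the closedness of~\(\hat{A}_e^+\).
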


Corollary~\ref{cor:Ae_fin_gen_hull} covers all the examples
considered in \cites{Savchuk-Schmudgen:Unbounded_induced,
  Dowerk-Savchuk:Induced}, except for the enveloping
algebra~\(\mathcal{W}\) of the Virasoro algebra that is studied in
\cite{Savchuk-Schmudgen:Unbounded_induced}*{§9.3}.

The \Star{}algebra~\(\mathcal{W}\) is \(\Z\)\nb-graded.  Its unit
fibre~\(\mathcal{W}_0\) is noncommutative.  The first step in the
study of its representations
in~\cite{Savchuk-Schmudgen:Unbounded_induced}*{§9.3} is to
replace~\(\mathcal{W}\) by a certain \(\Z\)\nb-graded quotient
\(A\defeq \mathcal{W}/\mathcal{I}\), whose unit fibre \(A_0 =
\mathcal{W}_0/(\mathcal{I}\cap \mathcal{W}_0)\) is commutative by
construction.  The motivation is that all ``integrable''
representations of~\(\mathcal{W}\) factor through~\(A\).  The main
result in \cite{Savchuk-Schmudgen:Unbounded_induced}*{§9.3} shows
that the partial action of~\(\Z\) on~\(\hat{A}_e^+\) is free and
that the disjoint union \(Y \defeq X_1 \sqcup X_2 \sqcup X_3\) of
the three families of characters described in (61)--(63)
of~\cite{Savchuk-Schmudgen:Unbounded_induced} is a fundamental
domain, that is, it meets each orbit of the partial action exactly
once.  Each subset~\(X_i\) is closed in~\(\hat{A}_e\) and locally
compact and second countable in the subspace topology.  Hence so
is~\(Y\).  Since~\(\Z\) acts by partial homeomorphisms and~\(Y\) is
a fundamental domain, there is a continuous bijection
\[
X\defeq \bigsqcup_{n\in\Z} (\mathcal{D}_{-n}\cap Y) \to
\hat{A}_e^+,\qquad (n,y)\mapsto \vartheta_n(y).
\]
Each~\(\mathcal{D}_{-n}\cap Y\) is an open subset of~\(Y\), so
that~\(X\) is locally compact.  I have not checked whether this
continuous bijection is a homeomorphism.  If so,
then~\(\hat{A}_e^+\) would be locally compact and the results
in~\cite{Savchuk-Schmudgen:Unbounded_induced} for the Virasoro
algebra would be contained in
Theorem~\ref{the:Ae_locally_compact_hull} after passing to the
quotient~\(\mathcal{W}/\mathcal{I}\).  If not, we would use the
locally compact space~\(X\).  The partial action of~\(\Z\)
on~\(\hat{A}_e^+\) is clearly continuous on~\(X\) as well, so that
Theorem~\ref{the:twisted_groupoid} applies.

\section{Rieffel deformation}
\label{sec:Rieffel}

Let~\(G\) be a discrete group.
Given a normalised \(2\)\nb-cocycle on~\(G\), Rieffel deformation is a
deformation functor that modifies the multiplication on a
\(G\)\nb-graded \Star{}algebra by the \(2\)\nb-cocycle.  There is a
similar process for Fell bundles over~\(G\), which we may transfer to
section \Cstar\nb-algebras.  This is
how Rieffel deformation is usually considered.  The setting of
graded algebras or Fell bundles is easier.  We now define
Rieffel deformation more precisely and show that it is compatible with
the construction of \Cstar\nb-hulls in
Theorem~\ref{the:Fell_bundle_sections_hull}.  This deformation
process has also recently been treated
in~\cite{Raeburn:Deformations}.

A \emph{normalised \(2\)\nb-cocycle} on a group~\(G\) is a function
\(\Lambda\colon G\times G\to\textup{U}(1)\) with
\(\Lambda(e,g)=1=\Lambda(g,e)\) for all \(g\in G\) and
\begin{equation}
  \label{eq:2-cocycle}
  \Lambda(g,h\cdot k)\Lambda(h,k) = \Lambda(g\cdot h,k)\Lambda(g,h)
\end{equation}
for all \(g,h,k\in G\).  Let \(A= \bigoplus_{g\in G} A_g\) be a
\(G\)\nb-graded algebra.  Let~\(A^\Lambda\) be the same
\(G\)\nb-graded vector space with the deformed multiplication and
involution
\[
\sum_{g\in G} a_g * \sum_{h\in G} b_h
\defeq \sum_{g,h\in G} \Lambda(g,h) a_g b_h,\qquad
\bigl(\sum a_g\bigr)^\dagger \defeq
 \sum \conj{\Lambda(g^{-1},g)} a_g^*,
\]
where \(a_g,b_g\in A_g\) for all \(g\in G\).  We call~\(A^\Lambda\)
the \emph{Rieffel deformation} of~\(A\) with respect to~\(\Lambda\).

\begin{lemma}
  \label{lem:Rieffel_def_works}
  The deformed multiplication and involution on~\(A^\Lambda\) give a
  \(G\)\nb-graded \Star{}algebra with \(a * b = a b\) if
  \(a\in A_e\) or \(b\in A_e\), and \(a^\dagger *b = a^* b\) for all
  \(g\in G\), \(a,b\in A_g\).
\end{lemma}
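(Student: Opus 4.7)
The plan is to verify bilinearity, grading compatibility, associativity, and the $*$-algebra axioms for the new operations, plus the two explicit simplifications at the end, by direct computation using only the cocycle identity~\eqref{eq:2-cocycle} and the normalisation $\Lambda(e,g)=1=\Lambda(g,e)$.

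First I would record three preliminary identities obtained by specialising the cocycle equation. Setting $(g,h,k)=(a,a^{-1},a)$ in~\eqref{eq:2-cocycle} and using normalisation yields
\[
  \Lambda(a,a^{-1}) = \Lambda(a^{-1},a)\qquad\text{for all }a\in G.
\]
Setting $(g,h,k)=(h^{-1},g^{-1},g)$ and using normalisation gives
\[
  \Lambda(g^{-1},g) = \Lambda(h^{-1}g^{-1},g)\,\Lambda(h^{-1},g^{-1}).
\]
Setting $(g,h,k)=(h^{-1}g^{-1},g,h)$ and using normalisation gives
\[
  \Lambda(h^{-1}g^{-1},gh)\,\Lambda(g,h) = \Lambda(h^{-1},h)\,\Lambda(h^{-1}g^{-1},g).
\]
Eliminating $\Lambda(h^{-1}g^{-1},g)$ between the last two yields the ``reversal'' identity
\[
  \Lambda(g,h)\,\Lambda(h^{-1}g^{-1},gh)
  = \Lambda(g^{-1},g)\,\Lambda(h^{-1},h)\,\overline{\Lambda(h^{-1},g^{-1})},
\]
which is exactly what is needed to control $(a*b)^\dagger$.

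Next I would check each axiom in turn, it suffices to do so on homogeneous elements. Grading is immediate from $A_g\cdot A_h\subseteq A_{gh}$ and $A_g^*=A_{g^{-1}}$. Associativity follows because for $a_g\in A_g$, $b_h\in A_h$, $c_k\in A_k$ both $(a_g*b_h)*c_k$ and $a_g*(b_h*c_k)$ equal a scalar times $a_g b_h c_k$, and the two scalars $\Lambda(g,h)\Lambda(gh,k)$ and $\Lambda(h,k)\Lambda(g,hk)$ agree by~\eqref{eq:2-cocycle}. Involutivity $(a_g^\dagger)^\dagger=a_g$ reduces to $\overline{\Lambda(g,g^{-1})}\,\Lambda(g^{-1},g)=1$, which holds by the first preliminary identity together with $|\Lambda(g,g^{-1})|=1$. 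The antimultiplicativity $(a_g*b_h)^\dagger=b_h^\dagger*a_g^\dagger$ reduces, after expanding both sides and using $(a_g b_h)^*=b_h^* a_g^*$, to precisely the reversal identity derived above. Conjugate linearity of~$\dagger$ and bilinearity of~$*$ are automatic from the definitions. Finally, $1\in A_e$ is a unit since $1*a_g = \Lambda(e,g)\cdot 1\cdot a_g = a_g$ and symmetrically on the right, while the special cases in the statement are immediate: $a*b=ab$ when one factor lies in $A_e$ by $\Lambda(e,g)=\Lambda(g,e)=1$, and $a_g^\dagger * b_g = \overline{\Lambda(g^{-1},g)}\,\Lambda(g^{-1},g)\,a_g^* b_g = a_g^* b_g$ for $a_g,b_g\in A_g$.

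The only step that is not a one-line unravelling of definitions is antimultiplicativity of $\dagger$, so the main obstacle is the derivation of the reversal identity. That will be handled cleanly by the two specialisations of~\eqref{eq:2-cocycle} indicated above, so the whole argument remains short and mechanical.
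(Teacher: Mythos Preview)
Your proof is correct and follows essentially the same approach as the paper's: both verify the \(*\)-algebra axioms on homogeneous elements, with the only non-trivial step being the antimultiplicativity of~\(\dagger\), and both reduce this to the same identity \(\Lambda(g,h)\,\Lambda((gh)^{-1},gh)\,\Lambda(h^{-1},g^{-1}) = \Lambda(g^{-1},g)\,\Lambda(h^{-1},h)\) derived from two specialisations of the cocycle equation. The only difference is cosmetic: the paper plugs in the triples \((g,h,h^{-1})\) and \((gh,h^{-1},g^{-1})\), while you use \((h^{-1},g^{-1},g)\) and \((h^{-1}g^{-1},g,h)\), but both routes land on the same key identity.
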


\begin{proof}
  The multiplication remains associative by the \(2\)\nb-cocycle
  condition~\eqref{eq:2-cocycle}.  The normalisation of~\(\Lambda\)
  and~\eqref{eq:2-cocycle} for \(g,g^{-1},g\) give \(\Lambda(g,g^{-1})
  = \Lambda(g^{-1},g)\) for all \(g\in G\).  Thus
  \[
  (a_g^\dagger)^\dagger
  = \conj{\Lambda(g,g^{-1})} \cdot
  \conj{\conj{\Lambda(g^{-1},g)}} (a_g^*)^*
  = a_g
  \]
  for \(a_g\in A_g\).  The normalisation condition
  and~\eqref{eq:2-cocycle} for \(g,h,h^{-1}\) and \(g
  h,h^{-1},g^{-1}\) for \(g,h\in G\) give
  \begin{align*}
    \Lambda(g h, h^{-1}) \Lambda(g,h) &= \Lambda(h,h^{-1}),\\
    \Lambda(g, g^{-1}) \Lambda(g h,h^{-1})
    &= \Lambda(g h,h^{-1} g^{-1}) \Lambda(h^{-1}, g^{-1}).
  \end{align*}
  Hence \(\Lambda(g,g^{-1})\Lambda(h,h^{-1}) = \Lambda(g,h) \Lambda(g
  h, h^{-1} g^{-1}) \Lambda(h^{-1},g^{-1})\).  This implies the
  condition \((a_g * b_h)^\dagger = b_h^\dagger * a_g^\dagger\) for
  \(a_g\in A_g\), \(b_h\in A_g\):
  \begin{align*}
    (a_g * b_h)^\dagger
    &= \conj{\Lambda(g,h)} \cdot \conj{\Lambda(g h, (g h)^{-1})}
    \cdot (a_g b_h)^*
    \\&= \conj{\Lambda(g,g^{-1})\cdot \Lambda(h,h^{-1})}\cdot
    \Lambda(h^{-1},g^{-1})\cdot b_h^* a_g^*
    = b_h^\dagger * a_g^\dagger.
  \end{align*}
  Thus the deformed multiplication and involution give a
  \Star{}algebra.  The formula \(a^\dagger *b = a^* b\) for \(g\in
  G\), \(a,b\in A_g\) is trivial, and \(a * b = a b\) if \(a\in A_e\)
  or \(b\in A_e\) follows from the normalisation of~\(\Lambda\).
\end{proof}

The same formulas work if~\((B_g)_{g\in G}\) is a Fell bundle
over~\(G\).  Let~\((B^\Lambda_g)_{g\in G}\) be the same Banach space
bundle as~\(B_g\) with the multiplication and involution \(a_g * b_h
\defeq \Lambda(g,h) a_g b_h\) and \(a_g^\dagger =
\conj{\Lambda(g^{-1},g)} a_g^*\) for \(g,h\in G\), \(a_g\in B_g\),
\(b_h\in B_h\).  By Lemma~\ref{lem:Rieffel_def_works}, the deformation
does not change~\(a b\) for \(a\in B_e\) or \(b\in B_e\) and \(a^* b\)
and~\(a b^*\) for \(a,b\in B_g\).  Hence \(B_g^\Lambda = B_g\) as
Hilbert \(B_e\)\nb-bimodules, so that the positivity and completeness
conditions for a Fell bundle are not affected by the deformation.  We
call~\((B_g^\Lambda)_{g\in G}\) the \emph{Rieffel deformation} of the
Fell bundle~\((B_g)_{g\in G}\) with respect to~\(\Lambda\).

For a \Cstar\nb-algebra of the form \(B = \Cst(B_g)\) for a Fell
bundle~\((B_g)_{g\in G}\) over~\(G\), we define its Rieffel
deformation with respect to~\(\Lambda\) as \(B^\Lambda \defeq
\Cst(B_g^\Lambda)\) for the deformed Fell bundle.

If~\(G\) is an Abelian group, then \(\Cst(B_g)\) for a Fell bundle
over~\(G\) carries a canonical continuous action of~\(\hat{G}\),
called the dual action.  Conversely, any \Cstar\nb-algebra with a
continuous \(\hat{G}\)\nb-action~\(\beta\) is of the form
\(B=\Cst(B_g)\), where~\((B_g)_{g\in G}\) is the spectral
decomposition of the action,
\[
B_g = \{b\in B\mid
\beta_\chi(b) = \chi(g)\cdot b\text{ for all }\chi\in \hat{G}\}.
\]
Thus Rieffel deformation takes a \Cstar\nb-algebra with a continuous
\(\hat{G}\)\nb-action to another \Cstar\nb-algebra with a continuous
\(\hat{G}\)\nb-action.  This is how it is usually formulated.
Since~\(\hat{G}\) is compact, there are no analytic difficulties with
oscillatory integrals as
in~\cite{Rieffel:Deformation_Rd}.

\begin{theorem}
  \label{the:Rieffel_deformation}
  Let \(A = \bigoplus_{g\in G} A_g\) be a \(G\)\nb-graded
  \Star{}algebra and let~\(B_e\) be a \Cstar\nb-hull for a class of
  integrable representations of~\(A_e\).  Assume that integrability
  is compatible with induction for~\(A\).  Let~\(\Lambda\) be a
  normalised \(2\)\nb-cocycle on~\(G\).  Then integrability is also
  compatible with induction for~\(A^\Lambda\), and the
  \Cstar\nb-hull for the integrable representations of~\(A^\Lambda\)
  is the Rieffel deformation with respect to~\(\Lambda\) of the
  \Cstar\nb-hull for the integrable representations of~\(A\).
\end{theorem}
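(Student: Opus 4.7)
The plan is to apply the Induction Theorem~\ref{the:Fell_bundle_sections_hull} to~\(A^\Lambda\) and identify the resulting Fell bundle with the Rieffel deformation of the Fell bundle for~\(A\). The first observation is that since \(\Lambda(e,g)=\Lambda(g,e)=1\), Lemma~\ref{lem:Rieffel_def_works} gives \(A^\Lambda_e = A_e\) as \Star{}algebras, and the right \(A_e\)-module structure on \(A^\Lambda_g\) agrees with that on~\(A_g\). Furthermore, \(a_1^\dagger *_\Lambda a_2 = \conj{\Lambda(g^{-1},g)}\Lambda(g^{-1},g) a_1^* a_2 = a_1^* a_2\) for \(a_1,a_2\in A_g\), so the \(A_e\)-valued inner product on~\(A_g\) is unchanged. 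Consequently, the class of integrable representations of~\(A^\Lambda_e\) coincides with that of~\(A_e\), the \Cstar\nb-hull~\(B_e\) and its quotient~\(B_e^+\) are the same, and the Hilbert \(B_e^+\)-module~\(A^\Lambda_g\otimes_{A_e} B_e^+\) is canonically isomorphic to~\(B_g^+ = A_g\otimes_{A_e}B_e^+\).

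Next I would verify that integrability is compatible with induction for~\(A^\Lambda\). The induced representation \(\mu^{+,\Lambda}_{e,g}\) of \(A_e\) on \(A^\Lambda_g \otimes_{A_e} B_e^+\) acts by \(a_e\cdot(a_g\otimes b) = (a_e *_\Lambda a_g)\otimes b = (a_e a_g)\otimes b\), which coincides under the identification above with~\(\mu^+_{e,g}\); since the latter is integrable by the hypothesis that integrability is compatible with induction for~\(A\), so is the former. By Theorem~\ref{the:Fell_bundle_sections_hull}, there is a Fell bundle \((B_g^{+,\Lambda})_{g\in G}\) whose section \Cstar\nb-algebra is a \Cstar\nb-hull for the integrable representations of~\(A^\Lambda\), and the discussion above shows \(B_g^{+,\Lambda} = B_g^+\) as Hilbert \(B_e^+\)\nb-bimodules (the left \(B_e^+\)\nb-module structure~\(\bar\mu^+_{e,g}\) depends only on the integrable \(A_e\)-representation on~\(B_g^+\), which is unchanged).

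The main step is to compare the Fell bundle multiplications and involutions. By Remark~\ref{rem:Fell_explicit}, the multiplication \(m^+_{g,h}\colon B_g^+\otimes_{B_e^+} B_h^+ \to B_{gh}^+\) is induced, via extension of scalars to~\(B_e^+\), by the multiplication map \(A_g\otimes_{A_e} A_h\to A_{gh}\), \(a\otimes a'\mapsto a a'\); the analogous map in~\(A^\Lambda\) is \(a\otimes a'\mapsto \Lambda(g,h) a a'\), so the corresponding Fell bundle multiplication satisfies
\[
  m^{+,\Lambda}_{g,h} = \Lambda(g,h)\cdot m^+_{g,h}.
\]
Similarly, the Fell bundle involution \(B_g^+\to B_{g^{-1}}^+\) is determined by the \Star{}involution \(A_g\to A_{g^{-1}}\), \(a\mapsto a^*\); in~\(A^\Lambda\) this becomes \(a\mapsto a^\dagger = \conj{\Lambda(g^{-1},g)} a^*\), so the involutions differ by the phase~\(\conj{\Lambda(g^{-1},g)}\). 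These are precisely the deformation formulas defining \((B_g^+)^\Lambda\), so \((B_g^{+,\Lambda})_{g\in G} \cong ((B_g^+)^\Lambda)_{g\in G}\) as Fell bundles. The theorem then follows by taking section \Cstar\nb-algebras, since the definition of Rieffel deformation for a \Cstar\nb-algebra of the form \(\Cst(B_g)\) is exactly \(\Cst(B_g^\Lambda)\).

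The hard part, and where one must be most careful, is the identification of the Fell bundle multiplication in the third paragraph: the map \(m^+_{g,h}\) is not literally \((a\otimes b)\otimes(a'\otimes b')\mapsto (aa')\otimes(bb')\), because one must move \(b\in B_e^+\) past \(a'\in A_h\) using the left \(B_e^+\)\nb-action~\(\bar\mu^+_{e,h}\). What rescues the argument is that~\(\bar\mu^+_{e,h}\) is defined intrinsically from the integrable \(A_e\)\nb-representation on~\(B_h^+\), which, as noted, is the same for \(A\) and \(A^\Lambda\); the only place where the deformation enters in the formula for the Fell bundle product is through the underlying multiplication \(A_g\times A_h\to A_{gh}\), which contributes precisely the scalar~\(\Lambda(g,h)\).
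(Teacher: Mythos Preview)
Your proof is correct and follows essentially the same approach as the paper's: both argue that the \(A_e\)-bimodule structure and inner product on each~\(A_g\) are unchanged by the deformation (using Lemma~\ref{lem:Rieffel_def_works}), so the fibres~\(B_g^+\) and the compatibility of integrability with induction carry over, while the Fell bundle multiplication and involution pick up exactly the scalars \(\Lambda(g,h)\) and \(\conj{\Lambda(g^{-1},g)}\). Your fourth paragraph, which isolates where \(\bar\mu_{e,h}^+\) enters and why it is unaffected, makes explicit what the paper compresses into the phrase ``inspecting the construction above.''
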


\begin{proof}
  The compatibility condition in
  Definition~\ref{def:induced_actions_integrable} is equivalent to the
  integrability of \(A_g \otimes_{A_e} (\Hilms,\pi)\) for all \(g\in
  G\), which only involves a single~\(A_g\) with its
  \(A_e\)\nb-bimodule structure and the \(A_e\)\nb-valued inner
  product \(\braket{a}{b} = a^* b\) for \(a,b\in A_g\).  This is not
  changed by Rieffel deformation by Lemma~\ref{lem:Rieffel_def_works}.
  Hence~\(A^\Lambda\) inherits the compatibility condition from~\(A\),
  and Theorem~\ref{the:Fell_bundle_sections_hull} applies to both
  \(A\) and~\(A^\Lambda\).

  The Hilbert \(B^+_e\)\nb-bimodule~\(B^+_g\) depends only on~\(A_g\) with
  the extra structure above and the universal inducible, integrable
  representation \((\Hilms[B]^+_e,\mu^+_e)\) of~\(A_e\) by
  Remark~\ref{rem:Fell_explicit}.  Since none of this is changed by
  Rieffel deformation, the Fell bundle obtained from~\(A^\Lambda\) has
  the same fibres~\((B_g^+)^\Lambda\) as~\(B_g^+\).  Rieffel deformation
  changes the multiplication maps \(A_g\times A_h \to A_{g h}\) and
  the involution \(A_g \to A_{g^{-1}}\) for fixed \(g,h\in G\) only by
  a scalar.  Inspecting the construction above, we see that the
  multiplication maps \(B_g^+\times B_h^+ \to B_{g h}^+\) and the involution
  \(B_g^+ \to B^+_{g^{-1}}\) in the Fell bundle are changed by exactly the
  same scalars.  Hence the Fell bundle for~\(A^\Lambda\)
  is~\((B_g^+)^\Lambda\).  Now the assertion follows from
  Theorem~\ref{the:Fell_bundle_sections_hull}.
\end{proof}

\section{Twisted Weyl algebras}
\label{sec:Weyl_twisted}

We illustrate our theory by studying \Cstar\nb-hulls of twisted
\(n\)\nb-dimensional Weyl algebras for \(1\le n\le\infty\).  We begin
with the case \(n=1\), where no twists occur.  Then we consider the
case of finite~\(n\) without twists and with twists.  Finally, we
consider the case \(n=\infty\) with and without twists.

The (\(1\)\nb-dimensional) \emph{Weyl algebra}~\(A\) is the universal
\Star{}algebra with one generator~\(a\) and the relation \(a a^* = a^*
a + 1\).  There is a unique \(\Z\)\nb-grading \(A= \bigoplus_{n\in\Z}
A_n\) with \(a\in A_1\).  The \Star{}subalgebra~\(A_0\) is isomorphic
to the polynomial algebra~\(\C[N]\) with \(N= a^* a\), which is
commutative.  The other subspaces \(A_k\subseteq A\) for \(k\in\N\)
are isomorphic to~\(A_0\) as left or right \(A_0\)\nb-modules because
\(A_k = A_0\cdot a^k = a^k\cdot A_0\) and \(A_{-k} = (a^*)^k\cdot A_0
= A_0\cdot (a^*)^k\) for all \(k\ge0\).  The spectrum~\(\hat{A}_0\)
of~\(A_0\) is~\(\R\), where the character \(\C[N]\to\C\) for
\(t\in\R\) evaluates a
polynomial at~\(t\).  A character is positive if and only if
it is positive on~\((a^*)^k a^k\) and~\(a^k (a^*)^k\) for all
\(k\ge1\).  This happens if and only if \(t\in\N\) by
\cite{Savchuk-Schmudgen:Unbounded_induced}*{Example 10}.

Since \(N a = a^* a a = (a a^* -1)a = a\cdot (a^* a-1) = a\cdot
(N-1)\), the partial automorphism~\(\vartheta_1\) of \(\hat{A}_0^+=\N\)
associated to the \(A_0\)\nb-bimodule~\(A_1\) acts on~\(\hat{A}_e^+\)
by the automorphism \(N\mapsto N-1\), which corresponds to translation
by~\(-1\).  By induction, we get \(N\cdot a^k = a\cdot (N-1)\cdot
a^{k-1} = \dotsb = a^k\cdot (N-k)\).  The domain of~\(\vartheta_k\) is as
big as it could possibly be, that is, it contains all \(n\in\N\) with
\(n\ge k\) by~\eqref{eq:partial_act_Aeplus_dom} (see also
\cite{Savchuk-Schmudgen:Unbounded_induced}*{Example 16}).  For any
\(k,l\in\N\) there is a unique \(n\in\Z\) with \(k-n=l\).  Thus the
transformation groupoid \(\Z\ltimes_\vartheta \N\) is simply the pair
groupoid on~\(\N\).  There can be no twist in this case.  First, the
pair groupoid simply has no non-trivial twists.  And secondly, the
generators~\(a^k,(a^*)^k\) for \(k\ge0\) satisfy the positivity
condition in
Proposition~\ref{pro:commutative_partial_action_no_twist}, which also
rules out a twist.

Since no proper non-empty subset of~\(\N\) is invariant under the
partial action~\(\vartheta\) of~\(\Z\), a commutative \Cstar\nb-hull
for~\(A_0\) for which integrability is compatible with induction
gives either \(B_0^+= \Cont_0(\N)\) or \(B_0^+=\{0\}\).  In the
second case, \(A\) has no non-zero integrable representations.  In
the first case, the \Cstar\nb-hull for the integrable
representations of~\(A\) is the groupoid \Cstar\nb-algebra
\(\Comp(\ell^2\N)\) of the pair groupoid~\(\N\times\N\).

The universal representation of~\(A\) on~\(\Comp(\ell^2\N)\) is
equivalent to a representation~\(\pi\) of~\(A\) on~\(\ell^2\N\) by
Proposition~\ref{pro:representation_Hilm_Comp}.  The domain of this
representation is the space~\(\mathcal{S}(\N)\) of rapidly
decreasing sequences, with \(\pi(a)(\delta_k) = \sqrt{k}
\delta_{k-1}\) for \(k\in\N\), so \(\pi(a^*)(\delta_k) = \sqrt{k+1}
\delta_{k+1}\), \(\pi(N)(\delta_k) = k\delta_k\).  By
Theorem~\ref{the:regular_self-adjoint_Cstar-hull}, a
representation~\(\pi\) of~\(A_0\) on a Hilbert module~\(\Hilm\) is
integrable if and only~\(\cl{\pi(N^k)}\) is regular and self-adjoint
for each \(k\in\N\) or, equivalently, \(\pi(N)\) is regular and
self-adjoint and \(\cl{\pi(N^k)} = \cl{\pi(N)}{}^k\) for all
\(k\in\N\).  By definition, a representation of~\(A\) is integrable
if and only if its restriction to~\(A_0\) is integrable.

The \(\Z\)\nb-grading on the \Cstar\nb-hull~\(\Comp(\ell^2\N)\) is
``inner'': it is induced by the \(\Z\)\nb-grading on~\(\ell^2\N\)
where~\(\delta_k\) has degree~\(k\).  Equivalently, the dual action
of~\(\T\) on~\(\Comp(\ell^2\N)\) associated to the \(\Z\)\nb-grading
is the inner action associated to the unitary representation \(U\colon
\T\to\textup{U}(\ell^2\N)\), where \(U_z(\delta_k) \defeq
z^k\delta_k\) for all \(z\in\T\), \(k\in\N\).

\medskip

Now let \(m\in\N\) and let \(\Theta = (\Theta_{jk})\) be an
antisymmetric \(m\times m\)-matrix.  Let \(\lambda_{jk} =
\exp(2\pi\ima\Theta_{jk})\).  Let~\(A^{m,\Theta}\) be the
\Star{}algebra with generators \(a_1,\dotsc,a_m\) and the
commutation relations \(a_j^{\vphantom{*}} a_j^* = a_j^*
a_j^{\vphantom{*}} + 1\) for \(1\le j\le m\) and
\begin{equation}
  \label{eq:Weyl_twisted_commutation_jk}
  a_j^{\vphantom{*}} a_k^{\vphantom{*}}
  = \lambda_{j k}^{\vphantom{*}} a_k^{\vphantom{*}} a_j^{\vphantom{*}},\qquad
  a_j^* a_k^{\vphantom{*}} = \lambda_{j k}^{-1} a_k^{\vphantom{*}} a_j^*
\end{equation}
for \(1\le j\neq k\le m\).  Since \(\lambda_{j k}^{\vphantom{-1}} =
\lambda_{k j}^{-1}\), the relations~\eqref{eq:Weyl_twisted_commutation_jk} for
\((j,k)\) and~\((k,j)\) are equivalent; so it suffices to
require~\eqref{eq:Weyl_twisted_commutation_jk} for \(1\le j<k\le m\).
The \Star{}algebra~\(A^{m,\Theta}\) is \(\Z^m\)\nb-graded by
giving~\(a_j\) degree \(e_j\in\Z^m\), where \(e_1,\dotsc,e_m\) is the
standard basis of~\(\Z^m\).

We first consider the case \(\Theta=0\) and write \(A^m\defeq
A^{m,0}\).  This is the \emph{\(m\)\nb-dimensional Weyl algebra},
which is the tensor product of \(m\)~copies of the
\(1\)\nb-dimensional Weyl algebra, with the induced
\(\Z^m\)\nb-grading.  Thus the zero fibre~\(A^m_0\) for \(0\in\Z^m\)
is isomorphic to the polynomial algebra \(\C[N_1,\dotsc,N_m]\) in the
\(m\)~generators \(N_j^{\vphantom{*}} = a_j^* a_j^{\vphantom{*}}\).
Its spectrum is~\(\R^m\).
Each~\(A^m_k\) for \(k\in\Z^m\) is isomorphic to~\(A^m_0\) both as a
left and a right \(A^m_0\)\nb-module; the generator is the product
of~\(a_j^{k_j}\) for \(k_j\ge0\) or~\((a_j^*)^{-k_j}\) for \(k_j<0\)
from \(j=1,\dotsc,m\).  Here the order of the factors does not matter
because \(\Theta=0\).  We may identify~\(A^m_k\) with the exterior
tensor product of the \(A^1\)\nb-bimodules \(A^1_{k_1} \otimes
A^1_{k_2} \otimes \dotsb \otimes A^1_{k_m}\).  Hence the space of
positive characters on~\(A^m\) is~\(\N^m\), and the partial action
of~\(\Z^m\) on~\(\N^m\) is the exterior product of the partial
actions of~\(\Z\) on~\(\N\) for the \(1\)\nb-dimensional Weyl
algebras.  That is, \(k\in\Z^m\) acts on~\(\N^m\) by translation
by~\(-k\) with the maximal possible domain.  Thus the transformation
groupoid \(\Z^m \ltimes \hat{A}_0^+\) is isomorphic to the pair
groupoid of the discrete set~\(\N^m\).

Once again, the only \(\Z^m\)\nb-invariant subsets of~\(\hat{A}_0^+\)
are the empty set and~\(\N^m\), so that the only inducible commutative
\Cstar\nb-hulls of~\(A_0\) for which integrability is compatible with
induction are~\(\{0\}\) and~\(\Cont_0(\N^m)\).  The first case is
boring, and the second case leads to the
\Cstar\nb-hull~\(\Comp(\ell^2\N^m)\) of the \(m\)\nb-dimensional Weyl
algebra.

As for \(m=1\), the universal representation of~\(A^m\) is
equivalent to a representation on~\(\ell^2\N^m\).  This has the
domain~\(\mathcal{S}(\N^m)\), and the representation is determined by
\[
\pi(a_j)(\delta_{(k_1,\dotsc,k_m)}) =
\sqrt{k_j} \delta_{(k_1,\dotsc,k_j-1,\dotsc,k_m)}
\]
for \((k_1,\dotsc,k_m)\in\N^m\) and \(j=1,\dotsc,m\).  Hence
\(\pi(N_j)(\delta_{(k_1,\dotsc,k_m)}) =
k_j\delta_{(k_1,\dotsc,k_m)}\).  A representation of~\(A\) is
integrable if and only if its restriction to~\(A_0\) is integrable in
the sense that it integrates to a representation of~\(\Cont_0(\R^m)\).
This automatically descends to a representation of~\(\Cont_0(\N^m)\)
by Lemma~\ref{lem:induced_repr_inducible}.  There are several ways to
characterise when a representation of~\(\C[N_1,\dotsc,N_m]\)
integrates to a representation of~\(\Cont_0(\R^m)\).  One is
that~\(\cl{\pi(N_j)}\) for \(j=1,\dotsc,m\) are strongly commuting,
regular, self-adjoint operators and \(\cl{\pi(N_j^k)} =
\cl{\pi(N_j)}{}^k\) for all \(j=1,\dotsc,m\), \(k\in\N\), compare
\cite{Schmudgen:Unbounded_book}*{Theorem 9.1.13}.

The groups~\(\Z^m\) for \(m\ge2\) have non-trivial \(2\)\nb-cocycles,
and~\(A^{m,\Theta}\) is, by definition, a Rieffel deformation
of~\(A^{m,0}\) for the normalised \(2\)\nb-cocycle
\begin{equation}
  \label{eq:Rieffel_Lambda}
  \Lambda\bigl((x_1,\dotsc,x_m),(y_1,\dotsc,y_m)\bigr) \defeq
  \prod_{j=1}^m \prod_{k=j+1}^m \lambda_{jk}^{x_k y_j}.
\end{equation}
We could also use the cohomologous antisymmetric \(2\)\nb-cocycle
\[
\prod_{j=1}^m \prod_{k=j+1}^m
\sqrt{\lambda_{jk}}^{x_k y_j - x_j y_k}
= \prod_{j,k=1}^m \exp(-\pi\ima \Theta_{jk} x_j y_k).
\]
Theorem~\ref{the:Rieffel_deformation} says that the
\Cstar\nb-hull~\(B^{m,\Theta}\) of~\(A^{m,\Theta}\) is the Rieffel
deformation of the \Cstar\nb-hull~\(B^{m,0}\) of~\(A^{m,0}\) for the
same \(2\)\nb-cocycle.

In the classification of Fell bundles with commutative unit fibre,
the important cohomology is that of the transformation groupoid
\(G\ltimes X^+\), not of~\(G\) itself.  Here \(G\ltimes X^+\) is the
pair groupoid of~\(\N^m\).

\begin{lemma}[compare \cite{Mantoiu-Purice-Richard:Magnetic}*{Lemma 2.9}]
  \label{lem:cohomology_pair_groupoid}
  The cohomology of the pair groupoid of a discrete set~\(X\) with
  coefficients in an Abelian group~\(H\) vanishes in all positive
  degrees.
\end{lemma}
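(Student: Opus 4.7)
The plan is to exhibit an explicit contracting homotopy on the cochain complex. Identify composable $n$\nb-tuples of arrows in the pair groupoid \(X\times X\) with elements \((x_0,\dotsc,x_n)\in X^{n+1}\), where the tuple corresponds to the composable arrows \(((x_0,x_1),(x_1,x_2),\dotsc,(x_{n-1},x_n))\); since \(H\) has trivial action, the coboundary on an \(n\)\nb-cochain \(f\colon X^{n+1}\to H\) becomes the alternating face sum
\[
(\dd f)(x_0,\dotsc,x_{n+1})
= \sum_{i=0}^{n+1}(-1)^i f(x_0,\dotsc,\hat{x_i},\dotsc,x_{n+1}).
\]
Thus the cochain complex is canonically isomorphic to the complex of (normalised) simplicial cochains of the full simplex on the vertex set~\(X\).

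If \(X=\emptyset\), there is nothing to prove. Otherwise, fix a base point \(p\in X\) and, for \(n\ge1\), define a homotopy \(h\colon C^n(X\times X,H)\to C^{n-1}(X\times X,H)\) by
\[
(h f)(x_0,\dotsc,x_{n-1}) \defeq f(p,x_0,\dotsc,x_{n-1}).
\]
This map preserves normalisation: if \(f\) vanishes whenever two consecutive arguments coincide, then so does~\(h f\).

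The key step is the verification of the cone identity \(\dd h + h \dd = \id\) on \(C^n\) for \(n\ge1\). Expanding
\[
(\dd h f)(x_0,\dotsc,x_n) = \sum_{i=0}^{n}(-1)^i f(p,x_0,\dotsc,\hat{x_i},\dotsc,x_n)
\]
and
\[
(h \dd f)(x_0,\dotsc,x_n) = (\dd f)(p,x_0,\dotsc,x_n)
= f(x_0,\dotsc,x_n) - \sum_{i=0}^{n}(-1)^i f(p,x_0,\dotsc,\hat{x_i},\dotsc,x_n),
\]
the sums cancel and leave \(f(x_0,\dotsc,x_n)\). Hence every \(n\)\nb-cocycle (\(n\ge1\)) is a coboundary.

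The only obstacle is bookkeeping of signs in the cone identity, which is the standard computation showing that the nerve of a groupoid with a terminal object (here, any object of the pair groupoid, since every pair of objects is uniquely connected by an arrow) has contractible classifying space. No genuine difficulty arises.
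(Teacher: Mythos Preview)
Your proof is correct and essentially identical to the paper's: both identify composable $n$-tuples with $X^{n+1}$, fix a base point, and use the cone contraction $(hf)(x_0,\dotsc,x_{n-1}) = f(p,x_0,\dotsc,x_{n-1})$ to obtain $\dd h + h\dd = \id$ in positive degrees. You provide slightly more detail in the sign verification, but the argument is the same.
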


\begin{proof}
  The set of composable \(n\)\nb-tuples in the pair groupoid
  of~\(X\) is~\(X^{n+1}\).  The groupoid cohomology with
  coefficients~\(H\) is the cohomology of the chain complex with
  cochains \(C_n \defeq H^{X^{n+1}}\), the space of all maps
  \(X^{n+1}\to H\), and with the boundary map \(\partial\colon
  C_n\to C_{n+1}\),
  \[
  \partial \varphi(x_0,\dotsc,x_n) \defeq \sum_{i=0}^n (-1)^i
  \varphi(x_0,\dotsc, \widehat{x_i},\dotsc, x_n);
  \]
  here the hat means that the entry~\(x_i\) is deleted.  Pick some
  point \(x_0\in X\) and let \(h\varphi(x_1,\dotsc,x_n) \defeq
  \varphi(x_0,x_1,\dotsc,x_n)\) for all \(n\in\N\),
  \(x_1,\dotsc,x_n\in X\), \(\varphi\in C_{n+1}\).  Then
  \(\partial\circ h(\varphi) + h \circ \partial(\varphi)=\varphi\)
  for all \(\varphi\in C_n\), \(n\ge1\).  Thus the cohomology
  vanishes in positive degrees.
\end{proof}

Any twist of the pair groupoid on~\(\N^m\) is trivial by
Lemma~\ref{lem:cohomology_pair_groupoid}.  Therefore, the
\Cstar\nb-hull~\(B^{m,\Theta}\) is isomorphic
to~\(\Comp(\ell^2\N^m)\), the untwisted groupoid \Cstar\nb-algebra
of the pair groupoid.  The proof of
Lemma~\ref{lem:cohomology_pair_groupoid} is explicit and so allows
to construct this isomorphism.
We explain another way to construct it, using
properties of Rieffel deformation.  Since
the \(\Z\)\nb-grading on the \Cstar\nb-hull \(\Comp(\ell^2\N)\) is
inner or, equivalently, the corresponding action of~\(\T\) is
inner, the same holds for the \(\Z^m\)\nb-grading on the
\Cstar\nb-hull~\(\Comp(\ell^2\N^m)\) and the corresponding
\(\T^m\)\nb-action on~\(\Comp(\ell^2\N^m)\).  Explicitly, the
\(\T^m\)\nb-action is induced by the unitary representation
of~\(\T^m\) on~\(\ell^2\N^m\) defined by
\[
U_{(z_1,\dotsc,z_m)}\delta_{(k_1,\dotsc,k_m)} \defeq
z_1^{-k_1} \dotsm z_m^{-k_m} \delta_{(k_1,\dotsc,k_m)}.
\]
Rieffel deformation of \Cstar\nb-algebras for \emph{inner} actions
does not change the isomorphism type of the \Cstar\nb-algebra.
Hence the \Cstar\nb-hull for the integrable representations
of~\(A^{m,\Theta}\) is also isomorphic to~\(\Comp(\ell^2\N^m)\).

We make this more explicit in our Fell bundle language.  Let
\(U\colon \T^m\to\mathcal{UM}(B)\) be a strictly continuous
homomorphism to the group of unitary multipliers of a
\Cstar\nb-algebra~\(B\) and let \(\alpha_z(b) \defeq U_z b U_z^*\)
for \(z\in\T^m\), \(b\in B\) be the resulting inner action.  Let
\((B_k)_{k\in\Z^m}\) be the spectral decomposition of this action,
that is, \(b\in B_k\) if and only if \(\alpha_z(b) = z^k\cdot b\)
for all \(z\in\T^m\).  In particular, \(U\in\mathcal{UM}(B_0)\)
because~\(\T^m\) is commutative.

Assume for simplicity that the \(2\)\nb-cocycle~\(\Lambda\) is a
bicharacter as above.  For fixed \(k\in\Z^m\), we view
\(\Lambda(k,\blank)\colon \Z^m\to\T\) as an
element~\(\tilde\Lambda(k)\) of the dual group~\(\T^m\).  The map
\(\tilde\Lambda\colon \Z^m \to \T^m\) is a group homomorphism.
The maps \(\psi_k\colon B_k \to B_k\), \(b\mapsto
U_{\tilde\Lambda(k)}^*\cdot b\), for \(k\in\N\) are Banach space
isomorphisms that modify the multiplication as follows:
\begin{align*}
  \psi_k^{\vphantom{*}}(b_1^{\vphantom{*}}) \psi_l^{\vphantom{*}}(b_2^{\vphantom{*}})
  &= U^*_{\tilde\Lambda(k)} b_1^{\vphantom{*}} U^*_{\tilde\Lambda(l)} b_2^{\vphantom{*}}
  = U_{\tilde\Lambda(k+l)}^* \alpha_{\tilde\Lambda(l)}(b_1^{\vphantom{*}}) b_2^{\vphantom{*}}
  = \psi_{k+l}^{\vphantom{*}}(\Lambda(k,l)\cdot b_1^{\vphantom{*}} b_2^{\vphantom{*}}).
\end{align*}
They keep the involution unchanged.  This is exactly what Rieffel
deformation does.  Hence the maps~\(\psi_k\) form an isomorphism
between the undeformed and Rieffel deformed Fell bundles.  This
finishes the proof that the Rieffel deformed algebra for an inner
action is canonically isomorphic to the original algebra.

The universal representation of~\(A^{m,\Theta}\)
on~\(\Comp(\ell^2\N^m)\) again corresponds to a representation
of~\(A^{m,\Theta}\) on~\(\ell^2(\N^m)\).  We may construct it by
carrying over the isomorphism \(B^{m,\Theta} \cong B^{m,0}\) between
the \Cstar\nb-hulls.  This is the inverse of the isomorphism above, so
it multiplies on the left by the unitary~\(U_{\tilde\Lambda(k)}\) of
degree~\(0\) on elements of degree~\(k\).  We do exactly the same on
elements of~\(A^{m,\Theta}\) and so let \(x\in A^{m,\Theta}_k\) for
\(k\in\Z^m\) act on~\(\ell^2\N^m\) by the operator
\(U_{\tilde\Lambda(k)} \pi^{m,0}(x)\), where~\(\pi^{m,0}\) is the
universal representation of the untwisted Weyl algebra~\(A^{m,0}\)
on~\(\ell^2\N^m\) described above.  The same computation as above
shows that this defines a \Star{}representation of~\(A^{m,\Theta}\).
We compute it explicitly.

First, the action of elements of~\(A^{m,\Theta}_0\) on~\(\ell^2\N^m\)
is not changed.  The domain of a representation of~\(A^{m,\Theta}\) is
equal to the domain of its restriction to~\(A^{m,\Theta}_0\).  Hence
the domain of our representation is the Schwartz
space~\(\mathcal{S}(\N^m)\), as in the untwisted case.  Let \(1\le
j\le m\).  The generator~\(a_j\) has degree \(e_j\in\Z^m\), and
\[
\tilde\Lambda(e_j) =
(\lambda_{1,j},\dotsc,\lambda_{j-1,j},1,\dotsc,1)\in\T^m
\]
for our first definition of~\(\Lambda\) in~\eqref{eq:Rieffel_Lambda}.
Thus
\[
\pi^{m,\Theta}(a_j) \delta_{(k_1,\dotsc,k_m)}
= U_{\tilde\Lambda(e_j)} \pi^{m,0}(a_j) \delta_{(k_1,\dotsc,k_m)}
= \biggl(\prod_{l=1}^{j-1} \lambda_{j,l}^{k_l} \biggr) \sqrt{k_j}
\delta_{(k_1,\dotsc,k_j-1,\dotsc,k_m)}.
\]
These operators on~\(\mathcal{S}(\N^m)\) satisfy the defining
relations of~\(A^{m,\Theta}\).

\medskip

The \emph{infinite-dimensional Weyl algebra~\(A^\infty\)} is the
universal \Star{}algebra with generators~\(a_j\) for \(j\in\N\) and
relations \(a_j^* a_j^{\vphantom{*}} = a_j^{\vphantom{*}} a_j^* +
1\), \(a_j a_k = a_k a_j\), and \(a_j^* a_k^{\vphantom{*}} =
a_k^{\vphantom{*}} a_j^*\) for \(0\le j<k\).  Let~\(\Z[\N]\) be the
free Abelian group on countably many generators.  The Weyl
algebra~\(A^\infty\) is \(\Z[\N]\)\nb-graded, where~\(a_j\) has
degree \(e_j\in\Z[\N]\), the \(j\)th generator of~\(\Z[\N]\).

The \Star{}algebra~\(A^\infty\) is a tensor product of infinitely
many \(1\)\nb-dimensional Weyl algebras.  The zero
fibre~\(A^\infty_0\) is the polynomial algebra in the generators
\(N_j^{\vphantom{*}} = a_j^* a_j^{\vphantom{*}}\) for \(j\in\N\).
Hence its spectrum is the
infinite product \(\hat{A}^\infty_0 = \R^\N\), which is not locally
compact.  The tensor product structure of~\(A^\infty\) shows that a
character is positive if and only if each component is.  That is,
\((A^\infty)_0^+ \cong \N^\N\) is a product of countably many copies
of the discrete space~\(\N\).  Since~\(\N\) is not compact, this is
not locally compact either.  Hence to build a commutative
\Cstar\nb-hull for~\(A^\infty_0\), we must choose some locally
compact space~\(X\) with a continuous, injective map \(f\colon X\to
\N^\N\).  Here we have simplified notation by assuming that already
\(f(X)\subseteq \N^\N\); otherwise, the first step in our
construction would replace~\(X\) by \(X^+ \defeq f^{-1}(\N^\N)\).
For \(X\)\nb-integrability to be compatible with induction, we also
need \(f(X)\) to be invariant under the partial action
of~\(\Z[\N]\), and we need the restricted partial action to lift to
a continuous partial action on~\(X\).

The partial action of the group~\(\Z[\N]\) on~\(\N^\N\) is the
obvious one by translations.  It is free, and two
points \((n_k)\) and~\((n'_k)\) in~\(\N^\N\) belong to the same
orbit if and only if there is~\(k_0\) such that \(n_k = n'_k\) for
all \(k\ge k_0\).  Briefly, such points are called \emph{tail
  equivalent}.  This partial action is minimal, that is, an open,
\(\Z[\N]\)-invariant subset is either empty or the whole space.
Hence~\(\N^\N\) has no \(\Z[\N]\)\nb-invariant, locally closed
subsets.  Thus~\(\N^\N\) has no \(\Z[\N]\)\nb-invariant, locally
compact subspaces.

Let~\(K\) be any compact subset of~\(\N^\N\).  Then the projection
\(p_j\colon \N^\N\to\N\) to the \(j\)th coordinate must map~\(K\) to
a compact subset of~\(\N\).  So there is an upper bound~\(M_j\) with
\(p_j(K)\subseteq [0,M_j]_\N \defeq [0,M_j]\cap \N\).  Then
\(K\subseteq \prod_{j\in\N} [0,M_j]_\N\), and the right hand side is
compact.  The closure of \(\prod_{j\in\N} [0,M_j]_\N\) under tail
equivalence is
\[
X_{(M_k)} \defeq \bigcup_{j\in\N}
\biggl(\N^j \times \prod_{k>j} [0,M_k]_\N\biggr),
\]
the restricted product of copies of~\(\N\) with respect to the
compact-open subsets \([0,M_j]_\N\).  There is a unique topology
on~\(X_{(M_k)}\) where each subset \(\N^j \times \prod_{k>j}
[0,M_k]_\N\) is open and carries the product topology.  This
topology is locally compact, and the partial action of~\(\Z[\N]\)
on~\(X_{(M_k)}\) by translation is continuous.

\begin{lemma}
  The Local--Global Principle fails for the \(X_{(M_k)}\)\nb-integrable
  representations of~\(A^\infty\).
\end{lemma}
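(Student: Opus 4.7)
The plan is first to apply Theorem~\ref{the:commutative_local-global} to the commutative unit fibre $A^\infty_0$ together with the injection $j\colon X_{(M_k)}\injto \hat{A}^\infty_0\cong \R^\N$, and then to lift the resulting counterexample from $A^\infty_0$ to $A^\infty$ by induction. For the failure of condition~(5) I take $x^{(n)}\defeq (M_n+1)e_n\in X_{(M_k)}$: each coordinate of $x^{(n)}$ eventually vanishes, so $j(x^{(n)})\to 0$ in $\R^\N$, but $\{x^{(n)}:n\ge 1\}$ meets each $X_j$ in the finite set $\{x^{(1)},\dotsc,x^{(j)}\}$, is therefore closed in $X_{(M_k)}$, and its complement is an open neighbourhood of $0$ missing every term, so $x^{(n)}\not\to 0$ in $X_{(M_k)}$. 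Condition~(5) fails, and Theorem~\ref{the:commutative_local-global} delivers a counterexample $(\Hilms[D],\mu)$ of $A^\infty_0$ on $\Cont(\bar{\N})$ associated to the set map $\xi(n)\defeq x^{(n)}$ for $n\in\N$ and $\xi(\infty)\defeq 0$.

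Since $A^\infty$ admits no finite-dimensional representations, $(\Hilms[D],\mu)$ does not extend to $A^\infty$ on the same Hilbert module; I therefore induce. Every fibre character $\xi(n)$ lies in $\N^\N=\hat{A}^\infty_0{}^+$ and is hence positive, so Proposition~\ref{pro:inducible_criteria} shows that $(\Hilms[D],\mu)$ is inducible and defines a closed representation $\pi_*$ of $A^\infty$ on $\Hilm_*\defeq A^\infty\otimes_{A^\infty_0}\Cont(\bar{\N})$. An elementary orbit computation shows that the $\Z[\N]$-orbit of any finitely supported element of $\N^\N$ equals the set $\N^{(\N)}$ of finitely supported $\N$-valued sequences; hence every Hilbert-space fibre of $\pi_*$ is the standard Fock representation on $\ell^2(\N^{(\N)})$, the cyclic section $v\defeq 1\otimes 1\in \Hilm_*$ has fibrewise value the unit basis vector $\delta_{\xi(n)}$, and the joint $A^\infty_0$-spectrum at each fibre is the discrete set $\N^{(\N)}\subseteq X_{(M_k)}$. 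Consequently each fibre representation is $X_{(M_k)}$-integrable. Every state on $\Cont(\bar{\N})$ is atomic because $\bar{\N}$ is countable, so $\pi_*\otimes_\varrho 1$ is a direct sum of these fibre representations, which is integrable by Corollary~\ref{cor:integrable_sums}.

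It remains to show $\pi_*$ is not integrable. If $\pi_*|_{A^\infty_0}$ factored through a representation of $\Cont_0(X_{(M_k)})$ on $\Hilm_*$, then $f\cdot v$ would be an element of $\Hilm_*$ for every $f\in \Cont_0(X_{(M_k)})$, with fibrewise value $f(\xi(n))\,\delta_{\xi(n)}$; the pointwise norm $|f(\xi(n))|$ would have to be a continuous function on $\bar{\N}$, forcing $f(\xi(n))\to f(0)$. But $X_{(M_k)}$ is the increasing union of the open sets $X_j$, so every compact subset lies in some $X_j$; since $x^{(n)}\notin X_j$ for $n>j$, the sequence $\xi(n)$ escapes every compact subset of $X_{(M_k)}$, and $f(\xi(n))\to 0$ for every $f\in\Cont_0(X_{(M_k)})$. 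Choosing $f$ with $f(0)\neq 0$ contradicts continuity, so $\pi_*$ is not integrable and yields the desired counterexample. The main obstacle is making the fibrewise picture of the induced Hilbert $\Cont(\bar{\N})$-module $\Hilm_*$ sufficiently explicit so that both the Fock identification of each fibre (for fibrewise integrability) and the identification of the cyclic section $1\otimes 1$ with $\delta_{\xi(n)}$ (for the discontinuity at $n=\infty$) can be read off from the same element.
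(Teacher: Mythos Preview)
Your proof is correct and follows essentially the same strategy as the paper: exhibit a sequence in~$X_{(M_k)}$ whose image converges in~$\hat{A}^\infty_0$ but which does not converge in~$X_{(M_k)}$, use Theorem~\ref{the:commutative_local-global} to get a counterexample $(\Hilms[D],\mu)$ for~$A^\infty_0$, and then induce to~$A^\infty$.

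Where you differ from the paper is in the verification of the two key properties of the induced representation~$\pi_*$. For non-integrability, the paper simply observes that the $e$-component $A^\infty_e\otimes_{A^\infty_0}\Cont(\bar\N)\cong\Cont(\bar\N)$ is an $A^\infty_0$-invariant direct summand of~$\Hilm_*$ on which~$\pi_*|_{A^\infty_0}$ restricts to~$\mu$; since direct summands of integrable representations are integrable and~$\mu$ is not, $\pi_*$ cannot be integrable. Your continuity argument with the cyclic vector $v=1\otimes 1$ amounts to the same thing but is more laborious (and the word ``discrete'' for $\N^{(\N)}\subseteq X_{(M_k)}$ is not quite right, though all you actually use is that each character in~$\N^{(\N)}$ lies in~$j(X_{(M_k)})$). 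For fibrewise integrability, the paper invokes Lemma~\ref{lem:induction_associative} together with the already-established fact that $X_{(M_k)}$-integrability is compatible with induction (Proposition~\ref{pro:commutative_integrability_induction}): since $\mu\otimes_\varrho 1$ is integrable for every Hilbert space representation~$\varrho$, so is $\pi_*\otimes_\varrho 1\cong A^\infty\otimes_{A^\infty_0}(\mu\otimes_\varrho 1)$. Your explicit Fock-space identification reaches the same conclusion by hand. Both routes are valid; the paper's is shorter because it reuses machinery already in place.
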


\begin{proof}
  Since the map \(X\to \hat{A}^\infty_0\) is not a homeomorphism
  onto its image and~\(\hat{A}^\infty_0\) is metrisable, the
  Local--Global Principle fails for the \(X\)\nb-integrable
  representations of~\(A^\infty_0\) by
  Theorem~\ref{the:commutative_local-global}.  Applying induction
  from~\(A^\infty_0\) to~\(A^\infty\) to a counterexample for the
  Local--Global Principle for~\(A^\infty_0\) produces such a
  counterexample also for~\(A^\infty\).  Explicitly, choose a
  sequence \((n_k)\) such that \(n_k>M_k\) for infinitely
  many~\(k\).  Let \(x_k\defeq n_k\delta_k \in\N^\N\), that is,
  \(x_k\in\N^\N\) has only one non-zero entry, which is~\(n_k\) in
  the \(k\)th place.  This sequence belongs to~\(X_{(M_k)}\) and
  converges to~\(0\) in the product topology
  on~\(\hat{A}^\infty_0\), but not in the topology of~\(X_{(M_k)}\).
  The resulting representation of~\(A^\infty_0\)
  on~\(\Cont(\bar{\N})\) is not \(X_{(M_k)}\)\nb-integrable, but it
  becomes integrable when we tensor with any Hilbert space
  representation of~\(\Cont(\bar{\N})\), see the proof of
  Theorem~\ref{the:commutative_local-global}.  Now induce this
  (inducible) representation of~\(A^\infty_0\) to a representation
  of~\(A^\infty\) on~\(\Cont(\bar{\N})\).  This gives a
  counterexample for the Local--Global Principle for~\(A^\infty\).
\end{proof}

I do not know a class of integrable representations of~\(A^\infty\)
with a \Cstar\nb-hull for which the Local--Global Principle holds.

Let~\(S\) be the set of all words in the letters
\(a_j^{\vphantom{*}},a_j^*\).  If
\(\chi\in\N^\N\) is a positive character and \(x,y\in S\cap
A^\infty_k\) for some \(k\in\Z[\N]\), then \(\chi(x^* y)\ge0\).  Hence
Proposition~\ref{pro:commutative_partial_action_no_twist} shows that
there is no twist in our case, that is, the \Cstar\nb-hull of the
\(X_{(M_k)}\)\nb-integrable representations of~\(A^\infty\) is the
groupoid \Cstar\nb-algebra of the transformation groupoid
\(\Z[\N]\ltimes X_{(M_k)}\).  This \Cstar\nb-hull is canonically
isomorphic to one of the host algebras for~\(A^\infty\) constructed
in~\cite{Grundling-Neeb:Full_regularity}, namely, to the one that is
denoted \(\mathcal{L}[\mathbf{n}]\)
in~\cite{Grundling-Neeb:Full_regularity} with \(\mathbf{n}_k = M_k+1\)
for all \(k\in\N\).  We remark in passing that the construction of a
\emph{full} host algebra from these host algebras
in~\cite{Grundling-Neeb:Full_regularity} is wrong: the resulting
\(\Cst\)\nb-algebra has too many Hilbert space representations, so
it is not a host algebra any more.  An erratum
to~\cite{Grundling-Neeb:Full_regularity} is currently being written.

The compact subset \(T \defeq \prod_{k\in\N} [0,M_k]_\N\) that we
started with is a complete transversal in \(\Z[\N]\ltimes
X_{(M_k)}\), that is, the range map in \(\Z[\N]\ltimes X_{(M_k)}\)
restricted to~\(s^{-1}(T)\) is an open surjection
onto~\(X_{(M_k)}\).  Hence the groupoid \(\Z[\N]\ltimes X_{(M_k)}\)
is Morita equivalent to its restriction to the compact subset~\(T\).
This restriction is the tail equivalence relation on~\(T\).  Its
groupoid \Cstar\nb-algebra is well known: it is the UHF-algebra for
\(\prod_{k\in\N} (M_k+1)\), that is, the infinite tensor product of
the matrix algebras \(\bigotimes_{k\in\N} \Mat_{M_k+1}\).  The
\Cstar\nb-algebra of \(\Z[\N]\ltimes X_{(M_k)}\) itself is the
\Cstar\nb-stabilisation of this UHF-algebra.

Thus the \(X_{(M_k)}\)\nb-integrable representations of~\(A^\infty\)
are equivalent to the representations of the (stabilisation of the)
UHF-algebra of type \(\prod_{k\in\N} (M_k+1)\).  This depends very
subtly on the choice of the sequence~\((M_k)\).  There is no canonical
\Star{}homomorphism between these UHF-algebras if we
increase~\((M_k)\): for some \((M_k)\le (M_k')\), there is not even a
non-zero map between their K\nb-theory groups.  Instead, there are
canonical morphisms, that is, there is a canonical nondegenerate
\Star{}homomorphism \(\Cst(\Z[N]\ltimes X_{(M_k')}) \to
\Mult(\Cst(\Z[N]\ltimes X_{(M_k)}))\) if \((M_k)\le (M_k')\).  They are
constructed as follows.  The inclusion map \(X_{(M_k)} \injto
X_{(M_k')}\) is continuous with dense range, but not proper.  Thus it
induces an injective, nondegenerate \Star{}homomorphism
\(\Cont_0(X_{(M_k')}) \to \Contb(X_{(M_k)})\).  Therefore, if a
representation of~\(A_0^\infty\) is \(X_{(M_k)}\)\nb-integrable, then
it is also \(X_{(M_k')}\)\nb-integrable.  If a representation
of~\(A^\infty\) has \(X_{(M_k)}\)\nb-integrable restriction
to~\(A^\infty_0\), then its restriction to~\(A^\infty_0\) is also
\(X_{(M_k')}\)\nb-integrable.  When we apply this to the universal
representation of~\(A^\infty\) on the \(\Cst\)\nb-hull
\(\Cst(\Z[N]\ltimes X_{(M_k)})\), this gives the desired canonical
morphism \(\Cst(\Z[N]\ltimes X_{(M_k')}) \to \Cst(\Z[N]\ltimes
X_{(M_k)})\) if \((M_k)\le (M_k')\).  It is injective, say, because
\(\Cst(\Z[N]\ltimes X_{(M_k')})\) is simple.

Now let \(\Theta = (\Theta_{jk})_{j,k\in\N}\) be a skew-symmetric
matrix.  It corresponds first to a matrix \(\lambda_{jk} \defeq
\exp(2\pi\ima\Theta_{jk})\) and then to a
\(2\)\nb-cocycle~\(\Lambda\) on~\(\Z[\N]\) as
in~\eqref{eq:Rieffel_Lambda}.  The Rieffel deformation
of~\(A^\infty\) by~\(\Theta\) is the universal
\Star{}algebra~\(A^{\infty,\Theta}\) with the same
generators~\((a_j)_{j\in\N}\) and the relations \(a_j^{\vphantom{*}}
a_j^* = a_j^* a_j^{\vphantom{*}} + 1\), \(a_j a_k = \lambda_{j k}
a_k a_j\), and \(a_j^* a_k^{\vphantom{*}} = \lambda_{j k}^{-1}
a_k^{\vphantom{*}} a_j^*\) for \(0\le j<k\).  We
define~\(X_{(M_k)}\) for a sequence~\((M_k)\) and the
\(X_{(M_k)}\)\nb-integrable representations of~\(A^{\infty,\Theta}\)
as above.  By Theorem~\ref{the:Rieffel_deformation}, this has a
\Cstar\nb-hull, namely, the Rieffel deformation of the
\Cstar\nb-hull for the \(X_{(M_k)}\)\nb-integrable representations
of the undeformed Weyl algebra.  The Rieffel deformation gives a
twist of the groupoid \(\Z[\N]\ltimes X_{(M_k)}\), and the
\Cstar\nb-hull is the twisted groupoid \Cstar\nb-algebra of
\(\Z[\N]\ltimes X_{(M_k)}\) for this twist.

\begin{proposition}
  Let \((M_k)\in\N^\N\).  The \Cstar\nb-hulls for the
  \(X_{(M_k)}\)-integrable representations of the twisted Weyl
  algebras~\(A^{\infty,\Theta}\) are isomorphic for all~\(\Theta\).
\end{proposition}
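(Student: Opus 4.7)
The plan is to show that the Rieffel twist on the transformation groupoid $\Z[\N] \ltimes X_{(M_k)}$ coming from $\Theta$ is a $2$-coboundary, so the resulting twisted groupoid \Cstar-algebra is isomorphic to the untwisted one. By Theorem~\ref{the:Rieffel_deformation}, $B^{\infty,\Theta}$ is the Rieffel deformation of $B^{\infty,0}$ by the $2$-cocycle $\Lambda$ on $\Z[\N]$ from \eqref{eq:Rieffel_Lambda}. The untwisted \Cstar-hull was already identified with $\Cst(\Z[\N] \ltimes X_{(M_k)})$ (Proposition~\ref{pro:commutative_partial_action_no_twist} rules out a twist for $\Theta = 0$). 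Hence the deformation yields the twisted groupoid \Cstar-algebra $\Cst(\Z[\N] \ltimes X_{(M_k)}, \tilde\Lambda)$, where $\tilde\Lambda$ is the continuous groupoid $2$-cocycle obtained by pulling $\Lambda$ back along the projection to $\Z[\N]$, i.e., $\tilde\Lambda((g_1, \vartheta_{g_2}(x)), (g_2, x)) = \Lambda(g_1, g_2)$.

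The main step is to exhibit an explicit continuous $1$-cochain $\psi \colon \Z[\N] \ltimes X_{(M_k)} \to \T$ whose coboundary equals $\tilde\Lambda$. The ansatz
\[
  \psi(g, x) \defeq \prod_{j<k} \lambda_{jk}^{-g_k x_j}, \qquad g = (g_k) \in \Z[\N], \ x = (x_k) \in X_{(M_k)},
\]
is well defined: for each $g$, only finitely many $g_k$ are non-zero, so the product is finite and depends continuously on $x$ through finitely many coordinates. Using the partial action $\vartheta_{g_2}(x) = x - g_2$, a direct computation with the coboundary formula \eqref{eq:2-coboundary_varphi} gives the exponent
\[
  -g_{2,k} x_j + (g_{1,k} + g_{2,k}) x_j - g_{1,k} (x_j - g_{2,j}) = g_{1,k} g_{2,j}
\]
of $\lambda_{jk}$, so that $\partial\psi(g_1, g_2, x) = \prod_{j<k} \lambda_{jk}^{g_{1,k} g_{2,j}} = \Lambda(g_1, g_2) = \tilde\Lambda(g_1, g_2, x)$. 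Thus $\tilde\Lambda$ is a continuous $2$-coboundary.

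Once $\tilde\Lambda$ is trivialized by $\psi$, the standard change-of-trivialization map on cochains (multiplication by $\psi$ on sections supported over each arrow $(g, x)$) intertwines the twisted convolution with the untwisted one and extends to an isomorphism $\Cst(\Z[\N] \ltimes X_{(M_k)}, \tilde\Lambda) \cong \Cst(\Z[\N] \ltimes X_{(M_k)}) = B^{\infty,0}$. Composing with the isomorphism from Theorem~\ref{the:Rieffel_deformation} produces the desired $B^{\infty,\Theta} \cong B^{\infty,0}$, independent of $\Theta$.

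I do not expect a serious obstacle beyond the algebraic verification above. The main points requiring care are (i) matching the sign convention of the partial action $\vartheta_g(x) = x - g$ with the formula for $\Lambda$, and (ii) checking that $\psi$ is defined on the correct subset of arrows, which is automatic since the formula makes sense whenever $(g, x)$ is an arrow in $\Z[\N] \ltimes X_{(M_k)}$ (indeed for all $(g, x) \in \Z[\N] \times X_{(M_k)}$).
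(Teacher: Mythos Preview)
Your proof is correct and takes a genuinely different route from the paper. The paper argues abstractly that \emph{every} twist of the transformation groupoid $R \cong \Z[\N]\ltimes X_{(M_k)}$ is trivial: it uses the exhaustion $R = \bigcup_d R_d$ by open subgroupoids isomorphic to (pair groupoid on $\N^d$)$\times X_{(M_{k+d})}$, invokes Lemma~\ref{lem:cohomology_pair_groupoid} to trivialise the cocycle on each~$R_d$, and then adjusts the trivialisations inductively so that they are compatible and glue to a global one. You instead write down the explicit $1$-cochain $\psi(g,x) = \prod_{j<k}\lambda_{jk}^{-g_k x_j}$ and verify $\partial\psi = \tilde\Lambda$ directly; the key observation making this work is that $g\in\Z[\N]$ has only finitely many nonzero entries, so the product is finite and depends on only finitely many coordinates of~$x$, hence is continuous on each piece $\{g\}\times\mathcal{D}_{g^{-1}}$ of the arrow space. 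Your argument is shorter and yields an explicit isomorphism, while the paper's argument buys the stronger structural fact $H^2(R,\T)=0$, so that any twist---not just those pulled back from a group cocycle on~$\Z[\N]$---is trivial.
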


\begin{proof}
  The \Cstar\nb-hull of~\(A^{\infty,\Theta}\) is a twisted groupoid
  algebra of the transformation groupoid \(\Z[\N]\ltimes
  X_{(M_k)}\), which is isomorphic to the tail equivalence
  relation~\(R\) on~\(X_{(M_k)}\).  We are going to prove that any
  twist \(X_{(M_k)}\times\T\into \Sigma\prto R\) is trivial.  Hence
  the \Cstar\nb-hull of~\(A^{\infty,\Theta}\) is isomorphic to the
  untwisted groupoid \Cstar\nb-algebra of~\(R\) for all~\(\Theta\).

  The arrow space of~\(R\) is totally disconnected
  because~\(X_{(M_k)}\) is totally disconnected and~\(R\) is étale.
  Hence any locally trivial principal bundle over~\(R\) is trivial.
  Thus \(\Sigma \cong R\times\T\) as a topological space, and the
  twist is described by a continuous \(2\)\nb-cocycle
  \(\varphi\colon R^{(2)}\defeq R\times_{s,X,r} R \to \T\).  We must
  show that~\(\varphi\) is a coboundary.

  Let~\(R_d\) for \(d\in\N\) be the equivalence relation
  on~\(X_{(M_k)}\) defined by \((n_k) \mathrel{R_d} (n_k')\) if and
  only if \(n_k = n_k'\) for all \(k\ge d\).  This is an increasing
  sequence of open subsets \(R_d\subseteq R\) with \(R=\bigcup
  R_d\), and each~\(R_d\) is also an equivalence relation.  The
  equivalence relation~\(R_d\) is isomorphic to the product of the
  pair groupoid on~\(\N^d\) and the space~\(X_{(M_{k+d})}\) for the
  shifted sequence \((M_{k+d})_{k\in\N}\).  Thus the cohomology
  of~\(R_d\) with coefficients in~\(\T\) is isomorphic to the
  cohomology of the pair groupoid on~\(\N^d\) with values in the
  Abelian group of continuous map \(X_{(M_{k+d})}\to\T\).
  This cohomology vanishes in
  positive degrees by Lemma~\ref{lem:cohomology_pair_groupoid}.
  Therefore, for each \(d\in\N\) there is \(\psi_d\colon R_d\to\T\)
  such that \(\varphi|_{R_d}\colon R_d\times_{s,r} R_d\to \T\) is
  the coboundary~\(\partial\psi_d\).  The restriction
  of~\(\psi_{d+1}\) to~\(R_d\) and~\(\psi_d\)
  both have coboundary~\(\varphi|_{R_d}\).  Hence
  \(\psi_{d+1}^{-1}|_{R_d}\cdot \psi_d\) is a \(1\)\nb-cocycle
  on~\(R_d\).  Again by Lemma~\ref{lem:cohomology_pair_groupoid},
  there is \(\chi\colon X\to\T\) with \(\psi_{d+1}^{-1}|_{R_d} \cdot
  \psi_d = \partial_{R_d}\chi\).  We replace~\(\psi_{d+1}\) by
  \(\psi_{d+1}' \defeq \psi_{d+1} \cdot \partial_{R_{d+1}}\chi\),
  where~\(\partial_{R_{d+1}}\chi\) means the coboundary of~\(\chi\)
  on the groupoid~\(R_{d+1}\).  This still satisfies \(\partial
  \psi_{d+1}' = \partial\psi_{d+1} = \varphi|_{R_{d+1}}\), and
  \(\psi'_{d+1}|_{R_d} = \psi_d\).  Proceeding like this, we get
  continuous maps \(\psi'_d\colon R_d \to\T\) for all \(d\in\N\)
  that satisfy \(\psi'_{d+1}|_{R_d} = \psi'_d\) and \(\partial
  \psi_d' = \varphi|_{R_d}\) for all \(d\in\N\).  These combine to a
  continuous map \(\psi'\colon R\to \T\) with \(\partial \psi' =
  \varphi\).
\end{proof}

\begin{bibdiv}
  \begin{biblist}
    \bibselect{references}
  \end{biblist}
\end{bibdiv}
\end{document}